\documentclass[10pt,a4paper]{amsart}
\usepackage{verbatim}

\usepackage[toc]{appendix}
\usepackage[T1]{fontenc}

\usepackage{graphicx}
\usepackage{tikz}
\usetikzlibrary{
  arrows.meta,
  calc,
  decorations.pathreplacing
}
\usepackage{enumerate}
\usepackage{mdframed}
\newcommand{\deris}{\frac{\textnormal{d}}{\textnormal{d}s}}
\usepackage{amsmath,amsfonts,amssymb}
\usepackage{color}
\def\loc{\operatorname{loc}}
\usepackage{cite}
\usepackage{ latexsym }
\definecolor{citation}{rgb}{0.11,0.67,0.84}
\definecolor{formula}{rgb}{0.1,0.2,0.6}
\definecolor{url}{rgb}{0.11,0.67,0.84}
\usepackage{pgf,tikz}
\usepackage{mathrsfs}
\usepackage{fancyhdr}
\usepackage{dutchcal}

\newcommand{\medint}{-\kern -,375cm\int}

\newcommand{\medintinrigo}{-\kern -,315cm\int}
\makeatletter
\newcommand{\linethrough}{\mathpalette\@thickbar}
\newcommand{\@thickbar}[2]{{#1\mkern0mu\vbox{
    \sbox\z@{$#1#2\mkern-0.5mu$}%
    \dimen@=\dimexpr\ht\tw@-\ht\z@+2\p@\relax 
    \hrule\@height0.5\p@ 
    \vskip\dimen@
    \box\z@}}
}
\makeatother

\newcommand{\vv}{\mathcal{v}}
\newcommand{\kk}{\kappa}
\newcommand{\bof}{\mathcal h_u}
\newcommand{\DD}{\mathcal D^{1,2}_{\#}(\er^n)}
\newcommand{\DDDD}{\mathcal D^{1,2}_{\#}(\er^2)}
\newcommand{\DDd}{\mathcal D^{1,2}(\er^n)}
\newcommand{\DDD}{\mathcal D^{1,2}_{\#}(\er^n)^*}

\newtheorem{theorem}{Theorem}[section]
\newtheorem{lemma}[theorem]{Lemma}
\newtheorem{proposition}[theorem]{Proposition}

\newtheorem{corollary}[theorem]{Corollary}
\newtheorem{definition}[theorem]{Definition}
\newtheorem{remark}[theorem]{Remark}
\numberwithin{equation}{section}

\usepackage{dsfont}

\newcommand{\reqnomode}{\tagsleft@false}

\usepackage{hyperref}

\def\dx{\,{\rm d}x}
\def\dz{\,{\rm d}z}
\def\dtt{\,{\rm d}t}
\def\drr{\,{\rm d}r}

\def\ccap{\textnormal{Cap}_{p}}
\def\ds{\,{\rm d}s}
\def\dt{\,{\rm d}t}
\def\dy{\,{\rm d}y}

\def \d{\,{\rm d}}
\def \diver{\,{\rm div}}
\def\dist{\,{\rm dist}}

\def\supp{\,{\rm supp}}
\def\diam{\,{\rm diam}}

\allowdisplaybreaks
\makeatletter
\DeclareRobustCommand*{\bfseries}{%
  \not@math@alphabet\bfseries\mathbf
  \fontseries\bfdefault\selectfont
  \boldmath
}

\makeatother

\newlength{\defbaselineskip}
\newcommand{\mint}{\mathop{\int\hskip -1,05em -\, \!\!\!}\nolimits}

\def \diver{\,{\rm div}}

\newcommand{\ett}{\textnormal T}

\newcommand{\er}{\mathbb R}
\newcommand{\el}{\mathbb L}

\newcommand{\logs}{\sqrt{\log}}

\newcommand{\eps}{\varepsilon}

\newcommand{\ti}[1]{\tilde{#1}}

\newcommand{\Ui}{\mathfrak{m}_\mu}

\newcommand{\dd}{\mathrm{d}}

\newcommand{\rr}{\varrho}
\newcommand{\snr}[1]{\lvert #1\rvert}
\newcommand{\nr}[1]{\lVert #1 \rVert}
\newcommand{\rif}[1]{(\ref{#1})}
\newcommand{\tx}[1]{\textnormal{\texttt{#1}}}
\newcommand{\stackleq}[1]{\stackrel{\rif{#1}}{ \leq}}

\def\loc{\operatorname{loc}}

\def\eqn#1$$#2$${\begin{equation}\label#1#2\end{equation}}

\def\supp{\,{\rm supp }}

\def\XXint#1#2#3{{\setbox0=\hbox{$#1{#2#3}{\int}$}
     \vcenter{\hbox{$#2#3$}}\kern-.5\wd0}}

\title{Light rays of Lorentzian graphs and the Born--Infeld model
}

\author[De Filippis]{Cristiana De Filippis}  \address{Cristiana De Filippis\\Dipartimento SMFI, Universit\`a di Parma\\ Parco Area delle Scienze 53/A, 43124 Parma, Italy} \email{\url{cristiana.defilippis@unipr.it}}
\author[Mingione]{Giuseppe Mingione}  \address{Giuseppe Mingione\\Dipartimento SMFI, Universit\`a di Parma, Parco Area delle Scienze 53/a, Campus, 43124 Parma, Italy} \email{\url{giuseppe.mingione@unipr.it}}

\begin{document}

\subjclass[2020]{Primary 35J93, 35B65;
Secondary 31C45, 53C50 \vspace{1mm}}
\renewcommand{\subjclassname}{2020 Mathematics Subject Classification}

\keywords{Nonuniform ellipticity, Born--Infeld model; anti-peeling; Lorentz--Minkowski space, nonlinear potential estimates.}

\title{Light rays of Lorentzian graphs and the Born--Infeld model}

\vspace{1mm}

\begin{abstract}
Following a program outlined by Bartnik, and also employing nonlinear potential techniques we developed in the last years, we study lightlike singularities
of variational solutions to the prescribed Lorentzian mean curvature
equation with possibly unbounded (rough) curvature. This corresponds to allowing for unbounded charges in the Born--Infeld model. We prove a sharp
extension of the anti-peeling theorem of Bartnik and Simon beyond the
bounded-curvature regime. Anti-peeling persists under the
sole local assumption
$
\mu\in L(n-1,s)$, $s<\infty,$
thereby proving the conjectured Lebesgue-scale statement and refining it
to the exact threshold in the Lorentz scale. This is optimal: at the weak
endpoint \(L(n-1,\infty)\) we construct a filamentary charge whose
variational solution contains a light segment. As a consequence, we obtain
sharp absence of light segments in the critical finite-index Lorentz regime,
settling both the corresponding conjecture and the critical question of
Byeon, Ikoma, Malchiodi and Mari (2021). Below the anti-peeling threshold, we
develop a capacitary approach to the set of light segments and derive
quantitative Hausdorff-dimension estimates, providing an answer
to another conjecture of theirs and answers to a further question of Bartnik (1987) on non-isolated singularities. Relatedly, we also establish borderline regularity estimates
for the Lorentzian boost function
$
1/\sqrt{1-|Du|^2}
$
and for the Hessian. In particular, we obtain an endpoint
\(L^n\)--\(L(n,1)\) theory yielding uniform spacelikeness and continuity
of the gradient, extending previous regularity results for the
Born--Infeld equation up to the optimal endpoint. Notably, using nonlinear potential theoretic methods, we obtain a rigidity form of the classical Calabi--Bernstein and Cheng--Yau theorems.  Finally we give a natural connection to nonlinear potential theory proving some intrinsic nonlinear potential estimates for the boost function. 
\end{abstract}
\maketitle

\setcounter{tocdepth}{1}
{\small \tableofcontents}

\section{Rough anti-peeling, a question of Bartnik, and two conjectures}
In this paper we study a few basic questions regarding the existence and the regularity properties of spacelike hypersurfaces with prescribed Lorentzian mean curvature, when the curvature is allowed to be unbounded. 
The regularity theory of such hypersurfaces is governed by a fundamental degeneracy: 
when the tangent plane approaches the light cone the nonuniform ellipticity character of the related equation  \footnote{In this paper, in most of the cases, $\Omega$ will always be either the whole $\er^n$ or a bounded domain of $\er^n$. Additional assumptions will be made according to the context.}
\eqn{bi}
$$
-\mathcal M u:= -\diver
\left(
\frac{Du}{\sqrt{1-\snr{Du}^2}}
\right)
=
\mu
\qquad
\mbox{in }\Omega
$$
becomes prominent and the surface ``goes null'' \cite{bs82}. This means that the so-called ellipticity ratio, i.e., the ratio between the highest and the lowest eigenvalue of the linearization of \rif{bi}, diverges when $|Du|\uparrow 1$. The relevant ellipticity parameter here is the boost function\footnote{See Section \ref{recapsec} for a recap on Lorentzian geometry and \eqref{boosty}.}
\eqn{boost}
$$
\mathcal h_u
:=
\frac{1}{\sqrt{1-\snr{Du}^2}}.
$$
Geometrically, $\mathcal h_u$ is the last component of the
future-directed Lorentzian unit normal to the graph of $u$; see Section \ref{recapsec}. From the analytical viewpoint, $\mathcal h_u$ is the square root of the ellipticity ratio of equation \rif{bi}; see Section \ref{ner} and \rif{ratione}. Therefore spacelikeness of solutions and nonuniform ellipticity are two aspects of the same phenomenon. Besides its geometric meaning, equation \rif{bi} appears in the electrostatic Born--Infeld model, where $u$ represents the electric potential generated by the charge density $\mu$ \cite{bi34}. For the classical theory of entire spacelike hypersurfaces
with constant mean curvature, we refer to Treibergs \cite{tre82}. For prescribed mean curvature hypersurfaces in Lorentzian
manifolds, see Gerhardt \cite{ger83} and, for their construction
by parabolic methods, Ecker--Huisken \cite{eh91}.
Isolated singularities of area maximizing hypersurfaces
were studied by Ecker \cite{eck86}.

Using variational methods to find solutions to \rif{bi} and to solve related Dirichlet problems is particularly convenient, as first shown in the fundamental contribution \cite{bs82}. With 
$
\Omega\subset\mathbb R^n
$
and a weakly spacelike map
$
w:\Omega\to\mathbb R,
$ 
the Born--Infeld functional is
\eqn{bi.en}
$$
\mathcal E_\mu(w;\Omega):=
\int_\Omega\left(1-\sqrt{1-\snr{Dw}^2}\right)\dx-\langle\mu,w\rangle,
$$
where the duality pairing agrees with
$
\int_\Omega\mu w\dx
$
whenever the latter is well defined\footnote{In fact, for duality, when $\Omega=\er^n$, we essentially require that $\mu$ belongs to the dual of the classical Deny-Lions space $\DDd$, while in the case $\Omega$ is a bounded domain we simply consider $\mu \in L^1(\Omega)$.}. Its formal Euler--Lagrange
equation is precisely \eqref{bi}. In the rest of the paper we
abbreviate
$
\mathcal E_\mu(w)\equiv\mathcal E_\mu(w;\er^n).
$

It is then relatively easy to prove existence of  minimizers via Direct Methods 
under rather general assumptions on the charge $\mu$. The main difficulty, and the crucial problem at this stage, 
is instead to give answers to
\eqn{regimplica}
$$
\mbox{minimizer to $\mathcal E_\mu \Longrightarrow $ distributional solution to \eqref{bi}?}
$$
This issue lies at the heart of the classical program initiated by
Bartnik and Simon in \cite{bs82}, and outlined by Bartnik in his survey \cite{bar87}. The two authors  dealt with the case 
\eqn{limitata}
$$\mu\in L^{\infty},$$ that is, with surfaces with bounded Lorentzian mean curvature.  The aim of this paper is to study the scenario when \rif{limitata} does not hold and, indeed, very low degrees of integrability are prescribed on $\mu$, thereby allowing for prescribed mean curvature with large blow-ups, and for very rough charges when looking at the Born--Infeld model. This is a fundamental issue that has been the object of intensive investigation in the last years, see for instance the recent, interesting works \cite{bdp16, bi19, bi23, bimm24}. We provide several sharp conditions, in different
settings, on the prescribed mean curvature $\mu$ --the charge in the
Born--Infeld terminology -- which guarantee the solvability of
\eqref{bi}, thereby addressing \eqref{regimplica}. We also establish
regularity properties of the solutions and of the boost function
$\mathcal h_u$ defined in \rif{boost}. These, in turn, translate into geometric properties
of the corresponding hypersurfaces. Moreover, we provide a number of
basic results aimed at describing the possible singularities of
variational minimizers and solutions.

We shall mainly be interested in two settings: the global case
$
\Omega=\er^n,
$
and the one of Dirichlet problems on bounded domains, as in \cite{bs82}. Therefore, unless otherwise specified, by $\Omega$ we shall always mean a bounded domain of $\er^n$. In the
rest of this section we give a qualitative description of our main
results. For the notation and the basic definitions used throughout
the paper, we refer the reader to Sections \ref{notazioni},
\ref{recapsec}, and \ref{luce}. 

\subsection{Light segments, rough anti-peeling and filamentary charges}\label{llsec}

The analysis in \cite{bs82}  reveals that the natural geometric
obstruction to regularity is represented by so-called light segments. These are the supervillains of this theory.  A weakly spacelike variational extremal may contain a {\em light segment}, i.e., a nontrivial
segment
$
\overline{x_0x_1}
$
such that
$
\snr{u(x_1)-u(x_0)}
=
\snr{x_1-x_0}.
$ This implies that the same identity holds on all its subsegments, as a consequence of $|Du|\leq 1$\footnote{See Remark \ref{raggire}.}. Along such a segment the graph becomes lightlike. 
Every light segment is contained in a unique maximally extended light
segment, which we call a {\em light ray}; see Definition \ref{defilight}. 
A fundamental
result of Bartnik and Simon \cite{bs82} shows that this phenomenon
cannot simply terminate at an interior point: once a portion of the
graph lies in a null direction, the corresponding light segment
propagates along its whole admissible continuation. This rigidity
mechanism is usually referred to as \emph{anti-peeling}. This may be viewed as a geometric counterpart of a strong
maximum principle. Unlike a standard maximum principle,
however, this is a directional rigidity statement rather than an
order principle.

The structural role of this phenomenon becomes particularly clear in
Bartnik's later formulation of the variational regularity problem.
Once a weakly spacelike extremal has been constructed, one of the
remaining steps towards regularity is precisely to exclude entire
null geodesics from the singular set; see \cite[p.~41]{bar87}. Most importantly, excluding the presence of light segments has the fundamental consequence that minimizers also solve the Euler-Lagrange equation \eqref{bi} under fairly general assumptions \cite{bimm24}. Thus
anti-peeling is not merely a geometric feature of singular solutions,
but one of the mechanisms through which regularity of variational
extremals is recovered. One of our main results extends this rigidity to unbounded
prescribed mean curvature and identifies its sharp integrability
threshold, which, in terms of Lebesgue spaces, is
$\mu\in L_{\loc}^{n-1}(\Omega)$.
We actually obtain an even sharper result and determine its exact
threshold within the Lorentz scale\footnote{We point out a coincidence in terminology which may otherwise
cause some confusion: Lorentz transformations are named after the Dutch
physicist Hendrik Antoon Lorentz (1853--1928), whereas Lorentz spaces
are named after the Russian-born American mathematician George Gunter
Lorentz (1910--2006). The two notions happen to
meet in the present paper.}
\begin{theorem}[The rough anti-peeling]\label{al.t}
Let $u\in W^{1,\infty}_{\loc}(\Omega)$ be a weakly spacelike local minimizer of \eqref{bi.en}, where $\Omega\subset \er^n$ is an open subset of $\er^n$. Assume that
\eqn{mmmmm}
$$
\begin{cases}
\displaystyle
\ \mu\in L_{\loc}(n-1,s)(\Omega), \ \ s<\infty\quad &\mbox{if} \ \ n\ge 3\vspace{1.5mm}\\ 
\displaystyle
\ \mu\in L^{1}_{\loc}(\Omega)\quad &\mbox{if} \ \ n=2.
\end{cases}
$$
If there is a segment $\overline{x_{1}x_{0}}\Subset \Omega$ such that
\eqn{a.0}
 $$
 u(x_{t})=u(x_{0})+t\snr{x_{1}-x_{0}}\qquad \mbox{for all} \ \ t\in [0,1],
 $$
with $x_{t}:=x_{0}+t(x_{1}-x_{0})$, then \eqref{a.0} holds for all $t\in \mathbb{R}$ such that $x_{t}\in \Omega$ and $\overline{x_{t}x_{0}}\subset \Omega$.
\end{theorem}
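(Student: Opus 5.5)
We follow the Bartnik--Simon strategy of \cite{bs82}. The only place where $\mu\in L^\infty$ enters there is a comparison/energy estimate near a light segment. Our plan is to replace it by a sharp estimate in the Lorentz scale.

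\emph{Normalization and contradiction setup.} After a translation and rotation we may take $x_0=0$, $x_1-x_0=\snr{x_1-x_0}e_n$ and $u(0)=0$. Then $u(te_n)=t\snr{x_1}$ for $t\in[0,1]$. Let $T:=\sup\{t\ge 1: u(\tau e_n)=\tau\snr{x_1} \ \mbox{for all} \ \tau\in[0,t],\ \overline{0\,x_t}\subset\Omega\}$, and treat the backward direction symmetrically. Suppose $x_T\in\Omega$. Since $\snr{Du}\le 1$ forces $u(x)\le u(x_T)+\snr{x-x_T}$, the graph of $u$ lies below the future light cone with vertex $(x_T,u(x_T))$ and touches it along the segment. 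For small $\delta>0$ and $y$ near $x_T$ we want to show that, if the segment peels off at $x_T$, then $u$ can be strictly improved. Following \cite{bs82}, we build a competitor $v=\max\{u,\psi\}$ (or $\min$) in a thin cylinder $C_{r,\delta}$ of radius $r$ around the continuation $\{x_T+\tau e_n:\tau\in(-\delta,\delta)\}$. Here $\psi$ is an explicit spacelike barrier, namely a slightly tilted hyperboloid, i.e.\ a constant Lorentzian mean curvature graph of curvature $\lambda$ whose gradient approaches the light cone along the axis. Peeling means that $\{\psi>u\}$ is a nonempty open set $U\Subset C_{r,\delta}$.

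\emph{Energy comparison.} Local minimality gives $\mathcal E_\mu(u;U)\le\mathcal E_\mu(\psi;U)$. Combined with the strict concavity of $z\mapsto \sqrt{1-\snr z^2}$ and the equation satisfied by $\psi$, this yields
\[
\int_U \left(\sqrt{1-\snr{D\psi}^2}-\sqrt{1-\snr{Du}^2}\right)\dx \ \lesssim\ \int_U (\psi-u)\,(\snr{\mu}+\lambda)\dx .
\]
The left-hand side has a coercive lower bound, of order $\lambda\int_U(\psi-u)\dx$ plus a boost term, coming from the choice of the barrier. In \cite{bs82} the right-hand side is absorbed using $\snr\mu\le\nr{\mu}_{L^\infty}$ and $\lambda\gg\nr{\mu}_\infty$. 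Here $\mu$ is unbounded, so this absorption is not available.

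\emph{Main obstacle: controlling $\int_U(\psi-u)\snr\mu$ with $\mu\in L(n-1,s)$.} The geometry is essentially one-dimensional along the ray. The set $U$ is contained in a tube of length $\delta$ and cross-section radius $r$, with $r\to 0$ as $\psi-u\to0$, and $\psi-u\lesssim r$ there. By Hölder's inequality in Lorentz spaces, $\int_U (\psi-u)\snr\mu\le \nr{\psi-u}_{L((n-1)',s')(U)}\nr{\mu}_{L(n-1,s)(U)}$. The $L((n-1)',s')$ norm of $\psi-u$ is estimated by slicing in the $e_n$ direction: each $(n-1)$-dimensional slice has area $\sim r^{n-1}$ and height $\lesssim r$. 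This makes the first factor comparable to the coercive term. The crucial point is that $\nr{\mu}_{L(n-1,s)(U)}\to0$ as $\snr U\to0$ precisely because $s<\infty$, by absolute continuity of the Lorentz quasinorm. This is exactly what fails at $s=\infty$, consistent with the filamentary counterexample. For $n=2$ the slices are points and $L^1$ suffices. We therefore choose $r$ small, depending on the modulus of absolute continuity of $\mu$ on a compact neighbourhood of the segment, so that the right-hand side is absorbed. This yields $U=\emptyset$, hence $u\ge\psi$ near the axis. Letting the barrier degenerate to the light cone then forces $u(x_T+\tau e_n)=u(x_T)+\tau\snr{x_1}$ for small $\tau>0$, which contradicts the maximality of $T$. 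I expect the delicate part to be making this slicing bound sharp in the Lorentz scale. Matching the barrier's coercivity with the Lorentz dual norm without losing a logarithm is exactly where the condition $s<\infty$ rather than $L^{n-1}$ should be used.
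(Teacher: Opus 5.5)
Your plan has a genuine gap exactly at the step you flag as the main obstacle. The absorption cannot be closed as described, and $U=\varnothing$ is not something the energy inequality can deliver. Put $v:=(\psi-u)_+$ and let $\lambda$ be the curvature of the barrier. Split $\mu$ at a level $L\le\lambda/2$, absorb the part where $\snr{\mu}\le L$, and use H\"older's inequality together with the interpolation $\nr{v}_{L((n-1)',s')}\lesssim\nr{v}_{L^1}^{(n-2)/(n-1)}\nr{v}_{L^\infty}^{1/(n-1)}$. Everything then reduces to
\[
\lambda\,\nr{v}_{L^1}^{\frac1{n-1}}\lesssim \nr{\mu\mathds 1_{\{\snr{\mu}>L\}}}_{L(n-1,s)}\,\nr{v}_{L^\infty}^{\frac1{n-1}}.
\]
This inequality is scale critical and is perfectly compatible with a small nonzero $v$. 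So no smallness of the Lorentz norm can force $v\equiv0$; in effect you are asking for a comparison principle with unbounded charge, which is exactly what is unavailable. The inequality becomes contradictory only if you also have the reverse \emph{fatness} bound $\nr{v}_{L^1}\gtrsim\lambda^{-(n-1)}\nr{v}_{L^\infty}$, with constants independent of $\lambda$. Your slicing gives only upper bounds for $\psi-u$ on the tube, so it cannot make the first factor ``comparable to the coercive term''. Moreover, with tube radius $r\sim1/\lambda$ the correct bound is $\psi-u\lesssim r^2$; the linear bound $\psi-u\lesssim r$ already loses a factor $1/r$ in the critical balance.

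The missing idea is where fatness comes from, and it reorganizes the argument.
\begin{itemize}
\item \emph{The barrier.} In the paper, $\mathcal w_\varepsilon^-$ is a radial Bartnik--Simon solution centred at a point of the light segment, normalized to $0$ with $u(0)=0$. It satisfies $\diver\,\partial H(D\mathcal w_\varepsilon^-)=1/\Lambda_\varepsilon$ and $0\le\mathcal w_\varepsilon^--\Gamma_0\le\tx c_1\Lambda_\varepsilon^2$, where $\Gamma_0(x)=-\snr{x}$. Its gradient reaches the light cone only at the centre, not along an axis.
\item \emph{Fatness.} In $\tx B_0$ one has $u\ge\Gamma_1\ge\Gamma_0$, with equality on the part $\mathcal S$ of the segment between the centre and the peeling region. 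The $1$-Lipschitz bound then gives $u-\Gamma_0\le 12\dist(x,\mathcal S')^2$ near $\mathcal S'$, see \eqref{a.19}, whereas $\mathcal w_\varepsilon^--\Gamma_0\ge\tx c_2\Lambda_\varepsilon^2$ there. Hence $v\ge\tx c_2\Lambda_\varepsilon^2/2$ on a cylinder of radius $\delta\Lambda_\varepsilon$ and fixed length. This gives $\nr{v}_{L^1}\ge\tx c_4\Lambda_\varepsilon^{n+1}$ as in \eqref{a.26}, exactly matching $\nr{v}_{L^\infty}\le\tx c_1\Lambda_\varepsilon^2$. So the set where the barrier exceeds $u$ is \emph{never} empty.
\item \emph{The role of peeling.} Peeling is used for something else than you assign to it. It gives $u>\mathcal w_\varepsilon^-$ on $\partial\tx B_0$: a point of the ray where $u>\Gamma_0$ lies on this sphere, and $\Gamma_1>\Gamma_0$ off the ray handles the rest. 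Hence $(\mathcal w_\varepsilon^--u-\sigma)_+$ is a compactly supported admissible perturbation. This is precisely what your claim $U\Subset C_{r,\delta}$ leaves unjustified.
\item \emph{The contradiction.} Choose $L$ large by absolute continuity of the $L(n-1,s)$ norm on $\{\snr{\mu}>L\}$, then $\Lambda_\varepsilon\lesssim1/L$. This gives $1\le c\,\nr{\mu}_{L(n-1,s)(\{\snr{\mu}>L\})}\le1/4$. There is no ``$U=\varnothing$, then let the barrier degenerate'' step.
\end{itemize}

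Your cone picture is also misplaced. The graph touches the future cone of $(x_T,u(x_T))$ only at $x_T$; behind $x_T$ it lies on the past cone of that point. The cone that $u$ touches along a nontrivial piece of the segment on the peeling side is the future cone of a point $z$ of the segment lying a definite distance behind $x_T$. That is where the barrier must be centred, not in a thin cylinder around $x_T$.
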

The above result is rather general, as it holds for any open subset and involves only the concept of local minimality, encoded in\footnote{We also consider global minimizers for Dirichlet
problems on bounded domains, as in Section \ref{dirisec},
and on $\mathbb R^n$, as in Definition \ref{minimiglobali}.
Dirichlet minimizers are local minimizers in the sense of
Definition \ref{minimilocali}; the same holds for global
minimizers on $\mathbb R^n$ under the assumptions of
Remark \ref{globale.locale}.}
\begin{definition}[Local minimizers]\label{minimilocali}
Let \(\Omega\subset\mathbb R^n\) be an open subset and let
\(\mu\in L^1_{\loc}(\Omega)\).
A weakly spacelike function
\(u\in W^{1,\infty}_{\loc}(\Omega)\)
is called a \emph{local minimizer} of \(\mathcal E_\mu\) if, for every bounded open set
\(U\Subset\Omega\) and every weakly spacelike
\(v\in W^{1,\infty}(U)\) such that
$
\supp(v-u)\Subset U,
$
one has
$
\mathcal E_\mu(u;U)
\le
\mathcal E_\mu(v;U).
$
\end{definition}
 As a consequence of Theorem \ref{al.t}, we obtain
\begin{theorem}[Absence of light segments]\label{light.t}
Let $n\ge2$, let $\mu\in\DDd^*$ satisfy\footnote{The space $L^1_0(\mathbb R^2)$ consists of  $L^1(\mathbb R^2)$-functions with zero global average, while $\DDd$ is the classical Deny-Lions space; see \rif{zeromean}. See Section \ref{spazifunzionali} below for the details.}
\[
\begin{cases}
\mu\in L^1_0(\mathbb R^2),
& n=2,\\[1mm]
\mu\in L_{\loc}(n-1,s)(\mathbb R^n)
\quad\text{for some }0<s<\infty,
& n\ge3,
\end{cases}
\]
and let $u\in\mathbb X(\mathbb R^n)$\footnote{The space $\mathbb X(\mathbb R^n)$ is defined in \rif{XXX}.} be the unique
minimizer of $\mathcal E_\mu$.
Then $u$ has no light segments, and, if $\mu\in L^2_{\loc}(\mathbb R^n)$, then
$u$ weakly solves \eqref{bi}.
In particular, this additional assumption is automatically
satisfied when $n\ge4$,  and  when $n=3$ with $s\le2$.
\end{theorem}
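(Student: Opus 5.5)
The plan is to combine the rough anti-peeling Theorem \ref{al.t} with the behaviour at infinity of the global minimizer $u\in\mathbb X(\mathbb R^n)$. First we check that $u$ satisfies the hypotheses of Theorem \ref{al.t} with $\Omega=\mathbb R^n$. By Remark \ref{globale.locale}, the unique minimizer of $\mathcal E_\mu$ in $\mathbb X(\mathbb R^n)$ is a local minimizer in the sense of Definition \ref{minimilocali}, and it is weakly spacelike and Lipschitz. The assumption on $\mu$ is exactly \eqref{mmmmm}. For $n=2$ we use $L^1_0(\mathbb R^2)\subset L^1_{\loc}$, and for $n\ge3$ the assumption is $L_{\loc}(n-1,s)$ with $s<\infty$.

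Now suppose, by contradiction, that $u$ has a light segment $\overline{x_0x_1}$. After possibly exchanging $x_0$ and $x_1$, we may assume \eqref{a.0} holds. Since every segment is compactly contained in $\mathbb R^n$, Theorem \ref{al.t} gives
\[
u(x_t)=u(x_0)+t\snr{x_1-x_0}\qquad\mbox{for all } t\in\mathbb R.
\]
Hence the light ray is an entire line on which $u$ grows linearly with slope $1$ in both directions. This contradicts the decay encoded in the space $\mathbb X(\mathbb R^n)$. For $n\ge3$, $u$ lies in the Deny--Lions space $\DDd$ and so tends to zero at infinity in a capacitary or averaged sense. An entire light line would force $\snr{u}$ to be unbounded along that line. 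By the $1$-Lipschitz property, $\snr{u}\ge c\,\snr{t}$ would then hold on a tube around the line, of fixed width, for large $\snr{t}$. Such a tube has infinite measure while $u\in L^{2^*}$, which is impossible. For $n=2$, the space $\mathbb X(\mathbb R^2)$ is presumably defined modulo constants with $Du\in L^2$. Here the contradiction should come from $Du\in L^2(\mathbb R^2)$ instead. Along the line, $Du\cdot e=1$ with $e=(x_1-x_0)/\snr{x_1-x_0}$. If we can show, by an anti-peeling and continuity argument near the line, that $\snr{Du}$ stays close to $1$ on a tube of fixed width, then $\int\snr{Du}^2=\infty$ follows. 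The precise form of this step depends on the definition \rif{XXX}, which I would invoke directly.

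This last step is where I expect the main obstacle. Excluding an entire light line using only the integrability built into $\mathbb X(\mathbb R^n)$ requires turning the pointwise linear growth along the line into a quantitative lower bound on a set of infinite measure. The tube argument via the Lipschitz bound $\snr{u(y)-u(x_t)}\le\snr{y-x_t}$ handles the growth of $u$ itself and works cleanly for $n\ge3$. For $n=2$ I would first try to exploit $\mu\in L^1_0$, which gives logarithmic control of $u$ at infinity, in place of a gradient bound.

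Finally, for the Euler--Lagrange equation, once light segments are excluded and $\mu\in L^2_{\loc}$, we invoke the result of \cite{bimm24} recalled above: minimizers without light segments weakly solve \eqref{bi}. The remaining inclusions are Lorentz embeddings. For $n\ge4$ we have $n-1\ge3>2$, so $L(n-1,s)\subset L^2_{\loc}$ for every $s$. For $n=3$ the space is $L(2,s)$, which embeds into $L^2=L(2,2)$ precisely when $s\le2$.
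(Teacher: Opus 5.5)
Your architecture matches the paper's proof. You use local minimality from Remark \ref{globale.locale}, apply Theorem \ref{al.t} with $\Omega=\mathbb R^n$ to turn a light segment into an entire light line, derive a contradiction with the behaviour of $u$ at infinity, and then invoke \cite{bimm24} and the Lorentz embeddings. For $n\ge3$ your tube argument with $u\in L^{2^*}$ is correct; the paper simply uses the global bound \eqref{infsup}, which is quicker.

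The gap is the case $n=2$, which you leave unproved. The first mechanism you suggest, an ``anti-peeling and continuity argument'' giving $\snr{Du}\approx1$ on a tube, is not available as stated: $Du$ is only bounded and measurable, with no continuity. Your fallback misattributes the logarithmic control of $u$. It does not come from $\mu\in L^1_0(\mathbb R^2)$, whose role is only to make the pairing $\langle\mu,\cdot\rangle$ insensitive to additive constants on $\DDDD$. It comes from $Du\in L^2(\mathbb R^2)$ together with $\snr{Du}\le1$, through a dyadic chain of Poincar\'e inequalities; this is Lemma \ref{lemmino}. The paper closes the two-dimensional case with \eqref{weightsmall}: $\snr{u(x)}/(1+\snr{x})\to0$ at infinity is incompatible with $u(x_t)=u(x_0)+t\snr{x_1-x_0}$ for all $t\in\mathbb R$.

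There is also an elementary fix valid in every dimension, which realizes your $\snr{Du}\approx1$ intuition in a strong form. Set $e:=(x_1-x_0)/\snr{x_1-x_0}$, so that $u(x_0+\tau e)=u(x_0)+\tau$ for all $\tau\in\mathbb R$. Applying the $1$-Lipschitz bound at the points $x_0\pm Re$ gives, for every $x\in\mathbb R^n$,
$$u(x_0)+R-\snr{x-x_0-Re}\le u(x)\le u(x_0)-R+\snr{x-x_0+Re}.$$
Letting $R\to\infty$ yields $u(x)=u(x_0)+\langle x-x_0,e\rangle$ on all of $\mathbb R^n$. Hence $\snr{Du}\equiv1$, contradicting $Du\in L^2(\mathbb R^n)$.

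For the equation, your one-line invocation of \cite{bimm24} should be phrased as the paper does, since those results concern Dirichlet problems. The paper applies \cite[Theorem 1.14(i)]{bimm24} for $n\ge3$ and \cite[Theorem 1.11(i)]{bimm24} for $n=2$ on each ball $B$. This uses three facts: $u$ minimizes the Dirichlet problem on $B$ by Remark \ref{globale.locale}; $u|_{\partial B}$ is spacelike because there are no light segments; and $\mu|_B\in L^1(B)\cap L^2(B)$. Since $B$ is arbitrary, the weak equation then holds on all of $\mathbb R^n$.
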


Recall that
$L^{n-1}=L(n-1,n-1)$ and this case is always covered for $n\geq 3$. 
Hence the critical Lebesgue space is only one member of the
admissible scale. In particular, our result also applies to the
strictly larger Lorentz spaces
$
L(n-1,s)$, $n-1 < s <\infty$. 
This is sharp, as shown in the following:
\begin{theorem}[Light segments shine at the weak-endpoint]
\label{ex.t}
Let $n\ge3$. There exists $\mu\in\mathcal D^{1,2}(\mathbb R^n)^*$ satisfying
\[
\mu\in L(n-1,\infty)(\mathbb R^n)
\setminus L(n-1,s)(\mathbb R^n)
\qquad\mbox{for every }0<s<\infty,
\]
such that the unique minimizer
$u\in\mathbb X(\mathbb R^n)$ of $\mathcal E_\mu$
admits a light segment.
Moreover, $u$ is a weak solution to \eqref{bi}
in $\mathbb R^n$.
\end{theorem}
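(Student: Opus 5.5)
We construct the example explicitly around a filament. Fix a segment $\Sigma:=\{(t,0,\dots,0):\ |t|\le 1\}\subset\er^n$ and take a charge concentrated near $\Sigma$ with profile behaving like
\[
\mu(x)\approx \frac{c}{d(x)\,|\log d(x)|^{\gamma}}\,\chi_{\{d(x)<\delta\}}\quad\text{or, more simply,}\quad \mu(x)= \frac{c}{d(x)}\,\chi_{\{d(x)<\delta\}}\,\eta(x_1),
\]
where $d(x):=\dist(x,\Sigma)$ and $\eta$ is a cutoff in the axial variable. In the transversal variables $x'\in\er^{n-1}$ the function $|x'|^{-1}$ lies exactly in $L(n-1,\infty)(\er^{n-1})$ but in no $L(n-1,s)$ with $s<\infty$. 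After multiplying by a bounded function of $x_1$ with compact support, the same membership holds in $\er^n$. The weak space is quasi-normed and rearrangement invariant, and the distribution function scales as $\snr{\{\mu>\lambda\}}\approx \lambda^{-(n-1)}$. We then add a compensating smooth part, or simply use compact support with $n\ge3$, so that $\mu\in\DDd^*$. This holds because $L(n-1,\infty)$ with compact support lies in $L^{2n/(n+2)}$, since $2n/(n+2)<n-1$ for $n\ge3$.

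The plan is to build a weakly spacelike function $v$ that contains a light segment along $\Sigma$, show that $v$ is the minimizer, and then verify the weak equation.

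\emph{Construction of $v$.} Let $v$ depend on $(x_1,|x'|)$ and equal $x_1$ on $\Sigma$, so that $|v(x_1)-v(x_0)|=|x_1-x_0|$ along the axis. Away from $\Sigma$ the function is strictly spacelike. The model transversal profile is
\[
v(x)=x_1-\phi(|x'|),\qquad \phi(r)\approx \kappa r,
\]
suitably corrected so that $|Dv|<1$ off $\Sigma$. With $\phi(r)=\kappa r$ one gets $|Dv|^2=1+\kappa^2$, which is not admissible, so the correct ansatz must be a genuinely two-dimensional profile in $(x_1,r)$. The simplest choice is the radial Born--Infeld point/line solution adapted to the filament. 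Conversely, we define $\mu:=-\mathcal M v$ for a carefully designed $v$ and check the integrability. Near $\Sigma$ we require $1-|Dv|^2\approx r^{2}$-type decay in a tuned way, so that the flux $Dv/\sqrt{1-|Dv|^2}$ behaves like $\log$ or like $r^{0}$. Its divergence then scales as $1/r$, which is exactly the $L(n-1,\infty)$ threshold in $n-1$ transversal dimensions. Concretely, we look for a flux field $F=\mathcal h_v Dv$ with $\diver F=-\mu$ and $|F'|\sim$ const as $r\to0$. This gives $\mu\sim 1/r$ and $\mathcal h_v\sim$ const$\cdot r^{-1}$-type blow-up along the axis. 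The axial component is made to force $|D v|\to1$ along $e_1$.

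\emph{Minimality and the weak equation.} By construction $v$ satisfies $-\mathcal M v=\mu$ pointwise off $\Sigma$. We need the distributional identity across $\Sigma$, and this works because $\mathcal h_v Dv\in L^1_{\loc}$: the flux is bounded transversally and integrable axially, since $\Sigma$ has codimension $n-1\ge2$ and the flux through small tubes vanishes. Once $v$ is a weak solution, convexity of $\mathcal E_\mu$ gives that $v$ is a minimizer. Indeed, for weakly spacelike $w$, test the equation with $w-v$ and use convexity of $1-\sqrt{1-|z|^2}$. The test $w-v$ is admissible via the $\mathbb X(\er^n)$ and $\DDd^*$ duality, with approximation justified by integrability of $\mathcal h_v|Dv|$. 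Uniqueness in $\mathbb X(\er^n)$ then identifies $u=v$.

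\emph{Main obstacle.} The hardest step is designing $v$ explicitly so that three requirements hold simultaneously. First, the graph is exactly null along $\Sigma$ but strictly spacelike elsewhere. Second, the flux remains locally integrable so the weak equation holds across $\Sigma$. Third, the resulting $\mu$ has precisely the $1/r$ transversal singularity, neither better (which would contradict Theorem \ref{al.t}) nor worse. The first attempt would be to take the flux as the primary unknown: prescribe $F=\mathcal h_v Dv$ with axial part tending to infinity like $r^{-a}$ and radial part bounded. Then $Dv=F/\sqrt{1+|F|^2}$ and $v$ is recovered, with curl-freeness of $F/\sqrt{1+|F|^2}$ imposed through a potential. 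Finally we check that $\diver F$ has the $1/r$ profile.
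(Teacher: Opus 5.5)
\textbf{There is a genuine gap: the function $v$ is never constructed.}

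Your skeleton is the paper's, and it is the right one. You define the charge inversely as $\mu:=-\mathcal Mv$ from a weakly spacelike $v$ with a built-in light segment. (Prescribing the profile $c/d(x)$ first, as in your opening paragraph, gives no handle on whether the minimizer has a light segment.) Local integrability of $\mathcal h_vDv$ and vanishing flux through thin tubes around the segment (codimension $n-1\ge2$) give the weak equation across it. Convexity gives minimality, strict convexity gives uniqueness, and $\mu\notin L(n-1,s)$ follows for free from Theorem \ref{light.t}.

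The problem is that you list the requirements and then propose to prescribe the flux $F$ and recover $v$ from $Dv=F/\sqrt{1+|F|^2}$, with curl-freeness `imposed through a potential'. That restates the problem rather than solving it. Without an explicit $v$, or an existence argument, none of the later checks has anything to act on. Your heuristics are also inconsistent: if $1-|Dv|^2\approx r^2$ then $|\mathcal h_vDv|\approx r^{-1}$. Only the transverse component of the flux stays bounded, not the flux itself.

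The missing idea is that a completely explicit choice works. With endpoints $x_0\neq x_1$, set $v(x):=\frac12\bigl(|x-x_1|-|x-x_0|-|x_1-x_0|\bigr)$.
\begin{itemize}
\item $Dv$ is half the difference of the unit vectors pointing from $x_1$ and from $x_0$ to $x$. Hence $|Dv|\le1$, with equality exactly on the open segment $(x_1x_0)$.
\item $v$ has slope one along $\overline{x_1x_0}$ and is constant on the two complementary half-lines. So $\overline{x_1x_0}$ is a light segment that cannot be extended past its endpoints.
\item Direct computation gives $\mathcal h_v\le(|x-x_0|+|x-x_1|)/(2\dist(x,\overline{x_1x_0}))$.
\item Off the segment, $-\mathcal Mv=-\frac{\mathcal h_v}{2}\bigl(|x-x_1|^{-1}-|x-x_0|^{-1}\bigr)(n-1+|Dv|^2)$. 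Hence $|\mathcal Mv|\le n/\dist(x,\overline{x_1x_0})$, which is the filamentary $L(n-1,\infty)$ bound, including near the endpoints.
\end{itemize}

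Two further steps are absent from your sketch.
\begin{itemize}
\item To land in $\mathbb X(\mathbb R^n)$ you must cut $v$ off at a large scale. This is harmless only because $|v|\le|x_1-x_0|$ and $|Dv|\lesssim|x_1-x_0|/|x|$ far away. The cut-off function then keeps $|Du|\le1/2$ in the transition annulus and creates only a bounded charge there.
\item The tube argument must also control the flux through the two end caps around $x_0$ and $x_1$. There $|Dv|^2\le1/2$, so $\mathcal h_vDv$ is bounded.
\end{itemize}
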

Note that the occurrence of light segments below the $L^{n-1}$ level was shown in \cite{bimm24} and Theorem \ref{ex.t} exploits this phenomenology up to the natural threshold of Marcinkiewicz spaces $L(n-1,\infty)$, that are the natural ones to describe the regularity properties of inverse of distance functions.  
We shall in fact see in a few lines how the analysis in Lorentz spaces is actually essential in order to give a fully satisfying answer to the problems posed by  Byeon, Ikoma, Malchiodi and Mari \cite{bimm24}.

The same anti-peeling mechanism applies to the Dirichlet problem in  bounded domains of the type 
\eqn{pd2}
$$
u\mapsto \min_{w\in \mathcal{D}_{0}}\mathcal{E}_{\mu}(w;\Omega),
$$
where the boundary datum $u_0$ is continuous and spacelike, and $\mathcal{D}_{0}$ is the natural Dirichlet class of competitors - these are described in Section \ref{dirisec}. 
We then have 
\begin{theorem}[No light segments for the Dirichlet problem]
\label{b.th}
Let $n\ge 2$ and let $\Omega\subset\mathbb R^n$ be a bounded
domain and assume \eqref{u0u0.bd00}. Assume in addition
that
\[
\mu\in L_{\mathrm{loc}}(n-1,s)(\Omega)
\qquad\text{for some }0<s<\infty
\]
when $n\geq 3$. 
Let $u\in\mathcal D_0(\Omega)$ be the unique minimizer
of the Dirichlet problem \eqref{pd2}.
Then $u$ has no light segments in $\Omega$. Moreover, if $\mu\in L^2_{\mathrm{loc}}(\Omega)$,
then $u$ weakly solves \eqref{bi} in $\Omega$. In particular, this additional assumption is automatically
satisfied when $n\ge4$, or when $n=3$ and $s\le2$.
\end{theorem}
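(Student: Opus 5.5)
The plan is to derive Theorem \ref{b.th} from the rough anti-peeling Theorem \ref{al.t}, combined with the boundary behaviour of the Dirichlet minimizer forced by the spacelike datum, and then to pass from absence of light segments to the Euler--Lagrange equation as in \cite{bs82,bimm24}.

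First, the unique minimizer $u\in\mathcal D_0(\Omega)$ of \eqref{pd2} is a local minimizer in the sense of Definition \ref{minimilocali}. Indeed, any weakly spacelike competitor $v$ with $\supp(v-u)\Subset U\Subset\Omega$ extends by $u$ to an element of $\mathcal D_0$, and the energies on $\Omega\setminus U$ coincide. The hypotheses \eqref{mmmmm} are those of the statement: for $n=2$ one has $\mu\in L^1$, which is part of the standing assumption \eqref{u0u0.bd00}. Hence Theorem \ref{al.t} applies to $u$ in $\Omega$.

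Next, argue by contradiction. Suppose $\overline{x_0x_1}\subset\Omega$ is a light segment; shrinking it, we may assume it is compactly contained and, after possibly reversing orientation, that \eqref{a.0} holds. By Theorem \ref{al.t}, \eqref{a.0} propagates to the whole connected component of $\{x_t\}\cap\Omega$ containing the segment. This component is an open chord $\{x_t: t\in(a,b)\}$ whose endpoints $y_a=x_a$ and $y_b=x_b$ lie on $\partial\Omega$, since $\Omega$ is bounded. Along the chord, $u(x_t)-u(x_s)=(t-s)\snr{x_1-x_0}$. Because $u$ is Lipschitz with constant $1$ and continuous up to the boundary with $u=u_0$ on $\partial\Omega$ (this is part of the class $\mathcal D_0$, or follows from $\snr{Du}\le1$ and continuity of $u_0$), letting $t\to a,b$ gives
\[
u_0(y_b)-u_0(y_a)=\snr{y_b-y_a}.
\]
The assumption \eqref{u0u0.bd00} that $u_0$ is spacelike should mean precisely that $\snr{u_0(y)-u_0(z)}<\snr{y-z}$ for distinct $y,z\in\partial\Omega$, at least for pairs connected through $\Omega$ by a segment. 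This gives a contradiction. I expect the main care to lie in this boundary step: identifying exactly what \eqref{u0u0.bd00} provides, and checking that $u$ attains the datum continuously. If $\Omega$ is not convex, the chord still joins two boundary points by a segment inside $\overline\Omega$, so the strict inequality for such pairs is what is needed.

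Finally, once there are no light segments, the weak Euler--Lagrange equation follows from the result of \cite{bimm24} (the same mechanism invoked in Theorem \ref{light.t}). Absence of light segments implies that $\snr{Du}<1$ a.e. with local control of the set where $\snr{Du}$ is close to $1$. Combined with $\mu\in L^2_{\loc}$, this allows variations $u+\varepsilon\varphi$ with $\varphi\in C^\infty_c$, suitably truncated to keep the competitors weakly spacelike, and gives \eqref{bi} weakly. For the last claim, the Lorentz embedding $L(n-1,s)\subset L^2_{\loc}$ holds when $n-1>2$, that is $n\ge4$, and for every $s$. When $n=3$, the space $L(2,s)$ lies in $L^2$ exactly when $s\le2$.
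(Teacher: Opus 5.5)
Your proposal is correct and follows the paper's proof essentially step by step. The steps are: local minimality of the Dirichlet minimizer; Theorem \ref{al.t} to propagate the light segment along the whole chord $\overline{y_ay_b}\subset\overline\Omega$; the boundary condition of Definition \ref{defidatobordo} along that chord, giving $u_0(y_b)-u_0(y_a)=\snr{y_b-y_a}$ and contradicting the strict chord condition in \eqref{u0u0.bd00}; and then \cite{bimm24} on balls $B\Subset\Omega$ (where $u|_{\partial B}$ is spacelike because $u$ has no light segments), exactly as in the proof of Theorem \ref{light.t}. Your sketch of truncated variations is not what carries the weak-solvability step; as in the paper, the cited result of \cite{bimm24} does that work. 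Your Lorentz-embedding justification of the final claim is the right one.
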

\begin{corollary}\label{cor.t}
Conjecture 1 in {\normalfont\cite{bimm24}} holds true, while the answer
to Question 2 in {\normalfont\cite{bimm24}} is negative (but, at the same time, almost positive).
\end{corollary}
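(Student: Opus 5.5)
The corollary is essentially a matter of matching the statements in \cite{bimm24} with Theorems \ref{light.t}, \ref{ex.t} and \ref{b.th}, so the proof reduces to translating their hypotheses into ours and checking that the function spaces line up.

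I first recall the formulations from \cite{bimm24}. Conjecture~1 there predicts that, for $n\ge3$ and $\mu\in L^{n-1}_{\loc}$ (in the natural admissible class, i.e. $\mu\in\DDd^*$ for the entire problem, or in the Dirichlet setting), the minimizer of $\mathcal E_\mu$ has no light segments. Question~2 asks whether the same conclusion persists at the critical level, i.e. for charges in the weak space $L^{n-1,\infty}$, the Marcinkiewicz class that contains the inverse-distance type singularities.

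For Conjecture~1, I use $L^{n-1}=L(n-1,n-1)$ with $s=n-1<\infty$. Then Theorem~\ref{light.t} in the global case, and Theorem~\ref{b.th} for the Dirichlet problem, directly give absence of light segments. I will also note the stronger statement obtained, namely that the whole range $L(n-1,s)$ with $s<\infty$ is allowed. If the conjecture in \cite{bimm24} additionally asks that $u$ solve \eqref{bi}, I invoke the second parts of the same theorems. For $n\ge4$, $L(n-1,n-1)\subset L^2_{\loc}$ because $n-1\ge3>2$. For $n=3$, $L(2,2)=L^2$, so the case $s\le2$ applies.

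For Question~2, Theorem~\ref{ex.t} provides $\mu\in \DDd^*\cap L(n-1,\infty)$ whose unique minimizer has a light segment, which gives a negative answer. The ``almost positive'' part is that every Lorentz space $L(n-1,s)$ with $s<\infty$, which exhausts $L(n-1,\infty)$ from below in the Lorentz scale, is still covered by Theorem~\ref{light.t}. So the threshold is exactly the weak endpoint.

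The only real obstacle is bookkeeping: checking that the admissibility classes of charges in \cite{bimm24} (duality with $\DDd$, and the $n=2$ zero-mean normalization) coincide with those used here. In particular, the counterexample of Theorem~\ref{ex.t} must lie in the class in which Question~2 was posed. I would verify this against the setup of Section~\ref{spazifunzionali}.
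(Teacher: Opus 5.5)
There is a genuine gap, and it is in the bookkeeping you flagged as the only obstacle: you have shifted both items of \cite{bimm24} one notch along the integrability scale. As the paper describes them, Conjecture~1 concerns Dirichlet problems with $\mu\in L^q_{\loc}$ for $q>n-1$. Question~2 is the borderline case of that conjecture: it asks whether light segments occur when $\mu\in L^{n-1}_{\loc}$, and it says nothing about the weak space $L(n-1,\infty)$. Read correctly, the ``negative'' answer means that light segments do \emph{not} occur for $L^{n-1}_{\loc}$ charges. It follows from Theorem~\ref{b.th} with $s=n-1$, using $L^{n-1}=L(n-1,n-1)$. The ``almost positive'' qualifier comes from Theorem~\ref{ex.t}, which shows that light segments do appear at the weak endpoint $L(n-1,\infty)$. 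Your proposal assigns these roles the other way round: you take the negative answer from the counterexample of Theorem~\ref{ex.t}, and the almost-positive part from the finite-index theorems. As written, this answers a different question. Moreover, the charge in Theorem~\ref{ex.t} lies in no $L(n-1,s)$ with $s<\infty$, in particular not in $L^{n-1}$, so it cannot settle the question actually posed in either direction.

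Your Conjecture~1 paragraph can be repurposed. What you prove there, namely absence of light segments for $L^{n-1}_{\loc}=L_{\loc}(n-1,n-1)$ via Theorems~\ref{light.t} and~\ref{b.th}, is exactly the negative answer to Question~2. Conjecture~1 then follows from Theorem~\ref{b.th} once you add the local inclusion $L^q_{\loc}\subset L_{\loc}(n-1,s)$ for $q>n-1$, valid on sets of finite measure and for any $s$. You omitted this step because you placed the conjecture at the exponent $n-1$ itself. The remarks on $L^2_{\loc}$ and weak solvability are harmless, but neither item needs them.
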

Conjecture 1 from \cite{bimm24} predicts the absence of light segments in Dirichlet problems  when $\mu \in L^q_{\loc}$ for $q>n-1$. Theorem \ref{b.th} goes actually further as absence of light segments is obtained also in the borderline case $q=n-1$ as a consequence of the Lorentz space analysis, which in fact allows us to exclude light segments throughout the larger finite-index Lorentz regime $L_{\loc}(n-1,s)$, $s<\infty$. This is precisely the negative answer to \cite[Question 2]{bimm24}, asking whether or not light segments occur in the case $\mu\in L^{n-1}_{\loc}$. The answer is negative, as follows by taking $s=n-1$, but in a sense almost positive by Theorem \ref{ex.t} as light segments appear in the endpoint case $L(n-1, \infty)$. This is also in accordance with the accompanying comment to Conjecture 1 in \cite[page 18]{bimm24}, according to which {\em ``the case $L^{n-1}$, which includes $\mu \in L^{2}_{\loc}$ when $n=3$, is particularly subtle''}. Indeed, the subtlety calls for the use of a second integrability index, as done here.

The proof of the anti-peeling is genuinely different from that of Bartnik and Simon. 
No maximum or comparison principle is used and this appears somehow surprising. Indeed, extending anti-peeling beyond the bounded curvature regime
was a classical open issue. Indeed, as the authors comment in \cite[Page 7]{bimm24} about the classical anti-peeling of Bartnik and Simon: {\em ``The proof depends on a comparison argument that is not
applicable to more general sources $\mu$, a case for which the relation between singularities
of $\mu$ and properties of light segments, including their existence, is currently unknown}''. Our results give in fact an essentially complete answer to this issue. In our proof, we combine
direct variational competitors, quantitative convexity properties of
the Born--Infeld integrand and borderline interpolation inequalities
in Lorentz spaces. The exponent
$
n-1
$
arises from the geometry transverse to a light segment, whereas the
second Lorentz index captures the endpoint mechanism: finite Lorentz
index provides the absolute continuity of the norm which enters the
anti-peeling argument, while this property fails at the weak endpoint
$
L(n-1,\infty).
$

In dimension three, the weak endpoint singled out by our sharpness
result is
$
L(2,\infty).
$
This has a natural geometric interpretation. If
$
\Gamma\subset\mathbb R^3$
is a smooth curve, then a density with the filamentary singularity
$$
\mu(x)\simeq\frac{1}{\dist(x,\Gamma)}
$$
belongs locally to $L(2,\infty)$ 
but to no $L(2,s)$ 
with $s<\infty.$
Thus codimension-two singular charge densities naturally occur
precisely at the Lorentz borderline case where our anti-peeling rigidity
may fail and the occurrence of light segments is no longer excluded.
Filamentary geometries have a natural precedent in Born--Infeld theory.
Two-dimensional electrostatic configurations, which may equivalently
be interpreted as three-dimensional configurations invariant along
one spatial direction, were already studied in the early development
of the theory by Pryce \cite{pryce35}; see also \cite{ferraro04}
for a later treatment of two-dimensional Born--Infeld electrostatics,
including finite-energy configurations per unit length.
The theory introduces a maximal admissible electric-field
strength \cite{bi34} and, in the normalization used here,
the corresponding saturation regime is
$
|Du|=1.
$
Interestingly, the sharpness construction of Theorem \ref{ex.t}
itself realizes this geometry: the charge is of filamentary type,
while the corresponding solution saturates the Born--Infeld
field bound along a light segment.

\subsection{The regularity path to weak solvability}
In this section we explore a different way to study solvability, which is complementary to that seen above and does not rely on the use of the anti-peeling mechanism. This approach also covers regimes in which the integrability
of $\mu$ is insufficient to exclude light segments. For this, we prove regularity estimates for variational minimizers, and in particular, for the boost function $\mathcal h_u$. Such estimates in turn imply a positive answer to \eqref{regimplica}. This path is more traditional and has actually been pursued before. The global variational framework was developed in \cite{bdp16}, who established weak solvability for radial or locally bounded charges and obtained $C^1$-regularity at the critical exponent $n$ under radial symmetry. For charges without symmetry assumptions, \cite{bi19} establishes local $W^{2,2}$-regularity when $\mu\in L^q(\mathbb R^n)$, $q>2n$, under an additional global integrability assumption; weak solvability and regularity of the gradient were obtained under a further smallness condition. Their subsequent work \cite{bi23} lowered the threshold to $q>n$, removed the smallness requirement, and established uniform spacelikeness and local $W^{2,q}$ regularity. Here we are finally able to treat the missing range $2\le q\le n$, which is particularly relevant as it is not covered by the anti-peeling based strategy, at least when $q<n-1$ and light rays might still appear. The integral estimates for the Hessian and the Sobolev estimates for suitable functions of the boost here provide the compactness and integrability needed to pass to the limit in the nonlinear flux in a suitable approximation scheme and finally show solvability.

\begin{theorem}[Boost function regularity]\label{t1}
Let $n\ge2$, let $2\le q\le n$, and let
$u\in\mathbb X(\mathbb R^n)$ be the minimizer of
\eqref{bi.en}, with
$
\mu\in\mathbb Y(\mathbb R^n)\cap L^q(\mathbb R^n).
$
Then $u\in W^{2,2}_{\loc}(\mathbb R^n)$, it weakly solves
\eqref{bi} and it holds that 
\eqn{unaw}
$$
\mathcal h_u\in L^q_{\loc}(\mathbb R^n),
\qquad
\mathcal h_u^{-1},\log\mathcal h_u
\in W^{1,2}_{\loc}(\mathbb R^n),
$$
with
\eqn{duaw}
$$
u\in L^\infty(\mathbb R^n) \ \mbox{if}\  n\ge3 \qquad \mbox{and} \qquad  \dfrac{u}{\log(e+|\cdot|)}
\in L^\infty(\mathbb R^2) \ \mbox{if}\  n=2
$$
and 
\begin{itemize}
\item[(I)]
If $2<q<n$, then
$$
\mathcal h_u^{(q-2)/2}
\in W^{1,2}_{\loc}(\mathbb R^n).
$$

\item[(II)]
If $q=n\ge3$, then
$$
\mathcal h_u^{p/2}
\in W^{1,2}_{\loc}(\mathbb R^n)
\qquad\mbox{for every }1\le p<\infty.
$$
\item[(III)]
If $n=2$, then
$$
\mathcal h_u\in L^p_{\loc}(\mathbb R^2)
\qquad\mbox{for every }1\le p<\infty.
$$
\end{itemize}
\end{theorem}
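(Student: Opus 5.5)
The plan is to regularize, derive a priori estimates that do not depend on the regularization, and then pass to the limit. First I approximate $\mu$ by smooth, compactly supported charges $\mu_\eps\to\mu$ in $\mathbb Y(\mathbb R^n)\cap L^q(\mathbb R^n)$. I also replace the Born--Infeld integrand by a uniformly convex regularization. One option is to work with the minimizers $u_\eps$ of $\mathcal E_{\mu_\eps}$, which are smooth and strictly spacelike by the bounded-curvature theory of Bartnik--Simon. Another is to add a penalization $\eps\int(|Dw|^2-1)_+^{m}$ and let $m\to\infty$.

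The bounds \eqref{duaw} on $u_\eps$ come from the global variational structure. The energy bound gives $Du_\eps\in L^2$. The duality $\mu\in\mathbb Y$ together with Sobolev and Moser iteration in $\mathcal D^{1,2}$ gives $u_\eps\in L^\infty$ for $n\ge3$. For $n=2$ I argue logarithmically, comparing with the fundamental solution and using that $\mu$ has zero average. Uniqueness of the minimizer then identifies the limit of the $u_\eps$ with $u$.

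The core step is a Bernstein-type estimate for the boost $\mathcal h=\mathcal h_{u_\eps}$. The key identity is that $\mathcal h$ satisfies a differential inequality of the form
\[
-\diver\big(\mathcal A(Du)D\mathcal h\big)+|\mathrm{II}|^2\mathcal h\,(\cdots)\le \mathcal h^{2}\,|\mu|\,|D^2u|\ \text{-type terms},
\]
where $\mathcal A(Du)$ is the linearized coefficient and $\mathrm{II}$ is the second fundamental form. I would obtain it by differentiating \eqref{bi} in $x_k$ and testing with $\partial_k u\,\varphi(\mathcal h)\eta^2$. The ellipticity ratio is $\mathcal h^2$, by \rif{ratione}, and the good term controls $\mathcal h^{3}|D\mathcal h|^2$-type quantities, i.e.\ the intrinsic $|\mathrm{II}|^2$. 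The source contributes $\int |\mu|\,\mathcal h^{\gamma}\,|D(\ldots)|$. Choosing $\varphi(\mathcal h)=\mathcal h^{q-2}$ and absorbing by Young's inequality, the source term reduces to $\int|\mu|^2\mathcal h^{q-2}\eta^2$. By Hölder with $\mu\in L^q$ this is bounded by $\|\mu\|_{L^q}^2\|\mathcal h\eta^{2/(q-2)\cdots}\|_{L^q}^{q-2}$, which closes once I have a reverse estimate $\mathcal h\in L^q_{\loc}$.

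That integrability of $\mathcal h$ itself I get from the equation. Testing \eqref{bi} with $(u-\ell)\eta^2$, with $\ell$ affine, gives $\int\mathcal h|Du|^2\eta^2$ bounded. Hence $\mathcal h\in L^1_{\loc}$, and a bootstrap via Sobolev on $\mathcal h^{(q-2)/2}\eta$ raises the exponent: $W^{1,2}\hookrightarrow L^{2^*}$ gives $\mathcal h\in L^{(q-2)n/(n-2)}$. The iteration closes exactly while $q\le n$. This yields (I). For $q=n$ the Sobolev gain can be iterated indefinitely, giving (II). For $n=2$, Trudinger–Moser gives (III). The cases $p=0$ and $p<0$ of the same inequality give $\log\mathcal h,\ \mathcal h^{-1}\in W^{1,2}_{\loc}$.

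The $W^{2,2}_{\loc}$ bound follows because the good term dominates $\mathcal h^{-1}|D^2u|^2$ up to powers of $\mathcal h$ already controlled. More precisely, $|D^2u|^2\lesssim \mathcal h^{c}|\mathrm{II}|^2$, and Hölder with the $L^q$ bound on $\mathcal h$ gives the estimate. I expect this to be the main obstacle: matching exponents so that the $W^{2,2}$ bound closes at $q=2$ may require the sharper intrinsic quantity rather than crude powers. Finally, I pass to the limit. Uniform $W^{2,2}_{\loc}$ bounds give $Du_\eps\to Du$ a.e. Uniform $L^q$ with $q\ge2>1$ bounds on $\mathcal h_{u_\eps}$ make the fluxes $Du_\eps\mathcal h_{u_\eps}$ equi-integrable, so Vitali's theorem lets me pass to the limit in the weak formulation. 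Hence $u$ weakly solves \eqref{bi}, and the estimates survive the limit by lower semicontinuity.
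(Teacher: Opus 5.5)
There is a genuine gap. It sits at the step you treat as routine: obtaining $\mathcal h_u\in L^q_{\loc}$, uniformly in the regularization, in the first place. Your Sobolev bootstrap cannot produce it.

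The Caccioppoli inequality with weight $\mathcal h^m$ bounds $\int\eta^2\mathcal h^{m-2}|D\mathcal h|^2$ (essentially $\|D(\eta\mathcal h^{m/2})\|_{L^2}^2$) by $\int\mathcal h^{m+2}|D\eta|^2+\int\eta^2\mu^2\mathcal h^m$. Two powers of $\mathcal h$ are lost, exactly because the ellipticity ratio is $\mathcal h^2$. So the step needs $\mathcal h\in L^{m+2}_{\loc}$ as input, and Sobolev returns $\mathcal h\in L^{mn/(n-2)}_{\loc}$. This beats $m+2$ only when $m>n-2$, i.e.\ only when you already know $\mathcal h\in L^p_{\loc}$ for some $p>n$.

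With $m=q-2$ your step is circular, since it presupposes $\mathcal h\in L^q_{\loc}$. It also returns only $L^{(q-2)n/(n-2)}$, which is strictly weaker than $L^q$ for $q<n$ and equal to it for $q=n$. From the $L^1_{\loc}$ bound you obtain you cannot start at all, since that would require $m=-1$.

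Consequently (I), the $W^{2,2}_{\loc}$ bound, $\log\mathcal h_u,\mathcal h_u^{-1}\in W^{1,2}_{\loc}$ and (III) have no foundation. Claim (II) fails outright: at $q=n$ the Sobolev gain is exactly zero, so there is nothing to iterate. Several of your other steps are fine once uniform $L^q_{\loc}$ bounds are available:
\begin{itemize}
\item $W^{2,2}_{\loc}$ is not the obstacle you expect; it follows from the $m=0$ inequality because $\mathcal h\ge1$.
\item Your H\"older closure for (I) works.
\item Your Vitali limit argument works.
\end{itemize}

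The base integrability comes from a \emph{global} Calder\'on--Zygmund-type estimate, Proposition \ref{cz.p}. First one shows, via the monotonicity formula, that $\mathcal h_u\to1$ at infinity (Proposition \ref{com.sup}), so that $\eta:=(\mathcal h_u-\mathcal h_\infty)_+$ is compactly supported. Then one tests the equation with $u\mathcal h_u^{-1}\eta^q$, with no cutoff, which produces $\int\eta^q|Du|^2$ on the left. The derivative terms are controlled through the subsolution inequality $\int\langle\mathcal H(Du)D\mathcal h_u,D\varphi\rangle\dx\le\int\varphi\langle Du,D\mu\rangle\dx$ tested with $\varphi=\eta^{q-1}$. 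The estimate closes using $\|u\|_{L^\infty}\lesssim\mathfrak m_\mu$ and $|Du|^2\ge1-\mathcal h_\infty^{-2}$ on $\{\eta>0\}$, giving \eqref{czcz}. For $n=2$ the logarithmic growth of $u$ must be compensated by logarithmic weights, which is where $\|\mu\|_{L^2_x(\mathbb R^2)}$ enters; see \eqref{mm.20}. Pairing the flux with $u$ is what gains the extra power of $\mathcal h$ that local Caccioppoli--Sobolev arguments lose.

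For (II) two further ideas are needed:
\begin{itemize}
\item Absorb the source term through $\int\eta^2\mu^2\mathcal h^m\le\|\mu\|_{L^n(B_r)}^2\|\eta\mathcal h^{m/2}\|_{L^{2^*}}^2$ on balls where $\|\mu\|_{L^n}$ is small (Lemma \ref{caccim}).
\item Use the Bella--Sch\"affner optimization over cutoff functions (Lemma \ref{bslem}). This effectively uses Sobolev on $(n-1)$-dimensional spheres and gives the recursion $m_{i+1}=m_i/\vartheta-2$, with a genuine gain starting from $m_0=n-2$, i.e.\ from $\mathcal h\in L^n_{\loc}$.
\end{itemize}
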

Assertions (II)--(III) cover the critical case
$\mu\in L^n(\mathbb R^n)$ in every dimension $n\ge2$,
yielding local integrability of $\mathcal h_u$ to every
finite exponent. In dimensions $n\ge3$, we also obtain
Sobolev regularity for its positive powers.
The higher-dimensional borderline estimate uses a technique
developed in connection with stochastic homogenization,
combining Caccioppoli inequalities with an optimized choice
of cutoff functions \cite{bs20,bs24,dkk24}.
These estimates also provide the analytic input for the
capacitary study of singular sets in the next subsection.

\subsection{Singular sets, a question of Bartnik and another conjecture}
As already mentioned, the regularity estimates of Theorem \ref{t1} yield
crucial quantitative information and provide a way to
investigate, in the present variational setting, the problem of
regularity beyond isolated singularities as suggested by Bartnik
\cite{bar87}. We consider the geometric degeneracy set
$$
\mathcal S_u:=\bigcup \left\{\overline{xy}:x\neq y,\quad |u(x)-u(y)|=|x-y|\right\},
$$
namely the union of all nontrivial light segments, including
their endpoints - see \eqref{language1} below for the precise definition. We also consider
$$
\mathcal I_u
:=
\left\{
x\in\mathbb R^n: |Du(x)|=1
\right\},
$$
where $|Du|$ is understood as the precise representative of
the scalar function $|Du|$, as specified in \eqref{esilimite}.
 We refer to Section \ref{luce} for the
definitions and their relation to the terminology of optimal
transport. Our approach is capacitary. In Section \ref{capacity}, we establish
criteria for general weakly spacelike functions which imply
$
\operatorname{Cap}_p(\mathcal S_u)
=
\operatorname{Cap}_p(\mathcal I_u)
=0$ 
for  $1\le p<n$. 
The assumptions involve local $W^{1,p}$ regularity of $Du$
and of either a positive power of $\mathcal h_u$ or
$\log\mathcal h_u$.
Theorem \ref{t1} provides precisely the required Sobolev
control with $p=2$, since
$Du,\log\mathcal h_u\in W^{1,2}_{\loc}(\mathbb R^n)$.
\begin{theorem}[Capacitary bounds for the degeneracy sets]
\label{sm.t2}
Let $n\ge4$, let
$\mu\in
\mathcal D^{1,2}(\mathbb R^n)^*
\cap L^q(\mathbb R^n)$ and $2\le q<n-1$, 
and let $u\in\mathbb X(\mathbb R^n)$ be the unique minimizer
of $\mathcal E_\mu$.
Then $u$ weakly solves \eqref{bi} and
$
\operatorname{Cap}_2(\mathcal S_u)
=
\operatorname{Cap}_2(\mathcal I_u)
=0.
$
Consequently,
$\dim_{\mathcal H}\mathcal S_u\le n-2$, 
$\dim_{\mathcal H}\mathcal I_u\le n-2$.
\end{theorem}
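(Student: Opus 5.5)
The plan is to combine Theorem \ref{t1} with the capacitary criteria announced for Section \ref{capacity}. Since $2\le q<n-1<n$, Theorem \ref{t1} applies. Here I take $\mathcal D^{1,2}(\er^n)^*\cap L^q$ to be admissible, i.e.\ contained in $\mathbb Y(\er^n)\cap L^q$; this needs a quick check against the definition of $\mathbb Y$. Theorem \ref{t1} then gives that $u\in W^{2,2}_{\loc}(\er^n)$ and that $u$ weakly solves \eqref{bi}, which is the first assertion. It also gives $\log\mathcal h_u\in W^{1,2}_{\loc}(\er^n)$, and, when $q>2$, $\mathcal h_u^{(q-2)/2}\in W^{1,2}_{\loc}(\er^n)$ by item (I). So $Du\in W^{1,2}_{\loc}$, and a Sobolev function of the boost is in $W^{1,2}_{\loc}$ as well. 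These are exactly the hypotheses, with $p=2<n$, of the criteria of Section \ref{capacity} that yield $\operatorname{Cap}_2(\mathcal S_u)=\operatorname{Cap}_2(\mathcal I_u)=0$.

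If I had to reprove the criterion itself, I would argue as follows.

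\textbf{The set $\mathcal I_u$.} The function $g:=\log\mathcal h_u$ belongs to $W^{1,2}_{\loc}$, so $\operatorname{Cap}_2$-quasi every point is a Lebesgue point of $g$ with finite precise representative. At a point $x$ where $|Du|$ has precise value $1$, I want to show that $g$ has infinite precise value. The function $\Phi(t)=-\tfrac12\log(1-t^2)$ is convex and increasing on $[0,1)$. Applying Jensen on shrinking balls therefore forces the averages of $g$ at $x$ to diverge. This places $\mathcal I_u$ inside the exceptional set of $g$, which has zero $2$-capacity.

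\textbf{The set $\mathcal S_u$.} On a nontrivial light segment $\overline{x_0x_1}$, the identity $|u(x_1)-u(x_0)|=|x_1-x_0|$ and $|Du|\le1$ force $Du$ to equal the unit direction $e$ along the segment, in a quasi-everywhere/trace sense. I would make this rigorous using $Du\in W^{1,2}_{\loc}$: its precise representative is $\operatorname{Cap}_2$-quasicontinuous, and one tests the segment identity against mollifications in transverse tubes around the segment. This should give $\mathcal S_u\subset\mathcal I_u\cup E$ with $\operatorname{Cap}_2(E)=0$. Endpoints are single points, and points have zero $2$-capacity when $n\ge3$, so they cause no trouble.

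The dimension bound follows from the standard fact that $\operatorname{Cap}_p(E)=0$ implies $\mathcal H^{s}(E)=0$ for every $s>n-p$. With $p=2$ this gives $\dim_{\mathcal H}\le n-2$ for both sets.

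I expect the main obstacle to be the inclusion $\mathcal S_u\subset\mathcal I_u$ up to a capacity-null set. Light segments are one-dimensional, so Lebesgue-a.e.\ information about $Du$ says nothing on them. The only tool available is fine properties of $W^{1,2}$ functions, that is, quasicontinuity along a line. Here $n\ge4$ matters: $\operatorname{Cap}_2$ of a segment is positive only when $n-2<1$, so quasi-everywhere statements do see segments in this range. The restriction $q<n-1$ only ensures we are in the regime where Theorem \ref{b.th}-type anti-peeling is not available, and it is not needed in the argument itself.
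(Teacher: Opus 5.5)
Your top-level argument is exactly the paper's. Theorem \ref{t1} gives weak solvability together with $Du,\log\mathcal h_u\in W^{1,2}_{\loc}(\mathbb R^n)$, and the capacitary criterion of Section \ref{capacity} (Theorem \ref{cap.t.1}, i.e.\ Proposition \ref{cap.t}, with $p=2$) then gives the conclusion. So as long as you simply cite that criterion, the proof is complete.

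Your reproof of the $\mathcal I_u$ half of the criterion is also fine, and slightly more direct than the paper's. Jensen's inequality for the convex increasing $\Phi(t)=-\frac12\log(1-t^2)$ turns $(\snr{Du})_{B_r(z)}\to1$ into $(\log\mathcal h_u)_{B_r(z)}\to\infty$. Hence $\mathcal I_u$ lies in the $\operatorname{Cap}_2$-null set $E_1$ of non-Lebesgue points of $\log\mathcal h_u$.

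The $\mathcal S_u$ half, however, would not go through as you sketch it. As the first half of your own remark shows, for $n\ge4$ (indeed for $n\ge3$) a segment has zero $2$-capacity. So quasicontinuity of $Du$ carries no information on any individual light segment: the precise representative of $Du$ is only pinned down off a capacity-null set, and each segment is such a set. Since $\mathcal S_u$ is an uncountable union of segments, a per-segment quasi-everywhere statement is vacuous. What you need is an inclusion valid at \emph{every} point of $\mathcal S_u$ outside one fixed null set. The same objection applies to your dismissal of endpoints. The endpoints of all light segments form an uncountable family, possibly a large set, and $\operatorname{Cap}_2$ is only countably subadditive.

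The missing ingredient is the elementary, pointwise Lemma \ref{lemmaraggi}, which uses only $\snr{Du}\le1$.
\begin{itemize}
\item \emph{Interior points.} Let $z$ be an interior point of a light segment $\overline{xy}$ with $u(x)>u(y)$. Then $u(x)-\snr{x-w}\le u(w)\le u(y)+\snr{w-y}$ for all $w$, with equality at $w=z$. Both cones are smooth at $z$ with the same gradient $q=(x-y)/\snr{x-y}$, so $u$ is classically differentiable at $z$ with gradient $q$. Integrating by parts on $B_r(z)$ gives $(Du)_{B_r(z)}\to q$. Then $\snr{Du}\le1=\snr q$ yields $\mint_{B_r(z)}\snr{Du-q}^2\le2\bigl(1-\langle(Du)_{B_r(z)},q\rangle\bigr)\to0$. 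So every point of $\mathcal S_{\texttt{i},u}$ is a Lebesgue point of $Du$ with $\snr{Du}=1$, hence lies in $\mathcal I_u$, with no exceptional set at all.
\item \emph{Endpoints.} Consider an endpoint that is a Lebesgue point of $Du$; this excludes only the $\operatorname{Cap}_2$-null set $E_0$ of non-Lebesgue points of $Du\in W^{1,2}_{\loc}$. There the Lipschitz function $u$ is classically differentiable, with differential equal to the Lebesgue value. Its derivative along the segment is $\pm1$, which together with $\snr{Du}\le1$ forces $\snr{Du}=1$.
\end{itemize}
This gives $\mathcal S_u\subset E_0\cup\mathcal I_u\subset E_0\cup E_1$, which is how Proposition \ref{cap.t} concludes.

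Finally, $n\ge4$ enters only because the range $2\le q<n-1$ is empty when $n=3$, not through capacities of segments.
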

Of course $\dim_{\mathcal H}$ here denotes the usual Hausdorff dimension. To compare these conclusions with results formulated in terms
of the closed light set, as in \cite{bimm24}, we also study
$$
\mathcal K_u:=\overline{\mathcal S_u}.
$$
Passing to the closure may add points which do not belong to any
nontrivial light segment. Our structural results identify the
geometric mechanism responsible for these additional points.
Let $\mathcal L_u$ denote the set of points
$z\in\mathbb R^n\setminus\mathcal S_u$ for which there exists
a sequence of light rays $\{\ell_k\}$ satisfying
$
\operatorname{dist}(z,\ell_k)\to0$ and 
$
\operatorname{diam}\, (\ell_k)\to0.
$
We prove the disjoint decomposition
\eqn{disjoint}
$$
\mathcal K_u =\mathcal S_u\dot\cup\mathcal L_u
$$ (see Theorem \ref{cap.t.1}). 
Thus every point added by taking the closure is an accumulation
point of light rays whose lengths tend to zero. Decomposition \ref{disjoint} is the conceptual key to understand the nature of the set $\mathcal K_u$. 
In particular, a uniform positive lower bound on the lengths
rules out this phenomenon and allows the capacitary estimate
to pass to $\mathcal K_u$. This phenomenon can occur even for global minimizers that weakly
solve \eqref{bi}. Indeed, in Section \ref{esempione}, for every $n\ge3$,
we construct such a minimizer with compact support and charge
$\mu\in L(n-1,\infty)(\mathbb R^n)$, and therefore outside the range covered by our rough anti-peeling result, whose light rays form a
sequence of disjoint segments with infinitesimal lengths and accumulating at the origin, i.e., $
\mathcal K_u=\mathcal S_u\dot\cup\{0\}$ and $
\mathcal L_u=\{0\}.
$
After constructing a one point singularity we can then approach more general compact sets with positive measure  replicating the previous construction via methods from Real Analysis that are standardly used in PDE. This allows to give negative answers to conjectures and questions posed in \cite{bimm24}. 
\begin{corollary}\label{brutto}
Conjecture 4 in {\normalfont\cite{bimm24}}, asserting that $\dim_{\mathcal H} \mathcal K_u\leq n-q$ when 
$\mu \in L^q$, with $2\le q\le n-1$, is false
for $n\ge4$ and $2\le q<n-1$. Specifically, there exists a compactly supported $ \mu \in L(n-1,\infty)$ 
while $|\mathcal K_u|>0$. Simultaneously, Question 5 in {\normalfont\cite{bimm24}}, asserting that $\mathcal H^{n-1}(\mathcal K_u)=0$, has a
negative answer for every $n\ge3$, even for absolutely
continuous data $\mu$. 
\end{corollary}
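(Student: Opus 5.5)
The plan is to build the counterexample in two stages: first a single-point accumulation example in the spirit of Section \ref{esempione}, and then a superposition of rescaled copies centred on a fat compact set. In the first stage, for $n\ge3$, I will construct a compactly supported weakly spacelike function $w$ with the following properties:
\begin{itemize}
\item its light rays are disjoint segments $\ell_k$ accumulating at the origin, with $\diam(\ell_k)\to0$;
\item $\mu_w:=-\mathcal M w$ belongs to $L(n-1,\infty)$;
\item $\mu_w\in\mathcal D^{1,2}(\er^n)^*$, and $w$ is the unique minimizer in $\mathbb X(\er^n)$.
\end{itemize}
To build it, I take the filamentary profile of Theorem \ref{ex.t}, i.e. a function with a light segment whose charge is comparable to $\dist(x,\ell)^{-1}$ near the segment. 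I rescale it by factors $r_k=2^{-k}$, place the copies at pairwise disjoint points $x_k\to0$, and glue them with cutoffs in spacelike regions.

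The scaling of the pieces is the key computation. Under $w_k(x)=r_k w((x-x_k)/r_k)$ the charge becomes $r_k^{-1}\mu((x-x_k)/r_k)$. The quasinorm of $L(n-1,\infty)$ behaves like $\sup_t t\,|\{|f|>t\}|^{1/(n-1)}$. Rescaling therefore multiplies the distribution function by $r_k^n$ at level $t r_k^{-1}$, which gives the norm factor $r_k^{n/(n-1)-1}=r_k^{1/(n-1)}$. This factor is summable, so the glued charge stays in $L(n-1,\infty)$ with a finite norm. The $\mathcal D^{1,2}$-dual norms are also summable for the same reason.

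Minimality is then checked from the weak Euler--Lagrange equation together with strict convexity. Here I use that $w$ is $C^1$ off the light rays and weakly solves \eqref{bi}, exactly as in Theorem \ref{ex.t}. Convexity of the functional \eqref{bi.en} then makes any weak solution in $\mathbb X$ the minimizer. By construction $\mathcal K_w=\mathcal S_w\dot\cup\{0\}$.

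In the second stage I pass from a point to a compact set. Fix a compact Cantor-type set $C$ with $|C|>0$. For each dyadic generation $j$ I choose a finite family of points $\{x_{j,i}\}$ in the complement that is $2^{-j}$-dense in $C$. I then place rescaled copies of $w$ at these points, with scale $\rho_j$ so small that three things hold:
\begin{itemize}
\item the supports are pairwise disjoint;
\item the total $L(n-1,\infty)$ quasinorm, of order $\sum_j N_j\rho_j^{1/(n-1)}$, is finite, where $N_j\sim 2^{jn}$ is the number of points;
\item the sum stays weakly spacelike with $|Du|\le1$ and in fact $<1$ away from the rays.
\end{itemize}
Each copy carries a light segment. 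The union of the segments is then dense in $C$, so $\mathcal K_u\supset C$ and hence $|\mathcal K_u|>0$. Since the $L(n-1,\infty)$ quasinorm is only quasi-additive, I will control the sum using disjointness of supports. For disjoint supports the distribution functions add, which gives $\|\sum f_k\|^{n-1}_{L(n-1,\infty)}\lesssim\sum\|f_k\|^{n-1}_{L(n-1,\infty)}$.

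For Conjecture 4 I pick $2\le q<n-1$ with $n\ge4$. A positive-measure $\mathcal K_u$ contradicts $\dim_{\mathcal H}\mathcal K_u\le n-q<n$. The charge also has to lie in $L^q$. Since $L(n-1,\infty)\subset L^q_{\loc}$ for $q<n-1$ and the support is compact, this holds. For Question 5, $|\mathcal K_u|>0$ already forces $\mathcal H^{n-1}(\mathcal K_u)=\infty$ for every $n\ge3$. The charge is a function, hence absolutely continuous.

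I expect the main obstacle to be preserving the exact light-segment identity $|u(x)-u(y)|=|x-y|$ on each copy after gluing. Adding neighbouring pieces could alter the gradient on a segment. To handle this I would make each copy's cutoff equal $1$ on a neighbourhood of its segment and keep the supports disjoint. Then $u=w_{j,i}$ near each segment, while outside the rays $|Du|\le\theta<1$ uniformly on the gluing annuli; to guarantee this I would subtract small constants per piece.
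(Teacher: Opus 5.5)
Your construction is essentially the paper's: disjointly supported rescaled copies of the compactly supported profile of Theorem \ref{ex.t} accumulate on a compact set $C$ with $|C|>0$ and empty interior, the $L(n-1,\infty)$ bound comes from adding distribution functions over disjoint supports, and minimality comes from the weak equation plus convexity; the paper places the copies in Whitney cubes of $\mathbb R^n\setminus C$, which makes global disjointness automatic (with your dyadic nets you must choose each new generation away from $C$ and from the earlier supports), and it restricts to $B_3(0)$ to obtain a Dirichlet minimizer with zero datum, matching the setting of \cite{bimm24}. Your final worry about gluing is moot: each copy already vanishes outside its support, so disjointness alone gives $u=w_{j,i}$ there without extra cutoffs or subtracted constants, and for the corollary you only need $C\subset\mathcal K_u$, not the exact description of the light set.
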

Note that, in contrast, the endpoint case $\mu\in L^{n-1}$
of Conjecture 4 in \cite{bimm24} holds true, with the stronger
conclusion $\mathcal K_u=\varnothing$.
Indeed, since $L^{n-1}=L(n-1,n-1)$, Theorem \ref{al.t}
and the spacelike boundary condition exclude light segments,
as in the proof of Theorem \ref{b.th}. In particular, Conjecture 4 holds true in dimension $n=3$,
where its admissible range reduces to $q=2$.
For global minimizers under the assumptions of Theorem \ref{t1},
the same conclusion also follows without using anti-peeling:
taking $q=n-1$ gives
$
\mathcal h_u\in L^{n-1}_{\loc}(\mathbb R^n),
$
and Theorem \ref{intre} yields
$
\mathcal S_u=\mathcal K_u=\varnothing.
$

The construction for \rif{brutto} relies on the presence of shrinking light segments accumulating on the compact set with positive measure. When no shrinking takes place the situation changes and we have the following consequence of Theorem \ref{sm.t2}.

\begin{corollary}\label{sm.c}
Under the assumptions of Theorem \ref{sm.t2}, suppose that
there exists $\ell_0>0$ such that
$
\operatorname{diam}\, (\ell)\ge\ell_0$
for every light ray $\ell$.
Then
$
\mathcal K_u=\mathcal S_u$ and 
$
\mathcal L_u=\varnothing$. Moreover, 
$
\operatorname{Cap}_2(\mathcal K_u)=0$
and $
\dim_{\mathcal H}\mathcal K_u\le n-2.
$
\end{corollary}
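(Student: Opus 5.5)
The plan is to argue directly from the decomposition \eqref{disjoint}, $\mathcal K_u=\mathcal S_u\dot\cup\mathcal L_u$ (Theorem \ref{cap.t.1}), combined with the capacitary conclusion of Theorem \ref{sm.t2}. The first step is to show $\mathcal L_u=\varnothing$ under the lower bound $\operatorname{diam}(\ell)\ge\ell_0$. Suppose $z\in\mathcal L_u$. Then by definition there are light rays $\ell_k$ with $\operatorname{dist}(z,\ell_k)\to0$ and $\operatorname{diam}(\ell_k)\to0$. For $k$ large, $\operatorname{diam}(\ell_k)<\ell_0$, which contradicts the hypothesis. Hence $\mathcal L_u=\varnothing$, and \eqref{disjoint} gives $\mathcal K_u=\mathcal S_u$.

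One point needs care. Light rays are maximally extended light segments, and in $\mathbb R^n$ they may be unbounded; in that case the lower bound holds trivially and nothing is lost. The definition of $\mathcal L_u$ only involves rays of vanishing diameter, so the contradiction is immediate. If instead the paper's definition of $\mathcal L_u$ allowed arbitrary light segments rather than rays, I would replace each segment by the ray containing it. That ray has diameter at least $\ell_0$, and one would then have to check that $z$ lies in $\mathcal S_u$, which is excluded. Since the definition is stated with rays, no such detour should be needed.

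The final step transfers the capacitary bound. Theorem \ref{sm.t2} gives $\operatorname{Cap}_2(\mathcal S_u)=0$ and $\dim_{\mathcal H}\mathcal S_u\le n-2$. Because $\mathcal K_u=\mathcal S_u$ as sets, the same statements hold for $\mathcal K_u$ with no further argument. The weak solvability also carries over, since the hypotheses are exactly those of Theorem \ref{sm.t2}.

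The only potentially delicate point is the use of \eqref{disjoint} itself, namely that every point of the closure not lying on a light segment is approximated by shrinking rays. This is established in Theorem \ref{cap.t.1}, so I would simply invoke it. Beyond that, the argument is bookkeeping.
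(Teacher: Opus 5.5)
Your proposal is correct and follows the paper's route. The paper obtains the corollary from the last part of Theorem \ref{cap.t.1}, via Proposition \ref{lem:nondegenerate-Kmu}: the lower bound on ray diameters empties $\mathcal L_u$ directly from its definition, the decomposition $\mathcal K_u=\mathcal S_u\dot\cup\mathcal L_u$ of Proposition \ref{lem:structure-Kmu} then gives $\mathcal K_u=\mathcal S_u$, and the $\operatorname{Cap}_2$ and Hausdorff-dimension bounds transfer from $\mathcal S_u$ exactly as you describe.
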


As mentioned before Theorem \ref{sm.t2}, in this line, but developing this time a more general direction, we derive a few general statements linking the integrability of the boost function and the Hausdorff dimension of the set of light segments. The peculiarity of these results is that they hold for general weakly spacelike functions, rather than for solutions or minima. An example is
\begin{theorem}[Boost integrability and light segments]
\label{intre}
Let
\(u\in W^{1,\infty}_{\loc}(\mathbb R^n)\) be weakly spacelike and let $\mathcal S_{u}$ be the union of all light segments as defined in \eqref{language1}.
Assume that
$
\mathcal h_u^q\in L^1_{\loc}(\mathbb R^n)
$
for some \(1\le q<\infty\). Then
\eqn{nicchia4}
$$
\begin{cases}
\displaystyle
\mathcal H^{n- q/2}
\ (\mathcal S_{u})=0
&\ \mbox{if}\quad  1\le q<n-1\\[2mm]
\displaystyle\mathcal S_{u}=\mathcal{K}_{u}=\varnothing
&\ \mbox{if}\quad  q\ge n-1.
\end{cases}
$$
In particular, when $n=2$  and  $\mathcal{h}_{u}\in L^{1}_{\loc}(\mathbb{R}^{2})$, it follows that  $\mathcal{S}_{\texttt{i},u}=\mathcal{S}_{u}=\mathcal{K}_{u}=\varnothing$.
\end{theorem}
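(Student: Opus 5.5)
The plan is to prove a quantitative lower bound on the blow-up of $\mathcal h_u$ near any light segment, and then turn it into a Hausdorff-measure estimate by a covering argument. The basic mechanism is a one-dimensional rigidity along lines parallel to a light segment. Suppose $\overline{x_0x_1}$ is a light segment with unit direction $e$ and length $L$, and let $x$ be a point at distance $r$ from the segment. By $|Du|\le 1$ (see Remark \ref{raggire}), $u$ is $1$-Lipschitz. Comparing $u$ at $x+te$ with its values on the segment gives
\[
u(x+Le)-u(x)\ge L-2r .
\]
This only says that $\partial_e u$ has average at least $1-2r/L$ on the parallel segment. To relate this to $\mathcal h_u$ we use
\[
1-\partial_e u\ \ge\ 1-|Du|\ \ge\ \tfrac12(1-|Du|^2)=\tfrac12\,\mathcal h_u^{-2}.
\]
Averaging over parallel segments in a tube $T_r$ of radius $r$ around a middle portion of the light segment then yields
\[
\medint_{T_r}\mathcal h_u^{-2}\dx\lesssim \frac{r}{L}.
\]

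The next step converts this smallness of $\mathcal h_u^{-2}$ into largeness of $\int\mathcal h_u^q$. Jensen (or Hölder, via $1\le (\medint \mathcal h_u^{-2})^{q/(q+2)}(\medint\mathcal h_u^q)^{2/(q+2)}$) gives
\[
\medint_{T_r}\mathcal h_u^q\dx\ \ge\ \Big(\medint_{T_r}\mathcal h_u^{-2}\dx\Big)^{-q/2}\gtrsim (L/r)^{q/2}.
\]
Since $|T_r|\simeq L r^{n-1}$, this becomes
\[
\int_{T_r}\mathcal h_u^q\dx\gtrsim L^{1+q/2}\,r^{\,n-1-q/2}.
\]

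\emph{Case $q\ge n-1$.} The exponent $n-1-q/2$ is strictly smaller than $q-q/2$, and the key is the sharper observation at $q=n-1$. Consider the cylinder around the whole segment, decomposed dyadically in $r$. On each shell $T_{2r}\setminus T_r$ the same bound still holds up to constants: apply the Jensen argument on the shell, where the average deficit of $\partial_e u$ is $\lesssim r/L$, while the shell has volume $\simeq Lr^{n-1}$. Each shell therefore contributes $\gtrsim L^{1+q/2}r^{n-1-q/2}$. For $q\ge n-1$ we have $n-1-q/2\le q/2$, but the relevant quantity is $n-1-q/2\le (n-1)/2$. I would instead test with the power $q$ directly: the shell average of $\mathcal h_u^q$ is at least $(L/r)^{q/2}$, so $\sum_k$ over dyadic $r=2^{-k}$ of $L^{1+q/2}r^{n-1-q/2}$ diverges exactly when $n-1-q/2\le0$, i.e.\ $q\ge 2(n-1)$.

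This falls short of the threshold $q\ge n-1$, so a genuinely better transverse estimate is needed. I expect this to be the main obstacle. What I would try first is to exploit that the deficit function $f=1-\partial_e u$ has line integrals along parallels at distance $r$ bounded by $2r$ \emph{pointwise in $r$}, and to apply Hölder on each parallel line, not on the tube. On each line, $\int_0^L\mathcal h_u^{-2}\lesssim r$, hence $\int_0^L\mathcal h_u^q\gtrsim L^{1+q/2}r^{-q/2}$. Integrating in the transverse variables over $B^{n-1}_\rho$ gives $\int r^{-q/2}\,r^{n-2}\,dr$, which diverges iff $q\ge 2(n-1)$, the same as before. Reaching $n-1$ therefore requires a quadratic improvement: deficit $\lesssim r^2$ along parallels. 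This should come from a second-order cone argument. Both endpoints $x_0,x_1$ lie on the light cone of the graph, and $|u(y)-u(z)|\le|y-z|$ gives
\[
u(x+Le)-u(x)\ge L-\big(\sqrt{L^2+r^2}-L\big)\cdot O(1)\sim L-O(r^2/L),
\]
using distances to the opposite endpoints. Indeed, one compares $u(x)$ with $u(x_0)$ via $|x-x_0|$ and $u(x+Le)$ with $u(x_1)$, choosing the comparison points near the far endpoints of a longer light segment so that the Pythagorean gain appears. This gives line averages of $\mathcal h_u^{-2}$ of order $r^2$ and hence the integral $\int r^{-q}r^{n-2}dr$, divergent iff $q\ge n-1$. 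That proves $\mathcal S_u=\varnothing$ when $q\ge n-1$. Emptiness of $\mathcal K_u=\overline{\mathcal S_u}$ is then immediate.

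\emph{Case $q<n-1$.} Here the local estimate reads
\[
\int_{B_r(z)}\mathcal h_u^q\gtrsim r^{\,n-q}\quad\text{for }z\in\mathcal S_u,
\]
with constants depending on a lower bound of the segment length near $z$. Splitting $\mathcal S_u$ into countably many pieces with segment length $\ge 1/j$ and applying a standard Vitali covering argument with absolute continuity of $\int\mathcal h_u^q$ gives $\mathcal H^{n-q}=0$ on each piece. Since $n-q/2\ge n-q$, this is stronger than the claim. If the quadratic improvement fails near interior points, the linear version above still yields $\int_{B_r}\mathcal h_u^q\gtrsim r^{n-q/2}$, and hence exactly the stated $\mathcal H^{n-q/2}(\mathcal S_u)=0$; I would present that as the safe route. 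The statement for $n=2$ follows from the case $q=1=n-1$.
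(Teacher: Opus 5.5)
Your argument for $q\ge n-1$ is correct and, once the false starts are discarded, it is the paper's. You compare with the \emph{far} endpoints of the segment, on parallels over a middle portion so that both comparison points are at distance $\gtrsim L$. This gives a deficit $\lesssim s^2/L$ at transverse distance $s$, as in \eqref{cc.0}--\eqref{cc.2}, and Jensen on each parallel plus the divergence of $\int_0 s^{n-2-q}\,ds$ concludes.

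For $1\le q<n-1$, however, the local density estimate is not what you state. Inside $B_r(z)$ the parallels have length $\sim r$. The cone comparisons still bound the deficit on such a short parallel only by the same total $\lesssim s^2/L$, and they say nothing about how this total is distributed along the parallel of length $L$. So the line average of $\mathcal h_u^{-2}$ is $\lesssim s^2/(rL)$, and Jensen plus integration in $x_\perp$ gives $\int_{B_r(z)}\mathcal h_u^q\gtrsim L^{q/2}r^{n-q/2}$, which is exactly \eqref{findy}. The claim $r^{n-q}$ would need the deficit to be spread uniformly along the whole parallel, and it is unjustified; it would also give the strictly stronger $\mathcal H^{n-q}(\mathcal S_u)=0$. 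Your ``safe route'' fails in the opposite direction. At ball scale the linear comparison gives a deficit $\le 2s$ on a parallel of length $\sim r$, hence only $\int_{B_r(z)}\mathcal h_u^q\gtrsim r^n$, which is void. The tube bound $L^{1+q/2}r^{n-1-q/2}$ lives on a tube of length $L$ and does not localize to balls centred at each point, which is what a covering argument requires.

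The second gap is the endpoints. The estimate above needs a macroscopic piece of light segment on \emph{both} sides of $z$. At an endpoint $x_0$ of a light ray, the only upper comparison for $x\in B_r(x_0)$ is $u(x)\le u(x_0)+|x-x_0|$ with $|x-x_0|\sim r$. The deficit is then only $\lesssim s^2/r$, and the bound collapses to $r^n$. This is intrinsic to the situation: in the example of Theorem \ref{ex.t} the boost $\mathcal h_v$ is asymptotically $0$-homogeneous at $x_0$, so $\int_{B_r(x_0)}\mathcal h_v^q\simeq r^n$ for $q<n-1$. Hence your Vitali argument gives only $\mathcal H^{n-q/2}(\mathcal S_{\texttt{i},u})=0$, and the endpoints of the possibly uncountably many light rays remain uncontrolled.

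The paper closes this with Lemma \ref{endpoints}. For midpoints $y$ of light segments of fixed length $2\delta$ in a ball, the increasing direction $\tx{e}_y$ is Lipschitz in $y$ (via \cite{cfm02}). So every point of $\mathcal S_u$ lies in a countable union of Lipschitz images of subsets of $\mathcal S_{\texttt{i},u}$, and Lipschitz images preserve $\mathcal H^{n-q/2}$-null sets. Your proposal needs this step or an equivalent one.
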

Further examples of statements of this type are Theorems \ref{sobre1} and \ref{sobre2} in Section \ref{capacity}. 

\subsection{Borderline regularity, quantitative spacelikeness, Calabi--Bernstein and Cheng--Yau}\label{finerreg}
We now address the finer Lorentz endpoint $\mu\in L(n,1)(\mathbb R^n)$, at which we obtain a global bound for the boost, hence uniform spacelikeness, together with continuity of the gradient. A corresponding two-dimensional result holds under the stated global admissibility conditions and an $L^2(\log L)^\alpha$ assumption, $\alpha>2$. Both results are in perfect accordance with what is known in the general theory of nonuniformly elliptic problems developed in \cite{bm20}; see Remark \ref{plapre} below. Apart from the regularity viewpoint, proving the boundedness of the boost function $\mathcal h_u$ has a fundamental  geometric importance, see Remark \ref{geore} below. 
\begin{theorem}[Borderline boost function regularity I]\label{t2}
Let \(u \in \mathbb{X}(\mathbb{R}^{n})\), \(n \geq 3\), be the minimizer
of \eqref{bi.en} with $\mu\in \mathbb{Y}(\mathbb{R}^{n})\cap L(n,1).$ 
Then \(u\) weakly solves \eqref{bi} and\footnote{See \rif{normalo} and \rif{normalo2} for the definition of the Lorentz norm $[\mu]_{n,1}.$}
\eqn{boost.boundi}
$$
\left\lVert \mathcal{h}_u \right\rVert_{L^\infty(\mathbb{R}^n)} \leq 
\exp\left\{c_n[\mu]_{n,1}\right\}.
$$
Furthermore, \(Du\) is continuous in \(\mathbb{R}^{n}\).
\end{theorem}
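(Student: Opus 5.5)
The plan is to combine the variational framework behind Theorem \ref{t1} with an $L^\infty$ bound for the boost that is uniform along an approximation, and then to pass to the limit. Since $L(n,1)\subset L^n$ in the relevant integrability range, Theorem \ref{t1} (case (II), $q=n$) already shows that $u\in W^{2,2}_{\loc}$, that $u$ weakly solves \eqref{bi}, and that $\mathcal h_u\in L^p_{\loc}$ for every finite $p$. What remains is the quantitative bound \eqref{boost.boundi} and the continuity of $Du$.

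\textbf{Approximation.} I would approximate $\mu$ by smooth, compactly supported $\mu_\eps$ with $[\mu_\eps]_{n,1}\le [\mu]_{n,1}+o(1)$ and $\mu_\eps\to\mu$ in $\mathbb Y$. For such data the minimizers $u_\eps$ are smooth and strictly spacelike by the classical theory (Bartnik--Simon, or \cite{bi23} since $\mu_\eps\in L^q$ for $q>n$).

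\textbf{Equation for the boost.} On the smooth solutions I would use the standard Bochner-type identity for the boost on a spacelike graph. Writing $\nu$ for the Lorentzian normal, $\mathcal h=-\langle \nu,e_{n+1}\rangle$ satisfies
\[
\Delta_g \mathcal h=\mathcal h\,(|A|^2+\mathrm{Ric}(\nu,\nu))+\langle \nabla_g H, e_{n+1}^{\top}\rangle .
\]
Here the Ricci term vanishes in Minkowski space and $H=\mu$. Equivalently, in divergence form one obtains for $w=\log \mathcal h_u$ a subsolution-type inequality
\[
-\diver(\mathbb A(Du)Dw)\le -\diver(\mu\, \mathcal h_u^{-1}\ldots)+\ldots ,
\]
with the right-hand side essentially $\mu$ times a bounded vector. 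The key structural point is that $|A|^2\ge 0$ can be discarded, so $\mathcal h$ is a subsolution of a linear equation on the graph whose only source is $\mu$.

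\textbf{Potential estimate.} I would then run a nonlinear/linear potential estimate on the graph $\Sigma$ with its induced Riemannian metric. The graph satisfies a Michael--Simon type Sobolev inequality with constants depending only on $n$ (via the mean-curvature term, which is controlled by $\mu$ itself). A De Giorgi iteration, or a Green-function representation, then gives
\[
\log\mathcal h(x)\le c_n\, \mathbf W^{\mu}(x,\infty),
\]
a truncated Riesz/Wolff potential of order one with exponent $2$, i.e. $\int_0^\infty (|\mu|(B_r(x))/r^{n-1})\,\frac{dr}{r}$. This potential is bounded by $c_n[\mu]_{n,1}$ by the classical Lorentz-space characterization $\sup_x I_1|\mu|(x)\lesssim \|\mu\|_{L(n,1)}$. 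At infinity, $\mathcal h\to1$ because of the decay coming from $\mathbb X$, which fixes the additive constant; this produces the exponential form of \eqref{boost.boundi}.

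\textbf{Main obstacle.} The hardest step will be making the iteration constants independent of the degeneracy. The linearized operator has ellipticity ratio $\mathcal h^2$, so Euclidean De Giorgi fails, and working intrinsically on $\Sigma$ distorts balls. My first attempt would be to work on Lorentzian-intrinsic balls and use the fact that the induced volume and Sobolev constants are controlled purely dimensionally, transferring $[\mu]_{n,1}$ to the graph measure $\mu\,\mathcal h^{-1}dx$, which has a smaller Lorentz norm. The fallback is the Euclidean $\log\mathcal h$ Caccioppoli inequality from Theorem \ref{t1}, fed into a Kilpeläinen--Malý/Havin--Maz'ya iteration that is linear in $\mu$, yielding $\sup \log\mathcal h\le c\,[\mu]_{n,1}$.

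\textbf{Limit and continuity of $Du$.} The uniform boost bound makes \eqref{bi} uniformly elliptic with constants $\exp(c[\mu]_{n,1})$. Gradient continuity then follows from the known $L(n,1)$ criterion for uniformly elliptic quasilinear equations (Duzaar--Mingione, Kuusi--Mingione). The estimates pass to the limit $\eps\to0$ by lower semicontinuity and uniqueness of the minimizer.
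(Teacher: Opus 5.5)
Several parts of your outline are fine: weak solvability and $W^{2,2}_{\loc}$ from Theorem \ref{t1} with $q=n$, the smooth approximation, and passage to the limit (the constraint $\snr{Du}\le\sqrt{1-M^{-2}}$ is convex, so weak convergence of the gradients suffices). Continuity of $Du$ via the quasilinear $L(n,1)$ gradient theory, after modifying $H$ outside $\{\snr{z}\le\theta\}$, is a legitimate alternative to the paper's linearization plus linear endpoint theory for bounded measurable coefficients. The gap is the core estimate $\log\mathcal h\le c_n\mathbf W^\mu$. You flag it as the main obstacle, but neither route you propose closes it.

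\emph{The intrinsic route.} You cannot simply invoke a Michael--Simon inequality with dimensional constants on a spacelike graph. The Euclidean derivation breaks down because tangential projection lengthens in $\mathbb L^{n+1}$: by \eqref{basic}, $\snr{\delta\ell}^2_{\mathbb L^{n+1}}=1+\ell^{-2}\langle\vv,\Phi\rangle^2_{\mathbb L^{n+1}}\ge1$. This is exactly why the Bartnik--Simon density in Proposition \ref{prop.mon} is non-increasing and carries the weights $\mathcal h^{-\gamma}$. What that machinery yields is only the mean value inequality \eqref{0.10}, a lower bound for $\mathcal h^{-\gamma}$ on the Lorentzian balls $K_r$. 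These balls are comparable to Euclidean ones at all scales only with constants depending on $\sup\mathcal h$ (Lemma \ref{balls}), so transferring $[\mu]_{n,1}$ to intrinsic potentials is circular. A Green-function representation for $\log\mathcal h$ is not available in the form you need either, since $\Delta_g\log\mathcal h=\mathcal h^{-1}\Delta_g\mathcal h-\snr{\nabla_g\log\mathcal h}^2$: the logarithm of a subsolution is not a subsolution.

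\emph{The Euclidean fallback.} The Caccioppoli inequality behind Theorem \ref{t1} (Lemma \ref{cacclem} with $m=0$) has $\int\mathcal h^2\snr{D\eta}^2\dx$ on the right. If you localize a truncated estimate by testing with $\eta^2(\mathcal h-\kappa)_+D_su$, the cutoff term is controlled only by $\int(\mathcal h-\kappa)_+^2\langle\mathcal H(Du)D\eta,D\eta\rangle\dx$. Its part $\int(\mathcal h-\kappa)_+^2\langle Du,D\eta\rangle^2\dx$ sits on the non-degenerate eigendirection of $\mathcal H(Du)$ and is exponential in $\log\mathcal h-\log\kappa$. That is not a De Giorgi-type inequality for $\log\mathcal h$, so a Kilpel\"ainen--Mal\'y iteration on shrinking balls does not run.

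The missing idea is to argue globally, without cutoffs, as the paper does.
\begin{itemize}
\item First one must prove, not assert, that $\mathcal h_u\to1$ at infinity. $Du\in L^2$ gives only integral decay. Proposition \ref{com.sup} gets pointwise decay from \eqref{0.10} at scale $1$, using boundedness of $u$ so that $K_1(x_0)\cap\supp\,\mu=\varnothing$ for $\snr{x_0}$ large.
\item Hence $(\mathcal h_u-\tau)_+$ is compactly supported for every $\tau>1$, and the differentiated equation can be tested with $(\mathcal h_u-\kappa)_+D_su$ on all of $\mathbb R^n$.
\item Since $\langle\mathcal H(Du)\xi,\xi\rangle\ge\mathcal h_u^{-2}\snr{\xi}^2$, this gives $\nr{D(\log\mathcal h_u-\log\kappa)_+}_{L^2}^2\le c_n\int_{\{\mathcal h_u>\kappa\}}\mu^2\dx$ for all $\kappa\ge\tau$. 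The degeneracy is absorbed exactly by passing to the logarithm.
\item The level sets lie in a fixed ball $B$, so the Euclidean Sobolev inequality applies. The calibrated global iteration of Lemma \ref{abstract.iteration}, with $\omega_\mu(s)=(\int_0^s(\snr{\mu}\mathds 1_B)^*(r)^2\,\dd r)^{1/2}$, then gives $\sup(\log\mathcal h_u-\log\tau)_+\le c_n\int_0^\infty s^{1/n}\mu^*(s)\,\frac{\dd s}{s}\lesssim_n[\mu]_{n,1}$. A classical Stampacchia iteration would need $\mu\in L^q$ with $q>n$. Finally let $\tau\downarrow1$.
\end{itemize}
Note also that energy methods of this kind control the quadratic potential ${\bf P}_1^{1/2}(\mu^2)$, not the linear $I_1\snr{\mu}$ you wrote. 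Both are bounded by $[\mu]_{n,1}$, so the final bound is unaffected, but a pointwise bound by $I_1\snr{\mu}$ is not what such an argument produces.
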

The proof of Theorem \ref{t2} builds on a rather delicate mixture of two ingredients. First, geometrically based ideas - monotonicity formulas and their potential theoretic formulations as in Proposition \ref{p31}, with asymptotic consequences as in Proposition \ref{com.sup}. Second: some purely PDE, subtle nonlinear potential theoretic facts that find their origins in recent work of the authors \cite{bm20, dm23, dm25,ddp26}. We indeed introduce some novel global, potential theoretic methods via a calibrated version of classical De Giorgi's iteration - see Lemma \ref{abstract.iteration}.  These are efficiently able to transform soft asymptotic information coming from monotonicity inequalities as in Proposition \ref{com.sup}, into rigidity statements as in \rif{boost.boundi}. More in Remark \ref{geore} below.  
\begin{remark}\label{geore}
\emph{Let us outline a few geometric consequences of estimate \rif{boost.boundi}. 
\begin{itemize}
\item The exponential type estimate in \rif{boost.boundi} -- as well as the later ones in \rif{boost.2di} and \rif{boost.bound.dir}-\rif{boost.2d.dir} -- calls for a parallel with the classical gradient estimate of Bombieri, De Giorgi and Miranda for the minimal surfaces \cite{bdgm69}. Note that the Born--Infeld and minimal surface integrands are Legendre
dual, up to an additive constant. We refer to the recent paper \cite{df26} for more in this direction. 
\item The classical theorems of Calabi--Bernstein \cite{cal70} and Cheng--Yau \cite{cy76} assert that every entire
spacelike maximal graph in Lorentz--Minkowski space is a
hyperplane. 
As the function class we are working with implies, 
$Du\in L^2(\mathbb R^n)$, we are a priori excluding tilted hyperplanes.
Nevertheless, estimate \eqref{boost.boundi} provides a
quantitative stability statement for such results within the present 
finite-energy class. Indeed, estimate \rif{boost.boundi} readily implies
$$
\nr{Du}_{L^\infty(\mathbb R^n)}^2
\le
1-\exp\left\{-2c_n[\mu]_{n,1;\mathbb R^n}\right\}.
$$
Geometrically, if $\theta_u$ denotes the hyperbolic angle
between the future-directed unit normal to the graph
and the vertical direction, then
$\mathcal h_u=\cosh\theta_u$, and therefore
$$
\nr{\theta_u}_{L^\infty(\mathbb R^n)}
\le
\operatorname{arcosh}\left(
\exp\left\{c_n[\mu]_{n,1;\mathbb R^n}\right\}
\right).
$$
Consequently, for assigned mean curvatures $\mu$ satisfying the hypotheses of
Theorem \ref{t2}, convergence to zero in $L(n,1)$ forces
the normals to converge uniformly to the vertical normal. Ultimately, Theorem \ref{t2} reaches the sharp Lorentz
threshold of elliptic gradient regularity, connecting it to
quantitative stability in the geometric setting.
\item The exclusion of light segments is a qualitative conclusion:
it does not by itself provide a uniform separation of $|Du|$
from the lightlike threshold. A bound for the boost function
gives precisely this quantitative information, since
$
\|\mathcal h_u\|_{L^\infty}\le M$ implies 
$|Du|\le\sqrt{1-M^{-2}}<1$, that is, the surface does not ``go null'', quantitatively.
Equivalently, since the induced metric is
$
g=\mathds I-Du\otimes Du,
$
a uniform bound for $\mathcal h_u$ prevents the metric from degenerating and keeps the tangent spaces uniformly separated from the light cone. 
\end{itemize}}
\end{remark}
\begin{remark}\label{plapre}
{\em The space $L(n,1)$ provides the natural setting for the boundedness of the boost function once this result is read as a stress tensor intrinsic regularity result. The best comparison is with the $p$-Laplacean equation for which $-\diver (|Du|^{p-2}Du) \in L(n,1)$ implies the continuity of $Du$ with optimal local a priori estimates on the vector field $|Du|^{p-2}Du$; see \cite{km14b} for a survey of such facts. In the present setting Theorem \ref{t2} reads exactly in the same way: $-\mathcal M u=-\diver(\mathcal h_uDu)\in L(n,1)$ implies the continuity of $Du$ and a bound on the stress $\mathcal h_uDu$ (recall $|Du|\leq 1$). Note that Theorem \ref{t2} is in a sense a geometric version of a classical theorem of Stein claiming that $\Delta u\in L(n,1)$ implies the continuity of $Du$. }
\end{remark}
The two-dimensional result is, instead
\begin{theorem}[Borderline boost function regularity II]\label{t222}
Let $u\in \mathbb{X}(\mathbb{R}^{2})$ be the minimizer of \eqref{bi.en} with 
$\mu\in \mathbb{Y}(\mathbb{R}^{2})\cap L^2(\log L)^\alpha(\er^2)$ where  $\alpha>2.$ 
Then $u$ solves \eqref{bi} and\footnote{See \rif{luxi} and \rif{lllog} for the definition of $\nr{\mu}_{L^2(\log L)^\alpha(\er^2)}$.}
\eqn{boost.2di}
$$
\nr{\mathcal h_u}_{L^\infty(\mathbb R^2)}
\lesssim_n
\left(1+  \left\lVert \mu \right\rVert_
    {\mathcal{D}^{1,2}(\mathbb{R}^{2})^{*}}^{2}\right)\exp\left\{
c_{\alpha}
\nr{\mu}_{L^2(\log L)^\alpha(\er^2)}
\right\}.
$$
Furthermore, $Du$ is continuous in $\mathbb{R}^{2}$.
\end{theorem}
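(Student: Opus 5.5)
The plan is to mimic the proof of Theorem \ref{t2}, replacing the Lorentz space $L(n,1)$ by its two-dimensional borderline counterpart $L^2(\log L)^\alpha$ with $\alpha>2$. The argument has five steps.

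\emph{Step 1: approximation.} I would first regularize the charge. Take $\mu_\eps\in C^\infty_c$ with $\mu_\eps\to\mu$ in $\mathcal D^{1,2}(\er^2)^*$ and in $L^2(\log L)^\alpha$, and with the global mean-zero and admissibility conditions built into $\mathbb Y(\er^2)$. By Bartnik--Simon and the results of \cite{bi23} (bounded, smooth charges), the corresponding minimizers $u_\eps\in\mathbb X(\er^2)$ are smooth, strictly spacelike solutions of \eqref{bi}. It therefore suffices to prove \eqref{boost.2di} for $u_\eps$ with constants uniform in $\eps$. Weak solvability of \eqref{bi} and continuity of $Du$ then follow by passing to the limit, using the $W^{2,2}_{\loc}$ and boost estimates of Theorem \ref{t1}(III) and a modulus of continuity inherited from the potential estimate below.

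\emph{Step 2: equation for the boost.} For smooth solutions, $\log\mathcal h_u$ (or $\mathcal h_u$ itself) satisfies a Jacobi-type differential inequality
\[
-\diver\bigl(\mathbf A(x)D\log\mathcal h_u\bigr)\le c\,|\mu|\,|D^2u|\,(\text{weights}) + |D\mu|\text{-free terms},
\]
where $\mathbf A$ is the induced inverse metric. After the standard integration by parts that removes $D\mu$, this gives a Caccioppoli inequality for $(\log\mathcal h_u-k)_+$ in which $\mu^2$ appears as the forcing term. This is the two-dimensional analogue of Proposition \ref{p31}. I would combine it with the monotonicity/asymptotic information of Proposition \ref{com.sup}: because $Du\in L^2(\er^2)$, the boost tends to $1$ at infinity in an averaged sense, so level sets of $\log\mathcal h_u$ above a fixed threshold have finite, controlled measure. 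The factor $1+\|\mu\|^2_{\mathcal D^{1,2}(\er^2)^*}$ enters here, through the energy bound $\|Du\|_{L^2}^2\lesssim\|\mu\|^2_{\mathcal D^{1,2}(\er^2)^*}$, which controls the initial level-set measure.

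\emph{Step 3: calibrated De Giorgi iteration.} Next I apply Lemma \ref{abstract.iteration} to $v=\log\mathcal h_u$. In the $n\ge3$ case the forcing term is summed through a Wolff-type potential $\int_0^\infty(r^{2-n}\int_{B_r}|\mu|^2)^{1/2}\,dr/r$, whose boundedness is equivalent to $L(n,1)$. In $n=2$ the corresponding quantity is
\[
\int_0^{1}\Bigl(\int_{B_r}|\mu|^2\Bigr)^{1/2}\frac{dr}{r},
\]
which is finite, uniformly in the centre, as soon as $\int_{B_r}\mu^2\lesssim\|\mu\|^2_{L^2(\log L)^\alpha}\log(e/r)^{-\alpha}$. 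Integrability of $\log(e/r)^{-\alpha/2}\,dr/r$ holds exactly when $\alpha>2$, which explains the threshold. To cover large scales $r\ge1$, I would use the global $L^2$ bound together with the decay from Step 2. Running the iteration yields $\sup v\le c+c_\alpha\|\mu\|_{L^2(\log L)^\alpha}$, multiplied by the measure-dependent factor from Step 2; exponentiating gives \eqref{boost.2di}.

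\emph{Main obstacle.} I expect the hardest part to be Step 3 in two dimensions. Lacking the Sobolev gain available for $n\ge3$, the iteration must be run with an Orlicz/Moser–Trudinger replacement. The Orlicz control of $\mu$ has to be converted into the correct logarithmic decay of $\int_{B_r}\mu^2$ without losing the exponent, and the weighted operator must be handled with degenerate ellipticity bounded only by $\mathcal h_u^2$. I would try first to use the Trudinger-type embedding of $W^{1,2}$ into $\exp L$ as the iteration gain, paired by duality with $L^2(\log L)^\alpha$ norms of $\mu^2$.
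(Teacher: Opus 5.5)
Your skeleton matches the paper's proof: smooth zero-mean approximation, the cutoff-free inequality $\|D(\log\mathcal h_u-\log\kappa)_+\|_{L^2}^2\le c\int_{\{\mathcal h_u>\kappa\}}\mu^2\dx$ from testing the differentiated equation with $(\mathcal h_u-\kappa)_+D_su$, exterior decay from Proposition \ref{com.sup}, Lemma \ref{abstract.iteration} with Moser--Trudinger decay of level sets, and Theorem \ref{t1} with $q=2$ for weak solvability. Two steps, however, do not work as described.

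\textbf{Continuity of $Du$.} Nothing in your argument gives a modulus of continuity for the vector field $Du$. The iteration, and the potential estimate \eqref{int.pot}, only bound $\mathcal h_u$ (that is, $|Du|$) from above. Moreover, $Du\in W^{1,2}_{\loc}(\mathbb R^2)$ need not be continuous. The missing idea is to use the boost bound to restore uniform ellipticity. If $\mathcal h_u\le M$, then $A:=\partial^2H(Du)$ satisfies $|\xi|^2\le\langle A\xi,\xi\rangle\le M^3|\xi|^2$. Difference quotients then give the linearized equation $-\diver(A\,DD_su)=\diver(\mu\tx{e}_s)$. Since $\mu\in L_{\loc}(2,1)$ by \eqref{immergilog}, Meyers' estimates plus real interpolation give $DD_su\in L_{\loc}(2,1)$. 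The endpoint Sobolev embedding then yields continuity of $D_su$.

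\textbf{The prefactor $1+\|\mu\|^2_{\mathcal D^{1,2}(\mathbb R^2)^*}$.} Multiplying $\sup v$ by a factor and then exponentiating produces a power, not a prefactor. Suppose instead you start at a fixed threshold. Then $\|Du\|_{L^2}^2\lesssim\|\mu\|^2_{\mathcal D^{1,2}(\mathbb R^2)^*}$ only gives $S_0\lesssim\|\mu\|^2_{\mathcal D^{1,2}(\mathbb R^2)^*}$. This initial measure then enters the exponent through the factor $\sqrt{\log(e\mathfrak c_2S_0/s)}$ in \eqref{abstract.exp.bound}, so the product form of \eqref{boost.2di} is lost.

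The paper avoids this as follows.
\begin{itemize}
\item It applies the iteration to $v=(\log(\mathcal h_u/\tau))_+$, starting at the $\mu$-dependent level $\tau=\mathcal h_\infty(\mu)=c_0(1+\|\mu\|^2_{\mathcal D^{1,2}(\mathbb R^2)^*})$.
\item It uses the weighted bound $\int\mathcal h_u|Du|^2\dx\le\|\mu\|^2_{\mathcal D^{1,2}(\mathbb R^2)^*}$ from \eqref{l1l1}, not just $\|Du\|_{L^2}$. Since $|Du|^2\ge1/2$ on $\{\mathcal h_u>\tau\}$, this gives $\frac{\tau}{2}|\{\mathcal h_u>\tau\}|\le\|\mu\|^2_{\mathcal D^{1,2}(\mathbb R^2)^*}$, hence $S_0\le\mathfrak s_n$, a dimensional constant.
\item The prefactor is then simply $\tau$ in $\mathcal h_u\le\tau e^{\|v\|_{L^\infty}}$.
\end{itemize}

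This also requires the pointwise limit \eqref{boost.at.infinity}, not an averaged one. That limit makes $(v-\kappa)_+\in W^{1,2}_0(B)$ for a fixed ball $B$, which the cutoff-free Caccioppoli inequality and Lemma \ref{abstract.iteration} need. A ball-by-ball De Giorgi scheme does not close directly: cutoff terms carry an extra factor $\mathcal h_u^2$ relative to the energy of $\log\mathcal h_u$ (compare \eqref{cacc}). For the same reason, the forcing enters through $\omega_\mu(|E_\kappa|)$, with $\omega_\mu(s)^2=\int_0^s((|\mu|\mathds{1}_B)^*(r))^2\,dr$, rather than through a ball potential, and no large-scale treatment is needed. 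The threshold $\alpha>2$ comes from bounding $\int_0^{\mathfrak s_n}\omega_\mu'(s)\sqrt{\log(e\mathfrak c_2S_0/s)}\ds$ using $\omega_\mu'(s)\le(|\mu|\mathds{1}_B)^*(s)/(2\sqrt{s})$, Cauchy--Schwarz, and \eqref{orlicz.rearr}.
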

The reader might now wonder whether it is possible to obtain an estimate of the type in \rif{boost.boundi} also in the two dimensional case, i.e., replacing $[\mu]_{n,1}$ by $L^2(\log L)^\alpha(\er^2)$ in \rif{boost.boundi}. The answer is negative, as described in Remark \ref{controre} below. 
\subsection{Borderline regularity for Dirichlet problems}
Theorems \ref{t2}-\ref{t222} have a parallel for solutions to Dirichlet problems \rif{pd2} in bounded and suitably regular domains (see also \rif{pd2in} below for the precise setting). We extend the classical results of \cite{bs82} to unbounded prescribed mean curvature under a smallness condition
confined to a boundary layer. The main assumption is a decomposition of the type
\eqn{data1}
$$
\mu=f+\mu_*,
$$
where \begin{equation}\label{data2}
\|f\|_{L^\infty(\Omega)}\le F_0,
\qquad
[\mu_*]_{n,1;\Omega}\le\Lambda_0,
\qquad
[\mu_*]_{n,1;\Omega_{r_0}}
\le\varepsilon_\partial,
\end{equation}
where 
$
\Omega_{r_0}
:=
\{x\in\Omega:\dist(x,\partial\Omega)<r_0\}. 
$
In other words we are considering $L(n,1)$-perturbations of $L^\infty$-data, where the perturbation is small, in the corresponding $L(n,1)$-norm, but only on {\em an arbitrarily small} boundary layer $\Omega_{r_0}$. We impose the following assumptions on the domain
and the boundary datum. 
\eqn{assuntidir0}
$$
\begin{cases}
\, \mbox{$\Omega$ is a $C^{2,\beta}$-regular, bounded domain of $\er^n$, $n\geq 2$, $\beta\in(0,1)$}\\[2pt]
\, \mbox{$\tx{u}_0 \in C^{2,\beta}(\overline{\Omega})$ such that $\nr{D\tx u_0}_{C^0(\overline\Omega)}
\leq
1-\sigma_0$ for some $\sigma_0\in(0,1)$}.
\end{cases}
$$
\begin{theorem}[Borderline boost regularity for Dirichlet problems]
\label{dir.borderline}
Assume \eqref{assuntidir0}.
For every choice of the constants $\Lambda_0,F_0$ in \eqref{data2}, and of the boundary layer width $r_0>0$, there exist constants
$
\varepsilon_\partial\in(0,1)$ and $
\mathcal h_{\partial\Omega}\ge2$, 
depending only on
$
n,\ r_0,\ \sigma_0,\ \Lambda_0,\ F_0,\
\|\tx u_0\|_{C^2(\overline\Omega)},\ \Omega,
$
with the following property.
Let $u$ be the minimizer of the Dirichlet problem
\eqref{pd2in}, with boundary datum
$u_0=\tx u_0|_{\partial\Omega}$ and  $\mu$
satisfying \eqref{data1}--\eqref{data2}.
\begin{itemize}
\item If $n\ge3$, then
\begin{equation}\label{boost.bound.dir}
\|\mathcal h_u\|_{L^\infty(\Omega)}
\le
\mathcal h_{\partial\Omega}
\exp\left\{c_n[\mu]_{n,1;\Omega}\right\}.
\end{equation}

\item If $n=2$ and, additionally,
$\mu\in L^2(\log L)^\alpha(\Omega)$ for some $\alpha>2$,
then
\begin{equation}\label{boost.2d.dir}
\|\mathcal h_u\|_{L^\infty(\Omega)}
\le
\mathcal h_{\partial\Omega}
\exp\left\{
c_{\alpha,|\Omega|}
\|\mu\|_{L^2(\log L)^\alpha(\Omega)}
\right\}.
\end{equation}
\end{itemize}
Here $c_n$ depends only on $n$, and
$c_{\alpha,|\Omega|}$ depends only on $\alpha$ and $|\Omega|$.
In both cases, $u$ weakly solves \eqref{bi} in $\Omega$, and 
satisfies $\|Du\|_{L^\infty(\Omega)}<1$ with 
$
u\in C^1(\Omega)\cap W^{2,2}_{\loc}(\Omega).
$
\end{theorem}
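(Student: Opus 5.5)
The plan is to split the estimate into a boundary part and an interior part. The boundary part produces the constant $\mathcal h_{\partial\Omega}$ and uses the smallness of $\mu_*$ in $\Omega_{r_0}$ together with barriers. The interior part reproduces the global argument of Theorems \ref{t2}--\ref{t222}, with the role of infinity now played by $\partial\Omega$.

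\textbf{Step 1: approximation.} First regularize. I replace $\mu_*$ by smooth $\mu_{*,k}\to\mu_*$ in $L(n,1)$; in the case $n=2$ the convergence is also in $L^2(\log L)^\alpha$. The approximations are taken so that \eqref{data2} holds with the same constants, up to a factor $2$. By Bartnik--Simon \cite{bs82}, the data $f+\mu_{*,k}\in L^\infty$ give minimizers $u_k$ which are smooth, strictly spacelike, and solve \eqref{bi}. It therefore suffices to prove \eqref{boost.bound.dir}--\eqref{boost.2d.dir} for $u_k$ with constants independent of $k$. One then passes to the limit: the uniform bound $|Du_k|\le\sqrt{1-M^{-2}}$ makes the equation uniformly elliptic, which gives $W^{2,2}_{\loc}$ and $C^{1}$ compactness, and hence weak solvability and the regularity of the limit $u$, which is the unique minimizer.

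\textbf{Step 2: boundary gradient estimate.} This is the step I expect to be the main obstacle. The aim is a bound $\sup_{\Omega_{r_0/2}}\mathcal h_{u_k}\le \mathcal h_{\partial\Omega}$. I would proceed in three moves.
\begin{itemize}
\item Build upper and lower barriers $\tx u_0\pm \psi(d)$, with $d=\dist(\cdot,\partial\Omega)$. The barriers should have slope at most $1-\sigma_0/2$ and mean curvature dominating $F_0$, in the spirit of \cite{bs82}. This is possible thanks to the $C^{2,\beta}$ regularity of $\partial\Omega$ and $\tx u_0$.
\item The barriers cannot absorb the unbounded part $\mu_*$. So I first solve the auxiliary problem with charge $\mu_*\mathds 1_{\Omega_{r_0}}$, or compare through a linear potential, and show that its effect on $u_k$ is small in $C^0$ near $\partial\Omega$. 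The smallness $\varepsilon_\partial$ enters here, through the potential bound $\|\mathbf W\|\lesssim[\mu_*]_{n,1;\Omega_{r_0}}$.
\item Comparison then yields a spacelike boundary gradient bound. The interior-type estimate of Step 3, localized to balls of radius about $r_0$ centered in $\Omega_{r_0/2}$ and flattened near $\partial\Omega$, upgrades this to a bound on $\mathcal h$ throughout the layer.
\end{itemize}
Choosing $\varepsilon_\partial$ small, depending on $n,r_0,\sigma_0,\Lambda_0,F_0,\|\tx u_0\|_{C^2},\Omega$, closes the argument.

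\textbf{Step 3: interior estimate.} On $\Omega$ I use the subsolution property of $\log\mathcal h_{u_k}$ coming from the Bochner-type identity behind Proposition \ref{p31}. In weak form this reads, on the graph with the induced metric,
\[
-\Delta_g \log\mathcal h\le |\mu|\,|D\mu|\text{-free terms}\lesssim \diver(\mu\cdots).
\]
Concretely, $\mathcal h$ satisfies $-\diver(A(Du)D\mathcal h)\le \mu$-terms in divergence form. I then run the calibrated De Giorgi iteration of Lemma \ref{abstract.iteration} on the super-level sets $\{\log\mathcal h>\kappa\}$, taking the initial level $\kappa_0=\log\mathcal h_{\partial\Omega}$ from Step 2. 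Above this level the truncations $(\log\mathcal h-\kappa)_+$ vanish on $\partial\Omega$, so no cutoff is needed, exactly as in the global case. The iteration gives $\log\mathcal h\le\kappa_0+c_n[\mu]_{n,1;\Omega}$, which is \eqref{boost.bound.dir}.

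\textbf{Step 4: the case $n=2$.} Here the $L(n,1)$ potential bound is replaced by the $L^2(\log L)^\alpha$ Orlicz bound with $\alpha>2$, as in Theorem \ref{t222}. The constant now depends on $|\Omega|$ in place of the $\mathcal D^{1,2}$-dual norm, since $\Omega$ is bounded.
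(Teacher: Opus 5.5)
Your Steps 1, 3 and 4 are essentially the paper's argument. The shared ingredients are a boundary threshold $\mathcal h_{\partial\Omega}$ independent of the approximation, and the energy inequality $\int_\Omega|D(\log\mathcal h-\log\kappa)_+|^2\dx\le c_n\int_{\{\mathcal h>\kappa\}}\mu^2\dx$ for $\kappa\ge\mathcal h_{\partial\Omega}$. On top of these comes the calibrated iteration of Lemma \ref{abstract.iteration} applied to $(\log(\mathcal h/\mathcal h_{\partial\Omega}))_+\in W^{1,2}_0(\Omega)$, with $|\Omega|$ replacing \eqref{stimamis} when $n=2$.

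The gap is in Step 2. Take any upper barrier $w$ with $|Dw|\le 1-s$ and $w(x_0)=u(x_0)$ at some $x_0\in\partial\Omega$, lying above $u$ on the part of the boundary of the comparison region inside $\Omega$. At those points it forces $u(x)-u(x_0)\le(1-s)|x-x_0|$; for your tubular barriers $\tx{u}_0\pm\psi(d)$ this happens along the normal segments reaching $\{d=\delta\}$. Weak spacelikeness gives this only with $s=0$. So any barrier argument presupposes a uniform cone condition at a fixed scale comparable to $r_0$, i.e.\ a quantitative exclusion of near-light segments. This is a global statement: it fails if $u_0$ has light chords. Hence it cannot come from local interior or potential estimates.

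The paper proves it in Lemma \ref{ww.lem} by compactness. A violating sequence converges to a Dirichlet minimizer whose charge lies in $L(n,1)(\Omega)\subset L_{\loc}(n-1,s)(\Omega)$ and which contains a light segment. The rough anti-peeling Theorem \ref{al.t} extends this segment to $\partial\Omega$ at both ends, contradicting the strict chord condition on $u_0$. Your proposal never invokes Theorem \ref{al.t}, and that is exactly the new input the Dirichlet result rests on.

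Your treatment of $\mu_*$ also fails as stated. There is no superposition for $\mathcal M$, so you cannot solve an auxiliary problem for $\mu_*$ on $\Omega_{r_0}$, or add a linear potential, and then compare. Moreover, $C^0$-smallness of the perturbation gives no control of $\partial_\nu u(x_0)$.

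What is needed are exact nonlinear correctors that vanish on the boundary of the comparison region and are small in $C^1$. Work on caps $\mathcal U_{x_0}\subset\Omega\cap B_{\rho_0}(x_0)$, with $\mathcal w^\pm$ the Bartnik--Simon barriers for $f$ built from the cone gap above. One then solves $-\mathcal M(\mathcal w^\pm+\mathcal b^\pm)=\mathfrak f^\pm\pm\mu_\pm$ with $\mathcal b^\pm=0$ on $\partial\mathcal U_{x_0}$. This is done by a contraction in $W^{2;n,1}\cap W^{1;n,1}_0$ around the linearized operator (Lemma \ref{correctors}, Corollary \ref{correctors.uniform}).

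The endpoint embedding $W^{2;n,1}\hookrightarrow C^1$ gives $\|D\mathcal b^\pm\|_{L^\infty}\le\mathcal s_f/4$ once $[\mu]_{n,1;\Omega_{r_0}}\le\varepsilon_\partial$; this is where $\varepsilon_\partial$ enters. Then $\mathcal W^-\le u\le\mathcal W^+$ with equality at $x_0$ yields $|Du(x_0)|\le 1-3\mathcal s_f/4$ (Proposition \ref{corr}).

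Finally, your localized ``upgrade through the layer'' is unnecessary. If you mollify $f$ as well (keeping it only in $L^\infty$ does not give smooth approximants), the approximants lie in $C^{2,\beta}(\overline\Omega)$ by \cite[Theorem 3.6]{bs82}. Continuity up to the boundary then turns the boundary bound into a collar bound. The collar width may depend on the approximation, but the threshold does not, and that is all the iteration needs (Corollary \ref{app.cor}).
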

In \eqref{boost.bound.dir} and \eqref{boost.2d.dir} we cannot replace $\mathcal h_{\partial\Omega}$ by $1$. See Remark \ref{prefactor}. 
\subsection{Geometric potential estimates}\label{geosec}
Nonlinear potential theory is a field of elliptic PDE theory aiming, among other things, at reproducing, for solutions to nonlinear equations, the estimates valid for solutions to the Poisson equation $-\Delta u=\mu$. The story starts with the fundamental paper of Havin \& Maz'ya \cite{hm72}, who introduced classes of nonlinear potentials to study fine properties of solutions. These find, as special cases, the usual linear Riesz potentials and those today known as Wolff potentials. Specifically, let $B_r(x_0)\subset \mathbb{R}^n$, let $\sigma>0$ and $\vartheta\geq 0$ be fixed parameters, and let $w\in L^1(B_r(x_0))$. The nonlinear Havin--Maz'ya type potential ${\bf P}_{\sigma}^{\vartheta}(w;\cdot)$ is defined by
\eqn{defi-P} 
$$
{\bf P}_{\sigma}^{\vartheta}(w;x_0,r)
:= \int_0^r \varrho^{\sigma}
\left( \mint_{B_{\varrho}(x_0)} \snr{w}\, dx \right)^{\vartheta}
\frac{\d\varrho}{\varrho}.
$$
These potentials allow one to pointwise estimate both solutions of nonlinear equations and their gradients in the same way Riesz potentials do in the linear case. For instance, in the case of the classical $p$-Laplacean equation $-\diver\, (|Du|^{p-2}Du )=\mu$, $p\geq 2$, it holds that 
$$
|Du(x_0)|^{p-1}\lesssim_{n,p}
\mathbf P_1^1(\mu;x_0,r)
+ 
\mint_{B_r(x_0)}|Du|^{p-1}\, \dd x
$$
and 
$$
|u(x_0)|
\lesssim_{n,p}\mathbf P_{\frac{p}{p-1}}^{\frac1{p-1}}(\mu;x_0,r)+
\mint_{B_r(x_0)}|u|\, \dd x.
$$
We refer to \cite{km14b} for details. In analogy with such classical theory, we introduce
\begin{definition}[Intrinsic Lorentzian potentials]
\label{defiintrinsic.potentials}
Let $u:\mathbb R^n\to\mathbb R$ be weakly spacelike, and write
$K_\rho^u(x_0)$ for the Lorentzian balls associated with its graph.
For $\sigma,\vartheta>0$ and $w\in L^1_{\loc}(\mathbb R^n)$,
we define the intrinsic Lorentzian potential
\begin{equation}\label{defi-PL}
\mathbf{PL}_{\sigma,u}^{\vartheta}(w;x_0,r)
:=
\int_0^r
\varrho^\sigma
\left(
\frac{1}{\omega_n\varrho^n}
\int_{K_\varrho^u(x_0)}|w|\,dx
\right)^\vartheta
\frac{d\varrho}{\varrho},
\qquad x_0\in\mathbb R^n,\quad r>0,
\end{equation}
with values in $[0,+\infty]$. The set $K_\varrho^u(x_0)$ is the projection onto
$\mathbb R^n$ of the Lorentzian ball of radius $\varrho$
on the graph of $u$, centered at $(x_0,u(x_0))$.
\end{definition}
When $u$ is constant, this reduces to
$\mathbf P_\sigma^\vartheta$ in \eqref{defi-P}.
If $u\in C^1(\mathbb R^n)$ is strictly spacelike,
Lemma \ref{balls} gives, for each $x_0\in\mathbb R^n$,
a constant $c_0=c_0(x_0,u)\ge1$ such that\footnote{Indeed, Lemma \ref{balls} gives
$
B_\varrho(x_0)\subset K_\varrho^u(x_0)
\subset B_{c_0\varrho}(x_0)$, 
for $ 0<\varrho\le r\le1.$
The first inclusion yields the lower bound. For the upper
bound, the second inclusion and the change of variables
$t=c_0\varrho$ give
$$
\mathbf{PL}_{\sigma,u}^{\vartheta}(w;x_0,r)
\le
c_0^{n\vartheta}
\int_0^r
\varrho^\sigma
\left(
\mint_{B_{c_0\varrho}(x_0)}|w|\,dx
\right)^\vartheta
\frac{d\varrho}{\varrho}=
c_0^{n\vartheta-\sigma}
\mathbf P_\sigma^\vartheta(w;x_0,c_0r).
$$}
\eqn{javier}
$$
\mathbf P_\sigma^\vartheta(w;x_0,r)
\le
\mathbf{PL}_{\sigma,u}^{\vartheta}(w;x_0,r)
\le
c_0^{n\vartheta-\sigma}
\mathbf P_\sigma^\vartheta(w;x_0,c_0r),
\qquad 0<r\le1.
$$

If, in addition,
$\|\mathcal h_u\|_{L^\infty(\mathbb R^n)}<\infty$,
the same comparison holds for every $x_0\in\mathbb R^n$
and every $r>0$, with
$c_0=\|\mathcal h_u\|_{L^\infty(\mathbb R^n)}$.

\begin{theorem}[Intrinsic potential estimate]\label{t3}
Let $n\ge2$ and let
$u\in\mathbb X(\mathbb R^n)$ be the minimizer of
\eqref{bi.en} with  $\mu\in\mathbb Y(\mathbb R^n)$. Moreover, assume that 
$\mu\in L(n,1)(\mathbb R^n)$ if $n\ge3$ and $\mu\in L^2(\log L)^\alpha(\mathbb R^2)$
for some $\alpha>2$ when $ n=2$.
Then the intrinsic potential estimate
\begin{align}\label{int.pot}
\mathcal h_u(x_0)^{-\gamma}
&\ge
\mint_{K_r^u(x_0)}
\mathcal h_u^{-(\gamma+1)}\dx
-2\mathbf{PL}_{2,u}^{1}(\mu^2;x_0,r)
-\mathbf{PL}_{1,u}^{1}(\mu;x_0,r)
\end{align}
holds for some $\gamma\in (0, 1/n)$\footnote{The constant $\gamma\equiv\gamma(n)$ originates in
Proposition \ref{p31}, and the same choice is used throughout
the paper. A possible choice is $\gamma=\frac1{4n}$.}, every $x_0\in\mathbb R^n$ and every $r>0$,
where the intrinsic Lorentzian potentials are defined
in \eqref{defi-PL}.
\end{theorem}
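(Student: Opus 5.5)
The plan is to derive \eqref{int.pot} by integrating a monotonicity inequality for a negative power of the boost on intrinsic Lorentzian balls.

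First, under the stated assumptions, Theorems \ref{t2} and \ref{t222} give $\mathcal h_u\in L^\infty(\mathbb R^n)$, $Du$ continuous, $u\in W^{2,2}_{\loc}$, and $u$ a weak solution of \eqref{bi}. So the graph is uniformly spacelike and $C^1$. This has three consequences:
\begin{itemize}
\item the Lorentzian balls $K^u_\varrho(x_0)$ are comparable to Euclidean balls, via Lemma \ref{balls} with $c_0=\|\mathcal h_u\|_{L^\infty}$;
\item every $x_0$ is a Lebesgue point of $\mathcal h_u^{-(\gamma+1)}$;
\item $\mathcal h_u(x_0)$ is pointwise defined and continuous.
\end{itemize}
I would then work with a smooth approximation: mollify $\mu$, take the corresponding smooth strictly spacelike solutions $u_\eps$, and prove the estimate for them. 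I would pass to the limit at the end, using local uniform convergence of $Du_\eps$, which follows from the uniform bounds of Theorem \ref{t2}/\ref{t222} and continuity.

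The core step is the monotonicity inequality of Proposition \ref{p31}. Its potential-theoretic form, as I expect it to read, is a differential inequality in $\varrho$ for
\[
\Phi(\varrho):=\frac{1}{\omega_n\varrho^n}\int_{K^u_\varrho(x_0)}\mathcal h_u^{-(\gamma+1)}\dx.
\]
It comes from a Bochner/Jacobi-type identity for the boost: $\mathcal h_u$ is the last component of the unit normal, and on the graph one has $\Delta_g \mathcal h_u=\mathcal h_u(|A|^2+\ldots)+\langle\nabla H,e_{n+1}\rangle$ with $H=\mu$. Choosing $\gamma$ small ($\gamma\le 1/(4n)$) makes $\mathcal h_u^{-\gamma}$ superharmonic up to error terms. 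The $|A|^2$ term absorbs the cross terms via Kato/Young, which leaves errors of size $\mu^2$ and $|\mu|$ after integration by parts moves the derivative off $\mu$. With the intrinsic mean-value machinery on the graph, using the Lorentzian distance function and its Hessian comparison on spacelike graphs, this should yield
\[
\frac{\d}{\d\varrho}\Phi(\varrho)\ \ge\ -\,2\varrho\,\frac{1}{\omega_n\varrho^n}\int_{K_\varrho^u}\mu^2\dx\ -\ \frac{1}{\omega_n\varrho^n}\int_{K_\varrho^u}|\mu|\dx,
\]
possibly in integrated form on $(\varrho_1,\varrho_2)$. The exact weights $\varrho^{2}\cdot\frac{1}{\varrho}$ and $\varrho^{1}\cdot\frac1\varrho$ are exactly what produce $\mathbf{PL}^1_{2,u}(\mu^2)$ and $\mathbf{PL}^1_{1,u}(\mu)$. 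Integrating from $\varrho\downarrow0$ to $r$ gives
\[
\Phi(r)-\lim_{\varrho\to0}\Phi(\varrho)\le 2\mathbf{PL}^1_{2,u}(\mu^2;x_0,r)+\mathbf{PL}^1_{1,u}(\mu;x_0,r).
\]

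To finish, I need to identify the limit and the normalisation. Because $\omega_n\varrho^n$ is used instead of $|K^u_\varrho|$, I need $|K_\varrho^u(x_0)|/(\omega_n\varrho^n)\to \mathcal h_u(x_0)$ as $\varrho\to0$. This holds since, for $C^1$ $u$, the projection of a small Lorentzian ball is an ellipsoid of volume $\omega_n\varrho^n\mathcal h_u(x_0)$, the boost dilating the direction of $Du(x_0)$. Hence
\[
\lim_{\varrho\to0}\Phi(\varrho)=\mathcal h_u(x_0)\,\mathcal h_u(x_0)^{-(\gamma+1)}=\mathcal h_u(x_0)^{-\gamma}.
\]
Finally, $\Phi(r)$ has the normalised integral $\tfrac{1}{\omega_n r^n}\int_{K_r^u}$, which I take to be what the averaged integral over $K_r^u(x_0)$ in \eqref{int.pot} denotes in this Lorentzian setting. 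Rearranging gives \eqref{int.pot}.

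The main obstacle is establishing the monotonicity inequality with exactly these error potentials. It needs the sign of the Jacobi operator term for $\mathcal h_u^{-\gamma}$, control of the Lorentzian distance Hessian (weakly spacelike, so null directions complicate comparison), and the handling of the divergence term involving $\mu$ with only $L(n,1)$ regularity. Here I rely on Proposition \ref{p31} as given. The approximation limit needs $\mathbf{PL}$ of $\mu_\eps$ to converge to that of $\mu$, which follows from $K^{u_\eps}_\varrho\to K^u_\varrho$ in measure together with uniform boost bounds.
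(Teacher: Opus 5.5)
Your overall route matches the paper's. Proposition \ref{p31} already \emph{is} estimate \eqref{0.10} for smooth minimizers with $C^\infty_0$ charges (plus a nonnegative Hessian term), and Theorem \ref{t3} is obtained by applying it to the approximants of Section \ref{apsec} and passing to the limit. So the monotonicity is not the obstacle here; the actual proof is the limit passage, and that is where your sketch has gaps.

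Two smaller points first. Your displayed differential inequality has the wrong sign. Integrating $\Phi'\ge-(\dots)$ gives $\Phi(r)-\Phi(0^+)\ge-(2\mathbf{PL}^1_{2,u}+\mathbf{PL}^1_{1,u})$, which is an \emph{upper} bound for $\mathcal h_u(x_0)^{-\gamma}$. What the monotonicity formula gives, consistent with your own ``superharmonic'' heuristic, is $\frac{d}{ds}[\mathbb A+\mathcal A]\le$ errors. Here $\mathcal A\ge0$ is an auxiliary term with $\mathcal A(0^+)=0$, and the integrated form of this is the inequality you then write. Also, $\mint_{K_r^u(x_0)}$ is the genuine average $|K_r^u(x_0)|^{-1}\int_{K_r^u(x_0)}$. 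Your $(\omega_n r^n)^{-1}\int_{K_r^u}$ version is stronger and implies the stated one because $B_r(x_0)\subset K_r^u(x_0)$, which is exactly how the paper concludes.

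The substantive gap is the convergence of the potential terms. To pass to the limit in $\mathcal h_{u_i}(x_0)^{-\gamma}\ge\mint\dots-2\mathbf{PL}^1_{2,u_i}(\mu_i^2)-\mathbf{PL}^1_{1,u_i}(\mu_i)$ you need $\limsup_i\mathbf{PL}^1_{m,u_i}(|\mu_i|^m)\le\mathbf{PL}^1_{m,u}(|\mu|^m)$, and Fatou only gives the reverse. Set convergence, the uniform boost bound and $\mu_i\to\mu$ give convergence at each fixed $\varrho$. They do not give a dominating function near $\varrho=0$ that is uniform in $i$, and this is precisely where $L(n,1)$, resp.\ $L^2(\log L)^\alpha$ with $\alpha>2$, enters. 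The paper passes to a subsequence with $\sum_i\|\mu_i-\mu\|_Z<\infty$ and sets $g:=|\mu|+\sum_i|\mu_i-\mu|\in Z$. Using $K^{u_i}_\varrho\subset B_{M\varrho}$ and Hardy--Littlewood, it dominates $\varrho^{m-n-1}\int_{K^{u_i}_\varrho}|\mu_i|^m$ by $q_m(\varrho)=\varrho^{m-n-1}\int_0^{\omega_nM^n\varrho^n}(g^*)^m\,dt$. This $q_m$ is integrable on $(0,r)$ by the Lorentz assumption, resp.\ by \eqref{orlicz.rearr}; for $n=2$, $m=1$ this needs exactly $\alpha>2$.

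Your claimed a.e.\ convergence of $\mathds 1_{K^{u_i}_\varrho(x_0)}$ also needs $|\partial K^u_\varrho(x_0)|=0$. The paper gets this from $\langle DF_{x_0}(x),x-x_0\rangle\ge2(1-\theta^2)|x-x_0|^2$, where $F_{x_0}=|x-x_0|^2-|u-u(x_0)|^2$.

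Finally, local uniform convergence of $Du_i$ does not follow from uniform boost bounds plus continuity of $Du$. You would need equicontinuity of $Du_i$, i.e.\ a uniform version of the endpoint continuity theorem for the linearized equations. The paper avoids this: it uses only $Du_i\to Du$ a.e., obtains \eqref{int.pot} for a.e.\ $x_0$, and extends it to every $x_0$ by showing each term is continuous in $x_0$. Note also that Proposition \ref{p31} requires $C^\infty_0$ charges. So mollification must be combined with truncation, and with the zero-mean correction when $n=2$.
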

This result provides an intrinsic potential formulation of the
estimates of Bonheure \& Iacopetti \cite{bi23},
extending them to dimension two and to the borderline classes of
rough data considered here. We refer to Corollary \ref{cor.pot} for a similar estimate using the traditional nonlinear potentials in \rif{defi-P}. 
\subsection{Plan of the paper}
We now give a brief overview of the technical content of the paper, breaking down the material according to sections.

\begin{itemize}

\item In Section \ref{notazioni} we describe the main notation and clarify the main function space settings. In particular, we describe the Dirichlet problem, the function space setting for the global problem, i.e., when $\Omega \equiv \er^n$, and a few preliminaries from nonlinear potential theory, adopting an approach already described in \cite{bm20}. Moreover, we also clarify how the notion of nonuniform ellipticity enters the present setting.

\item In Section \ref{babysec} we derive a very preliminary bound for the size of $u$ in the global case.

\item In Section \ref{recapsec} we gather the basic notions and results on Lorentz-Minkowski spaces we shall employ in the following. These include, among other things, Stokes' theorem on graphs (Lemma \ref{ibp}) and the monotonicity formula in Proposition \ref{prop.mon}. Most importantly, in Lemma \ref{balls} we establish some geometric conditions ruling the relations between Euclidean balls $B(x_0,r)$ and Lorentzian balls $K(x_0,r)$.

\item Section \ref{luce} is instead devoted to clarifying the notions of light segment and light ray, their connection to Optimal Transport theory, and a few basic differentiability properties of functions along light segments.

\item Section \ref{lr.s} contains the proof of the rough anti-peeling theorem, Theorem \ref{al.t}, for local minimizers on arbitrary open sets.

\item Section \ref{capacity} deals with general capacitary criteria to estimate the set of light rays of a general, given weakly spacelike function. The key idea is to establish estimates on the capacity and Hausdorff dimension of the set of light rays defined in \rif{language1} in terms of integrability and differentiability of the boost function $\mathcal h_u$. Such criteria hold whether or not the function in question solves \rif{bi}. It contains the proof of Theorem \ref{intre}. 

\item In Section \ref{potest} we derive estimate \rif{int.pot} for more regular solutions. We also point out an interesting connection with intrinsic potentials developed in the setting of degenerate evolution problems \cite{KM13,km14}.

\item Section \ref{boostinf} establishes an exterior upper bound for the boost function. This property is essential for the subsequent global estimates. We also prove in Proposition \ref{connessione} that classical solutions in the global energy space are minimizers.

\item Section \ref{diridiri} develops the preparatory material for dealing with Dirichlet problems involving rough charges. Using nonlinear correctors and the anti-peeling theorem, we obtain the quantitative boundary spacelikeness estimate of Proposition \ref{corr}. The resulting boundary control and regularity in Corollary \ref{app.cor} provide the tools needed for the subsequent estimates and approximation arguments. An interesting point is that considering unbounded charges leads us to depart from traditional barrier methods \cite{bs82} and to merge them with nonlinear fixed point arguments. This is a technical and heavy section that, although not at all trivial, we recommend skipping on a first reading.

\item In Section \ref{sec.4}, again in the form of a priori estimates for more regular solutions, we derive the necessary arguments to prove Theorem \ref{t1}. The most delicate point is the borderline result displayed in \eqref{hiiigh}. For this, as already mentioned before, we inject a technique developed for nonuniformly elliptic problems in the setting of stochastic homogenization and originally due to Bella \& Schäffner \cite{bs20,bs24}; see Lemma \ref{bslem} and \cite{dkk24}.

\item Section \ref{calisec} contains material of general interest that might find applications in several other places. We indeed give a ``calibrated'' version of the classical global iterations used in elliptic theory and going back to Stampacchia \cite{sta58} and Maz'ya \cite{maz61}. The outcome is a global version of De Giorgi iteration, which, unlike the usual global methods, is able to catch borderline spaces unachievable otherwise; see Lemma \ref{abstract.iteration}. The idea of the proof borrows from nonlinear potential theoretic methods, and, in particular, it gives a global version of some of the ideas explained in \cite{bm20, km94, km12}.

\item In the following Section \ref{caliapp}, we apply this abstract result to prove the global boost bounds of Proposition \ref{boostdopo}, under borderline Lorentz assumptions in dimensions $n\ge3$ and $L^2(\log L)^\alpha$ assumptions in dimension two. We stress that the flexibility of the calibrated iteration in Section \ref{calisec} leads to the neat estimate in \rif{boost.boundi}. 

\item Section \ref{apsec} constructs the approximation scheme. We approximate the charge by smooth, compactly supported data, preserving the required norm bounds and the zero-mean condition in dimension two. We then introduce the corresponding smooth minimizers and identify their limit with the minimizer of the original problem.

\item In Section \ref{apsec2} we apply the a priori estimates to the approximate minimizers and pass to the limit, proving Theorems \ref{t1}, \ref{t2}, and \ref{t222}. This yields weak solvability, Sobolev regularity, and the borderline boost and gradient estimates.

\item Following the previous one, in Section \ref{diriproof} we give the proof of Theorem \ref{dir.borderline}, using the content of Section \ref{diridiri}. 

\item In Section \ref{passa}, Theorem \ref{t3} is proved by passing to the limit in the intrinsic potential estimates of Section \ref{potest}, including the convergence of the Lorentzian integration sets.

\item Section \ref{s7.7} proves Theorems \ref{light.t} and \ref{b.th}. The anti-peeling theorem is used to exclude light segments in the global and bounded-domain settings, with the corresponding consequences for weak solvability.

\item In Section \ref{shine} we then construct the example in Theorem \ref{ex.t}, which exhibits a light segment at the weak Lorentz endpoint and establishes the sharpness of the anti-peeling threshold. 

\item In Section \ref{esempione} we give examples of fat closures of light rays and prove Corollary \ref{brutto}, thereby disproving the conjectures in \cite{bimm24}. 

\item Finally, Appendix \ref{appe} revisits and extends the basic existence, Euler--Lagrange, and Calder\'on--Zygmund results used in the paper, with particular attention to the two-dimensional case.

\end{itemize}

\vspace{.7cm}

{\bf Acknowledgements and use of AI}. 
During the preparation of this work, the authors used LLMs to correct typographical and grammatical errors, check the presentation and double-check the proofs. The mathematical ideas and strategy of proofs, belong to the authors and were first described in the first named author's proposal of the ERC project NEW, submitted in October 2024. We indeed acknowledge the support of the European Research Council, through the ERC StG project NEW, nr.~101220121. 

\vspace{.7cm}

\section{Preliminaries, Function spaces setting}\label{notazioni} 
\subsection{Notation}  
In this paper we denote by $c$ a general, finite constant such that $c\geq 1$, which, as usual, may change from line to line. The same will happen with constants playing the same role like $c_*,  \tilde c$ and so on. Relevant dependencies on parameters will be as usual emphasized by putting them in parentheses. By  $\texttt{x} \lesssim \texttt{y}$, with $\texttt{x},\texttt{y}$ being two non-negative real numbers, we mean that  $\texttt{x}\leq c\,  \texttt{y}$ holds for a universal constant $c$, i.e., a constant at most depending on the ambient dimension $n$. In less frequent cases, the constant will depend on a fixed set of parameters that will be clear from the context.  In case we want to emphasize such parameters, when for instance the constant $c$ depends on, say, $\gamma, \sigma$, we shall denote  $\texttt{x} \lesssim_{\gamma, \sigma} \texttt{y}$. We shall write $\texttt{x} \approx_{\gamma, \sigma} \texttt{y}$ when both $\texttt{x} \lesssim_{\gamma, \sigma} \texttt{y}$ and $\texttt{y} \lesssim_{\gamma, \sigma} \texttt{x}$ occur. In particular, we shall write $\texttt{x} \lesssim 1$ provided there exists an absolute constant $c$, i.e., depending at most on $n$, such that $\texttt{x} \leq c$, and $\texttt{x} \approx 1$ when $1/c \leq \texttt{x} \leq c$. A similar meaning occurs when using the notation $\texttt{x} \approx_{\gamma, \sigma}  1$.
When a constant $c$ depends essentially on one parameter, like for instance the ambient dimension $n$, we shall denote $c_n$ rather than $c(n)$.  We shall always abbreviate $0_{\er^n}\equiv 0$. With $x_{0} \in \er^n$ and $r>0$, we denote 
$
B_r(x_{0}):= \{x \in \er^n  :   |x-x_{0}|< r\}\,.
$
We shall omit denoting the center, i.e., abbreviating $B_r \equiv B_r(x_{0})$ when no ambiguity will arise; this will often be the case when various balls in the same context share the same center. When no need to specify center and radius will occur we shall denote a generic  ball of $\er^n$ by $B$. Moreover, with $B$ being a given ball with radius $r$ and $\gamma$ being a positive number, we denote by $\gamma B$ the concentric ball with radius $\gamma r$ and write $B/\gamma \equiv (1/\gamma){B}$.  We shall often denote by $\omega_n:=|B_1(0)|$ the Lebesgue measure of the $n$-dimensional ball in $\er^n$. We shall also denote 
$\mathbb S^{n-1}:= \{x\in \er^n\colon |x|=1\}\subset \er^n$. 
In this paper 
$
(\cdot)_+
$
denotes the positive part of a function, i.e. $(f)_+:=\max\{f,0\}$. Moreover, we shall abbreviate $\logs t:= \sqrt{\log t}$ for $t\geq 1$. Let $x,y\in\mathbb R^n$; we recall that the \emph{closed segment} joining $x$ and
$y$ is
$
\overline{xy}
:=
\{(1-t)x+ty\colon t\in[0,1]\}.
$
The segment is called \emph{nondegenerate} if $x\neq y$. Its relative interior is the  open segment
$
(xy)
:=
\{(1-t)x+ty\colon t\in(0,1)\}. 
$ In the rest of the paper the symbol $\langle\cdot, \cdot \rangle $ will be used both for the Euclidean scalar product in $\er^n$ and to express duality pairing in function spaces as already done in \rif{bi.en}. 
With $\mathcal  A \subset \er^{n}$ being a measurable subset such that  $0<|\mathcal A|<\infty$, and $w \colon \mathcal  A \to \er^{k}$, $k\geq 1$, being an integrable map, we denote  its integral average  by
$$(w)_{\mathcal  A}:=\frac{1}{\snr{\mathcal  A}}\int_{\mathcal  A}w\dx:= \mint_{\mathcal  A}w\dx\,.$$
\subsection{Nonuniform ellipticity}\label{ner} Here we discuss the main growth/ellipticity features of the integrand in \eqref{bi.en}. For $z\in \mathbb{R}^{n}$, $\snr{z}\leq  1$, set
$
H(z):=1-\sqrt{1-\snr{z}^{2}}$
so that, when $\snr{z}<  1$ it holds that
\eqn{recalla}
$$
\partial H(z)= \frac{z}{\sqrt{1-\snr{z}^{2}}}, \qquad \quad \mathcal{H}(z):= (1-|z|^2)\mathds I+z\otimes z=(1-\snr{z}^{2})^{3/2}\partial^{2}H(z).
$$
A direct computation grants
\eqn{0.1.1}
$$
\langle\partial^{2}H(z)\xi,\xi\rangle\ge\frac{\snr{\xi}^{2}}{\sqrt{1-\snr{z}^{2}}}\qquad \mbox{and}\qquad \snr{\partial^{2}H(z)}\le \frac{\sqrt{n}}{(1-\snr{z}^{2})^{\frac{3}{2}}},
$$
and 
\eqn{0.1}
$$
\begin{cases}
\displaystyle
\ \langle\mathcal{H}(z)\xi,\xi\rangle\ge (1-\snr{z}^{2})\snr{\xi}^{2}\vspace{1.5mm}\\
\ \snr{\mathcal{H}(z)}\le \sqrt{n},\qquad \quad  \mathcal{H}(z)z=z,
\end{cases}
$$
for any $z\in \mathbb{R}^{n}$ with $\snr{z}<1$, and all $\xi\in \mathbb{R}^{n}$. Bounds \eqref{0.1.1} highlight the nonuniform ellipticity of $H$. In fact, the ellipticity ratio $\mathcal{R}_{\partial H}$ (i.e., the quotient of the highest and lowest eigenvalues) blows up when $|z|\uparrow 1$
\eqn{ratione}
$$
\mathcal{R}_{\partial H}(z)=\frac{1}{1-\snr{z}^{2}}\stackrel{\snr{z}\to 1}{\longrightarrow}\infty.
$$
\begin{remark} {\em In the following we agree to set
\eqn{light}
$$
\frac{1}{\sqrt{1-|z|^2}}=\infty, \qquad \mbox{whenever $|z|=1$.}
$$
In particular, we set $\mathcal h_u(x)=\infty$ whenever $|Du(x)|=1$. Such case will be anyway very seldom considered.}
\end{remark}
\begin{remark}[Notation on derivatives]{\em We shall denote by $Dv=(D_i v)$ the usual gradient operator in $\er^n$, while, given a unit vector $\nu\in \er^n$, we denote by $\partial_\nu v$ the partial derivative of $v$ in direction $\nu$, so that, with  $\{\tx{e}_i\}$ being the standard basis of $\er^n$, $D_i v = \partial_{\tx{e}_i} v\equiv \partial_i v$. We shall sometimes depart from this notation using $\partial$ when considering an integrand as in \rif{recalla}-\rif{0.1}, so that, for instance, we have a notation like $D(H(Dv))= \partial H(Dv)D^2v$ allowing to better use the structural properties of the considered integrand when acting on gradients.}
\end{remark}
\subsection{Weakly spacelike functions and notions of solutions}
We recall the following, basic terminology \cite{bs82}. Let
$
\Omega\subset\mathbb R^n
$
be an open subset  and let
$
w\in W^{1,\infty}_{\loc}(\Omega)$, which we identify with its locally Lipschitz representative. The function $w$ is called
\begin{itemize}
\item \emph{weakly spacelike} if
$ \snr{Dw(x)} \leq 1 $
holds for a.e.\,$x\in\Omega$.
\item \emph{spacelike} if
$
\snr{w(x)-w(y)}
<
\snr{x-y}
$
holds for every distinct $x,y\in\Omega$ such that $\overline{xy}\subset\Omega$.
\item
\emph{strictly spacelike} if
$
w\in C^1(\Omega)
$
and
$
\snr{Dw}<1$ in $\Omega$.
\end{itemize}

\begin{definition}
Let $\Omega\subset\mathbb R^n$ be an open subset and let
$
\mu\in L^1_{\loc}(\Omega).
$
A weakly spacelike function
$
u\in W^{1,\infty}_{\loc}(\Omega)
$
is called a distributional or a weak solution to \eqref{bi} in $\Omega$ if
$
\mathcal h_u  Du
\in L^1_{\loc}(\Omega;\mathbb R^n)
$
and\footnote{This implies, in particular, that $|Du(x)|<1$ for almost every $x\in \Omega$ in accordance with 
\rif{light}.}
\eqn{weshall}
$$
 \int_\Omega
\langle\mathcal h_u  Du,D\varphi\rangle
\dx
=\int_\Omega \mu\varphi\dx
$$
for every
$
\varphi\in C^\infty_0(\Omega).
$
Equivalently,
$
-\mathcal Mu=\mu
$
in $\mathcal D'(\Omega)$.
\end{definition}
In the following we shall often write, instead of \eqref{weshall}
\eqn{weshall2}
$$
\int_\Omega
\langle
\partial H(Du),D\varphi\rangle\dx
=\int_\Omega \mu\varphi\dx. 
$$
\begin{definition}
Let $\Omega\subset\mathbb R^n$ be open and let
$\mu:\Omega\to\mathbb R$ be given. We say that
$
u\in C^2(\Omega)
$
is a \emph{classical solution} to \eqref{bi} if
$|Du|<1$
in $\Omega$ and
$
-\mathcal M u
=
\mu
$
holds pointwise in $\Omega$.
\end{definition}
\subsection{The Dirichlet problem}\label{dirisec} We shall consider two cases: the first is the global case $\Omega=\er^n$. The second is when $\Omega$ is an arbitrary, bounded domain of $\er^n$. When considering this last case we of course have to fix boundary data $u_0$ and to make precise the way the boundary values are attained. For this, following \cite{bs82} we consider 
\begin{definition}\label{defidatobordo}
Let $\Omega\subset\mathbb R^n$ be a bounded domain, let
$
u_0:\partial\Omega\to\mathbb R,
$ 
and let
$
u:\Omega\to\mathbb R
$
be weakly spacelike. We write
$
u=u_0
$
on $\partial\Omega$ if, for every
$
x_0\in\partial\Omega
$
and every open straight line segment
$
\ell\subset\Omega
$
having $x_0$ as an endpoint, one has
\eqn{fbi}
$$
\lim_{\substack{x\to x_0\\ x\in\ell}}u(x)
=
u_0(x_0).
$$
\end{definition}
\begin{remark}
{\em
Assume that $\Omega$ is Lipschitz regular and
$
u\in W^{1,\infty}(\Omega).
$
Then $u$ admits a Lipschitz continuous representative on
$
\bar\Omega.
$
Consequently, if
$
u_0\in C^0(\partial\Omega),
$
condition \rif{fbi} holds at every point
$
x_0\in\partial\Omega
$
if and only if
$
u_0
$
coincides with the Sobolev trace of $u$ on $\partial\Omega$.}
\end{remark}
\noindent For the formulation of the Dirichlet problem we consider a setting essentially equivalent to those in \cite{bs82} and \cite{bimm24}, adapted to the present situation in which the charge may be unbounded. More precisely, we consider boundary data \(u_0\) and charges \(\mu\) satisfying
\eqn{u0u0.bd00}
$$
\begin{cases}
u_{0}\in C^{0}(\partial\Omega)\mbox{ such that }
\snr{u_{0}(x)-u_{0}(y)}<\snr{x-y}
\mbox{ whenever }x\neq y
\mbox{ and }\overline{xy}\subset\overline\Omega,
\\[1mm]
\mbox{$\exists\,\tx{u}_0\in W^{1,\infty}(\Omega)$ with
$\|D\tx{u}_0\|_{L^{\infty}(\Omega)}\leq1$
and $\tx{u}_0=u_0$ on $\partial\Omega$
in the sense of Definition \ref{defidatobordo},}
\\[1mm]
\mu\in L^1(\Omega).
\end{cases}
$$
The corresponding class of competitors is
$$
\mathcal D_0(\Omega)
:=
\left\{
w\in W^{1,\infty}(\Omega):
\|Dw\|_{L^\infty(\Omega)}\le1
\mbox{ and }
w=u_0\mbox{ on }\partial\Omega
\mbox{ in the sense of Definition \ref{defidatobordo}}
\right\}.
$$
By \eqref{u0u0.bd00}, the class \(\mathcal D_0(\Omega)\) is non-empty.
The Dirichlet problem
\eqn{pd2in}
$$
\min_{w\in\mathcal D_0(\Omega)}
\mathcal E_\mu(w;\Omega)
$$
admits a unique minimizer by the classical Direct Methods. The Dirichlet minimizer is also a local minimizer in the sense of Definition \ref{minimilocali}. Indeed, if \(u\in\mathcal D_0(\Omega)\) is the Dirichlet minimizer and
\(v\) is as in the above definition, then the function which agrees with
\(v\) in \(U\) and with \(u\) in \(\Omega\setminus U\) belongs to
\(\mathcal D_0(\Omega)\); hence the local minimality inequality follows
from the global minimality in \(\mathcal D_0(\Omega)\).

\subsection{Function spaces and minimality for problems in $\er^n$}\label{spazifunzionali}
We describe the functional-analytic framework for the global
Born--Infeld functional
\eqn{globale}
$$
\mathcal E_\mu(w)
\equiv
\mathcal E_\mu(w;\mathbb R^n)
:=
\int_{\mathbb R^n}
H(Dw)\dx
-
\langle\mu,w\rangle.
$$
Our purpose is to specify both the admissible class for $w$ and the
class of data for which the duality term is well defined. For $n\ge2$, we denote by
$
\mathcal D^{1,2}(\mathbb R^n)
$
the completion of
$
C^\infty_0(\mathbb R^n)
$
with respect to the norm of the gradient; this is usually called Deny-Lions space \cite{lad64,ff21}. In dimension two, this
completion is naturally understood modulo additive constants. By the
corresponding Hardy inequalities, it can be identified with the
following concrete realization:
\eqn{24.3}
$$
\mathcal D^{1,2}(\mathbb R^n)
:=
\begin{cases}
\displaystyle\left\{
w\in W^{1,2}_{\loc}(\mathbb R^n):\frac{w}{1+|\cdot|}\in L^2(\mathbb R^n),
\quad
Dw\in L^2(\mathbb R^n)
\right\}
& n\ge3
\\[3mm]
\displaystyle
\left\{
w\in W^{1,2}_{\loc}(\mathbb R^2):
\frac{w}
{(1+|\cdot|)\log(e+|\cdot|)}\in L^2(\mathbb R^2),
\quad
Dw\in L^2(\er^2)
\right\}
\big/\er
& n=2.
\end{cases}
$$
The last symbol means that elements of $\mathcal D^{1,2}(\mathbb R^2)$ are identified modulo constants. Note that every equivalence class in
$
\mathcal D^{1,2}(\mathbb R^2)
$
has a unique representative whose average over $B_1(0)$ vanishes.
Accordingly, we introduce the normalized realization
$$
\DD
:=
\begin{cases}
\displaystyle
\mathcal D^{1,2}(\mathbb R^n)
& n\ge3
\\[2mm]
\displaystyle
\left\{w\in W^{1,2}_{\loc}(\mathbb R^2):Dw\in L^2(\mathbb R^2),\ 
(w)_{B_1(0)}=0\right\}
& n=2.
\end{cases}
$$
The space $\DD$ is a Hilbert space with scalar product and norm given by 
$$
(w_1,w_2)_{\DD}
:=\int_{\mathbb R^n}
\langle Dw_1,Dw_2\rangle\dx, \qquad 
\|w\|_{\DD}
:=
\|Dw\|_{L^2(\mathbb R^n)},
$$
respectively.  In dimension two, this is implied by the normalization and Poincaré's inequality. The natural admissible class for the Born--Infeld energy is now defined as 
\eqn{XXX}
$$
\mathbb X(\mathbb R^n)
:=
\left\{
w\in\DD:
\|Dw\|_{L^\infty(\mathbb R^n)}
\le1
\right\}.
$$
Notice that $\mathbb X(\mathbb R^n)$ is a closed convex subset of
$\DD$, rather than a vector space. Under the constraint
$
|Dw|\le1,
$
one has
\eqn{elema}
$$
\frac12|Dw|^2
\le
1-\sqrt{1-|Dw|^2}
\le
|Dw|^2.
$$
Consequently, on $\mathbb X(\mathbb R^n)$, the Born--Infeld energy
term is finite if and only if
$
Dw\in L^2(\mathbb R^n).
$
Thus, in every dimension, a natural sufficient assumption
for the global variational problem is $\mu\in\DD^*$.
For the global two-dimensional theory developed below, we shall work
in a more restrictive class of data. We first introduce the weighted
space
$$
\begin{cases}
\displaystyle
L_x^2(\mathbb R^2)
:=
\left\{
\mu:\mathbb R^2\to\mathbb R\text{ measurable}:
\mu\log(e+|\cdot|)
\in L^2(\mathbb R^2)
\right\}
\\[2mm]
\displaystyle
\|\mu\|_{L_x^2(\mathbb R^2)}
:=
\|\mu\log(e+|\cdot|)\|_{L^2(\mathbb R^2)},
\end{cases}
$$
and the zero-mean space
\eqn{zeromean}
$$
L_0^1(\mathbb R^2):=\left\{\mu\in L^1(\mathbb R^2):\int_{\mathbb R^2}\mu\dx=0\right\}.
$$
We then define the data space
\eqn{yyy}
$$
\mathbb Y(\mathbb R^n)
:=
\begin{cases}
\displaystyle
\DD^*
=
\mathcal D^{1,2}(\mathbb R^n)^*
& n\ge3
\\[2mm]
\displaystyle
\DD^*
\cap
L_x^2(\mathbb R^2)
\cap L_0^1(\mathbb R^2)& n=2,
\end{cases}
$$
which is endowed with the norm
\eqn{yyy.norm}
$$
\|\mu\|_{\mathbb Y(\mathbb R^n)}
:=
\begin{cases}
\displaystyle
\|\mu\|_{\DD^*}
& n\ge3
\\[2mm]
\displaystyle
\|\mu\|_{\DD^*}
+
\|\mu\|_{L_x^2(\mathbb R^2)}
+
\|\mu\|_{L^1(\mathbb R^2)}
& n=2.
\end{cases}
$$
\begin{remark}[Concrete representation]\label{concreto}
{\em
Let
$
\mu\in L^1_{\loc}(\mathbb R^n),
$
and, when $n=2$, assume in addition that
$
\mu\in L^1_0(\mathbb R^2).
$
We say that $\mu$ induces an element of $\DD^*$ if
\eqn{dual.function.norm}
$$
\|\mu\|_{\DD^*}
:=\sup_{\substack{
\varphi\in C^\infty_0(\mathbb R^n)\\
\|D\varphi\|_{L^2(\mathbb R^n)}\le1}}
\left|\int_{\mathbb R^n}
\mu\varphi\dx
\right|<\infty.
$$
When $n\ge3$, this is the usual concrete characterization of the
dual of
$
\DD=\mathcal D^{1,2}(\mathbb R^n),
$
since
$
C^\infty_0(\mathbb R^n)
$
is dense in $\DD$. 
When $n=2$, for every
$
\varphi\in C^\infty_0(\mathbb R^2),
$
set
$
P\varphi := \varphi-(\varphi)_{B_1(0)}.
$
Then
$
P\varphi\in\DDDD
$
and
$
D(P\varphi)=D\varphi.
$
Moreover, since
$
\mu\in L^1_0(\mathbb R^2),
$
one has
\eqn{dual.zero.mean}
$$
\int_{\mathbb R^2}\mu P\varphi\dx
=
\int_{\mathbb R^2}\mu\varphi\dx.
$$
Since
$
\{P\varphi:\varphi\in C^\infty_0(\mathbb R^2)\}
$
is dense in $\DDDD$, condition \eqref{dual.function.norm} is also
precisely the boundedness condition required for the distribution
induced by $\mu$ to extend continuously to the normalized space
$\DDDD$. Thus, in every dimension $n\ge2$, condition
\eqref{dual.function.norm} yields a unique continuous linear
functional on $\DD$, still denoted by $\mu$, satisfying
$$
|\langle\mu,w\rangle|
\le
\|\mu\|_{\DD^*}
\|w\|_{\DD}
\qquad
\mbox{for every }w\in\DD.
$$
The Riesz representation theorem
gives a unique
$
v_\mu\in\DD
$
such that
\eqn{dual.riesz}
$$
\begin{cases}
\displaystyle 
\langle\mu,w\rangle
=
\int_{\mathbb R^n}
\langle Dv_\mu,Dw\rangle\dx
\qquad
\mbox{for every }w\in\DD\\
\|\mu\|_{\DD^*}
=
\|Dv_\mu\|_{L^2(\mathbb R^n)}.
\end{cases}
$$
We call such a $v_{\mu}$ the Riesz representative of $\mu$. 
If $n\ge3$, this implies 
\eqn{dual.ide}
$$
\int_{\mathbb R^n}
\langle Dv_\mu,D\varphi\rangle\dx
=
\int_{\mathbb R^n}
\mu\varphi\dx, \qquad \mbox{for every $\varphi\in C^\infty_0(\mathbb R^n)$}. 
$$
If $n=2$, we instead take
$
w=P\varphi
$
and use \eqref{dual.zero.mean}, thereby 
obtaining the same identity. Consequently, in every dimension
$n\ge2$, it holds that  
$
-\Delta v_\mu=\mu
$
in $\mathcal D'(\mathbb R^n).
$
}
\end{remark}
\begin{definition}[Global minimizers on $\mathbb R^n$]
\label{minimiglobali}
Let $\mu\in\DD^*$.
A function $u\in\mathbb X(\mathbb R^n)$ is called a
\emph{global minimizer} of $\mathcal E_\mu$ if
$
\mathcal E_\mu(u)\le\mathcal E_\mu(v)
$
for every $v\in\mathbb X(\mathbb R^n)$.
\end{definition}

\begin{remark}[From global to local minimality]
\label{globale.locale}{\em 
Let
$\mu\in\DD^*\cap L^1_{\loc}(\mathbb R^n)$,
with the identification in Remark \ref{concreto}, and assume
in addition that $\mu\in L^1_0(\mathbb R^2)$ when $n=2$.
Let $u\in\mathbb X(\mathbb R^n)$ be a global minimizer
in the sense of Definition \ref{minimiglobali}.
For every bounded open subset $U\subset\mathbb R^n$, introduce
$$
\mathfrak X(u;U)
:=
\left\{
w\in W^{1,\infty}(U):
w-u\in W^{1,2}_0(U),
\quad
\|Dw\|_{L^\infty(U)}\le1
\right\}.
$$
Then $\mathcal E_\mu(u;U)\le\mathcal E_\mu(w;U)$ holds 
for every $w\in\mathfrak X(u;U)$.
In particular, $u$ is a local minimizer in the sense
of Definition \ref{minimilocali} with $\Omega =\er^n$. Indeed, given $w\in\mathfrak X(u;U)$, set
$$
w_u:=
\begin{cases}
w&\text{in }U\\
u&\text{in }\mathbb R^n\setminus U
\end{cases}
\qquad
z:=w_u-u.
$$
Define, as in Remark \ref{concreto}, 
$Pz:=z$ if $n\ge3$ and 
$Pz:=z-(z)_{B_1(0)}$ when $n=2$. 
Then $u+Pz\in\mathbb X(\mathbb R^n)$.
By Remark \ref{concreto}, approximating $z$ by smooth
compactly supported functions uniformly and in the
Dirichlet norm gives
$
\langle\mu,Pz\rangle
=
\int_U\mu(w-u)\,dx.
$
Global minimality therefore yields
$$
0
\le
\mathcal E_\mu(u+Pz)-\mathcal E_\mu(u)
=
\int_U
[H(Dw)-H(Du)-\mu(w-u)]\, \dd x=
\mathcal E_\mu(w;U)-\mathcal E_\mu(u;U).
$$}
\end{remark}

\subsection{Nonlinear potentials, Lorentz spaces}\label{tolli} In this section we recall a few function spaces of interest and connect them with the Havin-Maz'ya nonlinear potentials ${\bf P}_{\sigma}^{\vartheta}$ displayed in \rif{defi-P}. We shall rely in particular on the potential theoretic tools developed in \cite[Section 4]{dm23}; see also \cite{dm23,dm25,ddp26}. Let \(U\subset\mathbb R^n\) be an open set, and let
\(\mathfrak s\in(0,\infty)\) and \(\mathfrak{q}\in(0,\infty]\). Given a measurable
function \(w:U\to\mathbb R^k\), we denote its distribution function
by
\(
\mu_w(\lambda):=
|\{x\in U:|w(x)|>\lambda\}|
\)
and the non-increasing rearrangement of \(|w|\) by
\eqn{rearrangia}
$$
w^*(\varrho)
:=
\inf
\left\{
\lambda>0:
\mu_w(\lambda)\le\varrho
\right\}$$ for 
$\varrho\ge0. 
$
If \(\mathfrak{q}<\infty\), the Lorentz quasi-norm is defined by
\eqn{definorma}
$$
\|w\|_{\mathfrak s,\mathfrak{q};U}
:=
\left(
\int_0^\infty
\left(
\varrho^{1/\mathfrak s}w^*(\varrho)
\right)^\mathfrak{q}
\frac{d\varrho}{\varrho}
\right)^{1/\mathfrak{q}}=
\left(
\mathfrak s\int_0^\infty
\left(
\lambda^\mathfrak s
|\{x\in U:|w(x)|>\lambda\}|
\right)^{\mathfrak{q}/\mathfrak s}
\frac{d\lambda}{\lambda}
\right)^{1/\mathfrak{q}}.
$$
If \(\mathfrak{q}=\infty\), we set
$$
\|w\|_{\mathfrak s,\infty;U}
:=
\sup_{\varrho>0}
\varrho^{1/\mathfrak s}w^*(\varrho)
=
\sup_{\lambda>0}
\lambda\,
\mu_w(\lambda)^{1/\mathfrak s}.
$$
The Lorentz space \(L(\mathfrak s,\mathfrak{q})(U;\mathbb R^k)\) consists of all
measurable functions for which the corresponding quasi-norm is
finite; see \cite{o'n68,sw71}. The local version is defined in the usual way, i.e., \(v\in L_{\loc}(\mathfrak s,\mathfrak{q})(U;\mathbb R^k)\) whenever \(v\in L(\mathfrak s,\mathfrak{q})(\tilde U;\mathbb R^k)\) whenever $\tilde U \Subset U$ is an open subset. 
Define
$$
w^{**}(r)
:=
\frac1r\int_0^r w^*(\varrho)\,d\varrho,\quad  r>0
$$
and, for $0<\mathfrak{q} < \infty$
\eqn{normalo}
$$
[w]_{\mathfrak s,\mathfrak{q};U}
:=
\left(
\int_0^\infty
\left(
r^{1/\mathfrak s}w^{**}(r)
\right)^\mathfrak{q}
\frac{\drr}{r}
\right)^{1/\mathfrak{q}},
$$
whereas
$$
[w]_{\mathfrak s,\infty;U}
:=
\sup_{r>0}
r^{1/\mathfrak s}w^{**}(r).
$$
If \(\mathfrak s>1\) and \(0<\mathfrak{q}\le\infty\), then
\eqn{normalo2}
$$
\nr{w}_{\mathfrak s,\mathfrak{q};U}
\approx_{\mathfrak s,\mathfrak{q}}
[w]_{\mathfrak s,\mathfrak{q};U}.
$$
If, in addition, \(1\le\mathfrak{q}\le\infty\), the quantity
\([\cdot]_{\mathfrak s,\mathfrak{q};U}\) is a norm making
\(L(\mathfrak s,\mathfrak{q})(U;\mathbb R^k)\) a Banach space. For $m\in\mathbb N$, $\mathfrak s> 1$, and again 
$1\le\mathfrak{q}\le\infty$, we set
$
W^{m;\mathfrak s,\mathfrak{q}}(U)
:=
\left\{
w\in W^{m,\mathfrak s}(U):
|D^mw|\in L(\mathfrak s,\mathfrak{q})(U)
\right\},
$
endowed with the norm
$$
\|w\|_{W^{m;\mathfrak s,\mathfrak{q}}(U)}
:=\|w\|_{W^{m,\mathfrak s}(U)}+[D^mw]_{\mathfrak s,\mathfrak{q};U}.
$$
Its local
variant is defined in the usual way, and we set
$
W^{1;\mathfrak s,\mathfrak{q}}_0(U;\mathbb R^k)
:=
W^{1;\mathfrak s,\mathfrak{q}}(U;\mathbb R^k)
\cap
W^{1,\mathfrak s}_0(U;\mathbb R^k).
$

We shall also use Orlicz spaces, including Orlicz--Zygmund
spaces, which are useful in borderline situations.
Throughout this paper, we consider finite Young functions
$A:[0,\infty)\to[0,\infty)$, namely convex functions
satisfying
$
A(0)=0$ and $
\lim_{t\to\infty} A(t)/t=\infty.
$
For an open set $U\subset\mathbb R^n$, the Orlicz space
$L^A(U)$ consists of the measurable functions
$w:U\to\mathbb R$ for which the Luxemburg norm
\eqn{luxi}
$$
\nr{w}_{L^A(U)}
:=
\inf\left\{
\lambda>0:
\int_U
A\left(\frac{\snr{w}}{\lambda}\right)\dx
\le1
\right\}
$$
is finite. In particular, for $\alpha>0$, we set
\eqn{lllog}
$$
L^2(\log L)^\alpha(U):=L^{A_\alpha}(U),
\qquad
A_\alpha(t):=t^2\log^\alpha(e+t),
\quad t\ge0,
$$
with the Luxemburg norm defined in \rif{luxi}.
Since $A_\alpha(t)\ge t^2$, we have
$
\nr{w}_{L^2(U)}
\le
\nr{w}_{L^2(\log L)^\alpha(U)}.
$
Moreover, whenever $0<|U|<\infty$, the embedding
\eqn{immergilog}
$$
\nr{w}_{L(2,1)(U)}
\le
c\,\nr{w}_{L^2(\log L)^\alpha(U)},
\qquad \alpha>1,
$$
holds with $c$ depending only on $|U|$ and $\alpha$. We shall also use the rearrangement estimate
\cite[Theorem~8.8(i)]{opicpick99}
\eqn{orlicz.rearr}
$$
\int_0^\infty
(w^*(s))^2
\log^\alpha\left(e+\frac1s\right)\ds
\lesssim_\alpha
\nr{w}_{L^2(\log L)^\alpha(\mathbb R^n)}^2,
$$
valid for every $\alpha>0$ and
$w\in L^2(\log L)^\alpha(\mathbb R^n)$,
where $w^*$ is the nonincreasing rearrangement of $w$
defined in \rif{rearrangia}. We next include a lemma allowing to bound ${\bf P}_\sigma^\vartheta$
in terms of Lorentz and Orlicz norms. This is a slight variant of those in 
\cite[Lemma 4.1]{dm23} and
\cite[Section 2]{bm20}. 

\begin{lemma}\label{crit}
Let $n\ge2$ and $r\in(0,1)$.
If $\sigma,\vartheta>0$, $n\vartheta>\sigma$, and
$w\in L^1(\mathbb R^n)$, then
$$
\nr{{\bf P}_\sigma^\vartheta(w;\cdot,r)}
_{L^\infty(\mathbb R^n)}
\lesssim_{n,\vartheta,\sigma}
[w]_{n\vartheta/\sigma,\vartheta;\mathbb R^n}^{\vartheta}.
$$
If $n=2$, $\alpha>2$, and
$w\in L^2(\log L)^\alpha(\mathbb R^2)$, then
$$
\nr{{\bf P}_1^{1/2}(\snr{w}^2;\cdot,r)}
_{L^\infty(\mathbb R^2)}
\lesssim_\alpha
\nr{w}_{L^2(\log L)^\alpha(\mathbb R^2)}.
$$
\end{lemma}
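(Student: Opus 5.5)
The plan is to bound the potential pointwise through the rearrangement of $w$, and then recognise the result as a Lorentz norm. Fix $x_0$ and $r\in(0,1)$. For every $\varrho>0$ the Hardy--Littlewood inequality gives
\[
\int_{B_\varrho(x_0)}|w|\dx\le\int_0^{|B_\varrho|}w^*(s)\ds=|B_\varrho|\,w^{**}(|B_\varrho|),
\]
so that
\[
\left(\mint_{B_{\varrho}(x_0)}|w|\dx\right)^{\vartheta}\le \big(w^{**}(\omega_n\varrho^n)\big)^\vartheta .
\]
This bound is uniform in $x_0$. Inserting it into \eqref{defi-P}, extending the integration from $(0,r)$ to $(0,\infty)$, and changing variables $t=\omega_n\varrho^n$ (so that $\d\varrho/\varrho=\frac1n\,\d t/t$ and $\varrho^\sigma=(t/\omega_n)^{\sigma/n}$), we obtain
\[
{\bf P}_\sigma^\vartheta(w;x_0,r)\lesssim_{n,\sigma}\int_0^\infty \big(t^{\sigma/(n\vartheta)}w^{**}(t)\big)^{\vartheta}\frac{\d t}{t}=[w]^{\vartheta}_{n\vartheta/\sigma,\vartheta;\mathbb R^n},
\]
by definition \eqref{normalo} with $\mathfrak s=n\vartheta/\sigma$ and $\mathfrak q=\vartheta$. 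The hypothesis $n\vartheta>\sigma$ only serves to make $\mathfrak s>1$, so that $[\cdot]$ is comparable to the Lorentz quasi-norm by \eqref{normalo2}. The assumption $w\in L^1$ just makes everything well defined. This proves the first claim, and the argument requires no further work.

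For the two-dimensional claim we take $\sigma=1$, $\vartheta=1/2$, and apply the same reasoning to $|w|^2$, whose rearrangement is $(w^*)^2$. We get
\[
{\bf P}_1^{1/2}(|w|^2;x_0,r)\lesssim\int_0^{\pi r^2}\left(\frac1t\int_0^t (w^*(s))^2\ds\right)^{1/2}\frac{\d t}{\sqrt t}=\int_0^{\pi r^2}\left(\int_0^t (w^*)^2\ds\right)^{1/2}\frac{\d t}{t},
\]
with $\pi r^2<\pi$. The endpoint $\mathfrak s=n\vartheta/\sigma=1$ means the first part no longer applies. This is the main obstacle, and the log-weight must be used here. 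Set $\ell(s):=\log(e+1/s)$. For $s\le t\le\pi$, monotonicity gives $\ell(s)\ge\ell(t)$, hence
\[
\int_0^t (w^*)^2\ds\le \ell(t)^{-\alpha}\int_0^\infty (w^*)^2\ell^\alpha\ds\lesssim_\alpha \ell(t)^{-\alpha}\nr{w}^2_{L^2(\log L)^\alpha},
\]
where the last step uses \eqref{orlicz.rearr}. The potential is therefore bounded by
\[
\nr{w}_{L^2(\log L)^\alpha}\int_0^{\pi}\log^{-\alpha/2}(e+1/t)\,\frac{\d t}{t}.
\]
With $u=\log(1/t)$ this last integral behaves like $\int^\infty u^{-\alpha/2}\,\d u$, which is finite precisely because $\alpha>2$. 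This explains the threshold in the statement, and the bound is again uniform in $x_0$ and $r<1$.

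If the crude bound above were too lossy, a dyadic decomposition in $t$ combined with Cauchy--Schwarz would serve as a fallback, but this should not be needed: the monotonicity trick already matches the condition $\alpha>2$ exactly. The only care needed is to keep $t$ bounded (which follows from $r<1$), so that $\ell(t)\approx\log(1/t)$ near $0$ and is bounded below by $1$ elsewhere.
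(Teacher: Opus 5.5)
Your proof is correct and follows the paper's argument. For the two-dimensional claim, the Hardy--Littlewood bound, the substitution $t=\pi\varrho^2$, the monotonicity of $\log(e+1/s)$, \eqref{orlicz.rearr}, and the finiteness of $\int_0^\pi \log^{-\alpha/2}(e+1/s)\,\mathrm{d}s/s$ for $\alpha>2$ are exactly the paper's steps. For the first claim the paper simply cites \cite[Lemma 4.1]{dm23}, while you supply the standard direct derivation (Hardy--Littlewood plus $t=\omega_n\varrho^n$), which is correct and gives the bound with the explicit constant $\omega_n^{-\sigma/n}/n$.
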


\begin{proof}
The first estimate follows from
\cite[Lemma 4.1]{dm23}.
For completeness, we sketch the proof of the second estimate, a version of which already appears in \cite[Section 2]{bm20}.
Denote by $J$ the quantity in the left-hand side of \rif{orlicz.rearr}. 
By \eqref{orlicz.rearr},
$
J\lesssim_\alpha
\nr{w}_{L^2(\log L)^\alpha(\mathbb R^2)}^2.
$
The Hardy--Littlewood rearrangement inequality, 
the change of variables $s=\pi\varrho^2$ and the use of \rif{orlicz.rearr} give
$$
\begin{aligned}
{\bf P}_1^{1/2}(\snr{w}^2;x,r)
&\le c\int_0^{\pi r^2}
\left(\int_0^s(w^*(t))^2\dt\right)^{1/2}
\frac{\ds}{s}\\
&\le c\int_0^{\pi r^2}\left[
\frac{1}{\log^\alpha(e+1/s)}\int_0^s(w^*(t))^2\log^\alpha\left(e+\frac1t\right)\dt\right]^{1/2}\frac{\ds}{s}
\\
&\le c \int_0^\pi \frac{1}{\log^{\alpha/2}(e+1/s)}\frac{\ds}{s}\nr{w}_{L^2(\log L)^\alpha(\mathbb R^2)}
\le c_\alpha \nr{w}_{L^2(\log L)^\alpha(\mathbb R^2)}. 
\end{aligned}
$$
Note that the last integral appearing in the display is finite because $\alpha>2$.
Taking the supremum over $x\in\mathbb R^2$
completes the proof.
\end{proof}

Finally, we recall the following, classical 
\begin{lemma}[Moser--Trudinger inequality]\label{trudinger.lemma}
Let $n\ge2$. There exist constants
$
\mathfrak c_2\equiv\mathfrak c_2(n)\ge1
$
and
$
\mathfrak c_3\equiv\mathfrak c_3(n)>0
$
such that, for every bounded open set
$
U\subset\mathbb R^n
$
and every
$w\in W^{1,n}_0(U)$
with
$\nr{Dw}_{L^n(U)}>0,$
one has
\eqn{trudinger.scale}
$$
\int_U
\exp\left\{
\mathfrak c_3
\left(\frac{\snr{w}}{\nr{Dw}_{L^n(U)}}
\right)^{\frac{n}{n-1}}\right\}\dx
\le
\mathfrak c_2\snr U.
$$
Moreover, for every ball $B\subset\mathbb R^n$ and every
$w\in W^{1,n}(B)$ with $\nr{Dw}_{L^n(B)}>0$, one has
\eqn{trudinger.mean}
$$
\int_B
\exp\left\{\mathfrak c_4\left(\frac{\snr{w-(w)_B}}{\nr{Dw}_{L^n(B)}}\right)^{\frac{n}{n-1}}\right\}
\dx\le
\mathfrak c_5\snr B,
$$
for constants $\mathfrak c_4, \mathfrak c_5$ again depending on $n$. 
\end{lemma}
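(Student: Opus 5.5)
This is the classical Moser--Trudinger inequality, so my plan is to reduce it to rearrangement and a one-dimensional estimate rather than argue anything new. For \eqref{trudinger.scale}, I will first normalize. By homogeneity I replace $w$ with $w/\nr{Dw}_{L^n(U)}$, so that $\nr{Dw}_{L^n(U)}=1$. Since $|w|\in W^{1,n}_0(U)$ with $|D|w||\le|Dw|$, I may also assume $w\ge0$.

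Next I pass to the Schwarz symmetrization $w^\#$ on the ball $U^\#$ with $|U^\#|=|U|$. The P\'olya--Szeg\H{o} principle gives $\nr{Dw^\#}_{L^n}\le 1$, and equimeasurability leaves the left-hand side unchanged. This reduces matters to radial decreasing functions on a ball of radius $R$. I then use Moser's substitution $|x|=Re^{-t/n}$ and $\phi(t)=n^{(n-1)/n}\omega_n^{1/n}w^\#(x)$. Under it the gradient constraint becomes $\int_0^\infty|\phi'|^n\,dt\le1$, with $\phi(0)=0$, and the integral becomes $|U|\int_0^\infty e^{\mathfrak c_3'\phi^{n/(n-1)}-t}\,dt$. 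H\"older gives $\phi(t)\le t^{(n-1)/n}$, so $\phi^{n/(n-1)}\le t$. Choosing $\mathfrak c_3$ strictly below the sharp constant, for instance half of it, the integrand is bounded by $e^{-t/2}$. This yields the bound $\mathfrak c_2|U|$, with constants depending only on $n$; the sharp constant is not needed.

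For \eqref{trudinger.mean}, I will first use scaling and translation to reduce to $B=B_1(0)$, since both sides scale correctly and $\nr{Dw}_{L^n}$ is scale invariant. Then I normalize $\nr{Dw}_{L^n(B_1)}=1$ and $(w)_{B_1}=0$. The zero-trace argument does not apply directly, so I plan to use the potential representation $|w(x)|\lesssim_n\int_{B_1}|x-y|^{1-n}|Dw(y)|\,dy$, valid for mean-zero $w$ on a convex domain. Trudinger's original estimate for the Riesz potential $I_1$ of a function $f$ with $\nr{f}_{L^n}\le1$ on a bounded set then gives $\int_{B_1}\exp(c|I_1f|^{n/(n-1)})\le C$. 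That estimate is proved by expanding the exponential and bounding $\nr{I_1f}_{L^q}\lesssim q^{(n-1)/n}$ (more precisely, $\lesssim_n q^{1-1/n}$ uniformly in $q$) via Young's or H\"older's inequality, then summing the series. An alternative route is to extend $w$ by reflection to $B_2$, multiply by a cutoff, and apply \eqref{trudinger.scale}, using Poincar\'e's inequality to control $\nr{w}_{L^n}$ by $\nr{Dw}_{L^n}$.

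The main obstacle is tracking the growth in $q$ of the $L^q$ constants (equivalently, the one-dimensional Moser estimate), since that is exactly what produces the exponent $n/(n-1)$. Because the statement is classical, I would ultimately cite Trudinger and Moser for these steps.
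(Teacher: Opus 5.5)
Your proposal is correct and consistent with the paper, which gives no argument of its own for this lemma and simply refers to the classical works of Moser, Trudinger and Cianchi \cite{c05,m71,tru67}. Your proof of \eqref{trudinger.scale} (symmetrization plus Moser's substitution, where the crude bound $\phi^{n/(n-1)}\le t$ suffices because the sharp constant is not needed) and your potential-representation or reflection--Poincar\'e proof of \eqref{trudinger.mean} are the standard arguments behind those citations. One cosmetic point: with the paper's convention $\omega_n=|B_1|$, the normalization giving $\int_0^\infty|\phi'|^n\,dt=\|Dw^\#\|_{L^n}^n$ is $\phi=n\,\omega_n^{1/n}w^\#$; your choice only makes $\int_0^\infty|\phi'|^n\,dt$ smaller, so the argument is unaffected.
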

For the previous results we refer to \cite{c05,m71,tru67}. 
\section{A baby bound in the global case}\label{babysec}
Here we derive a simple global bound for minimizers of the functional
in \eqref{globale}.
\begin{proposition}\label{boun.p}
Let $u\in \mathbb{X}(\mathbb{R}^{n})$ be the minimizer of \eqref{bi.en} with $\mu\in \DD^{*}$. Then
\eqn{linf}
$$
\begin{cases}
\displaystyle
\ \nr{u}_{L^{\infty}(\mathbb{R}^{n})}\le c\Ui\quad &\mbox{if} \ \ n\ge 3\vspace{1.5mm}\\
\displaystyle
\ \left\|\frac{u}{\log(e+\snr{\ \cdot \ })}\right\|_{L^{\infty}(\mathbb{R}^{2})}\le c\Ui\quad &\mbox{if} \ \ n=2,
\end{cases}
$$
where $c\equiv c(n)$, and $\Ui$ is defined as 
\eqn{mmi}
$$
\Ui
:=
\begin{cases}
\displaystyle
\nr{\mu}_{\mathcal D^{1,2}(\mathbb R^n)^*}^{3/n}+\nr{\mu}_{\mathcal D^{1,2}(\mathbb R^n)^*}^{1/n}
&\mbox{if }n\ge3
\\[3mm]
\displaystyle
\nr{\mu}_{\DD^*}+1
&\mbox{if }n=2.
\end{cases}
$$
Moreover, 
\eqn{coerciva}
$$
\nr{Du}_{L^2(\er^n)}=\nr{u}_{\DD}\leq 2 \|\mu\|_{\DD^*}\,.
$$
\end{proposition}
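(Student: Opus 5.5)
First I would derive the energy bound \eqref{coerciva} by comparing $u$ with the competitor $0\in\mathbb X(\mathbb R^n)$. Minimality gives $\int H(Du)\dx\le\langle\mu,u\rangle$. By \eqref{elema} the left-hand side is at least $\tfrac12\|Du\|_{L^2}^2$. Duality bounds the right-hand side by $\|\mu\|_{\DD^*}\|Du\|_{L^2}$. Dividing through gives $\|Du\|_{L^2}\le 2\|\mu\|_{\DD^*}$.

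For the $L^\infty$ bound when $n\ge3$, I would combine the Lipschitz constraint $|Du|\le1$ with the $L^{2^*}$ control coming from Sobolev's inequality in $\mathcal D^{1,2}(\mathbb R^n)$: $\|u\|_{L^{2^*}}\lesssim \|Du\|_{L^2}\lesssim \|\mu\|_{\DD^*}$. Fix $x_0$ and set $a:=|u(x_0)|$. On $B_{a/2}(x_0)$ we have $|u|\ge a/2$, because $u$ is $1$-Lipschitz, so
\[
a^{2^*}a^n\lesssim \int_{B_{a/2}(x_0)}|u|^{2^*}\dx\le \|u\|_{L^{2^*}}^{2^*}\lesssim \|\mu\|^{2^*}_{\DD^*}.
\]
This gives $a^{n+2^*}\lesssim \|\mu\|^{2^*}$, that is, $a\lesssim \|\mu\|^{2^*/(n+2^*)}$. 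Here $2^*/(n+2^*)=2/(3n-2)$, which lies between $1/n$ and $3/n$. Interpolating through $t^{\theta}\le t^{1/n}+t^{3/n}$ for $\theta\in[1/n,3/n]$ then yields $\|u\|_\infty\le c\,\Ui$.

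I expect the paper may instead be using the cruder exponents $1/n$ and $3/n$ from a slightly different bookkeeping, but this choice suffices. One point needs care: $u$ must vanish at infinity in a suitable sense so that the representative is the one in $\mathcal D^{1,2}$. That is automatic, since $u\in L^{2^*}$ and $u$ is Lipschitz, so $u\to0$ at infinity.

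For $n=2$ I would use the normalization $(u)_{B_1(0)}=0$ together with $|Du|\le1$ and $Du\in L^2$. Poincaré on $B_1$ combined with the Lipschitz bound gives $|u(x)|\le |u(x)-(u)_{B_1}|\le |x|+2$ pointwise. That estimate grows linearly, however, and the target growth is logarithmic, so I need more. I would argue dyadically: on each annulus, or on $B_{2R}$, Moser--Trudinger (Lemma \ref{trudinger.lemma} with $n=2$) controls oscillations by $\|Du\|_{L^2}$. Comparing the averages $(u)_{B_{2^k}}$ and $(u)_{B_{2^{k+1}}}$ through Poincaré gives increments of size $\lesssim\|Du\|_{L^2(B_{2^{k+1}})}\le 2\|\mu\|$. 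Summing over $k\le\log_2|x|$ yields $|(u)_{B_{|x|}}|\lesssim \|\mu\|\log(e+|x|)$.

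To pass from averages to the pointwise value I would use the Lipschitz bound on a small ball: on $B_\rho(x)$ with $\rho=1$, $|u(x)-(u)_{B_1(x)}|\le1$, and $(u)_{B_1(x)}$ differs from the dyadic averages again by a Poincaré chain. This produces the extra ``$+1$'' in $\Ui$. The main obstacle is making this chaining for $n=2$ clean, in particular connecting off-center balls to the centered dyadic ones with constants depending only on $n$.
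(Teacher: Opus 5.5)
Your proof is correct in substance and follows the paper's structure. You obtain \eqref{coerciva} by testing minimality against the competitor $0$, exactly as the paper does. You then reduce the two $L^\infty$ statements to properties of arbitrary functions in $\mathbb X(\mathbb R^n)$, which the paper isolates in Lemma \ref{lemmino}.

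For $n=2$, your dyadic Poincar\'e chain followed by $\snr{u(x)-(u)_{B_1(x)}}\le 1$ is precisely the paper's proof of \eqref{logsup}. The off-centre step you flag is settled there by taking $R=2^k$ with $2^{k-1}<1+\snr{x}\le 2^k$. Then $B_R(x)\subset B_{2R}(0)$ with comparable measures, so one Poincar\'e estimate on $B_{2R}(0)$ joins the chain $B_1(x)\to B_R(x)$ to the chain $B_{2R}(0)\to B_1(0)$. Moser--Trudinger is not needed.

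For $n\ge 3$, the paper simply quotes \eqref{infsup} from \cite[Lemma 5.2]{bi23} with $\mathcal s=2n$; this is where the exponents $3/n$ and $1/n$ in $\Ui$ come from. Your Sobolev-plus-Lipschitz ball argument is a valid self-contained alternative, but the arithmetic needs fixing. Since $n+2^*=n^2/(n-2)$, you get $2^*/(n+2^*)=2/n$, not $2/(3n-2)$. As written the step is false: $2/(3n-2)<1/n$ for $n\ge3$, so $t^{2/(3n-2)}\le t^{1/n}+t^{3/n}$ fails as $t\to0$.

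With the correct exponent you obtain $\nr{u}_{L^\infty(\mathbb R^n)}\le c\nr{Du}_{L^2(\mathbb R^n)}^{2/n}\le c\nr{\mu}_{\DD^*}^{2/n}$. This single-power bound is invariant under the scaling $u\mapsto\lambda u(\cdot/\lambda)$, which preserves $\mathbb X(\mathbb R^n)$. It is dominated by $c\,\Ui$ because $t^{2/n}\le t^{1/n}+t^{3/n}$ for every $t\ge0$.

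So your route is self-contained and gives a slightly sharper homogeneous bound. The paper's citation is shorter and delivers directly the two-power form of $\Ui$ used later.
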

The above estimate is a consequence of some general properties of the space $\mathbb X(\mathbb R^n)$. 
\begin{lemma}\label{lemmino}
Let $w\in\mathcal D^{1,2}(\mathbb R^n)$ be such that 
$\|Dw\|_{L^\infty(\mathbb R^n)}\le1$. The following holds. 
\begin{itemize}
    \item If $n\ge 3$, then $w\in L^{2^{*}}(\mathbb{R}^{n})$ with $\lim_{\snr{x}\to \infty}w(x)=0$, and 
    \eqn{infsup}
    $$
    \nr{w}_{L^{\infty}(\mathbb{R}^{n})}\le c \nr{Dw}_{L^{2}(\mathbb{R}^{n})}^{\frac{2(n+\mathcal{s})}{n\mathcal{s}}}+c\nr{Dw}_{L^{2}(\mathbb{R}^{n})}^{\frac{2}{\mathcal{s}}}
    $$
    for all $n<\mathcal{s}<\infty$, where $c\equiv c (n, \mathcal{s})$.
\item If $n=2$, let $w$ denote a locally Lipschitz
representative of its class in \eqref{24.3}$_2$. Then
    \eqn{logsup}
    $$
\sup_{x\in \mathbb{R}^{2}}\frac{\snr{w(x)}}{\log(e+\snr{x})} + \sup_{x\in \mathbb{R}^{2}}\frac{\snr{w(x)}}{1+\snr{x}} \le c\nr{Dw}_{L^{2}(\mathbb{R}^{2})} + c\snr{(w)_{B_{1}(0)}}+c,
    $$
    where $c\equiv c(n)$. Moreover, for any $\varepsilon>0$ there exists $r_{\varepsilon}\equiv r_{\varepsilon}(w,\varepsilon)>0$ such that
    \eqn{weightsmall}
    $$
    \sup_{x\in \mathbb{R}^{2}\setminus B_{r_{\varepsilon}}(0)}\frac{\snr{w(x)}}{1+\snr{x}}<\varepsilon.
    $$
\end{itemize}
\end{lemma}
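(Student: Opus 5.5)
For $n\ge3$ the plan is to first get $w\in L^{2^*}$ from the Sobolev inequality in $\mathcal D^{1,2}$: $\|w\|_{L^{2^*}}\lesssim\|Dw\|_{L^2}$. Together with $|Dw|\le1$ this makes $w$ $1$-Lipschitz, which we use to convert integrability into a pointwise bound. Fix $x$ and set $t:=|w(x)|$. On $B_{t/2}(x)$ the Lipschitz bound gives $|w|\ge t/2$, so
$$
t^{2^*}\,|B_{t/2}|\lesssim \int_{B_{t/2}(x)}|w|^{2^*}\dx\le \|w\|_{L^{2^*}}^{2^*},
$$
that is, $t^{2^*+n}\lesssim \|Dw\|_{L^2}^{2^*}$. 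This already yields a bound of the type \eqref{infsup}. To recover the stated exponents for arbitrary $\mathcal s>n$, we will use $L^{\mathcal s}$ instead of $L^{2^*}$, obtained by interpolating $L^{2^*}$ with the $L^\infty$ bound itself (or, more directly, Gagliardo--Nirenberg with $\|Dw\|_{L^\infty}\le1$). Running the same ball argument with $L^{\mathcal s}$ and splitting into the cases $t\le1$ and $t\ge1$ should produce the two powers in \eqref{infsup}. Only the qualitative form is needed, so whichever exponents arise with the stated structure suffice; the algebra will be matched at the end.

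Decay at infinity follows from the same argument: $\int_{\mathbb R^n\setminus B_R}|w|^{2^*}\to0$, and for $|x|>R+t/2$ the ball $B_{t/2}(x)$ lies outside $B_R$, so $|w(x)|\to0$.

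For $n=2$, write $m:=(w)_{B_1(0)}$. The plan is a dyadic chain of averages. Poincaré on $B_{2^k}$ gives $|(w)_{B_{2^{k+1}}}-(w)_{B_{2^k}}|\lesssim\|Dw\|_{L^2}$, scale-invariantly in two dimensions. Summing over $k$ gives $|(w)_{B_{2^k}}|\le m+ck\|Dw\|_{L^2}$. Then pass from the average to the point: for $|x|\le 2^k$, the Lipschitz bound gives $|w(x)-(w)_{B_{2^k}}|\le c\,2^k$. This is too crude, so instead compare $x$ with the ball $B_1(x)$. The Lipschitz bound gives $|w(x)-(w)_{B_1(x)}|\le1$, and a chain of unit-scale-to-dyadic balls connecting $B_1(x)$ to $B_{2^k}(0)$, with $k\approx\log(e+|x|)$, costs $c\log(e+|x|)\|Dw\|_{L^2}$. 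This gives the logarithmic bound in \eqref{logsup}, and the $1+|x|$ term follows trivially from it.

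For \eqref{weightsmall}, the key is that $\int_{\mathbb R^2\setminus B_\rho}|Dw|^2\to0$. The chain increments at scales beyond $\rho$ are then bounded by $\epsilon_\rho\to0$ times the number of scales, so $|w(x)|\le C_\rho+\epsilon_\rho\log|x|$. Dividing by $1+|x|$ gives a quantity tending to $0$; the $\log|x|/|x|$ decay alone already suffices here.

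The main obstacle is the chaining constant in dimension two, i.e. keeping the chain from $B_1(x)$ to the origin using only $O(\log|x|)$ balls of comparable size. We will use balls $B_{2^j}(x)$ up to $j\approx\log_2|x|$, which then sit comfortably inside $B_{2^{j+2}}(0)$, and switch to centered balls from there.
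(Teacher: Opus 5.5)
Your proposal is correct. For $n=2$ it is essentially the paper's argument. The paper chains $B_1(x)\subset B_2(x)\subset\dots\subset B_{2^k}(x)$ with $2^{k-1}<1+|x|\le 2^k$, compares $B_{2^k}(x)$ with $B_{2^{k+1}}(0)$, chains back down to $B_1(0)$, and uses $|w(x)-(w)_{B_1(x)}|\le1$. It then obtains \eqref{weightsmall} directly from the resulting logarithmic growth, exactly as in your final remark; your refinement via $\int_{\mathbb R^2\setminus B_\rho}|Dw|^2\to0$ is not needed. The only small difference is the second term in \eqref{logsup}: the paper gets it from $|w(x)|\le|w(0)|+|x|$ and $|w(0)|\le|(w)_{B_1(0)}|+1$, whereas you use $\log(e+r)\le 1+r$. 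Both work.

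For $n\ge3$ the paper gives no argument and simply cites \cite[Lemma 5.2]{bi23}, so your Sobolev-plus-Lipschitz ball argument is a genuinely self-contained alternative. It in fact gives more than you realize. Since $2^*/(2^*+n)=2/n$, your first computation already yields the scale-invariant bound $\|w\|_{L^\infty}\le c(n)\|Dw\|_{L^2}^{2/n}$. Moreover $2/\mathcal s<2/n<2/n+2/\mathcal s=2(n+\mathcal s)/(n\mathcal s)$, so the elementary inequality $a^{2/n}\le a^{2/\mathcal s}+a^{2/n+2/\mathcal s}$ (split into $a\le1$ and $a\ge1$) gives \eqref{infsup} for every $\mathcal s>n$, with $c$ depending only on $n$.

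You should therefore drop the planned detour through $L^{\mathcal s}$ and the deferred ``matching of the algebra''. It is unnecessary, and as described it would fail for $n=3$. Interpolating $L^{2^*}$ with $L^\infty$ only reaches exponents $\ge 2^*=6$, while the statement allows $3<\mathcal s<6$. For such $\mathcal s$, a function in $\mathcal D^{1,2}(\mathbb R^3)$ with $|Dw|\le1$ need not lie in $L^{\mathcal s}$ at all: take $w=\min\{1,|x|^{-a}\}$ with $1/2<a<3/\mathcal s$.

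Your decay argument via the $L^{2^*}$-tail on the balls $B_{|w(x)|/2}(x)\subset\mathbb R^n\setminus B_R$ is also fine. There you use the already established $L^\infty$ bound to guarantee the ball lies outside $B_R$ once $|x|\ge R+\|w\|_{L^\infty}/2$.
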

\begin{proof}
For the content of the first bullet see \cite[Lemma 5.2]{bi23}. Let us focus on \eqref{logsup}-\eqref{weightsmall}, which are probably well-known under different assumptions, but we couldn't locate them in the literature. So we report the proof for completeness. 
In the following the denoted constant $c\equiv c_{n}$ depends only on the dimension $n$. Let 
$x\in\mathbb R^{2}$ and $r>0$, then we have
\begin{align*}
\snr{(w)_{B_r(x)}-(w)_{B_{2r}(x)}} &\leq \mint_{B_r(x)}
   \snr{w-(w)_{B_{2r}(x)}}\dy \leq  c
   \left(\mint_{B_{2r}(x)}
   \snr{w-(w)_{B_{2r}(x)}}^2\dy\right)^{1/2}
\end{align*}
and here we have used Jensen and  H\"older's inequalities. By further using Poincaré's inequality and exploiting that $n=2$ we conclude with 
\begin{equation}\label{eq:dyadic-means}
\snr{(w)_{B_r(x)}-(w)_{B_{2r}(x)}}
\leq c\nr{Dw}_{L^{2}(B_{2r}(x))}.
\end{equation}
Choose $k\in\mathbb N\cup\{0\}$ such that
$
2^{k-1}<1+\snr{x}\leq 2^{k}. 
$ With $R:=2^{k}$ we have 
$k+1\leq c\log(e+\snr{x})$
and $
B_R(x)\subset B_{2R}(0).
$
Iterating \eqref{eq:dyadic-means} yields 
\eqn{eq:chain-x}
$$
\begin{cases}
\snr{(w)_{B_1(x)}-(w)_{B_R(x)}}
\leq ck\nr{Dw}_{L^{2}(\mathbb R^{2})}
\\[4pt]
\snr{(w)_{B_{2R}(0)}-(w)_{B_1(0)}}
\leq
c(k+1)\nr{Dw}_{L^2(\mathbb R^2)}.
\end{cases}
$$
Moreover, since $B_R(x)\subset B_{2R}(0)$, arguing as for \rif{eq:dyadic-means} we find 
\eqn{eq:overlapping-balls}
$$
\snr{(w)_{B_R(x)}-(w)_{B_{2R}(0)}}
 \leq c\nr{Dw}_{L^{2}(\mathbb R^{2})}.
$$
Combining \eqref{eq:chain-x} and \eqref{eq:overlapping-balls}, we obtain
\eqn{eq:mean-log-growth}
$$
\snr{(w)_{B_1(x)}-(w)_{B_1(0)}}
\leq
c\log(e+\snr{x})\nr{Dw}_{L^2(\mathbb R^2)}.
$$
On the other hand, since \(w\) is \(1\)-Lipschitz,
$
\snr{w(x)-(w)_{B_1(x)}}\leq1.
$
This and \eqref{eq:mean-log-growth} imply that
\begin{equation}\label{eq:pointwise-log-growth}
\snr{w(x)}
\leq
1+\snr{(w)_{B_1(0)}}
+c\log(e+\snr{x})\nr{Dw}_{L^2(\mathbb R^2)}.
\end{equation}
Since \(\log(e+\snr{x})\geq\log2\), dividing
\eqref{eq:pointwise-log-growth} by \(\log(e+\snr{x})\), and recalling
that \(x\in\mathbb R^2\) is arbitrary, proves the estimate for the
first quantity in \eqref{logsup}. We next estimate the second, which follows by observing that the Lipschitz continuity 
of $w$ gives 
$\snr{w(x)}
\leq \snr{w(0)}+\snr{x}$ and $\snr{w(0)}
\leq
\snr{(w)_{B_1(0)}}+1
$. The property in \rif{weightsmall} now easily follows from \rif{eq:pointwise-log-growth}. 
\end{proof}
\begin{proof}[Proof of Proposition \ref{boun.p}] 
By \eqref{infsup}, with the choice $\mathcal s=2n$, and by
\eqref{logsup}, recalling that $(u)_{B_1(0)}=0$ when $n=2$, it is
enough to prove \eqref{coerciva}.
Using that
$ \mathcal E_{\mu}(u)\le\mathcal E_{\mu}(0)=0$, we find
$$
\begin{aligned}
\frac12 \|Du\|_{L^2(\mathbb R^n)}^2\le
\int_{\mathbb R^n}H(Du)\dx
&\le \langle\mu,u\rangle\\&\le
\|\mu\|_{\DD^*}\|u\|_{\DD}=\|\mu\|_{\DD^*}\|Du\|_{L^2(\mathbb R^n)},
\end{aligned}
$$
from which \rif{coerciva} follows immediately. 
\end{proof}

\section{Recap on Lorentzian geometry and auxiliary results}\label{recapsec}
In this section we recall some basic facts in Lorentzian geometry and state a few auxiliary results. Our main references in this respect are \cite{bs82,bi23,bimm24}, from which we adopt most of the notations and conventions. By $\mathbb{L}^{n+1}$ we denote the $(n+1)$-dimensional Lorentz-Minkowski space, that is $\mathbb{R}^{n+1}$ equipped with the symmetric bilinear form
\eqn{bil}
$$
\langle y,z\rangle_{\mathbb{L}^{n+1}}:=\sum_{i=1}^{n}y_{i}z_{i}-y_{n+1}z_{n+1}.
$$
The form in \eqref{bil} is nondegenerate with index one, and, naturally 
$$
\snr{y}_{\mathbb{L}^{n+1}}:=\sqrt{\snr{\langle y,y\rangle_{\mathbb{L}^{n+1}}}} \qquad \mbox{for every $y\in \mathbb{L}^{n+1}$}. 
$$
Notice that this quantity is not a norm, since it vanishes on nonzero
lightlike vectors.
A vector $y\in \mathbb{L}^{n+1}$ is spacelike if either $\langle y,y\rangle_{\mathbb{L}^{n+1}}>0$ or $y=0$, timelike if $\langle y,y\rangle_{\mathbb{L}^{n+1}}<0$ or lightlike if $\langle y,y\rangle_{\mathbb{L}^{n+1}}=0$ and $y\not =0$. The induced metric on vector subspaces is defined in the standard way. If $\{\tx{e}_{1},\cdots,\tx{e}_{n+1}\}$ denotes the standard basis of $\mathbb{L}^{n+1}$, we have 
$$
\begin{cases}
    \displaystyle
    \ \langle \tx{e}_{i},\tx{e}_{j}\rangle_{\mathbb{L}^{n+1}}=0\quad &\mbox{for all} \ \ i,j\in \{1,\cdots,n+1\}, \ \ i\not =j\vspace{1.5mm}\\
    \displaystyle
    \ \langle \tx{e}_{i},\tx{e}_{i}\rangle_{\mathbb{L}^{n+1}}=1\quad &\mbox{for all} \ \ i\in \{1,\cdots,n\} \vspace{1.5mm}\\
    \displaystyle
    \ \langle \tx{e}_{n+1},\tx{e}_{n+1}\rangle_{\mathbb{L}^{n+1}}=-1.
\end{cases}
$$
A causal vector $x\in \mathbb{L}^{n+1}$ is future directed if $\langle x,\tx{e}_{n+1}\rangle_{\mathbb{L}^{n+1}}<0$, and past directed if $\langle x,\tx{e}_{n+1}\rangle_{\mathbb{L}^{n+1}}>0$. 
\begin{definition}
A \(C^1\) hypersurface \(\mathfrak M\subset\el^{n+1}\) is said to be
spacelike if, for every \(z\in\mathfrak M\), the restriction of the
Lorentzian metric \(\langle\cdot,\cdot\rangle_{\el^{n+1}}\) to
\(\ett_z\mathfrak M\) is positive definite.
\end{definition}
\begin{definition}
If $u\in C^1(\Omega)$, then its vertical graph is a spacelike
hypersurface if and only if $u$ is strictly spacelike in the sense
introduced in Section \ref{notazioni}.
\end{definition}
Needless to say, a $C^{1}$-regular strictly spacelike function is such that for every compact $K\subset \Omega$ there exists $\delta \in (0,1)$ such that $\nr{Du(x)}<1-\delta$ for every $x\in K$.

Let $\Omega\subset \mathbb{R}^{n}$ be a domain, and $u\in C^{1}(\Omega)$ a function. The related vertical graph is defined as $\mathfrak{M}:=\{(x,u(x))\in \mathbb{L}^{n+1}\colon x\in \Omega\}$, so the natural parametrization is given by $\Phi\colon \Omega\to \mathbb{L}^{n+1}$ such that $\Phi(x)=(x,u(x))$. In this respect, through the parametrization $\Phi$, and with a slight abuse of notation, when no confusion shall arise we shall identify functions and vector fields on $\mathfrak M$ with their pull-backs to $\Omega$ and points $z\equiv (x, u(x))\in \mathfrak M$ with points $x\in \Omega$. For instance, if \(v\) is defined on \(\mathfrak M\), its pull-back is
$
\widetilde v:=v\circ\Phi.
$
With the above identification, we shall simply write \(v\equiv\widetilde v\),
namely \(v(z)=\widetilde v(x)\) whenever \(z=\Phi(x)=(x,u(x))\). Conversely, for a function $f$ defined on $\Omega$,
we denote its vertical extension to $\Omega\times\mathbb R$
by the same symbol, setting $f(x,t):=f(x)$. Given a point $z=(y_1,\ldots,y_{n+1})\in\el^{n+1}$,
we denote its projection onto $\er^n$ by
$\pi(z):=(y_1,\ldots,y_n)$.  Accordingly, the vectors 
\eqn{recallX}
$$\mathbf{X}_{i}(z)\equiv \mathbf{X}_{i}(x):=D_{i}\Phi(x)=\partial_{y_i}+D_{i}u(x)\partial_{y_{n+1}}\qquad i\in \{1,\cdots,n\}$$ are a basis for the tangent space $\ett_{z}\mathfrak{M}$, for $z=(x,u(x))$\footnote{In the following, we shall use \(\{x_1,\dots,x_n\}\) to denote coordinates
in \(\mathbb R^n\), used for the parametrization \(\Phi\), and
\(\{y_1,\dots,y_{n+1}\}\) to denote the ambient coordinates in
\(\mathbb L^{n+1}\).}. In the case $u$ is strictly spacelike we shall denote 
\eqn{boosty}
$$
\mathcal{h}_u(z)\equiv \mathcal{h}_u(x):= \frac{1}{\sqrt{1-|Du(x)|^2}}
$$ for $z=(x,u(x))$, and the graphical measure will be indicated as 
\eqn{area}
$$\d A:=\frac{\dx}{\mathcal{h}_u}= \sqrt{1-|Du|^2}\dx, \qquad A(K):=\int_{K} \d A $$
for every $K\subset \mathfrak M$ such that $\Phi^{-1}(K)$ is measurable. 
The induced metric on $\mathfrak{M}$ is given by 
$$
g\equiv g_z=(g_{ij})_{i,j\in \{1,\cdots,n\}}, 
\qquad g_{ij}=\langle \mathbf{X}_{i}(z),\mathbf{X}_{j}(z)\rangle_{\mathbb{L}^{n+1}}=\delta_{ij}-D_{i}u(x)D_{j}u(x),
$$
that is, 
$
g_z= \mathds I - Du(x)\otimes Du(x)
$ when $z\equiv (x, u(x))$, so that 
$g_z(\xi, \xi)= |\xi|^2-\langle Du,\xi\rangle^2$ for every $\xi \in \er^n$. It follows that $\mathfrak{M}$ is spacelike if and only if $\snr{Du(x)}<1$ for all $x\in \Omega$. 
Let $u\in C^{2}(\Omega)$ be a strictly spacelike function and $\mathfrak{M}$ its vertical graph. The future-oriented Gauss map is $\vv=(\vv_{i})_{1\leq i\leq n+1}$ with 
$$
\vv_i:=
\begin{cases}
\mathcal h_u D_i u&\mbox{if } i\in\{1,\ldots,n\}
\\[1.5mm]
\mathcal h_u
&\mbox{if } i=n+1.
\end{cases}
$$
Keep in mind the notation and conventions adopted in Section \ref{ner}. 
Note that, obviously, 
$
\vv=\mathcal{h}_{u}(Du,1)$ and $\langle \vv,\vv\rangle_{\mathbb{L}^{n+1}}=-1$. 
The inverse of the metric matrix $g$ is 
$$g^{-1}\equiv g^{-1}_z=(g^{ij})_{i,j\in\{1,\cdots,n\}}, \qquad g^{ij}=\delta_{ij}+\vv_i\vv_j=\delta_{ij}+\frac{D_i u\,D_{j}u}{1-|Du|^2} .$$ 
For \(v\in C^1(\mathfrak M)\), we denote by
\(\nabla^{\mathfrak M}_g v\) its tangential gradient with respect to the
induced metric \(g\), i.e., $g_z ((\nabla_g^{\mathfrak M}v)(z),Y)=Yv$ 
for every $Y\in \ett_z\mathfrak M$. The operators \(\delta_\alpha\), \(\alpha=1,\dots,n+1\),
are defined as the ambient components of this vector field, namely
\[
\nabla^{\mathfrak M}_g v
=
\sum_{i=1}^{n+1}\delta_i v\,\partial_{y_i}.
\] 
Equivalently, if \(\widetilde v=v\circ\Phi\) and $z=(x, u(x))$, then
$$
\delta_i v(z):=
\begin{cases}
\displaystyle
\sum_{j=1}^n g^{ij}D_j\widetilde v(x)
&\mbox{if } i\in\{1,\ldots,n\}
\\[1.5mm]
\displaystyle
\mathcal h_u\sum_{j=1}^n\vv_jD_j\widetilde v(x)
&\mbox{if } i=n+1.
\end{cases}
$$
\noindent A related useful tool is an integration-by-parts formula, a consequence of Stokes' theorem.
\begin{lemma}\label{ibp}
Let \(u\in C^2(\Omega)\) be strictly spacelike and let \(\mathfrak M\)
be its vertical graph. Let \(v\in C_{0}^1(\mathfrak M)\) and
\(w\in C^1(\mathfrak M)\). Let $\tx H_u\in C(\Omega)$ be defined by
 $$
 \tx{H}_{u}:=-\mathcal{h}_{u}\sum_{i,j=1}^{n}g^{ij}D_{ij}u.
 $$
 It holds
 $$
 \begin{cases}
     \displaystyle
     \ \int_{\mathfrak{M}}v\delta_{n+1}w\d A=\int_{\mathfrak{M}}\mathcal{h}_{u}vw \tx{H}_{u}\d A-\int_{\mathfrak{M}}w\delta_{n+1}v\d A\vspace{1.5mm}\\
     \displaystyle
     \ \int_{\mathfrak{M}}v\delta_{i}w\d A=\int_{\mathfrak{M}}\vv_{i}vw \tx{H}_{u}\d A-\int_{\mathfrak{M}}w\delta_{i}v\d A,
 \end{cases}
 $$
 for all $i\in \{1,\cdots,n\}$.
\end{lemma}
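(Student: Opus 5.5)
The plan is to pull everything back to $\Omega$ through the parametrization $\Phi$ and reduce both identities to the Euclidean divergence theorem. No boundary terms appear, because $v$ has compact support. The data are regular enough for this: since $u\in C^2(\Omega)$ is strictly spacelike, $\mathcal h_u\in C^1(\Omega)$; moreover $v,w\in C^1$ and $v$ is compactly supported. Hence every vector field below is $C^1$ with compact support.

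\emph{Step 1: the $(n+1)$-th identity.} First I rewrite each term in coordinates, using $\vv_j=\mathcal h_u D_ju$ and $\d A=\dx/\mathcal h_u$:
\[
\delta_{n+1}w=\mathcal h_u^2\langle Du,Dw\rangle,
\qquad\text{so}\qquad
\int_{\mathfrak M}v\,\delta_{n+1}w\d A=\int_\Omega v\,\mathcal h_u\langle Du,Dw\rangle\dx .
\]
Next I integrate by parts against the compactly supported $C^1$ field $v\mathcal h_uDu$. This gives
\[
-\int_\Omega w\,\mathcal h_u\langle Du,Dv\rangle\dx-\int_\Omega vw\diver(\mathcal h_uDu)\dx .
\]
The first term equals $-\int_{\mathfrak M}w\,\delta_{n+1}v\d A$. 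For the second term, the key computation is
\[
D\mathcal h_u=\mathcal h_u^3D^2u\,Du,
\qquad\text{hence}\qquad
\diver(\mathcal h_uDu)=\mathcal h_u\Delta u+\mathcal h_u^3\langle D^2u\,Du,Du\rangle=\mathcal h_u\sum_{i,j=1}^n g^{ij}D_{ij}u=-\tx H_u .
\]
Therefore the second term equals $\int_\Omega vw\tx H_u\dx=\int_{\mathfrak M}\mathcal h_u vw\tx H_u\d A$, which is the first identity.

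\emph{Step 2: the identities for $i\le n$.} Here $\delta_iw=D_iw+\mathcal h_u^2D_iu\langle Du,Dw\rangle$. Consequently
\[
\int_{\mathfrak M}v\,\delta_iw\d A=\int_\Omega\langle vZ_i,Dw\rangle\dx,
\qquad
Z_i:=\mathcal h_u^{-1}\tx e_i+\mathcal h_u D_iu\,Du .
\]
Integrating by parts, the part where the derivative falls on $v$ gives $-\int_{\mathfrak M}w\,\delta_iv\d A$, by the same coordinate identity read backwards. The remaining term is $-\int_\Omega vw\diver Z_i\dx$. To evaluate it I use
\[
D_i(\mathcal h_u^{-1})=-\mathcal h_u\langle D D_iu,Du\rangle ,
\]
which exactly cancels the term $\mathcal h_u\langle Du,DD_iu\rangle$ produced by differentiating $D_iu$ in $\diver(\mathcal h_uD_iu\,Du)$. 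What survives is
\[
\diver Z_i=D_iu\diver(\mathcal h_uDu)=-D_iu\,\tx H_u=-\frac{\vv_i}{\mathcal h_u}\tx H_u .
\]
Hence the remaining term becomes $\int_{\mathfrak M}\vv_ivw\tx H_u\d A$, as claimed.

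The only delicate point is the algebra behind $\diver(\mathcal h_uDu)=-\tx H_u$, i.e.\ checking that $g^{ij}=\delta_{ij}+\mathcal h_u^2D_iuD_ju$ reproduces exactly the two terms of that divergence, together with the cancellation in $\diver Z_i$. Everything else is a direct application of the divergence theorem to $C^1$ compactly supported fields.
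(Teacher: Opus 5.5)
Your proof is correct, and it is essentially the route the paper indicates: the paper gives no details, calling the lemma a consequence of Stokes' theorem, and you carry out the divergence theorem in the chart $\Phi$. The two identities you single out, $\diver(\mathcal h_uDu)=-\tx H_u$ and the cancellation giving $\diver Z_i=-D_iu\,\tx H_u$, both check out, and compact support of $v$ justifies every integration by parts. Intrinsically, your $Z_i$ is $\sqrt{\det g}=\mathcal h_u^{-1}$ times the coordinate expression of the tangential projection of $\partial_{y_i}$. The divergence of that projection on $\mathfrak M$ equals $-\langle\partial_{y_i},\vv\rangle_{\mathbb L^{n+1}}\tx H_u=-\vv_i\tx H_u$, which is why the cancellation had to occur.
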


In the following we are interested in functions $w$ defined in a neighbourhood of $\mathfrak M$ that are independent of the last variable $y_{n+1}$. These are naturally identified with functions \(w=w(x)\) on \(\Omega\) via the vertical
extension \(w(x,t)=w(x)\) to a neighbourhood of \(\mathfrak M\). Therefore, as \(\partial_{y_{n+1}}w=0\), in such cases
$
\mathbf X_iw=\partial_{y_i}w=D_iw.
$
\noindent Next, for a weakly spacelike function $u$, $x_{0},x_{1},x_{2}\in \mathbb{R}^{n}$, $r\in (0,\infty)$, the Lorentz distance is defined as 
$$\ell_u(x_{1},x_{2}):=\sqrt{\snr{x_{1}-x_{2}}^{2}-\snr{u(x_{1})-u(x_{2})}^{2}},$$
 the Lorentz ball of radius $r$ centered at $(x_{0},u(x_{0}))$ is 
 \eqn{loreball}
 $$L_{r}^u(x_{0}):=\{(x,u(x))\in \mathfrak{M}\colon \ell_u(x,x_{0})<r\},$$ 
 while its projection on $\mathbb{R}^{n}$ is given by 
 \eqn{loreballp}
 $$K_{r}^u(x_{0}):=\{x\in \mathbb{R}^{n}\colon \ell_u(x,x_{0})<r\}.$$ 
 Note that, when the function $u$ will be fixed or clear from the context, we shall abbreviate
 \eqn{abbreviation}
 $$
 \ell(x_{1},x_{2})\equiv \ell_u(x_{1},x_{2}), \quad L_{r}(x_{0})\equiv L_{r}^u(x_{0}), \quad K_{r}(x_{0})\equiv K_{r}^u(x_{0}). 
 $$
 Given $z_0=(x_0,u(x_0))$ and $z=(x,u(x))$, we set
$\ell_{z_0}(z)\equiv \ell_{z_0,u}(z):=\ell_u(x,x_0)$.
When no confusion arises, we also write
$\ell_{z_0}(z)\equiv\ell_{x_0}(x)\equiv \ell_{x_0,u}(x)$, or simply $\ell(x)$. Given two nonempty sets $\Omega_{1},\Omega_{2}\subset \mathbb{R}^{n}$, we indicate by 
 $$\begin{cases}
 \dist(\Omega_{1},\Omega_{2}):=\inf_{x_{1}\in \Omega_{1}, x_{2}\in \Omega_{2}}\snr{x_{1}-x_{2}}\\
 \dist_{L}(\Omega_{1},\Omega_{2}):=\inf_{x_{1}\in \Omega_{1},x_{2}\in \Omega_{2}}\ell(x_{1},x_{2})
 \end{cases}
 $$
the Euclidean and the Lorentzian set-distance between $\Omega_{1}$ and $\Omega_{2}$, respectively. The next lemma reports a Lorentzian version of Federer's coarea formula, \cite[(2.14)]{bs82}.
\begin{lemma}\label{coa}
Let $u\in C^{2}(\mathbb{R}^{n})$ be a strictly spacelike function,\footnote{For strictly spacelike functions, $K_{s}(x_{0})$ is always bounded for all $s\in (0,\infty)$, cf. Lemma \ref{balls} \textnormal{(h$_1$)}.} $\mathfrak{M}:=\textnormal{graph}(u)$, $x_{0}\in \mathbb{R}^{n}$, $w\in C^1(\mathfrak{M})$, 
$$
f(t):= \int_{L_{t}^u(x_{0})}w\d A \quad \mbox{for every $t>0$.}
$$
Then
$$
\deris f(s)=\int_{\partial L_{s}^u(x_{0})}\frac{w}{\snr{\delta \ell_{x_0,u}}_{\mathbb{L}^{n+1}}}\d\sigma
$$
holds whenever $s>0$, where $\d\sigma$ denotes the surface measure induced by $g$
on $\partial L_s^u(x_0)$.
\end{lemma}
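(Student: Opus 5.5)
The statement is the Lorentzian coarea formula of Lemma \ref{coa}, and I would prove it by reducing to the Riemannian coarea formula on $(\mathfrak M,g)$.

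Since $u\in C^2$ is strictly spacelike, $(\mathfrak M,g)$ is a smooth Riemannian manifold, $\Phi\colon\mathbb R^n\to\mathfrak M$ is a $C^2$ diffeomorphism onto the graph, and $\d A$ in \eqref{area} is exactly the Riemannian volume of $g$. This holds because $\det g=1-|Du|^2$, so $\sqrt{\det g}=\mathcal h_u^{-1}$. The function $\ell_{x_0}$ is locally Lipschitz on $\mathfrak M$; it is $C^2$ away from $z_0$, since $\ell^2=|x-x_0|^2-(u(x)-u(x_0))^2$ is $C^2$ and positive for $x\neq x_0$ by strict spacelikeness. Moreover $L_t^u(x_0)=\{\ell_{z_0}<t\}$ is bounded, by the footnote together with Lemma \ref{balls}(h$_1$).

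\emph{Step 1: coarea identity.} For every $t>0$, Federer's coarea formula on $(\mathfrak M,g)$ gives
$$
f(t)=\int_{L_t^u(x_0)}w\,\d A=\int_0^t\int_{\{\ell_{z_0}=s\}}\frac{w}{|\nabla_g^{\mathfrak M}\ell_{z_0}|_g}\,\d\sigma\,\d s .
$$
This requires two checks.
\begin{enumerate}
\item The Riemannian norm $|\nabla^{\mathfrak M}_g\ell|_g$ coincides with $|\delta\ell|_{\mathbb L^{n+1}}$. This holds because $\delta\ell=\sum_i\delta_i\ell\,\partial_{y_i}$ is, by definition, the tangential gradient viewed as a vector of $\mathbb L^{n+1}$, and $g$ is the restriction of $\langle\cdot,\cdot\rangle_{\mathbb L^{n+1}}$ to $\ett_z\mathfrak M$, which is positive definite.
\item $\nabla\ell\neq0$ on $\mathfrak M\setminus\{z_0\}$. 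I would compute $\frac12\partial_r(\ell^2)$ along Euclidean rays from $x_0$: with $x=x_0+r\theta$ it equals $r-(u(x)-u(x_0))\langle Du,\theta\rangle$. Using $|u(x)-u(x_0)|\le \sup|Du|\,r$ with $\sup|Du|<1$ on the relevant compact set, this is $\ge r(1-\sup|Du|^2)>0$.
\end{enumerate}
Since $\ell$ is then a $C^2$ submersion away from $z_0$, every level set $\partial L_s^u(x_0)=\{\ell=s\}$, $s>0$, is a compact $C^2$ hypersurface, and the singular point $z_0$ has measure zero.

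\emph{Step 2: continuity and differentiation.} The inner function $s\mapsto\int_{\{\ell=s\}}w/|\delta\ell|\,\d\sigma$ is continuous for $s>0$. I would see this by flowing level sets along the $C^1$ vector field $\nabla\ell/|\nabla\ell|^2$, which maps $\{\ell=s\}$ diffeomorphically onto $\{\ell=s'\}$ with Jacobians continuous in $s'$; here $w\in C^1$ is used. The fundamental theorem of calculus then gives $f'(s)$ at every $s>0$, not merely almost everywhere.

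The main obstacle is this everywhere differentiability, which needs the flow argument. The nondegeneracy of $\nabla\ell$ is the only place where strict spacelikeness enters essentially, and there I expect the ray computation above to suffice.
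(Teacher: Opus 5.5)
Your argument is correct and follows essentially the same route as the paper. The paper gives no separate proof; it quotes Federer's coarea formula in the form \cite[(2.14)]{bs82}, that is, the coarea formula on $(\mathfrak M,g)$ for $\ell_{x_0,u}$ with $|\nabla^{\mathfrak M}_g\ell|_g=|\delta\ell|_{\mathbb L^{n+1}}$, and your nondegeneracy computation and flow argument supply the details needed to get differentiability at every $s>0$. (Your radial computation in fact bounds $|\delta\ell|_{\mathbb L^{n+1}}$ from below, consistently with $|\delta\ell|^2_{\mathbb L^{n+1}}=1+\ell^{-2}\langle\vv,\Phi\rangle^2_{\mathbb L^{n+1}}\ge1$ in \eqref{basic}; this is what makes the integrand $w/|\delta\ell|_{\mathbb L^{n+1}}$ bounded near $z_0$.)
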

\noindent We record below an integral inequality following from \cite[(3.5)--(3.8)]{bi23}, in the spirit of the monotonicity formula in \cite[Section~2]{bs82}. Although \cite[Theorem~1.1]{bi23} is stated for \(n\ge3\), the computations leading to these formulas remain valid for \(n=2\). In particular, the constants in the following proposition can be chosen as \(\gamma=1/(4n)\) and \(c_\star=\gamma/2\). For this, see Proposition \ref{p31} later on. 

\begin{proposition}\label{prop.mon}
Let $u\in C^{3}(\mathbb{R}^{n})$ be a classical solution to \eqref{bi} (and therefore strictly spacelike) with a smooth charge $\mu\in C^{\infty}(\mathbb{R}^{n})$ with compact support, $r\in (0,\infty)$ a number, and assume that $(x_{0},u(x_{0}))=(0,0)$. With $\gamma, s>0$ we define the quantities 
$$
\mathbb A(s) :=\frac{1}{s^{n}}\int_{L_{s}}\mathcal{h}_{u}^{-\gamma}\d A , \qquad \mathbb B(s):= \int_{L_{s}}\mathcal{h}_{u}^{-\gamma}\ell^{-n-2}\snr{\langle\Phi,\vv\rangle_{\mathbb{L}^{n+1}}}^{2}\d A, 
$$
$$
\mathbb E(s):= \frac{1}{s^{n+1}}\int_{L_{s}}\frac{1}{2}(s^{2}-\ell^{2})\mathcal{h}_{u}^{2-\gamma}\left[\sum_{i,j=1}^{n}(D_{ij}u)^{2}+\sum_{j=1}^{n}\left(\sum_{i=1}^{n}\vv_{i}D_{ij}u\right)^{2}\right]\d A,
$$
$$
\mathbb E_\mu(s):=\frac{1}{s^{n+1}}\int_{L_{s}}\frac{1}{2}(s^{2}-\ell^{2})\left(\frac{1}{4}\mathcal{h}_{u}^{-\gamma}\mu^{2}+\gamma\delta_{n+1}(\mathcal{h}_{u}^{-(\gamma+1)}\mu)\right)\d A
$$
and 
$$
\mathbb F_\mu(s):=
\frac{1}{s^{n+1}}\int_{L_{s}}\mathcal{h}_{u}^{-\gamma}\mu\langle\Phi,\vv\rangle_{\mathbb{L}^{n+1}}\d A, 
$$
where $\ell(\Phi(x)):=\ell_u(x,0)$ and $L_{s}:=\{(x,u(x))\in \mathfrak{M}\colon \ell(x,0)<s\}$. 
There exists $\gamma\equiv \gamma(n)\in (0,1/n)$ such that
\eqn{monotonia}
$$
\deris [\mathbb  A(s)+\mathbb  B(s)] \leq - c_{\star}\mathbb  E(s) + \mathbb  E_\mu(s) - \mathbb  F_\mu(s)
$$holds for all $s\in (0,r)$, with $c_{\star}\equiv c_{\star}(n)>0$. In particular, when $\mu\equiv 0$ the function $s \mapsto \mathbb  A(s)+\mathbb  B(s)$ is non-increasing.
\end{proposition}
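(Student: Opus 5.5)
The plan is to follow the Bartnik--Simon monotonicity computation, in the form carried out in \cite[(3.5)--(3.8)]{bi23}, adding the charge contributions explicitly. Since $(x_0,u(x_0))=(0,0)$, the position vector is $\Phi$. The Lorentz distance $\ell$ satisfies $\ell^2=-\langle\Phi,\Phi\rangle_{\mathbb L^{n+1}}$ restricted to the graph.

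\textbf{Step 1 (first variation).} I would start from the divergence identity on $\mathfrak M$ given by Lemma \ref{ibp}, applied to the tangential part of the position vector $\Phi$ multiplied by the weight $\psi(\ell)\mathcal h_u^{-\gamma}$, where $\psi(\ell)=\frac12(s^2-\ell^2)_+$. The tangential divergence of $\Phi$ equals $n$ plus a mean-curvature term $\tx H_u\langle\Phi,\vv\rangle_{\mathbb L^{n+1}}$, and $\tx H_u$ is expressed through $\mu$ via the equation $-\mathcal Mu=\mu$. With this cut-off, differentiating in $s$ and using the coarea formula (Lemma \ref{coa}) produces the standard identity for $\deris\mathbb A$. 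In that identity the boundary flux term $|\delta\ell|^{-1}$ splits into $\ell^{-1}$ plus a term controlled by $\langle\Phi,\vv\rangle^2$. This correction is exactly $-\deris\mathbb B$, up to the sign needed for the combination $\mathbb A+\mathbb B$. The $\mu$-term coming from $\tx H_u$ yields $-\mathbb F_\mu$.

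\textbf{Step 2 (derivative of the weight).} The new ingredient relative to the $\mu\equiv0$ case is the tangential derivative of $\mathcal h_u^{-\gamma}$ hitting $\Phi^{\top}$. I would use the Jacobi-type (Simons) identity satisfied by $\mathcal h_u=-\langle\vv,\tx e_{n+1}\rangle$ on a graph of prescribed mean curvature. On a maximal surface this identity reads $\Delta_g\mathcal h_u=|\mathrm{II}|^2\mathcal h_u$; with a charge it acquires the terms $\delta_{n+1}\mu$ and $\mathcal h_u\mu^2$. Integrating it against $\psi(\ell)\mathcal h_u^{-\gamma-1}$ via Lemma \ref{ibp} gives a positive term $\gamma\int\psi\,\mathcal h_u^{-\gamma}|\mathrm{II}|^2$. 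Here $|\mathrm{II}|^2\mathcal h_u^{2}$ is the bracket in $\mathbb E$, up to a factor comparable to $1$.

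The same integration gives a gradient term $\gamma(\gamma+1)\int\psi\,\mathcal h_u^{-\gamma-2}|\nabla\mathcal h_u|^2$, which is dominated by the $|\mathrm{II}|^2$ term thanks to the Kato-type inequality $|\nabla\mathcal h_u|^2\le\mathcal h_u^2|\mathrm{II}|^2$. The charge contributions collect into $\gamma\,\delta_{n+1}(\mathcal h_u^{-(\gamma+1)}\mu)$. Finally, the cross term between $\mu$ and $\langle\Phi,\vv\rangle$ is absorbed by Young's inequality into $\tfrac14\mathcal h_u^{-\gamma}\mu^2$ and part of $\mathbb E$; together these produce $\mathbb E_\mu$.

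\textbf{Step 3 (choice of $\gamma$ and main obstacle).} The main obstacle is the sign bookkeeping. The terms involving $\nabla\mathcal h_u\cdot\Phi^{\top}$ from Step 1 must be absorbed by the good $|\mathrm{II}|^2$ term. Doing so requires $\gamma$ small, since $\gamma(\gamma+1)$ must be beaten and the Cauchy--Schwarz losses scale like $n\gamma$. Choosing $\gamma=1/(4n)$ leaves the fraction $c_\star=\gamma/2$ of $\mathbb E$. I would verify this absorption term by term, following \cite{bi23}.

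When $\mu\equiv0$, the right-hand side of \eqref{monotonia} is nonpositive, which gives the stated monotonicity. Smoothness and compact support of $\mu$, together with $u\in C^3$, justify the integrations by parts and the differentiation in $s$ over the bounded sets $L_s$.
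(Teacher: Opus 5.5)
\textbf{Gap in Step 2.} You follow the same route as the paper, which imports the Bartnik--Simon computation in the form of \cite[(3.5)--(3.8)]{bi23}. However, the step that carries the whole estimate fails as you state it. Write
\[
\Delta_g\mathcal h_u^{-\gamma}=-\gamma\mathcal h_u^{-\gamma-1}\Delta_g\mathcal h_u+\gamma(\gamma+1)\mathcal h_u^{-\gamma-2}|\nabla\mathcal h_u|^2 .
\]
The Jacobi identity $\Delta_g\mathcal h_u=\mathcal h_u|\mathrm{II}|^2+(\text{charge term})$ supplies the good term $-\gamma\mathcal h_u^{-\gamma}|\mathrm{II}|^2$.

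Since $\nabla\mathcal h_u=\mathrm{II}(\tx{e}_{n+1}^{\top})$ and $|\tx{e}_{n+1}^{\top}|^2=\mathcal h_u^2-1$, the best a plain Kato bound gives is $|\nabla\mathcal h_u|^2\le(\mathcal h_u^2-1)|\mathrm{II}|^2$. The net coefficient of $\mathcal h_u^{-\gamma}|\mathrm{II}|^2$ is then $\gamma\bigl(\gamma-(1+\gamma)\mathcal h_u^{-2}\bigr)$, which is positive wherever $\mathcal h_u^2>(1+\gamma)/\gamma$. So the gradient term is \emph{not} dominated, for any $\gamma>0$. With your inequality you are comparing $\gamma(\gamma+1)$ against $\gamma$, and no smallness of $\gamma$ helps.

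The plain bound is sharp when $\mathrm{II}$ has rank one with eigenvector $\tx{e}_{n+1}^{\top}$. What rules this out is the trace constraint $\operatorname{tr}_g\mathrm{II}=\tx H_u=\mu$. The missing ingredient is the refined (trace) inequality. Split $\mathrm{II}=\mathring{\mathrm{II}}+\frac{\mu}{n}g$ and use $|\mathring{\mathrm{II}}(v)|^2\le\frac{n-1}{n}|\mathring{\mathrm{II}}|^2|v|^2$ for trace-free symmetric forms, which gives
\[
|\nabla\mathcal h_u|^2\le(\mathcal h_u^2-1)\Big[(1+\epsilon)\tfrac{n-1}{n}|\mathring{\mathrm{II}}|^2+c_\epsilon\mu^2\Big].
\]
The coefficient of $\mathcal h_u^{-\gamma}|\mathring{\mathrm{II}}|^2$ becomes $\gamma\big[-1+(1+\gamma)(1+\epsilon)\frac{n-1}{n}\big]$, which is negative for $\gamma<1/(n-1)$ and $\epsilon$ small. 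This is the source of the range $\gamma\in(0,1/n)$ in the statement and of the admissibility of $\gamma=1/(4n)$, $c_\star=\gamma/2$. It is the ``trace inequality'' invoked at the end of the proof of Proposition \ref{p31}.

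To finish, return from $|\mathring{\mathrm{II}}|^2=|\mathrm{II}|^2-\mu^2/n$ to the Hessian bracket. Write the charge term of the Jacobi identity as $\gamma\delta_{n+1}(\mathcal h_u^{-(\gamma+1)}\mu)$ plus the cross term $\gamma(\gamma+1)\mathcal h_u^{-\gamma-2}\mu\,\delta_{n+1}\mathcal h_u$. Absorb the cross term by Young, using $|\delta_{n+1}\mathcal h_u|\le\mathcal h_u|\nabla\mathcal h_u|$. What remains is $O(\gamma)\mathcal h_u^{-\gamma}\mu^2$, hence at most $\frac14\mathcal h_u^{-\gamma}\mu^2$ for $\gamma$ small.

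\textbf{Smaller corrections.}
\begin{itemize}
\item On a spacelike graph $\ell^2=+\langle\Phi,\Phi\rangle_{\mathbb L^{n+1}}=|x|^2-u(x)^2$, not $-\langle\Phi,\Phi\rangle_{\mathbb L^{n+1}}$. With your sign, the tangential Laplacian of $\ell^2$ would start from $-2n$ and Step 1 would not produce $\mathbb A+\mathbb B$.
\item One has $|\mathrm{II}|^2=\mathcal h_u^2\bigl(|D^2u|^2+2\mathcal h_u^2|D^2u\,Du|^2+\mathcal h_u^4\langle D^2u\,Du,Du\rangle^2\bigr)$. Hence $|\mathrm{II}|^2\ge\mathcal h_u^2$ times the bracket in $\mathbb E$, which is exactly what produces the weight $\mathcal h_u^{2-\gamma}$. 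The two are not comparable, and your identification has the power of $\mathcal h_u$ inverted.
\item $\mathbb F_\mu$ stays untouched on the right-hand side of \eqref{monotonia}. So no cross term $\mu\langle\Phi,\vv\rangle_{\mathbb L^{n+1}}$ needs absorbing here, and it could not be absorbed into $\mathbb E$, which contains only second derivatives. The $\frac14\mathcal h_u^{-\gamma}\mu^2$ in $\mathbb E_\mu$ comes from the trace splitting above.
\end{itemize}
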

\begin{remark}\label{rem0}
    \emph{If $u\in C^{3}(\mathbb{R}^{n})$ is a classical solution to \eqref{bi} with charge $\mu\in C^{\infty}_{0}(\mathbb{R}^{n})$ and $x_{0}\in \mathbb{R}^{n}$ is a point, the translated map $u_{0}(x):=u(x_{0}+x)-u(x_{0})$ is still a solution to \eqref{bi} with datum $\mu_{0}(x):=\mu(x_{0}+x)$, so there is no loss of generality in assuming that $x_{0}=0$ and $u(0)=0$. In Proposition \ref{prop.mon} note that 
    $$
    \langle\Phi,\vv\rangle_{\mathbb{L}^{n+1}}= - \mathcal h_u (u-\langle Du, x\rangle).
    $$}
\end{remark}
\noindent Next, we highlight a basic relation between the projection on $\mathbb{R}^{n}$ of Lorentz balls and the standard Euclidean balls.
\begin{lemma}\label{balls} Let $u\in C^{1}(\mathbb{R}^{n})$ be strictly spacelike, $s\in (0,\infty)$ be a number and $x_{0}\in \mathbb{R}^{n}$ be any point. Then the following holds.
\begin{itemize}
    \item[(\textnormal{h}$_1$)] For all balls $B_{s}(x_{0})\subset \mathbb{R}^{n}$, it is $B_{s}(x_{0})\subset K_{s}(x_{0})$. Moreover there exists $r_{0}\equiv r_{0}(x_{0},u,s)>0$ such that $K_{s}(x_{0})\subset B_{r_{0}}(x_{0})$. If in addition $s\in (0,1]$, then $K_{s}(x_{0})\subset B_{c_{0}s}(x_{0})$ for some positive constant $c_{0}\equiv c_{0}(x_{0},u)$. Finally, if 
    \eqn{h-}
    $$\mathcal{h}_{-}:=\inf_{\mathbb{R}^{n}}\mathcal{h}_{u}^{-1}=\inf_{\mathbb{R}^{n}}\sqrt{1-|Du(x)|^2}>0,$$ then $K_{s}(x_{0})\subset B_{s/\mathcal{h}_{-}}(x_{0})$.
    \item[(\textnormal{h}$_2$)] If $u\in L^{\infty}(\mathbb{R}^{n})\cap C^{1}(\mathbb{R}^{n})$, then $K_{s}(x_{0})\subset B_{\ti{s}}(x_{0})$ with $\ti{s}:=(s^{2}+4\nr{u}_{L^{\infty}(\mathbb{R}^{n})}^{2})^{1/2}$.
    \item[(\textnormal{h}$_3$)] In dimension $n=2$, if $u\in C^{1}(\mathbb{R}^{2})\cap \mathbb{X}$ and $\Omega\subset \mathbb{R}^{2}$ is a compact set and $s\in (0,\infty)$ a number, there exists a radius $r_{*}\equiv r_{*}(u,\Omega,s)>1$ such that $$\Omega\Subset B_{r_{*}}(0) \quad \mbox{and}\quad \dist_{L}(\Omega, \mathbb{R}^{2}\setminus B_{r_{*}}(0))\ge 10\max\{s,1\}\,.$$ In other terms, for any compact set $\Omega\subset \mathbb{R}^{2}$, radius $s\in (0,\infty)$, we can find a ball $B_{r_{*}}(0)\subset \mathbb{R}^{2}$ with radius $r_{*}\equiv r_{*}(\Omega,u,s)>1$ such that $\Omega\cap K_{s}(y)=\varnothing$ for all $y\in \mathbb{R}^{2}\setminus B_{r_{*}}(0)$. 
\end{itemize}
\end{lemma}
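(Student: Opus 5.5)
The plan is to prove the three items directly from the definition $\ell_u(x,x_0)^2=\snr{x-x_0}^2-\snr{u(x)-u(x_0)}^2$, using only the weak spacelike bound $\snr{u(x)-u(x_0)}\le\snr{x-x_0}$ together with quantitative strict spacelikeness where it is available.

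For (h$_1$), the inclusion $B_s(x_0)\subset K_s(x_0)$ is immediate, since $\ell_u(x,x_0)\le\snr{x-x_0}<s$.

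For the reverse inclusions, I first derive a uniform lower bound on $\ell$. Fix $x$, write $x=x_0+\rho\omega$ with $\snr{\omega}=1$, and set $\delta(\rho):=\sup_{\overline{B_\rho(x_0)}}\snr{Du}<1$; this supremum is less than $1$ by continuity and compactness. Then
$$
\snr{u(x)-u(x_0)}\le\int_0^\rho\snr{Du(x_0+t\omega)}\dt ,
$$
so that
$$
\ell_u(x,x_0)^2\ge \rho^2-\Big(\int_0^\rho \snr{Du(x_0+t\omega)}\dt\Big)^2 .
$$
The key estimate is that the map
$$
\rho\mapsto m(\rho):=\rho-\int_0^\rho \sup_{\partial B_t(x_0)}\snr{Du}\dt
$$
is strictly increasing, and $m(\rho)\ge(1-\delta(1))(\rho-1)\to\infty$ for $\rho\ge1$. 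Since $\ell^2\ge m(\rho)\cdot\rho\ge m(\rho)^2$, we get $\ell(x,x_0)\ge m(\snr{x-x_0})$. Hence $\ell<s$ forces $\snr{x-x_0}\le m^{-1}(s)=:r_0$.

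For $s\le1$, work inside $B_{1}$ first. There $\ell\ge\sqrt{1-\delta(1)^2}\,\snr{x-x_0}$, and combining this with the monotone bound above gives $K_s\subset B_{c_0 s}$ with $c_0=c_0(x_0,u)$. Under \eqref{h-}, the same computation with $\snr{Du}\le\sqrt{1-\mathcal h_-^2}$ gives $\ell\ge \mathcal h_-\snr{x-x_0}$, whence $K_s\subset B_{s/\mathcal h_-}$.

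For (h$_2$), use $\snr{u(x)-u(x_0)}\le 2\nr{u}_{L^\infty}$ directly. This gives $\snr{x-x_0}^2\le \ell^2+4\nr{u}_{L^\infty}^2<s^2+4\nr{u}^2_{L^\infty}$, which is exactly $\tilde s^{\,2}$.

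For (h$_3$), the key input is \eqref{weightsmall} of Lemma \ref{lemmino}. Take $\Omega\subset B_R(0)$ and $y$ with $\snr{y}\ge r_*\gg R$. For $x\in\Omega$ we have $\snr{x-y}\ge \snr y-R$, and
$$
\snr{u(x)-u(y)}\le \snr{u(x)}+\eps(1+\snr y)
$$
once $\snr y\ge r_\eps$. Here $\snr{u(x)}\le C_\Omega$, bounded by continuity on the compact set $\Omega$. Then
$$
\ell^2\ge(\snr y-R)^2-(C_\Omega+\eps(1+\snr y))^2 ,
$$
and choosing $\eps=1/4$ makes the right-hand side at least $\tfrac14\snr y^2$ for $\snr y$ large. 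This exceeds $100\max\{s,1\}^2$ for $r_*$ large, giving $\dist_L(\Omega,\er^2\setminus B_{r_*})\ge 10\max\{s,1\}$, and consequently $\Omega\cap K_s(y)=\varnothing$.

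The main obstacle is the first part of (h$_1$), namely the boundedness of $K_s$ with no global bound on $\snr{Du}$. Since $\snr{Du}$ may approach $1$ at infinity, the bound must come from the integrated deficit $\int_0^\rho(1-\snr{Du})$, which is at least linear beyond radius $1$ because $\snr{Du}\le\delta(1)<1$ on $B_1$ does not suffice alone. I would handle this by noting that $\snr{u(x)-u(x_0)}\le \snr{x-x_0}-(1-\delta(1))\min\{\snr{x-x_0},1\}$. This yields
$$
\ell^2\ge(1-\delta(1))\min\{\rho,1\}\,\rho ,
$$
which tends to infinity as $\rho\to\infty$ and is what we need.
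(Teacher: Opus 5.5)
The inclusion $B_s(x_0)\subset K_s(x_0)$, the case $\mathcal h_->0$, (h$_2$) and (h$_3$) are correct and follow the paper. For (h$_2$) the paper only cites \cite{bi23}; your one-line argument $|x-x_0|^2=\ell^2+|u(x)-u(x_0)|^2<s^2+4\|u\|_{L^\infty(\mathbb R^n)}^2$ is a complete proof. For (h$_3$) you use \eqref{weightsmall} as the paper does, with $|u|$ on $\Omega$ bounded by compactness instead of by the weighted norm.

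The central step of your (h$_1$) argument is false. The inequality $m(\rho)\ge(1-\delta(1))(\rho-1)$ would need $\sup_{\partial B_t(x_0)}|Du|\le\delta(1)$ for $t\ge1$, and you have no such bound there. If $1-\sup_{\partial B_t(x_0)}|Du|$ is integrable on $(1,\infty)$, which strict spacelikeness allows, then $m$ is bounded. In that case $m^{-1}(s)$ is undefined for large $s$, and $\ell\ge m(|x-x_0|)$ gives no bound on $K_s(x_0)$. The loss occurs when you pass from $\ell^2\ge m(\rho)\rho$ to $\ell^2\ge m(\rho)^2$.

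Your closing paragraph contains the correct argument. Writing $|u(x)-u(x_0)|\le\rho-a$ with $a:=(1-\delta(1))\min\{\rho,1\}\le\rho$ gives $\ell^2\ge\rho^2-(\rho-a)^2\ge a\rho$, that is, $\ell^2\ge(1-\delta(1))\min\{\rho^2,\rho\}$. Hence $K_s(x_0)\subset B_{r_0}(x_0)$ with $r_0=\max\{s/\sqrt{1-\delta(1)},\,s^2/(1-\delta(1))\}$. For $s\le1$ the same bound gives $c_0=1/(1-\delta(1))$, which also makes your separate ``work inside $B_1$ first'' step unnecessary.

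This is the paper's proof. It bounds $|Du|\le\delta_0<1$ on $B_{\varrho_0}(x_0)$ and $|Du|\le1$ outside, then applies Cauchy--Schwarz to obtain $\ell^2\ge(1-\delta_0^2)\min\{\rho^2,\varrho_0\rho\}$. Drop the $m^{-1}(s)$ step and let the last paragraph carry both the boundedness of $K_s(x_0)$ and the $s\le1$ claim.
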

\begin{proof} We start with the proof of (\textnormal{h}$_1$). Inclusion $B_{s}(x_{0})\subset K_{s}(x_{0})$ is trivial by the very definition of Lorentz distance. Since $|Du(x_0)|<1$ and $Du$ is continuous, there exist
$\delta_0\in(0,1)$ and $\rr_0>0$, depending on
$1-|Du(x_0)|$ and on the local modulus of continuity of $Du$,
such that
$\|Du\|_{L^\infty(B_{\rr_0}(x_0))}\le\delta_0$. We then bound
\begin{eqnarray*}
\snr{x-x_{0}}^{2}&\le&\ell(x,x_{0})^{2}+\snr{u(x)-u(x_{0})}^{2}\nonumber \\
&\stackrel{x\in K_{s}(x_{0})}{\le}& s^{2} +\snr{x-x_{0}}^{2}\int_{0}^{\min\left\{1,\frac{\rr_{0}}{\snr{x-x_{0}}}\right\}}\snr{Du(x_{0}+t(x-x_{0}))}^{2}\dt\nonumber \\
&&+\snr{x-x_{0}}^{2}\int^{1}_{\min\left\{1,\frac{\rr_{0}}{\snr{x-x_{0}}}\right\}}\snr{Du(x_{0}+t(x-x_{0}))}^{2}\dt\nonumber\\
&\le&s^{2}+\snr{x-x_{0}}^{2}\left(\delta_{0}^{2}\min\left\{1,\frac{\rr_{0}}{\snr{x-x_{0}}}\right\}+1-\min\left\{1,\frac{\rr_{0}}{\snr{x-x_{0}}}\right\}\right),
\end{eqnarray*}
so, reabsorbing terms, we obtain that 
$$\snr{x-x_{0}}^{2}(1-\delta_{0}^{2})\min \left\{1,\frac{\rr_{0}}{\snr{x-x_{0}}}\right\}\le s^{2}.$$ We set 
$$
r_{0}:=\max\left\{\frac{s}{\sqrt{1-\delta_{0}^{2}}},\frac{s^{2}}{\rr_{0}(1-\delta_{0}^{2})}\right\},
$$
and deduce from the above displays that $K_{s}(x_{0})\subset B_{r_{0}}(x_{0})$. In particular, if $s\in (0,1]$ by definition we can further estimate 
$$r_0\le
s\max\left\{
(1-\delta_0^2)^{-1/2},
\frac{1}{\rr_0(1-\delta_0^2)}
\right\}
=:c_0(x_0,u)s.$$
Concerning the final claim in (\textnormal{h}$_1$), we notice that $\mathcal{h}_{-}>0$ implies that $\nr{Du}_{L^{\infty}(\mathbb{R}^{n})}^{2}\le 1-\mathcal{h}_{-}^{2}$, thus 
\begin{flalign*}
\snr{x-x_{0}}^{2}&\le \ell(x,x_{0})^{2}+\snr{u(x)-u(x_{0})}^{2}\nonumber \\
&\le s^{2}+\nr{Du}_{L^{\infty}(\mathbb{R}^{n})}^{2}\snr{x-x_{0}}^{2}\le s^{2}+(1-\mathcal{h}_{-}^{2})\snr{x-x_{0}}^{2},
\end{flalign*}
and $\snr{x-x_{0}}\le s/\mathcal{h}_{-}$.
For assertion \textnormal{(h$_2$)}, see
\cite[Lemma 2.8(i)]{bi23}. We now turn to the proof of
\textnormal{(h$_3$)}. The idea is that Lorentz balls
centered far away from a given compact set do not
``grow'' fast enough to reach it. Being $\Omega$ compact, there exists $\bar{r}\equiv \bar{r}(\Omega)$ such that $\Omega\Subset B_{\bar{r}}(0)$. Moreover by \eqref{weightsmall} applied with $\varepsilon=1/2^{4}$, we can find a radius $r_{2^{-4}}>0$ such that
\eqn{24.6}
$$
\sup_{x\in \mathbb{R}^{2}\setminus B_{r_{2^{-4}}}(0)}\frac{\snr{u(x)}}{1+\snr{x}}<\frac{1}{2^{4}}.
$$
Now set
\eqn{24.7}
$$
r_{*}:=2^{10}\max\{s,1\}\max\left\{\bar{r}, r_{2^{-4}},2\max\{\bar{r},1\}\left\|\frac{u}{1+\snr{\ \cdot \ }}\right\|_{L^{\infty}(\mathbb{R}^{2})},1\right\}.
$$
For $x\in\Omega$ and
$y\in\mathbb R^2\setminus B_{r_*}(0)$, we have $$
\snr{x-y}\ge \snr{y}-\snr{x}\ge 2^{-1}\snr{y}+2^{-1}r_{*}(1-2^{-9}\max\{s,1\}^{-1})\ge 2^{-1}\snr{y}+2^{-2}r_{*},
$$
and
\begin{eqnarray*}
\snr{u(x)-u(y)}&\le&  \left\|\frac{u}{1+\snr{\ \cdot \ }}\right\|_{L^{\infty}(\mathbb{R}^{2})}(1+\snr{x})+\left(\sup_{x\in \mathbb{R}^{2}\setminus B_{r_{*}}(0)}\frac{\snr{u(x)}}{1+\snr{x}}\right)(1+\snr{y})\nonumber \\
&\stackrel{\eqref{24.6}}{\le}&2\max\{\bar{r},1\}\left\|\frac{u}{1+\snr{\ \cdot \ }}\right\|_{L^{\infty}(\mathbb{R}^{2})}+2^{-3}\snr{y}\stackrel{\eqref{24.7}}{\le}\frac{r_{*}}{2^{10}}+2^{-3}\snr{y}.
\end{eqnarray*}
We then control
\begin{flalign*}
\ell(x,y)^{2}&\ge\snr{y}^{2}(2^{-2}-2^{-6})+r_{*}^{2}(2^{-4}-2^{-20})+\snr{y}r_{*}(2^{-2}-2^{-12})\nonumber \\
&\ge\frac{\snr{y}^{2}}{5}+\frac{r_{*}^{2}}{20}+\frac{\snr{y}r_{*}}{5}\ge \frac{2r_{*}^{2}}{5}\stackrel{\eqref{24.7}}{>}100\max\{s,1\}^{2},
\end{flalign*}
so $\dist(\Omega,\mathbb{R}^{2}\setminus B_{r_{*}}(0))\ge \dist_{L}(\Omega,\mathbb{R}^{2}\setminus B_{r_{*}}(0))\ge 10\max\{s,1\}$ and $\Omega\cap K_{s}(y)=\varnothing$ for all $y\in \mathbb{R}^{2}\setminus B_{r_{*}}(0)$.
\end{proof}
\noindent We then report an intrinsic variant of the Lebesgue differentiation theorem, whose proof is a  consequence of classical Lebesgue theory and of the definition of the area element in \rif{area}.
\begin{lemma}\label{limlem}
    Let $u\in C^{1}(\mathbb{R}^{n})$ be strictly spacelike, $f\in L^{1}_{\loc}(\mathbb{R}^{n})$ be a function such that $\snr{f}^{p}\in L^{1}_{\loc}(\mathbb{R}^{n})$ for some $p>0$, and $x_{0}$ be a Lebesgue point for $\snr{f}^{p}$. Then
    \eqn{0.8}
    $$
    \lim_{s\to 0}\frac{1}{s^{n}}\int_{L_{s}(x_{0})}\snr{f(x)}^{p}\d A=\omega_{n}\snr{f(x_{0})}^{p}.
    $$
    In particular, if $f\equiv 1$, it is 
    $$\lim_{s\to 0}\frac{A(L_{s}(x_{0}))}{\snr{B_{s}(x_{0})}}=1.$$
    \end{lemma}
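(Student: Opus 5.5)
The statement to prove is Lemma \ref{limlem}. The plan is to transfer the intrinsic averages to Euclidean averages by two-sided inclusions of Lorentzian and Euclidean balls. Two facts from the definitions do the work. First, the area element in \rif{area} is $\d A=\sqrt{1-|Du|^2}\dx$ with a continuous density. Second, $L_s(x_0)$ is the graph of $u$ over $K_s(x_0)$. Hence
\[
\frac{1}{s^n}\int_{L_s(x_0)}|f|^p\d A=\frac{1}{s^n}\int_{K_s(x_0)}|f|^p\sqrt{1-|Du|^2}\dx .
\]

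The first step is to localize. By Lemma \ref{balls} (h$_1$), for $s\le 1$ we have $B_s(x_0)\subset K_s(x_0)\subset B_{c_0 s}(x_0)$. So we work in a fixed small ball around $x_0$, where $Du$ is continuous and $|Du|<1$.

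The second step is to sharpen these inclusions asymptotically. Write $a:=Du(x_0)$ and $\eps(r):=\sup_{B_r(x_0)}|Du-a|$, so that $\eps(r)\to 0$ as $r\to0$. For $x\in B_{c_0s}(x_0)$ we have $u(x)-u(x_0)=\langle a,x-x_0\rangle+O(\eps(c_0s)s)$. Therefore
\[
\ell(x,x_0)^2=|x-x_0|^2-\langle a,x-x_0\rangle^2+o(s^2)=g_{x_0}(x-x_0,x-x_0)+o(s^2),
\]
with $g_{x_0}=\mathds I-a\otimes a$ the induced metric. It follows that
\[
E_{(1-\delta)s}\subset K_s(x_0)\subset E_{(1+\delta)s},\qquad E_r:=\{y:\ g_{x_0}(y-x_0,y-x_0)<r^2\},
\]
for any $\delta>0$ and all $s$ small enough. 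Since $\det g_{x_0}=1-|a|^2$, we get $|E_r|=\omega_n r^n(1-|a|^2)^{-1/2}$.

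The third step is the Lebesgue point argument. The ellipsoids $E_r$ are comparable to the balls $B_{c r}(x_0)$, and $K_s(x_0)\subset B_{c_0s}(x_0)$ with $|B_{c_0 s}|\lesssim s^n$. Combining this with the continuity of the density, we obtain
\[
\frac1{s^n}\int_{K_s(x_0)}\bigl||f|^p\sqrt{1-|Du|^2}-|f(x_0)|^p\sqrt{1-|a|^2}\bigr|\dx\to0 .
\]
Indeed, $\big||f|^p-|f(x_0)|^p\big|$ averages to zero over $B_{c_0s}(x_0)$ because $x_0$ is a Lebesgue point of $|f|^p$. The term $|f|^p\,\bigl|\sqrt{1-|Du|^2}-\sqrt{1-|a|^2}\bigr|$ is at most $o(1)\cdot|f|^p$, and the average of $|f|^p$ over $B_{c_0s}(x_0)$ stays bounded.

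It then remains to compute $s^{-n}|K_s(x_0)|\sqrt{1-|a|^2}$. By the ellipsoid sandwich this quantity lies between $(1\pm\delta)^n\omega_n$, and letting $\delta\to0$ gives the limit $\omega_n$. This yields \rif{0.8}, and the case $f\equiv1$ gives the ratio statement, since $|B_s(x_0)|=\omega_n s^n$.

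The main obstacle is the second step: the quantitative sandwich of $K_s(x_0)$ between dilated $g_{x_0}$-ellipsoids. It needs the uniform expansion of $u$ to first order on $B_{c_0 s}(x_0)$, which uses only the continuity of $Du$. The cancellation $|E_r|\sqrt{1-|a|^2}=\omega_n r^n$ is exactly why the intrinsic area measure produces the Euclidean constant $\omega_n$.
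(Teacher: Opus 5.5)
Your argument is correct and is the natural fleshing-out of the paper's one-line justification, which is Lebesgue differentiation combined with $dA=\sqrt{1-|Du|^2}\,dx$; you also localize via Lemma \ref{balls} (h$_1$). The one step the paper's sketch leaves implicit is that the constant $\omega_n$ (equivalently $A(L_s(x_0))/|B_s(x_0)|\to1$) depends on the first-order geometry of $K_s(x_0)$ at $x_0$ and not on Lebesgue theory alone, and your sandwich between $g_{x_0}$-ellipsoids together with the cancellation $|E_r|\sqrt{1-|a|^2}=\omega_n r^n$ supplies exactly that (for the inner inclusion, note that $E_{(1-\delta)s}\subset B_{(1-\delta)s/\sqrt{1-|a|^2}}(x_0)$, so the uniform expansion of $u$ on any ball $B_{Cs}(x_0)$ suffices).
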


\section{Light segments in the optimal transport terminology}\label{luce}

 We first establish some basic terminology aimed at describing the fine structure of the so-called ``singular set'' of local minima $u\in \mathbb{X}(\mathbb{R}^{n})$ of \eqref{bi.en}, that is the set of light segments \cite{bs82,bar87}. We shall adopt some terminology from optimal transport theory, specifically regarding transport rays \cite{fm02}. 
\begin{definition}\label{defilight}
Let $\Omega\subset\mathbb R^n$ be open and let
$u:\Omega\to\mathbb R$ be weakly spacelike.
\begin{itemize}

\item A nondegenerate segment
$\overline{xy}\subset\Omega$, $x\neq y$, is called a
\emph{light segment} if
$
|u(x)-u(y)|=|x-y|.
$

\item A \emph{light ray} is a maximally extended light segment.
More precisely, given a light segment $\overline{xy}$, let
$\mathsf r_{x,y}$ denote the line through $x$ and $y$. The light ray
containing $\overline{xy}$ is the set
\[
\ell_{xy}
:=
\bigcup
\left\{
\overline{zw}\subset\Omega:
\begin{array}{l}
\overline{zw}\text{ is a light segment},\\
\overline{xy}\subset\overline{zw}\subset\mathsf r_{x,y}
\end{array}
\right\}.
\]
\end{itemize}
\end{definition}

Note that every light segment is contained in a unique light ray and
that every nondegenerate closed subsegment of a light ray is a light
segment. We call
$
\diam(\ell)\in(0,\infty]
$
the length of a light ray $\ell$.
While every light segment is closed and compactly contained in $\Omega$,
a light ray need not have either property.
Indeed, its closure may meet $\partial\Omega$, and, when $\Omega$ is
unbounded, it may also be a half-line or an entire line. Light segments remain central in our analysis. 
\begin{remark}\label{raggire}
{\em
Let $u:\Omega\to\mathbb R$ be weakly spacelike and let
$
\overline{xy}\subset\Omega
$
with $x\neq y$. If
$
|u(x)-u(y)|=|x-y|,
$
then it is easy to see that $u$ is affine on $\overline{xy}$. More precisely, assuming for
definiteness that $u(y)>u(x)$ and setting
$
z_t:=(1-t)x+ty
$
for $t\in[0,1]$, one has
$
u(z_t)
=
u(x)+t|y-x|
=
(1-t)u(x)+tu(y)
$
for every $t\in[0,1]$. Moreover, if
$
u\in\mathbb X(\mathbb R^n)
$
and $n\ge3$, all light segments, and therefore all light rays, have
uniformly bounded length. Indeed, by \eqref{infsup},
\eqn{aff.0.1}
$$
\sup
\left\{
|x-y|:
\overline{xy}\subset\mathbb R^n
\mbox{ is a light segment}
\right\}
\le
2\|u\|_{L^\infty(\mathbb R^n)}.
$$
In dimension $n=2$, \eqref{logsup} rules out unbounded light rays.}
\end{remark}
With $u,\Omega$ fixed and understood as in Definition \ref{defilight},
we set
\eqn{language1}
$$
\mathcal S_{\texttt{i},u}:=
\bigcup\left\{(xy):\overline{xy}\text{ is a light segment}\right\},
\qquad \mathcal S_u :=\bigcup
\left\{\overline{xy}:\overline{xy}\text{ is a light segment}\right\},
$$
and
\eqn{language2}
$$
\mathcal I_u
:=\left\{ x\in\Omega:|Du(x)|=1 \right\}.
$$
Thus, $\mathcal S_{\texttt{i},u}$ and $\mathcal S_u$ are the unions
of the relative interiors and of the closed light segments,
respectively, while $\mathcal I_u$ is the $1$-level set of $|Du|$. Note that $\mathcal S_u$ can equivalently be regarded as the union of all light rays.

In analogy with the zero-length rays arising in optimal transport
theory, cf. \cite[Definition 6]{fm02}, we define the zero-length
accumulation set
\eqn{language3}
$$
\mathcal L_u
:=
\left\{
z\in\Omega\setminus\mathcal S_u:
\begin{array}{l}
\text{there exists a sequence of light rays }\{\ell_k\}
\text{ such that}\\[1mm]
\dist(z,\ell_k)\to0
\quad\text{and}\quad
\diam(\ell_k)\to0
\end{array}
\right\}.
$$
When the light rays are compact, say
$\ell_k=\overline{x_ky_k}$, the above condition is equivalently
expressed by
$
x_k\to z$ and $
y_k\to z.
$
By definition,
\eqn{vuoto}
$$
\mathcal S_u\cap\mathcal L_u=\varnothing.
$$
Finally, following \cite{bimm24}, we set
\eqn{deffiK}
$$
\mathcal K_u:=\overline{\mathcal S_u}\cap\Omega.
$$
The set $\mathcal K_u$ may contain points which do not belong to any
nondegenerate light segment. The set $\mathcal L_u$ introduced above
is designed to describe accumulation points arising from light rays
whose diameters tend to zero. The precise relation between
$\mathcal S_u$, $\mathcal L_u$, and $\mathcal K_u$ will be established
in Section \ref{capacity}.
\begin{remark}[Precise representatives]{\em We clarify a few conventions we shall adopt for the rest of the paper. 
\begin{itemize}
\item Here, as in all the rest of the paper we identify $u$ with its locally Lipschitz continuous representative. This allows for no ambiguity occurring in the definitions of the light segments and in \rif{defilight} and in \rif{language1} and \rif{language3}. \item The situation is slightly different for \rif{language2}. In the definition of $\mathcal I_u$ in \rif{language2},
the notation $|Du(z)|$ refers to the precise representative
of the scalar function $|Du|$, defined by 
\eqn{esilimite}
$$
|Du(z)|:=  
\begin{cases} 
\lim_{r\to 0} \, (|Du|)_{B_r(z)} & \mbox{if this limit exists}\\
0 & \mbox{otherwise}. 
\end{cases}
$$
In this sense the precise representative is defined everywhere and the definition of $\mathcal{I}_{u}$ automatically discards the points where the limit in \rif{esilimite}$_1$ does not exist. Later on, we shall assume higher regularity properties on $Du$ that will allow us to better clarify the structure of $\mathcal{I}_{u}$.
\item If $z$ is a Lebesgue point of the weak gradient $Du$,
we denote its precise value by $Du(z)$. Then $z$ is
also a Lebesgue point of $|Du|$, and
\[
\lim_{r\to 0}(|Du|)_{B_r(z)}
=
|\lim_{r\to 0}(Du)_{B_r(z)}|.
\]
Thus the scalar representative in \rif{esilimite}
agrees with the modulus of the vector representative
at such points. The converse implication between
the two Lebesgue-point properties is generally false.
 \item Moreover, we shall denote by $D_{\mathrm{cl}}u(z)$ the classical, pointwise differential of $u$ at $z$, whenever it exists. We recall that whenever $z$ is both a Lebesgue point for $Du$ and a point of classical differentiability, then the precise representative of $Du$ at $z$ and the classical differential do coincide, i.e., $Du(z)=D_{\mathrm{cl}}u(z)$. 
 \end{itemize}}
\end{remark}
\subsection{Differentiability on light segments}\label{bls.s} Here we collect a few basic facts on light segments that are partially folklore when seen in connection to the Optimal Transport theory.
\begin{lemma}\label{lemmaraggi}
Let \(\overline{xy}\) be a light segment, oriented so that
\(u(x)>u(y)\). Then every point
$
z\in\overline{xy}\setminus\{x,y\}
$
is both a classical differentiability point of \(u\) and a Lebesgue
point of the weak gradient \(Du\), and moreover 
$$
Du(z)=D_{\mathrm{cl}}u(z)=\frac{x-y}{|x-y|}.
$$
If either endpoint \(x\) or \(y\) is a Lebesgue point of \(Du\), then
\(u\) is classically differentiable at that endpoint and the classical
differential agrees with the precise representative of \(Du\).
In particular,
\eqn{aff.3}
$$
\begin{cases}
\displaystyle |Du(z)|=1
&\mbox{for every } z\in\overline{xy}\setminus\{x,y\}
\\[1.5mm]
\displaystyle |Du(z)|=1
&\mbox{if \(z \in \{x,y\}\) is a Lebesgue point of \(Du\).}
\end{cases}
$$
\end{lemma}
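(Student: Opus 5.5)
The proof rests on two facts: $u$ is $1$-Lipschitz along every line (weak spacelikeness, $|Du|\le1$ a.e.), and $u$ is affine with slope $1$ along the light segment (Remark \ref{raggire}). Set $e:=(x-y)/|x-y|$ and $L:=|x-y|$, and parametrize $z_t:=y+tLe$, so that $u(z_t)=u(y)+tL$ for $t\in[0,1]$. Fix an interior point $z=z_{t_0}$ with $t_0\in(0,1)$, and choose $\rho>0$ so small that the double cone around the segment, of aperture angle $\theta$ and height $\rho$, stays inside $\Omega$. We then compare $u$ near $z$ with the two \emph{light cones} anchored at the endpoints.

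\textbf{Classical differentiability at interior points.} For $h\in\mathbb R^n$ small, the Lipschitz bound in the directions $z+h-y$ and $x-(z+h)$ gives the two-sided estimate
\[
u(y)+|z+h-y|\;\ge\; u(z+h)\;\ge\; u(x)-|x-z-h|.
\]
This needs the segments $\overline{y(z+h)}$ and $\overline{(z+h)x}$ to lie in $\Omega$. That holds for $|h|$ small, because $\overline{xy}\Subset\Omega$ and $\Omega$ is open, so a neighborhood of the segment, which is convex enough locally, is contained in $\Omega$. Taylor-expanding the Euclidean norm gives
\[
|z+h-y| = t_0L+\langle e,h\rangle+O\bigl(|h|^2/(t_0L)\bigr),
\]
\[
|x-z-h| = (1-t_0)L-\langle e,h\rangle+O\bigl(|h|^2/((1-t_0)L)\bigr).
\]
Substituting $u(y)=u(z)-t_0L$ and $u(x)=u(z)+(1-t_0)L$, both bounds become $u(z)+\langle e,h\rangle+O(|h|^2)$. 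Hence
\[
u(z+h)=u(z)+\langle e,h\rangle+O(|h|^2),
\]
so $u$ is classically differentiable at $z$ with $D_{\mathrm{cl}}u(z)=e$.

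\textbf{Lebesgue point.} We show $\mint_{B_r(z)}|Du-e|\,dx\to0$. Since $|Du|\le1$ a.e., we have $|Du-e|^2=|Du|^2-2\langle Du,e\rangle+1\le 2(1-\langle Du,e\rangle)$. It therefore suffices to show $\mint_{B_r(z)}(1-\langle Du,e\rangle)\,dx\to0$, i.e. that the average of $\partial_e u$ over $B_r(z)$ tends to $1$. For this, apply Fubini along lines parallel to $e$ through the slab $B_r(z)$. For $w\in B_r(z)$, the difference $u(w+se)-u(w-se)$, for a fixed $s\sim r$ (say $s=r$, working on the enlarged ball $B_{2r}$), equals $2s+O(r^2)$ by the quadratic expansion just proved. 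Averaging the one-dimensional identity
\[
\int_{-s}^{s}\partial_e u(w+\tau e)\,d\tau = u(w+se)-u(w-se)
\]
over $w$ gives
\[
\mint_{B_r(z)}\mint_{-s}^{s}(1-\partial_eu(w+\tau e))\,d\tau\,dw = O(r).
\]
The integrand is nonnegative. After a change of variables, this controls $\mint_{B_{r}(z)}(1-\partial_e u)$ up to a constant factor, because the averaged region covers $B_r(z)$ with bounded multiplicity relative to $B_{2r}(z)$. So the average is $O(r)\to0$, $z$ is a Lebesgue point of $Du$, and the precise representative is $Du(z)=e=D_{\mathrm{cl}}u(z)$.

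\textbf{Endpoints and the main obstacle.} At an endpoint $x$ that is a Lebesgue point of $Du$ with value $\xi$, classical differentiability follows from the standard fact that for a Lipschitz function, a Lebesgue point of the gradient is a point of differentiability with differential equal to the precise value. One proves this via Poincaré's inequality on $B_r(x)$ and Morrey's estimate applied to $u-u(x)-\langle\xi,\cdot-x\rangle$, which is Lipschitz with gradient small in mean. The conclusions in \eqref{aff.3} then follow: $|e|=1$ at interior points, and at an endpoint $x$ with value $\xi$, the fact that $x$ is a limit of interior points $z$ along the segment, combined with differentiability at $x$ along direction $e$, gives $\langle\xi,e\rangle=1$, hence $|\xi|=1$ because $|\xi|\le1$. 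The main obstacle is the Lebesgue-point step. Classical differentiability alone does not give Lebesgue points of $Du$, and the bookkeeping in the Fubini/averaging argument, showing the mean of $1-\partial_e u$ is $O(r)$, is where I expect the care to be needed.
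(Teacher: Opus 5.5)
Your proof is correct and has the same structure as the paper's: the two-sided light-cone squeeze is exactly the argument behind \cite[Lemma 10]{fm02}, which the paper cites for interior differentiability. The Lebesgue-point step rests on the same inequality $|Du-e|^2\le 2(1-\langle Du,e\rangle)$, and the endpoint conclusion $\langle\xi,e\rangle=1\Rightarrow|\xi|=1$ is identical. The differences are only in execution. For the Lebesgue-point step, the paper gets $(Du)_{B_r(z)}\to e$ in one line from the divergence theorem, $|(Du-e)_{B_r(z)}|\le \frac{n}{r}\sup_{\partial B_r(z)}|u-\ell|=o(1)$ with $\ell$ the tangent affine map. This needs only differentiability and removes the Fubini bookkeeping you worried about. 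In your version that bookkeeping needs a small fix: the double average controls $B_{r/2}(z)$ rather than $B_r(z)$, since chords of $B_r(z)$ in direction $e$ near its equator are short. At the endpoints, the paper upgrades $L^1$-differentiability by a contradiction argument where you use Morrey's inequality. That works once you note that $|Du-\xi|\le 2$ turns the $L^1$ Lebesgue-point condition into an $L^p$ one for $p>n$.
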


\begin{proof} All balls appearing below are taken sufficiently small
to be compactly contained in $\Omega$. By the argument in \cite[Lemma 10]{fm02}, $u$ is
classically differentiable at every interior point $z$
of $\overline{xy}$ and, since $u(x)>u(y)$,
\eqn{valore}
$$
D_{\mathrm{cl}}u(z)=\frac{x-y}{|x-y|}.
$$
Moreover, we have 
\eqn{aff.0}
$$
u(x+t_2(y-x))-u(x+t_1(y-x))=-|x-y|(t_2-t_1),\qquad 0\le t_1\le t_2\le1.
$$
We next show that every such \(z\) is also a Lebesgue point of the
weak gradient \(Du\) (that we identify via its precise representative) and that $Du(z)=D_{\mathrm{cl}}u(z)$. Fix
$
z\in\overline{xy}\setminus\{x,y\}
$
and set
$
q:=D_{\mathrm{cl}}u(z)$ so that $ |q|=1$,
and consider the affine map 
$
\ell(w):=\langle q,w-z\rangle+u(z)
$
for every $w \in \er^n$. 
Integration by parts gives 
\begin{flalign*}
|(Du-q)_{B_r(z)}| & =\frac{1}{|B_r(z)|} \left|\int_{\partial B_r(z)}
(u-\ell)\nu_r\,\textnormal{d}\mathcal H^{n-1} \right|\le\frac{\mathcal H^{n-1}(\partial B_r(z))}
{|B_r(z)|}\sup_{\partial B_r(z)}|u-\ell | =\frac{o(r)}{r},
\end{flalign*}
where $\nu_r(w):=(w-z)/r$ is the outward Euclidean unit
normal to $\partial B_r(z)$, and therefore
$
(Du)_{B_r(z)}\to q
$
as $r\to 0$. 
Using $\snr{Du}\le1$ and $\snr q=1$, we obtain
$$
\mint_{B_r(z)}|Du-q|^2\,\textnormal{d}w=\mint_{B_r(z)}\left(
|Du|^2+|q|^2-2\langle Du,q\rangle\right)\,dw\leq 2\left(1-\langle (Du)_{B_r(z)},q\rangle\right)\to 0.
$$
Thus \(z\) is a Lebesgue point of \(Du\), and its precise value is the one displayed in \rif{valore}. 
To prove \rif{aff.3}$_2$ and differentiability at the endpoints, we now consider an endpoint, say $x$, which is a Lebesgue
point of $Du$. Since $u$ is locally Lipschitz, the argument
in \cite[proof of Theorem 6.2]{eg15} yields
\eqn{contr}
$$
\lim_{\rr \to 0} \frac{1}{\rr}\mint_{B_{\rr}(x)}\snr{u(z)-u(x)-\langle Du(x),z-x\rangle}\dz=0.
$$
In our setting this implies classical differentiability at $x$. By contradiction, we assume that there is $\delta\in(0,1)$ and a sequence of vectors $\{h_{k}\}_{k\in \mathbb{N}}\subset\mathbb{R}^{n}$ with $\rr_{k}:=\snr{h_{k}}\to 0$ such that
\eqn{aff.1}
$$
\snr{u(x+h_{k})-u(x)-\langle Du(x),h_{k}\rangle}>\delta\snr{h_{k}}\qquad \mbox{for all} \ \ k\in \mathbb{N}.
$$
Set $x_{k}:=x+h_{k}$. For any $z\in B_{\delta\rr_{k}/4}(x_{k})$ we bound
\begin{eqnarray}\label{aff.2}
\snr{u(z)-u(x)-\langle Du(x),z-x\rangle}&\ge&\snr{u(x_{k})-u(x)-\langle Du(x),x_{k}-x\rangle}\nonumber \\
&&-\snr{u(z)-u(x_{k})}-\snr{\langle Du(x),x_{k}-z\rangle}\nonumber \\
&\stackrel{\eqref{aff.1}}{\ge}&\delta\snr{h_{k}}-2\snr{x_{k}-z}\ge  \delta\rr_{k}/2.
\end{eqnarray}
As for all $k\in \mathbb{N}$ it is $B_{\delta\rr_{k}/4}(x_{k})\subset B_{2\rr_{k}}(x)$, we find
\begin{flalign*}
&\frac{1}{2\rr_{k}}\mint_{B_{2\rr_{k}}(x)}\snr{u(z)-u(x)-\langle Du(x),z-x\rangle}\dz\nonumber \\
&\qquad \quad \ge\frac{1}{\rr_{k}}\left(\frac{\delta}{2^{4}}\right)^{n}\mint_{B_{\frac{\delta\rr_{k}}{4}}(x_{k})}\snr{u(z)-u(x)-\langle Du(x),z-x\rangle}\dz\stackrel{\eqref{aff.2}}{\ge}\frac{\delta^{n+1}}{2^{5n}}>0,
\end{flalign*}
which contradicts \rif{contr}. Hence $u$ is classically differentiable at every Lebesgue point of $Du$, and the classical differential agrees there with the precise representative of $Du$. We look back at \eqref{aff.0}, and notice that
$$
\frac{u(x+t(y-x))-u(x)}{t\snr{x-y}}=-1\qquad \mbox{for all} \ \ t\in (0,1),
$$
which, combined with the classical differentiability of $u$ at $x$, implies that $(\langle Du(x),(y-x)\rangle)/\snr{x-y}=-1$. Similarly, if also $y$ is a Lebesgue point of $Du$, we gain that $(\langle Du(y),(y-x)\rangle)/\snr{x-y}=-1$. Recalling that $\|Du\|_{L^\infty(\Omega)}\le1$ we deduce that \rif{aff.3}$_2$ holds, and the proof is complete. 
\end{proof}

\section{The $L(n-1,s)$-anti-peeling, and Theorem \ref{al.t}}\label{lr.s}

If $n\ge3$, there is no loss of generality in assuming $s>1$.
Let $\Omega\subseteq\mathbb R^n$, $n\ge2$, be open, and let
$u\in W^{1,\infty}_{\loc}(\Omega)$ be a weakly spacelike local
minimizer of \eqref{bi.en} according to Definition
\ref{minimilocali}.
By contradiction, assume that \eqref{a.0} fails for some
admissible $t<0$; an analogous argument works if it fails for
an admissible $t>1$.
By weak spacelikeness, this means that
\eqn{a.1}
$$
u(x_t)>u(x_0)+t\snr{x_1-x_0}
\qquad \mbox{for some } t<0.
$$
Fix such a number $t_*<0$ with
$\overline{x_{t_*}x_0}\subset\Omega$.
Since $\overline{x_{t_*}x_1}\Subset\Omega$, we can choose a bounded
domain $\widetilde\Omega$ such that
$
\overline{x_{t_*}x_1}\Subset\widetilde\Omega\Subset\Omega.
$
Introduce also the backward light cones $\Gamma_0$ and $\Gamma_1$
with vertices at $(x_0,u(x_0))$ and $(x_1,u(x_1))$, respectively,
that is
$$
\Gamma_0(x):=u(x_0)-\snr{x-x_0}
\qquad \mbox{and}\qquad
\Gamma_1(x):=u(x_1)-\snr{x-x_1},
$$
and the balls
$$
\tx{B}_0:=B_{\snr{x_1-x_0}/2}(x_0)
\subset
\tx{B}_1:=B_{2\snr{x_1-x_0}}(x_1).
$$
The rest of the proof proceeds in five steps.

\subsection*{Step 1: Basic reductions.} Here we follow \cite[Proof of Theorem 3.2]{bs82}. By choosing different $x_{0},x_{1}$ (on the extended light segment), we can assume that \eqref{a.0} holds on $[-1/4,1]$, i.e.,
\eqn{a.3}
$$
u(x_{t})=u(x_{0})+t\snr{x_{1}-x_{0}}\qquad \mbox{for all}\ \ t\in [-1/4,1],
$$
while \eqref{a.1} is satisfied at $t=-1/2$,
\eqn{a.4}
$$
u(x_{-1/2})>u(x_{0})- \frac{ \snr{x_{1}-x_{0}}}{2},
$$
and $\tx{B}_{1}\Subset \ti{\Omega}$. Next, set for simplicity
\eqn{a.7}
$$
\tx{l}:=\snr{x_{1}-x_{0}},\qquad \quad \tx{e}:=\frac{x_{1}-x_{0}}{\tx{l}}.
$$
After applying a translation in the \(x\)-variables, an orthogonal transformation, and a vertical translation of \(u\), and relabelling the transformed objects again as \(u\), \(\mu\), and \(\widetilde{\Omega}\), we may assume that
$
x_{0}=0 $, $u(0)=0$, $\tx{e}=\tx{e}_{n}$.
Specifically, we rescale as follows
$$
u_{\tx{l}}(x):=\tx{l}^{-1}u(\tx{l} x),\qquad
\mu_{\tx{l}}(x):=\tx{l}\,\mu(\tx{l} x),\qquad
\widetilde{\Omega}_{\tx{l}}:=\{x\in\mathbb{R}^{n}:\tx{l} x\in\widetilde{\Omega}\}.
$$
These transformations preserve local minimality and
\eqref{a.3}--\eqref{a.4}; hence, again we may assume that 
\eqn{u0u0}
$$
\begin{cases}
\displaystyle
\ x_{0}=0, \qquad u_{\tx{l}}(0)=0,\qquad \tx{e}=\tx{e}_{n}\vspace{1.5mm}\\ \displaystyle
\ \overline{x_{1}x_{0}}=\{t\tx{e}_{n},\ t\in [0,1]\}\vspace{1.5mm} \\ \displaystyle
\ \Gamma_{0}(x)=-\snr{x},\qquad \quad \Gamma_{1}(x)=1-\snr{x-\tx{e}_{n}}\vspace{1.5mm}\\ \displaystyle
\ \tx{B}_{0}=B_{\tx{1}/2}(0)\subset \tx{B}_{1}=B_{2}(\tx{e}_{n})\Subset \ti{\Omega}_{\tx{l}},
\end{cases}
$$
where $\tx{e}_{n}:=(0,\cdots,0,1)$, and
\eqn{a.11}
$$
\begin{cases}
    \displaystyle
    \ u_{\tx{l}}(t\tx{e}_{n})=t\quad &\mbox{for all} \ \ t\in [-1/4,1]\vspace{1.5mm}\\
    \displaystyle
    \ u_{\tx{l}}(-\tx{e}_{n}/2)>-1/2\,.\quad &
\end{cases}
$$
\noindent After such preliminaries, we deliver the main construction.
\subsection*{Step 2: Construction of radial barriers.} The idea is to derive a quantitative version of the barrier construction by Bartnik \& Simon, starting from the family of radial solutions identified in \cite[Section 3]{bs82}. Let $\varepsilon_{0},\ti{\varepsilon}_{0}\in (0,1)$ be small thresholds to be fixed, assume $0<\varepsilon_{0}\le \ti{\varepsilon}_{0}$ and pick $\varepsilon\in (0,\varepsilon_{0})$. For $\rr\in [0,1/2]$, set
\eqn{bn}
$$
\begin{array}{c}
\displaystyle
a(\rr):=N-\frac{\rr^{n}}{n},\qquad \quad N:=2>\frac{1}{n2^{n}}+1,
\end{array}
$$
introduce the parameter
\eqn{lamlam}
$$
\Lambda_{\varepsilon}:=\frac{\varepsilon}{\mathds{1}_{\{n\ge 3\}}\nr{\mu_{\tx{l}}}_{L(n-1,s)(\tx{B}_{0})}+\mathds{1}_{\{n=2\}}\nr{\mu_{\tx{l}}}_{L^{1}(\tx{B}_{0})}+1},
$$
and notice that $a(\rr)\ge 1$ for all $\rr\in [0,1/2]$. Then introduce
\eqn{fw}
$$
\mathcal{f}_{\varepsilon}^{-}(\rr):=-\int_{0}^{\rr}\frac{a(t)\dtt}{\sqrt{\Lambda_{\varepsilon}^{2}t^{2(n-1)}+a(t)^{2}}},\qquad \quad \mathcal{w}_{\varepsilon}^{-}(x):=\mathcal{f}_{\varepsilon}^{-}(\snr{x}),
$$
thus 
\eqn{fw.1}
$$
\snr{D\mathcal{w}_{\varepsilon}^{-}(x)}=\snr{(\mathcal{f}^{-}_{\varepsilon})'(\snr{x})}<1\mbox{ for all }x\in \tx{B}_{0}\setminus \{0\}\mbox{ and }\snr{D\mathcal{w}_{\varepsilon}^{-}}\le 1\mbox{ a.e. on }\tx{B}_{0},
$$
meaning that $\mathcal{w}_{\varepsilon}^{-}\in W^{1,\infty}(\tx{B}_{0})$ is strictly spacelike in $\tx{B}_{0}\setminus \{0\}$. A direct computation shows that
\eqn{fw.2}
$$
\diver\, \frac{D\mathcal{w}_{\varepsilon}^{-}}{\sqrt{1-\snr{D\mathcal{w}^{-}_{\varepsilon}}^{2}}} \stackrel{\eqref{fw}}{=}\rr^{1-n}\left(\frac{\rr^{n-1}(\mathcal{f}_{\varepsilon}^{-})'(\rr)}{\sqrt{1-\snr{(\mathcal{f}_{\varepsilon}^{-})'(\rr)}^{2}}}\right)'
$$
for all $\rr=\snr{x}>0$. As $$1-(\mathcal{f}_{\varepsilon}^{-})'(\rr)^{2}=\frac{\Lambda_{\varepsilon}^{2}\rr^{2(n-1)}}{\Lambda_{\varepsilon}^{2}\rr^{2(n-1)}+a(\rr)^{2}},
$$
we have
\eqn{a.8}
$$
\diver\, \frac{D\mathcal{w}_{\varepsilon}^{-}}{\sqrt{1-\snr{D\mathcal{w}_{\varepsilon}^{-}}^{2}}} \stackrel{\eqref{fw.2}}{=} \frac{1}{\Lambda_{\varepsilon}}\qquad \mbox{pointwise in} \ \ \tx{B}_{0}\setminus \{0\}.
$$
\begin{remark}{\em 
 For later use, observe that the vector field
$$
\partial H(D\mathcal w_\varepsilon^-)=\frac{D\mathcal w_\varepsilon^-}{\sqrt{1-|D\mathcal w_\varepsilon^-|^2}}
$$
is smooth in $\tx{B}_{0}\setminus\{0\}$; hence \eqref{a.8} also holds
distributionally on every open set $U\Subset \tx{B}_{0}\setminus\{0\}$.}
\end{remark}

\subsection*{Step 3: Barrier estimates.} We first compare cones $\Gamma_{0}$, $\Gamma_{1}$. By triangle inequality, we have
$$
\Gamma_{1}(x)-\Gamma_{0}(x)=1-\snr{x-\tx{e}_{n}}+\snr{x}\ge 0,\qquad x\in \tx{B}_{0},
$$
with equality holding when $x$, $0$, and $\tx{e}_{n}$ are collinear - specifically, when $0$ stays between $x$ and $\tx{e}_{n}$. This implies that 
\eqn{a.9}
$$
\begin{cases}
\displaystyle
\ \Gamma_{1}(x)\ge \Gamma_{0}(x)\quad &\mbox{for all} \ \ x\in \tx{B}_{0}\vspace{1.5mm}\\
\displaystyle
\ \Gamma_{1}(x)>\Gamma_{0}(x)\quad &\mbox{for all} \ \ x\in \tx{B}_{0}\setminus \{-t\tx{e}_{n}\, \colon\, t\in [0,1/2]\}.
\end{cases}
$$
Now, since the light segment is $t\tx{e}_{n}$, $t\in [-1/4,1]$, it holds $u_{\tx{l}}(\tx{e}_{n})=1$ so by weak spacelikeness we get
\eqn{a.10}
$$
u_{\tx{l}}(x)-\Gamma_{1}(x)=u_{\tx{l}}(x)-1+\snr{x-\tx{e}_{n}}=u_{\tx{l}}(x)-u_{\tx{l}}(\tx{e}_{n})+\snr{x-\tx{e}_{n}}\ge 0\qquad \mbox{for all} \ \ x\in \tx{B}_{0},
$$
and \eqref{a.9} yields $u_{\tx{l}}(x)\ge \Gamma_{0}(x)$ for all $x\in \tx{B}_{0}$. Let us show that this inequality is strict on $\partial \tx{B}_{0}$. By $\eqref{a.9}_{2}$-\eqref{a.10} we have that $u_{\tx{l}}(x)>\Gamma_{0}(x)$ for all $x\in \tx{B}_{0}\setminus \{-t\tx{e}_{n}\, \colon\, t\in [0,1/2]\}$, so we only need to check what happens on $\partial\tx{B}_{0}\cap \{-t\tx{e}_{n}\, \colon\,  \ t\in [0,1/2]\}$. The only option is $\partial\tx{B}_{0}\cap \{-t\tx{e}_{n}\, \colon\,  \ t\in [0,1/2]\}=\{x_{-1/2}=-\tx{e}_{n}/2\}$, thus
$$
u_{\tx{l}}(x)-\Gamma_{0}(x)\stackrel{\eqref{a.11}_{2}}{>}-\frac{1}{2}+\snr{x}\stackrel{x\in \partial\tx{B}_{0}}{=}0,
$$
therefore, being $\partial \tx{B}_{0}$ compact, it is 
\eqn{a.25.1}
$$\tx{c}_{0}:=\min_{x\in \partial\tx{B}_{0}}(u_{\tx{l}}(x)-\Gamma_{0}(x))>0,$$ $\tx{c}_{0}\equiv \tx{c}_{0}(u)$. On the other hand, \eqref{a.11}$_{1}$ forces $u_{\tx{l}}=\Gamma_{0}$ on the segment $t\tx{e}_{n}$ with $t\in [-1/4,0]$, so equality holds on $\mathcal{S}:=\{t\tx{e}_{n}, \ t\in [-1/4,0]\}\subseteq \overline{(1/2)\tx{B}}_{0}$. Overall, we have just proven that
\eqn{a.13}
$$
\begin{cases}
    \displaystyle
    \ u_{\tx{l}}\ge \Gamma_{0}\quad &\mbox{in} \ \ \tx{B}_{0}\vspace{1.5mm}\\
    \displaystyle
    \ u_{\tx{l}}=\Gamma_{0}\quad &\mbox{on} \ \ \mathcal{S}\vspace{1.5mm}\\
    \displaystyle
    \ u_{\tx{l}}>\Gamma_{0}\quad &\mbox{on} \ \ \partial \tx{B}_{0}.
\end{cases}
$$
At this stage, we identify the small parameter $\varepsilon_{0}\in (0,\ti{\varepsilon}_{0})$ introduced in \emph{Step 2}, $\ti{\varepsilon}_{0}>0$ being fixed later on. We set $M_{\tx{l}}:=\mathds{1}_{\{n\ge 3\}}\nr{\mu_{\tx{l}}}_{L(n-1,s)(\tx{B}_{0})}+\mathds{1}_{\{n=2\}}\nr{\mu_{\tx{l}}}_{L^{1}(\tx{B}_{0})}+1$, and
\eqn{a.14}
$$
\varepsilon_{0}:=\min\left\{\frac{\ti{\varepsilon}_{0}}{2},2^{n-1}M_{\tx{l}}\sqrt{\frac{\tx{c}_{0}}{(2n-1)}},M_{\tx{l}}\right\},
$$
and permanently work under the assumption $\varepsilon\in (0,\varepsilon_{0})$. Next, observe that on $\bar{\tx{B}}_{0}$ it holds 
\begin{eqnarray}\label{a.25.2}
0&\stackrel{\snr{(\mathcal{f}_{\varepsilon}^{-})'}\le 1}{\le}&\int_{0}^{\snr{x}}\left(1-\frac{a(t)}{\sqrt{\Lambda_{\varepsilon}^{2}t^{2(n-1)}+a(t)^{2}}}\right)\dtt\nonumber \\ &=&\mathcal{w}^{-}_{\varepsilon}(x)-\Gamma_{0}(x)\nonumber \\
&=&\int_{0}^{\snr{x}}\frac{\Lambda_{\varepsilon}^{2}t^{2(n-1)}}{\sqrt{\Lambda_{\varepsilon}^{2}t^{2(n-1)}+a(t)^{2}}(\sqrt{\Lambda_{\varepsilon}^{2}t^{2(n-1)}+a(t)^{2}}+a(t))}\dtt\nonumber \\
&\le&\int_{0}^{\snr{x}}\frac{\Lambda_{\varepsilon}^{2}t^{2(n-1)}}{a(t)^{2}}\dtt\nonumber \\
&\stackrel{a\ge 1}{\le} &\frac{\Lambda_{\varepsilon}^{2}\snr{x}^{2n-1}}{2n-1}\notag \\ 
&\le & \frac{\Lambda_{\varepsilon}^{2}}{(2n-1)2^{2n-1}}=:\tx{c}_{1}\Lambda_{\varepsilon}^{2},
\end{eqnarray} 
so, via \eqref{a.13}-\eqref{a.14} we can conclude that
\eqn{a.25}
$$
\begin{cases}
    \displaystyle
    \ \min_{x\in \partial \tx{B}_{0}}(u_{\tx{l}}(x)-\mathcal{w}_{\varepsilon}^{-}(x))\ge \frac{\tx{c}_{0}}{2}\vspace{1.5mm}\\
    \displaystyle
    \ (\mathcal{w}_{\varepsilon}^{-}-u_{\tx{l}})_{+}\le \tx{c}_{1}\Lambda_{\varepsilon}^{2}\mbox{ for all }x\in \tx{B}_{0}.
    \end{cases}
    $$
In fact, $\eqref{a.25}_{1}$ follows from
$$
u_{\tx{l}}(x)-\mathcal{w}_{\varepsilon}^{-}(x)=(u_{\tx{l}}(x)-\Gamma_{0}(x))-(\mathcal{w}_{\varepsilon}^{-}(x)-\Gamma_{0}(x))\stackrel{\eqref{a.25.1},\eqref{a.25.2}}{\ge}\tx{c}_{0}-\tx{c}_{1}\Lambda_{\varepsilon}^{2}\stackrel{\eqref{a.14}}{\ge}\frac{\tx{c}_{0}}{2},
$$
for all $x\in \partial \tx{B}_{0}$, while $\eqref{a.25}_{2}$ comes from the simple observation that for any $x\in \tx{B}_{0}$ such that $\mathcal{w}_{\varepsilon}^{-}(x)\le u_{\tx{l}}(x)$, it is $(\mathcal{w}_{\varepsilon}^{-}-u_{\tx{l}})_{+}=0$, while if $\mathcal{w}_{\varepsilon}^{-}(x)> u_{\tx{l}}(x)$, we have
\begin{eqnarray*}
(\mathcal{w}_{\varepsilon}^{-}(x)-u_{\tx{l}}(x))_{+}&\stackrel{\mathcal{w}_{\varepsilon}^{-}(x)> u_{\tx{l}}(x)}{=}&\mathcal{w}_{\varepsilon}^{-}(x)-u_{\tx{l}}(x)=(\mathcal{w}_{\varepsilon}^{-}(x)-\Gamma_{0}(x))-(u_{\tx{l}}(x)-\Gamma_{0}(x))\nonumber \\
&\stackrel{\eqref{a.13}_{1}}{\le}& (\mathcal{w}_{\varepsilon}^{-}(x)-\Gamma_{0}(x))\stackrel{\eqref{a.25.2}}{\le}\tx{c}_{1}\Lambda_{\varepsilon}^{2}.
\end{eqnarray*}
Moreover, by continuity we deduce that there exists $r_{0}\equiv r_{0}(u,\varepsilon)\in (0,1/2)$ so close to $1/2$ that 
\eqn{a.15}
$$\min_{\bar{\tx{B}}_{0}\setminus B_{r_{0}}(0)}(u_{\tx{l}}(x)-\mathcal{w}_{\varepsilon}^{-}(x))>0.$$
Next, we shorten segment $\mathcal{S}$ by introducing $\mathcal{S}':=\{t\tx{e}_{n}\, \colon \, \ t\in [-1/6,-1/8]\}\Subset \mathcal{S}$, and observe that on $\mathcal{D}_{\mathcal{S}'}:=\tx{B}_{0}\cap\{\dist(x,\mathcal{S}')\le 2^{-4}\}$ it is
\eqn{a.16}
$$
x\in \mathcal{D}_{\mathcal{S}'} \ \Longrightarrow \ \frac{1}{16}\le \snr{x}\le \frac{11}{48}.
$$
For all $x\in \mathcal{D}_{\mathcal{S}'}$, we bound below
\begin{eqnarray}\label{a.17}
\mathcal{w}^{-}_{\varepsilon}(x)-\Gamma_{0}(x)&\stackrel{\eqref{a.16}}{\ge}&\int_{0}^{\frac{1}{16}}\left(1-\frac{a(t)}{\sqrt{\Lambda_{\varepsilon}^{2}t^{2(n-1)}+a(t)^{2}}}\right)\dtt\nonumber \\
&=&\int_{0}^{\frac{1}{16}}\frac{\Lambda_{\varepsilon}^{2}t^{2(n-1)}}{\sqrt{\Lambda_{\varepsilon}^{2}t^{2(n-1)}+a(t)^{2}}(\sqrt{\Lambda_{\varepsilon}^{2}t^{2(n-1)}+a(t)^{2}}+a(t))}\dtt\nonumber \\
&\stackrel{\eqref{a.14},a\ge 1}{\ge}&\frac{\Lambda_{\varepsilon}^{2}}{4}\int_{0}^{\frac{1}{16}}\frac{t^{2(n-1)}}{a(t)^{2}}\dtt\notag \\ &\stackrel{\eqref{bn}}{\ge}&\frac{\Lambda_{\varepsilon}^{2}}{2^{16n}N^{2}(2n-1)}=:\tx{c}_{2}\Lambda_{\varepsilon}^{2},
\end{eqnarray}
where we also used that, by \eqref{lamlam}
and \eqref{a.14}, we have $\Lambda_{\varepsilon}^{2}\le1$ and $\tx{c}_{2}\equiv \tx{c}_{2}(n)$. Finally, we decompose $x\in \tx{B}_{0}\cap \{\langle x,\tx{e}_{n}\rangle\in [-1/6,-1/8]\}$ as $x=-t\tx{e}_{n}+x_{\perp}$, with $t\in [1/8,1/6]$ and $x_{\perp}$ orthogonal to $\tx{e}_{n}$. By $\eqref{a.13}_{2}$ it is $u_{\tx{l}}(-\tx{e}_{n}/4)=-1/4$, so weak spacelikeness gives
\begin{eqnarray}\label{a.19}
0&\stackrel{\eqref{a.13}_{1}}{\le}&u_{\tx{l}}(x)-\Gamma_{0}(x)=u_{\tx{l}}(x)-u_{\tx{l}}\left(-\frac{\tx{e}_{n}}{4}\right)-\frac{1}{4}+\snr{x}\nonumber \\
&\le& -\frac{1}{4}+\left|x+\frac{\tx{e}_{n}}{4}\right|+\snr{x}\nonumber \\
&=&-\frac{1}{4}+\sqrt{\left(\frac{1}{4}-t\right)^{2}+\snr{x_{\perp}}^{2}}+\sqrt{t^{2}+\snr{x_{\perp}}^{2}}\nonumber \\
&\le& -\frac{1}{4}+\left(\frac{1}{4}-t\right)+\frac{2\snr{x_{\perp}}^{2}}{1-4t}+t+\frac{\snr{x_{\perp}}^{2}}{2t}\nonumber \\
&\stackrel{t\in [1/8,1/6]}{\le}&10\snr{x_{\perp}}^{2}\le \tx{c}_{3}\dist(x,\mathcal{S}')^{2},
\end{eqnarray}
where we also used the elementary inequality $$\sqrt{b_{1}^{2}+b_{2}^{2}}\le b_{1}+\frac{b_{2}^{2}}{2b_{1}}\, \quad \mbox{for all $b_{1}>0$, $b_{2}\in \mathbb{R}$}, 
$$ 
and it is $\tx{c}_{3}:=12$. Fix $\delta\equiv \delta(n)>0$ as
\eqn{a.20}
$$\delta:= \min\left\{\sqrt{\frac{\tx{c}_{2}}{2\tx{c}_{3}}},\frac{1}{2^{4}}\right\},$$ introduce the cylinder
\eqn{a.18}
$$
\mathcal{C}_{\varepsilon}:=\tx{B}_{0}\cap\left\{x=-t\tx{e}_{n}+x_{\perp}, \ t\in [1/8,1/6], \ \snr{x_{\perp}}\le \Lambda_{\varepsilon}\delta\right\}\stackrel{\eqref{a.14},\eqref{a.20}}{\subset} \mathcal{D}_{\mathcal{S}'},
$$
and record that, by \eqref{a.17}-\eqref{a.18}, it holds
\begin{flalign}\label{a.21}
\mathcal{w}_{\varepsilon}^{-}(x)-u_{\tx{l}}(x)&= (\mathcal{w}_{\varepsilon}^{-}(x)-\Gamma_{0}(x))-\left(u_{\tx{l}}(x)-\Gamma_{0}(x)\right)\nonumber \\
&\ge \tx{c}_{2}\Lambda_{\varepsilon}^{2}-\tx{c}_{3}\dist(x,\mathcal{S}')^{2}\ge (\tx{c}_{2}-\tx{c}_{3}\delta^{2} ) \Lambda_{\varepsilon}^{2}\ge  \tx{c}_{2}\Lambda_{\varepsilon}^{2}/2
\end{flalign}
for all $x\in \mathcal{C}_{\varepsilon}$. In particular,
\eqn{a.26}
$$
\nr{(\mathcal{w}_{\varepsilon}^{-}-u_{\tx{l}})_{+}}_{L^{1}(\tx{B}_{0})}\ge \nr{(\mathcal{w}_{\varepsilon}^{-}-u_{\tx{l}})_{+}}_{L^{1}(\mathcal{C}_{\varepsilon})}\ge \tx{c}_{2}\Lambda_{\varepsilon}^{2}\snr{\mathcal{C}_{\varepsilon}}/2=:\tx{c}_{4}(n)\Lambda_{\varepsilon}^{n+1}.
$$
\subsection*{Step 4: Spacelike competitors.} Here we use $\mathcal{w}_{\varepsilon}^{-}$ to build suitable competitors for problem \eqref{bi.en}. Let $\sigma>0$, set $$
v_{\varepsilon;\sigma}:=\begin{cases}(\mathcal{w}_{\varepsilon}^{-}-u_{\tx{l}}-\sigma)_{+} & \mbox{in $ \tx{B}_{0}$}\\
0 & \mbox{outside $ \tx{B}_{0}$}
\end{cases}
$$ and introduce the open (by continuity) set $\tx{E}_{\sigma}^{-}:=\tx{B}_{0}\cap \{\mathcal{w}_{\varepsilon}^{-}-\sigma>u_{\tx{l}}\}$. By construction, $u_{\tx{l}}+v_{\varepsilon;\sigma},v_{\varepsilon;\sigma}\in W^{1,\infty}(\tx{B}_{0})$. In particular, by \eqref{fw.1}, we see that $u_{\tx{l}}+v_{\varepsilon;\sigma}=\max\{\mathcal{w}_{\varepsilon}^{-}-\sigma,u_{\tx{l}}\}$ is weakly spacelike with $\nr{Du_{\tx{l}}+Dv_{\varepsilon;\sigma}}_{L^{\infty}(\tx{B}_{0})}\le 1$. Let us show that there exists a radius $r_{\varepsilon;\sigma}\equiv r_{\varepsilon;\sigma}(\varepsilon,\sigma,\mathcal{w}_{\varepsilon}^{-},u)\in (0,1/4)$ such that 
$\supp(v_{\varepsilon;\sigma})\Subset \tx{B}_{0}\setminus B_{r_{\varepsilon;\sigma}}(0)$. By \eqref{a.15} we have $\supp(v_{\varepsilon;\sigma})\Subset \tx{B}_{0}$. Furthermore, as $\mathcal{w}_{\varepsilon}^{-}(0)=u_{\tx{l}}(0)=0$, cf. \eqref{u0u0}, \eqref{fw}, for $\varepsilon,\sigma$ fixed, by continuity we can find $r_{\varepsilon;\sigma}\equiv r_{\varepsilon;\sigma}(\varepsilon,\sigma,\mathcal{w}_{\varepsilon}^{-},u)$ as above  so small that $\snr{\mathcal{w}_{\varepsilon}^{-}(x)-u_{\tx{l}}(x)}\le \sigma/2$ holds for all $x\in B_{r_{\varepsilon;\sigma}}(0)$, so $v_{\varepsilon;\sigma}=0$ on $B_{r_{\varepsilon;\sigma}}(0)$. All in all, we have just shown that
\eqn{a.22}
$$
\begin{cases}
    \displaystyle
    \ u_{\tx{l}}+v_{\varepsilon;\sigma}\in W^{1,\infty}(\tx{B}_{0}), \qquad \quad &\nr{Du_{\tx{l}}+Dv_{\varepsilon;\sigma}}_{L^{\infty}(\tx{B}_{0})}\le 1\vspace{1.5mm}\\
    \displaystyle
    \ v_{\varepsilon;\sigma}\in W^{1,\infty}(\tx{B}_{0}),\qquad \quad &\supp(v_{\varepsilon;\sigma})\Subset \tx{B}_{0}\setminus B_{r_{\varepsilon;\sigma}}(0)\vspace{1.5mm}\\
    \displaystyle
    \ (Du_{\tx{l}}+Dv_{\varepsilon;\sigma})\mathds{1}_{\tx{E}^{-}_{\sigma}}=D\mathcal{w}_{\varepsilon}^{-}\mathds{1}_{\tx{E}^{-}_{\sigma}}.
\end{cases}
$$

\subsection*{Step 5: $L_{\loc}(n-1,s)$-anti-peeling.} The local minimality of $u_{\tx{l}}$, \eqref{a.22} and the strict convexity of $H$, cf. \eqref{0.1.1}, yield
\begin{eqnarray*}
0&\stackrel{\eqref{a.22}_{2}}{\ge}&\mathcal{E}_{\mu_{\tx{l}}}(u_{\tx{l}};\tx{B}_{0})-\mathcal{E}_{\mu_{\tx{l}}}(u_{\tx{l}}+v_{\varepsilon;\sigma};\tx{B}_{0})\\ &=&\mathcal{E}_{\mu_{\tx{l}}}(u_{\tx{l}};\tx{E}_{\sigma}^{-})-\mathcal{E}_{\mu_{\tx{l}}}(u_{\tx{l}}+v_{\varepsilon;\sigma};\tx{E}_{\sigma}^{-})\nonumber \\
&=&\int_{\tx{E}_{\sigma}^{-}}\mu_{\tx{l}} v_{\varepsilon;\sigma}\dx+\int_{\tx{E}_{\sigma}^{-}}H(Du_{\tx{l}})-H(Du_{\tx{l}}+Dv_{\varepsilon;\sigma})\dx\nonumber \\
&\stackrel{\eqref{0.1.1}}{\ge}&\int_{\tx{E}_{\sigma}^{-}}\mu_{\tx{l}} v_{\varepsilon;\sigma}\dx-\int_{\tx{E}_{\sigma}^{-}}\langle \partial H(Du_{\tx{l}}+Dv_{\varepsilon;\sigma}),Dv_{\varepsilon;\sigma}\rangle\dx\nonumber \\
&\stackrel{\eqref{a.22}_{2,3}}{=}&\int_{\tx{E}_{\sigma}^{-}}\mu_{\tx{l}} v_{\varepsilon;\sigma}\dx-\int_{\tx{B}_{0}}\langle \partial H(D\mathcal{w}_{\varepsilon}^{-}),Dv_{\varepsilon;\sigma}\rangle\dx\nonumber \\
&=&\int_{\tx{E}_{\sigma}^{-}}\mu_{\tx{l}} v_{\varepsilon;\sigma}\dx+\int_{\tx{B}_{0}}\diver (\partial H(D\mathcal{w}_{\varepsilon}^{-}) )v_{\varepsilon;\sigma}\dx\nonumber \\
&\stackrel{\eqref{a.22},\eqref{a.8}}{=}&\int_{\tx{E}_{\sigma}^{-}}\mu_{\tx{l}} v_{\varepsilon;\sigma}\dx+\frac{1}{\Lambda_{\varepsilon}}\int_{\tx{B}_{0}}v_{\varepsilon;\sigma}\dx.
\end{eqnarray*}
Notice that above we used that $\mathcal{w}_{\varepsilon}^{-}$ is strictly spacelike outside zero, cf. \eqref{fw.1}, and that $\supp(v_{\varepsilon;\sigma})\Subset \tx{B}_{0}\setminus \{0\}$ so that $\diver(\partial H(D\mathcal{w}_{\varepsilon}^{-}))$ makes sense pointwise, see \eqref{a.8} and \eqref{a.22}$_{2}$. The content of the previous display, Fatou's lemma, and the monotone convergence theorem imply
\begin{flalign}\label{a.30}
\frac{1}{\Lambda_{\varepsilon}}\nr{(\mathcal{w}_{\varepsilon}^{-}-u_{\tx{l}})_{+}}_{L^{1}(\tx{B}_{0})}&\le \frac{1}{\Lambda_{\varepsilon}}\liminf_{\sigma\to 0} \int_{\tx{B}_{0}}v_{\varepsilon;\sigma}\dx\nonumber \\
&\le \liminf_{\sigma\to 0} \int_{\tx{E}_{\sigma}^{-}}\snr{\mu_{\tx{l}}} v_{\varepsilon;\sigma}\dx\le \int_{\tx{B}_{0}}\snr{\mu_{\tx{l}}}(\mathcal{w}_{\varepsilon}^{-}-u_{\tx{l}})_{+}\dx,
\end{flalign}
so to conclude we need to control the term on the right-hand side of \eqref{a.30}. For $L\ge1$ to be fixed, define
$$
\tx B_{0;L}
:=
\tx B_0\cap\{|\mu_{\tx{l}}|>L\}.
$$
When $n\ge3$, recalling that $1<s<\infty$, we use
the following interpolation inequality in Lorentz spaces:
\eqn{inter}
$$
\nr{(\mathcal w_\varepsilon^--u_{\tx{l}})_+}
_{L\left(\frac{n-1}{n-2},\frac{s}{s-1}\right)(\tx B_0)}
\le c(n,s)
\nr{(\mathcal w_\varepsilon^--u_{\tx{l}})_+}
_{L^1(\tx B_0)}^{\frac{n-2}{n-1}}
\nr{(\mathcal w_\varepsilon^--u_{\tx{l}})_+}
_{L^\infty(\tx B_0)}^{\frac1{n-1}}.
$$
By H\"older's inequality in Lorentz spaces, \eqref{inter}, and
\eqref{a.25}$_2$, it follows that
\begin{flalign*}
\int_{\tx B_{0;L}}
|\mu_{\tx{l}}|
(\mathcal w_\varepsilon^--u_{\tx{l}})_+\dx
&\le
c(n,s)
\nr{\mu_{\tx{l}}}_{L(n-1,s)(\tx B_{0;L})}
\nr{(\mathcal w_\varepsilon^--u_{\tx{l}})_+}
_{L\left(\frac{n-1}{n-2},\frac{s}{s-1}\right)(\tx B_0)}
\\
&\le
\tx c_5
\Lambda_\varepsilon^{\frac2{n-1}}
\nr{\mu_{\tx{l}}}_{L(n-1,s)(\tx B_{0;L})}
\nr{(\mathcal w_\varepsilon^--u_{\tx{l}})_+}
_{L^1(\tx B_0)}^{\frac{n-2}{n-1}},
\end{flalign*}
where $\tx c_5=\tx c_5(n,s)$.
When $n=2$, 
\eqref{a.25}$_2$ plainly gives 
$$
\int_{\tx B_{0;L}}
|\mu_{\tx{l}}|
(\mathcal w_\varepsilon^--u_{\tx{l}})_+\dx
\le
\nr{\mu_{\tx{l}}}_{L^1(\tx B_{0;L})}
\nr{(\mathcal w_\varepsilon^--u_{\tx{l}})_+}
_{L^\infty(\tx B_0)}
\le
\tx c_6\Lambda_\varepsilon^2
\nr{\mu_{\tx{l}}}_{L^1(\tx B_{0;L})},
$$
where $\tx c_6>0$ is a dimensional constant.
On $\tx B_0\setminus\tx B_{0;L}$, we simply use
$|\mu_{\tx{l}}|\le L$.
Combining these estimates with \eqref{a.30}, we obtain
\eqn{a.50}
$$
\begin{aligned}
&
(1/\Lambda_\varepsilon-L)
\nr{(\mathcal w_\varepsilon^--u_{\tx{l}})_+}
_{L^1(\tx B_0)}
\\
&\qquad\le
\tx c_5\mathds{1}_{\{n\ge3\}}
\Lambda_\varepsilon^{\frac2{n-1}}
\nr{\mu_{\tx{l}}}_{L(n-1,s)(\tx B_{0;L})}
\nr{(\mathcal w_\varepsilon^--u_{\tx{l}})_+}
_{L^1(\tx B_0)}^{\frac{n-2}{n-1}}
\\
&\qquad\quad+
\tx c_6\mathds{1}_{\{n=2\}}
\Lambda_\varepsilon^2
\nr{\mu_{\tx{l}}}_{L^1(\tx B_{0;L})}.
\end{aligned}
$$
with $\tx{c}_{5},\tx{c}_{6}\equiv \tx{c}_{5},\tx{c}_{6}(n,s)$. By the absolute continuity of the Lorentz norm (keep in mind that $s<\infty$), we choose $L\equiv L(n,\tx{l},\mu,s)$ large enough to have
\eqn{l.05}
$$
\frac{2\tx{c}_{5}}{\tx{c}_{4}^{1/(n-1)}} \mathds{1}_{\{n\ge 3\}}\nr{\mu_{\tx{l}}}_{L(n-1,s)(\tx{B}_{0;L})}+ \frac{2\tx{c}_{6}}{\tx{c}_{4}} \mathds{1}_{\{n= 2\}}\nr{\mu_{\tx{l}}}_{L^{1}(\tx{B}_{0;L})}\le \frac{1}{4},
$$
and then fix the threshold $\ti{\varepsilon}_{0}$ as
\eqn{tie0.4}
$$
\ti{\varepsilon}_{0}:=\min\left\{\frac12,\frac{M_{\tx{l}}}{4L}\right\}.
$$
Estimate \eqref{a.50} yields
\begin{eqnarray*}
\frac{\tx{c}_{4}^{1/(n-1)}}{2} \Lambda_{\varepsilon}^{\frac{2}{n-1}}&\stackrel{\eqref{a.26}}{\le}&\frac{1}{2\Lambda_{\varepsilon}}\nr{(\mathcal{w}_{\varepsilon}^{-}-u_{\tx{l}})_{+}}_{L^{1}(\tx{B}_{0})}^{\frac{1}{n-1}}\nonumber\\
&\stackrel{\eqref{a.50}}{\le}&\tx{c}_{5}\mathds{1}_{\{n\ge 3\}}\Lambda_{\varepsilon}^{\frac{2}{n-1}}\nr{\mu_{\tx{l}}}_{L(n-1,s)(\tx{B}_{0;L})}+\tx{c}_{6}\mathds{1}_{\{n=2\}}\Lambda_{\varepsilon}^{2}\nr{\mu_{\tx{l}}}_{L^{1}(\tx{B}_{0;L})}.
\end{eqnarray*}
Simplifying above and recalling \eqref{l.05} and \eqref{tie0.4} we obtain
$$
1\le \frac{2\tx{c}_{5}}{\tx{c}_{4}^{1/(n-1)}} \mathds{1}_{\{n\ge 3\}}\nr{\mu_{\tx{l}}}_{L(n-1,s)(\tx{B}_{0;L})}+ \frac{2\tx{c}_{6}}{\tx{c}_{4}}\mathds{1}_{\{n= 2\}}\nr{\mu_{\tx{l}}}_{L^{1}(\tx{B}_{0;L})} \stackrel{\eqref{l.05}}{\le}\frac{1}{4},
$$
a contradiction. The proof is complete.
\begin{remark}
\emph{In \eqref{mmmmm}, the Lorentz condition is in force only in dimension $n\ge 3$, while for $n=2$ we assume the (slightly stronger) $L^{1}_{\loc}(\Omega)\equiv L_{\loc}(1,1)(\Omega)$. This is due to the fact that for $s>1$, and any fixed ball $B\Subset \Omega$, the H\"older-type bound $\nr{fg}_{L^{1}(B)}\lesssim_{n,s}\nr{f}_{L(1,s)(B)}\nr{g}_{L^{\infty}(B)}$ fails already for $f\in L(1,s)(B)$ and $g=1$. }   
\end{remark}

\section{Hausdorff dimension of light segments and proof of Theorem \ref{intre}}\label{capacity} 

Here we consider a weakly spacelike function $u$, not necessarily being a minimizer of the Born--Infeld functional, and show how the integrability and the differentiability of certain nonlinear functions of $\mathcal{h}_{u}$, such as $\mathcal h_u^q\in L^1_{\loc}(\mathbb R^n)$, imply Hausdorff dimension estimates of the sets of light segments as defined in Section \ref{luce}. Eventually, we derive further qualitative properties of such sets. Note that a condition of the type
$\mathcal h_u^q\in L^1_{\loc}(\mathbb R^n)$ for some $q>0$ automatically implies that $|Du|<1$ almost everywhere.  We start with the 
\begin{proof}[Proof of Theorem \ref{intre}]
If \(\mathcal S_{\texttt{i},u}=\varnothing\), there is nothing to
prove. We therefore fix 
$z\in\mathcal S_{\texttt{i},u}.$

{\em Step 1: A close-to-lightness estimate}.
We may choose a light segment
$\overline{y_zx_z}$
such that \(z\) is its midpoint, and label its endpoints so that
$u(y_z)>u(x_z)$.
Set
$\tx{l}_z:=|x_z-y_z|>0$,
$\tx{e}_z:=(y_z-x_z)/\tx{l}_z$,
so that
$y_z=z+\tx{l}_z\tx{e}_z/2$,
$x_z=z-\tx{l}_z\tx{e}_z/2$,
and
$u(y_z)=u(x_z)+\tx{l}_z$.
Fix
$
0<\rr<\tx{l}_z/2^{3}.
$
We now decompose
$$
x=z+t\tx{e}_z+x_\perp, \qquad |t|<\rr, \qquad \langle x_\perp,\tx{e}_z\rangle=0, \qquad |x_\perp|<\rr,
$$
and define
$$
\begin{cases}
B_\rr^{n-1}:=
\{x_\perp\in\tx{e}_z^\perp:|x_\perp|<\rr\}\\[1mm]
\mathcal C_\rr(z):=\left\{z+t\tx{e}_z+x_\perp:|t|<\rr,\ x_\perp\in B_\rr^{n-1}\right\}
\Subset B_{2\rr}(z).
\end{cases}
$$
The notation here is obvious: $\tx{e}_z^\perp$ denotes the space orthogonal to $\tx{e}_z$. 
The \(1\)-Lipschitz regularity of \(u\), together with the elementary
inequality
$\sqrt{a^2+b^2}\le a+b^2/(2a)$ valid whenever \(a>0\),
gives
\eqn{cc.0}
$$\left|u(x)-\left(u(x_z)+\frac{\tx{l}_z}{2}+t\right)\right|
\le\frac{4|x_\perp|^2}{3\tx{l}_z}
\qquad
\mbox{for every }x\in\mathcal C_\rr(z).
$$
Indeed,
\begin{flalign*}
u(x)
&\le
u(x_z)+|x-x_z|=u(x_z)+
\sqrt{\left(t+\frac{\tx{l}_z}{2}\right)^2
+|x_\perp|^2}
\\
&\le
u(x_z)+t+\frac{\tx{l}_z}{2}+\frac{|x_\perp|^2}{2t+\tx{l}_z}\le
u(x_z)+t+\frac{\tx{l}_z}{2}+\frac{4|x_\perp|^2}{3\tx{l}_z},
\end{flalign*}
and, similarly,
\begin{flalign*}
u(x)&\ge u(y_z)-|y_z-x|=u(x_z)+\tx{l}_z-
\sqrt{\left(\frac{\tx{l}_z}{2}-t\right)^2+
|x_\perp|^2}
\\
&\ge
u(x_z)+t+\frac{\tx{l}_z}{2}-
\frac{|x_\perp|^2}{\tx{l}_z-2t}
\ge u(x_z)+t+\frac{\tx{l}_z}{2}-\frac{4|x_\perp|^2}{3\tx{l}_z}.
\end{flalign*}
Merging the inequalities in the last two displays leads to
\eqref{cc.0}. Next, for every \(x_\perp\in B_\rr^{n-1}\), define
$
\mathcal f_{x_\perp}(t)
:=
u(z+t\tx{e}_z+x_\perp) $ for 
$t\in[-\rr,\rr].$
Since \(u\) is \(1\)-Lipschitz, \(\mathcal f_{x_\perp}\) is
\(1\)-Lipschitz, and hence absolutely continuous, for every
\(x_\perp\in B_\rr^{n-1}\). Moreover, by the slicing properties of
Sobolev functions, for a.e. \(x_\perp\in B_\rr^{n-1}\),
$
\mathcal f_{x_\perp}'(t)=\left\langle Du(z+t\tx{e}_z+x_\perp),\tx{e}_z \right\rangle
$
for a.e. \(t\in(-\rr,\rr)\). By \eqref{cc.0},
$$
\int_{-\rr}^{\rr}\left(1-\mathcal f_{x_\perp}'(t)\right)\dt
=
2\rr-\left(\mathcal f_{x_\perp}(\rr)
-
\mathcal f_{x_\perp}(-\rr)\right)\le\frac{8|x_\perp|^2}{3\tx{l}_z}.
$$
Since $1-|Du|^2\le2(1-|Du|)\le2(1-\mathcal f_{x_\perp}'),$
we obtain the following {\em close-to-lightness} estimate:
\eqn{cc.2}
$$
\int_{-\rr}^{\rr}
\left(1-|Du(z+t\tx{e}_z+x_\perp)|^2\right)\dt
\le\frac{16|x_\perp|^2}{3\tx{l}_z}.
$$
\medskip

{\em Step 2: Consequences of higher integrability}.
Define
\eqn{choice}
$$
\mathcal h_{u;\rr}^{q}(x_\perp)
:=\mint_{-\rr}^{\ \ \rr}\mathcal h_u(z+t\tx{e}_z+x_\perp)^q\dt.
$$
Since
$
s\longmapsto s^{-q/2}
$
is convex on \((0,\infty)\), Jensen's inequality and \eqref{cc.2}
give
\eqn{higher.light}
$$
\mathcal h_{u;\rr}^{q}(x_\perp)
\ge
\left(\mint_{-\rr}^{\ \ \rr}\left(1-|Du(z+t\tx{e}_z+x_\perp)|^2\right)\dt\right)^{-q/2}
\ge \left(\frac{3\rr\tx l_z}{8|x_\perp|^2}\right)^{q/2}
$$
for a.e.
\(x_\perp\in B_\rr^{n-1}\setminus\{0\}\). Consequently, for every \(0<\delta<\rr\), Fubini's theorem and
the inclusion $\mathcal C_\rr(z)\Subset B_{2\rr}(z)$ yield
\begin{align}\label{higher.int.lower}
\int_{B_{2\rr}(z)}\mathcal h_u^q\dx
&\ge
\int_{\mathcal C_\rr(z)}\mathcal h_u^q\dx
\nonumber =2\rr\int_{B_\rr^{n-1}}\mathcal h_{u;\rr}^{q}(x_\perp)\dx_\perp
\nonumber\\
&\ge \frac{\tx l_z^{q/2}\rr^{1+q/2}}{c}\int_{\{\delta<|x_\perp|<\rr\}}\frac{dx_\perp}{|x_\perp|^q}=c(n,q)\tx l_z^{q/2}\rr^{1+q/2}\int_\delta^\rr s^{n-2-q}\,\dd s.
\end{align}
Assume first that
$
q\ge n-1.
$
Letting \(\delta\downarrow0\) in \eqref{higher.int.lower}, we obtain
$$
\int_{B_{2\rr}(z)}\mathcal h_u^q\dx=\infty,
$$
which is a contradiction. 
Thus
$
\mathcal S_{\texttt{i},u}=\varnothing
$ and 
hence
$
\mathcal S_{u}=\varnothing=\mathcal{K}_{u}=\overline{\mathcal S_{u}}=\varnothing. 
$ This proves the second conclusion in \rif{nicchia4}. 
Assume now that
$
1\le q<n-1.
$
Letting \(\delta\downarrow0\) in \eqref{higher.int.lower}, we find
\eqn{findy}
$$
\int_{B_{2\rr}(z)}\mathcal h_u^q\dx \ge
\frac{\tx l_z^{q/2}\rr^{1+q/2}}{c(n,q)}\int_0^\rr s^{n-2-q}\,\dd s=\frac{\tx l_z^{q/2}\rr^{n-q/2}}{c(n,q)}.
$$
Define
$$
E_q
:=\left\{
x\in\mathbb R^n: \limsup_{r\to0}\, r^{q/2-n}\int_{B_r(x)}\mathcal h_u^q\dx>0\right\}.
$$
Estimate \rif{findy} shows that
$
\mathcal S_{\texttt{i},u}\subset E_q.
$
Since \(\mathcal h_u^q\in L^1_{\loc}(\mathbb R^n)\) we have 
$
\mathcal H^{n-q/2}(E_q)=0 
$ by \cite[Theorem 2.10]{eg15}, 
and therefore 
$
\mathcal H^{n-q/2}(\mathcal S_{\texttt{i},u})=0.
$ This implies, via Lemma \ref{endpoints} below, that $
\mathcal H^{n-q/2}
(\mathcal S_{u})
=0
$ and the proof is complete. 
\end{proof}
\begin{lemma}
\label{endpoints}
Let $U\subset\mathbb R^n$ be open and let
$u\in W^{1,\infty}_{\loc}(U)$ be weakly spacelike.
For every $s>0$,
$
\mathcal H^s(\mathcal S_u)=0$ if and only if 
$\mathcal H^s(\mathcal S_{\texttt{i},u})=0$.
In particular,
$
\dim_{\mathcal H}\mathcal S_u=\dim_{\mathcal H}\mathcal S_{\texttt{i},u}.
$
\end{lemma}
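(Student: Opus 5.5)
Since $\mathcal S_{\texttt{i},u}\subset\mathcal S_u$, monotonicity of $\mathcal H^s$ gives one implication at once, and the whole content is the other one: if $\mathcal H^s(\mathcal S_{\texttt{i},u})=0$, then $\mathcal H^s(\mathcal S_u)=0$. I would write
$$
\mathcal S_u\setminus\mathcal S_{\texttt{i},u}\subset E,
\qquad
E:=\bigl\{x\in U:\ x \mbox{ is an endpoint of a light segment but lies in no open light segment}\bigr\}.
$$
It is then enough to show $\mathcal H^s(E)=0$. Note that $E$ may well be uncountable, since a single point can be the endpoint of many light segments, so the argument must use geometry rather than a counting bound.

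\textbf{Step 1 (each point of $E$ is a limit of interior points).} Let $x\in E$ be an endpoint of a light segment $\overline{xy}$. Every point $x+t(y-x)$ with $t\in(0,1)$ lies in the open segment $(xy)\subset\mathcal S_{\texttt{i},u}$. Hence $x\in\overline{\mathcal S_{\texttt{i},u}}$. This is not enough on its own, because closures can carry positive measure. So I would use the segment itself and compare $E$ with $\mathcal S_{\texttt{i},u}$ by a projection argument along the segment directions.

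\textbf{Step 2 (Lipschitz retraction along the segment).} Fix $x\in E$ with segment $\overline{xy}$, length $\tx l$ and direction $\tx e=(y-x)/\tx l$ oriented so that $u(y)-u(x)=\pm \tx l$. Then $u$ is differentiable at the interior points with gradient $\pm\tx e$ by Lemma \ref{lemmaraggi}, and the cone estimate \eqref{cc.0} holds. I would show that two distinct light rays cannot cross at interior points. Indeed, at an interior point $u$ is differentiable and both rays would force $Du=\pm$ (their respective directions), so the directions coincide up to sign. This lets me define, on the set $E_{k}$ of points of $E$ carrying a segment of length at least $1/k$ with direction within $1/k$ of a fixed unit vector $\tx e_k$, the map
$$
T(x):=x+\frac{1}{2k}\tx e(x),
$$
that is, push each endpoint a distance $1/(2k)$ into its own segment. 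Then $T(E_k)\subset\mathcal S_{\texttt{i},u}$. The key claim is that $T^{-1}$ is Lipschitz on $T(E_k)$, meaning $|x-x'|\le C\,|T(x)-T(x')|$. This is the standard optimal-transport fact that transport rays separate in a Lipschitz way away from their endpoints, as in \cite[Lemma 16]{fm02}. It follows from the $1$-Lipschitz bound on $u$ together with $u$ being affine with slope $1$ on both segments: if the interior points are close, the cone inequalities \eqref{cc.0}, applied centered at them, force both the endpoints and the directions to be close, with a constant depending only on $k$. With this in hand,
$$
\mathcal H^s(E_k)\le C_k^s\,\mathcal H^s(T(E_k))\le C_k^s\,\mathcal H^s(\mathcal S_{\texttt{i},u})=0,
$$
and covering $E$ by countably many $E_k$ over rational $1/k$-nets of directions finishes the argument.

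\textbf{Main obstacle.} The hard step is the Lipschitz estimate for $T^{-1}$. I would prove it first in the model case from Section \ref{lr.s}, where one ray is normalized to $\{t\tx e_n\}$ with $u(t\tx e_n)=t$. I would then use the two-sided sandwich $u(y)-|y-\cdot|\le u\le u(x)+|\cdot-x|$ to show that a second light segment through a point at distance $\delta$ from the interior has direction deviating by $O(\delta/\tx l)$. From there the endpoint displacement is $O(\delta)$. If this quantitative separation turns out to fail, the fallback is to make $T$ Lipschitz by restricting to subsets of $E_k$ where the direction field is uniformly continuous, using Lusin's theorem applied to $\mathcal H^s$-measurable pieces.
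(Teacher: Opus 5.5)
Your argument is correct and is essentially the paper's. The paper covers $U$ by balls $B_j\Subset U$ and takes the midpoints $M_{\delta,j}\subset\mathcal S_{\texttt{i},u}$ of light segments of length $2\delta$ in $B_j$. It then uses the Lipschitz dependence of the ray direction on such midpoints \cite[Lemma 16]{cfm02} to make the endpoint maps $y\mapsto y\pm\delta\tx{e}_y$ Lipschitz. These maps are exactly your $T^{-1}$ with $\delta=1/(2k)$. Your sandwich idea does yield the needed bound $\delta|d_z-d_{z'}|\le|z-z'|$: sum $u(z+\delta d_z)-u(z'-\delta d_{z'})\le|z-z'+\delta(d_z+d_{z'})|$ with its symmetric counterpart.

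The one thing to add is localization. That inequality uses $|u(a)-u(b)|\le|a-b|$, which for weakly spacelike $u$ on a general open $U$ holds only along segments contained in $U$. You should therefore also index your pieces $E_k$ by balls $B_j\Subset U$ containing the relevant subsegments, as the paper does (it also extends $u|_{B_j}$ to a $1$-Lipschitz function). The Lusin fallback is then unnecessary, and it would not work anyway, because mere continuity does not preserve $\mathcal H^s$-null sets.
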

\begin{proof} Since $\mathcal S_{\texttt{i},u}\subset\mathcal S_u$,
it is sufficient to prove that
$\mathcal H^s(\mathcal S_{\texttt{i},u})=0$ implies
$
\mathcal H^s(\mathcal S_u)=0$, and therefore we assume that 
\eqn{walter}
$$\mathcal H^s(\mathcal S_{\texttt{i},u})=0.$$
The idea of the proof is that every midpoint of a light segment
determines its two endpoints once the length is fixed.
Since the directions depend Lipschitz continuously on the midpoints,
this association has Lipschitz character too.
This implies that the endpoints do not spread too much,
so that the information in \rif{walter} extends to the endpoints too.
Choose a countable covering
$
U=\bigcup_{j\ge1}B_j
$
by balls $B_j\Subset U$.
For $\delta>0$, let $M_{\delta,j}$ be the set of midpoints
of light segments of length $2\delta$ contained in $B_j$. 
Clearly,
$
M_{\delta,j}\subset\mathcal S_{\texttt{i},u}.
$
For $y\in M_{\delta,j}$, choose the corresponding increasing
unit direction $\tx{e}_y$, i.e.,
$
u(y\pm\delta \tx{e}_y)=u(y)\pm\delta$.
This direction is unique by Lemma \ref{lemmaraggi}.
Next, consider the endpoint maps
$
F_{\delta,j}^\pm:M_{\delta,j}\to\mathcal S_u$
defined by
$
F_{\delta,j}^\pm(y):=y\pm\delta \tx{e}_y.
$
Upon extending the restriction $u|_{B_j}$ to 
$1$-Lipschitz extension to $\mathbb R^n$, we can use 
 \cite[Lemma 16, Remark 17 and the proof of Lemma 22]{cfm02}, to deduce that the dependence 
of $\tx{e}_y$ 
on $y\in M_{\delta,j}$ is Lipschitz continuously
on $y$, for every fixed $\delta>0$. It follows that the maps $F_{\delta,j}^\pm$ are Lipschitz too. Now, every point $p\in \mathcal S_u$ is an endpoint of a
nondegenerate light subsegment $\ell$ contained in some $B_j$ and, up to shorten $\ell$, we can assume $\diam(\ell)=2/k$ for some integer $k$. The midpoint of $\ell$ belongs to $M_{1/k,j}$.
As $p$ was generic, we conclude with 
$$
\mathcal S_u
\subset
\bigcup_{j,k\ge1}
\left(
F_{1/k,j}^+(M_{1/k,j})
\cup
F_{1/k,j}^-(M_{1/k,j})
\right).
$$
By \rif{walter}, since $M_{1/k,j}\subset\mathcal S_{\texttt{i},u}$,
the Lipschitz continuity of the maps $F_{1/k,j}^{\pm}$ yields
$$
\mathcal H^s\left(F_{1/k,j}^\pm(M_{1/k,j})\right)=0
\qquad
\mbox{for every }j,k\ge1.
$$
Countable subadditivity of Hausdorff measure then yields
$
\mathcal H^s(\mathcal S_u)=0.
$
\end{proof}
\begin{theorem}[Sobolev exclusion of light segments - higher dimensions]\label{sobre1}
Let \(u\in W^{1,\infty}_{\loc}(\mathbb R^n)\) be weakly spacelike. Let $p\geq 1$  and $m>0$ such that $
\mathcal h_u^m\in W^{1,p}_{\loc}(\mathbb R^n)
$ and 
\eqn{notache}
$$
\frac{n-1}{1+m}\le p<n-1.
$$
Then
$
\mathcal S_{\texttt{i},u}=\mathcal S_{u}=\mathcal{K}_{u}=\varnothing.
$
\end{theorem}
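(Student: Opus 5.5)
The plan is to reduce to a transverse $(n-1)$-dimensional Sobolev embedding. We cannot simply apply the $n$-dimensional embedding and then Theorem \ref{intre}. Indeed, $\mathcal h_u^m\in W^{1,p}_{\loc}$ gives $\mathcal h_u\in L^{mnp/(n-p)}_{\loc}$, and $mnp/(n-p)\ge n-1$ requires $p\ge n(n-1)/(mn+n-1)$. This threshold is strictly larger than the one in \eqref{notache}. Instead, we exploit the fact, already visible in the proof of Theorem \ref{intre}, that the degeneration near a light segment is governed by the $n-1$ variables transverse to it.

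Argue by contradiction and pick $z\in\mathcal S_{\texttt{i},u}$. Use exactly the setup of Step 1 in the proof of Theorem \ref{intre}: a light segment $\overline{y_zx_z}$ with midpoint $z$, length $\tx l_z$, direction $\tx e_z$, and the cylinder $\mathcal C_\rr(z)$ with $0<\rr<\tx l_z/8$. The close-to-lightness estimate \eqref{cc.2} holds there. Define the transversal average
\[
g(x_\perp):=\mint_{-\rr}^{\ \ \rr}\mathcal h_u^m(z+t\tx e_z+x_\perp)\dt,\qquad x_\perp\in B^{n-1}_\rr .
\]
This is \eqref{choice} with $q=m$. Since $s\mapsto s^{-m/2}$ is convex, Jensen's inequality together with \eqref{cc.2} gives, exactly as in \eqref{higher.light},
\[
g(x_\perp)\ge \bigl(3\rr\tx l_z/(8|x_\perp|^2)\bigr)^{m/2}\ge c\,|x_\perp|^{-m}\quad\text{for a.e. } x_\perp\in B^{n-1}_\rr .
\]

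Next, I will show $g\in W^{1,p}(B^{n-1}_\rr)$, with weak gradient given by the $t$-average of the transverse components of $D(\mathcal h_u^m)$. This follows from Fubini's theorem, by testing against $\varphi(x_\perp)$ independent of $t$. Jensen's inequality then gives $\|g\|_{W^{1,p}(B^{n-1}_\rr)}\lesssim \rr^{-1/p}\|\mathcal h_u^m\|_{W^{1,p}(\mathcal C_\rr(z))}<\infty$. Since $p<n-1$, the Sobolev embedding in dimension $n-1$ yields $g\in L^{p^\ast}(B^{n-1}_\rr)$ with $p^\ast=(n-1)p/(n-1-p)$. Hence $|x_\perp|^{-m}\in L^{p^\ast}$ near $0$ in $\er^{n-1}$, that is, $mp^\ast<n-1$. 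A direct computation shows that $mp^\ast\ge n-1$ is equivalent to $p(1+m)\ge n-1$, which is precisely the left inequality in \eqref{notache}. In the equality case $mp^\ast=n-1$ the integral $\int|x_\perp|^{-(n-1)}\,\dd x_\perp$ diverges logarithmically, so we still get a contradiction. Therefore $\mathcal S_{\texttt{i},u}=\varnothing$.

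Finally, every point of $\mathcal S_u$ lies on some light segment, and every light segment has nonempty relative interior. Hence $\mathcal S_{\texttt{i},u}=\varnothing$ forces $\mathcal S_u=\varnothing$, and then $\mathcal K_u=\overline{\mathcal S_u}=\varnothing$. The only delicate step is the rigorous justification that the $t$-average of a $W^{1,p}$ function on the cylinder is a $W^{1,p}$ function of $x_\perp$, including the identification of the precise representative so that the a.e. lower bound and the embedding apply to the same function. I would handle this by approximating $\mathcal h_u^m$ in $W^{1,p}(\mathcal C_\rr(z))$ by smooth functions, noting that averaging in $t$ is a bounded linear map $W^{1,p}(\mathcal C_\rr)\to W^{1,p}(B^{n-1}_\rr)$, and then passing to the limit.
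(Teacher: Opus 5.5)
Your proposal is correct and follows essentially the same route as the paper: averaging $\mathcal h_u^m$ in the direction of the segment, the Jensen lower bound $|x_\perp|^{-m}\lesssim \mathcal h_{u;\rr}^m$ taken from \eqref{higher.light}, Fubini to place this average in $W^{1,p}(B^{n-1}_\rr)$, and the $(n-1)$-dimensional Sobolev embedding combined with the equivalence $(1+m)p\ge n-1 \iff mp^\ast\ge n-1$. Your additional remarks are accurate and spell out what the paper states more tersely: why the $n$-dimensional embedding plus Theorem \ref{intre} would not reach the threshold in \eqref{notache}, and how approximation justifies the Fubini step.
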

\begin{proof} Note that assumption \rif{notache} automatically implies $n\geq 3$. 
Assume by contradiction that
\(z\in\mathcal S_{\texttt{i},u}\). Using the notation and the
close-to-lightness estimate from the proof of Theorem
\ref{intre}, restart from \rif{higher.light} (there we replace $q$ by $m$ and observe that the whole argument remains unchanged). 
Then $|x_\perp|^{-m} \lesssim \mathcal h_{u;\rr}^{m}$
for a.e.
\(x_\perp\in B_\rr^{n-1}\setminus\{0\}\); integrating then yields
\eqn{lefty}
$$
\int_{B_\rr^{n-1}}
(\mathcal h_{u;\rr}^{m})^{p^*}\dx_\perp
\ge
\frac1c
\int_0^\rr s^{n-2-mp^*}\,\dd s,
\qquad
p^*:=\frac{(n-1)p}{n-1-p}.
$$
By the assumption on \(\mathcal h_u^m\) and Fubini's theorem,
$
\mathcal h_{u;\rr}^{m}\in W^{1,p}(B_\rr^{n-1}).
$
Sobolev embedding in dimension \(n-1\) gives
$
\mathcal h_{u;\rr}^{m}\in L^{p^*}(B_\rr^{n-1}),
$ so that the last integral in \rif{lefty} is finite. 
On the other hand, the assumption
$
(1+m)p\ge n-1
$
is equivalent to
$
mp^*\ge n-1,
$
and hence the last integral diverges, a contradiction. Therefore
$
\mathcal S_{\texttt{i},u}=\varnothing,
$
and consequently
$
\mathcal S_{u}=\mathcal{K}_{u}=\varnothing.
$
\end{proof}
\begin{theorem}[Sobolev exclusion of light segments - lower dimensions]
\label{sobre2}
Let
\(u\in W^{1,\infty}_{\loc}(\mathbb R^n)\) be weakly spacelike.
Assume that
$
\log\mathcal h_u\in W^{1,p}_{\loc}(\mathbb R^n)$ for some 
$p\ge n-1.$
Then
$
\mathcal S_{\texttt{i},u}=\mathcal S_{u}=
\mathcal{K}_{u}=\varnothing.
$
\end{theorem}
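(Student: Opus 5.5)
We argue by contradiction, in the same way as in Theorem \ref{sobre1}. Suppose $z\in\mathcal S_{\texttt{i},u}$. We use the notation of the proof of Theorem \ref{intre} (the light segment $\overline{y_zx_z}$, its direction $\tx{e}_z$, the cylinder $\mathcal C_\rr(z)$ and the slices $B^{n-1}_\rr$) and restart from the close-to-lightness estimate \rif{cc.2}. Since $\log$ is concave, Jensen's inequality gives, for a.e. $x_\perp\in B_\rr^{n-1}\setminus\{0\}$,
$$
\mathcal g(x_\perp):=\mint_{-\rr}^{\ \ \rr}\log\mathcal h_u(z+t\tx{e}_z+x_\perp)\dt
=-\frac12\mint_{-\rr}^{\ \ \rr}\log\left(1-|Du|^2\right)\dt
\ge -\frac12\log\left(\frac{8|x_\perp|^2}{3\rr\tx{l}_z}\right).
$$
Hence $\mathcal g(x_\perp)\ge \log(1/|x_\perp|)-c(\rr,\tx l_z)$, so $\mathcal g$ has a logarithmic singularity along the segment, transversally.

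Next we transfer the Sobolev regularity. By Fubini's theorem and the slicing properties of Sobolev functions, the averaged function $\mathcal g$ belongs to $W^{1,p}(B^{n-1}_\rr)$. Indeed, differentiating under the integral sign in the $x_\perp$ variables gives $|D\mathcal g|\le \mint|D\log\mathcal h_u|\dt$, and Jensen then gives $\|D\mathcal g\|_{L^p(B^{n-1}_\rr)}\lesssim \rr^{-1/p}\|D\log\mathcal h_u\|_{L^p(\mathcal C_\rr(z))}$. The same holds for $\mathcal g$ itself in $L^p$. After possibly shrinking $\rr$, we work on $B^{n-1}_\rr$, a ball in $\er^{n-1}$ with $p\ge n-1$.

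The contradiction comes from comparing a one-sided logarithmic blow-up with the borderline embedding in dimension $n-1$. We split into two cases.

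If $p>n-1$, Morrey's embedding makes $\mathcal g$ continuous, hence bounded, on $B^{n-1}_\rr$. This contradicts $\mathcal g(x_\perp)\to\infty$ as $x_\perp\to0$: the lower bound holds a.e., and is therefore also valid for the continuous representative near $0$.

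If $p=n-1$, we use the mean-value form of the Moser--Trudinger inequality \rif{trudinger.mean} in $\er^{n-1}$, applied on small balls $B^{n-1}_r(0)$. The function $\mathcal g-(\mathcal g)_{B_r}$ is exponentially integrable with constant $\mathfrak c_4/\|D\mathcal g\|_{L^{n-1}(B_r)}^{(n-1)/(n-2)}$ (for $n=2$, $p\ge1$ gives $W^{1,1}$ in one dimension, so $\mathcal g$ is bounded, which is the simpler case). Since $\|D\mathcal g\|_{L^{n-1}(B_r^{n-1})}\to0$ as $r\to0$ by absolute continuity, $\exp\{\lambda|\mathcal g-(\mathcal g)_{B_r}|\}$ has average bounded by $\mathfrak c_5$ for $r$ small, for every fixed $\lambda$. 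We exploit this with exponent $(n-1)/(n-2)\ge1$ and large $\lambda$.

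The function $\mathcal g$ itself satisfies $e^{\lambda\mathcal g}\ge c|x_\perp|^{-\lambda}$, which fails to be integrable near $0$ once $\lambda\ge n-1$. The main obstacle is the subtraction of the mean $(\mathcal g)_{B_r}$. To handle it, we control $(\mathcal g)_{B_r}$ by comparing it with the average over an annulus, or simply note that $(\mathcal g)_{B_r}$ is a finite number for each $r$. Then $\int_{B_r}e^{\lambda(\mathcal g-(\mathcal g)_{B_r})}\ge e^{-\lambda(\mathcal g)_{B_r}}\int_{B_r}c|x_\perp|^{-\lambda}=\infty$ for $\lambda=n-1$, contradicting the finiteness granted by \rif{trudinger.mean}.

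In fact, finiteness of the exponential integral for a single large $\lambda$ already suffices, and this uses only $\mathcal g\in W^{1,n-1}$. For $n-1\ge2$ the exponent is $(n-1)/(n-2)>1$ and $|\mathcal g|^{(n-1)/(n-2)}\ge\mathcal g$ for large values, so the argument is even easier. Thus $\mathcal S_{\texttt{i},u}=\varnothing$. Since every light segment has nonempty relative interior, we get $\mathcal S_u=\varnothing$, and then $\mathcal K_u=\overline{\mathcal S_u}=\varnothing$.
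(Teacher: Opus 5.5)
Your proof is correct and follows the paper's route: average $\log\mathcal h_u$ along the light direction, use Jensen with \eqref{cc.2} to get the transversal logarithmic blow-up, then contradict Morrey's embedding for $p>n-1$ (or the one-dimensional $W^{1,1}$ bound when $n=2$) and the Moser--Trudinger inequality \eqref{trudinger.mean} in dimension $n-1$ when $p=n-1$. You handle the mean slightly differently from the paper, which instead picks a small radius $\rr_\perp$: factoring out the finite constant $e^{-\lambda(\mathcal g)_{B_r}}$ and using that the superlinear exponent $(n-1)/(n-2)$ dominates $\lambda|\cdot|$ is a clean variant; and the case $\|D\mathcal g\|_{L^{n-1}}=0$, excluded in Lemma \ref{trudinger.lemma}, is immediate because $\mathcal g$ would then be constant.
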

\begin{proof}
Assume by contradiction there exists 
$
z\in\mathcal S_{\texttt{i},u}.
$
This time it is natural to define
$$
\ell\mathcal h_{u;\rr}(x_\perp)
:=\mint_{-\rr}^{\ \ \rr}\log\mathcal h_u(z+t\tx{e}_z+x_\perp)\dt,
$$
which replaces the choice in \rif{choice}. 
Again Fubini's theorem yields $
\ell\mathcal h_{u;\rr}
\in W^{1,p}(B_\rr^{n-1}).$
Moreover, Jensen's inequality and \eqref{cc.2} give
\eqn{zz.0}
$$
\ell\mathcal h_{u;\rr}(x_\perp)
\ge
\log\left(
\frac{1}{|x_\perp|}
\sqrt{\frac{3\rr\tx{l}_z}{8}}
\right)
$$
for a.e. \(x_\perp\in B_\rr^{n-1}\setminus\{0\}\). When  \(p>n-1\), Morrey's embedding theorem gives a locally bounded
continuous representative of
\(\ell\mathcal h_{u;\rr}\), contradicting \eqref{zz.0} as
\(x_\perp\to0\). Therefore it remains to consider the borderline case \(p=n-1\). 
If \(n=2\), then \(p=1\), and $
\ell\mathcal h_{u;\rr}\in W^{1,1}(-\rr,\rr) $
has an absolutely continuous, bounded representative.
This again contradicts \eqref{zz.0}. We may therefore assume that \(n\ge3\), so that \(p=n-1>1\). If
$
\left\|
\partial_{x_\perp}\ell\mathcal h_{u;\rr}
\right\|_{L^p(B_\rr^{n-1})}=0,
$
then $\ell\mathcal h_{u;\rr}$ is constant almost everywhere,
contradicting \eqref{zz.0}.
Otherwise we apply 
\eqref{trudinger.mean} of Lemma \ref{trudinger.lemma},
in dimension $n-1=p$, to obtain
\eqn{zz.1}
$$
\int_{B_\rr^{n-1}}\exp\left\{c_*\left|\ell\mathcal h_{u;\rr}-(\ell\mathcal h_{u;\rr})_{B_\rr^{n-1}}
\right|^{\frac{p}{p-1}}
\right\}\dx_\perp
<\infty,
$$
where 
$c_*>0$ depends on $
n,p,
\left\|
\partial_{x_\perp}
\ell\mathcal h_{u;\rr}
\right\|_{L^p(B_\rr^{n-1})}$. 
Choose \(\rr_\perp\in(0,\rr)\) so small that
\[
\frac12
\log\left(
\frac1{\rr_\perp}
\sqrt{\frac{3\rr\tx{l}_z}{8}}
\right)
\ge
(\ell\mathcal h_{u;\rr})_{B_\rr^{n-1}}.
\]
Then \rif{zz.0} implies that
$$
\left|\ell\mathcal h_{u;\rr}(x_\perp)-(\ell\mathcal h_{u;\rr})_{B_\rr^{n-1}}\right|
\ge
\frac12\log\left(\frac1{|x_\perp|}\sqrt{\frac{3\rr\tx{l}_z}{8}}\right)
$$
holds for a.e. \(x_\perp\in B_{\rr_\perp}^{n-1}\). 
Set
$ c_1:=c_*2^{-p/(p-1)}$ and $c_2:=\sqrt{3\rr\tx{l}_z/8}.$
Using polar coordinates and then changing variables via 
$
t=\log (c_2/s),
$
we obtain
\begin{flalign*}
\int_{B_\rr^{n-1}}\exp\left\{c_*\left|\ell\mathcal h_{u;\rr}-(\ell\mathcal h_{u;\rr})_{B_\rr^{n-1}}\right|^{\frac{p}{p-1}}
\right\}\dx_\perp
&\ge
\frac1c
\int_0^{\rr_\perp}
s^{n-2}
\exp\left\{
c_1
\left[
\log\left(\frac{c_2}{s}\right)
\right]^{\frac{p}{p-1}}
\right\}\,\dd s
\\
&=
\frac{c_2^{n-1}}{c}
\int_{\log(c_2/\rr_\perp)}^\infty
\exp\left\{
c_1t^{\frac{p}{p-1}}-(n-1)t
\right\}\,\textnormal{d}t.
\end{flalign*}
Since $
p/(p-1)>1,$
the last integral is infinite, contradicting \eqref{zz.1}.\end{proof}
We now switch to capacitary estimates and structure results. Note that some of the conclusions of the following Theorem might be obtained as corollaries of Theorems \ref{intre} and \ref{sobre2}, when the parameter $p$ is large enough to meet their assumptions. It is otherwise more general as it works under greater generality. 
\begin{theorem}\label{cap.t.1}
 Let $u\in W^{1,\infty}_{\loc}(\mathbb{R}^{n})$ be a weakly spacelike function such that $Du\in W^{1,p}_{\loc}(\er^n;\er^n)$, with $1\le p<n$. It holds that
 \begin{itemize}
 \item  $\mathcal{K}_{u}
=
\mathcal S_{u}\,\dot\cup\,\mathcal {L}_{u}.$
 \item Up to zero $p$-capacity sets, every point of $\mathcal {L}_{u}$ with $\snr{Du}<1$ is a discontinuity point of the precise representative of $\snr{Du}$.
\item If either 
$
\mathcal h_{u}^{m}\in W^{1,p}_{\loc}(\mathbb R^n)
$ for some $m >0$, 
or
$
\log \mathcal h_u\in W^{1,p}_{\loc}(\mathbb R^n) 
$ holds,  then 
\eqn{zerocap}
$$
\begin{cases}
\displaystyle
\ccap(\mathcal I_{u})=0 \quad  \mbox{and}  \quad  \dim_{\mathcal H}(\mathcal I_{u})\leq n-p
\\[1mm]
\displaystyle
\ccap(\mathcal S_{u})=0 \quad \mbox{and}  \quad  \dim_{\mathcal H}(\mathcal S_{u})\leq n-p. 
\end{cases}
$$
If, in addition,
\eqn{ndeg}
$$
\inf\left\{\diam(\ell):\ell\mbox{ is a light ray}\right\}>0,
$$
then
\eqn{nicchia5}
$$
\mathcal{K}_{u}=\mathcal S_{u},
\qquad
\mathcal {L}_{u}=\varnothing,
\qquad
\ccap(\mathcal{K}_{u})=0,
\qquad
\dim_{\mathcal H} \mathcal{K}_{u}\le n-p.
$$
\end{itemize}
\end{theorem}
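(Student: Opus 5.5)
The plan is to handle the three bullets separately. The first is purely geometric. The second and third rest on two standard fine-property facts for $W^{1,p}$ functions with $1\le p<n$:
\begin{itemize}
\item[(a)] the set of non-Lebesgue points of a $W^{1,p}_{\loc}$ function, including the points where its averages blow up, has zero $p$-capacity;
\item[(b)] $\ccap(E)=0$ implies $\mathcal H^{s}(E)=0$ for every $s>n-p$, hence $\dim_{\mathcal H}E\le n-p$.
\end{itemize}
We apply (a) to $Du$ and to $\mathcal h_u^m$ (or $\log\mathcal h_u$), and combine it with Lemma \ref{lemmaraggi}, which puts interior points of light segments (and Lebesgue-point endpoints) inside $\mathcal I_u$.

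For the decomposition $\mathcal K_u=\mathcal S_u\,\dot\cup\,\mathcal L_u$, the inclusion $\mathcal L_u\subset\mathcal K_u$ is immediate, and disjointness is \eqref{vuoto}. Conversely, take $z\in\mathcal K_u\setminus\mathcal S_u$, points $z_k\in\mathcal S_u$ with $z_k\to z$, and light rays $\ell_k\ni z_k$. Suppose $\diam(\ell_k)\ge\delta>0$ along a subsequence. Choose closed light subsegments $\overline{a_kb_k}\subset\ell_k$ containing $z_k$ with $|a_k-b_k|=\delta/2$, which are bounded since $z_k\to z$. By compactness, $a_k\to a$ and $b_k\to b$ with $|a-b|=\delta/2$ and $z\in\overline{ab}$. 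Continuity of $u$ passes $|u(a_k)-u(b_k)|=|a_k-b_k|$ to the limit, so $\overline{ab}$ is a light segment containing $z$. This contradicts $z\notin\mathcal S_u$. Hence $\diam(\ell_k)\to0$ and $\dist(z,\ell_k)\to0$, so $z\in\mathcal L_u$. Under \eqref{ndeg} the defining sequences of $\mathcal L_u$ cannot exist, so $\mathcal L_u=\varnothing$ and $\mathcal K_u=\mathcal S_u$; the last two claims in \eqref{nicchia5} then follow from \eqref{zerocap}.

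For the second bullet, discard the $\ccap$-null set of non-Lebesgue points of $Du$ given by (a). Let $z\in\mathcal L_u$ with $|Du(z)|<1$, and let $\ell_k$ be rays shrinking to $z$. Interior points $w_k$ of $\ell_k$ satisfy $w_k\to z$ and, by Lemma \ref{lemmaraggi}, $|Du(w_k)|=1$ for the precise representative. So the precise representative of $|Du|$ jumps at $z$.

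For the third bullet, first treat $\mathcal I_u$. Let $z\in\mathcal I_u$ be a Lebesgue point of $Du$. Since $|Du|\le1$ and $(|Du|)_{B_r(z)}\to1$, we get $(|Du|^2)_{B_r(z)}\to1$. The maps $s\mapsto(1-s)^{-m/2}$ and $s\mapsto-\tfrac12\log(1-s)$ are convex on $[0,1)$, so Jensen's inequality gives
\[
\mint_{B_r(z)}\mathcal h_u^m\dx\ \ge\ \bigl(1-(|Du|^2)_{B_r(z)}\bigr)^{-m/2}\to\infty ,
\]
and similarly for the averages of $\log\mathcal h_u$. Thus $z$ is a point where the averages of a $W^{1,p}_{\loc}$ function blow up, and (a) gives $\ccap(\mathcal I_u)=0$. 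Next, by Lemma \ref{lemmaraggi}, $\mathcal S_u\subset\mathcal I_u\cup N$, where $N$ is the non-Lebesgue set of $Du$: interior points lie in $\mathcal I_u$, and endpoints either lie in $\mathcal I_u$ or belong to $N$. Hence $\ccap(\mathcal S_u)=0$. The dimension bounds in \eqref{zerocap} then follow from (b).

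The main obstacle is the precise form of (a), namely that for $f\in W^{1,p}_{\loc}$ the set $\{\limsup_r (|f|)_{B_r(x)}=\infty\}$ is $\ccap$-null; this is a standard capacitary weak-type estimate for the maximal function. A secondary point is that $\mathcal I_u$ is defined through the scalar precise representative \eqref{esilimite}, while the argument uses Lebesgue points of the vector $Du$. These agree off the $\ccap$-null non-Lebesgue set, which is all that is needed.
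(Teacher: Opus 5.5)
Your proposal is correct and follows essentially the same route as the paper. The decomposition and the discontinuity claim are argued as in Proposition \ref{lem:structure-Kmu}, and the capacity bounds come, as in Proposition \ref{cap.t}, from $\mathcal I_u\cup\mathcal S_u\subset E_0\cup E_1$, where $E_0,E_1$ are the capacity-null non-Lebesgue sets of $Du$ and of the boost-type function. The only difference is cosmetic: you treat $\mathcal h_u^m$ directly via Jensen for $s\mapsto(1-s)^{-m/2}$, whereas the paper first reduces to $\log\mathcal h_u\in W^{1,p}_{\loc}$ by a Lipschitz composition and then invokes Lemma \ref{blow-up}.
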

The proof of Theorem \ref{cap.t.1} - put at the end of this section - will be obtained via a series of intermediate propositions, some of which are of their own interest and contain partial and additional results under weaker assumptions. 

\begin{remark}[Canonical exceptional set]\label{loc.rem}
\emph{In the following we shall extensively make use of the usual $p$-Capacity in $\er^n$ as described in \cite[Section 4]{eg15}, and therefore with $1\leq p<n$. The basic relevant property \cite[Theorem 4.19]{eg15} is of course that if a function $v\in W^{1,p}(A)$, where $A\subset \er^n$ is any open subset, then there exists an exceptional Borel set $E\equiv E(v)\subset A$ such that $\ccap(E)=0$ and $z$ is a Lebesgue point for $v$ whenever $z\not \in E$. When proving that a certain set $\mathcal U$ has zero $p$-Capacity by standard properties of the $p$-Capacity \cite[Theorem 4.15 (viii)]{eg15} it will be sufficient to prove that $\textnormal{Cap}_{p}(\mathcal U\cap B)=0$ for every ball $B\subset \er^n$. We shall furthermore denote $\mathcal {U}^B:= \mathcal U \cap B$ and $\mathcal {U}^{\bar B}:= \mathcal U \cap \bar B$. }
\end{remark}  
We start with a very simple lemma, whose proof is reported for completeness. 
\begin{lemma}\label{blow-up}
Let $u\in W^{1,\infty}_{\loc}(\mathbb R^n)$ be weakly
spacelike and assume that
$\log\mathcal h_u\in L^1_{\loc}(\mathbb R^n)$.
For every $z\in\mathbb R^n$, one has
\eqn{unalog}
$$
\lim_{r\to0}(\mathcal h_u^{-1})_{B_r(z)}=0
\quad\Longrightarrow\quad
\lim_{r\to0}(\log\mathcal h_u)_{B_r(z)}=\infty.
$$
\end{lemma}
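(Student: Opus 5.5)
The plan is to use Jensen's inequality for the convex function $s\mapsto -\log s$ on $(0,\infty)$, applied to the positive function $\mathcal h_u^{-1}$, which takes values in $[0,1]$.

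First, note that $\log\mathcal h_u\in L^1_{\loc}(\mathbb R^n)$ forces $\mathcal h_u<\infty$ almost everywhere. In view of \eqref{light}, this means $|Du|<1$ a.e., so $\mathcal h_u^{-1}=\sqrt{1-|Du|^2}\in(0,1]$ a.e. Moreover $\log\mathcal h_u\ge 0$, so its averages over balls are well defined and nonnegative. Also, $\mathcal h_u^{-1}$ is bounded and hence locally integrable, and $(\mathcal h_u^{-1})_{B_r(z)}>0$ for every $r>0$.

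Next, fix $z$ and $r>0$ and apply Jensen's inequality on $B_r(z)$ with the convex function $-\log$:
$$
(\log\mathcal h_u)_{B_r(z)}=\mint_{B_r(z)}\left(-\log \mathcal h_u^{-1}\right)\dx\ \ge\ -\log\left((\mathcal h_u^{-1})_{B_r(z)}\right).
$$
This is legitimate because $-\log \mathcal h_u^{-1}=\log\mathcal h_u$ is integrable on $B_r(z)$ and $\mathcal h_u^{-1}$ is bounded. If the hypothesis of \eqref{unalog} holds, then $(\mathcal h_u^{-1})_{B_r(z)}\to 0$, so the right-hand side tends to $+\infty$ as $r\to 0$. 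The left-hand side is therefore forced to diverge, which is the claim.

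The only delicate point is the justification of Jensen's inequality, namely that the function is positive a.e. and that the integrals are finite. Both follow from the local integrability of $\log\mathcal h_u$ and from weak spacelikeness, so I do not expect any real obstacle.
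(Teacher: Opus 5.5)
Your proposal is correct and follows the paper's proof: both note that $\log\mathcal h_u\in L^1_{\loc}$ forces $0<\mathcal h_u^{-1}\le 1$ a.e. Both then apply Jensen's inequality to the convex function $t\mapsto -\log t$, obtaining $(\log\mathcal h_u)_{B_r(z)}\ge -\log((\mathcal h_u^{-1})_{B_r(z)})\to+\infty$.
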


\begin{proof}
The integrability assumption implies that
$0<\mathcal h_u^{-1}\le1$ almost everywhere.
By Jensen's inequality applied to the convex function
$t\mapsto-\log t$,
\[
(\log\mathcal h_u)_{B_r(z)}
=
(-\log\mathcal h_u^{-1})_{B_r(z)}
\ge
-\log\bigl((\mathcal h_u^{-1})_{B_r(z)}\bigr).
\]
The right-hand side tends to $+\infty$ as $r\to0$,
which proves the assertion. 
\end{proof}
\begin{proposition}[$p$-capacity of $\mathcal I_u$ and $\mathcal S_u$]
\label{cap.t}
Let $1\le p<n$ and let
$u\in W^{1,\infty}_{\loc}(\mathbb R^n)$ be weakly spacelike.
Assume that
$
Du\in W^{1,p}_{\loc}(\mathbb R^n;\mathbb R^n)$ and $
\log\mathcal h_u\in W^{1,p}_{\loc}(\mathbb R^n).
$
Then \eqref{zerocap} holds.
In particular, the conclusion holds if the assumption on
$\log\mathcal h_u$ is replaced by
$\mathcal h_u^m\in W^{1,p}_{\loc}(\mathbb R^n)$
for some $m>0$.
\end{proposition}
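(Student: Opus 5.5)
The plan is to reduce everything to the canonical exceptional sets of two Sobolev functions, following Remark \ref{loc.rem}. By that remark it suffices to work in a fixed ball $B$ and prove $\ccap(\mathcal I_u^B)=0$ and $\ccap(\mathcal S_u^B)=0$. The Hausdorff dimension bounds then follow from the classical fact that sets of zero $p$-capacity have $\mathcal H^{s}$-measure zero for every $s>n-p$ (see \cite[Section 4]{eg15}).

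Since $|Du|\le 1$ and $Du\in W^{1,p}(2B)$, the scalar function $|Du|$ also belongs to $W^{1,p}(2B)$, and so does $\log\mathcal h_u$. Let $E_1$ be the exceptional set of $|Du|$ and $E_2$ the exceptional set of $\log\mathcal h_u$, both of zero $p$-capacity. Outside $E_1\cup E_2$, every point $z$ is a Lebesgue point of both functions, so $\lim_{r\to0}(\log\mathcal h_u)_{B_r(z)}$ exists and is finite.

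\textbf{The set $\mathcal I_u$.} Fix $z\in\mathcal I_u^B\setminus E_1$. Then $(|Du|)_{B_r(z)}\to1$, and $z$ is a Lebesgue point of $|Du|$, so $|Du|\to1$ in $L^1$-average on $B_r(z)$. Since $1-|Du|^2\le 2(1-|Du|)$, this gives $(\mathcal h_u^{-1})_{B_r(z)}\to0$, because $\mathcal h_u^{-1}=\sqrt{1-|Du|^2}$. Lemma \ref{blow-up} then forces $(\log\mathcal h_u)_{B_r(z)}\to\infty$, hence $z\in E_2$. Therefore $\mathcal I_u^B\subset E_1\cup E_2$, which has zero capacity.

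\textbf{The set $\mathcal S_u$.} By Lemma \ref{lemmaraggi}, interior points of light segments are Lebesgue points of $Du$ with $|Du|=1$. Hence $\mathcal S_{\texttt{i},u}\subset\mathcal I_u$. Each endpoint either is a Lebesgue point of $Du$, in which case it lies in $\mathcal I_u$ by \eqref{aff.3}$_2$, or it lies in the exceptional set of $Du$, which has zero capacity. Thus $\mathcal S_u^B$ is contained in $\mathcal I_u$ together with a zero-capacity set, and the first part gives $\ccap(\mathcal S_u^B)=0$.

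\textbf{The variant with $\mathcal h_u^m$.} If instead $\mathcal h_u^m\in W^{1,p}_{\loc}$, I would run the same argument with $\mathcal h_u^m$ in place of $\log\mathcal h_u$. By Jensen's inequality, $(\mathcal h_u^m)_{B_r(z)}\ge \bigl((\mathcal h_u^{-1})_{B_r(z)}\bigr)^{-m}\to\infty$. This is incompatible with $z$ being a Lebesgue point of $\mathcal h_u^m$ with a finite value. Alternatively, one can note that $\log\mathcal h_u=\frac1m\log(\mathcal h_u^m)$ with $\mathcal h_u^m\ge1$, so $\log\mathcal h_u\in W^{1,p}_{\loc}$ by the chain rule, since $\log$ is $1$-Lipschitz on $[1,\infty)$.

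\textbf{Main obstacle.} The delicate step is showing that $z\in\mathcal I_u$ with $z\notin E_1$ really gives $(\mathcal h_u^{-1})_{B_r(z)}\to0$. Here one must use the precise-representative convention \eqref{esilimite} and the Lebesgue-point property of $|Du|$ itself, not of $Du$, before invoking Lemma \ref{blow-up}.
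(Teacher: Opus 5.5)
Your argument is correct and is essentially the paper's proof. You discard the zero-capacity sets of non-Lebesgue points, use Lemma \ref{blow-up} to show that points of $\mathcal I_u$ cannot be Lebesgue points of $\log\mathcal h_u$, use Lemma \ref{lemmaraggi} to put the Lebesgue points of $Du$ on light segments into $\mathcal I_u$, and reduce the $\mathcal h_u^m$ variant to the logarithmic case by the chain rule. The only deviation is that you use $1-|Du|^2\le2(1-|Du|)$ (plus Jensen) with the exceptional set of $|Du|$, where the paper uses H\"older continuity of $t\mapsto\sqrt{1-t^2}$ with the exceptional set of $Du$; this is harmless. In fact, since $0\le|Du|\le1$, the convergence $(|Du|)_{B_r(z)}\to1$ built into \eqref{esilimite} already gives $(\mathcal h_u^{-1})_{B_r(z)}\to0$. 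Hence $\mathcal I_u^B\subset E_2$, and removing $E_1$ (your ``main obstacle'') is not actually needed.
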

\begin{proof}
It suffices to prove the assertions about capacity.
By Remark \ref{loc.rem}, fix an arbitrary ball
$B\subset\mathbb R^n$ and prove that
$
\ccap(\mathcal I_u^B)=\ccap(\mathcal S_u^B)=0.
$ Let $E_0:=E_0(Du)$ and $E_1:=E_1(\log\mathcal h_u)$
be the exceptional sets of non-Lebesgue points from
Remark \ref{loc.rem}. Then
$
\ccap(E_0^B)=\ccap(E_1^B)=0.
$
If $z\in\mathcal I_u^B\setminus E_0^B$, then $z$ is
a Lebesgue point of $Du$, hence also of $|Du|$, and
$|Du(z)|=1$.
Since $t\mapsto\sqrt{1-t^2}$ is H\"older continuous
on $[0,1]$, it follows that
\[
\lim_{r\to0}(\mathcal h_u^{-1})_{B_r(z)}
=\sqrt{1-|Du(z)|^2}=0.
\]
By \eqref{unalog} we infer 
$
\lim_{r\to0}(\log\mathcal h_u)_{B_r(z)}=\infty,
$
so $z\in E_1^B$. Therefore, $
\mathcal I_u^B\subset E_0^B\cup E_1^B.
$
By \eqref{aff.3}$_{1,2}$, every point of a light segment
which is a Lebesgue point of $Du$ satisfies $|Du|=1$,
including the endpoints. Hence
$
\mathcal S_u^B\setminus E_0^B\subset\mathcal I_u^B,
$
and consequently
$
\mathcal I_u^B\cup\mathcal S_u^B
\subset E_0^B\cup E_1^B.
$
Monotonicity and subadditivity of the $p$-capacity give
the desired conclusion on $B$.
Since $B$ is arbitrary, Remark \ref{loc.rem}
yields \eqref{zerocap}. Finally, if
$\mathcal h_u^m\in W^{1,p}_{\loc}(\mathbb R^n)$
for some $m>0$, then $\mathcal h_u^m\ge1$ almost everywhere.
Composition with the Lipschitz function
$t\mapsto m^{-1}\log\max\{1,t\}$ gives
$\log\mathcal h_u\in W^{1,p}_{\loc}(\mathbb R^n)$,
so the preceding argument applies.
\end{proof}
\begin{proposition}[Structure of $\mathcal K_u$]
\label{lem:structure-Kmu}
Let
$
u\in W^{1,\infty}_{\loc}(\mathbb R^n)
$
be weakly spacelike. Then
\eqn{structure.K}
$$
\mathcal K_u
=
\mathcal S_u\,\dot\cup\,\mathcal L_u.
$$
In particular, if $1\le p<n$ and
$
\ccap(\mathcal S_u)=0,
$
then
$
\ccap\left(
\mathcal K_u\mathbin{\triangle}\mathcal L_u
\right)
=0.
$
Assume, in addition, that
$
Du\in W^{1,p}_{\loc}(\mathbb R^n;\mathbb R^n),
$
and let $E_0=E_0(Du)$ be the exceptional set from
Remark \ref{loc.rem}. Every point
$
z\in\mathcal L_u\setminus E_0
$
such that
$
|Du(z)|<1
$
is a discontinuity point of the precise representative of $|Du|$.
\end{proposition}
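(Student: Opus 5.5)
The plan is to prove the three assertions of Proposition \ref{lem:structure-Kmu} in order: the set identity \eqref{structure.K}, the capacitary consequence, and the discontinuity property on $\mathcal L_u\setminus E_0$.

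\textbf{The decomposition \eqref{structure.K}.} Disjointness is \eqref{vuoto}, and $\mathcal S_u\subset\mathcal K_u$ holds by definition. For $\mathcal L_u\subset\mathcal K_u$, take $z\in\mathcal L_u$ with rays $\ell_k$ satisfying $\dist(z,\ell_k)\to0$, and pick points $p_k\in\ell_k\subset\mathcal S_u$ with $p_k\to z$. Then $z\in\overline{\mathcal S_u}$, and $z\in\Omega$ by the definition of $\mathcal L_u$.

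The converse inclusion is the real content. Fix $z\in\mathcal K_u\setminus\mathcal S_u$ and choose $p_k\in\mathcal S_u$ with $p_k\to z$. Let $\ell_k$ be the light ray through $p_k$. If $\diam(\ell_k)\to0$ along a subsequence, then $z\in\mathcal L_u$. Otherwise $\diam(\ell_k)\ge 2\eta>0$ for all large $k$. Since $z$ is interior, I can work in a fixed ball $B_{R}(z)\Subset\Omega$. Inside each $\ell_k$ I choose a closed light segment $\overline{a_kb_k}$ that contains $p_k$, or lies within distance $o(1)$ of it, and satisfies $|a_k-b_k|\ge\min\{\eta,R/4\}$ and $\overline{a_kb_k}\subset \overline{B_{R/2}(z)}$. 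This is possible because $\ell_k$ is a convex subset of a line passing near $z$, with length at least $2\eta$. By compactness, $a_k\to a$ and $b_k\to b$ with $a\neq b$, and $z\in\overline{ab}$. Continuity of $u$ then gives $|u(a)-u(b)|=\lim_k|u(a_k)-u(b_k)|=\lim_k|a_k-b_k|=|a-b|$. So $\overline{ab}$ is a light segment containing $z$, hence $z\in\mathcal S_u$, a contradiction. The only delicate point is choosing the subsegments so that they stay inside the compact region while keeping their length bounded below; I would do this by truncating each $\ell_k$ to its intersection with $\overline{B_{R/2}(z)}$, which is still long once $p_k$ is close to $z$.

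\textbf{The capacitary consequence.} By \eqref{structure.K}, $\mathcal K_u\triangle\mathcal L_u=\mathcal S_u$. The claim $\ccap(\mathcal K_u\triangle\mathcal L_u)=0$ is therefore exactly the hypothesis $\ccap(\mathcal S_u)=0$.

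\textbf{The discontinuity property.} Let $z\in\mathcal L_u\setminus E_0$ with $|Du(z)|<1$, and take rays $\ell_k$ with $\dist(z,\ell_k)\to0$ and $\diam(\ell_k)\to0$. I pick interior points $q_k$ of $\ell_k$ with $q_k\to z$. By Lemma \ref{lemmaraggi}, each $q_k$ is a Lebesgue point of $Du$ with $|Du(q_k)|=1$. At such points the precise representative \eqref{esilimite} of $|Du|$ equals $|Du(q_k)|=1$, by the remark following \eqref{esilimite}. At $z$, which is a Lebesgue point since $z\notin E_0$, the precise representative equals $|Du(z)|<1$. Hence $\lim_k|Du|(q_k)=1\neq|Du|(z)$, so the precise representative of $|Du|$ is discontinuous at $z$. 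This step is routine; the main obstacle is the compactness argument in the decomposition step.
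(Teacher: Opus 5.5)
Your proposal is correct and follows the paper's proof essentially step by step. For the converse inclusion you argue by contradiction and extract, via compactness and continuity of $u$, a nondegenerate limit light segment through $z$; the paper does the same, simply picking $w_k\in\ell_k$ with $|w_k-z_k|=\rho$ instead of truncating $\ell_k$ to a ball. The capacity claim is read off from $\mathcal K_u\mathbin{\triangle}\mathcal L_u=\mathcal S_u$, and the discontinuity follows from relative interior points of the shrinking rays via Lemma \ref{lemmaraggi}, exactly as in the paper.
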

\begin{proof}
Since $\mathcal S_u\subset\mathcal K_u$ by definition and
$\mathcal S_u\cap\mathcal L_u=\varnothing$ by \eqref{vuoto},
it remains to prove the inclusions
$
\mathcal L_u\subset\mathcal K_u$ and 
$\mathcal K_u\setminus\mathcal S_u\subset\mathcal L_u.$ Let first $z\in\mathcal L_u$. There exist light rays $\ell_k$ such that
$
\dist(z,\ell_k)\to0$
and 
$\diam(\ell_k)\to0.
$
Choose $z_k\in\ell_k$ such that
$
|z-z_k|
\le
\dist(z,\ell_k)+1/k.
$
It follows
that $z_k\in\mathcal S_u$. Hence
$
z\in\overline{\mathcal S_u}=\mathcal K_u.
$
Conversely, let
$
z\in\mathcal K_u\setminus\mathcal S_u.
$
Choose $z_k\in\mathcal S_u$ such that $z_k\to z$, and let $\ell_k$ be
the light ray containing a light segment through $z_k$. Then
$\dist(z,\ell_k)\le|z-z_k|\to0.$
We claim that
$\diam(\ell_k)\to0.$
Otherwise, after passing to a subsequence, there exists $\rho>0$ such
that
$
\diam(\ell_k)\ge4\rho
$
for every $k$. Since $\ell_k$ is an interval of a straight line and
$z_k\in\ell_k$, we may choose $w_k\in\ell_k$ such that
$|w_k-z_k|=\rho.$
The segment $\overline{z_kw_k}$ is obviously a light segment. After passing to a
further subsequence,
$\rho^{-1}(w_k-z_k)\to \tx{e}$
for some $\tx{e}\in \er^n$, $|\tx{e}|=1$, and therefore
$w_k\to w:=z+\rho \tx{e}.$
By continuity of $u$,
$|u(w)-u(z)|
=|w-z|=\rho.$
Thus $\overline{zw}$ is a nondegenerate light segment, contradicting
$
z\notin\mathcal S_u.
$
Consequently,
$
\diam(\ell_k)\to0,
$
and hence $z\in\mathcal L_u$. This proves \eqref{structure.K}. Since
$
\mathcal K_u\mathbin{\triangle}\mathcal L_u=\mathcal S_u,
$
the capacity assertion follows immediately. Finally, let
$
z\in\mathcal L_u\setminus E_0
$
with $|Du(z)|<1$, and let $\ell_k$ be as in
\eqref{language3}. We can choose
$
\tilde z_k\in\mathcal S_{\texttt{i},u}\cap\ell_k
$
such that
$
|\tilde z_k-z|
\le
\dist(z,\ell_k)+\diam(\ell_k)+1/k.
$
Then $\tilde z_k\to z$, and Lemma \ref{lemmaraggi} gives
$
|Du(\tilde z_k)|=1.
$
Since $z\notin E_0$ and $|Du(z)|<1$, the precise representative of
$|Du|$ is discontinuous at $z$.
\end{proof}

\begin{proposition}[Nondegenerate light-ray lengths]
\label{lem:nondegenerate-Kmu}
Let
$
u\in W^{1,\infty}_{\loc}(\mathbb R^n)
$
be weakly spacelike and assume that \eqref{ndeg} holds. Then
$
\mathcal K_u=\mathcal S_u
$ and $
\mathcal L_u=\varnothing.
$
In particular, if $1\le p<n$ and
$
\ccap(\mathcal S_u)=0,
$
then
$
\ccap(\mathcal K_u)=0
$ and 
$\dim_{\mathcal H}\mathcal K_u\le n-p.$
\end{proposition}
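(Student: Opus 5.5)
The plan is to argue by contradiction that $\mathcal L_u=\varnothing$. Once this is done, the decomposition \eqref{structure.K} of Proposition \ref{lem:structure-Kmu} gives $\mathcal K_u=\mathcal S_u\,\dot\cup\,\varnothing=\mathcal S_u$ at once. Set $\ell_0:=\inf\{\diam(\ell):\ell\ \mbox{is a light ray}\}$, which is positive by \eqref{ndeg}. If $z\in\mathcal L_u$ existed, then by the definition \eqref{language3} there would be light rays $\ell_k$ with $\diam(\ell_k)\to0$. For $k$ large this gives $\diam(\ell_k)<\ell_0$, contradicting \eqref{ndeg}. Hence $\mathcal L_u=\varnothing$.

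An alternative, which avoids quoting the structure result, is to redo its compactness argument directly. Take $z\in\mathcal K_u$ and points $z_k\in\mathcal S_u$ with $z_k\to z$. Each $z_k$ lies on a light ray of diameter at least $\ell_0$. The ray through $z_k$ is a straight interval containing $z_k$, so we can pick $w_k$ on it with $|w_k-z_k|=\ell_0/4$. We then extract a subsequence along which $(w_k-z_k)/|w_k-z_k|\to\tx e$, so that $w_k\to w:=z+(\ell_0/4)\tx e$. By the continuity of $u$, $|u(w)-u(z)|=|w-z|=\ell_0/4$. Thus $\overline{zw}$ is a light segment, and $z\in\mathcal S_u$.

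One small point needs checking here: the limit segment $\overline{zw}$ must lie in the domain. This is automatic, since the paper works in $\mathbb R^n$ (the regularity of $u$ is on $\mathbb R^n$). Otherwise one would simply shrink $\rho=\ell_0/4$ so that everything stays inside a compact subset.

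For the capacitary part, assume $\ccap(\mathcal S_u)=0$ with $1\le p<n$. Then $\ccap(\mathcal K_u)=\ccap(\mathcal S_u)=0$ is immediate from $\mathcal K_u=\mathcal S_u$. For the dimension bound we use the standard fact that sets of zero $p$-capacity have $\mathcal H^s$-measure zero for every $s>n-p$ (see \cite[Section 4]{eg15}). This gives $\dim_{\mathcal H}\mathcal K_u\le n-p$.

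I do not expect any step to be a real obstacle. The only point needing care is this last implication from capacity to Hausdorff dimension, which holds with the exponent $s>n-p$. For $p=1$ the corresponding statement is with $\mathcal H^{n-1}$, and the bound still holds as written.
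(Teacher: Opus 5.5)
Your main argument is exactly the paper's: \eqref{ndeg} together with the definition \eqref{language3} forces $\mathcal L_u=\varnothing$, and Proposition \ref{lem:structure-Kmu} then gives $\mathcal K_u=\mathcal S_u$. The capacity and dimension assertions follow at once; your appeal to the standard comparison between $p$-capacity and Hausdorff measure just spells out what the paper calls immediate. Your alternative compactness argument is simply the proof of Proposition \ref{lem:structure-Kmu} specialized to this case, and it is fine too, provided you replace $\ell_0/4$ by $\min\{\ell_0,1\}/4$ to cover the case $\ell_0=+\infty$.
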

\begin{proof}
Condition \eqref{ndeg} and the definition in \rif{language3} immediately
give
$\mathcal L_u=\varnothing.$
The identity
$\mathcal K_u=\mathcal S_u$
then follows from Proposition \ref{lem:structure-Kmu}. The remaining
assertions are immediate.
\end{proof}\begin{proof}[Proof of Theorem \ref{cap.t.1}] The proof follows by combining the above propositions. The first two assertions follow from Proposition
\ref{lem:structure-Kmu}, which gives the disjoint decomposition
$
\mathcal{K}_{u}
=
\mathcal S_{u}\,\dot\cup\,\mathcal {L}_{u}
$
and the stated discontinuity property outside an exceptional set of
zero \(p\)-capacity. Finally, Propositions \ref{cap.t} and \ref{lem:nondegenerate-Kmu} imply the third point. \end{proof}


\section{Intrinsic potential estimates and potentials based on Lorentzian balls}\label{potest}
Here we build on the Michael \& Simon \cite{ms73} type monotonicity/mean value estimates in \cite{bs82}, see also \cite{bi19,bi23}. For the rest of the section we recommend the reader to keep in mind the notation and the definitions given in Section \ref{recapsec}, and the abbreviations in \rif{abbreviation}. 

\begin{proposition}\label{p31}
Let $u\in \mathbb{X}(\er^n)\cap C^{\infty}(\mathbb{R}^{n})$ be a classical solution of \eqref{bi} with $\mu\in C^{\infty}_{0}(\mathbb{R}^{n})$. There exists $\gamma\equiv \gamma(n)\in (0,1/n)$ such that the intrinsic potential estimate
\begin{flalign}\label{0.10}
\mathcal h_u(x_0)^{-\gamma}
&\ge \mint_{K_r(x_0)}\mathcal h_u^{-(\gamma+1)}\dx
+\frac{c_n}{r^{n-2}}\int_{K_{r/2}(x_0)}\snr{D^2u}^2\dx
\nonumber\\
&\quad-2\mathbf{PL}_{2,u}^{1}(\mu^2;x_0,r)-\mathbf{PL}_{1,u}^{1}(\mu;x_0,r)
\end{flalign}
holds for all $x_0\in\mathbb R^n$ and every $r>0$,
with a constant $c_n>0$ depending only on $n$.
Moreover, the following choice is possible:
\eqn{sceltagamma}
$$
\gamma = \frac{1}{4n}\,.
$$
\end{proposition}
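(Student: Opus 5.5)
We integrate the monotonicity inequality of Proposition \ref{prop.mon} in $s$ from $0$ to $r$. By Remark \ref{rem0} we may assume $x_0=0$ and $u(0)=0$. Since $u\in\mathbb X(\er^n)\cap C^\infty$ is a classical solution, it is strictly spacelike. By Lemma \ref{balls}, each $K_s$ is bounded and contains $B_s$, so all the quantities below are finite. Integrating \eqref{monotonia} over $(\tau,r)$ gives
$$
\mathbb A(r)+\mathbb B(r)+c_\star\int_\tau^r\mathbb E(s)\ds\le \mathbb A(\tau)+\mathbb B(\tau)+\int_\tau^r(\mathbb E_\mu(s)-\mathbb F_\mu(s))\ds .
$$
We then let $\tau\to0$.

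\emph{The limit $\tau\to0$.} Lemma \ref{limlem} applied with $f=\mathcal h_u^{-\gamma}$ (and $p=1$) gives $\mathbb A(\tau)\to\omega_n\mathcal h_u(0)^{-\gamma}$. For $\mathbb B(\tau)$ we use smoothness at the origin. Writing $\langle\Phi,\vv\rangle_{\mathbb L^{n+1}}=-\mathcal h_u(u-\langle Du,x\rangle)$, Taylor expansion gives $|\langle\Phi,\vv\rangle|\lesssim|x|^2$. Moreover $\ell\approx|x|$ near $0$ because $|Du(0)|<1$. Hence $\mathbb B(\tau)\lesssim\int_{B_{c\tau}}|x|^{2-n}\dx\to0$. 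On the left-hand side we drop $\mathbb B(r)\ge0$. We then rewrite $\mathbb A(r)=r^{-n}\int_{K_r}\mathcal h_u^{-\gamma-1}\dx$ using $\d A=\mathcal h_u^{-1}\dx$; after dividing by $\omega_n$ this is exactly the average term in \eqref{0.10}, up to the normalization $|K_r|\ge\omega_nr^n$, which must be matched to the averaged integral as defined in the paper. For the Hessian term, we restrict $\mathbb E(s)$ to $s\in(r/2,r)$ and $L_{r/2}$. There $s^2-\ell^2\ge s^2-r^2/4\gtrsim r^2$ on part of the range, and $\mathcal h_u^{2-\gamma}\,\d A=\mathcal h_u^{1-\gamma}\dx\ge\dx$. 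Integrating in $s$ then yields $c\, r^{2-n}\int_{K_{r/2}}|D^2u|^2\dx$.

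\emph{The charge terms.} For $\mathbb F_\mu$, note that $|\langle\Phi,\vv\rangle|=\mathcal h_u|u-\langle Du,x\rangle|$. Since $\mathcal h_u^{-\gamma}\mathcal h_u\,\d A=\mathcal h_u^{-\gamma}\dx\le \dx$, we expect the pointwise bound $|u-\langle Du,x\rangle|\le\ell$ or a comparable one. Finding this identity or inequality is the delicate point: we would exploit the Lorentzian identity $|\langle\Phi,\vv\rangle|^2=\ell^2+|\Phi^\top|^2_g$ to compare with $\mathbb B$ and with $s$. This gives $|\mathbb F_\mu(s)|\le s^{-n}\int_{K_s}|\mu|\dx$, and after integrating, $\int_0^r|\mathbb F_\mu|\ds\le\omega_n\mathbf{PL}^1_{1,u}(\mu;0,r)$. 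For $\mathbb E_\mu$, the first piece is bounded by $s^{1-n}\int_{K_s}\mu^2\dx$, which produces a multiple of $\mathbf{PL}^1_{2,u}(\mu^2)$. The second piece, $\gamma\delta_{n+1}(\mathcal h_u^{-\gamma-1}\mu)$, is handled by the integration by parts of Lemma \ref{ibp} with $v=\tfrac12(s^2-\ell^2)_+$. This moves the derivative onto $\ell$, using $|\delta_{n+1}\ell^2|\lesssim|\langle\Phi,\vv\rangle|\mathcal h_u$, and onto the curvature term $\tx H_u=\mu$. The outcome is terms of the form $\mu\langle\Phi,\vv\rangle$, which are absorbed as above, plus $\gamma\mathcal h_u^{-\gamma}\mu^2(s^2-\ell^2)$, which is absorbed into the $\mu^2$ potential. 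With $\gamma=1/(4n)$, the constants $\tfrac14+\gamma\le$ an admissible factor should yield the coefficient $2$ in front of $\mathbf{PL}_{2,u}^1(\mu^2)$.

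\emph{Main obstacle.} The main obstacle is tracking the constants so that exactly the coefficients $2$ and $1$ in \eqref{0.10} appear, and absorbing the $\delta_{n+1}$-term without losing a $\mathbb B$-type term. I would first attempt to absorb any leftover $|\langle\Phi,\vv\rangle|^2\ell^{-n-2}$ contributions by Young's inequality into the derivative of $\mathbb B$, which has been kept on the left-hand side. Only if that fails would I sacrifice constants and accept a $c_n$-dependent factor.
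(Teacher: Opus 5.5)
Your routine steps are fine: the translation, $\mathbb A(\tau)\to\omega_n\mathcal h_u(0)^{-\gamma}$, $\mathbb B(\tau)\to0$, the Hessian term obtained by restricting to $s\in(3r/4,r)$ and $L_{r/2}$, and the direction of the normalization $|K_r|\ge\omega_nr^n$. The charge terms, however, rest on two incorrect estimates.

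First, the identity you quote has the wrong sign. Writing $\Phi=\Phi^\top-\langle\Phi,\vv\rangle_{\mathbb L^{n+1}}\vv$ and using $\langle\vv,\vv\rangle_{\mathbb L^{n+1}}=-1$ gives $\ell^2=\langle\Phi,\Phi\rangle_{\mathbb L^{n+1}}=|\Phi^\top|_g^2-\langle\Phi,\vv\rangle_{\mathbb L^{n+1}}^2$. So $\langle\Phi,\vv\rangle$ is bounded by $|\Phi^\top|_g$, not by $\ell$. The pointwise bound $|u-\langle Du,x\rangle|\lesssim\ell$ on $K_s$, which you need for $|\mathbb F_\mu(s)|\le s^{-n}\int_{K_s}|\mu|\dx$, is false in general. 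Take a profile along $\tx{e}_1$ that rises almost null from $0$ to $x=L\tx{e}_1$, with $u(x)=L-\delta$, and flattens just before $x$ so that $Du(x)\approx0$. Then $\ell(x)\approx\sqrt{2L\delta}$, while $\mathcal h_u(x)^{-\gamma}|u(x)-\langle Du(x),x\rangle|\approx L$, and $\mu$ is nonzero in the flattening region.

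Second, $|\delta_{n+1}\ell^2|\lesssim\mathcal h_u|\langle\Phi,\vv\rangle|$ is also false. A direct computation gives $\delta_{n+1}\ell^2=2\mathcal h_u\langle\Phi,\vv\rangle_{\mathbb L^{n+1}}+2u$, which is nonzero for linear $u$, where $\langle\Phi,\vv\rangle=0$. The correct estimate is $|\mathcal h_u^{-1}\delta_{n+1}\ell|\le|\delta\ell|_{\mathbb L^{n+1}}=(1+\ell^{-2}\langle\Phi,\vv\rangle^2)^{1/2}\le1+\ell^{-1}|\langle\Phi,\vv\rangle|$. It is the ``$1$'' here, inside the integration-by-parts remainder of $\mathbb E_\mu$, that produces the $|\mu|$-potential, as $\gamma s^{-n}\int_{L_s}\mathcal h_u^{-\gamma}|\mu|\d A$. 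So your routing ($\mathbb F_\mu\to\mathbf{PL}^1_{1,u}(\mu)$, with the remainders $\mu\langle\Phi,\vv\rangle$ ``absorbed as above'') does not work.

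What is needed is the mechanism you list only as a fallback, made precise. Split every product $\mu\langle\Phi,\vv\rangle$, both in $\mathbb F_\mu$ and in the remainder of $\mathbb E_\mu$, by Young's inequality. This gives a $\mu^2$-term, which feeds $\mathbf{PL}^1_{2,u}(\mu^2)$ with total coefficient $(5+8\gamma)/8\le2$ in front of $s^{1-n}\int_{L_s}\mathcal h_u^{-\gamma}\mu^2\d A$. It also gives bulk terms such as $s^{-n-1}\int_{L_s}f\ell^{-2}\d A$ with $f:=\mathcal h_u^{-\gamma}\langle\Phi,\vv\rangle_{\mathbb L^{n+1}}^2$, plus an analogous $s^{-n}\int_{L_s}f\ell^{-3}\d A$.

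These bulk terms are not $\mathbb B$-densities. You convert them with the coarea formula of Lemma \ref{coa}, using $\ell=s$ on $\partial L_s$: $s^{-n-1}\int_{L_s}f\ell^{-2}\d A=-\frac1n\deris\big(s^{-n}\int_{L_s}f\ell^{-2}\d A\big)+\frac1n\deris\mathbb B(s)$. The resulting total coefficient of $\deris\mathbb B$ is $\frac1{2n}+\frac{\gamma}{2(n-1)}<1$, so it is absorbed by the $-\deris\mathbb B$ in \eqref{monotonia}. What remains is $-\deris\mathcal A(s)$ for some $\mathcal A\ge0$ with $\mathcal A(s)\to0$ as $s\to0$, by the same $O(|x|^2)$ bound you already use for $\mathbb B$.

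Only after this absorption can you integrate in $s$ and discard terms; in particular, you cannot drop $\mathbb B(r)$ at the outset as you do. No delicate bookkeeping of the coefficients $2$ and $1$ is needed afterwards: they are crude upper bounds using $\mathcal h_u^{-1}\le1$ and $\gamma\le1$.
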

\begin{proof}
As pointed out in Remark \ref{rem0}, we may assume that $x_0=0$ and
$u(0)=0$, namely $\Phi(0)=(0,u(0))=0$. Throughout the proof, we write
$L_s:=L_s^u(0)$, $K_s:=K_s^u(0)$, and
$\ell(\Phi(x))=\ell_u(x,0)$. Proposition \ref{prop.mon} gives
\eqn{diffy}
$$
\deris \mathbb  A(s) \leq - \deris\mathbb  B(s) - c_{\star}\mathbb  E(s) + \mathbb  E_\mu(s) - \mathbb  F_\mu(s)
$$
for all $s\in(0,r)$, with $c_\star=c_\star(n)>0$. We leave the term $c_{\star}\mathbb  E(s)$ as it is for the moment. In the following we shall use the basic relations
 \eqn{basic}
$$
\begin{cases}
\displaystyle
\ \snr{\delta \ell}_{\mathbb{L}^{n+1}}^{2}=1+\ell^{-2}\langle\vv,\Phi\rangle_{\mathbb{L}^{n+1}}^{2}\vspace{1.5mm}\\ \displaystyle
\ \snr{\mathcal{h}_{u}^{-1}\delta_{n+1}\ell}\le \snr{\delta \ell}_{\mathbb{L}^{n+1}},
\end{cases}
$$
for which we refer to \cite[(2.9) and (3.14)]{bi23}. Set \(\varphi_s:=(s^2-\ell^2)/2\). By Lemma \ref{ibp}\footnote{Strictly speaking, we apply Lemma \ref{ibp} to a smooth approximation of
\(\varphi_s\mathds 1_{L_s}\); no boundary term appears since
\(\varphi_s=0\) on \(\partial L_s\).}, with
\(w=\mathcal h_u^{-(\gamma+1)}\mu\), and since \(\tx H_u=\mu\), 
$$
\int_{L_s}\varphi_s\,\delta_{n+1}w\,dA
=
\int_{L_s}\varphi_s\mathcal h_u^{-\gamma}\mu^2\,dA
+
\int_{L_s}\ell\,\mathcal h_u^{-(\gamma+1)}\mu\,\delta_{n+1}\ell\,dA.
$$
Hence, using \(0\le \varphi_s\le s^2/2\) on \(L_s\),
$$
\mathbb E_\mu(s)
\le
\frac{1+4\gamma}{8s^{n-1}}
\int_{L_s}\mathcal h_u^{-\gamma}\mu^2\,dA
+
\frac{\gamma}{s^{n+1}}
\int_{L_s}
\ell\mathcal h_u^{-\gamma}|\mu|\,
|\mathcal h_u^{-1}\delta_{n+1}\ell|\,dA .
$$
Again using \rif{basic} we gain
$$
\mathbb E_\mu(s)
\le\frac{1+4\gamma}{8s^{n-1}}\int_{L_{s}}\mathcal{h}_{u}^{-\gamma}\mu^{2}\d A+\frac{\gamma}{s^{n}}\int_{L_{s}}\mathcal{h}_{u}^{-\gamma}\snr{\mu} \d A  +\frac{\gamma}{s^{n}}\int_{L_{s}}\mathcal{h}_{u}^{-\gamma}\snr{\mu}\ell^{-1}\snr{\langle \vv,\Phi\rangle_{\mathbb{L}^{n+1}}}\d A. 
$$
By Young's inequality we continue to bound
\begin{flalign}
\notag \mathbb E_\mu(s)
&\le\frac{1+4\gamma}{8s^{n-1}}\int_{L_{s}}\mathcal{h}_{u}^{-\gamma}\mu^{2}\d A+\frac{\gamma}{s^{n}}\int_{L_{s}}\mathcal{h}_{u}^{-\gamma}\snr{\mu}\d A\nonumber \\
\notag&\quad +\frac{\gamma}{2s^{n}}\int_{L_{s}}\mathcal{h}_{u}^{-\gamma}\mu^{2}\ell \d A+\frac{\gamma}{2s^{n}}\int_{L_{s}}\mathcal{h}_{u}^{-\gamma}\ell^{-3}\snr{\langle\Phi,\vv\rangle_{\mathbb{L}^{n+1}}}^{2}\d A\nonumber \\
&\le\frac{1+8\gamma}{8s^{n-1}}\int_{L_{s}}\mathcal{h}_{u}^{-\gamma}\mu^{2}\d A+\frac{\gamma}{s^{n}}\int_{L_{s}}\mathcal{h}_{u}^{-\gamma}\snr{\mu}\d A+\frac{\gamma}{2s^{n}}\int_{L_{s}}\mathcal{h}_{u}^{-\gamma}\ell^{-3}\snr{\langle\Phi,\vv\rangle_{\mathbb{L}^{n+1}}}^{2}\d A. \label{you1}
\end{flalign}
Similarly, again by Young's inequality, we find
\eqn{you2}
$$
\snr{\mathbb F_\mu(s)}\le\frac{1}{2s^{n-1}}\int_{L_{s}}\mathcal{h}_{u}^{-\gamma}\mu^{2}\d A+\frac{1}{2s^{n+1}}\int_{L_{s}}\mathcal{h}_{u}^{-\gamma}\ell^{-2}\snr{\langle\Phi,\vv\rangle_{\mathbb{L}^{n+1}}}^{2}\d A.
$$
The last two integrals appearing in \rif{you1}-\rif{you2} can be estimated using the coarea formula (Lemma \ref{coa}) as follows:
\begin{flalign*}
\frac{1}{s^{n}}\int_{L_{s}}\mathcal{h}_{u}^{-\gamma}\ell^{-3}\snr{\langle\Phi,\vv\rangle_{\mathbb{L}^{n+1}}}^{2}\d A&=-\frac{1}{n-1}\deris\left(s^{1-n}\int_{L_{s}}\mathcal{h}_{u}^{-\gamma}\ell^{-3}\snr{\langle\Phi,\vv\rangle_{\mathbb{L}^{n+1}}}^{2}\d A\right)\nonumber \\
&\quad +\frac{1}{n-1}\int_{\partial L_{s}}\mathcal{h}_{u}^{-\gamma}\ell^{-2-n}\snr{\langle\Phi,\vv\rangle_{\mathbb{L}^{n+1}}}^{2}\snr{\delta \ell}_{\mathbb{L}^{n+1}}^{-1}\d\sigma\nonumber \\
&=-\frac{1}{n-1}\deris\left(s^{1-n}\int_{L_{s}}\mathcal{h}_{u}^{-\gamma}\ell^{-3}\snr{\langle\Phi,\vv\rangle_{\mathbb{L}^{n+1}}}^{2}\d A\right)\nonumber \\
&\quad +\frac{1}{n-1}\deris\left(\int_{L_{s}}\mathcal{h}_{u}^{-\gamma}\ell^{-2-n}\snr{\langle\Phi,\vv\rangle_{\mathbb{L}^{n+1}}}^{2}\d A\right),
\end{flalign*}
and 
\begin{flalign*}
\frac{1}{s^{n+1}}\int_{L_{s}}\mathcal{h}_{u}^{-\gamma}\ell^{-2}\snr{\langle\Phi,\vv\rangle_{\mathbb{L}^{n+1}}}^{2}\d A&= -\frac{1}{n}\deris\left(s^{-n}\int_{L_{s}}\mathcal{h}_{u}^{-\gamma}\ell^{-2}\snr{\langle\Phi,\vv\rangle_{\mathbb{L}^{n+1}}}^{2}\d A\right)\nonumber \\
&\quad +\frac{1}{n}\int_{\partial L_{s}}\mathcal{h}_{u}^{-\gamma}\ell^{-2-n}\snr{\langle\Phi,\vv\rangle_{\mathbb{L}^{n+1}}}^{2}\snr{\delta \ell}_{\mathbb{L}^{n+1}}^{-1}\d\sigma\nonumber \\
&=-\frac{1}{n}\deris\left(s^{-n}\int_{L_{s}}\mathcal{h}_{u}^{-\gamma}\ell^{-2}\snr{\langle\Phi,\vv\rangle_{\mathbb{L}^{n+1}}}^{2}\d A\right)\nonumber \\
&\quad +\frac{1}{n}\deris\left(\int_{ L_{s}}\mathcal{h}_{u}^{-\gamma}\ell^{-2-n}\snr{\langle\Phi,\vv\rangle_{\mathbb{L}^{n+1}}}^{2}\d A\right).
\end{flalign*}
Combining the last four displays with \rif{diffy} we obtain
\begin{flalign}\label{0.4}
\deris \mathbb  A(s)&\le - c_{\star} \mathbb  E(s)-\left(1-\frac{\gamma}{2(n-1)}-\frac{1}{2n}\right)\deris \mathbb  B(s)\nonumber \\
&\quad -\deris\left(\int_{L_{s}}\left(\frac{\gamma \mathcal{h}_{u}^{-\gamma}\ell^{-3}}{2(n-1)s^{n-1}}+\frac{ \mathcal{h}_{u}^{-\gamma}\ell^{-2}}{2ns^{n}}\right)\snr{\langle\Phi,\vv\rangle_{\mathbb{L}^{n+1}}}^{2}\d A\right)\nonumber \\
&\quad +\left(\frac{5+8\gamma}{8s^{n-1}}\right)\int_{L_{s}}\mathcal{h}_{u}^{-\gamma}\mu^{2}\d A+\frac{\gamma}{s^{n}}\int_{L_{s}}\mathcal{h}_{u}^{-\gamma}\snr{\mu}\d A.
\end{flalign}
Next, denote
\eqn{recapp}
$$
\mathcal{A}(s):=\int_{L_{s}}\left(\frac{\gamma \ell^{-1}}{2(n-1)s^{n-1}}+\frac{1}{2ns^{n}}+\left(1-\frac{n(\gamma+1)-1}{2n(n-1)}\right)\ell^{-n}\right)\frac{\snr{\langle\Phi,\vv\rangle_{\mathbb{L}^{n+1}}}^{2}}{\ell^{2}\mathcal{h}_{u}^{\gamma}}\d A \geq 0.
$$
With this notation, \eqref{0.4} implies
\eqn{0.9}
$$
 \deris \mathbb A(s)\le -c_{\star} \mathbb  E(s) -\deris \mathcal{A}(s)
+\frac{2\nr{\mathcal{h}_{u}^{-1}}_{L^{\infty}(L_{s})}^{\gamma}}{s^{n-1}}\int_{L_{s}}\mu^{2}\d A+\frac{\nr{\mathcal{h}_{u}^{-1}}_{L^{\infty}(L_{s})}^{\gamma}}{s^{n}}\int_{L_{s}}\snr{\mu}\d A.
$$
Since $u\in C^{\infty}(\mathbb R^n)$ is strictly spacelike, by Lemma \ref{limlem} we find 
\eqn{lim1}
$$\lim_{s\to0} \mathbb A(s)= \omega_{n}\mathcal h_u(0)^{-\gamma}$$
and, moreover, 
 $$\langle \Phi,\vv\rangle_{\mathbb{L}^{n+1}}=-\mathcal{h}_{u}\left(u(x)-\langle x,Du(x)\rangle\right)=O(\snr{x}^{2}), \quad \ell(x)\approx \snr{x}$$
 for $|x|$ small enough. 
 It follows that 
 $$
 \lim_{s\to 0}\frac{1}{s^n}\int_{L_{s}}\snr{\langle\Phi,\vv\rangle_{\mathbb{L}^{n+1}}}^{2}\ell^{-2}\d A=0
 $$
and 
$$
\lim_{s\to 0}\int_{L_{s}}\snr{\langle\Phi,\vv\rangle_{\mathbb{L}^{n+1}}}^{2}\ell^{-n-2}\d A=\lim_{s\to 0}\int_{L_{s}}\snr{\langle\Phi,\vv\rangle_{\mathbb{L}^{n+1}}}^{2}s^{-n+1}\ell^{-3}\d A=0.
$$
Recalling \rif{recapp} we conclude with 
\eqn{lim2}
$$
\lim_{s\to 0} \mathcal A(s)=0\,.
$$
Using \rif{lim1} and \rif{lim2}, and recalling that
$\mathcal h_u^{-1}\le1$, we integrate \eqref{0.9} over
$(\varepsilon,r)$ and let $\varepsilon\downarrow0$ to obtain
\begin{flalign*}
\mathbb A(r) &\le \omega_{n}\mathcal h_u(0)^{-\gamma}-\mathcal{A}(r) - c_{\star} \int_0^r \mathbb E(s) \ds \nonumber \\
&\quad +2\int_{0}^{r}s^{2-n}\int_{L_{s}}\mu^{2}\d A\frac{\d s}{s}+\int_{0}^{r}s^{1-n}\int_{L_{s}}\snr{\mu}\d A\frac{\ds}{s}.
\end{flalign*}
We now proceed as in \cite[p.~140]{bs82}. Note that for any function $w\in L^{1}(\mathfrak{M})$ it holds that
$$
\int_{L_{s}}w\d A=\int_{\mathfrak{M}}\mathds{1}_{[0,\infty)}(s-\ell)w\d A,\qquad \quad \mathds{1}_{[0,\infty)}(t):=\begin{cases}
\ 1\quad &\mbox{if} \ \ t\ge 0\vspace{1.5mm}\\
\displaystyle
\ 0\quad &\mbox{if} \ \ t<0.
\end{cases}
$$
Therefore, also using Fubini's theorem, we have 
\begin{flalign*}
 \int_0^r \mathbb E(s) \ds 
& =\int_{\mathfrak{M}}\mathcal{h}_{u}^{2-\gamma}\left[\sum_{i,j=1}^{n}(D_{ij}u)^{2}+\sum_{j=1}^{n}\left(\sum_{i=1}^{n}\vv_{i}D_{ij}u\right)^{2}\right]\left(\int_{0}^{r}\frac{\mathds{1}_{[0,\infty)}(s-\ell)(s^{2}-\ell^{2})}{2s^{n+1}}\ds\right)\d A\nonumber \\
& \ge\int_{\mathfrak{M}}\mathcal{h}_{u}^{2-\gamma}\left(\sum_{i,j=1}^{n}(D_{ij}u)^{2}\right)\mathcal{J}(\ell,r)\d A \ge\int_{L_{r}}\mathcal{h}_{u}^{2-\gamma}\left(\sum_{i,j=1}^{n}(D_{ij}u)^{2}\right)\mathcal{J}(\ell,r)\d A \\
& \ge c_{n}r^{2-n}\int_{L_{r/2}}\mathcal{h}_{u}^{2-\gamma}\snr{D^{2}u}^{2}\d A,
\end{flalign*}
where
$$
\mathcal{J}(\ell,r):=
\begin{cases}
\displaystyle
\frac{\ell^{2-n}}{n(n-2)}
-\frac{r^{2-n}}{2(n-2)}
+\frac{\ell^{2}r^{-n}}{2n}
&\mbox{if } n\ge3,\quad 0<\ell<r
\\[2mm]
\displaystyle
\frac12\log\left(\frac r\ell\right)
-\frac14\left(1-\frac{\ell^2}{r^2}\right)
&\mbox{if } n=2,\quad 0<\ell<r
\\[2mm]
0
&\mbox{if } \ell\ge r.
\end{cases}
$$
We also used that $\mathcal{J}(\ell,r)\ge c_{n}r^{2-n}$ whenever $\ell\in (0,r/2]$. Recalling that $\mathcal{A}(r)\ge 0$, combining the preceding estimates gives
\begin{flalign*}
\omega_{n}\mathcal h_u(0)^{-\gamma}&\ge \frac{1}{r^{n}}\int_{L_{r}}\mathcal{h}_{u}^{-\gamma}\d A+\frac{c_n}{r^{n-2}}\int_{L_{r/2}}\mathcal{h}_{u}^{2-\gamma}\snr{D^{2}u}^{2}\d A\nonumber \\
&\quad -2\int_{0}^{r}s^{2-n} \int_{L_{s}}\mu^{2}\d A \frac{\d s}{s}-\int_{0}^{r}s^{1-n} \int_{L_{s}}\snr{\mu}\d A \frac{\ds}{s}.
\end{flalign*}
Translating the previous estimate back to $(x_0,u(x_0))$, we obtain 
\begin{flalign*}
\omega_{n}\mathcal{h}_u(x_0)^{-\gamma}&\ge \frac{1}{r^{n}}\int_{L_{r}(x_{0})}\mathcal{h}_{u}^{-\gamma}\d A+\frac{c_n}{r^{n-2}}\int_{L_{r/2}(x_{0})}\mathcal{h}_{u}^{2-\gamma}\snr{D^{2}u}^{2}\d A\nonumber \\
&\quad -2\int_{0}^{r}s^{2-n} \int_{L_{s}(x_{0})}\mu^{2}\d A \frac{\d s}{s}-\int_{0}^{r}s^{1-n} \int_{L_{s}(x_{0})}\snr{\mu}\d A \frac{\ds}{s}.
\end{flalign*}
Recalling that $B_{r}(x_{0})\subset K_{r}(x_{0})$ and that $\gamma\in (0,1)$, we obtain 
\begin{flalign}
\mathcal{h}_u(x_0)^{-\gamma}&\ge\mint_{K_{r}(x_{0})}\mathcal{h}_{u}^{-(\gamma+1)}\d x+\frac{c_{n}}{r^{n-2}}\int_{K_{r/2}(x_{0})}\snr{D^{2}u}^{2}\dx\nonumber \\
&\quad -\frac{2}{\omega_{n}}\int_{0}^{r}s^{2-n} \int_{K_{s}(x_{0})}\mu^{2}\d x \frac{\d s}{s}-\frac{1}{\omega_{n}}\int_{0}^{r}s^{1-n} \int_{K_{s}(x_{0})}\snr{\mu}\dx \frac{\ds}{s},
\end{flalign}
which proves \eqref{0.10}. Finally, the choice in \eqref{sceltagamma} is admissible in the computation leading to
\cite[(3.8)]{bi23}: applying the trace inequality and
Young's inequality with parameter $1/4$ in
\cite[(3.6)]{bi23} yields $c_\star=1/(8n)$ and a coefficient
at most $1/4$ for the term $\mathcal h_u^{-\gamma}\mu^2$.
\end{proof}
\noindent The integral quantities appearing on the right-hand side of \eqref{0.10} feature the same homogeneity as the classical Havin--Maz'ya potentials \cite{hm72}. Let us show how these objects relate.
\begin{corollary}\label{cor.pot}
Within the same setting as Proposition \ref{p31},
for every $x_0\in\mathbb R^n$ there exists
$c_0=c_0(x_0,u)\ge1$ such that
\begin{flalign}\label{0.10.1}
\mathcal h_u(x_0)^{-\gamma}
&\ge
\mint_{K_r(x_0)}\mathcal h_u^{-(\gamma+1)}\dx
+
\frac{c_n}{r^{n-2}}
\int_{K_{r/2}(x_0)}\snr{D^2u}^2\dx
\nonumber\\
&\quad
-2c_0^{n-2}\mathbf P_2^1(\mu^2;x_0,c_0r)
-c_0^{n-1}\mathbf P_1^1(\mu;x_0,c_0r)
\end{flalign}
for every $r\in(0,1]$.
If additionally $\mathcal h_->0$, see \eqref{h-},
then \eqref{0.10.1} holds uniformly in $x_0\in\mathbb R^n$
and for every $r>0$, with $\mathcal h_-^{-1}$
replacing $c_0$.
\end{corollary}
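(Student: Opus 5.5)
The plan is to deduce \eqref{0.10.1} directly from \eqref{0.10} in Proposition \ref{p31}, by comparing the intrinsic potentials $\mathbf{PL}$ with the Euclidean ones $\mathbf P$ through the ball inclusions of Lemma \ref{balls}. Since the left-hand side and the first two terms on the right of \eqref{0.10} already coincide with those in \eqref{0.10.1}, it suffices to bound from above the two negative terms, i.e., to show
\[
\mathbf{PL}_{2,u}^{1}(\mu^2;x_0,r)\le c_0^{n-2}\mathbf P_2^1(\mu^2;x_0,c_0r),\qquad
\mathbf{PL}_{1,u}^{1}(\mu;x_0,r)\le c_0^{n-1}\mathbf P_1^1(\mu;x_0,c_0r).
\]

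For this I use (h$_1$) of Lemma \ref{balls}: since $u$ is smooth and strictly spacelike, for every $x_0$ there is $c_0=c_0(x_0,u)\ge1$ with $K_\varrho(x_0)\subset B_{c_0\varrho}(x_0)$ for all $\varrho\in(0,1]$. Hence, for $0<\varrho\le r\le1$ and $w\in\{\mu^2,|\mu|\}$,
\[
\frac{1}{\omega_n\varrho^n}\int_{K_\varrho(x_0)}|w|\dx\le c_0^n\mint_{B_{c_0\varrho}(x_0)}|w|\dx .
\]
Inserting this into \eqref{defi-PL} with $\vartheta=1$ and substituting $t=c_0\varrho$ (so $\varrho^\sigma=c_0^{-\sigma}t^\sigma$ and $\d\varrho/\varrho=\d t/t$) gives $\mathbf{PL}_{\sigma,u}^{1}(w;x_0,r)\le c_0^{n-\sigma}\mathbf P_\sigma^1(w;x_0,c_0r)$, exactly as in the footnote preceding \eqref{javier}. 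Taking $\sigma=2$ and $\sigma=1$ yields the two powers $c_0^{n-2}$ and $c_0^{n-1}$, and plugging these bounds into \eqref{0.10} gives \eqref{0.10.1} for $r\in(0,1]$.

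For the uniform statement I assume $\mathcal h_->0$. The last part of (h$_1$) then gives $K_\varrho(x_0)\subset B_{\varrho/\mathcal h_-}(x_0)$ for every $\varrho>0$ and every $x_0$, with no restriction $\varrho\le1$. The same computation with $c_0$ replaced by $\mathcal h_-^{-1}\ge1$ applies for all $r>0$ and all $x_0$. The only real point to check is that the constant in (h$_1$) is valid on the whole range $\varrho\in(0,r]$, which is exactly why the general case needs $r\le1$. Beyond that, the argument is a routine change of variables.
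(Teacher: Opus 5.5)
Your proposal is correct and follows the paper's proof. The paper invokes the comparison \eqref{javier} with $(\sigma,\vartheta,w)=(2,1,\mu^2)$ and $(1,1,\mu)$, and that comparison is derived in its footnote exactly as you do: the inclusion $K_\varrho^u(x_0)\subset B_{c_0\varrho}(x_0)$ from Lemma~\ref{balls}\,(h$_1$) plus the change of variables $t=c_0\varrho$, with $c_0=\mathcal h_-^{-1}$ for the uniform case.
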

\begin{proof}
Apply the comparisons in \rif{javier} with
$(\sigma,\vartheta,w)=(2,1,\mu^2)$ and $(1,1,\mu)$ 
to the two potential terms in \eqref{0.10}.
This gives \eqref{0.10.1}.
When $\mathcal h_->0$, the uniform comparison applies
with $c_0=\mathcal h_-^{-1}$ for every centre and radius,
proving the final assertion.
\end{proof}
\begin{remark}[Other intrinsic potentials]
\label{rem.intrinsic.potentials}
{\em Intrinsic potentials of a different type also occur in the nonlinear potential theory of degenerate parabolic equations of the type
\eqn{appearpar}
$$
u_t-\diver\, (|Du|^{p-2}Du)=\mu\,, \qquad p> 2-\frac{1}{n+1}
$$
in cylinders of the type $\Omega \times \er$, $\Omega \subset \er^n$, as pioneered in  \cite{KM13,km14}.
There, the intrinsic cylinders
\[
Q_\varrho^\lambda(x_0,t_0)
:=
B_\varrho(x_0)\times
(t_0-\lambda^{2-p}\varrho^2,t_0),
\qquad p\ge2,
\]
give rise to the potentials
$$
I_{\beta,\lambda}^{\mu}(x_0,t_0;r):=
\int_0^r
\frac{|\mu|(Q_\varrho^\lambda(x_0,t_0))}
{\varrho^{n+2-\beta}}
\frac{d\varrho}{\varrho},
\qquad 0<\beta\le n+2,
$$
where $\mu$ is the measure datum appearing in \rif{appearpar}.
The parameter $\lambda$ is linked to the solution by the
condition in \cite[Theorem 1.1]{km14}. More precisely, for $p\ge2$, \cite[Theorem 1.1]{km14}
gives the implication
\[
cI_{1,\lambda}^{\mu}(x_0,t_0;r)
+
c\left(
\mint_{Q_r^\lambda(x_0,t_0)}
|Du|^{p-1}\,dx\,\dd t
\right)^{1/(p-1)}
\le\lambda
\quad\Longrightarrow\quad
|Du(x_0,t_0)|\le\lambda,
\]
where $c=c(n,p)>1$, provided that
$Q_r^\lambda(x_0,t_0)$ is contained in the domain
and $(x_0,t_0)$ is a Lebesgue point of $Du$.
Thus $\lambda$ determines the integration cylinder
and simultaneously controls the potential and the
gradient average computed on it, tying the geometry
to the solution. Such estimates yield sharp regularity
criteria and demonstrate the intrinsic geometry approach
going back to DiBenedetto \cite{dib93}.
In the present setting, the corresponding dependence is
encoded directly in the sets $K_\varrho^u(x_0)$. In both constructions, the potential is therefore evaluated
on a geometry tied to the function being estimated.}
\end{remark}

\section{Decay at infinity}\label{boostinf}
A  significant consequence of \eqref{0.10} is a bound at
infinity for $\mathcal h_u$. This is stated in the following:
\begin{proposition}\label{com.sup}
Let
$
u\in\mathbb X(\mathbb R^n)\cap C^\infty(\mathbb R^n)
$
be the minimizer of \eqref{bi.en} with charge
$
\mu\in C^\infty_0(\mathbb R^n).
$
If $n=2$, assume in addition that $\mu\in L^1_0(\er^2)$ (so that $u$ is a classical solution to \eqref{bi} by Proposition \ref{exex}).
Then
\eqn{boost.at.infinity}
$$
\lim_{|x|\to\infty}\mathcal h_u(x)=1.
$$
Moreover, there exist a radius
$
\tx r_\mu
\equiv
\tx r_\mu(n,\mu)
\in(0,\infty)
$
and dimensional constants
$c_0(n)\ge4$
and
$\mathfrak s_n>0$
such that, setting
\eqn{soglie}
$$
\mathcal h_\infty(\mu)
:=
c_0(n)
\left(1+\nr{\mu}_{\mathcal D^{1,2}(\mathbb R^n)^*}^2\right)
\geq 4,
$$
one has
\eqn{0.17}
$$
\mathcal h_u
\le
\frac{\mathcal h_\infty(\mu)}{2}
\qquad
\mbox{in }
\mathbb R^n\setminus B_{\tx r_\mu}(0)
$$
and
\eqn{stimamis}
$$
\left| \left\{x\in\mathbb R^n:\mathcal h_u(x)>\mathcal h_\infty(\mu)\right\}\right|
\le\mathfrak s_n.
$$
In particular,
\eqn{external.compact.support}
$$
\supp \left(\mathcal h_u-\mathcal h_\infty(\mu) \right)_+\Subset B_{\tx r_\mu}(0).
$$
\end{proposition}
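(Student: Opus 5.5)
The plan is to apply the intrinsic potential estimate \eqref{0.10} of Proposition \ref{p31} at points $x_0$ far from the support of $\mu$, using the Lorentzian balls $K_r(x_0)$ with a carefully chosen radius. Since $\mu\in C^\infty_0$, fix $R_0$ with $\supp\mu\subset B_{R_0}(0)$. Drop the nonnegative Hessian term. Then, for every $x_0$ and $r>0$,
\[
\mathcal h_u(x_0)^{-\gamma}\ge \mint_{K_r(x_0)}\mathcal h_u^{-(\gamma+1)}\dx-2\mathbf{PL}_{2,u}^{1}(\mu^2;x_0,r)-\mathbf{PL}_{1,u}^{1}(\mu;x_0,r).
\]
The potentials vanish as soon as $K_r(x_0)\cap B_{R_0}(0)=\varnothing$. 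For $n\ge3$ this holds when $|x_0|>R_0+\tilde r$ by Lemma \ref{balls}(h$_2$), with $\tilde r=(r^2+4\|u\|_\infty^2)^{1/2}$ and $\|u\|_\infty\le c\Ui$ by Proposition \ref{boun.p}. For $n=2$ it follows from Lemma \ref{balls}(h$_3$) applied with the compact set $\overline{B_{R_0}}$.

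Next I would bound the average from below. By Jensen, $\mint_{K_r}\mathcal h_u^{-(\gamma+1)}\ge (\mint_{K_r}\mathcal h_u^{-1})^{\gamma+1}$. Moreover $1-\mathcal h_u^{-1}\le |Du|^2$, so
\[
\mint_{K_r}(1-\mathcal h_u^{-1})\le |K_r|^{-1}\int_{\mathbb R^n}|Du|^2\le \frac{4\|\mu\|^2_{\mathcal D^{1,2}(\mathbb R^n)^*}}{\omega_n r^n},
\]
using $B_r\subset K_r$ and \eqref{coerciva}. Fixing $r$ large, for instance $r^n\approx 1+\|\mu\|^2$ with a suitable dimensional factor, gives $\mint_{K_r}\mathcal h_u^{-1}\ge 1-\eta$ with $\eta$ small. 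Hence $\mathcal h_u(x_0)^{-\gamma}\ge(1-\eta)^{\gamma+1}$ for all $|x_0|$ beyond a radius $\tx r_\mu$, and therefore $\mathcal h_u(x_0)\le 2$. This is at most $\mathcal h_\infty(\mu)/2$ because $c_0(n)\ge4$, which proves \eqref{0.17}.

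For \eqref{boost.at.infinity} the same argument applies with $r\to\infty$. For each $r$ the bound $\mathcal h_u(x_0)^{-\gamma}\ge (1-c\|\mu\|^2r^{-n})^{\gamma+1}$ holds for $|x_0|$ large, and letting $r\to\infty$ yields the limit $1$. Finally, \eqref{stimamis} follows from Chebyshev. On the set $\{\mathcal h_u>\mathcal h_\infty\}$ we have $1-\mathcal h_u^{-1}\ge 1-1/4$, so by \eqref{coerciva} its measure is at most $c\|\mu\|^2$. This still has to be made dimensional: one should use $|Du|^2\ge 1-\mathcal h_\infty^{-2}\ge 15/16$ there, giving measure $\lesssim\|\mu\|^2$, which is not dimensional.

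This last point is where I expect the main obstacle. I would try the potential estimate at every $x_0$ with the choice $r^n\approx\|\mu\|^2$. The potential terms are then controlled only through $\mathcal h_\infty$'s dependence, so the measure bound must instead come from combining the $L^2$ energy with the choice of $c_0(n)(1+\|\mu\|^2)$. Concretely, the set $\{\mathcal h_u>\mathcal h_\infty\}$ lies where $1-|Du|^2<\mathcal h_\infty^{-2}$, and $\int (1-|Du|^2)^{-1/2}$-type bounds from Theorem \ref{t1}-like estimates would give measure $\lesssim \|\mu\|^2/\mathcal h_\infty\lesssim 1$. That is the bound I would aim for, and \eqref{external.compact.support} then follows directly from \eqref{0.17}.
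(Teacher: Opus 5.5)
\textbf{The gap is \eqref{stimamis}.} Your arguments for \eqref{boost.at.infinity}, \eqref{0.17} and \eqref{external.compact.support} are sound. Dropping the Hessian term in \eqref{0.10}, killing the potentials via Lemma \ref{balls} (h$_2$)/(h$_3$), and bounding the mean from below by the global energy bound $\|Du\|_{L^2}^2\le 4\|\mu\|^2$ with a large radius $r$ works just as well as the paper's route. The paper instead takes $r=1$ and uses smallness of the exterior energy $\int_{\mathbb R^n\setminus B_R}|Du|^2\dx$.

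For \eqref{stimamis} you correctly see that Chebyshev plus \eqref{coerciva} only gives $|\{\mathcal h_u>\mathcal h_\infty(\mu)\}|\lesssim\|\mu\|^2$, but neither remedy you suggest closes it:
\begin{itemize}
\item The potential estimate with $r^n\approx\|\mu\|^2$ gives no control of the $\mu$-potentials in terms of $\|\mu\|_{\mathcal D^{1,2}(\mathbb R^n)^*}$ alone.
\item Appealing to ``Theorem \ref{t1}-like'' integrability of $\mathcal h_u$ is circular. The proof of Theorem \ref{t1} uses \eqref{czcz} with $\mathcal h_\infty=\mathcal h_\infty(\mu)$, and that needs \eqref{external.compact.support} from this very proposition (see \eqref{app.4}).
\item Those bounds are also not dimensional anyway: they depend on $\|\mu\|_{L^q}$ and on the ball.
\end{itemize}

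\textbf{The missing idea} is the weighted energy inequality \eqref{l1l1},
\[
\int_{\mathbb R^n}\mathcal h_u|Du|^2\dx\le\langle\mu,u\rangle .
\]
Formally this is the equation tested with $u$ itself; rigorously, Proposition \ref{exex} derives it from the convexity of $t\mapsto\mathcal E_\mu(tu)$ on $[0,1]$. Since $\mathcal h_u\ge1$, it gives $\|Du\|_{L^2}^2\le\|\mu\|\,\|Du\|_{L^2}$. Hence $\|Du\|_{L^2}\le\|\mu\|$ and
\[
\int_{\mathbb R^n}\mathcal h_u|Du|^2\dx\le\|\mu\|_{\mathcal D^{1,2}(\mathbb R^n)^*}^2 .
\]
On $E_0:=\{\mathcal h_u>\mathcal h_\infty(\mu)\}$ one has $|Du|^2=1-\mathcal h_u^{-2}\ge 1/2$. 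Therefore $\tfrac12\mathcal h_\infty(\mu)|E_0|\le\|\mu\|^2$, that is,
\[
|E_0|\le \frac{2\|\mu\|^2}{c_0(n)(1+\|\mu\|^2)}\le \frac{2}{c_0(n)}.
\]
The flux weight $\mathcal h_u$ is what places $\mathcal h_\infty(\mu)\approx1+\|\mu\|^2$ in the denominator and makes the bound dimensional. The unweighted energy \eqref{coerciva} cannot do this.
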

\begin{proof}
We strongly rely on \eqref{0.10}. For this, we first recall that
by Proposition \ref{exex}, $u$ is strictly spacelike
and solves \eqref{bi} in the classical sense.
Let $\gamma$ be the exponent in Proposition \ref{p31}.
Fix a dimensional constant $c_0(n)\ge4$.
We first prove \eqref{boost.at.infinity} using only the equation
and the fact that $Du\in L^2(\mathbb R^n)$.
Fix $\tau>1$ and choose $R>0$ such that
\eqn{barry}
$$
\supp\,\mu\Subset B_R(0),
\qquad
\int_{\mathbb R^n\setminus B_R(0)}|Du|^2\dx
\le\omega_n(1-\tau^{-\gamma}).
$$
By Lemma \ref{lemmino} and Lemma \ref{balls}
\textnormal{(h$_2$)} when $n\ge3$, and by Lemma
\ref{balls} \textnormal{(h$_3$)} when $n=2$,
there exists $R_1\equiv R_1(\tau)>R$ such that
$
K_1(x_0)\cap\overline{B_R(0)}=\varnothing
$
whenever $|x_0|\ge R_1$.
Indeed, in dimensions $n\ge3$ the function $u$ is bounded,
so the Euclidean radii containing $K_1(x_0)$ can be
chosen independently of $x_0$.
In dimension two, apply \textnormal{(h$_3$)} with
the compact set $\overline{B_R(0)}$ and $s=1$.
Since $K_s(x_0)\subset K_1(x_0)$ for $0<s\le1$,
both potentials involving $\mu$ in \eqref{0.10}
vanish at such centers when $r=1$.
Further discarding the nonnegative Hessian term, we obtain
$$
\begin{aligned}
\mathcal h_u(x_0)^{-\gamma}
&\ge
\mint_{K_1(x_0)} (1-|Du|^2)^{(\gamma+1)/2}\dx
\ge 1-\mint_{K_1(x_0)}|Du|^2\dx\\
&\ge 1-\frac1{\omega_n} \int_{\mathbb R^n\setminus B_R(0)}|Du|^2\dx \stackrel{\eqref{barry}}{\ge}
\tau^{-\gamma}.
\end{aligned}
$$
Here we used $0<(\gamma+1)/2<1$ and
$B_1(x_0)\subset K_1(x_0)$.
We deduce that
$
\mathcal h_u(x_0)\le\tau
$
whenever $|x_0|\ge R_1(\tau)$.
Since $\tau>1$ is arbitrary and $\mathcal h_u\ge1$,
this proves \eqref{boost.at.infinity}.
Taking $\tau=2$, we obtain
$
\mathcal h_u(x_0)\le2 \le\mathcal h_\infty(\mu)/2
$
whenever $|x_0|\ge R_1(2)$.
This proves \eqref{0.17} with
$\tx r_\mu:=2R_1(2)$.
Note that since $u$ is uniquely determined by $\mu$,
this radius depends only on $n$ and $\mu$.
Moreover,
$
\supp\,(\mathcal h_u-\mathcal h_\infty(\mu))_+
\subset\overline{B_{R_1(2)}(0)}
\Subset B_{\tx r_\mu}(0),
$
which proves \eqref{external.compact.support}.
It remains to establish \eqref{stimamis}. 
By \eqref{l1l1} we have 
\[
\|Du\|_{L^2(\mathbb R^n)}^2
\le
\int_{\mathbb R^n}\mathcal h_u|Du|^2\dx
\le
\langle\mu,u\rangle
\le
\|\mu\|_{\mathcal D^{1,2}(\mathbb R^n)^*}
\|Du\|_{L^2(\mathbb R^n)}.
\]
Thus
$
\|Du\|_{L^2(\mathbb R^n)}
\le
\|\mu\|_{\mathcal D^{1,2}(\mathbb R^n)^*},
$
and, finally
\eqn{jenny}
$$
\int_{\mathbb R^n}\mathcal h_u|Du|^2\dx
\le
\|\mu\|_{\mathcal D^{1,2}(\mathbb R^n)^*}^2.
$$
Set
$
E_0:=\{x\in\mathbb R^n:
\mathcal h_u(x)>\mathcal h_\infty(\mu)\}.
$
Since $\mathcal h_\infty(\mu)\ge4$, on $E_0$ we have
$|Du|^2=1-\mathcal h_u^{-2}\ge1/2$.
Therefore,
$$
\frac{\mathcal h_\infty(\mu)}2\,|E_0|
\le
\int_{E_0}\mathcal h_u|Du|^2\dx
\stackleq{jenny}
\|\mu\|_{\mathcal D^{1,2}(\mathbb R^n)^*}^2.
$$
Using \eqref{soglie}, we conclude
$$
|E_0|
\le
\frac{2}{c_0(n)}
\frac{\|\mu\|_{\mathcal D^{1,2}(\mathbb R^n)^*}^2}{1+\|\mu\|_{\mathcal D^{1,2}(\mathbb R^n)^*}^2}
\le\frac{2}{c_0(n)}
=:\mathfrak s_n.
$$
This proves \eqref{stimamis}.
\end{proof}

We finally close the section with a result that clarifies the classical connection between being a solution and being a minimizer in this setting. The material in the proof will also be useful later on, in Remark \ref{controre}.

\begin{proposition}[Classical solutions are minimizers]
\label{connessione}
Let
$u\in\mathbb X(\mathbb R^n)\cap C^\infty(\mathbb R^n)$
be a classical solution to \eqref{bi}, with
$\mu\in C^\infty_0(\mathbb R^n)$.
Then $\mu\in\DD^*$ and $u$ minimizes $\mathcal E_\mu$
over $\mathbb X(\mathbb R^n)$.
Moreover, if $n=2$, then $
\mu\in L^1_0(\er^2).
$
\end{proposition}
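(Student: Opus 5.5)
The plan is to show first that $\mu$ defines an element of $\DD^*$, and then to obtain minimality from convexity of $H$ together with the weak form of the equation.

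\emph{Step 1: $\mu\in\DD^*$.} Since $u$ is a classical solution, $\mu=-\diver(\partial H(Du))$ pointwise. Hence, for every $\varphi\in C^\infty_0(\mathbb R^n)$,
\[
\int_{\mathbb R^n}\mu\varphi\dx=\int_{\mathbb R^n}\langle\mathcal h_uDu,D\varphi\rangle\dx .
\]
This gives $|\int\mu\varphi\dx|\le\|\mathcal h_uDu\|_{L^2}\|D\varphi\|_{L^2}$, provided $\mathcal h_uDu\in L^2(\mathbb R^n)$. To verify this:
\begin{itemize}
\item The boost is bounded at infinity. Outside $\supp\mu$ the equation is homogeneous, and the argument giving \eqref{boost.at.infinity} uses only $Du\in L^2$, the potential estimate \eqref{0.10} and Lemma \ref{balls}; it does not use minimality.
\item I will rerun that argument directly, citing Proposition \ref{p31} and Lemmas \ref{lemmino} and \ref{balls}, which only assume $u\in\mathbb X\cap C^\infty$ is a classical solution. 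It yields $\mathcal h_u\le2$ outside a large ball $B_{R_1}$.
\item Inside the compact set $\overline{B_{R_1}}$, strict spacelikeness and continuity of $Du$ give $\mathcal h_u$ bounded.
\end{itemize}
So $\mathcal h_u\in L^\infty$ and $\mathcal h_uDu\in L^2$, which is what is needed.

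\emph{Step 1 continued, case $n=2$.} First check zero mean. Take cutoffs $\eta_R$ equal to $1$ on $B_R$, supported in $B_{2R}$, with $|D\eta_R|\lesssim1/R$. Then
\[
\Bigl|\int\mu\eta_R\dx\Bigr|\le\|\mathcal h_u\|_\infty\|Du\|_{L^2(B_{2R}\setminus B_R)}\|D\eta_R\|_{L^2}\to0,
\]
since $\|D\eta_R\|_{L^2}$ is bounded in two dimensions and the tail of $Du$ vanishes. For $R$ beyond $\supp\mu$ the left side equals $\int\mu\dx$, so $\mu\in L^1_0$. Remark \ref{concreto} then identifies $\mu$ with a functional on $\DDDD$, and the bound above shows it is continuous.

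\emph{Step 2: minimality.} Fix $v\in\mathbb X(\mathbb R^n)$. Convexity of $H$ on the closed unit ball, with $|Du|<1$ so that $\partial H(Du)$ is finite, gives
\[
H(Dv)\ge H(Du)+\langle\partial H(Du),Dv-Du\rangle \quad\text{pointwise.}
\]
Integrating,
\[
\mathcal E_\mu(v)-\mathcal E_\mu(u)\ge\int\langle\mathcal h_uDu,D(v-u)\rangle\dx-\langle\mu,v-u\rangle .
\]
It remains to show the right side is zero. Set $w:=v-u\in\DD$; when $n=2$, both $u$ and $v$ are normalized, so $w$ is too. Choose $\varphi_k\in C^\infty_0$ with $D\varphi_k\to Dw$ in $L^2$; for $n=2$ use $P\varphi_k$ as in Remark \ref{concreto}. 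The weak identity holds for each $\varphi_k$. Passing to the limit, the left term converges because $\mathcal h_uDu\in L^2$, and the duality term converges by continuity from Step 1. Hence the right side vanishes and $\mathcal E_\mu(v)\ge\mathcal E_\mu(u)$.

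\emph{Main obstacle.} The delicate point is the global bound $\mathcal h_u\in L^\infty$, equivalently $\mathcal h_uDu\in L^2$, established without invoking minimality. Proposition \ref{com.sup} is stated for minimizers, so I must check that its decay argument really uses only the classical equation and $Du\in L^2$; I believe it does. A fallback would be to write $|Du|^2\mathcal h_u$ against a cutoff test function using $u$ itself, but that requires boundedness of $u$, which is available from Lemma \ref{lemmino} when $n\ge3$.
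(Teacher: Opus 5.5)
Your proof is correct and follows the paper's argument. You bound $\mathcal h_u$ globally via the exterior-decay part of Proposition \ref{com.sup}, which indeed uses only the classical equation and $u\in\mathbb X(\mathbb R^n)$; you deduce $\mathcal h_uDu\in L^2$ and $\mu\in\DD^*$, and conclude by the convexity inequality extended to $v-u$ by density. The only deviation is in the two-dimensional zero-mean step, where you use standard cutoffs, the vanishing $L^2$-tail of $\mathcal h_uDu$, and the bounded $\|D\eta_R\|_{L^2}$; the paper instead uses the logarithmic cutoffs $\eta_T$ of \eqref{logatau}, whose Dirichlet energy tends to zero. Both variants work.
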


\begin{proof} The exterior bound established in the first part of the proof of Proposition \ref{com.sup} uses only the classical equation and $u\in\mathbb X(\mathbb R^n)$.
It therefore applies here and gives $\mathcal h_u\le2$ outside a sufficiently large ball.
Together with the continuity of $Du$ and strict spacelikeness on compact sets, this implies
$\mathcal h_u\in L^\infty(\mathbb R^n)$.
Since $Du\in L^2(\mathbb R^n)$, it follows that
$
F:=\mathcal h_uDu
\in L^2(\mathbb R^n;\mathbb R^n).
$ If $n=2$, choose $R>0$ such that $\supp\, \mu\Subset B_R(0)$.
To prove $\mu\in L^1_0(\mathbb R^2)$,
fix a nonincreasing function 
$\vartheta\in C^\infty(\mathbb R)$ such that
$0\le\vartheta\le1$, $\vartheta=1$ on $(-\infty,1]$,
and $\vartheta=0$ on $[2,\infty)$.
For $T>\max\{e^2,R+e\}$, define
\eqn{logatau}
$$
\eta_T(x)
:=\vartheta\left(\frac{\log(e+|x|)}{\log T}\right).
$$
Then $\eta_T\in C^\infty_0(\mathbb R^2)$,
$\eta_T=1$ in $B_{T-e}(0)$, and
$\eta_T=0$ outside $B_{T^2-e}(0)$.
In particular, $\eta_T=1$ on $\supp\, \mu$.
Moreover,
$$
|D\eta_T(x)|\le\frac{\|\vartheta'\|_{L^\infty}}{(e+|x|)\log T}\mathds1_{\{T-e<|x|<T^2-e\}}(x),
$$
for every $x\in \mathbb R^2$, so that
$$
\|D\eta_T\|_{L^2(\mathbb R^2)}^2\le
\frac{2\pi\|\vartheta'\|_{L^\infty}^2}{(\log T)^2}
\int_{T-e}^{T^2-e}\frac{\rho}{(e+\rho)^2}\,d\rho\le
\frac{2\pi\|\vartheta'\|_{L^\infty}^2}{\log T}.
$$
Testing $-\diver F=\mu$ directly with $\eta_T$, we obtain
$$
\left|\int_{\mathbb R^2}\mu\,dx\right|
=\left|\int_{\mathbb R^2}\langle F, D\eta_T\rangle\,dx\right|
\le
\frac{\sqrt{2\pi}\|\vartheta'\|_{L^\infty}}{\sqrt{\log T}}
\|F\|_{L^2(\mathbb R^2)}
\stackrel{T\to \infty}{\longrightarrow}0,
$$
which establishes $
\mu\in L^1_0(\er^2).
$ In every dimension, the equation gives
$$
\left|\int_{\mathbb R^n}\mu\varphi\,dx\right|
=\left|\int_{\mathbb R^n}\langle F, D\varphi\rangle \,dx\right|
\le
\|F\|_{L^2(\mathbb R^n)}
\|D\varphi\|_{L^2(\mathbb R^n)}
$$
for all $\varphi\in C^\infty_0(\mathbb R^n)$.
Hence $\mu\in\DD^*$, and a standard density argument, together with
Remark \ref{concreto}, gives
$
\langle\mu,w\rangle
=
\int_{\mathbb R^n}\langle F, Dw\rangle\,dx$
for every $w\in\DD$. 
Finally, another standard argument: the convexity of
$H(\xi)=1-\sqrt{1-|\xi|^2}$ implies that, for every
$v\in\mathbb X(\mathbb R^n)$,
$$
\mathcal E_\mu(v)-\mathcal E_\mu(u)\ge
\int_{\mathbb R^n}\langle F, D(v-u)\rangle\,dx
-\langle\mu,v-u\rangle =0.
$$
This proves the minimality of $u$.
\end{proof}


\section{Dirichlet problems with rough data}\label{diridiri}

\noindent In this section we turn our attention to the Dirichlet problem. Our
main goal is to derive a priori estimates for sufficiently regular
solutions under substantially weaker control of the charge. We build
on the strategy developed by Bartnik and Simon in \cite{bs82}, while
departing from it at several points. Their barrier arguments rely on a
pointwise bound for the prescribed mean curvature, whereas here we
consider equations of the form \(-\mathcal Mu=f+\mu\) in a bounded
domain \(\Omega\), where \(f\in L^\infty(\Omega)\) and the rough
perturbation \(\mu\) belongs to \(L(n,1)(\Omega)\) and satisfies a
smallness condition on arbitrarily small boundary collars, but not on the whole domain. This, among other things, 
requires a different treatment of the boundary barriers. In
particular, we combine the nonhomogeneous barriers of \cite{bs82} with
perturbative nonlinear correctors near the boundary, whose size is
controlled in Lorentz-Sobolev spaces. The endpoint space \(L(n,1)\)
is naturally suited to this construction, since the corresponding
Lorentz-Sobolev estimates allow one to preserve a quantitative
spacelike gap. The central tool is the new anti-peeling phenomenon for
rough charges established in Theorem \ref{al.t}. Throughout this section, we assume the following:
\eqn{assuntidir}
$$
\begin{cases}
\, \mbox{$\Omega$ is a $C^2$-regular, bounded domain of $\er^n$, $n\geq 2$}\\[2pt]
\, \mbox{$\tx{u}_0 \in C^2(\overline{\Omega})$ such that $\nr{D\tx u_0}_{C^0(\overline\Omega)}
\leq
1-\sigma_0$ for some $\sigma_0\in(0,1)$}. 
\end{cases}
$$
The number $\sigma_0$ will be established in the relevant statements, and additional assumptions will be put both on $\Omega$ and $\tx{u}_0$ when needed. 
\begin{lemma}[Spacelikeness via compactness]\label{ww.lem}
Under assumptions \eqref{assuntidir}, 
let
$
\Lambda_0>0$, 
$
M>0$, 
$\tx l>0.
$
Then there exists
$
\sigma
=
\sigma(n,\tx l,\Lambda_0,\sigma_0,M,\Omega)
\in(0,1)
$
with the following property. Let
$
u\in C^2(\Omega)\cap C^0(\overline\Omega)
$
satisfy
$$
\snr{Du(x)}<1 \mbox{ for every $x\in\Omega$  \quad $[-\mathcal Mu]_{n,1;\Omega}\le\Lambda_0$ \quad and $ u=\tx u_0$  
 on $\partial\Omega$}
 $$
and assume 
$
\nr{\tx u_0}_{C^2(\overline\Omega)}
\le M$.
Then, for every
$
x_0,x_1\in\overline\Omega
$
such that
$
\overline{x_1x_0}\subset\overline\Omega$ and 
$\tx l/4
\le
\snr{x_1-x_0}
\le
\tx l,
$
it holds that 
\eqn{proved}
$$
\snr{u(x_1)-u(x_0)}
\le
(1-\sigma)\snr{x_1-x_0}.
$$
\end{lemma}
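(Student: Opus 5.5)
We argue by contradiction and compactness. If the lemma fails, we get, for some fixed data $(n,\tx l,\Lambda_0,\sigma_0,M,\Omega)$, a sequence $u_k\in C^2(\Omega)\cap C^0(\overline\Omega)$ with $|Du_k|<1$, $[\mu_k]_{n,1;\Omega}\le\Lambda_0$ where $\mu_k:=-\mathcal Mu_k$, and boundary values $u_k=\tx u_{0,k}$ on $\partial\Omega$ with $\nr{\tx u_{0,k}}_{C^2}\le M$ and $\nr{D\tx u_{0,k}}_{C^0}\le 1-\sigma_0$. The sequence also comes with points $x_0^k,x_1^k$ satisfying $\tx l/4\le|x_1^k-x_0^k|\le\tx l$, $\overline{x_1^kx_0^k}\subset\overline\Omega$ and $|u_k(x_1^k)-u_k(x_0^k)|>(1-1/k)|x_1^k-x_0^k|$. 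The $u_k$ are $1$-Lipschitz, and their values are bounded because of the boundary data. Arzel\`a--Ascoli then gives, after passing to subsequences, uniform limits $u_k\to u$ and $\tx u_{0,k}\to\tx u_0^\infty$ in $C^1$; we also get $x_i^k\to x_i$ and $\mu_k\rightharpoonup\mu$ weakly-$*$ in $L(n,1)$, with $[\mu]_{n,1;\Omega}\le\Lambda_0$. In the limit, $u$ is weakly spacelike, $u=\tx u_0^\infty$ on $\partial\Omega$, and $\overline{x_1x_0}$ is a light segment of length at least $\tx l/4$ contained in $\overline\Omega$.

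The second step shows that $u$ is a local minimizer of $\mathcal E_\mu$ in $\Omega$. Each $u_k$ is a classical solution, so by convexity of $H$ (the argument of Proposition \ref{connessione}) it minimizes $\mathcal E_{\mu_k}(\cdot;U)$ among weakly spacelike competitors with the same values on $\partial U$. We pass to the limit using lower semicontinuity of $\int H(Dw)$ under uniform convergence. The linear terms converge because $L(n,1)\subset L^1$ on bounded sets and $u_k\to u$ uniformly. This recovery step needs some care: we glue a competitor $v$ for $u$ to $u_k$ near $\partial U$, using a cutoff and a small convex combination to keep the gradient bounded by $1$.

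The third step derives the contradiction. Since $L(n,1)\subset L(n-1,s)_{\loc}$ for every $s>n-1$ (and $\subset L^1$ when $n=2$), Theorem \ref{al.t} applies to $u$ on $\Omega$. Suppose first that part of the segment lies in $\Omega$. Anti-peeling then extends the light segment in both directions until it hits $\partial\Omega$, producing $y_0,y_1\in\partial\Omega$ with $|u(y_1)-u(y_0)|=|y_1-y_0|$. Here $y_0,y_1$ are endpoints of a chord containing the original segment, or we reach $\partial\Omega$ through the relative interior. This contradicts the strict spacelikeness of the boundary datum. To make that contradiction precise we need control of $\tx u_0^\infty$ along chords, not just tangentially. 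Since $\Omega$ is $C^2$ and $|D\tx u_0^\infty|\le1-\sigma_0$, the straight chord satisfies $|\tx u_0^\infty(y_1)-\tx u_0^\infty(y_0)|\le(1-\sigma_0)|y_1-y_0|$. The remaining case is a segment lying entirely in $\partial\Omega$ or touching it only at its endpoints. Segments inside $\partial\Omega$ are handled by the same boundary bound. If the segment meets $\Omega$, we apply anti-peeling on a slightly shortened subsegment compactly contained in $\Omega$.

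The main obstacle is that Theorem \ref{al.t} extends only along portions where $\overline{x_tx_0}\subset\Omega$, while the light segment may graze the boundary. One must therefore check that the maximal extension really terminates at boundary points through chords in $\overline\Omega$, so that the bound on $D\tx u_0^\infty$ can be used. A secondary technical point is the passage to local minimality, specifically constructing competitors that respect $|Dw|\le1$.
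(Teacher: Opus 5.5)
Your overall strategy is the paper's. You argue by contradiction and compactness, show that the uniform limit $u$ is a minimizer for the limiting charge, and apply anti-peeling (Theorem \ref{al.t}) along the maximal component of the line inside $\Omega$. The contradiction then comes from the chord bound for the limiting boundary datum at the two endpoints on $\partial\Omega$ (the paper's \eqref{ww.bd.strict}).

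Proving only local minimality on sets $U\Subset\Omega$, instead of the paper's Dirichlet minimality on $\Omega$, is a legitimate and slightly lighter variant. On such $U$ each $u_k$ is a strictly spacelike classical solution on $\overline U$, and Theorem \ref{al.t} only needs Definition \ref{minimilocali}. Your concern about the segment grazing $\partial\Omega$ is settled as in the paper. Take the connected component of $\{t:x_t\in\Omega\}$ containing an open piece of the segment, and use continuity of $u$ on $\overline\Omega$ at its endpoints.

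The step that fails as you sketch it is the recovery construction in your second step. With $w_k=\zeta v+(1-\zeta)u_k$, in the transition region (where $v=u$) one has $Dw_k=\zeta Du+(1-\zeta)Du_k+(u-u_k)D\zeta$. Its modulus may exceed $1$ by up to $\|u-u_k\|_{L^\infty}\|D\zeta\|_{L^\infty}$. A convex combination restores $|D\cdot|\le1$ while keeping the boundary values of $u_k$ only if you mix in a function with those boundary values and a spacelike gap. Moreover, the weight tends to zero only if that gap is large compared with the excess.

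No such function is available. Take, for instance, a ball $U$ centred at an interior point of the limiting light segment. There the values of $u_k$ at the two ends of the diameter along the segment differ by at least $\operatorname{diam}U-2\|u_k-u\|_{L^\infty}$. Hence any function with these boundary values has gap $O(\|u_k-u\|_{L^\infty}/\operatorname{diam}U)$, which is of the same order as the excess.

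Use truncation instead of interpolation, as the paper does in the footnote to this proof. Set $v_k:=\min\{\max\{u_k,v-\delta\},v+\delta\}$. This function is automatically weakly spacelike. It equals $u_k$ near $\partial U$ as soon as $\|u_k-u\|_{L^\infty(U)}<\delta$. It satisfies $Dv_k=Du_k$ a.e. on $\{|u_k-v|\le\delta\}$ and $Dv_k=Dv$ a.e. elsewhere.

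With this choice, $\mathcal E_{\mu_k}(u_k;U)\le\mathcal E_{\mu_k}(v_k;U)$ becomes $\int_{\{|u_k-v|>\delta\}}H(Du_k)\,dx\le\int_{\{|u_k-v|>\delta\}}H(Dv)\,dx+\int_U\mu_k(u_k-v_k)\,dx$. Now use lower semicontinuity on the open sets $\{|u-v|>\delta+\eta\}$ and the uniform convergence $u_k-v_k\to(u-v-\delta)_+-(v-u-\delta)_+$. Letting $\eta,\delta\downarrow0$ then gives $\mathcal E_\mu(u;U)\le\mathcal E_\mu(v;U)$, since $Du=Dv$ a.e. on $\{u=v\}$. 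With this replacement your argument is complete.
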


\begin{proof}
In the following we shall repeatedly pass
to subsequences without relabeling them. We first observe that every function satisfying the assumptions of the
lemma is weakly spacelike along every segment contained in
$\overline\Omega$, namely
\eqn{equi}
$$
\snr{u(x)-u(y)}
\le
\snr{x-y}
\qquad
\mbox{whenever }
\overline{xy}\subset\overline\Omega.
$$
Indeed, set
$
\gamma(t):=x+t(y-x)
$
for $t\in[0,1]$. If
$
\gamma((0,1))\subset\Omega,
$
the claim follows from $\snr{Du}<1$ and continuity up to the
endpoints. Otherwise, setting
$
a:=\min\{t\in[0,1]:\gamma(t)\in\partial\Omega\},
$, $
b:=\max\{t\in[0,1]:\gamma(t)\in\partial\Omega\},
$
we have, by $\snr{Du}<1$ in $\Omega$ and continuity,
$
\snr{u(x)-u(\gamma(a))}
\le
\snr{x-\gamma(a)}$, $
\snr{u(\gamma(b))-u(y)}
\le
\snr{\gamma(b)-y}.
$
Moreover, since $u=\tx u_0$ on $\partial\Omega$ and
$\nr{D\tx u_0}_{C^0(\overline\Omega)}\le1-\sigma_0$,
$
\snr{u(\gamma(a))-u(\gamma(b))}
\le
(1-\sigma_0)\snr{\gamma(a)-\gamma(b)}.
$
Since the points are collinear and ordered along the segment, the
triangle inequality gives \eqref{equi}.

We argue by contradiction. Then there exist sequences of functions 
$
\{u_i\}\subset  C^2(\Omega)\cap C^0(\overline\Omega)$, $
\{\tx u_{0;i}\}\subset  C^2(\overline\Omega),
$
and points
$
\{x_{0;i} \},\{x_{1;i}\}\subset \overline\Omega
$
such that
$
\snr{Du_i}<1
$
 in $\Omega$, 
$
[-\mathcal Mu_i]_{n,1;\Omega}\le\Lambda_0$, 
$
u_i=\tx u_{0;i}
$ 
on $\partial\Omega,
$
with
\eqn{ww.unif.bd}
$$
\nr{\tx u_{0;i}}_{C^2(\overline\Omega)}
\le M,
\qquad
\nr{D\tx u_{0;i}}_{C^0(\overline\Omega)}
\le1-\sigma_0,
$$
and
$
\overline{x_{1;i}x_{0;i}}\subset\overline\Omega$, 
 $ \tx l/4 \le\snr{x_{1;i}-x_{0;i}}\le \tx l,
$
while
\eqn{pp.0}
$$
\snr{x_{1;i}-x_{0;i}}
\ge
\snr{u_i(x_{1;i})-u_i(x_{0;i})}
\ge
(1-1/i)\snr{x_{1;i}-x_{0;i}}.
$$

By compactness of $\overline\Omega$, we may assume that
$
x_{0;i}\to x_0,
$ and 
$x_{1;i}\to x_1.$
It follows that
$
\tx l/4
\le
\snr{x_1-x_0}
\le
\tx l,
$
and hence $x_0\ne x_1$. Moreover, since
$
\overline{x_{1;i}x_{0;i}}\subset\overline\Omega
$
for every $i$ and $\overline\Omega$ is closed, we have
$
\overline{x_1x_0}\subset\overline\Omega.
$ The uniform $C^2$-bound in \eqref{ww.unif.bd} and the compactness of
$\overline\Omega$ allow us, after passing to a further subsequence, to
find
$
\tx u_0\in C^1(\overline\Omega)
$
such that
\eqn{ww.bd.conv}
$$
\tx u_{0;i}\to\tx u_0
\qquad
\mbox{in }C^1(\overline\Omega).
$$
In particular,
$
\nr{D\tx u_0}_{C^0(\overline\Omega)}\le1-\sigma_0.
$
By defining 
$
u_0:=\left.\tx u_0\right|_{\partial\Omega},
$
we also have
$
\left.\tx u_{0;i}\right|_{\partial\Omega}
\to u_0$
uniformly in $\partial\Omega$. Furthermore, whenever
$y_0,y_1\in\partial\Omega$ and
$\overline{y_1y_0}\subset\overline\Omega$,
we have 
\eqn{ww.bd.strict}
$$
\snr{u_0(y_1)-u_0(y_0)}
\le
(1-\sigma_0)\snr{y_1-y_0}. 
$$
We next establish compactness of $\{u_i\}$ in
$C^0(\overline\Omega)$. For every $x\in\Omega$, choose
$y_x\in\partial\Omega$
such that
$\snr{x-y_x}=\dist(x,\partial\Omega).
$
The open segment joining $x$ to $y_x$ is contained in $\Omega$, and
therefore \eqref{equi} gives
$
\snr{u_i(x)-\tx u_{0;i}(y_x)}
\le
\dist(x,\partial\Omega).
$
Hence
$
\sup_i\nr{u_i}_{L^\infty(\Omega)}
\le
M+\diam(\Omega).
$
Moreover, the functions $u_i$ are locally uniformly $1$-Lipschitz in
$\Omega$. By the Ascoli-Arzel\`a theorem and a diagonal argument,
they converge, after passing to a subsequence, locally uniformly in
$\Omega$. The previous boundary estimate, together with
\eqref{ww.bd.conv}, upgrades the convergence to
\eqn{ww.unif.conv}
$$
u_i\to u
\qquad
\mbox{uniformly on }\overline\Omega
$$
for some
$
u\in C^0(\overline\Omega)
$
satisfying
$
u=u_0
$ on $\partial\Omega.
$
Passing to the limit in \eqref{equi}, we obtain
\eqn{ww.limit.sp}
$$
\snr{u(x)-u(y)}
\le
\snr{x-y}
\qquad
\mbox{whenever }
\overline{xy}\subset\overline\Omega.
$$
In particular,
$
u\in W^{1,\infty}(\Omega),
$ with $
\snr{Du}\le1$
a.e. in $\Omega.$ 
Set
$
\mu_i:=-\mathcal Mu_i.
$
Since
$
[\mu_i]_{n,1;\Omega}\le\Lambda_0
$
and
$
L(n,1)(\Omega)\hookrightarrow L^n(\Omega)
$,
the sequence $\{\mu_i\}$ is uniformly bounded in $L^n(\Omega)$.
Since $n\ge2$, after passing to a subsequence we may assume that
\eqn{ww.weak.mu}
$$
\mu_i\rightharpoonup\mu
\qquad
\mbox{weakly in }L^n(\Omega)
$$
for some $\mu\in L^n(\Omega)$. By the weak lower semicontinuity
properties of the Lorentz norm,
$
\mu\in L(n,1)(\Omega)$, 
with $
[\mu]_{n,1;\Omega}\le\Lambda_0.
$ We can now adapt the convergence argument of
\cite[Lemma 1.3]{bs82}\footnote{Indeed, each $u_i$ minimizes
$\mathcal E_{\mu_i}(\,\cdot\,;\Omega)$ with its own boundary datum.
To justify this without any control of the boost near the boundary,
let $v\in W^{1,\infty}(\Omega)$ be weakly spacelike with
$v=u_i$ on $\partial\Omega$, and set
$
v_\delta
:=u_i+(v-u_i-\delta)_+-(u_i-v-\delta)_+$, 
$ \delta>0.
$
Then $v_\delta$ is weakly spacelike and
$\supp(v_\delta-u_i)\Subset\Omega$.
Convexity of $H$ and the equation for $u_i$, tested with
$v_\delta-u_i$, give
$\mathcal E_{\mu_i}(u_i;\Omega)
\le\mathcal E_{\mu_i}(v_\delta;\Omega)$.
Since $v_\delta\to v$ uniformly and $Dv_\delta\to Dv$ a.e.,
letting $\delta\downarrow0$ yields the desired minimality
by dominated convergence, using $0\le H\le1$ on the unit ball
and $\mu_i\in L^1(\Omega)$. See also the argument at the end of the proof of Proposition \ref{connessione}.}. The uniform $L(n,1)$-bound gives uniform
$L^n$- and $L^1$-bounds for the charges, while
\eqref{ww.unif.conv} and \eqref{ww.weak.mu} allow passage to the limit
in the linear terms. In particular, the moving-set argument is
unchanged: whenever
$
E_i\to E
$
in measure and $v$ is bounded,
$$
\left|
\int_{E_i\triangle E}\mu_i v\dx
\right|
\le
c\nr{v}_{L^\infty(\Omega)}
\nr{\mu_i}_{L^n(\Omega)}
\snr{E_i\triangle E}^{1-1/n}
\to0.
$$
It follows that $u$ is the variational minimizer of
$
\mathcal E_\mu(\,\cdot\,;\Omega)
$
with boundary datum $u_0$.
By \eqref{ww.unif.conv}, letting \(i\to\infty\) in \eqref{pp.0}
gives
\eqn{ww.light.end}
$$
\snr{u(x_1)-u(x_0)}
=
\snr{x_1-x_0}.
$$
Set  
\(\gamma(t):=x_0+t(x_1-x_0)\), \(t\in[0,1]\) and  we may assume that
\(u(x_1)-u(x_0)=\snr{x_1-x_0}\). Then \eqref{ww.limit.sp} and \eqref{ww.light.end} imply
\eqn{ombra}
$$
u(\gamma(t))
=
u(x_0)+t\snr{x_1-x_0}
\qquad
\mbox{for every }t\in[0,1].
$$
If \(\overline{x_1x_0}\cap\Omega=\varnothing\), then
\(\overline{x_1x_0}\subset\partial\Omega\), and
\eqref{ww.light.end}, together with \(u=u_0\) on
\(\partial\Omega\), gives
\(\snr{u_0(x_1)-u_0(x_0)}=\snr{x_1-x_0}\), contradicting
\eqref{ww.bd.strict}. Assume therefore that
\(\overline{x_1x_0}\cap\Omega\ne\varnothing\). Then
\(I:=\{t\in(0,1):\gamma(t)\in\Omega\}\) is a nonempty open subset
of \((0,1)\), and hence contains a nonempty open interval. By
\eqref{ombra}, the graph of \(u\) contains a nontrivial light
segment entirely contained in \(\Omega\). Since \(u\) is the
variational minimizer with charge \(\mu\in L(n,1)(\Omega)\), the
results of Section \ref{dirisec} allow us to apply
Theorem \ref{al.t}. The light segment therefore extends along the
whole maximal connected component of the line through
\(x_0,x_1\) contained in \(\Omega\). Let \(y_0,y_1\in\partial\Omega\) be the endpoints of this maximal
component. By the definition of the component,
\(\overline{y_1y_0}\subset\overline\Omega\). By continuity, the light-ray identity extends to the endpoints and gives
$
\snr{u(y_1)-u(y_0)}
=
\snr{y_1-y_0}.
$
Since $u=u_0$ on $\partial\Omega$, we obtain
$
\snr{u_0(y_1)-u_0(y_0)}
=
\snr{y_1-y_0},
$
which contradicts \eqref{ww.bd.strict}.
\end{proof}

\begin{lemma}[Nonlinear correctors via contractions]
\label{correctors}
Let $\mathcal U\subset\mathbb R^n$ be a $C^2$-regular, bounded domain, 
let $\mathcal w\in C^2(\overline{\mathcal U})$, and assume that
\eqn{ww.background.f}
$$
-\mathcal M \mathcal w=\mathfrak f
\qquad
\mbox{in }\mathcal U
$$
for some $\mathfrak f\in L^\infty(\mathcal U)$. Suppose moreover that
\eqn{ww.}
$$
\nr{D\mathcal w}_{C^0(\overline{\mathcal U})}
\leq1-\mathcal s
\qquad
\mbox{for some }\mathcal s\in(0,1).
$$
Let $\mu_*\in L(n,1)(\mathcal U)$. There exists a constant
$
\ti c_* =\ti c_* (n,\mathcal s,\nr{D\mathcal w}_{C^1(\overline{\mathcal U})},\mathcal U)
\geq1
$
such that, if
\eqn{ww.17}
$$
\ti c_*[\mu_*]_{n,1;\mathcal U}\leq1,
$$
then there exists
$
\mathcal b \in
W^{2;n,1}(\mathcal U) \cap W^{1;n,1}_0(\mathcal U)
$
solving
\eqn{pd.ww}
$$
\begin{cases}
-\mathcal M(\mathcal w+\mathcal b)
=
\mathfrak f+\mu_*
&
\mbox{in }\mathcal U,
\\[1mm]
\mathcal b=0
&
\mbox{on }\partial\mathcal U.
\end{cases}
$$
Moreover,
\eqn{ww.15}
$$
\nr{\mathcal b}_{W^{2;n,1}(\mathcal U)}
\leq
c[\mu_*]_{n,1;\mathcal U},
\qquad
\nr{D\mathcal b}_{L^\infty(\mathcal U)}
\leq
\frac{\mathcal s}{4},
$$
where
$c$ depends only on 
$
n,\mathcal s,
\nr{D\mathcal w}_{C^1(\overline{\mathcal U})},
\mathcal U.
$
The solution is unique among the solutions satisfying the gradient
bound in \eqref{ww.15}. Finally,
\eqn{ww.18}
$$
\mu_*\geq0
\quad\Longrightarrow\quad
\mathcal b\geq0,
\qquad
\mu_*\leq0
\quad\Longrightarrow\quad
\mathcal b\leq0.
$$
\end{lemma}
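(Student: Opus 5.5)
We look for $\mathcal b$ as a fixed point of a contraction on a small ball of $X:=W^{2;n,1}(\mathcal U)\cap W^{1;n,1}_0(\mathcal U)$. Since $\mathcal w\in C^2(\overline{\mathcal U})$ with $\nr{D\mathcal w}_{C^0}\le 1-\mathcal s$, the linearization of $\mathcal M$ at $D\mathcal w$,
$$
L\mathcal b:=-\diver\big(\partial^2H(D\mathcal w)D\mathcal b\big)=-\partial^2H(D\mathcal w):D^2\mathcal b-\big(\partial^3H(D\mathcal w)D^2\mathcal w\big)\cdot D\mathcal b,
$$
is uniformly elliptic. By \eqref{0.1.1}, its ellipticity constants depend only on $n,\mathcal s$, and its coefficients are $C^0$ in the principal part and bounded in the lower-order part, with bounds depending only on $\nr{D\mathcal w}_{C^1}$. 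By the divergence form and the maximum principle, $L$ with zero Dirichlet data is injective. Classical $W^{2,p}$ theory for nondivergence operators with continuous coefficients on $C^2$ domains, interpolated (Marcinkiewicz/real interpolation between $p_0<n<p_1$) to the Lorentz scale, gives
$$
\nr{\mathcal b}_{W^{2;n,1}}\le c\,[L\mathcal b]_{n,1;\mathcal U},\qquad c=c(n,\mathcal s,\nr{D\mathcal w}_{C^1},\mathcal U).
$$
Here $L:X\to L(n,1)$ is an isomorphism.

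Writing $-\mathcal M(\mathcal w+\mathcal b)=\mathfrak f+L\mathcal b-\diver\,\mathcal R(\mathcal b)$, where
$$
\mathcal R(\mathcal b):=\partial H(D\mathcal w+D\mathcal b)-\partial H(D\mathcal w)-\partial^2H(D\mathcal w)D\mathcal b,
$$
problem \eqref{pd.ww} becomes $\mathcal b=T\mathcal b:=L^{-1}\big(\mu_*+\diver\,\mathcal R(\mathcal b)\big)$. The key embedding is $W^{2;n,1}\cap W^{1;n,1}_0\hookrightarrow W^{1,\infty}$ (Stein's theorem: $D^2\mathcal b\in L(n,1)\Rightarrow D\mathcal b$ bounded and continuous), with $\nr{D\mathcal b}_{L^\infty}\le c_S\nr{\mathcal b}_X$.

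Work on $\mathcal B_\delta:=\{\mathcal b\in X:\nr{\mathcal b}_X\le\delta\}$ with $\delta$ chosen so that $c_S\delta\le\mathcal s/4$. Then $|D\mathcal w+D\mathcal b|\le1-3\mathcal s/4$, so $H$ is smooth with bounds depending on $\mathcal s$ there. Expanding,
$$
\diver\,\mathcal R(\mathcal b)=\big[\partial^2H(D\mathcal w+D\mathcal b)-\partial^2H(D\mathcal w)\big]:D^2\mathcal b+\big[\partial^2H(D\mathcal w+D\mathcal b)-\partial^2H(D\mathcal w)-\partial^3H(D\mathcal w)D\mathcal b\big]\cdot D^2\mathcal w .
$$
Hence
$$
[\diver\,\mathcal R(\mathcal b)]_{n,1}\le c\,\nr{D\mathcal b}_\infty\big([D^2\mathcal b]_{n,1}+\nr{D\mathcal b}_\infty\big)\le c\,\nr{\mathcal b}_X^2,
$$
and similarly
$$
[\diver\,\mathcal R(\mathcal b_1)-\diver\,\mathcal R(\mathcal b_2)]_{n,1}\le c\,\delta\,\nr{\mathcal b_1-\mathcal b_2}_X .
$$
Thus
$$
\nr{T\mathcal b}_X\le c[\mu_*]_{n,1}+c\delta^2,\qquad \nr{T\mathcal b_1-T\mathcal b_2}_X\le c\delta\nr{\mathcal b_1-\mathcal b_2}_X .
$$
Fixing $\delta$ small (depending only on the listed data) and then requiring $\ti c_*[\mu_*]_{n,1}\le1$ with $\ti c_*$ large so that $c[\mu_*]_{n,1}\le\delta/2$, Banach's theorem yields a unique fixed point in $\mathcal B_\delta$. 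Moreover $\nr{\mathcal b}_X\le 2c[\mu_*]_{n,1}$, giving both parts of \eqref{ww.15}.

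Uniqueness among all solutions with $\nr{D\mathcal b}_\infty\le\mathcal s/4$ (not just those in $\mathcal B_\delta$) follows differently: two such solutions both solve a divergence equation in which $D(\mathcal w+\mathcal b_i)$ stays in $\{|z|\le1-3\mathcal s/4\}$. Their difference $d$ satisfies the linear uniformly elliptic equation $-\diver(A\,Dd)=0$, with
$$
A=\int_0^1\partial^2H\big(D\mathcal w+D\mathcal b_2+t(D\mathcal b_1-D\mathcal b_2)\big)\,\dd t,
$$
and $d=0$ on $\partial\mathcal U$. Testing with $d$ gives $d=0$.

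The sign property \eqref{ww.18} follows the same way. The two functions $\mathcal w$ and $\mathcal w+\mathcal b$ satisfy $-\mathcal M(\mathcal w+\mathcal b)-(-\mathcal M\mathcal w)=\mu_*\ge0$, which we rewrite as $-\diver(A\,D\mathcal b)=\mu_*$ with $A$ uniformly elliptic as above and $\mathcal b=0$ on $\partial\mathcal U$. The weak maximum principle (testing with $\mathcal b_-$) gives $\mathcal b\ge0$; the case $\mu_*\le0$ is symmetric.

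The main obstacle is the linear estimate in Lorentz spaces: establishing $W^{2;n,1}$ regularity for $L$ with only continuous leading coefficients, together with uniform constants. I would first try to obtain it by real interpolation of the $W^{2,p}$ Calderón--Zygmund estimates, which are valid for all $1<p<\infty$ with continuous coefficients on $C^{1,1}$ domains. Since $L$ is an isomorphism $W^{2,p}\cap W^{1,p}_0\to L^p$ for $p$ near $n$ by uniqueness, interpolation via the $K$-method transfers this to $L(n,1)$. The remaining steps are routine Taylor estimates.
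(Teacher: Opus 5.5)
Your proposal is correct and follows the paper's proof essentially step by step. Both arguments linearize at $\mathcal w$ and obtain $W^{2,p}$ estimates that are interpolated to $W^{2;n,1}$. Both then use the endpoint Lorentz--Sobolev embedding into $W^{1,\infty}$, quadratic bounds on the remainder, and a Banach fixed point on a small ball. Uniqueness and the sign property come in both cases from a uniformly elliptic averaged matrix.

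The differences are cosmetic. You write the remainder as the divergence of $\partial H(D\mathcal w+D\mathcal b)-\partial H(D\mathcal w)-\partial^2H(D\mathcal w)D\mathcal b$, which expands to exactly the paper's terms (I)+(II). You also prove uniqueness with the averaged matrix instead of by strict monotonicity of $\partial H$, which amounts to the same thing.
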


\begin{proof}
The proof splits into three steps. 

{\em Step 1: Linearized operator and nonlinear remainder}.
Set
\(
\tx H(x):=\partial^2H(D\mathcal w(x)).
\)
By \eqref{ww.}, the matrix $\tx H(\cdot)$ is symmetric and uniformly
elliptic. More precisely, there exists
$c_{\mathcal s}=c_{\mathcal s}(n,\mathcal s)\geq1$ such that
\eqn{ww.2}
$$
|\xi|^2
\leq
\langle\tx H(x)\xi,\xi\rangle
\leq
c_{\mathcal s}|\xi|^2
\qquad
\mbox{for every }
x\in\overline{\mathcal U},
\quad
\xi\in\mathbb R^n.
$$
Moreover,
\eqn{ww.1}
$$
\nr{\tx H}_{C^0(\overline{\mathcal U})}
\leq c,
\qquad
\nr{D\tx H}_{C^0(\overline{\mathcal U})}
\leq
c(n,\mathcal s)
\nr{D^2\mathcal w}_{C^0(\overline{\mathcal U})},
$$
where $c=c(n,\mathcal s)$. Expanding
$-\diver(\tx H(x)D\varphi)$, define
\begin{flalign*}
\mathcal{L}_{\mathcal{w}}\varphi&:=-\sum_{i,j=1}^{n}\partial^{2}_{ij}H(D\mathcal{w})D_{ij}\varphi-\sum_{j=1}^{n}\left(\sum_{i,k=1}^{n}\partial^{3}_{ij,k}H(D\mathcal{w})D_{ik}\mathcal{w}\right)D_{j}\varphi\nonumber \\
&=:-\sum_{i,j=1}^{n}\tx{H}_{ij}(x)D_{ij}\varphi-\sum_{j=1}^{n}\tx{J}_{j}(x)D_{j}\varphi.
\end{flalign*}
Note that 
$
\nr{\tx{J}}_{C^0(\mathcal{\overline U})}\le c(n,\mathcal{s})\nr{D^{2}\mathcal{w}}_{C^0(\mathcal{\overline U})}.
$
Using \eqref{ww.2}, \eqref{ww.1}, and the above bound
on $\tx{J}$, we may apply
\cite[Theorem 9.15 and Lemma 9.17, pp.~241--242]{gt01}.
Thus, 
for every \(p\in(1,\infty)\) and every \(\tx{f}\in L^p(\mathcal U)\), we can uniquely solve $\mathcal {L}_{\mathcal w}v=\tx{f}$ in the Dirichlet class $W^{1,p}_0(\mathcal U)$ with  
\eqn{dirip}
$$
\|v\|_{W^{2,p}(\mathcal U)}\le c\|\tx{f}\|_{L^p(\mathcal U)},
$$
where $c$ depends only on $n,p,\nr{D\mathcal w}_{C^1(\mathcal{\overline U})}, \mathcal s, \mathcal U$. 
With abuse of notation we denote by $\mathcal {L}_{\mathcal w}^{-1}$ the linear operator such that 
$\mathcal {L}_{\mathcal w}^{-1}\tx{f} =v$ (the abuse is linked to the fact that, in principle, such an operator should depend on the exponent $p$, which is in fact not the case). By \eqref{dirip} and standard real interpolation,
we obtain
\eqn{ww.4}
$$
\nr{\mathcal {L}_{\mathcal w}^{-1}\tx{f}}_{W^{2;n,1}(\mathcal U)}
\le
c_{\rm inv}[\tx{f}]_{n,1;\mathcal U}
\quad
\mbox{for every }\tx{f}\in L(n,1)(\mathcal U),
$$
where $c_{\rm inv}\geq 1$ depends on $n,\nr{D\mathcal w}_{C^1(\mathcal{\overline U})}, \mathcal s, \mathcal U$ (see \rif{normalo} for the definition of the right-hand side). 
We then linearize around
\eqn{ww.5}
$$
v\in W^{2;n,1}(\mathcal{U})\cap W^{1;n,1}_{0}(\mathcal{U}) \ \ \mbox{such that} \ \ \nr{Dv}_{L^{\infty}(\mathcal{U})}\le \mathcal{s}/4.
$$
By \eqref{ww.} and \eqref{ww.5},
\(
|D\mathcal{w}+tDv|
\le 1-3\mathcal{s}/4
\)
for every \(t\in[0,1]\). Hence all the derivatives of \(H\) up to
order four are uniformly bounded on the corresponding compact
subset of the unit ball. 
Since $-\mathcal M \mathcal w=\mathfrak f$, we can write
\begin{eqnarray}\label{ww.8}
-\mathcal M(\mathcal w+v)-\mathfrak f
&=&
-\mathcal M(\mathcal w+v)+\mathcal M \mathcal w
=
\mathcal L_{\mathcal w}v
+
\left(
-\mathcal M(\mathcal w+v)
+\mathcal M \mathcal w
-\mathcal L_{\mathcal w}v
\right)
\nonumber\\
&=:&
\mathcal L_{\mathcal w}v
+
\mathcal R_{\mathcal w}(v).
\end{eqnarray}
Recalling that $\partial^{3}_{ij,k}H$ is symmetric in all indices, we split the nonlinear remainder $\mathcal{R}_{\mathcal{w}}(v)$ as 
\begin{flalign}\label{ww.7}
\mathcal{R}_{\mathcal{w}}(v)&=-\sum_{i,j=1}^{n}\big(\partial^{2}_{ij}H(D\mathcal{w}+Dv)-\partial^{2}_{ij}H(D\mathcal{w})\big)D_{ij}v\nonumber \\
&\quad -\sum_{i,j=1}^{n}\big(\partial^{2}_{ij}H(D\mathcal{w}+Dv)-\partial^{2}_{ij}H(D\mathcal{w})- \sum_{k=1}^{n}\partial^{3}_{ij,k}H(D\mathcal{w})D_{k}v\big)D_{ij}\mathcal{w}\nonumber \\
&=:\mbox{(I)}+\mbox{(II)}.
\end{flalign}
For the first term, the fundamental theorem of calculus gives
\[
\partial^2_{ij}H(D\mathcal{w}+Dv)
-
\partial^2_{ij}H(D\mathcal{w})
=
\sum_{k=1}^{n}
\int_0^1
\partial^3_{ij,k}H(D\mathcal{w}+tDv)
D_kv\dt.
\]
We conclude with 
\eqn{del1}
$$
|\mathrm{(I)}|
\le
c(n,\mathcal{s})|Dv||D^2v|.
$$
For the second term, Taylor's formula with integral remainder yields
\[
\begin{aligned}
&
\partial^2_{ij}H(D\mathcal{w}+Dv)
-
\partial^2_{ij}H(D\mathcal{w})
-
\sum_{k=1}^{n}
\partial^3_{ij,k}H(D\mathcal{w})D_kv
\\
&\qquad=
\sum_{k,\ell=1}^{n}
\int_0^1
(1-t)\,
\partial^4_{ij,k\ell}H(D\mathcal{w}+tDv)
D_kv\,D_\ell v\dt.
\end{aligned}
\]
Therefore,
\eqn{del2}
$$
|\mathrm{(II)}| \le c(n,\mathcal{s})|Dv|^2|D^2\mathcal{w}|.
$$
Combining the estimates in \eqref{del1} and \eqref{del2} with \rif{ww.7}, we obtain
\eqn{rem}
$$
|\mathcal R_{\mathcal w}(v)|
\le
c|Dv||D^2v|
+
c|Dv|^2|D^2\mathcal w|
$$
with $c\equiv c(n,\mathcal{s})$. By the endpoint Sobolev--Lorentz embedding \cite{cp98}, we have
\eqn{immersione}
$$
\nr{Dv}_{L^\infty(\mathcal U)}
\le
c_{\rm emb}\nr{v}_{W^{2;n,1}(\mathcal U)}, \quad c_{\rm emb} \equiv c_{\rm emb}(n,\mathcal U)\geq 1\,.
$$
Using Young's inequality in \eqref{rem}, we get
\begin{flalign*}
[\mathcal{R}_{\mathcal{w}}(v)]_{n,1;\mathcal{U}}&\le c\nr{Dv}_{L^{\infty}(\mathcal{U})}[D^{2}v]_{n,1;\mathcal{U}}+c\nr{Dv}_{L^{\infty}(\mathcal{U})}^{2}\nr{D^{2}\mathcal{w}}_{L^{\infty}(\mathcal{U})}[\mathds 1_{\mathcal U}]_{n,1;\mathcal U} \nonumber \\
&\le c \nr{Dv}_{L^{\infty}(\mathcal{U})}^{2}+c[D^{2}v]_{n,1;\mathcal{U}}^{2} 
\end{flalign*}
and therefore, by means of \rif{immersione}, we conclude with 
\eqn{ww.10}
$$
[\mathcal{R}_{\mathcal{w}}(v)]_{n,1;\mathcal{U}}\leq c_{\rm rem} \nr{v}_{W^{2;n,1}(\mathcal{U})}^{2},
$$
where  $c_{\rm rem}\geq 1$ depends on $n,\mathcal{s},\diam (\mathcal U)\nr{D^2\mathcal{w}}_{C^{0}(\bar{\mathcal{U}})},c_{\rm emb}$\footnote{For such a constant dependence note that
$
[\mathds 1_{\mathcal U}]_{n,1;\mathcal U}
\leq c(n)|\mathcal U|^{1/n}
\leq c(n)\diam(\mathcal U).
$}. A similar argument then yields
\eqn{ww.13}
$$
[\mathcal{R}_{\mathcal{w}}(v_{1})-\mathcal{R}_{\mathcal{w}}(v_{2})]_{n,1;\mathcal{U}}\le c_{\rm diff}\left(\nr{v_{1}}_{W^{2;n,1}(\mathcal{U})}+\nr{v_{2}}_{W^{2;n,1}(\mathcal{U})}\right)\nr{v_{1}-v_{2}}_{W^{2;n,1}(\mathcal{U})},
$$
which holds for all $v_{1},v_{2}\in W^{2;n,1}(\mathcal{U})\cap W^{1;n,1}_{0}(\mathcal{U})$ such that $\nr{Dv_{i}}_{L^{\infty}(\bar{\mathcal{U}})}\le \mathcal{s}/4,$ $i\in \{1,2\}$, where $c_{\rm diff}\geq 1$ again depends on $n,\mathcal{s},\diam (\mathcal U)\nr{D^2\mathcal{w}}_{C^{0}(\bar{\mathcal{U}})}$ and $c_{\rm emb}$.
\medskip

{\em Step 2: Fixed-point argument}. 
Recall the meaning of the constants $c_{\rm inv}, c_{\rm emb}, c_{\rm rem}, c_{\rm diff}$; these have been defined in \rif{ww.4}, \rif{immersione}, \rif{ww.10} and \rif{ww.13}, respectively, and altogether ultimately depend on $n,\mathcal{s},\nr{D\mathcal{w}}_{C^1(\mathcal{\overline U})}$ and $\mathcal{U}$.  
Set
\(
X:=
W^{2;n,1}(\mathcal U)\cap W^{1;n,1}_0(\mathcal U)
\)
and take a number $\rho$ such that 
\eqn{ww.rho}
$$
0<\rho
\le
\frac18
\min
\left\{
\frac{\mathcal s}{c_{\rm emb}},
\frac1{c_{\rm inv}c_{\rm rem}},
\frac1{c_{\rm inv}c_{\rm diff}}
\right\}=:\rho_{\rm uni}. 
$$
Consider the closed ball
$
\mathbb B_{\rho}^X
:=\{v\in X:\nr{v}_{W^{2;n,1}(\mathcal U)}\le\rho\}.
$
It is a complete metric  space  and, by \eqref{ww.rho},
$$
\nr{Dv}_{L^\infty(\mathcal U)}
\le
c_{\rm emb}\rho
\le
\mathcal s/4
\qquad
\mbox{for every }v\in\mathbb B_{\rho}^X.
$$
For \(v\in\mathbb B_{\rho}^X\), define
$$
\mathcal G_{\mathcal w}(v;\mu_{*})
:=
\mathcal {L}_{\mathcal w}^{-1}
(\mu_{*}-\mathcal R_{\mathcal w}(v)).
$$
We choose the constant $\widetilde c_*$ appearing
in \eqref{ww.17} as
$
\widetilde c_*:=2c_{\rm inv}/\rho.
$
With this choice, condition \eqref{ww.17} becomes
$
c_{\rm inv}[\mu_*]_{n,1;\mathcal U}\le \rho/2.
$
Then, using \eqref{ww.4}, \eqref{ww.10}, and \eqref{ww.rho}, we obtain
\eqn{ww.14}
$$
\begin{aligned}
\nr{\mathcal G_{\mathcal w}(v;\mu_{*})}
_{W^{2;n,1}(\mathcal U)}
&\le
c_{\rm inv}
[\mu_{*}]_{n,1;\mathcal U}
+c_{\rm inv}
[\mathcal R_{\mathcal w}(v)]_{n,1;\mathcal U}
\le
\frac{\rho}{2}
+
c_{\rm inv}c_{\rm rem}\rho^2
\le
\frac{5\rho}{8}
<
\rho.
\end{aligned}
$$
Thus \(\mathcal G_{\mathcal w}\) maps
\(\mathbb B_{\rho}^X\) into itself. Similarly, by \eqref{ww.4}, \eqref{ww.13}, and
\eqref{ww.rho},
$$
\begin{aligned}
&
\nr{
\mathcal G_{\mathcal w}(v_1;\mu_{*})
-
\mathcal G_{\mathcal w}(v_2;\mu_{*})
}_{W^{2;n,1}(\mathcal U)}
\le
2c_{\rm inv}c_{\rm diff}\rho
\nr{v_1-v_2}_{W^{2;n,1}(\mathcal U)}
\le
\frac14
\nr{v_1-v_2}_{W^{2;n,1}(\mathcal U)}
\end{aligned}
$$
holds whenever \(v_1,v_2\in\mathbb B_{\rho}^X\). 
Hence \(\mathcal G_{\mathcal w}\) is a contraction on
\(\mathbb B_{\rho}^X\). The Banach--Caccioppoli fixed-point theorem gives a unique
\(\mathcal b\in\mathbb B_{\rho}^X\) such that
\(
\mathcal b=\mathcal G_{\mathcal w}(\mathcal b;\mu_{*}).
\)
By \eqref{ww.8}, this is equivalent to \rif{pd.ww}. Proceeding as for \rif{ww.14}, the fixed-point property  and \eqref{ww.10} also give
$$
\nr{\mathcal b}_{W^{2;n,1}(\mathcal U)} = \nr{\mathcal G_{\mathcal w}(\mathcal b;\mu_{*})}
_{W^{2;n,1}(\mathcal U)}
\le
c_{\rm inv}[\mu_{*}]_{n,1;\mathcal U}
+
c_{\rm inv}c_{\rm rem}\nr{\mathcal b}_{W^{2;n,1}(\mathcal U)}^2.
$$
Since
\(
\nr{\mathcal b}_{W^{2;n,1}(\mathcal U)}\le\rho
\)
and \(c_{\rm inv}c_{\rm rem}\rho\le1/8\), the last term can be absorbed, yielding
\rif{ww.15}. The solution is unique in the class determined by the last
gradient bound. Indeed, if \(\mathcal b_1,\mathcal b_2\) are two
such solutions, subtracting their weak formulations and testing
with \(\mathcal b_1-\mathcal b_2\) gives
$$
\begin{aligned}
0
={}&
\int_{\mathcal U}
\left\langle
\partial H(D\mathcal w+D\mathcal b_1)
-
\partial H(D\mathcal w+D\mathcal b_2),
D\mathcal b_1-D\mathcal b_2
\right\rangle\dx.
\end{aligned}
$$
The strict monotonicity of \(\partial H\) implies
\(D\mathcal b_1=D\mathcal b_2\) a.e. in \(\mathcal U\), and the
zero boundary trace gives \(\mathcal b_1=\mathcal b_2\).

\medskip

{\em Step 3: Sign of the corrector}.
Finally, \eqref{ww.18} follows from the weak comparison principle.
Indeed, by \eqref{ww.} and \eqref{ww.15},
$
\nr{D\mathcal w+D\mathcal b}_{L^\infty(\mathcal U)}
\le
1-3\mathcal s/4,
$
and, since
$
\nr{D\mathcal w}_{L^\infty(\mathcal U)}\le1-\mathcal s,
$
we have
$
\snr{D\mathcal w+tD\mathcal b}
\le
1-3\mathcal s/4
$
for every $t\in[0,1]$.
Hence, setting
$$
\tx{H}_{\mathcal b}(x)
:=
\int_0^1
\partial^2H(D\mathcal w(x)+tD\mathcal b(x))\dt,
$$
the matrix \(\tx{H}_{\mathcal b}\) is uniformly elliptic in
\(\mathcal U\). Subtracting
\(-\mathcal M \mathcal w =\mathfrak f\) from
\(-\mathcal M(\mathcal w+\mathcal b)=\mathfrak f+\mu_*\), we obtain
$
-\diver \, (\tx{H}_{\mathcal b}D\mathcal b)
=
\mu_*$
in $\mathcal U$
with 
$\mathcal b=0$
on $\partial \mathcal U$, and \rif{ww.18} 
follows by the weak comparison principle. 
\end{proof}
We record the uniform version of the preceding construction needed below. 
\begin{corollary}[Uniform correctors on boundary caps]
\label{correctors.uniform}
Let \(\Omega\subset\mathbb R^n\) be a $C^2$-regular, bounded domain.
There exists \(r_\Omega>0\) with the following property. For every
\(r\in(0,r_\Omega]\), one can choose, for each
\(x_0\in\partial\Omega\), a \(C^2\)-regular boundary cap
\(\mathcal U_{x_0}\) such that
$$
\Omega\cap B_{r/2}(x_0)
\subset
\mathcal U_{x_0}
\subset
\Omega\cap B_r(x_0),
\qquad
\partial\mathcal U_{x_0}\cap B_{r/2}(x_0)
=
\partial\Omega\cap B_{r/2}(x_0),
$$
and such that the family
\(\{\mathcal U_{x_0}\}_{x_0\in\partial\Omega}\) has uniformly controlled
\(C^2\)-geometry. More precisely, the caps may be constructed from a
fixed reference cap in boundary coordinates, so that, after
translation, rotation, and dilation by \(1/r\), their \(C^2\)
boundary charts have uniform bounds only depending on \(r\) and
\(\Omega\).
Let \(\mathcal s\in(0,1)\) and \(K>0\). For every
\(x_0\in\partial\Omega\), let
\(\mathcal w_{x_0}\in C^2(\overline{\mathcal U}_{x_0})\) satisfy
\eqn{ww.uniform.background}
$$
-\mathcal M \mathcal w_{x_0}=\mathfrak f_{x_0}
\quad\mbox{in }\mathcal U_{x_0},
\qquad
\nr{D\mathcal w_{x_0}}_{L^\infty(\mathcal U_{x_0})}
\leq1-\mathcal s,
\qquad
r\nr{D^2\mathcal w_{x_0}}_{L^\infty(\mathcal U_{x_0})}
\leq K,
$$
where \(\mathfrak f_{x_0}\in L^\infty(\mathcal U_{x_0})\). Then there exist constants $
\eps_{\rm \partial} \in (0,1)$ and $c_{\rm uni}\geq 1$, 
only depending on $n,\mathcal s,K,r,\Omega$, 
such that the following holds. If
\(\mu_{x_0}\in L(n,1)(\mathcal U_{x_0})\) satisfies
$$
[\mu_{x_0}]_{n,1;\mathcal U_{x_0}}
\leq
\eps_{\rm \partial},
$$
then there exists
$
\mathcal b_{x_0}
\in
W^{2;n,1}(\mathcal U_{x_0})
\cap
W^{1;n,1}_0(\mathcal U_{x_0})
$
solving \eqref{pd.ww} with $\mathcal b\equiv  \mathcal b_{x_0}$, $\mathfrak f\equiv  \mathfrak f_{x_0}$ and $\mu_*\equiv  \mu_{x_0}$, 
and satisfying \eqref{ww.15}, with $c_{\rm uni}$ in place of $c$. Moreover, the sign conclusion \eqref{ww.18} holds.
\end{corollary}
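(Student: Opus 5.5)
The plan is to reduce Corollary \ref{correctors.uniform} to Lemma \ref{correctors} via a scaling argument, so that every constant there becomes uniform in $x_0$. First, the caps. Since $\Omega$ is $C^2$ and bounded, there is $r_\Omega>0$ such that for every $x_0\in\partial\Omega$ the boundary near $x_0$ is, in rotated coordinates, the graph of a $C^2$ function $\psi_{x_0}$ over $B^{n-1}_{2r_\Omega}$, with $\psi_{x_0}(0)=0$, $D\psi_{x_0}(0)=0$ and $\|D^2\psi_{x_0}\|_\infty\le\kappa_\Omega$. Fix once and for all a smooth reference domain $\mathcal U_{\rm ref}\subset B_1$ in the flat setting: its boundary is the union of a flat piece containing $B_{1/2}\cap\{y_n=0\}$ and a smooth cap closing it off inside $B_1$. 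For $r\le r_\Omega$ define $\mathcal U_{x_0}$ as the image of $\mathcal U_{\rm ref}$ under dilation by $r$, followed by the boundary-straightening map $(y',y_n)\mapsto(y',y_n+\psi_{x_0}(y'))$ and then the rigid motion. In the rescaled variable $y=(x-x_0)/r$ the flattening map has $C^2$ norm controlled by $1+r\kappa_\Omega$, which is uniform in $x_0$. Possibly after shrinking $r_\Omega$, this yields the stated inclusions and a family $\{\mathcal U_{x_0}\}$ whose rescaled versions $\widehat{\mathcal U}_{x_0}:=(\mathcal U_{x_0}-x_0)/r$ lie in a precompact family of $C^2$ domains, with charts bounded in terms of $r,\Omega$.

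Next, rescale the problem. Set $\widehat{\mathcal w}(y):=r^{-1}\mathcal w_{x_0}(x_0+ry)$ and $\widehat\mu(y):=r\mu_{x_0}(x_0+ry)$, and similarly $\widehat{\mathfrak f}$. The operator $\mathcal M$ commutes with this scaling, since it is invariant under the Lorentzian-type dilation $u\mapsto r^{-1}u(r\cdot)$ with the charge picking up a factor $r$. Consequently $-\mathcal M\widehat{\mathcal w}=\widehat{\mathfrak f}$, $\|D\widehat{\mathcal w}\|_\infty\le 1-\mathcal s$, and $\|D^2\widehat{\mathcal w}\|_\infty=r\|D^2\mathcal w_{x_0}\|_\infty\le K$. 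The norm $L(n,1)$ is scale invariant under $\mu\mapsto r\mu(r\cdot)$, because $[\cdot]_{n,1}$ carries the same homogeneity as $L^n$. Hence $[\widehat\mu]_{n,1;\widehat{\mathcal U}_{x_0}}=[\mu_{x_0}]_{n,1;\mathcal U_{x_0}}$. Likewise, $\|D\mathcal b\|_\infty$ and $[D^2\mathcal b]_{n,1}$ are invariant, so the bounds in \eqref{ww.15} transfer back to the original scale, with the lower-order parts of the $W^{2;n,1}$ norm changing only by powers of $r$, which are absorbed into $c_{\rm uni}$.

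The main step, and the real obstacle, is to check that the constants $c_{\rm inv}$, $c_{\rm emb}$, $c_{\rm rem}$ and $c_{\rm diff}$ in the proof of Lemma \ref{correctors} are uniform over the family $\widehat{\mathcal U}_{x_0}$, given only $\|D\widehat{\mathcal w}\|_{C^1}\le 1+K$ and $\mathcal s$. The constants $c_{\rm rem}$ and $c_{\rm diff}$ depend only on $n,\mathcal s,K$ and $\diam\widehat{\mathcal U}\le 2$. The constant $c_{\rm emb}$ comes from the Sobolev--Lorentz embedding on a uniformly $C^2$ (in particular uniformly Lipschitz and bounded) domain, so it is uniform. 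For $c_{\rm inv}$, the Gilbarg--Trudinger $W^{2,p}$ estimates (Theorem 9.15, Lemma 9.17) depend on the domain only through the $C^{1,1}$ character of the boundary charts, on the ellipticity constant $c_{\mathcal s}$, and on the sup norms and moduli of continuity of the coefficients. The drift $\tx J$ is bounded by $c(n,\mathcal s)K$. The principal coefficients $\partial^2H(D\widehat{\mathcal w})$ are Lipschitz with constant $c(n,\mathcal s)K$, so their modulus of continuity is uniform. Two points need care. One is uniqueness in Lemma 9.17: it uses the maximum principle, which works uniformly because the zero-order term vanishes. The other is the real interpolation step from $L^p$ to $L(n,1)$, whose constant is uniform once the two $L^{p_i}\to W^{2,p_i}$ bounds are. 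If one prefers to avoid tracking the explicit dependence, a compactness fallback also works. Suppose uniformity failed along some sequence of $x_0$. The domains would then converge in $C^{1,1}$, after passing to a subsequence in $C^{1,\alpha}$, and the coefficients would converge uniformly, while the estimate constants blew up. Running the standard contradiction argument for the $W^{2,p}$ estimate on the limit domain rules this out.

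With uniform constants, $\rho_{\rm uni}$ in \eqref{ww.rho} becomes uniform, and so does $\widetilde c_*=2c_{\rm inv}/\rho$. Set $\eps_\partial:=1/\widetilde c_*$. Lemma \ref{correctors} then gives $\widehat{\mathcal b}$ on $\widehat{\mathcal U}_{x_0}$, and $\mathcal b_{x_0}(x):=r\widehat{\mathcal b}((x-x_0)/r)$ solves \eqref{pd.ww} on $\mathcal U_{x_0}$, satisfies \eqref{ww.15} with $c_{\rm uni}$, and keeps the sign property \eqref{ww.18}, since the scaling preserves signs.
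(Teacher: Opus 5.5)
Your proposal is correct and follows the paper's argument. The whole point is that the constants $c_{\rm inv}$, $c_{\rm emb}$, $c_{\rm rem}$, $c_{\rm diff}$ from Lemma \ref{correctors} are uniform over the family of caps, after which $\eps_\partial=\rho_{\rm uni}/(2c_{\rm inv})$ and the fixed-point argument run verbatim; your rescaling to unit scale, via the invariance of $\mathcal M$ and of $[\cdot]_{n,1}$ under $u\mapsto r^{-1}u(x_0+r\,\cdot)$, $\mu\mapsto r\mu(x_0+r\,\cdot)$, is only a normalization (the paper works directly at scale $r$ with constants depending on $K/r$), and your compactness fallback spells out the uniformity of the $W^{2,p}$ inverse constant that the paper simply asserts.
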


\begin{proof}

Since \(\partial\Omega\) is compact and the caps
\(\mathcal U_{x_0}\) are constructed at the fixed scale \(r\), their
quantitative \(C^2\)-geometry is controlled uniformly with respect to
\(x_0\in\partial\Omega\). Consequently, the constant $c_{\rm emb}$ in the endpoint Sobolev-Lorentz
embedding \rif{immersione} can be chosen independently of \(x_0\in \partial \Omega\).
On the other hand, by \eqref{ww.uniform.background}, 
it follows that the coefficient matrices
$
\partial^2H(D\mathcal w_{x_0})
$
are uniformly elliptic, with ellipticity constants only depending on
\(n\) and \(\mathcal s\), and that their \(W^{1,\infty}\)-norms are bounded
uniformly in \(x_0\), in terms of \(n,\mathcal s,K/r\). Similarly, the
coefficients of the linearized operators
$
\mathcal L_{\mathcal w_{x_0}}
$
satisfy common ellipticity and regularity bounds. Therefore, the global \(W^{2,p}\)-estimates used in the proof of
Lemma \ref{correctors}, at two fixed exponents
\(1<p_0<n<p_1<\infty\), give a common inverse constant
\(c_{\rm inv}\) via real interpolation. The estimates for the nonlinear
remainder and its difference give common constants
\(c_{\rm rem}\) and \(c_{\rm diff}\). Summarizing, the constants
\(c_{\rm inv}\), \(c_{\rm rem}\), and \(c_{\rm diff}\) can be chosen
uniformly with respect to \(x_0\in\partial\Omega\), only depending on
\(n,\mathcal s,K/r,r,\Omega\), while \(c_{\rm emb}\) depends only on
\(n,r,\Omega\). Here the dependence on \(r\) and \(\Omega\) accounts
for the common quantitative geometry of the caps. With such a uniformized choice, define $\rho_{\rm uni}$ as in \rif{ww.rho}, with the present uniform meaning, and finally define
$
\eps_{\rm \partial}
:=
\rho_{\rm uni}/(2c_{\rm inv}).
$
The fixed-point argument in Lemma \ref{correctors} can then be
performed on every \(\mathcal U_{x_0}\) with the same radius
\(\rho_{\rm uni}\). It yields \rif{ww.15} with $c_{\rm uni}:=2c_{\rm inv}$ as in the statement of the Corollary. 
\end{proof}
\begin{proposition}[Strict spacelikeness at the boundary]
\label{corr}
Under assumptions \eqref{assuntidir}, 
fix \(\Lambda_0, F_0>0\). For every $r_0>0$ there exist numbers
\eqn{piccole}
$$
\varepsilon_\partial, \sigma_*\equiv \varepsilon_\partial, \sigma_*\left(n,r_0,\sigma_0,\Lambda_0,F_0,
\nr{\tx u_0}_{C^2(\overline\Omega)},\Omega\right)\in (0,1)
$$
with the following property. Let $f\in L^\infty(\Omega)$ satisfy
\(\nr f_{L^\infty(\Omega)}\leq F_0\). Let
$\mu\in L^\infty(\Omega)$, and assume 
\eqn{ww.41}
$$
[\mu]_{n,1;\Omega}\leq\Lambda_0.
$$
Assume that
\(u\in C^2(\Omega)\cap C^1(\overline\Omega)\)
is a strictly spacelike solution of
\eqn{ww.background.perturbed.problem}
$$
\begin{cases}
-\mathcal Mu=f+\mu
&
\mbox{in }\Omega,
\\[1mm]
u=\tx{u}_0
&
\mbox{on }\partial\Omega.
\end{cases}
$$
Set
\(
\Omega_{r_0}
:=
\{x\in\Omega:\dist(x,\partial\Omega)<r_0\}.
\)
If
\eqn{ww.31}
$$
[\mu]_{n,1;\Omega_{r_0}}
\leq
\varepsilon_\partial,
$$
then
\eqn{ww.boundary.gap}
$$
\nr{Du}_{C^0(\partial\Omega)}
\leq
1-\sigma_*.
$$
\end{proposition}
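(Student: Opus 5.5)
The proof will be a barrier argument in the spirit of \cite{bs82}. The unbounded part $\mu$ of the charge is handled through the nonlinear correctors of Corollary \ref{correctors.uniform}, and the interior information comes from Lemma \ref{ww.lem}.

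\emph{Reduction.} Fix $r\in(0,\min\{r_\Omega,r_0\}]$, to be chosen depending only on the data. Then every cap $\mathcal U_{x_0}$ from Corollary \ref{correctors.uniform} satisfies $\mathcal U_{x_0}\subset\Omega\cap B_r(x_0)\subset\Omega_{r_0}$, so \eqref{ww.31} gives $[\mu]_{n,1;\mathcal U_{x_0}}\le\varepsilon_\partial$ for every $x_0\in\partial\Omega$. Since $u=\tx u_0$ on $\partial\Omega$, the tangential part of $Du(x_0)$ coincides with that of $D\tx u_0(x_0)$, which has size at most $1-\sigma_0$. It therefore suffices to bound the normal derivative from both sides:
\[
\partial_\nu\tx u_0(x_0)-\tau\le\partial_\nu u(x_0)\le\partial_\nu\tx u_0(x_0)+\tau,
\]
with $\tau$ small enough that $|Du(x_0)|^2\le(1-\sigma_0)^2+\dots\le(1-\sigma_*)^2$. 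This would follow from two-sided barriers $\mathcal W^-\le u\le\mathcal W^+$ on $\mathcal U_{x_0}$ that agree with $u$ at $x_0$ and satisfy $|D\mathcal W^\pm|\le1-\sigma_*$ there.

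\emph{Barriers.} I will take $\mathcal w^\pm_{x_0}:=\tx u_0\pm\psi(d)$, where $d$ is the distance to $\partial\Omega$ and $\psi$ is a concave profile with $\psi(0)=0$ and $\psi'\le\sigma_0/2$. This keeps $|D\mathcal w^\pm|\le1-\sigma_0/2=:1-\mathcal s$. I will choose $\psi$ so that $\mathfrak f^+:=-\mathcal M\mathcal w^+_{x_0}\ge F_0$ and $\mathfrak f^-:=-\mathcal M\mathcal w^-_{x_0}\le-F_0$ on $\mathcal U_{x_0}$ for $r$ small. The concavity of $\psi$ contributes the term $-\psi''\,\partial^2H(\cdot)[Dd,Dd]\ge-\psi''$, which dominates for $\psi(d)=\kappa^{-1}\log(1+\kappa' d)$-type profiles. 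Then $r\|D^2\mathcal w^\pm\|_\infty\le K$ with $K=K(\sigma_0,F_0,\|\tx u_0\|_{C^2},\Omega)$. Since $\mathfrak f^\pm$ is just defined as $-\mathcal M\mathcal w^\pm$, it lies in $L^\infty$, so Corollary \ref{correctors.uniform} applies with $\mu_{x_0}:=\mu|_{\mathcal U_{x_0}}$. It produces $\mathcal b^\pm$ vanishing on $\partial\mathcal U_{x_0}$ with $\|D\mathcal b^\pm\|_\infty\le\mathcal s/4$. Setting $\mathcal W^\pm:=\mathcal w^\pm+\mathcal b^\pm$, we get
\[
-\mathcal M\mathcal W^+=\mathfrak f^++\mu\ge f+\mu,\qquad -\mathcal M\mathcal W^-\le f+\mu,\qquad |D\mathcal W^\pm|\le1-3\mathcal s/4 .
\]

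\emph{Boundary ordering (expected main obstacle).} We need $\mathcal W^-\le u\le\mathcal W^+$ on $\partial\mathcal U_{x_0}$. On $\partial\Omega\cap\partial\mathcal U_{x_0}$ this holds with equality. On the inner part $\partial\mathcal U_{x_0}\setminus\partial\Omega$, which lies at distance comparable to $r$ from $\partial\Omega$, I will use Lemma \ref{ww.lem} with $\tx l\approx r$ and the bound $\Lambda_0+F_0|\Omega|^{1/n}$ on the $L(n,1)$ norm of the full charge $f+\mu$. For $x$ there and $y\in\partial\Omega$ the nearest point, it gives $|u(x)-\tx u_0(y)|\le(1-\sigma)|x-y|$. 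On the other hand, $\mathcal w^+(x)\ge\tx u_0(y)-(1-\sigma_0)|x-y|+\psi(d)$ up to $C^2$ errors of order $Mr^2$, while $|\mathcal b^\pm|\le(\mathcal s/4)r$. Hence we must tune $\psi$ so that $\psi(d)\ge(2-\sigma-\sigma_0+\mathcal s/4)d+O(r^2)$ at $d\approx r/2$. This conflicts with $\psi'\le\sigma_0/2$. I would therefore first try to compare against the sharper estimate
\[
u(x)\le\tx u_0(x)+\bigl|u(x)-\tx u_0(y)\bigr|+\bigl|\tx u_0(y)-\tx u_0(x)\bigr|,
\]
and, if the constants do not close, replace the linear-growth requirement by a two-scale barrier (steep near the inner boundary, still spacelike) exactly as in the nonhomogeneous barriers of \cite{bs82}. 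The gap $\sigma$ from Lemma \ref{ww.lem} is what supplies the room. This step fixes $r$, and hence $\varepsilon_\partial$, in terms of the data only.

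\emph{Conclusion.} With the boundary ordering in place, the weak comparison principle gives $\mathcal W^-\le u\le\mathcal W^+$ in $\mathcal U_{x_0}$: test the difference of the weak formulations with $(u-\mathcal W^+)_+$ and use the strict monotonicity of $\partial H$, which is legitimate since all functions involved are strictly spacelike and $C^1$ up to the boundary. Equality at $x_0$ then yields the two normal-derivative inequalities with $\tau\le\sigma_0/2+\mathcal s/4$. Combined with the tangential bound, this gives \eqref{ww.boundary.gap}, with $\sigma_*$ depending only on the listed quantities.
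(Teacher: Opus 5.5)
Your reduction, the corrector step and the final comparison are fine. Taking $\mu_{x_0}=\mu|_{\mathcal U_{x_0}}$ for both correctors also works: $\mathfrak f^-\le f\le\mathfrak f^+$ already gives $-\mathcal M\mathcal W^-\le f+\mu\le-\mathcal M\mathcal W^+$. The gap is exactly the step you flag, namely the ordering $\mathcal W^-\le u\le\mathcal W^+$ on $\partial\mathcal U_{x_0}\setminus\partial\Omega$. With barriers of the form $\tx{u}_0\pm\psi(d)$ this step cannot be closed by tuning constants.

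The artificial part of $\partial\mathcal U_{x_0}$ is not at distance comparable to $r$ from $\partial\Omega$. It must join $\partial\Omega$ somewhere in the annulus $r/2\le|x-x_0|\le r$, so $d\to0$ along it. Near such junction points, Lemma \ref{ww.lem} is unavailable relative to the nearest boundary point $y$, because it needs $|x-y|\ge\tx{l}/4$. You only get $u(x)-\tx{u}_0(x)\le(2-\sigma_0)\,d(x)$. The requirement $\tx{u}_0+\psi(d)\ge u$ there amounts to $\psi'(0)\gtrsim\partial_\nu(u-\tx{u}_0)$ at boundary points near the junction. That is an a priori bound on the normal derivative of $u$, which is precisely what you are trying to prove. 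Since $|Du|$ may a priori be arbitrarily close to $1$ there, this would force $\mathcal w^+$ towards the light cone and destroy the uniform gap $\mathcal s$ that Corollary \ref{correctors.uniform} needs on all of $\overline{\mathcal U}_{x_0}$. (Note also that $\mathcal b^\pm\equiv0$ on $\partial\mathcal U_{x_0}$, so the term $\mathcal s r/4$ plays no role in the ordering.) A barrier built from $\tx{u}_0$ and the distance to $\partial\Omega$ sees only nearest-point information, and at that scale no cone gap exists.

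The missing idea is to anchor the comparison at the centre $x_0$.
\begin{itemize}
\item Every $x\in\partial\mathcal U_{x_0}\cap\Omega$ satisfies $r/2\le|x-x_0|\le r$. So Lemma \ref{ww.lem}, applied once at scale $\tx{l}=r$ with the global charge bound $c_\Omega F_0+\Lambda_0$, controls the ray from $x_0$ to $x$.
\item Let $\gamma(t_x)$ be the last crossing of $\partial\Omega$ by this ray. If $t_x\le1/2$, use the lemma on $\overline{x\,\gamma(t_x)}$. If $t_x>1/2$, use weak spacelikeness on the short piece.
\item In both cases, combine with $|\tx{u}_0(z)-\tx{u}_0(x_0)|\le(1-\sigma_0/2)|z-x_0|$ on $B_r(x_0)$. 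This holds once $r\le\sigma_0/(2\max\{M_2,1\})$, where $M_2$ bounds $D^2\tx{u}_0$ near $\overline\Omega$.
\item The result is a cone gap $|u(x)-u(x_0)|\le(1-\tilde\sigma)|x-x_0|$ on the whole artificial boundary, relative to the centre rather than to nearest points.
\item Next, extend $u|_{\partial\mathcal U_{x_0}}$ by Tietze to a cone-type function $\tx{u}_0(x_0)+\tau_0(x)|x-x_0|$ with $|\tau_0|\le1-\tilde\sigma$, equal to $\tx{u}_0$ near $x_0$.
\item Feed this into the quantitative barrier construction of \cite[proof of Proposition~3.1]{bs82} with curvature bound $F_0$. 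This yields $\mathcal w^\pm$ with $\mathfrak f^-\le f\le\mathfrak f^+$, strict ordering $\mathcal w^-<u<\mathcal w^+$ on $\partial\mathcal U_{x_0}\setminus\{x_0\}$, equality at $x_0$, and gap and $C^2$ bounds uniform in $x_0$.
\end{itemize}
Only after this do your corrector and comparison steps apply. Your ``two-scale barrier'' remark points in this direction, but without the $x_0$-centred cone gap the boundary ordering, and hence the proof, is not established.
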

\begin{proof}
Again, the proof goes in two steps.

{\em Step 1: Constructing uniform caps}.
Fix a bounded \(C^2\)-extension operator to a tubular neighbourhood
\(N_{\rho_{\rm ext}}(\overline\Omega)\) of size ${\rho_{\rm ext}}$, and continue to denote the
extension of \(\tx u_0\) by \(\tx u_0\). Put
\(M_2:=
\nr{D^2\tx u_0}_{L^\infty
(N_{\rho_{\rm ext}}(\overline\Omega))}\).
The extension may be fixed so that
\(M_2
\leq
C_{\rm ext}(\Omega)
\nr{\tx u_0}_{C^2(\overline\Omega)}\). Let \(r_\Omega>0\) be the radius supplied by Corollary
\ref{correctors.uniform}, and set
\[
r_*
:=
\min\left\{
1,r_\Omega,\rho_{\rm ext},
\frac{\sigma_0}{2\max\{M_2,1\}}
\right\},
\qquad
\rho_0:=\min\{r_0,r_*\}.
\]
In particular, \(0<\rho_0\leq r_0\) and
\(\rho_0\leq r_\Omega\). Applying the construction in Corollary \ref{correctors.uniform} at
the fixed scale \(r=\rho_0\), we choose, for every
\(x_0\in\partial\Omega\), a \(C^2\)-regular boundary cap
\(\mathcal U_{x_0}\) such that
\eqn{ww.21}
$$
\Omega\cap B_{\rho_0/2}(x_0)
\subset
\mathcal U_{x_0}
\subset
\Omega\cap B_{\rho_0}(x_0),
\qquad
\partial\mathcal U_{x_0}\cap B_{\rho_0/2}(x_0)
=
\partial\Omega\cap B_{\rho_0/2}(x_0).
$$
By the same corollary, the family
\(\{\mathcal U_{x_0}\}_{x_0\in\partial\Omega}\)
has uniformly controlled \(C^2\)-geometry at the scale \(\rho_0\).
In particular, the caps are fixed solely in terms of
\(r_0,\sigma_0,
\nr{\tx u_0}_{C^2(\overline\Omega)},\Omega\),
and are independent of \(f,\mu\), and \(u\). Finally, by the definition of \(r_*\), for every
\(x_0\in\partial\Omega\) and every \(x\in B_{\rho_0}(x_0)\),
the fundamental theorem of calculus gives
\eqn{ww.24}
$$
\snr{\tx u_0(x)-\tx u_0(x_0)}
\leq
(1-\sigma_0/2)\snr{x-x_0}.
$$ Fix now \(x_0\in\partial\Omega\); we shall often abbreviate 
\(\mathcal U_0:=\mathcal U_{x_0}\).
We first establish a quantitative spacelike inequality for the
boundary values of \(u\) on \(\partial\mathcal U_0\). Fix
\(x\in\partial\mathcal U_0\cap\Omega\), and define
\(
\gamma(t):=x_0+t(x-x_0)
\)
for \(t\in[0,1]\). Since \(\gamma(1)=x\in\Omega\), let \((t_x,1]\), with
$t_x\in[0,1)$, be the connected component containing \(1\) of
$
\{t\in(0,1]:\gamma(t)\in\Omega\}.
$
Then
$
\gamma(t_x)\in\partial\Omega,
$
where possibly \(t_x=0\), in which case
\(\gamma(t_x)=x_0\).
Notice that, by \eqref{ww.21},
$\rho_0/2\le\snr{x-x_0}\le \rho_0.$
Since $\Omega$ has finite measure, the embedding
$L^\infty(\Omega)\hookrightarrow L(n,1)(\Omega)$ gives a constant
$c_{\Omega}=c(n,\Omega)$ such that
$$
[f+\mu]_{n,1;\Omega}
\leq
c_{\Omega}\nr f_{L^\infty(\Omega)}
+
[\mu]_{n,1;\Omega}
\leq
c_{\Omega}F_0+\Lambda_0
=:
\Lambda_{\rm tot}.
$$
Apply Lemma \ref{ww.lem} once, at the already fixed scale
\(\tx l=\rho_0\), with
$M=1+\nr{\tx u_0}_{C^2(\overline\Omega)}$
and global charge bound $\Lambda_{\rm tot}$. Denote its constant by
$
\sigma_{\rm ch}
=
\sigma_{\rm ch}
(
n,\rho_0,\Lambda_{\rm tot},\sigma_0,
\nr{\tx u_0}_{C^2(\overline\Omega)},\Omega
)
\in(0,1).
$
If \(t_x\le1/2\), then
\(
\rho_0/4
\le
\snr{x-\gamma(t_x)}
\le
\rho_0,
\)
and the estimate supplied by Lemma \ref{ww.lem} on
\(\overline{x\,\gamma(t_x)}\) gives
\begin{flalign*}
\snr{u(x)-u(x_0)}
&\leq
\snr{u(x)-u(\gamma(t_x))}
+
\snr{u(\gamma(t_x))-u(x_0)}
\\
&\leq
(1-\sigma_{\rm ch})
\snr{x-\gamma(t_x)}
+
\snr{\tx u_0(\gamma(t_x))-\tx u_0(x_0)}
\\
&\leq 
(1-\sigma_{\rm ch})
\snr{x-\gamma(t_x)}
+
(
1- \sigma_0/2
)
\snr{\gamma(t_x)-x_0}
\\
&\leq
(
1-\min\{\sigma_{\rm ch}, \sigma_0/2\}
)
\snr{x-x_0}.
\end{flalign*}
Here we used the fact that
\(x\), \(\gamma(t_x)\), and \(x_0\) are collinear and \eqref{ww.24} in the third line. If \(t_x>1/2\), the weak spacelike inequality on
\(\overline{x\,\gamma(t_x)}\), together with
\eqref{ww.24}, gives
\begin{flalign*}
\snr{u(x)-u(x_0)}&\le
\snr{u(x)-u(\gamma(t_x))}
+
\snr{\tx u_0(\gamma(t_x))-\tx u_0(x_0)}
\\
&\le
\snr{x-\gamma(t_x)}
+
\snr{\tx u_0(\gamma(t_x))-\tx u_0(x_0)}
\\
&\le
(1-t_x)\snr{x-x_0}
+
(
1-\sigma_0/2
)
t_x\snr{x-x_0}
\le
(
1- \sigma_0/4
)
\snr{x-x_0}.
\end{flalign*}
In any case we conclude with 
\eqn{ww.23}
$$
\snr{u(x)-u(x_0)}
\le
(1-\widetilde\sigma)\snr{x-x_0}
\qquad
\mbox{for every }
x\in\partial\mathcal U_0\cap\Omega,
$$
where
\(
\widetilde\sigma
:=
\min\{\sigma_{\rm ch},\sigma_0\}/4.
\) We now construct a suitable extension of the boundary datum
\(\left.u\right|_{\partial\mathcal U_0}\). Define
\(
R_0
:=
\overline{\mathcal U}_0
\cap
\{\rho_0/4\le\snr{x-x_0}\le \rho_0\}
\)
and note that
\(
\partial R_0
:=
(\partial\mathcal U_0\cap R_0)
\cup
(\overline{\mathcal U}_0\cap
\partial B_{\rho_0/4}(x_0)).
\)
On \(\partial R_0\), set
$$
\tau(x)
:=
\begin{cases}
\displaystyle
\frac{u(x)-\tx{u}_0(x_0)}{\snr{x-x_0}}
&
x\in\partial\mathcal U_0\cap R_0,
\\[3mm]
\displaystyle
\frac{
4(\tx u_0(x)-\tx u_0(x_0))
}{\rho_0}
&
x\in
\overline{\mathcal U}_0
\cap
\partial B_{\rho_0/4}(x_0).
\end{cases}
$$
The two definitions agree on their common domain. Indeed,
$$
\partial\mathcal U_0
\cap
\partial B_{\rho_0/4}(x_0)
\subset
\partial\Omega
\cap
B_{\rho_0/2}(x_0),
$$
and hence
$ u=\tx u_0$
there. Thus
$\tau\in C^{0}(\partial R_0).$
Moreover, \eqref{ww.23} applies on the artificial part of
\(\partial\mathcal U_0\), while \eqref{ww.24} applies on its physical
part. Hence
\(
\nr{\tau}_{C^0(\partial R_0)}
\le1-\widetilde\sigma.
\)
By the Tietze extension theorem, there exists
\(\tau_0\in C^0(\overline{R_0})\) such that
\eqn{ww.26}
$$
\tau_0=\tau
\quad\mbox{on }\partial R_0,
\qquad
\nr{\tau_0}_{C^0(\overline{R_0})}
\le
1-\widetilde\sigma.
$$
Define, for \(x\in\overline{\mathcal U}_0\),
$$
\mathfrak{u}_0(x)
:=
\begin{cases}
\tx u_0(x)
&\snr{x-x_0}\le \rho_0/4
\\[1pt]
\tx{u}_0(x_0)+\tau_0(x)\snr{x-x_0}&
\snr{x-x_0}\ge \rho_0/4.
\end{cases}
$$
By the definition of \(\tau_0\), the two expressions agree on
\(\overline{\mathcal U}_0\cap \partial B_{\rho_0/4}(x_0)\), and hence
\(
\mathfrak{u}_0
\in C^{0}(\overline{\mathcal U}_0).
\)
Furthermore,
$
\mathfrak{u}_0=u
$
on $\partial\mathcal U_0.$
Indeed, this follows from
\(\mathfrak{u}_0=\tx u_0=u\) on
\(\partial\mathcal U_0\cap B_{\rho_0/4}(x_0)\), and from
\(\tau_0=\tau\) on the remaining part of
\(\partial\mathcal U_0\). By \eqref{ww.24} and \eqref{ww.26},
$$
\snr{
\mathfrak{u}_0(x)
-
\mathfrak{u}_0(x_0)
}
\le
(1-\widetilde\sigma)\snr{x-x_0}
\qquad
\mbox{for every }
x\in\overline{\mathcal U}_0.
$$
Moreover,
\(\mathfrak{u}_0=\tx u_0\) in
\(\overline{\mathcal U}_0\cap B_{\rho_0/4}(x_0)\), and therefore
$$
\mathfrak u_0=\tx u_0
\quad\mbox{on }
\overline{\mathcal U}_0\cap B_{\rho_0/4}(x_0),
\qquad
\nr{
D^2\mathfrak{u}_0
}_{
C^0(
\overline{\mathcal U}_0
\cap
B_{\rho_0/8}(x_0)
)}
\le
M_2.
$$
We now apply the quantitative barrier construction in
\cite[proof of Proposition~3.1]{bs82}, with curvature bound
\(A=F_0\), boundary datum
\(\left.u\right|_{\partial\mathcal U_0}\), extension
\(\mathfrak u_0\), cone gap \(\widetilde\sigma\), and local
\(C^2\)-bound \(M_2\). The construction gives upper and lower
barriers
\(\mathcal w_{f,x_0}^{\pm}
\in C^2(\overline{\mathcal U}_0)\) that solve
\eqn{solvef}
$$
-\mathcal M\mathcal w_{f,x_0}^{+}=:\mathfrak f_{x_0}^{+}, \qquad -\mathcal M\mathcal w_{f,x_0}^{-}=:\mathfrak f_{x_0}^{-}
$$
and that satisfy
\eqn{ww.bs.f.sources}
$$
\mathfrak f_{x_0}^-
\leq
f
\leq
\mathfrak f_{x_0}^+
\qquad
\mbox{a.e. in }\mathcal U_0.
$$
Note that $\mathfrak f_{x_0}^{+}$ and $\mathfrak f_{x_0}^{-}$ are precisely defined via \rif{solvef}. 
Since \(\rho_0,\widetilde\sigma,M_2,F_0\), and the quantitative
\(C^2\)-geometry of the caps are uniform with respect to \(x_0\),
the parameters in the same barrier construction can be chosen
uniformly. Consequently, there exist
\(\mathcal s_f\in(0,1)\) and \(K_f<\infty\), only depending on
$
n,
\rho_0,
\widetilde\sigma,
M_2,
F_0,
\Omega,
$
such that
\eqn{ww.bs.f.bounds}
$$
\nr{D\mathcal w_{f,x_0}^{\pm}}_
{L^\infty(\mathcal U_0)}
\leq
1-\mathcal s_f,
\qquad
\rho_0
\nr{D^2\mathcal w_{f,x_0}^{\pm}}_
{L^\infty(\mathcal U_0)}
\leq
K_f.
$$
In particular,
\(\mathfrak f_{x_0}^{\pm}\in L^\infty(\mathcal U_0)\).
Moreover,
\eqn{ww.bs.f.boundary}
$$
\begin{cases}
\mathcal w_{f,x_0}^+(x)>u(x)
&
x\in\partial\mathcal U_0\setminus\{x_0\},
\\[1mm]
\mathcal w_{f,x_0}^-(x)<u(x)
&
x\in\partial\mathcal U_0\setminus\{x_0\},
\\[1mm]
\mathcal w_{f,x_0}^\pm(x_0)=u(x_0).
\end{cases}
$$

\medskip

{\em Step 2: Correctors}.
Since \(x_0\in\partial\Omega\) was arbitrary, Step~1 produces two
families
\(\{\mathcal w_{f,x_0}^+\}_{x_0\in\partial\Omega}\) and
\(\{\mathcal w_{f,x_0}^-\}_{x_0\in\partial\Omega}\), defined on the
corresponding caps \(\mathcal U_{x_0}\), and satisfying \rif{solvef} and 
\eqref{ww.bs.f.bounds} with the same constants
\(\mathcal s_f\) and \(K_f\). We are now ready to check the applicability of Corollary \ref{correctors.uniform} to such a family. To this aim we also define \(\mu_+:=\max\{\mu,0\}\) and
\(\mu_-:=\max\{-\mu,0\}\) and in the setting of Corollary \ref{correctors.uniform} we choose $\mu_{x_0}\equiv \pm\mu_{\pm}\lfloor_{\mathcal U_{x_0}}$. Let \(\eps_{\rm \partial}\) be the threshold supplied by
Corollary \ref{correctors.uniform}, applied with
\(r\equiv \rho_0\), \(\mathcal s\equiv \mathcal s_f\), and \(K\equiv K_f\) and this determines the number $\eps_{\partial}$ used here in \rif{ww.31}. 
Hence \eqref{ww.41} and \eqref{ww.31}  give
\eqn{ww.local.mu}
$$
[\pm\mu_{\pm}\lfloor_{\mathcal U_{x_0}}]_{n,1}=[\pm\mu_\pm]_{n,1;\mathcal U_{x_0}}
\leq [\mu]_{n,1;\mathcal U_{x_0}}
\leq [\mu]_{n,1;\Omega_{r_0}}
\leq \varepsilon_\partial
\quad
\mbox{for every }x_0\in\partial\Omega.
$$
In view of \eqref{ww.local.mu},
Corollary \ref{correctors.uniform} yields correctors 
$\mathcal b_{x_0}^{\pm}\in
W^{2;n,1}(\mathcal U_{x_0})\cap
W^{1;n,1}_0(\mathcal U_{x_0})$ such that 
\eqn{ww.upper.corrector}
$$
-\mathcal M(\mathcal w_{f,x_0}^{\pm}+\mathcal b_{x_0}^{\pm})
=
\mathfrak f^{\pm}_{x_0}\pm\mu_{\pm}\lfloor_{\mathcal U_{x_0}},
\qquad
\nr{D\mathcal b_{x_0}^{\pm}}_{L^\infty(\mathcal U_{x_0})}
\leq
\frac{\mathcal s_f}{4},
\qquad
\mathcal b_{x_0}^+\geq0, \ \ 
\mathcal b_{x_0}^-\leq0. 
$$
Given a generic but fixed \(x_0\), we abbreviate
\(\mathcal w^\pm:=\mathcal w_{f,x_0}^\pm\),
\(\mathfrak f^\pm:=\mathfrak f_{x_0}^\pm\), and
\(\mathcal b^\pm:=\mathcal b_{x_0}^\pm\), and we continue to denote
the restrictions of \(\mu_\pm\) to \(\mathcal U_{x_0}\) by
\(\mu_\pm\).
Define
$
\mathcal W^\pm:=\mathcal w^\pm+\mathcal b^\pm.
$
By \eqref{ww.bs.f.bounds} and \eqref{ww.upper.corrector},
\eqn{ww.corrected.gap}
$$
\nr{D\mathcal W^\pm}_{L^\infty(\mathcal U_{x_0})}
\leq
1-\mathcal s_f+\frac{\mathcal s_f}{4}=1-\frac{3\mathcal s_f}{4}<1.
$$
Since $\mathcal b^\pm\equiv0$ on $\partial\mathcal U_{x_0}$,
\eqref{ww.bs.f.boundary} remains valid with $\mathcal W^\pm$
in place of $\mathcal w^\pm$. Moreover, by \eqref{ww.bs.f.sources} and
\eqref{ww.upper.corrector}, we have 
$$
-\mathcal M\mathcal W^-=\mathfrak f^--\mu_-
\leq f+\mu \leq
\mathfrak f^++\mu_+ =
-\mathcal M\mathcal W^+.
$$
For $\delta>0$, set
$v_\delta:=(u-\mathcal W^+-\delta)_+$, so that
$\supp\,v_\delta\Subset\mathcal U_{x_0}$.
We may therefore test the difference of the equations with
$v_\delta$, obtaining
$$
0\leq
\int_{\mathcal U_{x_0}}
\langle \partial H(Du)-\partial H(D\mathcal W^+),
Dv_\delta \rangle\dx =\int_{\mathcal U_{x_0}}
(f+\mu-\mathfrak f^+-\mu_+)v_\delta\dx
\leq0.
$$
Strict monotonicity gives $Dv_\delta=0$ almost everywhere,
hence $v_\delta=0$.
Letting $\delta\downarrow0$, we obtain $u\leq\mathcal W^+$.
The analogous argument with
$(\mathcal W^--u-\delta)_+$
gives $\mathcal W^-\leq u$. Therefore,
$
\mathcal W^-\leq u\leq\mathcal W^+
$
in $\overline{\mathcal U}_{x_0}$. By the endpoint Sobolev--Lorentz embedding,
$\mathcal b^\pm\in C^1(\overline{\mathcal U}_{x_0})$.
At $x_0$, the tangential derivatives of
$\mathcal W^-$, $u$, and $\mathcal W^+$ coincide.
If $\vv$ denotes the common inward unit normal to
$\partial\Omega$ and $\partial\mathcal U_{x_0}$ at $x_0$, then
$
D_\vv\mathcal W^-(x_0) \leq D_\vv u(x_0) \leq D_\vv\mathcal W^+(x_0).
$
It follows from \eqref{ww.corrected.gap} that
$$
\snr{Du(x_0)} \leq \max\left\{
\snr{D\mathcal W^-(x_0)},
\snr{D\mathcal W^+(x_0)}
\right\} \leq
1-\frac{3\mathcal s_f}{4}.
$$
Since $x_0\in\partial\Omega$ was arbitrary,
\eqref{ww.boundary.gap} follows with
$
\sigma_*:=3\mathcal s_f/4.
$
\end{proof}

\begin{remark}{\em Perturbative solvability arguments near a prescribed reference graph also appear in \cite[Theorem 1]{tsu20}, in the Euclidean setting and with \(W^{2,q}\) estimates for \(q>n\). Moreover, in the setting of uniformly elliptic problems, a related barrier perturbation argument appears in \cite[Propositions~4.1--4.2]{bmw16}, where inhomogeneous barriers for Pucci operators are compared with homogeneous ones, controlling the error for sources in $L^q$, $q>n$.}
\end{remark}

\noindent
A straightforward consequence of Proposition \ref{corr} is the
following boundary-collar estimate.

\begin{corollary}\label{app.cor}
Let $\beta\in(0,1)$, let
$\Omega\subset\mathbb R^n$ be a 
$C^{2,\beta}$-regular, bounded domain, and let $\tx u_0\in C^{2,\beta}(\overline\Omega)$ be such that  \eqref{assuntidir}$_2$ is satisfied. 
Fix $\Lambda_0,F_0, r_0>0$, and let
$\varepsilon_\partial$ and $\sigma_*$ be the quantities determined in
\eqref{piccole} according to Proposition \ref{corr}. 
Assume that
$f,\mu\in C^{0,\beta}(\overline\Omega)$,
$\nr f_{L^\infty(\Omega)}\leq F_0$,  and that
\eqn{ww.corollary.data}
$$
[\mu]_{n,1;\Omega}\leq\Lambda_0,
\qquad
[\mu]_{n,1;\Omega_{r_0}}\leq \varepsilon_\partial.
$$
Then the solution $u$ to \eqref{ww.background.perturbed.problem} 
belongs to $C^{2,\beta}(\overline\Omega)$. Moreover, there exists 
$
\varepsilon_*
\equiv 
\varepsilon_*
(\sigma_*,[Du]_{C^{0,\beta}(\overline\Omega)},\beta)\in (0,1)
$
such that
\eqn{ww.50}
$$
\nr{Du}_{
L^\infty\left(
\left\{
x\in\overline\Omega:
\dist(x,\partial\Omega)\leq\varepsilon_*
\right\}
\right)}
\leq
1-\frac{\sigma_*}{2}.
$$
 Moreover, 
\eqn{sottosotto}
$$
\mathcal h_u
\le
\frac{\mathcal h_{\partial\Omega}}{2} \quad \mbox{on $\overline\Omega
\cap
\left\{
\dist(\,\cdot\,,\partial\Omega)
\le
\varepsilon_*
\right\}$},
$$
where 
\eqn{boundary.boost.threshold}
$$
2< \mathcal h_{\partial\Omega}
:=
\frac{
2}{\sqrt{1-\left(1-\frac{\sigma_*}{2}\right)^2}}
=
\frac{2}{
\sqrt{\sigma_*
\left(1-\frac{\sigma_*}{4}
\right)}}\leq \frac{4}{\sqrt{3\sigma_*}}.
$$
In particular, 
$
\supp\, 
\left(\mathcal h_u-\kk\right)_+\Subset\Omega
$
for every $
\kk\ge
\mathcal h_{\partial\Omega}/2.$ Finally, $\supp\, 
\left(
\mathcal h_u-\mathcal h_{\partial\Omega}
\right)_+
\Subset
\Omega.$
\end{corollary}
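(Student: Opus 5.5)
The plan is to combine classical Schauder theory for the Dirichlet problem with the boundary gradient estimate of Proposition \ref{corr} and the uniform continuity of $Du$ up to $\partial\Omega$.

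\emph{Step 1: regularity.} Since $u\in C^2(\Omega)\cap C^1(\overline\Omega)$ is strictly spacelike and $\overline\Omega$ is compact, we have $\nr{Du}_{C^0(\overline\Omega)}<1$. Hence the quasilinear operator $-\mathcal M$ is uniformly elliptic along $u$: its coefficients $\partial^2H(Du)$ are continuous on $\overline\Omega$ with ellipticity controlled by $\sup|Du|$. We write the equation in nondivergence form,
$-\partial^2_{ij}H(Du)D_{ij}u=f+\mu$,
and proceed in two stages.
\begin{itemize}
\item First, global $W^{2,p}$ estimates for all $p<\infty$ (\cite[Theorem 9.15]{gt01}, with continuous coefficients, $C^{2}$ boundary and $C^{2,\beta}$ data) give $u\in C^{1,\alpha}(\overline\Omega)$ for every $\alpha<1$.
\item Then the coefficients are $C^{0,\beta}(\overline\Omega)$. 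Since $f,\mu\in C^{0,\beta}(\overline\Omega)$, $\partial\Omega\in C^{2,\beta}$ and $\tx u_0\in C^{2,\beta}$, the global Schauder estimates (\cite[Theorem 6.14 / Lemma 6.18]{gt01}) yield $u\in C^{2,\beta}(\overline\Omega)$.
\end{itemize}
The only point needing care is starting the bootstrap: the regularity of the coefficients up to the boundary relies on $u\in C^1(\overline\Omega)$ with $|Du|<1$ there. I take this as part of the assumed setting of Proposition \ref{corr}, which provides exactly such a strictly spacelike solution.

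\emph{Step 2: the collar estimate \eqref{ww.50}.} By Proposition \ref{corr}, $|Du(x_0)|\le1-\sigma_*$ for every $x_0\in\partial\Omega$. Given $x\in\overline\Omega$ with $\dist(x,\partial\Omega)\le\varepsilon_*$, pick $x_0\in\partial\Omega$ with $|x-x_0|\le\varepsilon_*$. Hölder continuity of $Du$ on $\overline\Omega$ gives
$$|Du(x)|\le 1-\sigma_*+[Du]_{C^{0,\beta}(\overline\Omega)}\varepsilon_*^\beta.$$
We therefore choose
$$\varepsilon_*:=\min\Big\{\tfrac12,\big(\sigma_*/(2\max\{[Du]_{C^{0,\beta}},1\})\big)^{1/\beta}\Big\},$$
which depends only on $\sigma_*$, $[Du]_{C^{0,\beta}}$ and $\beta$, and gives $|Du(x)|\le1-\sigma_*/2$. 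This proves \eqref{ww.50}.

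\emph{Step 3: boost consequences.} On the collar, $\mathcal h_u=(1-|Du|^2)^{-1/2}\le(1-(1-\sigma_*/2)^2)^{-1/2}=\mathcal h_{\partial\Omega}/2$, which is \eqref{sottosotto}. For \eqref{boundary.boost.threshold}, we expand $1-(1-\sigma_*/2)^2=\sigma_*(1-\sigma_*/4)$. The bound $\ge3\sigma_*/4$ follows from $\sigma_*<1$, and $\mathcal h_{\partial\Omega}>2$ is immediate.

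For the support statement, if $\kk\ge\mathcal h_{\partial\Omega}/2$, then $(\mathcal h_u-\kk)_+$ vanishes on the closed collar $\{\dist\le\varepsilon_*\}$. Its support is therefore contained in $\{\dist(\cdot,\partial\Omega)\ge\varepsilon_*\}$, a compact subset of $\Omega$. The last claim is the special case $\kk=\mathcal h_{\partial\Omega}$.

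I expect the only genuinely delicate step to be Step 1, namely justifying global Schauder regularity up to the boundary. The rest is elementary.
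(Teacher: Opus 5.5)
\textbf{Comparison with the paper.} Your Steps 2 and 3 coincide with the paper's argument: the same choice of $\varepsilon_*$, the same comparison of $Du(x)$ with $Du$ at a nearest boundary point, the same identity $1-(1-\sigma_*/2)^2=\sigma_*(1-\sigma_*/4)\ge 3\sigma_*/4$, and the same support conclusion. Capping $\varepsilon_*$ by $1/2$ rather than $1$ is immaterial, since $(\sigma_*/2)^{1/\beta}<1/2$ anyway.

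The difference is in Step 1. The paper does not bootstrap. It quotes \cite[Theorem 3.6]{bs82}, which directly furnishes a strictly spacelike classical solution in $C^{2,\beta}(\overline\Omega)$, given the bounded H\"older curvature $f+\mu$, the $C^{2,\beta}$ domain and the $C^{2,\beta}$ spacelike datum. That single citation gives three things at once: existence of ``the solution'', finiteness of $[Du]_{C^{0,\beta}(\overline\Omega)}$, and membership in the class where Proposition \ref{corr} applies. It is also exactly how the corollary is used later, for the approximating Dirichlet minimizers in Section \ref{diriproof}. You instead take a solution in the class of Proposition \ref{corr} as given and upgrade it by linear theory ($W^{2,p}$, then Schauder). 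This is self-contained and avoids the nonlinear existence theory, but it is a pure regularity statement and relies on existence from elsewhere.

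\textbf{A step that needs repair.} In the paper's terminology, strict spacelikeness only gives $|Du|<1$ in the open set $\Omega$. Compactness of $\overline\Omega$ alone therefore does not give $\|Du\|_{C^0(\overline\Omega)}<1$: $|Du|$ might reach $1$ on $\partial\Omega$, and then $\partial^2H(Du)$ would not even be bounded on $\overline\Omega$, so your bootstrap could not start. The fix is to reorder:
\begin{itemize}
\item First apply Proposition \ref{corr}. It needs only the $C^2(\Omega)\cap C^1(\overline\Omega)$ setting and the data bounds, all of which you have. This gives $|Du|\le 1-\sigma_*$ on $\partial\Omega$, hence $\max_{\overline\Omega}|Du|<1$ by continuity.
\item Then run the bootstrap.
\item Then carry out Step 2 as you wrote it.
\end{itemize}
In the bootstrap you should also say how $u$ is identified with the $W^{2,p}$ solution provided by \cite[Theorem 9.15]{gt01}. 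One way is the Aleksandrov maximum principle for functions in $W^{2,n}_{\loc}(\Omega)\cap C^0(\overline\Omega)$.
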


\begin{proof}
Under the assumptions of the corollary,
\cite[Theorem 3.6]{bs82} gives
$u\in C^{2,\beta}(\overline\Omega)$ and
$Du\in C^{0,\beta}(\overline\Omega)$. In view of \eqref{ww.corollary.data},
Proposition \ref{corr} applies and gives
$
\nr{Du}_{C^0(\partial\Omega)}
\leq
1-\sigma_*.
$
Set
$$
M_{Du}
:= [Du]_{C^{0,\beta}(\overline\Omega)},
\qquad
\varepsilon_*
:=\min\left\{1,\left(\frac{\sigma_*}{2\max\{M_{Du},1\}}\right)^{1/\beta}\right\}.
$$
If $\dist(x,\partial\Omega)\leq\varepsilon_*$, choose
$y\in\partial\Omega$ with
$\snr{x-y}=\dist(x,\partial\Omega)$. Then
$
\snr{Du(x)}
\leq
\snr{Du(y)}
+
M_{Du}\snr{x-y}^{\beta}
\leq
1-\sigma_*+\sigma_*/2
=
1- \sigma_*/2.
$
This proves \eqref{ww.50}. This implies 
$$
\mathcal h_u\le\left[1-\left(1-\frac{\sigma_*}{2}\right)^2\right]^{-1/2}=\frac{\mathcal h_{\partial\Omega}}{2}
$$
in the corresponding boundary neighbourhood. Hence every truncation
with
$
\kk\ge\mathcal h_{\partial\Omega}/2
$
vanishes there, and its support is contained in
$
\{
x\in\overline\Omega:
\dist(x,\partial\Omega)\ge\varepsilon_*
\}
\Subset
\Omega.
$
\end{proof}


\section{Higher integrability a priori estimates}\label{sec.4}
All estimates in this section are a priori.
Throughout, we assume that \(u\) is smooth, strictly spacelike,
and solves \eqref{bi} classically.
These assumptions are understood in all statements
below involving \(u\). For the rest of the section we recall that the number $\Ui$ has been defined in \rif{mmi}. 
\begin{lemma}\label{cacclem}
Let $u\in \mathbb{X}(\mathbb{R}^{n})\cap C^{\infty}(\mathbb{R}^{n})$ be a classical solution to \eqref{bi} with $\mu \in C^{\infty}_0(\er^n)$. Then
\begin{flalign}\label{cacc}
&(m+1)\int_{\mathbb{R}^{n}}\eta^{2}\mathcal{h}_{u}^{m-2}\snr{D\mathcal{h}_{u}}^{2}\dx+\int_{\mathbb{R}^{n}}\eta^{2}\mathcal{h}_{u}^{m+2}\snr{D^{2}u}^{2}\dx\nonumber \\
&\qquad \quad \le c\max\left\{1,\frac{1}{1+m}\right\}\int_{\mathbb{R}^{n}}\mathcal{h}_{u}^{m+2}\snr{D\eta}^{2}\dx+c\max\left\{1,1+m\right\}\int_{\mathbb{R}^{n}}\eta^{2}\mu^{2}\mathcal{h}_{u}^{m}\dx
\end{flalign}
holds whenever $m> -1$, $\eta\in W^{1,\infty}(\mathbb{R}^{n})$ is nonnegative and compactly supported,
where \(c=c_n\). 
\end{lemma}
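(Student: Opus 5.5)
We plan to differentiate the equation and test it with a weighted derivative, in the spirit of the Bernstein-type computations behind \cite{bi23}. Since $u$ is smooth and strictly spacelike, all quantities are smooth. We write the equation as $-\diver\,\partial H(Du)=\mu$ and differentiate in the direction $x_k$. This gives, in weak form,
$$
\int_{\mathbb R^n}\langle \partial^2H(Du)D(D_ku),D\varphi\rangle\dx=-\int_{\mathbb R^n}\mu\, D_k\varphi\dx ,
$$
valid for every compactly supported Lipschitz $\varphi$. This is the natural way to handle a source $\mu$ that is not differentiated.

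We then test with $\varphi=\eta^2\mathcal h_u^{m}D_ku$ and sum over $k$. Two identities drive the computation. First, $D\mathcal h_u=\mathcal h_u^3 D^2u\,Du$. Second, $\partial^2H(z)=\mathcal h_u\mathds I+\mathcal h_u^3z\otimes z$ evaluated at $z=Du$, so that $\partial^2 H(Du)$ is bounded below by $\mathcal h_u\mathds I$ by \eqref{0.1.1}.

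The main term is $\sum_k\eta^2\mathcal h_u^m\langle\partial^2H\,DD_ku,DD_ku\rangle$. It is at least $\eta^2\mathcal h_u^{m+1}|D^2u|^2$, plus the extra piece $\eta^2\mathcal h_u^{m+3}|D^2u\,Du|^2$.

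The term in which $D$ falls on $\mathcal h_u^m$ equals
$$
m\,\eta^2\mathcal h_u^{m-1}\sum_k\langle\partial^2H\,DD_ku,D\mathcal h_u\rangle D_ku=m\,\eta^2\mathcal h_u^{m-1}\langle \partial^2H\,D\mathcal h_u,D\mathcal h_u\rangle\,\mathcal h_u^{-3}.
$$
Here we used $\sum_k D_ku\,DD_ku=D^2u\,Du=\mathcal h_u^{-3}D\mathcal h_u$. Since $\langle\partial^2H\,\xi,\xi\rangle\ge\mathcal h_u|\xi|^2$, this gives $m\,\eta^2\mathcal h_u^{m-2}|D\mathcal h_u|^2$.

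The extra piece of the main term is also of the form $\mathcal h_u^{m-3}|D\mathcal h_u|^2$, and it can be compared with $\mathcal h_u^{m-2}|D\mathcal h_u|^2$ via $|Du|\le 1$. It supplies the additional ``$+1$'' and produces the coefficient $(m+1)$ in \eqref{cacc}. For $m<0$ the term with $m$ is negative, and the bookkeeping must use exactly this positive extra contribution. This is where $m>-1$ enters and where the factor $\max\{1,1/(1+m)\}$ is generated.

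The powers, however, do not match \eqref{cacc}: the left side there carries $\mathcal h_u^{m+2}|D^2u|^2$, while we obtained $\mathcal h_u^{m+1}$. We therefore redo the computation with the test function $\eta^2\mathcal h_u^{m+1}D_ku$, that is, with $m$ shifted by one. The term involving $\mathcal h_u^{m+3}|D^2u\,Du|^2$ then becomes $\mathcal h_u^{m-2}|D\mathcal h_u|^2$ as required, because $|D\mathcal h_u|^2=\mathcal h_u^6|D^2uDu|^2$. The exponents in \eqref{cacc} then line up.

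There are two error terms. The cutoff term $2\eta\,\mathcal h_u^{m+1}\langle\partial^2H\,DD_ku,D\eta\rangle D_ku$ equals $2\eta\,\mathcal h_u^{m-2}\langle\partial^2H\,D\mathcal h_u,D\eta\rangle$. We absorb it by Cauchy--Schwarz in the $\partial^2H$ inner product, together with $\langle\partial^2H\,D\eta,D\eta\rangle\le\mathcal h_u^3|D\eta|^2$. This yields $\mathcal h_u^{m+2}|D\eta|^2$, weighted by $1/(1+m)$ from the absorption.

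The source term is $-\int\mu\,D_k(\eta^2\mathcal h_u^{m+1}D_ku)$. Expanded, it contains $\eta^2\mathcal h_u^{m+1}\mu\,\Delta u$, then $(m+1)\eta^2\mathcal h_u^m\mu\langle D\mathcal h_u,Du\rangle$, and finally a $D\eta$ term. We estimate these by Young's inequality against $\eta^2\mathcal h_u^{m+2}|D^2u|^2$, against $(m+1)\eta^2\mathcal h_u^{m-2}|D\mathcal h_u|^2$, and against the $D\eta$ term respectively. The leftovers are $\eta^2\mu^2\mathcal h_u^{m}$ and $(m+1)\eta^2\mu^2\mathcal h_u^{m+2}$.

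The step we expect to be hardest is this middle source term. It has the wrong power of $\mathcal h_u$ for \eqref{cacc}, which only allows $\mu^2\mathcal h_u^m$. The way out is the identity $\langle D\mathcal h_u,Du\rangle=\mathcal h_u^3\langle D^2u\,Du,Du\rangle$, which lets us estimate this term against $\mathcal h_u^{m+2}|D^2u|^2$ instead. Alternatively, we can use the equation itself, $\mu=-\mathcal h_u(\Delta u+\mathcal h_u^2\langle D^2uDu,Du\rangle)$, to recombine the two source terms into $\mu\,\mathcal h_u^{m+1}\cdot\mathcal h_u^{-1}\mu$-type expressions with the correct homogeneity. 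We would try the latter first; it explains the factor $\max\{1,1+m\}$.

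Finally, we collect all terms and absorb into the left-hand side.
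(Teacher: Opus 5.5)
The skeleton of your argument matches the paper's: differentiate the equation, test with $\eta^2\mathcal h_u^{m+1}D_ku$ after the shift, and absorb the cutoff term by Cauchy--Schwarz in the $\partial^2H(Du)$-inner product at the cost of $1/(1+m)$. However, the step you yourself single out as hardest is not closed.

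\textbf{Route (a) fails.} Writing $\langle D\mathcal h_u,Du\rangle=\mathcal h_u^3\langle D^2u\,Du,Du\rangle$ and applying Young against $\eta^2\mathcal h_u^{m+2}\snr{D^2u}^2$ leaves $(m+1)^2\eta^2\mathcal h_u^{m+4}\mu^2$. That is worse than the $\mathcal h_u^{m+2}$ you were trying to avoid.

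\textbf{Route (b) closes, but not with the constants in \eqref{cacc}.} Inserting $\langle D\mathcal h_u,Du\rangle=-\mu-\mathcal h_u\Delta u$ turns the two source terms into $-(m+1)\eta^2\mathcal h_u^m\mu^2-m\,\eta^2\mathcal h_u^{m+1}\mu\Delta u$. The cross term can only be absorbed into $\eta^2\mathcal h_u^{m+2}\snr{D^2u}^2$, which has coefficient $1$ on the left. This produces $c\,m^2\eta^2\mathcal h_u^m\mu^2$, so the factor is of order $\max\{1,(1+m)^2\}$, not the $\max\{1,1+m\}$ you say it explains. That weaker version would suffice for the later uses in the paper, where $m$ is fixed or constants depend on $m$, but it is not the stated lemma.

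\textbf{The missing idea.} You discarded exactly the quantity that controls this term when you wrote ``since $\langle\partial^2H\,\xi,\xi\rangle\ge\mathcal h_u\snr{\xi}^2$''. With the shifted test function, the term where the derivative falls on the weight equals $(m+1)\eta^2\mathcal h_u^{m}\langle\mathcal H(Du)D\mathcal h_u,D\mathcal h_u\rangle$. Here $\langle\mathcal H(Du)\xi,\xi\rangle=\mathcal h_u^{-2}\snr{\xi}^2+\langle Du,\xi\rangle^2$, and you need to keep the anisotropic part. Since $\mathcal H(Du)Du=Du$ (see \eqref{0.1}), Cauchy--Schwarz in the $\mathcal H(Du)$-inner product gives $\snr{\langle Du,D\mathcal h_u\rangle}\le\snr{Du}\langle\mathcal H(Du)D\mathcal h_u,D\mathcal h_u\rangle^{1/2}$. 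Hence
\[
(m+1)\eta^2\mathcal h_u^m\snr{\mu}\,\snr{\langle D\mathcal h_u,Du\rangle}\le\varepsilon(m+1)\eta^2\mathcal h_u^m\langle\mathcal H(Du)D\mathcal h_u,D\mathcal h_u\rangle+\frac{m+1}{4\varepsilon}\eta^2\mathcal h_u^m\mu^2 .
\]
This absorbs into the left side with exactly the power $\mathcal h_u^m$ and a linear factor $m+1$, which is the paper's argument. It is also the same device you already use for the cutoff term: absorbing that term via Cauchy--Schwarz in the $\partial^2H(Du)$-inner product likewise requires the full form $\langle\mathcal H(Du)D\mathcal h_u,D\mathcal h_u\rangle$ on the left.

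\textbf{Where $(m+1)$ and $m>-1$ come from.} Your account is off. After the shift, the coefficient $(m+1)$ comes directly from $D\mathcal h_u^{m+1}$, and $m>-1$ simply makes that term nonnegative. The anisotropic piece of the main term, $\eta^2\mathcal h_u^{m+4}\snr{D^2u\,Du}^2=\eta^2\mathcal h_u^{m-2}\snr{D\mathcal h_u}^2$, does not supply the ``$+1$''. It is a harmless bonus that can be dropped; the paper only uses $\partial^2H(Du)\ge\mathcal h_u\mathds I$ there.
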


\begin{proof}
 Since $\mu\in C^{\infty}_{0}(\mathbb{R}^{n})$, by Proposition \ref{exex} we know that $u$ is smooth. We can then differentiate \eqref{weshall2} to achieve
\eqn{0}
$$
\int_{\mathbb{R}^{n}}\langle\partial^{2}H(Du)DD_{s}u,D\varphi\rangle\dx=-\int_{\mathbb{R}^{n}}\mu D_{s}\varphi\dx=
\int_{\mathbb{R}^{n}}D_{s} \mu \varphi\dx,
$$
for all $\varphi\in C^{1}_{0}(\mathbb{R}^{n})$ and any $s\in \{1,\cdots,n\}$. Note that, by a standard density argument, \eqref{0} remains valid for every
compactly supported $\varphi\in W^{1,2}(\mathbb R^n)$. Recalling \rif{recalla}, we shall repeatedly use the relations
\eqn{identitas}
$$
D\mathcal h_u
=
\mathcal h_u^3D^2u\,Du, \qquad \mathcal H(Du)D\mathcal h_u
=
\sum_{s=1}^n
D_su\,\partial^2H(Du)DD_su.
$$
Let $\eta\in W^{1,\infty}(\mathbb{R}^{n})$ be a non-negative, compactly supported function, $m> -1$ be a number, test \eqref{0} against $\varphi:=\eta^{2}\mathcal{h}_{u}^{m+1}D_{s}u$ and sum over $s\in \{1,\cdots,n\}$ to get, via \rif{identitas}
\begin{flalign*}
0&=2\int_{\mathbb{R}^{n}}\eta\mathcal{h}_{u}^{m+1}\langle\mathcal H(Du)D\mathcal h_{u},D\eta \rangle\dx\nonumber +(m+1) \int_{\mathbb{R}^{n}}\eta^{2}\mathcal{h}_{u}^{m}\langle\mathcal H(Du)D\mathcal h_{u},D\mathcal{h}_{u}\rangle\dx\nonumber \\
&\ \  \ +\sum_{s=1}^{n}\int_{\mathbb{R}^{n}}\eta^{2}\mathcal{h}_{u}^{m+1}\langle\partial^{2}H(Du)DD_{s}u,DD_{s}u\rangle\dx\nonumber\\
&\ \ \ +\sum_{s=1}^{n}\int_{\mathbb{R}^{n}}\mu D_{s}(\eta^{2}\mathcal{h}_{u}^{m+1}D_{s}u)\dx=:\mbox{(I)}+\mbox{(II)}+\mbox{(III)}+\mbox{(IV)}.
\end{flalign*}  
We estimate via Cauchy-Schwarz and Young inequalities,
\begin{eqnarray*}
\snr{\mbox{(I)}}
&\le&2\left(\int_{\mathbb{R}^{n}}\eta^{2}\mathcal{h}_{u}^{m}\langle\mathcal H(Du)D\mathcal{h}_{u},D\mathcal{h}_{u}\rangle\dx\right)^{1/2}\nonumber\left(\int_{\mathbb{R}^{n}}\mathcal{h}_{u}^{m+2}\langle\mathcal H(Du)D\eta,D\eta\rangle\dx\right)^{1/2}\nonumber \\
&\stackrel{\eqref{0.1}_{2}}{\le}&\varepsilon_{1}\mbox{(II)}\nonumber  +\frac{c}{\varepsilon_{1}(m+1)}\int_{\mathbb{R}^{n}}\mathcal{h}_{u}^{m+2}\snr{D\eta}^{2}\dx,
\end{eqnarray*}
for some $\varepsilon_{1}\in (0,1)$ to be fixed and $c\equiv c_{n}$. 
Finally, 
$$
\mbox{(II)} \geq (m+1)\int_{\mathbb{R}^{n}}\eta^{2}\mathcal{h}_{u}^{m-2}\snr{D\mathcal{h}_{u}}^{2}\dx, \qquad 
\mbox{(III)}\stackrel{\eqref{0.1.1}_{1}}{\ge}\int_{\mathbb{R}^{n}}\eta^{2}\mathcal{h}_{u}^{m+2}\snr{D^{2}u}^{2}\dx. 
$$
Let us now take care of the terms involving \(\mu\). Expanding
\(\mathrm{(IV)}\), we have
\begin{flalign*}
|\mathrm{(IV)}|
&\le
2\int_{\mathbb R^n}
\eta|\mu|\mathcal h_u^{m+1}
|D\eta||Du|\dx
+(m+1)\int_{\mathbb R^n}
\eta^2|\mu|\mathcal h_u^m
|\langle Du,D\mathcal h_u\rangle|\dx
\\
& \qquad 
+\int_{\mathbb R^n}
\eta^2|\mu|\mathcal h_u^{m+1}
|\Delta u|\dx.
\end{flalign*}
Using \(|Du|\le1\), the identity
\(\mathcal H(Du)Du=Du\), the Cauchy--Schwarz inequality associated
with \(\mathcal H(Du)\), and Young's inequality, we obtain
\begin{flalign*}
|\mathrm{(IV)}|
&\le
c\int_{\mathbb R^n}
\mathcal h_u^{m+2}|D\eta|^2\dx
+
c\int_{\mathbb R^n}
\eta^2\mu^2\mathcal h_u^m\dx
+\varepsilon_2\mathrm{(II)}
+
\frac{c(m+1)}{\varepsilon_2}
\int_{\mathbb R^n}
\eta^2\mu^2\mathcal h_u^m\dx
\\
& \quad 
+\varepsilon_3\mathrm{(III)}
+
\frac{c}{\varepsilon_3}
\int_{\mathbb R^n}
\eta^2\mu^2\mathcal h_u^m\dx
\\
&\le 
c\int_{\mathbb R^n}
\mathcal h_u^{m+2}|D\eta|^2\dx
+
c\left(
1+\frac{m+1}{\varepsilon_2}
+\frac1{\varepsilon_3}
\right)
\int_{\mathbb R^n}
\eta^2\mu^2\mathcal h_u^m\dx
+\varepsilon_2\mathrm{(II)}
+\varepsilon_3\mathrm{(III)},
\end{flalign*}
for $c\equiv c_{n}$, and some $\varepsilon_{2},\varepsilon_{3}\in (0,1)$. Merging the content of all previous displays after choosing $\varepsilon_{1},\varepsilon_{2},\varepsilon_{3}\in (0,1)$ sufficiently small, and recalling $\eqref{0.1}_{1}$, we obtain \eqref{cacc}. 
\end{proof}

\begin{lemma}\label{w22.lem}
Let $u\in \mathbb{X}(\mathbb{R}^{n})\cap C^{\infty}(\mathbb{R}^{n})$ be a classical solution to \eqref{bi} with $\mu \in C^{\infty}_0(\er^n)$. Let
\(B\subset\mathbb R^n\) be a ball. If \(n\ge3\) and \(q\in[2,\infty)\), then
\eqn{w22.high}
$$
\begin{aligned}
&\int_B
(|D\log\mathcal h_u|^2+|D\mathcal h_u^{-1}|^2+|D^2u|^2)\dx\\
&\qquad\le c\mathcal h_\infty(\mu)^2
|B|^{1-2/n}+c|B|^{1-2/q}
\left(1+\frac{\Ui^2}{|B|^{2/n}}
\right)\|\mu\|_{L^q(\mathbb R^n)}^2,
\end{aligned}
$$
where \(c=c(n,q)\). If \(n=2\), then
\eqn{w22.two}
$$
\begin{aligned}
&\int_B
(|D\log\mathcal h_u|^2+|D\mathcal h_u^{-1}|^2+|D^2u|^2)\dx
\\
&\qquad\le
c\mathcal h_\infty(\mu)^2
+c\left[1+ |B|^{-1}\Ui^2\log^2(e+\Ui)\right]
\|\mu\|_{L_x^2(\mathbb R^2)}^2,
\end{aligned}
$$
where \(c>0\) is a dimensional constant.
\end{lemma}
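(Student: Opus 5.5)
The plan is to apply the Caccioppoli inequality \eqref{cacc} of Lemma \ref{cacclem} with the exponent $m=-2$... but $m>-1$ is required, so instead take $m\downarrow$ close to $-1$ is not useful either. The choice I commit to is $m=0$. Then \eqref{cacc} gives
\[
\int\eta^2\mathcal h_u^{-2}|D\mathcal h_u|^2\dx+\int\eta^2\mathcal h_u^{2}|D^2u|^2\dx\lesssim \int \mathcal h_u^2|D\eta|^2\dx+\int\eta^2\mu^2\dx .
\]
Since $\mathcal h_u\ge1$, the left-hand side controls $|D\log\mathcal h_u|^2$, $|D^2u|^2$, and also $|D\mathcal h_u^{-1}|^2=\mathcal h_u^{-4}|D\mathcal h_u|^2\le\mathcal h_u^{-2}|D\mathcal h_u|^2$. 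The $\mu$-term is handled by Hölder: with $\eta\le1$ supported in $2B$, we get $\int\eta^2\mu^2\le|2B|^{1-2/q}\|\mu\|_{L^q}^2$ for $n\ge3$. For $n=2$ we instead bound it directly by $\|\mu\|_{L^2}^2\le\|\mu\|_{L^2_x}^2$.

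The hard term is $\int\mathcal h_u^2|D\eta|^2$, because $\mathcal h_u$ is not a priori bounded. I would split $\mathcal h_u^2\le\mathcal h_\infty(\mu)^2+\mathcal h_u^2\mathds 1_{\{\mathcal h_u>\mathcal h_\infty(\mu)\}}$. The first piece gives $\mathcal h_\infty(\mu)^2|B|^{1-2/n}$ with the standard cutoff $|D\eta|\lesssim r^{-1}$. For $n=2$ this is scale invariant, which is consistent with the bound in \eqref{w22.two}.

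For the superlevel piece I would not use $|D\eta|\lesssim1/r$ blindly. Instead I would exploit Proposition \ref{com.sup}: the set $E_0=\{\mathcal h_u>\mathcal h_\infty(\mu)\}$ is compactly contained in $B_{\tx r_\mu}$ and has measure at most $\mathfrak s_n$. The term still needs an integral bound on $\mathcal h_u^2$ there, and I expect this to be the main obstacle. My first attempt is to rerun \eqref{cacc} with $m=0$ and a cutoff adapted to $E_0$. Since $(\mathcal h_u-\mathcal h_\infty)_+$ has compact support, one can test the differentiated equation \eqref{0} with $\varphi=(\mathcal h_u-\mathcal h_\infty)_+ D_su$ globally, so no cutoff is needed and the $|D\eta|$ term disappears. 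This yields
\[
\int_{E_0}\mathcal h_u^{-2}|D\mathcal h_u|^2+\int_{E_0}\mathcal h_u^2|D^2u|^2\lesssim\int_{E_0}\mu^2\mathcal h_u\dx .
\]
The right-hand side must then be closed using the structure of the equation itself. Writing $\mu=-\diver(\mathcal h_uDu)$ and integrating by parts against $(\mathcal h_u-\mathcal h_\infty)_+$, I would bound $\int\mathcal h_u(\mathcal h_u-\mathcal h_\infty)_+$-type quantities through $\langle\mu,\cdot\rangle$ and the Riesz representative. The constant $\Ui$ enters here, via $\|u\|_{L^\infty}\le c\Ui$ from Proposition \ref{boun.p}: integrating $\int_{E_0}\mathcal h_u|Du|^2\le\int\mu\,u\,\psi+\dots$ with a cutoff $\psi$ on $B$ produces $\Ui\|\mu\|_{L^q}|B|^{1-1/q}$ and $\Ui^2|B|^{-2/n}$ terms. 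After Young's inequality these give precisely $|B|^{1-2/q}\Ui^2|B|^{-2/n}\|\mu\|_{L^q}^2$.

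In dimension two, $u$ is not bounded. There I would use the logarithmic growth \eqref{linf}: on $2B$ one has $|u|\lesssim\Ui\log(e+|x|)$, and a suitable choice of center and radius bounds this by $\Ui\log(e+\Ui)$ after using the weight in $L^2_x$. This produces the factor $|B|^{-1}\Ui^2\log^2(e+\Ui)\|\mu\|_{L^2_x}^2$.

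Concretely, the energy identity I intend to use is $\int\psi^2\mathcal h_u|Du|^2=\int\mu u\psi^2-2\int\psi u\,\mathcal h_u\langle Du,D\psi\rangle$. Its left side dominates $\int\psi^2\mathcal h_u$ on $E_0$, since there $|Du|^2\ge1/2$. To control $\mathcal h_u^2$ rather than $\mathcal h_u$, I expect to need one iteration with $m=1$ in \eqref{cacc}. Checking that the exponents close consistently is the step I expect to require the most care.
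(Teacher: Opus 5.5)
\textbf{The gap: a global $L^q$ bound on the boost above level $\mathcal h_\infty(\mu)$.} Your reduction is the same as the paper's: \eqref{cacc} with $m=0$, a cutoff supported in $2B$ with $\snr{D\eta}\lesssim\snr{B}^{-1/n}$, splitting $\mathcal h_u$ at the level $\mathcal h_\infty(\mu)$, and H\"older on $\int_{2B}\mu^2\dx$. The gap is exactly the step you flag. You need
\[
\int_{2B}(\mathcal h_u-\mathcal h_\infty(\mu))_+^2\dx\le c\,\snr{B}^{1-2/q}\,\Ui^2\,\nr{\mu}_{L^q(\mathbb R^n)}^2
\]
and, for $n=2$, $\int_{\mathbb R^2}(\mathcal h_u-\mathcal h_\infty(\mu))_+^2\dx\le c\,\Ui^2\log^2(e+\Ui)\nr{\mu}_{L_x^2(\mathbb R^2)}^2$.

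The paper does not prove this inside the lemma. It invokes the global Calder\'on--Zygmund type estimates \eqref{czcz} and \eqref{mm.20} of Proposition \ref{cz.p}, which apply because $(\mathcal h_u-\mathcal h_\infty(\mu))_+$ has compact support by \eqref{external.compact.support}, and then uses H\"older on $2B$. Since $q>2$ is allowed and only $\nr{\mu}_{L^q}$ may appear, what is really required is a global $L^q$ bound on $(\mathcal h_u-\mathcal h_\infty(\mu))_+$.

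None of your substitutes produces it:
\begin{itemize}
\item Testing \eqref{0} with $(\mathcal h_u-\mathcal h_\infty)_+D_su$ gives \eqref{matrix}, i.e.\ control of $\langle\mathcal H(Du)D\mathcal h_u,D\mathcal h_u\rangle\ge\mathcal h_u^{-2}\snr{D\mathcal h_u}^2$ on $E_0$. That is information on $D\log\mathcal h_u$, not on $\mathcal h_u$ itself. Your display also misstates it: the Hessian weight is $(\mathcal h_u-\mathcal h_\infty)_+\mathcal h_u$, not $\mathcal h_u^2\mathds 1_{E_0}$.
\item Testing the equation with $u\psi^2$, $\psi$ a cutoff on $B$, controls at most $\int\psi^2\mathcal h_u$ on $E_0$. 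Its term $\int\psi u\,\mathcal h_u\langle Du,D\psi\rangle\dx$ needs an $L^1$ bound for $\mathcal h_u$ on $2B\setminus B$, which is circular. With $\psi\equiv1$ it reduces to the global bound \eqref{jenny}, which gives $\nr{\mu}_{\mathcal D^{1,2}(\mathbb R^n)^*}^2$ rather than $\Ui\nr{\mu}_{L^q}$.
\item \eqref{cacc} with $m=1$ puts $\int\mathcal h_u^3\snr{D\eta}^2\dx$ and $\int\eta^2\mu^2\mathcal h_u\dx$ on the right-hand side. This raises the power of $\mathcal h_u$ to be controlled, so no iteration in $m$ closes.
\item In dimension two you cannot pick the centre of $B$. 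The ball is arbitrary, and $\snr{u}$ genuinely grows like $\Ui\log(e+\snr{x})$ on it.
\end{itemize}

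\textbf{The missing idea: the weight in the $u$-test.} Test \eqref{mm.9} globally with $\varphi=u\,\mathcal h_u^{-1}\eta^q$, where $\eta:=(\mathcal h_u-\mathcal h_\infty)_+$; this is admissible by compact support.
\begin{itemize}
\item The factor $\mathcal h_u^{-1}$ cancels the boost in the flux and leaves $\int\eta^q\snr{Du}^2\dx$ on the left.
\item Using $\eta\le\mathcal h_u$, the derivatives of $\mathcal h_u^{-1}\eta^q$ produce terms bounded by $(q+1)\snr{u}\eta^{q-1}\snr{\langle Du,D\mathcal h_u\rangle}$. Via $\mathcal H(Du)Du=Du$, these are bounded by $(q+1)\snr{u}\eta^{q-1}\snr{Du}\langle\mathcal H(Du)D\mathcal h_u,D\mathcal h_u\rangle^{1/2}$.
\item The weighted energy $\int\eta^{q-2}\langle\mathcal H(Du)D\mathcal h_u,D\mathcal h_u\rangle\dx\le c\int\eta^{q-2}\mu^2\dx$ comes from the subsolution inequality \eqref{cz.aux} tested with $\eta^{q-1}$. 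One then integrates by parts to move $D\mu$ back onto $\mu$ through the equation.
\item With $\nr{u}_{L^\infty}\le c\,\Ui$, $\snr{Du}^2\ge1-\mathcal h_\infty^{-2}\ge3/4$ on $\{\eta>0\}$, Young and H\"older, this yields $\nr{\eta}_{L^q(\mathbb R^n)}\le c\,\Ui\nr{\mu}_{L^q(\mathbb R^n)}$.
\end{itemize}
For $n=2$ the same scheme is run with the weight $\log^2(A+\snr{x})$, where $A=e+c\,\Ui/\sqrt{1-\mathcal h_\infty^{-2}}$. The weight absorbs the logarithmic growth of $u$ from \eqref{linf}. The size of $A$ absorbs the terms produced when derivatives hit the weight. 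Finally, $\log(A+\snr{x})\le\log A\,\log(e+\snr{x})$ produces the factor $\log^2(e+\Ui)$ and the $L_x^2$ norm.
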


\begin{proof}
Fix a ball \(B\subset\mathbb R^n\); as clarified in Section \ref{notazioni} we denote by \(2B\)  the
concentric ball with twice the radius. Choose
\(\eta\in W^{1,\infty}_0(2B)\) such that
$
0\le\eta\le1$, $
\eta=1$ in $B$, 
$|D\eta|\le c|B|^{-1/n}.$
Using that 
$
|D\mathcal h_u^{-1}|^2
\le
\mathcal h_u^2|Du|^2|D^2u|^2,
$
since
$
1+\mathcal h_u^2|Du|^2=\mathcal h_u^2,$
it follows that
$
|D\mathcal h_u^{-1}|^2+|D^2u|^2\le\mathcal h_u^2|D^2u|^2.
$
Therefore, applying \eqref{cacc} with \(m=0\), we obtain
\eqn{w22.aux}
$$
\begin{aligned}
&\int_B
(|D\log\mathcal h_u|^2+|D\mathcal h_u^{-1}|^2+|D^2u|^2)\dx
\\
&\qquad\le c
\mathcal h_\infty(\mu)^2
|B|^{1-2/n}
+
c|B|^{-2/n}
\int_{2B}(\mathcal h_u-\mathcal h_\infty(\mu))_+^2\dx
+
c\int_{2B}\mu^2\dx.
\end{aligned}
$$

Assume first that \(n\ge3\). By H\"older's inequality and \eqref{czcz},
\begin{flalign*}
\int_{2B}(\mathcal h_u-\mathcal h_\infty(\mu))_+^2\dx
&\le
c|B|^{1-2/q}
\|(\mathcal h_u-\mathcal h_\infty(\mu))_+\|_{L^q(\mathbb R^n)}^2\\
& \le
c|B|^{1-2/q}
\frac{\Ui^2}
{(1-\mathcal h_\infty(\mu)^{-2})^2}
\|\mu\|_{L^q(\mathbb R^n)}^2\leq c
|B|^{1-2/q}
\Ui^2 \|\mu\|_{L^q(\mathbb R^n)}^2.
\end{flalign*}
Note that we have used that   
$
\mathcal h_\infty(\mu)\ge2,
$
and hence
$
1-\mathcal h_\infty(\mu)^{-2}
\ge
3/4.
$
Similarly, 
$$
\|\mu\|_{L^2(2B)}^2
\le
c|B|^{1-2/q}
\|\mu\|_{L^q(\mathbb R^n)}^2.
$$
Inserting the last two estimates into
\eqref{w22.aux} gives \eqref{w22.high}. Similarly, when \(n=2\) by \eqref{mm.20} we have 
$$
\|(\mathcal h_u-\mathcal h_\infty(\mu))_+\|_{L^2(\mathbb R^2)}^2
\le
c
\Ui^2
\log^2(e+\Ui)
\|\mu\|_{L_x^2(\mathbb R^2)}^2.
$$
Moreover,
$
\log2\|\mu\|_{L^2(\mathbb R^2)}
\le
\|\mu\|_{L_x^2(\mathbb R^2)}.
$
Combining these estimates with
\eqref{w22.aux} yields \eqref{w22.two}.
\end{proof}
\begin{lemma}\label{caccim} Let $u\in \mathbb{X}(\mathbb{R}^{n})\cap C^{\infty}(\mathbb{R}^{n})$ be a classical solution to \eqref{bi} with $\mu \in C^{\infty}_0(\er^n)$. Assume that $n\geq 3$. 
Consider $x_0\in \er^n$ and the condition
\eqn{smallina}
$$\nr{\mu}_{L^{n}(B_{r}(x_{0}))}\le \nr{\tx{g}}_{L^{n}(B_{2r}(x_{0}))},
$$
where $\tx{g}\in L^{n}(\mathbb{R}^{n})$ is a fixed function.  
For every $m>-1$ there exists a positive threshold 
\eqn{lasoglia}
$$r_{*}\equiv r_{*}(n,m,\tx{g})\in (0,1]$$ 
such that if \eqref{smallina} is satisfied for every $r \leq r_{*}$, then the Caccioppoli type inequality
 \eqn{cacc.c}
 $$
 \int_{\mathbb{R}^{n}}\eta^{2}\left(\mathcal{h}_{u}^{m}+\snr{D\mathcal{h}_{u}^{m/2}}^{2}\right)\dx+\int_{\mathbb{R}^{n}}\eta^{2}\mathcal{h}_{u}^{m+2}\snr{D^{2}u}^{2}\dx\le c\int_{\mathbb{R}^{n}}\mathcal{h}_{u}^{m+2}\snr{D\eta}^{2}\dx
 $$
 holds true for all non-negative $\eta\in W^{1,\infty}_{0}(B_{5r/6}(x_{0}))$ such that $r\leq r_{*}$, where $c\equiv c(n,m)$. In particular,  the choice $\tx{g}\equiv \mu$ is admissible. Consequently \eqn{ccc}
$$
\nr{\mathcal{h}_{u}^{m/2}}_{W^{1,2}(B_{3r/4}(x_{0}))}\le c(n,m)r^{-1}\nr{\mathcal{h}_{u}^{m/2+1}}_{L^{2}(B_{5r/6}(x_{0}))}. 
$$
\end{lemma}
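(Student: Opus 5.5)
The natural starting point is the Caccioppoli inequality \eqref{cacc} of Lemma \ref{cacclem}, applied with the same exponent $m>-1$ and a cutoff $\eta\in W^{1,\infty}_0(B_{5r/6}(x_0))$. Its left-hand side already contains $\int\eta^2\mathcal h_u^{m+2}|D^2u|^2$. It also contains $(m+1)\int\eta^2\mathcal h_u^{m-2}|D\mathcal h_u|^2$, which equals $\tfrac{4(m+1)}{m^2}\int\eta^2|D\mathcal h_u^{m/2}|^2$ when $m\neq0$; for $m=0$ the gradient term in \eqref{cacc.c} is trivial. The term $\int\eta^2\mathcal h_u^m$ is bounded by $\int\eta^2\mathcal h_u^{m+2}|D^2u|^2$ plus a harmless lower-order quantity. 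Alternatively, one can use $\mathcal h_u\ge1$ together with a Poincar\'e inequality on $\eta\mathcal h_u^{m/2}$ (recall $r\le1$), so that $\int\eta^2\mathcal h_u^m\lesssim r^2\int|D(\eta\mathcal h_u^{m/2})|^2$. This is the route I would take.

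The whole proof therefore reduces to absorbing the charge term $c(m)\int\eta^2\mu^2\mathcal h_u^m$ on the right of \eqref{cacc}. I would apply H\"older with exponents $n/2$ and $n/(n-2)$:
\[
\int\eta^2\mu^2\mathcal h_u^m\le \|\mu\|_{L^n(B_{5r/6})}^2\,\|\eta\mathcal h_u^{m/2}\|_{L^{2^*}}^2 ,
\]
where $2^*=2n/(n-2)$, which is available since $n\ge3$. By the Sobolev inequality for the compactly supported function $\eta\mathcal h_u^{m/2}$, the last factor is at most $c_n\int|D(\eta\mathcal h_u^{m/2})|^2\le 2c_n\int\eta^2|D\mathcal h_u^{m/2}|^2+2c_n\int\mathcal h_u^m|D\eta|^2$.

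Now I use \eqref{smallina} with $r\le r_*$: we have $\|\mu\|_{L^n(B_r)}\le\|\tx g\|_{L^n(B_{2r})}$. By absolute continuity of the integral of $|\tx g|^n$, uniformly in the centre, I choose $r_*(n,m,\tx g)$ so that $\sup_{x}\|\tx g\|_{L^n(B_{2r_*}(x))}^2\le\varepsilon(n,m)$, with $\varepsilon$ small enough that $c(m)\,2c_n\varepsilon$ is less than half of the coefficient $4(m+1)/m^2$ in front of $\int\eta^2|D\mathcal h_u^{m/2}|^2$. After absorption, the remaining term $\int\mathcal h_u^m|D\eta|^2\le\int\mathcal h_u^{m+2}|D\eta|^2$ is admissible, since $\mathcal h_u\ge1$. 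The case $m=0$ has no gradient term to absorb into, so there I would instead absorb into $\int\eta^2\mathcal h_u^{2}|D^2u|^2$, using $|D\mathcal h_u^{1/2}|$-type identities from \eqref{identitas}, or simply run the argument with exponent $m/2$ replaced by a small positive power. This is the step I expect to be most delicate, because the constants degenerate as $m\to-1$ and as $m\to0$.

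The uniform choice of $r_*$ is legitimate because the radii $B_{2r}(x)$ shrink while $|\tx g|^n\in L^1$: the map $\rho\mapsto\sup_x\int_{B_\rho(x)}|\tx g|^n$ tends to $0$ as $\rho\to0$, by equi-integrability of a single $L^1$ function. For \eqref{ccc}, take $\eta=1$ on $B_{3r/4}$ with $|D\eta|\lesssim1/r$. Then the gradient part is bounded by $cr^{-2}\int_{B_{5r/6}}\mathcal h_u^{m+2}$, and the $L^2$ part $\int_{B_{3r/4}}\mathcal h_u^m\le\int_{B_{5r/6}}\mathcal h_u^{m+2}\le r^{-2}\int_{B_{5r/6}}\mathcal h_u^{m+2}$, using $r\le1$ and $\mathcal h_u\ge1$.
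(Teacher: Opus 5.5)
Your proposal is correct and is essentially the paper's proof. Both arguments rest on H\"older in $L^{n}\times L^{2^*}$, Sobolev for $\eta\mathcal h_u^{m/2}$, a centre-independent choice of $r_*$ from the absolute continuity of $|\tx{g}|^n$, and absorption. The only cosmetic differences are that the paper absorbs into $\|\eta\mathcal h_u^{m/2}\|_{L^{2^*}}^2$ (obtained on the left from $\int|D(\eta\mathcal h_u^{m/2})|^2$), and recovers $\int\eta^2\mathcal h_u^m$ by H\"older on $B_{5r/6}$, which is equivalent to your Poincar\'e route. Your first suggestion, bounding $\int\eta^2\mathcal h_u^m$ through the Hessian term, would not work in general and should simply be dropped.

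The case $m=0$ that worries you is trivial within your own scheme, since then $\eta\mathcal h_u^{m/2}=\eta$ and $\int\eta^2\mu^2\le c_n\varepsilon\int|D\eta|^2\le c_n\varepsilon\int\mathcal h_u^{2}|D\eta|^2$, with nothing to absorb. Nor does any constant degenerate as $m\to0$: the coefficient $4(m+1)/m^2$ blows up, which only helps. The genuinely delicate limit is $m\downarrow-1$, which is why $c$ and $r_*$ depend on $m$.
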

\begin{proof}
With $\eta$ as in the statement, triangle inequality and \eqref{cacc} yield
\begin{flalign}\label{caccr.3}
&\int_{\mathbb{R}^{n}}\snr{D(\eta\mathcal{h}_{u}^{m/2})}^{2}\dx+\int_{\mathbb{R}^{n}}\eta^{2}\mathcal{h}_{u}^{m+2}\snr{D^{2}u}^{2}\dx\nonumber \\
&\qquad \qquad\quad\qquad \quad \le c\int_{\mathbb{R}^{n}}\eta^{2}\mathcal{h}_{u}^{m-2}\snr{D\mathcal{h}_{u}}^{2}\dx+c\int_{\mathbb{R}^{n}}\snr{D\eta}^{2}\mathcal{h}_{u}^{m}\dx\nonumber   +c\int_{\mathbb{R}^{n}}\eta^{2}\mathcal{h}_{u}^{m+2}\snr{D^{2}u}^{2}\dx\nonumber \\
&\qquad\qquad \qquad\quad \quad\le c\int_{\mathbb{R}^{n}}\mathcal{h}_{u}^{m+2}\snr{D\eta}^{2}\dx+c\int_{\mathbb{R}^{n}}\eta^{2}\mu^{2}\mathcal{h}_{u}^{m}\dx\nonumber \\
&\qquad \qquad\qquad\quad \quad \le c\int_{\mathbb{R}^{n}}\mathcal{h}_{u}^{m+2}\snr{D\eta}^{2}\dx+c\nr{\mu}_{L^{n}(\supp \, \eta )}^{2}\left(\int_{\mathbb{R}^{n}}\eta^{2^{*}}\mathcal{h}_{u}^{2^{*}m/2}\dx\right)^{2/2^{*}}\nonumber \\
&\qquad \qquad\qquad\quad \quad \le c\int_{\mathbb{R}^{n}}\mathcal{h}_{u}^{m+2}\snr{D\eta}^{2}\dx+c\nr{\tx{g}}_{L^{n}(B_{2r_{*}})}^{2}\left(\int_{\mathbb{R}^{n}}\eta^{2^{*}}\mathcal{h}_{u}^{2^{*}m/2}\dx\right)^{2/2^{*}},
\end{flalign}  
with $c\equiv c(n,m)$. By \eqref{cacc}, \eqref{caccr.3} and Sobolev embedding,
\begin{flalign}\label{cacc.4} 
&\left(\int_{\mathbb{R}^{n}}\eta^{2^{*}}\mathcal{h}_{u}^{2^{*}m/2}\dx\right)^{2/2^{*}}+\int_{\mathbb{R}^{n}}\eta^{2}\mathcal{h}_{u}^{m-2}\snr{D\mathcal{h}_{u}}^{2}\dx+\int_{\mathbb{R}^{n}}\eta^{2}\mathcal{h}_{u}^{m+2}\snr{D^{2}u}^{2}\dx\nonumber \\
&\qquad \qquad  \quad \le \tilde c\int_{\mathbb{R}^{n}}\mathcal{h}_{u}^{m+2}\snr{D\eta}^{2}\dx+\tilde c\nr{\tx{g}}_{L^{n}(B_{2r_{*}})}^{2}\left(\int_{\mathbb{R}^{n}}\eta^{2^{*}}\mathcal{h}_{u}^{2^{*}m/2}\dx\right)^{2/2^{*}},
\end{flalign}
where $\ti{c}\equiv \ti{c}(n,m)$. By absolute continuity of the integral, choose
\(r_*=r_*(n,m,\tx{g})\in(0,1]\) such that
\[
\widetilde c\sup_{x\in\mathbb R^n}
\|\tx{g}\|_{L^n(B_{2r_*}(x))}^{2}
\le \frac12.
\]
We can then absorb the last term on the right-hand side
of \eqref{cacc.4} and obtain \eqref{cacc.c}
by standard manipulations. As \eqref{cacc.c} holds for all non-negative $\eta\in W^{1,\infty}_{0}(B_{5r/6}(x_{0}))$, we just select one such that $\mathds{1}_{B_{3r/4}(x_{0})}\le \eta\le \mathds{1}_{B_{5r/6}(x_{0})}$ and $\snr{D\eta}\lesssim r^{-1}$, and \rif{ccc} follows directly from \eqref{cacc.c}.  
\end{proof}
\noindent To proceed, we need a preliminary, optimization lemma, which is a variant of \cite[Lemma 3]{bs20}. The proof, up to minimal changes, can be extracted from \cite[Lemma 6.1, (6.5)-(6.8)]{dkk24}, letting $\alpha=0$.
\begin{lemma}\label{bslem}
Let $n\ge3$, let $0<\tau_1<\tau_2<\infty$, and define
\eqn{gamma}
$$
\vartheta:=
\begin{cases}
\dfrac{n-3}{n-1} & n\ge4\\[2mm]
\vartheta_0\in(0,1) & n=3.
\end{cases}
$$
Let $v\ge0$ satisfy
$
v,v^\vartheta\in W^{1,2}(B_{\tau_2}).
$
Then
$$
\inf_{\substack{
\eta\in C^1_0(B_{\tau_2})\\
\mathds{1}_{B_{\tau_1}}\le\eta\le\mathds{1}_{B_{\tau_2}}
}}
\nr{vD\eta}_{L^2(B_{\tau_2})}^{2}
\le
\frac{c}
{\tau_2^{(n-1)(1-\vartheta)/\vartheta}
(\tau_2-\tau_1)^{(1+\vartheta)/\vartheta}}
\left(
\tau_2^{2/\vartheta}
\nr{Dv^\vartheta}_{L^2(B_{\tau_2})}^{2/\vartheta}
+
\nr{v^\vartheta}_{L^2(B_{\tau_2})}^{2/\vartheta}
\right),
$$
where $c=c(n)$ if $n\ge4$, and $c=c(\vartheta_0)$ if $n=3$.
\end{lemma}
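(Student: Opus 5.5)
The plan is to build a radial cutoff $\eta(x)=\psi(|x|)$ out of a one-dimensional optimization in the radial variable. This is the Bella--Schäffner mechanism. Set $\rho:=|x|$, and for $\rho\in(\tau_1,\tau_2)$ define the spherical quantity
\[
\mathcal I(\rho):=\int_{\partial B_\rho}v^2\,\d\mathcal H^{n-1}.
\]
For radial $\eta$ one has $\nr{vD\eta}_{L^2}^2=\int_{\tau_1}^{\tau_2}\psi'(\rho)^2\mathcal I(\rho)\,\d\rho$, subject to $\psi(\tau_1)=1$ and $\psi(\tau_2)=0$. By Cauchy--Schwarz, the infimum over such $\psi$ equals
\[
\left(\int_{\tau_1}^{\tau_2}\mathcal I(\rho)^{-1}\,\d\rho\right)^{-1}.
\]
Smoothing the optimal (Lipschitz) profile and using density shows that the $C^1_0$ infimum is bounded by the same quantity, up to an arbitrarily small error.

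Next I bound $\mathcal I(\rho)$ sphere by sphere through the Sobolev embedding on $\partial B_\rho$, which is an $(n-1)$-dimensional manifold. Write $v^2=(v^\vartheta)^{2/\vartheta}$. For $n\ge4$, the choice $\vartheta=(n-3)/(n-1)$ gives exactly $2/\vartheta=2^*_{n-1}=2(n-1)/(n-3)$. This is the Sobolev exponent of $W^{1,2}$ in dimension $n-1$, so
\[
\mathcal I(\rho)\le c\left(\nr{D_Tv^\vartheta}_{L^2(\partial B_\rho)}+\rho^{-1}\nr{v^\vartheta}_{L^2(\partial B_\rho)}\right)^{2/\vartheta}.
\]
Here the scaling of the constant is fixed by rescaling the sphere to unit radius. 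When $n=3$ the sphere is two-dimensional and $W^{1,2}$ embeds into every $L^p$, $p<\infty$; an arbitrary $\vartheta_0\in(0,1)$ therefore works, with constant $c(\vartheta_0)$. Setting
\[
g(\rho):=\rho^{2}\nr{Dv^\vartheta}_{L^2(\partial B_\rho)}^2+\nr{v^\vartheta}_{L^2(\partial B_\rho)}^2,
\]
this reads $\mathcal I(\rho)\le c\,\rho^{-2/\vartheta}g(\rho)^{1/\vartheta}$, and on the full annulus $\rho\approx\tau_2$ when $\tau_1$ is comparable to $\tau_2$. The case where $\tau_1$ is small relative to $\tau_2$ is handled by restricting attention to $\rho\in(\tau_1,\tau_2)$ and tracking the powers of $\rho$ exactly.

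The remaining step is to bound $\left(\int_{\tau_1}^{\tau_2}\mathcal I^{-1}\right)^{-1}$. This is the step I expect to be the main obstacle, because the powers of $\tau_2$ and $\tau_2-\tau_1$ in the statement must come out exactly. Apply Hölder (reverse Jensen) with exponents $1+\vartheta$ and $(1+\vartheta)/\vartheta$:
\[
\tau_2-\tau_1=\int_{\tau_1}^{\tau_2}\mathcal I^{-\vartheta/(1+\vartheta)}\mathcal I^{\vartheta/(1+\vartheta)}
\le\left(\int\mathcal I^{-1}\right)^{\vartheta/(1+\vartheta)}\left(\int\mathcal I^{\vartheta}\right)^{1/(1+\vartheta)}.
\]
This gives
\[
\left(\int\mathcal I^{-1}\right)^{-1}\le(\tau_2-\tau_1)^{-(1+\vartheta)/\vartheta}\left(\int_{\tau_1}^{\tau_2}\mathcal I^\vartheta\,\d\rho\right)^{1/\vartheta}.
\]
Then $\mathcal I^\vartheta\le c\,\rho^{-2}g(\rho)$, and by coarea $\int_{\tau_1}^{\tau_2}g\,\d\rho\le\tau_2^2\nr{Dv^\vartheta}^2_{L^2(B_{\tau_2})}+\nr{v^\vartheta}^2_{L^2(B_{\tau_2})}$. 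The remaining weight is the factor $\rho^{-2}$ from the sphere Sobolev scaling, together with the $\rho^{n-1}$ normalization of the sphere measure if I first write Sobolev with averaged norms. Matching it against $\tau_2^{-(n-1)(1-\vartheta)/\vartheta}$ is where I would redo the scaling carefully, following \cite[(6.5)--(6.8)]{dkk24} with $\alpha=0$. There, Sobolev is written on the unit sphere in terms of averages, which produces exactly the factor $\tau_2^{(n-1)(1-1/\vartheta)}$ after raising to the power $1/\vartheta$ (using $\rho\le\tau_2$ and $1-1/\vartheta<0$, which needs $\rho$ bounded below by a multiple of $\tau_2$ or a dyadic splitting). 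Finally, density gives the result for $v,v^\vartheta\in W^{1,2}$: for a.e. $\rho$ the traces on spheres are $W^{1,2}(\partial B_\rho)$ by Fubini in polar coordinates.
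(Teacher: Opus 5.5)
Your strategy is the right one. It is the Bella--Sch\"affner argument the paper defers to, and its first half is correct: radial cutoffs, the one-dimensional Cauchy--Schwarz optimization, and the H\"older step giving $\bigl(\int_{\tau_1}^{\tau_2}\mathcal I^{-1}\bigr)^{-1}\le(\tau_2-\tau_1)^{-(1+\vartheta)/\vartheta}\bigl(\int_{\tau_1}^{\tau_2}\mathcal I^{\vartheta}\bigr)^{1/\vartheta}$. One technicality: if $\mathcal I$ may vanish, the optimal profile need not be Lipschitz, so run the optimization with $\mathcal I+\varepsilon$ and let $\varepsilon\to0$. The step you postpone is, however, exactly where the stated powers of $\tau_2$ are produced. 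As written it contains two errors, so the argument does not reach the stated estimate.

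\emph{First, the spherical Sobolev bound is wrong for $n=3$.} Your bound $\mathcal I(\rho)\le c\rho^{-2/\vartheta}g(\rho)^{1/\vartheta}$, i.e.\ $\mathcal I^\vartheta\le c\rho^{-2}g$, holds only for $n\ge4$. When $n=3$ the embedding of $W^{1,2}(\partial B_\rho)$ into $L^{2/\vartheta_0}$ is subcritical, and rescaling leaves an extra factor $\rho^{2\vartheta_0}$. For instance, $v\equiv1$ gives $\mathcal I(\rho)=4\pi\rho^2$, whereas $\rho^{-2/\vartheta}g^{1/\vartheta}=(4\pi)^{1/\vartheta}$. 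The correct statement for all $n\ge3$ comes from the averaged Sobolev inequality on $\partial B_\rho$ with exponent $2/\vartheta$:
\[
\mathcal I(\rho)^\vartheta\le c\,\rho^{-(n-1)(1-\vartheta)}\,g(\rho).
\]
This reduces to your bound for $n\ge4$, since then $(n-1)(1-\vartheta)=2$. The exponent $(n-1)(1-\vartheta)$ is exactly what generates the prefactor $\tau_2^{-(n-1)(1-\vartheta)/\vartheta}$ in the statement.

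\emph{Second, you need $\rho\gtrsim\tau_2$ on the transition region.} Replacing $\rho^{-(n-1)(1-\vartheta)}$ by $c\,\tau_2^{-(n-1)(1-\vartheta)}$ requires a lower bound on $\rho$. Your parenthetical ``using $\rho\le\tau_2$'' points the wrong way, since the exponent is negative. ``Tracking the powers of $\rho$'' with only $\rho\ge\tau_1$ yields $\tau_1$ in place of $\tau_2$, which is not the claim when $\tau_1\ll\tau_2$. The missing observation is elementary:
\begin{itemize}
\item the admissible class shrinks as $\tau_1$ increases, so the infimum is bounded by the one with $\bar\tau_1:=\max\{\tau_1,\tau_2/2\}$;
\item $\tau_2-\bar\tau_1\ge(\tau_2-\tau_1)/2$, so you may assume $\tau_1\ge\tau_2/2$ at the price of a factor $2^{(1+\vartheta)/\vartheta}$.
\end{itemize}
Alternatively, Hardy's inequality on $B_{\tau_2}$, available since $n\ge3$, controls the weight directly.

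With these corrections, coarea gives
\[
\int_{\tau_1}^{\tau_2}\mathcal I^\vartheta\,\mathrm d\rho\le c\,\tau_2^{-(n-1)(1-\vartheta)}\left(\tau_2^{2}\nr{Dv^\vartheta}_{L^2(B_{\tau_2})}^{2}+\nr{v^\vartheta}_{L^2(B_{\tau_2})}^{2}\right).
\]
Raising to the power $1/\vartheta$ and using $(a+b)^{1/\vartheta}\le2^{1/\vartheta-1}(a^{1/\vartheta}+b^{1/\vartheta})$ gives precisely the claimed inequality, with $c=c(n)$ for $n\ge4$ and $c=c(\vartheta_0)$ for $n=3$. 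Without these corrections your argument does not close, and for $n=3$ it passes through a false intermediate inequality.
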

Next, the core result for this part, that is, an arbitrarily high size gain for $\mathcal{h}_{u}$.
\begin{proposition}\label{p4.4} Within the same setting as Lemma \ref{caccim}, for every $p>1$ there exists a positive threshold 
\eqn{lasoglia2}
$$r_{p}\equiv r_{p}(n,p,\tx{g})\in (0,1]$$  such that if \eqref{smallina} is satisfied for every $r\leq r_p$, then   
\eqn{hiiigh}
$$
\nr{\mathcal{h}_{u}^{p/2}}_{W^{1,2}(B_{r/8}(x_0))}\le cr^{-\gamma_p}\left(\mathcal{h}_{\infty}(\mu)+\Ui\nr{\mu}_{L^{n}(\mathbb{R}^{n})}\right)^{\tx{b}_{p}}
$$
holds whenever $r \leq r_p$, where $c,\tx{b}_{p}, \gamma_p\equiv c,\tx{b}_{p},\gamma_p(n,p)\geq 1$.
\end{proposition}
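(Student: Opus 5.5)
The plan is an iteration of the local Caccioppoli inequality \eqref{ccc} from Lemma \ref{caccim}, improved by the optimized cutoff choice of Lemma \ref{bslem}, so that each step raises the integrability exponent of $\mathcal h_u$ by a fixed factor. The starting level comes from the global information already available. By \eqref{w22.aux}, together with the bound on $(\mathcal h_u-\mathcal h_\infty(\mu))_+$ in $L^q$ used in Lemma \ref{w22.lem} (with $q=n$), we control $\nr{\mathcal h_u}_{L^2(B_r)}$ and $\nr{D\mathcal h_u^{-1}}_{L^2(B_r)}$ by a constant times $\mathcal h_\infty(\mu)+\Ui\nr{\mu}_{L^n(\mathbb R^n)}$ and a negative power of $r$. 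More precisely, $\nr{\mathcal h_u}_{L^n(B_r)}$ is controlled by these quantities via the exterior estimate \eqref{stimamis} and the $L^q$ bound for the truncation.

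\emph{The iteration step.} Fix $m>-1$ and a pair of radii $\tau_1<\tau_2$ between $r/8$ and $5r/6$. In \eqref{cacc.c}, the right-hand side is $\int \mathcal h_u^{m+2}|D\eta|^2$, which is two powers of $\mathcal h_u$ above the left-hand side; a naive iteration therefore loses rather than gains. The remedy is to minimize over $\eta$ using Lemma \ref{bslem} with $v:=\mathcal h_u^{(m+2)/2}$. This bounds $\inf_\eta\nr{vD\eta}_{L^2}^2$ by $\nr{v^\vartheta}_{W^{1,2}(B_{\tau_2})}^{2/\vartheta}$ up to powers of $\tau_2$ and of $\tau_2-\tau_1$. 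Here
\[
v^\vartheta=\mathcal h_u^{\vartheta(m+2)/2},
\]
and its $W^{1,2}$ norm is again estimated by \eqref{ccc} at the level $m'=\vartheta(m+2)-2$ on a larger ball. The gain comes from the slicing in Lemma \ref{bslem}: only the $(n-1)$-dimensional Sobolev embedding on spheres is used, so that $\vartheta=(n-3)/(n-1)$, or any $\vartheta_0<1$ when $n=3$. Combined with the Sobolev embedding $W^{1,2}\hookrightarrow L^{2^*}$ on the left-hand side of \eqref{cacc.c}, this gives
\[
\nr{\mathcal h_u^{m/2}}_{L^{2^*}(B_{\tau_1})}\lesssim (\ldots)\,\nr{\mathcal h_u^{m'/2}}_{W^{1,2}(B_{\tau_2})}^{1/\vartheta}.
\]
I would check the arithmetic that the map $m'\mapsto m$ strictly increases the exponent $p$ of $\mathcal h_u^p\in L^1$, namely from $p$ to a larger value determined by $2^*/2=n/(n-2)$ and $\vartheta$. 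Since the ratio of exponents is bounded below by a constant greater than $1$, finitely many steps, with their number depending on $(n,p)$, reach any prescribed $p$.

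\emph{Bookkeeping.} Along the finite chain of radii $5r/6>\dots>r/8$, each step costs a factor $r^{-c}$ from $|D\eta|$ and the $\tau$-powers, and raises the previous bound to a power $1/\vartheta$. Collecting these gives the exponents $\gamma_p$ and $\tx b_p$. The threshold $r_p$ is the minimum of the thresholds $r_*(n,m,\tx g)$ from \eqref{lasoglia} over the finitely many values of $m$ used. Finally, \eqref{ccc} at the last level yields the $W^{1,2}$ bound on $\mathcal h_u^{p/2}$ in \eqref{hiiigh}.

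The main obstacle is the exponent-gain computation. One has to verify that Lemma \ref{bslem}, applied with the borderline $\vartheta$, really produces a net gain despite the two extra powers of $\mathcal h_u$ in \eqref{cacc.c}, and that the $\tau_2-\tau_1$ factors stay harmless on the fixed finite chain of radii. If the direct gain fails for small $m$, I would first run a few steps using only $\mathcal h_u\in L^n$ (Hölder for the $\mu^2$ term already absorbed) to reach a regime where the ratio exceeds $1$.
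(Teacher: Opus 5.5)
The architecture you describe is the paper's: Lemma \ref{bslem} is inserted into \eqref{cacc.c}, there is a finite chain of levels, the start is $\|\mathcal h_u\|_{L^n(B_{5r/6})}\lesssim \mathcal h_\infty(\mu)+\Ui\|\mu\|_{L^n}$ from \eqref{czcz}, and $r_p$ is the minimum of the thresholds $r_*(n,m,\tx g)$. However, the step you defer as ``the main obstacle'' is the whole content of the proof, and you have set it up incorrectly.

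\textbf{The recursion is misstated.} With $v=\mathcal h_u^{(m+2)/2}$, Lemma \ref{bslem} needs a $W^{1,2}$ bound for $v^\vartheta=\mathcal h_u^{\vartheta(m+2)/2}$ itself. Applying \eqref{ccc} at level $\vartheta(m+2)-2$ does not give this. It bounds the $W^{1,2}$ norm of the lower power $\mathcal h_u^{(\vartheta(m+2)-2)/2}$ by $\|v^\vartheta\|_{L^2}$, which is the wrong direction. Combining \eqref{cacc.c} with Lemma \ref{bslem} actually yields
\[
\|\mathcal h_u^{m_{i+1}/2}\|_{W^{1,2}(B_{i+1})}\lesssim \|\mathcal h_u^{m_i/2}\|_{W^{1,2}(B_i)}^{1/\vartheta},\qquad m_{i+1}=\frac{m_i}{\vartheta}-2,
\]
that is, $\vartheta(m_{i+1}+2)=m_i$. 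The quantity being iterated is the $W^{1,2}$ norm itself, and the embedding $W^{1,2}\hookrightarrow L^{2^*}$ plays no role in the gain. Under your shifted relation the gain looks automatic for every $m'>-2$. Under the correct one, $m_{i+1}>m_i$ holds if and only if $m_i>2\vartheta/(1-\vartheta)$. So the levels go to infinity only if the starting level lies strictly above this threshold.

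\textbf{Where the $L^n$ hypothesis enters.} This threshold check is exactly what your plan leaves unverified. Applying \eqref{ccc} to $\mathcal h_u\in L^n$ gives a $W^{1,2}$ bound for $\mathcal h_u^{m_0/2}$ with $m_0=n-2$. For $n\ge4$ the threshold is $2\vartheta/(1-\vartheta)=n-3$, so $L^n$ clears it by exactly one unit. Equivalently, for $n\ge4$ the scheme gains precisely when it starts from $\mathcal h_u\in L^q$ with $q>n-1$.

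\textbf{The case $n=3$.} Here you cannot take ``any $\vartheta_0<1$''. You need $2\vartheta_0/(1-\vartheta_0)<m_0=1$, i.e.\ $\vartheta_0<1/3$; the paper takes $\vartheta_0=1/4$.

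\textbf{The fallback does not work, and is not needed.} Running a few plain steps at the $L^n$ level first produces nothing. The ordinary Caccioppoli--Sobolev step sends $\mathcal h_u\in L^q$ to $\mathcal h_u\in L^{n(q-2)/(n-2)}$, which improves $q$ only when $q>n$. At $q=n$ it is exactly critical. No preliminary phase is needed, though: once the correct recursion is written down, the starting level $m_0=n-2$ is already above the threshold.
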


\begin{proof}
Fix \(p\in[1,\infty)\). Let \(\vartheta\in(0,1)\) be defined in
\eqref{gamma}, choosing \(\vartheta_0=1/4\) when \(n=3\), and set
$
m_0:=n-2, 
$
so that 
$
m_0> 2\vartheta/(1-\vartheta).
$
Define recursively
\eqn{4.1}
$$
m_{i+1}:=\frac{m_i}{\vartheta}-2,
\qquad
i\in\mathbb N\cup\{0\}.
$$
A direct induction gives
\eqn{4}
$$
m_i=\frac{1}{\vartheta^{i}}
\left(m_0-\frac{2\vartheta}{1-\vartheta}\right)
+\frac{2\vartheta}{1-\vartheta},\qquad
i\in\mathbb N\cup\{0\},
$$
and hence
$
m_i\to\infty.
$
Choose
$$
i_p:=\min\{i\ge1:m_i\ge p\},
\qquad \mathfrak{m}_{p}:=m_{i_p},
\qquad
(0,1]\ni r_p:=\min_{0\le j\le i_p}r_*(n,m_j,\tx{g}).
$$
Here $r_*(n,m_j,\tx{g})$ is the threshold in
\eqref{lasoglia} corresponding to $m=m_j$.
This determines the number $r_p$ appearing in
\eqref{lasoglia2}.
By the proof of Lemma \ref{caccim}, estimate
\eqref{cacc.c} holds with \(m\equiv m_i\), \(0\le i\le i_p\),
for every \(0<r\le r_p\), at the fixed center \(x_0\). Fix such a ball \(B_r(x_0)\), and choose
$
 r/8\le\tau_1<\tau_2\le 3r/4.
$ In the following all the balls will be centered at $x_0$ and we shall omit to denote the center $B_r\equiv B_r(x_0)$. 
For \(i\in\mathbb N\cup\{0\}\), define
$
\varrho_i
:=
\tau_1+(\tau_2-\tau_1)/2^{i}$, 
$
B_i:=B_{\varrho_i}.
$
Then
$
\varrho_0=\tau_2$, $
\varrho_i\searrow\tau_1$, $
\varrho_i-\varrho_{i+1} = (\tau_2-\tau_1)/2^{i+1}.
$ 
 For \(i\in\{0,\ldots,i_p-1\}\), apply Lemma \ref{bslem}
with \(v:=\mathcal h_u^{(m_{i+1}+2)/2}\).
Since $\vartheta(m_{i+1}+2)=m_i$ by \eqref{4.1}
and $r/8\le\varrho_i\le r\le1$, we obtain
$$
\inf_{\substack{
\eta\in C^1_0(B_i)\\ 
\mathds{1}_{B_{i+1}}\le\eta\le\mathds{1}_{B_i}}}
\nr{\mathcal h_u^{m_{i+1}/2+1}D\eta}_{L^2(B_i)}^2
\le \frac{c}{(\varrho_i-\varrho_{i+1})^{(1+\vartheta)/\vartheta}r^{\kappa}}
\nr{\mathcal h_u^{m_i/2}}_{W^{1,2}(B_i)}^{2/\vartheta},
$$
where we are denoting 
$
\kappa:=(n-1)(1-\vartheta)/\vartheta
$. 
Applying \eqref{cacc.c} with \(m=m_{i+1}\) and taking
the infimum over the admissible cutoff functions yields
\eqn{caccr.2}
$$
\nr{\mathcal h_u^{m_{i+1}/2}}_{W^{1,2}(B_{i+1})}
\le
\frac{c(n,m_{i+1})}
{(\varrho_i-\varrho_{i+1})^{(1+\vartheta)/(2\vartheta)}r^{\kappa/2}}
\nr{\mathcal h_u^{m_i/2}}_{W^{1,2}(B_i)}^{1/\vartheta}.
$$
Set
$$
\tx{H}_i:=
\nr{\mathcal h_u^{m_i/2}}_{W^{1,2}(B_i)}^{2/m_i}.
$$
Then \eqref{caccr.2} gives
$$
\tx{H}_{i+1}
\le
\frac{c}
{(\varrho_i-\varrho_{i+1})^{(1+\vartheta)/
(\vartheta m_{i+1})}r^{\kappa/m_{i+1}}}
\tx{H}_i^{m_i/(\vartheta m_{i+1})}.
$$
Iterating this inequality, we obtain, for \(1\le i\le i_p\),
\eqn{3}
$$
\tx{H}_i
\le
\frac{c\,r^{-\alpha_i}}
{(\tau_2-\tau_1)^{\widetilde\beta_i}}
\tx{H}_0^{m_0/(\vartheta^i m_i)},
\qquad
\alpha_i:=
\frac{(n-1)(\vartheta^{-i}-1)}{m_i},
\qquad
\widetilde\beta_i:=
\frac{1+\vartheta}{m_i}
\sum_{j=1}^{i}\frac 1{\vartheta^{j}},
$$
where \(c=c(n,i)\). We now choose
$
\tau_1=r/8,
$ and $
\tau_2=3r/4.
$
By \eqref{ccc},
$$
\nr{\mathcal h_u^{m_0/2}}_{W^{1,2}(B_{3r/4})}\le
cr^{-1}\nr{\mathcal h_u^{m_0/2+1}}_{L^2(B_{5r/6})}.
$$
Since \(m_0+2=n\), \eqref{3} gives
$$
\nr{\mathcal h_u^{m_i/2}}_{W^{1,2}(B_{r/8})}
\le
Cr^{-\gamma_i}
\nr{\mathcal h_u}_{L^n(B_{5r/6})}^{\frac{n}{2\vartheta^i}},
$$
for some \(\gamma_i=\gamma_i(n,i)>0\). By \eqref{4}, the index $i_p$ is finite and depends
only on $n$ and $p$.
We apply the last estimate with $i=i_p$. Since \(\mathfrak{m}_{p}\ge p\) and
\(\mathcal h_u\ge1\), we have
$$
\snr{D\mathcal h_u^{p/2}}
=
\frac{p}{\mathfrak{m}_{p}}
\mathcal h_u^{(p-\mathfrak{m}_{p})/2}
\snr{D\mathcal h_u^{\mathfrak{m}_{p}/2}}
\le \frac{p}{\mathfrak{m}_{p}}
\snr{D\mathcal h_u^{\mathfrak{m}_{p}/2} 
}.
$$
Moreover, by H\"older's inequality,
$$
\begin{aligned}
\nr{\mathcal h_u^{p/2}}_{L^2(B_{r/8})}
&\le
|B_{r/8}|^{\frac{\mathfrak{m}_{p}-p}{2\mathfrak{m}_{p}}}
\nr{\mathcal h_u^{\mathfrak{m}_{p}/2}}_{L^2(B_{r/8})}^{p/\mathfrak{m}_{p}}\le
c\nr{\mathcal h_u^{\mathfrak{m}_{p}/2}}_{W^{1,2}(B_{r/8})}+c.
\end{aligned}
$$
Consequently,
\eqn{3.0.1}
$$
\nr{\mathcal h_u^{p/2}}_{W^{1,2}(B_{r/8})}
\le
cr^{-\gamma_p}\left(1+\nr{\mathcal h_u}_{L^n(B_{5r/6})}^{\tx{b}_p}\right),
$$
where
$
\tx{b}_p:=
n/(2\vartheta^{i_p})$ and $
\gamma_p=\gamma_p(n,p)>0$. Finally, by \eqref{czcz} with \(q=n\), recalling that
\(\mathcal h_\infty(\mu)\ge2\) and \(r\le r_p\le1\), we obtain
$$
\begin{aligned}
\|\mathcal h_u\|_{L^n(B_{5r/6})}
&\le |B_{5r/6}|^{1/n}\mathcal h_\infty(\mu)
+\|(\mathcal h_u-\mathcal h_\infty(\mu))_+\|_{L^n(\mathbb R^n)}
\\
&\le c_n\left[
\mathcal h_\infty(\mu)+\mathfrak m_\mu\|\mu\|_{L^n(\mathbb R^n)}\right].
\end{aligned}
$$
Inserting this bound into \eqref{3.0.1} and absorbing the additive
constant by using \(\mathcal h_\infty(\mu)\ge2\) yields
\eqref{hiiigh}, completing the proof.
\end{proof}

\section{A calibrated De Giorgi iteration}\label{calisec}

The next lemma provides a global, calibrated variant of
the classical De Giorgi iteration. It may be of independent interest
and useful elsewhere. Its two main features are that the iteration is
performed on a fixed domain and that the level increments are chosen
in a self-calibrating scheme. This choice enforces
a prescribed geometric decay of the truncated energy and is inspired
by nonlinear potential theoretic methods originating in
\cite{km94}; see also \cite[Section 3]{bm20}. 

\begin{lemma}[Calibrated De Giorgi-type iteration]\label{abstract.iteration}
Let $B\subset\mathbb R^n$ be a bounded domain, let
$
v:B\to[0,\infty)
$
be a measurable function, and let
$
\kappa_0\ge0.
$
For every $\kappa\ge\kappa_0$, set
$$
E_\kappa:=\{x\in B:v(x)>\kappa\}.
$$
Assume that
\eqn{abstract.zero.boundary}
$$
(v-\kappa)_+
\in
W^{1,2}_0(B)
\qquad
\mbox{for every }\kappa\ge\kappa_0,
$$
and define
$
T(\kappa)
:=
\nr{D(v-\kappa)_+}_{L^2(B)}
$
and assume that
$
S_0:=\snr{E_{\kappa_0}}>0.
$
Let
$
\omega:[0,S_0]\to[0,\infty)
$
be a nondecreasing, absolutely continuous function such that
$
\omega(0)=0,
$
and assume there exists $\mathfrak{c}_0\ge1$ such that
\eqn{abstract.energy}
$$
T(\kappa)
\le
\mathfrak{c}_0\omega(\snr{E_\kappa})
\qquad
\mbox{for every }\kappa\ge\kappa_0.
$$
It follows that
\begin{itemize}
\item
If there exist $\chi>1$ and $\mathfrak{c}_{1}\ge1$ such that
\eqn{abstract.power.level}
$$
\snr{E_{\kappa_2}}
\le
\mathfrak{c}_{1}
\left(
\frac{T(\kappa_1)}
{\kappa_2-\kappa_1}
\right)^\chi
$$
whenever
$
\kappa_0\le\kappa_1<\kappa_2,
$
then
\eqn{abstract.power.bound}
$$
\operatorname*{ess\,sup}_{B}v
\le
\kappa_0
+
c(\chi)\mathfrak{c}_0\mathfrak{c}_{1}^{1/\chi}
\int_0^{S_0}
\frac{\omega'(s)}{s^{1/\chi}}
\ds.
$$

\item
If there exist constants
$
\mathfrak{c}_{2}\ge1
$
and
$
\mathfrak{c}_{3}>0
$
such that
\eqn{abstract.exp.level}
$$
\snr{E_{\kappa_2}}
\le
\mathfrak{c}_{2}
\exp\left\{
-\frac{
\mathfrak{c}_{3}(\kappa_2-\kappa_1)^2
}{
T(\kappa_1)^2
}
\right\}
\snr{E_{\kappa_1}}
$$
whenever
$
\kappa_0\le\kappa_1<\kappa_2
$
and
$
T(\kappa_1)>0,
$
then
\eqn{abstract.exp.bound}
$$
\operatorname*{ess\,sup}_{B}v
\le
\kappa_0
+
\frac{c\mathfrak{c}_0}{\sqrt{\mathfrak{c}_{3}}}
\int_0^{S_0}
\omega'(s)
\logs\left(
\frac{e\mathfrak{c}_{2}S_0}{s}
\right)
\ds.
$$
\end{itemize}
\end{lemma}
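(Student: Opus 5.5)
The plan is a calibrated choice of levels: we pick increasing levels $\kappa_0<\kappa_1<\dots$ so that the measures $S_i:=\snr{E_{\kappa_i}}$ decay geometrically, say $S_{i+1}\le S_i/2$. We then bound the total increment $\sum_i(\kappa_{i+1}-\kappa_i)$ by a Riemann-type sum for the integrals in \eqref{abstract.power.bound} and \eqref{abstract.exp.bound}. Since $v\le\kappa_\infty:=\lim\kappa_i$ a.e. whenever $S_i\to0$, this bounds the essential supremum. If at some step $S_i=0$ or $T(\kappa_i)=0$, the latter implies $(v-\kappa_i)_+=0$ by \eqref{abstract.zero.boundary}, and we stop early with $\operatorname*{ess\,sup} v\le\kappa_i$.

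In the power case we set $\kappa_{i+1}:=\kappa_i+(2\mathfrak c_1)^{1/\chi}T(\kappa_i)S_i^{-1/\chi}$. Then \eqref{abstract.power.level} gives $S_{i+1}\le\mathfrak c_1 (T(\kappa_i)/(\kappa_{i+1}-\kappa_i))^\chi=S_i/2$. Using \eqref{abstract.energy} and monotonicity of $\omega$, the increment satisfies $\kappa_{i+1}-\kappa_i\le c\,\mathfrak c_0\mathfrak c_1^{1/\chi}\omega(S_i)S_i^{-1/\chi}$. To pass from $\sum_i\omega(S_i)S_i^{-1/\chi}$ to the integral we cannot use $S_i$ directly, since its decay may be faster than geometric and we need a comparison with dyadic scales. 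So we instead work with dyadic numbers $\sigma_j:=2^{-j}S_0$, for which $\omega(\sigma_j)=\sum_{k\ge j}[\omega(\sigma_k)-\omega(\sigma_{k+1})]$. Interchanging the order of summation gives
$$\sum_j\omega(\sigma_j)\sigma_j^{-1/\chi}=\sum_k[\omega(\sigma_k)-\omega(\sigma_{k+1})]\sum_{j\le k}\sigma_j^{-1/\chi}\le c(\chi)\sum_k\sigma_k^{-1/\chi}\int_{\sigma_{k+1}}^{\sigma_k}\omega'\,ds\le c(\chi)\int_0^{S_0}\omega'(s)s^{-1/\chi}\,ds.$$
The inner sum is geometric and is controlled by its last term because $\chi>1$. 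To make the iteration match dyadic scales, we define the levels by $\kappa_{i+1}:=\kappa_i+(\mathfrak c_1)^{1/\chi}\mathfrak c_0\,\omega(\sigma_i)\sigma_{i+1}^{-1/\chi}$ and prove by induction that $S_i\le\sigma_i$. The induction step runs as follows: \eqref{abstract.power.level} with \eqref{abstract.energy} gives $S_{i+1}\le \mathfrak c_1(\mathfrak c_0\omega(S_i))^\chi/(\kappa_{i+1}-\kappa_i)^\chi\le \sigma_{i+1}$, using $\omega(S_i)\le\omega(\sigma_i)$. Then $S_i\to0$, and summing the increments gives \eqref{abstract.power.bound}.

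The exponential case follows the same scheme. We choose the increments $\kappa_{i+1}-\kappa_i:=\mathfrak c_0\omega(\sigma_i)\,\mathfrak c_3^{-1/2}\sqrt{\log(2\mathfrak c_2\sigma_i/\sigma_{i+1}\cdots)}$, calibrated so that \eqref{abstract.exp.level} yields $S_{i+1}\le\sigma_{i+1}$ given $S_i\le\sigma_i$. The difficulty is that the ratio in \eqref{abstract.exp.level} is relative to $S_i$, not to $\sigma_i$, so a fixed factor $1/2$ per step is too weak to produce the logarithmic weight. We therefore calibrate with a target ratio: we take $\kappa_{i+1}-\kappa_i=\mathfrak c_0\mathfrak c_3^{-1/2}\omega(\sigma_i)\logs(e\mathfrak c_2 S_0/\sigma_{i+1})$. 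This choice gives $S_{i+1}\le \mathfrak c_2 e^{-\log(e\mathfrak c_2S_0/\sigma_{i+1})}S_i\le \sigma_{i+1}S_i/(eS_0)\le\sigma_{i+1}$. Summing the increments and using the same Abel summation, now with weight $\logs(e\mathfrak c_2S_0/\sigma_k)$, we need $\sum_{j\le k}\logs(e\mathfrak c_2S_0/\sigma_{j+1})\lesssim$ a comparable integral. This is the step I expect to be the main obstacle, because the partial sums of $\sqrt{j}$ grow like $k^{3/2}$, not like $\sqrt k$, so the naive weight is too large by a factor of $k$. The fix I would try first is to use a doubly exponential schedule, $\sigma_j:=S_0e^{-2^j}$. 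Then $\sum_{j\le k}\sqrt{2^j}\lesssim\sqrt{2^k}$ is geometric again. The measure recursion remains admissible, since the choice of $\sigma_{j+1}$ is free, and a dyadic-in-log comparison converts the sum into $\int_0^{S_0}\omega'(s)\logs(e\mathfrak c_2S_0/s)\,ds$, giving \eqref{abstract.exp.bound}.
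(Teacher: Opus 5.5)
Your argument is correct, up to two small repairs in the exponential case noted below, but it is organized differently from the paper's. The paper calibrates dyadically in the \emph{range} of $\omega$. It fixes energy targets $t_i=T(\kappa_0)/(\mathfrak c_0 2^{i})$, measure targets $\omega^{-1}(t_i)$, and adaptive increments proportional to the current energy $T(\kappa_{i-1})$, so that $T(\kappa_i)\le 2^{-i}T(\kappa_0)$. The total increment is then a Riemann sum, over the intervals $(t_i/2,t_i]$, of a function monotone in $t$. The substitution $t=\omega(s)$ yields both \eqref{abstract.power.bound} and \eqref{abstract.exp.bound} with one and the same scheme, whatever the monotone weight. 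The price is that $\omega$ must be inverted, hence the regularization $\omega+\varepsilon s$, which is the only place where $\chi>1$ enters. The argument is closed via $T(\kappa_i)\to0$, Poincar\'e and Fatou.

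You instead prescribe an a priori schedule of measure targets $\sigma_j$ in the $s$-variable, drive the increments by $\omega(\sigma_i)$, and pass to the integral by Abel summation. This avoids inverting $\omega$ and never uses $\chi>1$, since the inner geometric sum only needs $2^{1/\chi}>1$. It also concludes directly from $|E_{\kappa_i}|\le\sigma_i\to0$. The cost is that the schedule must be tailored to the weight: geometric in $s$ for $s^{-1/\chi}$, and geometric in $\log(1/s)$ for the square-root-log weight. Your diagnosis that a dyadic schedule genuinely loses a factor $k$ in the second case is right.

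\textbf{The two repairs.}

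First, the schedule must start at $\sigma_0=S_0$. With $\sigma_j=S_0e^{-2^j}$ you get $\sigma_0=S_0/e$, and the base case $T(\kappa_0)\le\mathfrak c_0\omega(\sigma_0)$ is not available.

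Second, set $W_j:=\log^{1/2}(e\mathfrak c_2S_0/\sigma_j)$. The increment at step $i$ carries the weight $W_{i+1}$, while on $[\sigma_{k+1},\sigma_k]$ the integrand weight is only bounded below by $W_k$. So after Abel summation you need $\sum_{j=1}^{k+1}W_j\le c\,W_k$. With $\sigma_j=S_0e^{-2^j}$ one has $W_j=(\log(e\mathfrak c_2)+2^j)^{1/2}$, which is essentially constant for about $\log_2\log(e\mathfrak c_2)$ indices. Your sum of increments then exceeds the integral by a factor of that order. To see this, take $\omega$ rising from $0$ to $1$ just below $\sigma_m$, with $2^m\approx\log(e\mathfrak c_2)$. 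Hence the constant is not the absolute $c$ of \eqref{abstract.exp.bound}.

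Taking instead $\sigma_j:=S_0\exp\{-(2^j-1)\log(e\mathfrak c_2)\}$ gives $W_j=2^{j/2}\log^{1/2}(e\mathfrak c_2)$. Then $\sum_{j=1}^{k+1}W_j\le\frac{2}{\sqrt2-1}W_k$, and the constant is absolute. Your induction step is unaffected, since it only uses $S_i\le\sigma_i\le S_0$.
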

\begin{proof}
Note that $S_0>0$ implies 
$
T(\kappa_0)>0,
$
since otherwise \rif{abstract.zero.boundary} and Poincar\'e's
inequality would give
$
(v-\kappa_0)_+=0
$
a.e. in $B$, contradicting $S_0>0$. Note also that we can always assume that $\omega$ is strictly increasing. Otherwise, for $\varepsilon>0$, we replace it  by 
$
s \mapsto \omega_\varepsilon(s):=\omega(s)+\varepsilon s.
$
Estimate \rif{abstract.energy} remains valid with
$
\omega_\varepsilon
$
in place of $\omega$, and moreover (according to the case under consideration)
$$
\varepsilon
\int_0^{S_0}s^{-1/\chi}\ds
\to0, 
\qquad 
\varepsilon
\int_0^{S_0}
\logs\left(
\frac{e\mathfrak{c}_{2}S_0}{s}
\right)
\ds
\to0
\qquad
\mbox{as }\varepsilon\to0
$$
since $\chi>1$. Applying the lemma to $\omega_{\eps}$ and eventually letting $\varepsilon\to0$ in the resulting
estimates, we therefore assume that the inverse $
\omega^{-1}
$ exists. 
\medskip

\noindent
{\em Proof of \eqref{abstract.power.bound}.}
We can assume that the right-hand side of
\rif{abstract.power.bound} is finite. Set
$
T_0:=T(\kappa_0)
$
and, for every $i\ge1$, define
\eqn{abstract.ti}
$$
t_i
:=
\frac{T_0}{\mathfrak{c}_0 2^{i}},
\qquad
\tx{b}_i
:=
\omega^{-1}(t_i)>0.
$$
Notice that
$t_1=T_0/(2\mathfrak{c}_0)\le\omega(S_0)/2,$
by \eqref{abstract.energy}, and hence each $\tx{b}_i$ is well-defined.
Define recursively
\eqn{abstract.di.power}
$$
d_i
:=
\frac{
\mathfrak{c}_{1}^{1/\chi}T(\kappa_{i-1})
}{
\tx{b}_i^{1/\chi}
},
\qquad
\kappa_i
:=
\kappa_{i-1}+d_i,
\qquad
i\ge1.
$$
This definition makes sense for every $i\ge1$. Thus, if
$
T(\kappa_j)=0
$
for some $j\ge0$, then
$
\kappa_i=\kappa_j
$
for every $i\ge j$, while
$
d_i=0
$
for every $i\ge j+1$. We claim that
\eqn{abstract.decay.power}
$$
T(\kappa_i)
\le
\frac{T_0}{2^{i}}
\qquad
\mbox{for every }i\ge0.
$$
The assertion is trivial for $i=0$. Fix $i\ge1$. If
$
T(\kappa_{i-1})=0,
$
then
$
d_i=0,
$
$
\kappa_i=\kappa_{i-1},
$
and therefore
$
T(\kappa_i)=0,
$
so the assertion follows. Suppose instead that
$
T(\kappa_{i-1})>0.
$
Then $d_i>0$ and
$
\kappa_{i-1}<\kappa_i.
$
By \eqref{abstract.power.level} and \eqref{abstract.di.power},
$$
\snr{E_{\kappa_i}}
\le
\mathfrak{c}_{1}
\left[
\frac{T(\kappa_{i-1})}{d_i}
\right]^\chi
=
\tx{b}_i.
$$
Therefore, by \eqref{abstract.energy},
\eqn{ray0}
$$
T(\kappa_i)
\le
\mathfrak{c}_0\omega(\snr{E_{\kappa_i}})
\le
\mathfrak{c}_0\omega(\tx{b}_i)
=
\mathfrak{c}_0t_i
=
\frac{T_0}{2^{i}}.
$$
This proves \eqref{abstract.decay.power}. We next estimate the sum of the increments. From
\eqref{abstract.decay.power},
$$
d_i
\le
\mathfrak{c}_{1}^{1/\chi}
\frac{2^{-i+1}T_0}{\tx{b}_i^{1/\chi}}
=
2\mathfrak{c}_0\mathfrak{c}_{1}^{1/\chi}
\frac{t_i}{\omega^{-1}(t_i)^{1/\chi}}
$$
for every $i\ge1$. If
$
t\in[t_i/2,t_i],
$
then
$
\omega^{-1}(t)
\le
\omega^{-1}(t_i),
$
and hence
$$
\int_{t_i/2}^{t_i}
\frac{\dt}{\omega^{-1}(t)^{1/\chi}}
\ge
\frac{t_i}{2\omega^{-1}(t_i)^{1/\chi}}.
$$
Since
$
t_i/2=t_{i+1},
$
the intervals
$
(t_i/2,t_i]
$
are pairwise disjoint and cover $(0,t_1]$. Consequently,
\eqn{walmart}
$$
\sum_{i=1}^{\infty}d_i
\leq
4\mathfrak{c}_0\mathfrak{c}_{1}^{1/\chi}
\int_{0}^{t_1}
\frac{\dt}{\omega^{-1}(t)^{1/\chi}}
=
4\mathfrak{c}_0\mathfrak{c}_{1}^{1/\chi}
\int_0^{\tx{b}_1}
\frac{\omega'(s)}{s^{1/\chi}}
\ds
\leq
4\mathfrak{c}_0\mathfrak{c}_{1}^{1/\chi}
\int_0^{S_0}
\frac{\omega'(s)}{s^{1/\chi}}
\ds.
$$
In the equality above we used the change of variables
$t=\omega(s)$. Set
\eqn{kkkk}
$$
\kappa_\infty
:=\kappa_0+\sum_{i=1}^{\infty}d_i
\stackrel{\eqref{walmart}}{\leq}
\kappa_0+
4\mathfrak{c}_0\mathfrak{c}_{1}^{1/\chi}
\int_0^{S_0}
\frac{\omega'(s)}{s^{1/\chi}}
\ds.
$$
Hence $\kappa_\infty<\infty$. Moreover, by
\eqref{abstract.decay.power},
$
T(\kappa_i)\to0.
$
Since
$
(v-\kappa_i)_+\in W^{1,2}_0(B),
$
Poincar\'e's inequality gives
$
\nr{(v-\kappa_i)_+}_{L^2(B)}
\le
c(B)T(\kappa_i)
\to0.
$
Moreover,
$
\kappa_i\uparrow\kappa_\infty
$
and
$
(v-\kappa_i)_+ \to (v-\kappa_\infty)_+
$
pointwise a.e. in $B$. Hence, by Fatou's lemma,
$$
\nr{(v-\kappa_\infty)_+}_{L^2(B)}^2
\le
\liminf_{i\to\infty}
\nr{(v-\kappa_i)_+}_{L^2(B)}^2
=
0.
$$
We conclude that
$
(v-\kappa_\infty)_+=0
$
a.e. in $B$. Estimate \eqref{abstract.power.bound} now follows from
\eqref{kkkk}. 

\medskip

\noindent
{\em Proof of \eqref{abstract.exp.bound}.}
As before, we assume that the right-hand side of \rif{abstract.exp.bound}
is finite; the proof is a variant of the power case,
and we shall partially use the same notation. 
Again set
$
T_0:=T(\kappa_0)
$
and define $t_i$ and $\tx{b}_i$ as in \rif{abstract.ti}. We use the
convention
$$
[\log r]_+
:=
\begin{cases}
0
&\mbox{if $0\le r\le1$}
\\ \log r&\mbox{if $r>1$}.
\end{cases}
$$
For every $i\ge1$, define recursively
\eqn{abstract.di.exp}
$$
d_i
:=
\frac{T(\kappa_{i-1})}{\sqrt{\mathfrak{c}_{3}}}
\left[
\log\left(
\frac{
\mathfrak{c}_{2}\snr{E_{\kappa_{i-1}}}
}{
\tx{b}_{i}
}
\right)
\right]_+^{1/2},
\qquad
\kappa_i
:=
\kappa_{i-1}+d_i.
$$
This is well-defined also when
$
T(\kappa_{i-1})=0,
$
because in that case
$
\snr{E_{\kappa_{i-1}}}=0
$
by \rif{abstract.zero.boundary} and Poincar\'e's inequality, and our
convention gives $d_i=0$. We again claim
\rif{abstract.decay.power}. 
The assertion is trivial for $i=0$. Fix $i\ge1$. If
$
T(\kappa_{i-1})=0,
$
then
$
d_i=0,
$
$
\kappa_i=\kappa_{i-1},
$
and
$
T(\kappa_i)=0.
$
Suppose therefore that
$
T(\kappa_{i-1})>0.
$
If
$
\mathfrak{c}_{2}\snr{E_{\kappa_{i-1}}}
\le
\tx{b}_{i},
$
then $d_i=0$, so that $\kappa_i=\kappa_{i-1}$, and
$
\snr{E_{\kappa_i}}
=
\snr{E_{\kappa_{i-1}}}
\le
\mathfrak{c}_{2}^{-1}\tx{b}_{i}
\le
\tx{b}_{i}.
$
Otherwise, $d_i>0$ and
$
\kappa_{i-1}<\kappa_i.
$
Hence \eqref{abstract.exp.level} and \eqref{abstract.di.exp} give
$$
\snr{E_{\kappa_i}}
\le
\mathfrak{c}_{2}
\exp\left\{-\log\left(\frac{\mathfrak{c}_{2}\snr{E_{\kappa_{i-1}}}}{\tx{b}_{i}}\right)\right\}\snr{E_{\kappa_{i-1}}}
=\tx{b}_{i}.
$$
Thus, in every case,
$
\snr{E_{\kappa_i}}\le\tx{b}_{i}.
$
Using \eqref{abstract.energy}, we obtain \rif{ray0}, which 
proves \eqref{abstract.decay.power}. Since
$
\snr{E_{\kappa_{i-1}}}\le S_0,
$
estimate \eqref{abstract.decay.power} gives
$$
d_i
\leq 
\frac{2^{-i+1}T_0}{\sqrt{\mathfrak{c}_{3}}}
\logs\left(\frac{\mathfrak{c}_{2}S_0}{\omega^{-1}(t_i)}
\right)
=
\frac{2\mathfrak{c}_0}{\sqrt{\mathfrak{c}_{3}}}
t_i
\logs\left(\frac{\mathfrak{c}_{2}S_0}{\omega^{-1}(t_i)}\right).
$$
The function
$$
t
\longmapsto
\logs\left(
\frac{
\mathfrak{c}_{2}S_0
}{
\omega^{-1}(t)
}
\right)
$$
is nonincreasing on $(0,t_1]$. Arguing as in the previous case,
we obtain
\begin{flalign*}
\sum_{i=1}^{\infty}d_i
&\leq
\frac{4\mathfrak{c}_0}{\sqrt{\mathfrak{c}_{3}}}
\int_0^{t_1}
\logs\left(
\frac{
\mathfrak{c}_{2}S_0
}{
\omega^{-1}(t)
}
\right)
\dt
\\
&=
\frac{4\mathfrak{c}_0}{\sqrt{\mathfrak{c}_{3}}}
\int_0^{\tx{b}_1}
\omega'(s)
\logs\left(
\frac{
\mathfrak{c}_{2}S_0
}{
s
}
\right)
\ds\leq \frac{4\mathfrak{c}_0}{\sqrt{\mathfrak{c}_{3}}}
\int_0^{S_0}\omega'(s)
\logs\left(\frac{e\mathfrak{c}_{2}S_0}{s}\right)
\ds.
\end{flalign*}
Upon setting this time
$$
\kappa_\infty
:=
\kappa_0+\sum_{i=1}^{\infty}d_i\le
\kappa_0
+
\frac{c\mathfrak{c}_0}{\sqrt{\mathfrak{c}_{3}}}
\int_0^{S_0}
\omega'(s)
\logs\left(
\frac{
e\mathfrak{c}_{2}S_0
}{
s
}
\right)
\ds,
$$
the rest of the proof follows as in the proof of \eqref{abstract.power.bound}. 
\end{proof}


\section{Boost function $L^\infty$-bounds via calibrated iterations}\label{caliapp} 
\noindent 
Here we use the content of the previous section to show the following borderline a priori estimates:
\begin{proposition}\label{boostdopo}
Let
$u\in\mathbb X(\mathbb R^n)\cap C^\infty(\mathbb R^n)$
be the minimizer of \eqref{bi.en} with charge
$\mu\in C^\infty_0(\mathbb R^n)$.
If $n=2$, assume in addition that $\mu\in L^1_0(\er^2)$. 
Let $\mathcal h_\infty(\mu)$ be as in
Proposition \ref{com.sup}.
\begin{itemize}
\item If $n\ge3$, then
\eqn{boost.bound}
$$
\nr{\mathcal h_u}_{L^\infty(\mathbb R^n)}
\le
\exp\left\{c_n[\mu]_{n,1}\right\}.
$$
\item If $n=2$, then, for every $\alpha>2$,
\eqn{boost.2d}
$$
\nr{\mathcal h_u}_{L^\infty(\mathbb R^2)}
\le\mathcal h_\infty(\mu)\exp\left\{c_\alpha\nr{\mu}_{L^2(\log L)^\alpha(\mathbb R^2)}\right\}.
$$
\end{itemize}
Here $c_n$ depends only on $n$, and $c_\alpha$
only on $\alpha$.
\end{proposition}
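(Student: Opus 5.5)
We will apply Lemma \ref{abstract.iteration} on a fixed large ball to a truncation of $\log\mathcal h_u$. By Proposition \ref{com.sup}, $\supp(\mathcal h_u-\mathcal h_\infty(\mu))_+\Subset B_{\tx r_\mu}(0)$ and $|\{\mathcal h_u>\mathcal h_\infty(\mu)\}|\le\mathfrak s_n$. We therefore take $B:=B_{\tx r_\mu}(0)$, $v:=\log\mathcal h_u$ and $\kappa_0:=\log\mathcal h_\infty(\mu)$. Then \eqref{abstract.zero.boundary} holds, $S_0\le\mathfrak s_n$, and the smoothness of $u$ gives $v\in W^{1,2}(B)$. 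For $n\ge3$ the final bound has no prefactor $\mathcal h_\infty(\mu)$. To obtain it, we instead take $\kappa_0=\log 2$ on $B_{R}$, with $R$ chosen via \eqref{boost.at.infinity} so that $\mathcal h_u\le 2$ outside $B_R$, and use only the measure bound coming from the energy estimate \eqref{jenny}. Alternatively, we apply the same iteration starting from $\kappa_0=0$ with the support information. In either case the constant depends only on $n$ and $\mu$'s norm and not on $R$, since the lemma is dimension-free in $B$.

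\emph{Energy estimate \eqref{abstract.energy}.} We test \eqref{0} with $\varphi=D_su\,\mathcal h_u\,\psi(\log\mathcal h_u)$, where $\psi(t)=(t-\kappa)_+$. This is the Caccioppoli computation of Lemma \ref{cacclem} with $m=0$, but with the truncation in place of a cutoff, so no $D\eta$ term appears. Using \eqref{identitas} and $\langle\mathcal H(Du)\xi,\xi\rangle\ge\mathcal h_u^{-2}|\xi|^2$, the diffusion term controls $\int_{E_\kappa}\mathcal h_u^{-2}|D\mathcal h_u|^2=T(\kappa)^2$ together with $\int\mathcal h_u^{2}|D^2u|^2(v-\kappa)_+$. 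The $\mu$-terms are handled as in (IV): after Young's inequality they reduce to $\int_{E_\kappa}\mu^2$ and to $\int|\mu|\,(v-\kappa)_+$-type terms. We aim for
\[
T(\kappa)^2\lesssim \int_{E_\kappa}\mu^2\,dx+\int_B|\mu|(v-\kappa)_+\,dx .
\]
The second term is bounded by $[\mu\mathds 1_{E_\kappa}]_{n,1}\|(v-\kappa)_+\|_{L(n/(n-1),\infty)}$, and Sobolev gives $\|(v-\kappa)_+\|_{L(n/(n-1),\infty)}\lesssim T(\kappa)$; absorbing this yields $T(\kappa)\lesssim\omega(|E_\kappa|)$.

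\emph{Choice of $\omega$.} For $n\ge 3$ we take $\omega(s)=\big(\int_0^s (\mu^*)^2\big)^{1/2}+\int_0^{s}\mu^*(\tau)\tau^{1/n-1}d\tau$; both pieces dominate the respective norms of $\mu$ on sets of measure $s$ by Hardy--Littlewood. Sobolev's inequality for $(v-\kappa_2)_+$ gives \eqref{abstract.power.level} with $\chi=n/(n-2)$, i.e.\ $|E_{\kappa_2}|^{(n-2)/(2n)}(\kappa_2-\kappa_1)\lesssim T(\kappa_1)$. Then \eqref{abstract.power.bound} yields
\[
\sup v\le\kappa_0+c\int_0^{S_0}\omega'(s)s^{-(n-2)/n}\,ds\lesssim\kappa_0+\int_0^\infty s^{1/n}\mu^*(s)\frac{ds}{s},
\]
which is $[\mu]_{n,1}$; exponentiating gives \eqref{boost.bound}. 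A quick check: the piece $\omega'\sim \mu^*(s)^2/\omega_1$ integrated against $s^{-1+2/n}$ should still be controlled by an $L(n,1)$ bound, possibly via Cauchy--Schwarz plus Hardy. This is exactly where the calibration of Lemma \ref{abstract.iteration}, reminiscent of Lemma \ref{crit}, should pay off.

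For $n=2$ we use the exponential branch. Lemma \ref{trudinger.lemma} applied to $(v-\kappa_1)_+\in W^{1,2}_0(B)$ gives \eqref{abstract.exp.level} with $\chi$ replaced by Moser--Trudinger decay, and here $\omega(s)=\big(\int_0^s(\mu^*)^2\big)^{1/2}+\ldots$. The bound \eqref{abstract.exp.bound} then reduces to $\int_0\omega'(s)\sqrt{\log(1/s)}\,ds$, which by Cauchy--Schwarz and \eqref{orlicz.rearr} is finite for $\alpha>2$, as in the proof of Lemma \ref{crit}.

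The main obstacle is the energy inequality: closing the $\mu$-terms linear in $(v-\kappa)_+$ without a cutoff, together with showing that the Hessian term $\int\mu\,\mathcal h_u\Delta u\,\psi$ is absorbed using weight $\mathcal h_u^2|D^2u|^2$. We would first try the pure test function $\psi=(v-\kappa)_+$ and rely on $|D v|\le \mathcal h_u|D^2u|$.
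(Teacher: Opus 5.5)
There is a genuine gap, and it is the step you flag as the main obstacle: the energy inequality \eqref{abstract.energy}. With your test function $\varphi=D_su\,\mathcal h_u(v-\kappa)_+$, $v=\log\mathcal h_u$, the $\mu$-term carrying the Hessian is $\int\mu\,\mathcal h_u(v-\kappa)_+\Delta u\,dx$. Young's inequality against the good term $\int\mathcal h_u^2(v-\kappa)_+|D^2u|^2\,dx$ leaves $\int\mu^2(v-\kappa)_+\,dx$, not a term linear in $\mu$ as you claim. This is intrinsic and not an artifact of Young. Inserting $\mathcal h_u\Delta u=-\mu-\langle D\mathcal h_u,Du\rangle$ from the equation shows that the right-hand side of your tested identity is exactly $\int\mu^2(v-\kappa)_+\,dx-\int_{E_\kappa}\mu\langle Du,D\mathcal h_u\rangle\,dx$. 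A term quadratic in $\mu$ and linear in the truncation can only be closed by Lorentz--H\"older plus Sobolev, which puts a contribution of order $[\mu]_{n,2}^2$ into the exponent. So \eqref{boost.bound}, linear in $[\mu]_{n,1}$, is out of reach along this route.

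The missing idea is to truncate $\mathcal h_u$ itself: test \eqref{0} with $(\mathcal h_u-k)_+D_su$ and sum over $s$. By \eqref{identitas} and \eqref{0.1.1}, the left-hand side controls $\int(\mathcal h_u-k)_+\mathcal h_u|D^2u|^2\,dx+\int\langle\mathcal H(Du)D(\mathcal h_u-k)_+,D(\mathcal h_u-k)_+\rangle\,dx$. The Hessian cross term then costs only $\int\frac{(\mathcal h_u-k)_+}{\mathcal h_u}\mu^2\,dx\le\int_{\{\mathcal h_u>k\}}\mu^2\,dx$, because that ratio is at most one. The term $\int\mu\langle D(\mathcal h_u-k)_+,Du\rangle\,dx$ is handled by Cauchy--Schwarz for $\mathcal H(Du)$ together with $\mathcal H(Du)Du=Du$. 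Since $\langle\mathcal H(Du)\xi,\xi\rangle\ge\mathcal h_u^{-2}|\xi|^2$, the degenerate diffusion converts this into $\|D(\log\mathcal h_u-\log k)_+\|_{L^2}^2\le c_n\int_{\{\mathcal h_u>k\}}\mu^2\,dx$. With $k=\tau e^{\kappa}$ this reads $T(\kappa)\le c_n\,\omega_\mu(|E_\kappa|)$, for the single function $\omega_\mu(s)=(\int_0^s((|\mu|\mathds 1_B)^*)^2\,dr)^{1/2}$. No term linear in the truncation ever appears.

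The remaining bookkeeping also needs correcting.
\begin{itemize}
\item \emph{The exponent $\chi$.} Chebyshev and Sobolev applied to $(v-\kappa_1)_+$ give \eqref{abstract.power.level} with $\chi=2^*=2n/(n-2)$, not $n/(n-2)$, so the weight is $s^{-(n-2)/(2n)}$.
\item \emph{Your second piece of $\omega$.} The piece $\int_0^s\mu^*(\tau)\tau^{1/n-1}\,d\tau$ is unnecessary once the energy estimate is right. In any case, against the correct weight it produces $\int\mu^*(s)s^{(4-n)/(2n)}\,\frac{ds}{s}$, which is not controlled by $[\mu]_{n,1}$.
\item \emph{Your ``quick check''.} Monotonicity of the rearrangement gives $\omega_\mu(s)^2\ge s\,((|\mu|\mathds 1_B)^*(s))^2$, hence $\omega_\mu'(s)\le(|\mu|\mathds 1_B)^*(s)/(2\sqrt s)$. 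Since $\frac12+\frac1{2^*}=1-\frac1n$, this yields $\int_0^{S_0}\omega_\mu'(s)s^{-1/2^*}\,ds\le\frac12\int_0^\infty s^{1/n}\mu^*(s)\,\frac{ds}{s}\lesssim[\mu]_{n,1}$.
\item \emph{The starting level for $n\ge3$.} Neither of your choices works. $\kappa_0=\log 2$ leaves a factor $2$ that cannot be absorbed into $\exp\{c_n[\mu]_{n,1}\}$ when $[\mu]_{n,1}$ is small. $\kappa_0=0$ with $v=\log\mathcal h_u$ violates \eqref{abstract.zero.boundary} on a bounded ball, since in general $\mathcal h_u>1$ near $\partial B$. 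Instead take $v=(\log(\mathcal h_u/\tau))_+$ and $\kappa_0=0$ on a ball $B$ with $\supp(\mathcal h_u-\tau)_+\Subset B$, given by \eqref{boost.at.infinity}, for arbitrary $\tau>1$. The resulting constant is independent of $\tau$ and $B$, so you can let $\tau\downarrow1$.
\end{itemize}

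Your $n=2$ branch (Moser--Trudinger, \eqref{orlicz.rearr} with $\alpha>2$, and $S_0\le\mathfrak s_n$ from \eqref{stimamis}) is the right one once the energy estimate is repaired.
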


\begin{proof} By Proposition \ref{exex}, $u$ is a classical solution
to \eqref{bi}. Therefore, we can use all the results already proved for these ones. The proof splits in two steps. 

{\em Step 1: Level sets estimate for $\log \mathcal h_u$}. If $n\ge3$, fix an arbitrary $\tau>1$.
If $n=2$, set $\tau:=\mathcal h_\infty(\mu)$.
By \eqref{boost.at.infinity}, there exists a ball
$B$ centered at the origin such that
$
\supp\,(\mathcal h_u-\tau)_+\Subset B.
$
When $n=2$, we take
$B:=B_{\tx r_\mu}(0)$ as in Proposition \ref{com.sup}.
For $\kk\ge\tau$, set
$
\tx A_\kk:=\{x\in B:\mathcal h_u(x)>\kk\}.
$
Then
\eqn{supporto}
$$
\supp\,(\mathcal h_u-\kk)_+\Subset B
\quad
\mbox{for every $\kk\ge\tau$}.
$$We test \eqref{0} against $\varphi:=(\mathcal{h}_{u}-\kk)_{+}D_{s}u$ and sum over $s\in \{1,\cdots,n\}$ to get
\begin{flalign*}
0&=\sum_{s=1}^{n}\int_{\mathbb{R}^{n}}(\mathcal{h}_{u}-\kk)_{+}\langle\partial^{2}H(Du)DD_{s}u,DD_{s}u\rangle\dx\nonumber  +\int_{\mathbb{R}^{n}}\langle\mathcal H(Du)D\mathcal h_u,D(\mathcal{h}_{u}-\kk)_{+}\rangle\dx\nonumber \\
&\quad +\int_{\mathbb{R}^{n}}\mu(\mathcal{h}_{u}-\kk)_{+}\Delta u\dx+\int_{\mathbb{R}^{n}}\mu\langle D(\mathcal{h}_{u}-\kk)_{+}, Du\rangle\dx=:\sum_{i=1}^{4}\mbox{(I)}_{i}.
\end{flalign*}
Here we used \rif{identitas} and the definition of $\mathcal H$ in
\eqref{recalla}.
Via \eqref{0.1.1}-\eqref{0.1}  we bound from below as follows
$$
\mbox{(I)}_{1}+\mbox{(I)}_{2}\ge \int_{\mathbb{R}^{n}}(\mathcal{h}_{u}-\kk)_{+}\mathcal{h}_{u}\snr{D^{2}u}^{2}\dx\nonumber +\int_{\mathbb{R}^{n}}\langle\mathcal{H}(Du)D(\mathcal{h}_{u}-\kk)_{+},D(\mathcal{h}_{u}-\kk)_{+}\rangle\dx.
$$
Next, recalling \rif{supporto}, via Young's inequality we find
\begin{flalign*}
\snr{\mbox{(I)}_3}
&\le
c_n
\int_{\mathbb R^n}
\snr{\mu}\snr{D^2u}
(\mathcal h_u-\kk)_+
\dx
\\
&\le
\frac14
\int_{\mathbb R^n}
(\mathcal h_u-\kk)_+
\mathcal h_u\snr{D^2u}^2
\dx
+
c_n
\int_{\mathbb R^n}
\frac{(\mathcal h_u-\kk)_+}{\mathcal h_u}
\mu^2\dx
\\
&\le
\frac14
\int_{\mathbb R^n}
(\mathcal h_u-\kk)_+
\mathcal h_u\snr{D^2u}^2
\dx
+
c_n
\int_{\tx A_\kk}\mu^2\dx.
\end{flalign*}
Similarly, this time by the Cauchy-Schwarz inequality relative to $\mathcal H(Du)$ and using 
$
\mathcal H(Du)Du=Du
$, we obtain
\begin{flalign*}
\snr{\mbox{(I)}_{4}}&=\left|\int_{\mathbb{R}^{n}}\mu\langle \mathcal H(Du) D(\mathcal{h}_{u}-\kk)_{+}, Du\rangle\dx\right|
\\
&\le
\frac14
\int_{\mathbb R^n}
\left\langle
\mathcal H(Du)D(\mathcal h_u-\kk)_+,
D(\mathcal h_u-\kk)_+
\right\rangle\dx+
4\int_{\tx A_\kk}\mu^2\snr{Du}^2\dx
\\
&\le
\frac14\int_{\mathbb R^n}\left\langle
\mathcal H(Du)D(\mathcal h_u-\kk)_+,
D(\mathcal h_u-\kk)_+
\right\rangle\dx+4\int_{\tx A_\kk}\mu^2\dx.
\end{flalign*}
Merging the content of all previous displays and reabsorbing terms, we obtain
\eqn{matrix}
$$
\int_{\tx A_\kk}\left\langle\mathcal H(Du)D\mathcal h_u,D\mathcal h_u\right\rangle\dx=\int_B
\left\langle
\mathcal H(Du)D(\mathcal h_u-\kk)_+,
D(\mathcal h_u-\kk)_+\right\rangle\dx
\le c_{n} \int_{\tx A_\kk}\mu^2\dx. 
$$
Recalling that
$
\left\langle
\mathcal H(Du)\xi,\xi
\right\rangle
\ge
\mathcal h_u^{-2}\snr{\xi}^2,
$
\eqref{matrix} implies
\begin{flalign}
\nonumber  \nr{
D(\log\mathcal h_u-\log\kk)_+
}_{L^2(B)}^2
 & =
\int_{\tx A_\kk}
\mathcal h_u^{-2}\snr{D\mathcal h_u}^2\dx\\
& \le
\int_{\tx A_\kk}
\left\langle
\mathcal H(Du)D\mathcal h_u,
D\mathcal h_u
\right\rangle\dx
\le
c
\int_{\tx A_\kk}\mu^2\dx \label{whit}. 
\end{flalign}
Define
\eqn{ray10}
$$
\omega_\mu(s):=
\left(
\int_0^s
\left(
(\snr\mu\mathds1_B)^*(r)
\right)^2
\drr
\right)^{1/2} , \qquad  0 \leq s \leq |B|. 
$$
By the classical Hardy--Littlewood rearrangement inequality,
$$
\int_{\tx A_\kk}\mu^2\dx
=
\int_{\tx A_\kk}
(\snr{\mu}\mathds1_B)^2\dx
\le
\int_0^{\snr{\tx A_\kk}}\left(
(\snr\mu\mathds1_B)^*(r)
\right)^2\drr
=
\omega_\mu(\snr{\tx A_\kk})^2,
$$
so that \rif{whit} finally becomes
\eqn{og.cacc}
$$
\nr{D(\log\mathcal h_u-\log\kk)_+}_{L^2(B)}
\le c_{n}\omega_\mu(\snr{\tx A_\kk}).
$$
The constant $c_{n}$ appearing in the display is independent of $\tau$, $\kk\geq \tau$, and $B$.
In particular, the preceding estimates use only
$0\le(\mathcal h_u-\kk)_+/\mathcal h_u\le1$
and are valid independently of any lower bound on $\tau-1$.
Note that the function $\omega_\mu$ is nondecreasing and absolutely continuous
on $[0,\snr B]$, and
$
\omega_\mu(0)=0.
$
Before approaching the next step we record an estimate for $\omega_\mu'$. Since $s \mapsto (\snr{\mu}\mathds 1_B)^*(s)$ is
nonincreasing, we have  
$
\omega_\mu(s)^2
\ge
s(\snr{\mu}\mathds 1_B)^*(s)^2.
$
It follows that, for a.e. $s\in(0,|B|)$ such that
$
\omega_\mu(s)>0,
$
\eqn{boost.omega.derivative}
$$
\omega_\mu'(s)
=
\frac{(\snr{\mu}\mathds 1_B)^*(s)^2}{2\omega_\mu(s)}
\leq 
\frac{(\snr{\mu}\mathds 1_B)^*(s)}{2\sqrt{s}}.
$$
\indent
{\em Step 2: Iterations according to Lemma \ref{abstract.iteration}.}
The occurrence of \rif{og.cacc} leads us to define the function $v$
$$
v:=\left(
\log\frac{\mathcal h_u}{\tau}
\right)_+
\Longrightarrow
(v-\kappa)_+
=
\left(
\log\frac{\mathcal h_u}{\tau e^\kappa}
\right)_+,
\quad \kappa\ge0,
$$
to which we apply Lemma \ref{abstract.iteration} with initial level
$
\kappa_0:=0.
$
Note that $v$ is non-negative, continuous and belongs to
$W^{1,2}_0(B)$.
For every $\kappa\ge0$, set
$$
E_\kappa
:=
\{x\in B:v(x)>\kappa\}
=
\left\{
x\in B:
\mathcal h_u(x)>\tau e^\kappa
\right\}.
$$
In particular,
$
\tau e^\kappa\ge\tau
$
and
$
(v-\kappa)_+\in W^{1,2}_0(B).
$
Following the notation of Lemma \ref{abstract.iteration}, define
$
T(\kappa)
:=
\nr{D(v-\kappa)_+}_{L^2(B)}
$
and set
\eqn{ss00}
$$
S_0:=\snr{E_{\kappa_0}}
=\snr{E_0}
=
\snr{\left\{x\in B:\mathcal h_u(x)>\tau\right\}}.
$$
If $S_0=0$, then $v=0$ in $B$ and the estimates for $v$
obtained below are immediate.
To derive these estimates, we may therefore assume that
$
S_0>0.
$
Moreover,
$
E_\kappa\subset E_0
$
for every $\kappa\ge0$, and hence
$
\snr{E_\kappa}\le S_0.
$
Replacing $\kk$ by $\tau e^\kappa$ in \rif{og.cacc}, we obtain
$
T(\kappa)
\le
c_n\omega_\mu(\snr{E_\kappa})
$
for every $\kappa\ge0$.
Thus condition \eqref{abstract.energy} holds with
$\kappa_0=0$, $\mathfrak c_0=c_n$, and
$\omega=\omega_\mu$ on $[0,S_0]$,
where $\omega_\mu$ is defined in \eqref{ray10}.

\medskip

\medskip

\noindent
{\em Case $n\ge3$.} We verify \rif{abstract.power.level}. 
Take the usual Sobolev embedding exponent 
$
2^*
:=
2n/(n-2)
$
and fix
$
0\le\kappa_1<\kappa_2.
$
We have 
$
(v-\kappa_1)_+\in W^{1,2}_0(B),
$
while
$
(v-\kappa_1)_+
\ge
\kappa_2-\kappa_1$
a.e. on $E_{\kappa_2}$. This last fact, Chebyshev and Sobolev inequalities give 
$$
\snr{E_{\kappa_2}}
\leq \frac{\nr{(v-\kappa_1)_+}_{L^{2^*}(B)}^{2^*}}{(\kappa_2-\kappa_1)^{2^*}}
\leq 
c_{n}
\frac{
\nr{D(v-\kappa_1)_+}_{L^2(B)}^{2^*}}{(\kappa_2-\kappa_1)^{2^*}}
=
c_{n}
\frac{T(\kappa_1)^{2^*}}{(\kappa_2-\kappa_1)^{2^*}}.
$$
Thus \eqref{abstract.power.level} holds with
$
\chi=2^*$ and 
$\mathfrak c_1=c_{n}\ge1.
$
All the assumptions in the first part of Lemma
\ref{abstract.iteration} are therefore satisfied. Since the initial
level for $v$ is
$
\kappa_0=0,
$
estimate \eqref{abstract.power.bound} gives
\eqn{boost.abstract.high}
$$
\nr v_{L^\infty(B)}
\le
c_{n}
\int_0^{S_0}
\frac{
\omega_\mu'(s)
}{
s^{1/2^*}
}
\ds.
$$
Using \eqref{boost.omega.derivative} and the identity
$
1/2+1/2^*=1-1/n$
we find
$$
\int_0^{S_0}
\frac{
\omega_\mu'(s)
}{
s^{1/2^*}
}
\ds
\leq 
\frac12
\int_0^{S_0}
s^{1/n}(\snr{\mu}\mathds 1_B)^*(s)\frac{\ds}{s}
\leq 
c[\mu]_{n,1;B}
\leq 
c[\mu]_{n,1}.
$$
Combining this estimate with \eqref{boost.abstract.high}, we obtain
$$
\nr v_{L^\infty(B)}
\le
c_{n}[\mu]_{n,1}.
$$
The constant $c_n$ in the above display is independent of $\tau$ and $B$. 
By the definition of $v$, we obtain
$
\mathcal h_u
\le
\tau\exp\left\{c_n[\mu]_{n,1}\right\}
$
in $B$.
Moreover, the choice of $B$ gives
$
\mathcal h_u\le\tau
$
in $\mathbb R^n\setminus B$.
Therefore,
$$
\nr{\mathcal h_u}_{L^\infty(\mathbb R^n)}
\le
\tau\exp\left\{c_n[\mu]_{n,1}\right\}.
$$
Since $\tau>1$ is arbitrary, letting $\tau\downarrow1$
proves \rif{boost.bound}.

\medskip

\noindent
{\em Case $n=2$.} Again, we have to check \rif{abstract.exp.level}. 
Fix
$
0\le\kappa_1<\kappa_2
$
such that 
$
T(\kappa_1)>0.
$
Since $\mathcal h_u$ is smooth, $v$ is continuous and
$
E_{\kappa_1}
$
is open, so that 
$
(v-\kappa_1)_+
\in
W^{1,2}_0(E_{\kappa_1}).
$
Lemma \ref{trudinger.lemma} yields absolute constants
$
\mathfrak c_2\ge1
$
and
$
\mathfrak c_3>0
$
such that
$$
\int_{E_{\kappa_1}}
\exp\left\{
\frac{\mathfrak c_3(v-\kappa_1)_+^2}{T(\kappa_1)^2}
\right\}\dx
\le
\mathfrak c_2
\snr{E_{\kappa_1}}.
$$
On $E_{\kappa_2}$ we have
$
(v-\kappa_1)_+
\ge
\kappa_2-\kappa_1.
$
It follows that
\begin{flalign*}
\snr{E_{\kappa_2}}
\exp\left\{
\frac{
\mathfrak c_3(\kappa_2-\kappa_1)^2
}{
T(\kappa_1)^2
}
\right\}
&\le
\int_{E_{\kappa_1}}
\exp\left\{
\frac{
\mathfrak c_3(v-\kappa_1)_+^2
}{
T(\kappa_1)^2
}
\right\}
\dx
\leq 
\mathfrak c_2
\snr{E_{\kappa_1}}.
\end{flalign*}
Therefore we obtain the following inequality, which shows that \eqref{abstract.exp.level} is also satisfied
$$
\snr{E_{\kappa_2}}
\le\mathfrak c_2\exp\left\{-\frac{
\mathfrak c_3(\kappa_2-\kappa_1)^2}{T(\kappa_1)^2}\right\}
\snr{E_{\kappa_1}}.
$$
All the assumptions in
the second part of Lemma \ref{abstract.iteration} hold with
$
\kappa_0=0,
$
and \eqref{abstract.exp.bound} gives
\eqn{boost.abstract.two}
$$
\nr v_{L^\infty(B)}
\le
c
\int_0^{S_0}
\omega_\mu'(s)
\logs\left(
\frac{e\mathfrak c_2S_0
}{s}\right)
\ds.
$$ 
We now estimate the right-hand side of the above inequality as follows (recall $S_0$ in \eqref{ss00})
\begin{flalign*}
&\int_0^{S_0}
\omega_\mu'(s)
\logs\left(
\frac{e\mathfrak c_2S_0}{s}
\right)
\ds
\\
&\qquad\stackleq{boost.omega.derivative}
c
\int_0^{S_0}
(\snr{\mu}\mathds 1_B)^*(s)
\logs\left(
e+
\frac{e\mathfrak c_2S_0}{s}
\right)
\frac{\ds}{\sqrt{s}}
\\
&\qquad\stackleq{stimamis}
c \int_0^{\mathfrak{s}_n}
(\snr{\mu}\mathds 1_B)^*(s)
\logs\left(e+\frac1s\right)
\frac{\ds}{\sqrt{s}}
\\
&\qquad\ \ \le
c
\left(
\int_0^{\mathfrak{s}_n}
(\snr{\mu}\mathds 1_B)^*(s)^2
\log^\alpha\left(
e+\frac1s
\right)\ds\right)^{1/2}
\left(\int_0^{\mathfrak{s}_n} \log^{1-\alpha}\left(e+\frac1s \right)\frac{\ds}{s}\right)^{1/2}
\\
&\qquad \stackleq{orlicz.rearr}
c
\nr{\mu}_{L^2(\log L)^\alpha(\mathbb R^n)}
\left( \int_0^{\mathfrak{s}_n}
\log^{1-\alpha}\left( e+\frac1s\right) \frac{\ds}{s} \right)^{1/2}
 \le
c \nr{\mu}_{L^2(\log L)^\alpha(\mathbb R^n)},
\end{flalign*}
where $c\equiv c(n,\alpha)$. We used the convention that
$
(\snr{\mu}\mathds1_B)^*
$
is extended by zero outside $[0,\snr B]$. Moreover,
 note that the last integral appearing in the latest display is finite precisely when $\alpha >2$. 
Combining this last estimate with \rif{boost.abstract.two}, we obtain
$
\nr v_{L^\infty(B)}\le c(n,\alpha)\nr{\mu}_{L^2(\log L)^\alpha}
$
and, since this time $\tau=\mathcal h_\infty(\mu)$,
the definition of $v$ and the bound $\mathcal h_u\le\tau$
outside $B$ yield \eqref{boost.2d}.
\end{proof}
\begin{remark}\label{controre}
\emph{ When $n=2$ the factor in \eqref{boost.2d} cannot be dropped. For this, we largely use the material in Proposition \ref{connessione}.
Indeed, let $\eta_T$ be the cut-off functions
defined in \eqref{logatau}.
For $a\in(0,1)$ and $T$ sufficiently large, set 
$
u_{a,T}(x):=a x_1\eta_T(x)$ and $
\mu_{a,T}:=
-\diver(\mathcal h_{u_{a,T}}Du_{a,T}).
$ 
Direct computations give 
$
\nr{Du_{a,T}}_{L^\infty(\mathbb R^2)}
\le
a\left(1+C/\log T\right)
$, so that, choosing $T\equiv T(a)>1$ large enough, we have 
$
\nr{Du_{a,T}}_{L^\infty(\mathbb R^2)}< 1$, i.e., 
$u_{a,T}$ is strictly spacelike and belongs to $\mathbb X(\mathbb R^2)$.
The function $\mu_{a,T}$ is smooth and compactly supported. Moreover, $\mu_{a,T} \in L^1_0(\er^2)$
by Proposition \ref{connessione} and 
$u_{a,T}$ is the associated global minimizer, again by Proposition \ref{connessione}.
Using computations similar to those in Proposition \ref{connessione}, and the definition of 
Luxemburg norm given in \rif{luxi}, we find that for every fixed $\alpha>0$,
\eqn{contra}
$$
\nr{\mu_{a,T}}_{L^2(\log L)^\alpha(\mathbb R^2)}
\lesssim_{a, \alpha}\frac{1}{\sqrt{\log T}}.
$$
On the other hand, $\eta_T=1$ near the origin, so
\eqn{contra2}
$$
\mathcal h_{u_{a,T}}(0)=\frac1{\sqrt{1-a^2}}.
$$
Note that this identity is independent of $T$. 
Now assume that 
$\nr{\mathcal h_u}_{L^\infty(\mathbb R^2)}
\le \exp\{c_\alpha\nr{\mu}_{L^2(\log L)^\alpha(\mathbb R^2)}\}$ holds for some constant $c_{\alpha}$ and for all admissible $\mu$. 
For every integer $m\geq 2$, by \rif{contra}-\rif{contra2}, we can find $a<1$ and, consequently, $T\equiv T(a)>1$, such that $
\nr{Du_{a,T}}_{L^\infty(\mathbb R^2)}< 1$ and  $$\mathcal h_{u_{a,T}}(0) > m > 1+1/m > \exp\left\{c_\alpha\nr{\mu_{a,T}}_{L^2(\log L)^\alpha(\mathbb R^2)}\right\},$$ which is a contradiction. 
}\end{remark}


\section{Approximation schemes for Theorems \ref{t1}, \ref{t2} and \ref{t222}}\label{apsec}

\subsection{General set-up for the approximation}

\noindent
The estimates obtained in Propositions \ref{p4.4}, and
\ref{boostdopo} are a priori estimates for smooth, compactly supported
charges and smooth strictly spacelike solutions. In this section we construct suitable regularizations of the datum. Throughout this section, $\mu$ is a function satisfying the
assumptions of the theorem under consideration. In particular,
$\mu\in\mathbb Y(\mathbb R^n)$. In particular, when $n=2$, we recall that 
$\mu\in L^1_0(\mathbb R^2)$. In the following we denote by  $v_\mu\in\DD$ the Riesz representative of $\mu$, in the sense of \rif{dual.riesz} (see also \rif{dual.ide}). We recommend the reader to keep in mind the content of Remark \ref{concreto}.

\subsubsection{Truncation for $n\ge3$}\label{addi1}

Let $R\ge1$, and choose a cut-off function
$\eta_R\in C^\infty_0(\mathbb R^n)$ satisfying
$
0\le\eta_R\le1$, $
\eta_R=1$ in $B_R(0)$, 
$\eta_R=0$ in $\mathbb R^n\setminus B_{2R}(0)$, and $|D\eta_R|\le c/R.$
In particular,
$\|D\eta_R\|_{L^n(\mathbb R^n)}\le c(n)$.
We set
$
\nu_R:=\eta_R\mu
$  and  check that
$
\nu_R\to\mu
$
in $\DD^*$ as $R\to\infty$. Take 
$\varphi\in C^\infty_0(\mathbb R^n)$ with $\|D\varphi\|_{L^2(\er^n)}\leq 1$. Since
$(1-\eta_R)\varphi\in C^\infty_0(\mathbb R^n)\subset\DD$, 
\eqref{dual.riesz} gives
$$
\begin{aligned}
\langle\mu-\nu_R,\varphi\rangle 
&=
\int_{\mathbb R^n}
(1-\eta_R)
\langle Dv_\mu,D\varphi\rangle\dx
-
\int_{\mathbb R^n}
\varphi
\langle Dv_\mu,D\eta_R\rangle\dx.
\end{aligned}
$$
The first term in the right-hand side satisfies
$$
\left|
\int_{\er^n}
(1-\eta_R)
\langle Dv_\mu,D\varphi\rangle\dx
\right|
\le
\|Dv_\mu\|_{L^2(\mathbb R^n\setminus B_R)}.
$$
For the second term, H\"older's inequality and the Sobolev embedding
give ($2^*=2n/(n-2)$)
$$
\begin{aligned}
\left|
\int_{\mathbb R^n}
\varphi
\langle Dv_\mu,D\eta_R\rangle\dx
\right|
&\le
\|Dv_\mu\|_{L^2(B_{2R}\setminus B_R)}
\|D\eta_R\|_{L^n(\er^n)}
\|\varphi\|_{L^{2^*}(\er^n)}
\\
&\le
c
\|Dv_\mu\|_{L^2(B_{2R}\setminus B_R)}
\|D\varphi\|_{L^2}\le
c
\|Dv_\mu\|_{L^2(B_{2R}\setminus B_R)}
.
\end{aligned}
$$
Combining the content of the last three displays, and then taking the supremum over all
$\varphi\in C^\infty_0(\mathbb R^n)$ with 
$\|D\varphi\|_{L^2}\le1$, we obtain
\eqn{app.cutoff.dual.high}
$$
\|\mu-\nu_R\|_{\DD^*}
\le c \|Dv_\mu\|_{L^2(\mathbb R^n\setminus B_R)}
\stackrel{R\to \infty}{\longrightarrow}0.
$$

\subsubsection{Truncation for $n=2$}\label{addi2}
Fix a nonincreasing
function $\vartheta\in C^\infty(\mathbb R)$ such that
$0\le\vartheta\le1$, $\vartheta=1$ on $(-\infty,1]$, and
$\vartheta=0$ on $[2,\infty)$. For $R>e^2$, we define $\eta_R$ as in \rif{logatau}, with $T=R$, and use
the support and gradient estimates established there. In particular, $\eta_R=1$ in $B_{R-e}(0)$, $\eta_R=0$ in
$\mathbb R^2\setminus B_{R^2-e}(0)$, and
\eqn{app.cutoff.two.gradient}
$$
|D\eta_R(x)|
\le
\frac{c}
{(e+|x|)\log R}
\mathds1_{\{R-e<|x|<R^2-e\}}(x), \quad \|D\eta_R\|_{L^2(\mathbb R^2)}
\le
\frac{c}{\sqrt{\log R}}
\stackrel{R\to \infty}{\longrightarrow}0.
$$
The function $\eta_R\mu$ does not necessarily have zero integral and therefore needs a correction. 
We fix
$\psi\in C^\infty_0(B_1(0))$ such that
$\int_{\mathbb R^2}\psi\dx=1$. This time we set 
\eqn{defninu}
$$
\nu_R:=\eta_R\mu-a_R\psi, \quad \mbox{where }\quad 
a_R
:=
\int_{\mathbb R^2}\eta_R\mu\dx.  
$$
It follows that
$$
\int_{\mathbb R^2}\nu_R\dx=0.
$$
Moreover, since $\eta_R\in C^\infty_0(\mathbb R^2)$ and
$\mu\in\DDDD^*$, the definition of the dual norm and
\eqref{app.cutoff.two.gradient} give
\eqn{app.mean.small}
$$
|a_R|=
\left|\int_{\mathbb R^2}\mu\eta_R\dx\right|
\le\|\mu\|_{\DD^*}
\|D\eta_R\|_{L^2(\mathbb R^2)}
\le \frac{c\|\mu\|_{\DDDD^*}}{\sqrt{\log R}}
\stackrel{R\to \infty}{\longrightarrow}0.
$$
We next prove that $\nu_R\to\mu$ in $\DDDD^*$. Let
$\varphi\in C^\infty_0(\mathbb R^2)$ with $\|D\varphi\|_{L^2(\er^2)}\leq 1$, and set
$
P\varphi
:=
\varphi-(\varphi)_{B_1(0)}
$, and, as in Remark \ref{concreto}, note that 
$P\varphi\in\DDDD$,
$(P\varphi)_{B_1(0)}=0$ and
$D(P\varphi)=D\varphi$. Since $\eta_R=1$ on $B_1(0)$, the function
$(1-\eta_R)P\varphi$ also belongs to $\DDDD$. Indeed, its gradient is
$
(1-\eta_R)D\varphi-P\varphi D\eta_R,
$
and the logarithmic Hardy inequality gives
\eqn{app.cutoff.two.multiplier}
$$
\|P\varphi D\eta_R\|_{L^2(\mathbb R^2)}\stackleq{app.cutoff.two.gradient}
c
\left\|
\frac{P\varphi}
{(1+|\cdot|)\log(e+|\cdot|)}
\right\|_{L^2(\mathbb R^2)}
\le
c
\|D\varphi\|_{L^2(\mathbb R^2)}\leq c .
$$
In the last line we have again used Poincaré inequality. 
Using \eqref{defninu} and \eqref{dual.riesz},
we therefore obtain
$$
\begin{aligned}
\langle\mu-\nu_R,\varphi\rangle 
=\langle\mu-\nu_R,P\varphi\rangle 
&=\left\langle
\mu,(1-\eta_R)P\varphi
\right\rangle+a_R\int_{\mathbb R^2}\psi P\varphi\dx
\\
&=\int_{\mathbb R^2}
(1-\eta_R)
\langle Dv_\mu,D\varphi\rangle\dx
-\int_{\mathbb R^2}P\varphi\langle Dv_\mu,D\eta_R\rangle\dx+a_R\int_{\mathbb R^2}\psi P\varphi\dx.
\end{aligned}
$$
The first two integrals are supported outside $B_{R-e}(0)$.
Moreover, by Poincaré's inequality on $B_1(0)$,
$$
\left|
\int_{\er^n}\psi P\varphi\dx
\right|
\le
c_\psi\|D\varphi\|_{L^2(\er^2)} \leq c .
$$
Thus, also using \eqref{app.mean.small} and \eqref{app.cutoff.two.multiplier}, we obtain
\eqn{app.cutoff.dual.two}
$$
\|\mu-\nu_R\|_{\DDDD^*}
\le
c
\|Dv_\mu\|_{L^2(\mathbb R^2\setminus B_{R-e})}
+
c |a_R|
\stackrel{R\to \infty}{\longrightarrow}0.
$$
\subsubsection{Convergence in the additional data spaces}\label{addi3}

We now consider the additional function-space assumptions separately.
\begin{itemize}
\item If $\mu\in L^q(\mathbb R^n)$, with $1\le q<\infty$, then
$\eta_R\mu\to\mu$ in $L^q(\mathbb R^n)$ by dominated convergence.

\item If $n\ge3$ and $\mu\in L(n,1)(\mathbb R^n)$, then
$\eta_R\mu\to\mu$ in $L(n,1)(\mathbb R^n)$ by the absolute
continuity of the Lorentz norm.

\item When $n=2$, dominated convergence gives
$\eta_R\mu\to\mu$ both in $L^1(\mathbb R^2)$ and in
$L_x^2(\mathbb R^2)$. Since $a_R\to0$ and $\psi$ is fixed and
smooth, the correction $a_R\psi$ converges to zero in both spaces.
Consequently, $\nu_R\to\mu$ in
$L^1(\mathbb R^2)\cap L_x^2(\mathbb R^2)$.
\item If, in addition,
$\mu\in L^2(\log L)^\alpha(\mathbb R^2)$, then
$\eta_R\mu\to\mu$ in the corresponding Luxemburg norm. For completeness we give a short proof of this. To see this, recall that
$A_\alpha(t)=t^2\log^\alpha(e+t)$.
Since $A_\alpha$ satisfies the global $\Delta_2$ condition,
membership in $L^{A_\alpha}(\mathbb R^2)$ implies\footnote{The global $\Delta_2$ condition means that
$A_\alpha(2t)\le c_\alpha A_\alpha(t)$ for every $t\ge0$.
Iteration of this and monotonicity easily imply that for every $a>0$ there exists
$c(\alpha,a)$ such that
$A_\alpha(at)\le c(\alpha,a)A_\alpha(t)$ for all $t\ge0$.
Consequently, finiteness of
$\int_{\mathbb R^2}A_\alpha(|\mu|/\lambda)\dx$
for some $\lambda>0$, guaranteed by membership in
$L^{A_\alpha}(\mathbb R^2)$, implies its finiteness
for every $\lambda>0$.}
$$
\int_{\mathbb R^2}
A_\alpha\left(\frac{|\mu|}{\eps}\right)\dx
<\infty
\qquad\text{for every }\eps>0.
$$
Fix $\eps>0$. Since $0\le\eta_R\le1$ and
$\eta_R\to1$ pointwise, dominated convergence gives
$$
\int_{\mathbb R^2}
A_\alpha\left(
\frac{|(1-\eta_R)\mu|}{\eps}
\right)\dx
\stackrel{R\to \infty}{\longrightarrow} 0.
$$
For all sufficiently large $R$, this integral is at most $1$.
By the definition of the Luxemburg norm, it follows that
$
\|(1-\eta_R)\mu\|_{L^2(\log L)^\alpha(\mathbb R^2)}
\le\varepsilon.
$
As $\eps>0$ is arbitrary, this proves the claimed
norm convergence. Moreover,
$
\|a_R\psi\|_{L^2(\log L)^\alpha(\mathbb R^2)}
=
|a_R|\,\|\psi\|_{L^2(\log L)^\alpha(\mathbb R^2)}
\to 0
$ as $R\to \infty$. 
We deduce
$\nu_R\to\mu$ in $L^2(\log L)^\alpha(\mathbb R^2)$.
\end{itemize}


\subsubsection{Mollification}\label{mollisi}

Let
$
\rho\in C^\infty_0(B_1(0))
$
be a nonnegative, radially symmetric function such that
$
\int_{\mathbb R^n}\rho\dx=1,
$
and set
$
\rho_\varepsilon(x):=\varepsilon^{-n}\rho(x/\varepsilon).
$
We  define
$
\tilde \mu_i
:=
\rho_{1/i}*\nu_{i}, 
$ for $i\geq e^2$ (in order to use the arguments of Section \ref{addi2}, where we took $R\geq e^2$). 
Since $\nu_{i}$ has compact support, it follows that
$
\tilde \mu_i\in C^\infty_0(\mathbb R^n)
$
and
$
\supp\, \tilde \mu_i
\subset
\supp \, \nu_{i} +\overline{B_{1/i}(0)}.
$
Moreover, when $n=2$, the zero-mean property of $\nu_{i}$ is obviously preserved by translations/convolutions:
\eqn{app.mollification.mean}
$$
\int_{\mathbb R^2}\tilde \mu_i\dx
=
\int_{\mathbb R^2}\rho_{1/i}\dx
\int_{\mathbb R^2}\nu_{i}\dx
=
0.
$$
Let us prove that 
\eqn{app.mollification.dual.limit}
$$
\tilde \mu_i \to \mu\qquad\mbox{in }\DD^*. 
$$
 Let
$
f\in\DD^* \cap L^1_{\loc}(\er^n)
$, with $f \in L^1_0(\er^2)$ when $n=2$, and let 
$v_f\in\DD$
be its Riesz representative in the sense of  \rif{dual.riesz}. 
Define
$
v_{f,\varepsilon}:=\rho_\varepsilon*v_f
$
if $n\ge3$. When $n=2$, we as usual correct it defining $
v_{f,\varepsilon}
:=
\rho_\varepsilon*v_f
-
(\rho_\varepsilon*v_f)_{B_1(0)}.
$
It is not difficult to show that  $v_{f,\varepsilon}$ is the Riesz representative of
$\rho_\varepsilon*f$ in the sense of \rif{dual.riesz}. Using this fact and Young's inequality for convolution we then obtain
$$
\|\rho_\varepsilon*f\|_{\DD^*}
\stackrel{\eqref{dual.riesz}}{=}
\|Dv_{f,\varepsilon}\|_{L^2(\mathbb R^n)}
=
\|\rho_\varepsilon*Dv_f\|_{L^2(\mathbb R^n)}
\le 
\|Dv_f\|_{L^2(\mathbb R^n)}
=
\|f\|_{\DD^*}.
$$
We now write
$
\tilde \mu_i-\mu
=
\rho_{1/i}*(\nu_{i}-\mu)
+
(\rho_{1/i}*\mu-\mu).
$
Using the content of the above display with $f\equiv \nu_{i}-\mu$, we obtain
$$
\begin{aligned}
\|\tilde \mu_i-\mu\|_{\DD^*}
&\le
\|\rho_{1/i}*(\nu_{i}-\mu)\|_{\DD^*}
+
\|\rho_{1/i}*\mu-\mu\|_{\DD^*}
\\
&\le
\|\nu_{i}-\mu\|_{\DD^*}
+
\|\rho_{1/i}*Dv_\mu-Dv_\mu\|_{L^2(\mathbb R^n)}.
\end{aligned}
$$
The first term tends to zero by \rif{app.cutoff.dual.high} and \rif{app.cutoff.dual.two}. The second term tends to zero because
$
Dv_\mu\in L^2(\mathbb R^n;\mathbb R^n). 
$
Therefore
\rif{app.mollification.dual.limit} follows. We next consider the additional function spaces as done in Section \ref{addi3}. We denote by  $Z$ any of the additional spaces involved in
the theorem under consideration, namely
$
L^q(\mathbb R^n),
$
$
L(n,1)(\mathbb R^n),
$
$
L_x^2(\mathbb R^2),
$
or
$
L^2(\log L)^\alpha(\mathbb R^2)
$, and we assume that $\mu\in Z$, as required by the corresponding statement.
Standard properties of convolutions guarantee that  $\|\rho_{1/i}*\mu-\mu\|_Z\to 0$. 
The same properties hold in $L_x^2(\mathbb R^2)$. For this it is sufficient to note that, for 
$|y|\le1$,
$
\log(e+|x|)
\le
c\log(e+|x-y|),
$
and therefore
$
\|\rho_\varepsilon*f\|_{L_x^2(\mathbb R^2)}
\le
c\|f\|_{L_x^2(\mathbb R^2)}$
for $0<\varepsilon\le1$.
The approximation property in $L_x^2$ then follows by density of
$C^\infty_0(\mathbb R^2)$. 
Then, we write 
$$
\begin{aligned}
\|\tilde \mu_i-\mu\|_Z
&\le
\|\rho_{1/i}*(\nu_{i}-\mu)\|_Z+\|\rho_{1/i}*\mu-\mu\|_Z
\\
&\le c_Z\|\nu_{i}-\mu\|_Z +\|\rho_{1/i}*\mu-\mu\|_Z \to 0.
\end{aligned}
$$
Here we use the convergence properties established in Section \ref{addi3}. The same argument
applies to $L^1(\mathbb R^2)$.
Finally, after a harmless scalar rescaling
$
\mu_i:=\theta_i\tilde \mu_i,
$
with
$
0<\theta_i\le1
$
and
$
\theta_i\to1,
$
we may further assume that all the corresponding norms of $\mu_i$ do
not exceed those of $\mu$. Summarizing, the sequence
$
\{\mu_i\}_{i\in\mathbb N}
\subset C^\infty_0(\mathbb R^n)
$
therefore converges to $\mu$ in all the spaces appearing in the
theorems under consideration and satisfies
\eqn{app.2}
$$
\begin{aligned}
&\|\mu_i\|_{\DD^*}
\le
\|\mu\|_{\DD^*},
\\
&\|\mu_i\|_{L^q(\mathbb R^n)}
\le
\|\mu\|_{L^q(\mathbb R^n)}
\qquad
\mbox{under the assumptions of Theorem \ref{t1}},
\\
&[\mu_i]_{n,1;\mathbb R^n}
\le
[\mu]_{n,1;\mathbb R^n}
\qquad
\mbox{under the assumptions of Theorems
\ref{t2} and \ref{t3}},
\\
&\|\mu_i\|_{L_x^2(\mathbb R^2)}
\le
\|\mu\|_{L_x^2(\mathbb R^2)},
\qquad
\|\mu_i\|_{L^1(\mathbb R^2)}
\le
\|\mu\|_{L^1(\mathbb R^2)}
\qquad
\mbox{if }n=2,
\\
&\|\mu_i\|_{L^2(\log L)^\alpha(\mathbb R^2)}
\le
\|\mu\|_{L^2(\log L)^\alpha(\mathbb R^2)}
\qquad
\mbox{under the assumptions of Theorem \ref{t222}}.
\end{aligned}
$$
Moreover,
\eqn{dualc}
$$
\mu_i\to \mu
\qquad
\mbox{in }\mathbb Y(\mathbb R^n),
$$
and, when $n=2$,
\eqn{app.approximation.mean}
$$
\int_{\mathbb R^2}\mu_i\dx=0
\qquad
\mbox{for every }i\in\mathbb N.
$$

\subsection{Approximate minimizers and compactness}\label{sezio}

For every $i\in\mathbb N$, let
\eqn{app.energy}
$$
\mathcal E_{\mu_i}(w)
:=
\int_{\mathbb R^n}H(Dw)\dx
-
\langle\mu_i,w\rangle=
\int_{\mathbb R^n}H(Dw)\dx
-
\int_{\mathbb R^n}\mu_iw\dx, 
\qquad
w\in\mathbb X(\mathbb R^n).
$$
Note that the identity above is justified by recalling that $\mu_i$ is smooth and compactly supported, and has zero mean
when $n=2$, so that the duality term in \eqref{app.energy} agrees with the integral. By Proposition \ref{exex}, there exists a unique minimizer
$u_i\in\mathbb X(\mathbb R^n)$, which is smooth, strictly spacelike, and a classical solution to
\eqref{bi} with datum $\mu_i$. Using Proposition \ref{boun.p},  \rif{coerciva} and \rif{app.2} imply 
\eqn{app.coercivity}
$$
\|u_i\|_{\DD}\le
2\|\mu_i\|_{\DD^*}
\le
2\|\mu\|_{\DD^*}.
$$
Thus $\{u_i\}$ is bounded in $\DD$ and, up to a not relabelled subsequence, we can assume that 
there exists
$\tilde u\in\DD$ such that
$
u_i\rightharpoonup\tilde u
$
weakly in $\DD$. Since $\mathbb X(\mathbb R^n)$ is convex and
strongly closed in $\DD$, it is weakly closed. Hence
$\tilde u\in\mathbb X(\mathbb R^n)$. The definition of
$\mathfrak m_\mu$ in \rif{mmi} implies
\eqn{app.3}
$$
\begin{cases}
\displaystyle
\|u_i\|_{L^\infty(\mathbb R^n)}
\stackleq{linf} 
c\mathfrak m_{\mu_i}\stackleq{app.2}
c\mathfrak m_\mu
& n\ge3
\\[3mm]
\displaystyle
\left\|
\frac{u_i}{\log(e+|\cdot|)}
\right\|_{L^\infty(\mathbb R^2)}
\stackleq{linf} 
c\mathfrak m_{\mu_i}\stackleq{app.2} c\mathfrak m_\mu
& n=2.
\end{cases}
$$
Together with $\|Du_i\|_{L^\infty}\le1$, these estimates show that
$\{u_i\}$ is locally uniformly bounded and equi-Lipschitz.
Arzel\`a--Ascoli's theorem therefore gives, after passing to a further
subsequence,
$
u_i\to\tilde u
$
locally uniformly in $\mathbb R^n.$
We now identify $\tilde u$ with the minimizer $u$ corresponding
to the original datum. First,
\eqn{app.pairing.limit}
$$
\left|
\langle\mu_i,u_i\rangle
-
\langle\mu,\tilde u\rangle
\right|
\le
\|\mu_i-\mu\|_{\DD^*}
\|u_i\|_{\DD}+
\left|
\langle\mu,u_i-\tilde u\rangle
\right|
\to 0.
$$
The first term tends to zero by \eqref{dualc} and
\eqref{app.coercivity}, while the second one tends to zero by 
weak convergence. Let $w\in\mathbb X(\mathbb R^n)$. The minimality of $u_i$ gives
$
\mathcal E_{\mu_i}(u_i)\le\mathcal E_{\mu_i}(w).
$
The convexity and lower semicontinuity of $H(\cdot)$ imply
$$
\int H(D\tilde u)\dx \le \liminf_{i \to \infty}\int H(Du_i)\dx.
$$
Moreover,
$\langle\mu_i,w\rangle\to\langle\mu,w\rangle$.
Together with \eqref{app.pairing.limit}, this yields
$$
\mathcal E_\mu(\tilde u)
\le
\liminf_{i\to\infty}
\mathcal E_{\mu_i}(u_i)
\le
\limsup_{i\to\infty}
\mathcal E_{\mu_i}(w)
=
\mathcal E_\mu(w).
$$
Thus $\tilde u$ is a minimizer of $\mathcal E_\mu$ in
$\mathbb X(\mathbb R^n)$. By uniqueness,
$\tilde u=u$. In particular, we have 
\eqn{convconv}
$$
u_i\to u
\quad
\mbox{locally uniformly in }\mathbb R^n.
$$


\section{Theorems \ref{t1}, \ref{sm.t2}, \ref{t2}, \ref{t222} and Corollary \ref{sm.c}}\label{apsec2}

We retain the notation and the approximation scheme of the
preceding section. As established in Section \ref{sezio},
each $u_i\in\mathbb X(\mathbb R^n)$ is the minimizer of
$\mathcal E_{\mu_i}$ and, as a consequence of Proposition \ref{exex} is a smooth, strictly spacelike,
classical solution to \eqref{bi}, with
$\mu_i\in C^\infty_0(\mathbb R^n)$.
Thus the a priori estimates obtained above apply to $u_i$. Let $\mathcal h_\infty(\mu)$ be defined as in
Proposition \ref{com.sup}. Since
$\|\mu_i\|_{\DD^*}\le\|\mu\|_{\DD^*}$, one has
$\mathcal h_\infty(\mu_i)\le\mathcal h_\infty(\mu)$.
Consequently, \eqref{external.compact.support} implies
\eqn{app.4}
$$
\supp\, 
\left(\mathcal h_{u_i}-\mathcal h_\infty(\mu)\right)_+\Subset\mathbb R^n
\qquad
\mbox{for every }i\in\mathbb N.
$$
In view of \eqref{app.4}, estimates
\eqref{czcz}--\eqref{mm.20} apply uniformly to $u_i$
with $\mathcal h_\infty=\mathcal h_\infty(\mu)$. Throughout the proof, we write
$\mathcal h_i:=\mathcal h_{u_i}$.

\subsection{Proof of Theorem \ref{t1}}

\subsubsection*{Common regularity and weak solvability}

Assume first that $n\ge3$. Since
$
\mathcal h_i
\le \mathcal h_\infty(\mu)
+(\mathcal h_i-\mathcal h_\infty(\mu))_+,$
estimate \eqref{czcz} gives, for every ball
$B\subset\mathbb R^n$,
\eqn{cz.0}
$$
\int_{2B}\mathcal h_i^q\dx
\le c\mathcal h_\infty(\mu)^q|B|
+c\mathfrak m_\mu^q
\|\mu\|_{L^q(\mathbb R^n)}^q.
$$
Here and below we use \eqref{app.2},
$\mathfrak m_{\mu_i}\le\mathfrak m_\mu$, and
$\mathcal h_\infty(\mu)\ge2$.
When $n=2$, necessarily $q=2$, and
\eqref{mm.20} yields
\eqn{cz.1}
$$
\int_{2B}\mathcal h_i^2\dx
\le
c\mathcal h_\infty(\mu)^2|B|
+
c\mathfrak m_\mu^2
\log^2(e+\mathfrak m_\mu)
\|\mu\|_{L_x^2(\mathbb R^2)}^2.
$$
We next establish the common Sobolev estimates, namely $u\in W^{2,2}_{\loc}(\mathbb R^n)$. 
For $n\ge3$ and every $2\le q\le n$,
\eqref{w22.high} gives
\eqn{app.55}
$$
\begin{aligned}
&
\|D\log\mathcal h_i\|_{L^2(B)}
+
\|D\mathcal h_i^{-1}\|_{L^2(B)}
+
\|D^2u_i\|_{L^2(B)}
\\
&\qquad\le
c\mathcal h_\infty(\mu)
|B|^{\frac{n-2}{2n}}
+
c|B|^{\frac12-\frac1q}
\left(
1+\frac{\mathfrak m_\mu}{|B|^{1/n}}
\right)
\|\mu\|_{L^q(\mathbb R^n)},
\end{aligned}
$$
where $c=c(n,q)$.
For $n=2$, \eqref{w22.two} gives
\eqn{app.555}
$$
\begin{aligned}
&
\|D\log\mathcal h_i\|_{L^2(B)}
+
\|D\mathcal h_i^{-1}\|_{L^2(B)}
+
\|D^2u_i\|_{L^2(B)}
\\
&\qquad\le
c\mathcal h_\infty(\mu)
+
c\left[
1+
\frac{
\mathfrak m_\mu\log(e+\mathfrak m_\mu)
}{
|B|^{1/2}
}
\right]
\|\mu\|_{L_x^2(\mathbb R^2)}.
\end{aligned}
$$
Together with \eqref{app.3} and
$\|Du_i\|_{L^\infty(\mathbb R^n)}\le1$,
these estimates show that $\{u_i\}$ is bounded in
$W^{2,2}(B)$ for every ball $B$.
By local compactness, after passing to a (not relabelled) subsequence
and using \eqref{convconv} to identify the limit,
we obtain
\eqn{conv.grad}
$$
\begin{cases}
u_i\rightharpoonup u
\quad\mbox{weakly in }W^{2,2}_{\loc}(\mathbb R^n),
\\[1mm]
Du_i\to  Du
\quad\mbox{strongly in }L^2_{\loc}(\mathbb R^n),
\\[1mm]
Du_i(x)\to  Du(x)
\quad\mbox{for almost every }x\in\mathbb R^n.
\end{cases}
$$
In particular,
$u\in W^{2,2}_{\loc}(\mathbb R^n)$.
Recall that
$
\mathcal h_u(x)=
1/\sqrt{1-|Du(x)|^2},
$
with the convention $\mathcal h_u(x)=+\infty$
when $|Du(x)|=1$.
Then $\mathcal h_i\to\mathcal h_u$ almost everywhere.
Fatou's lemma and \eqref{cz.0}--\eqref{cz.1} imply
$\mathcal h_u\in L^q_{\loc}(\mathbb R^n)$, which is the first assertion in \rif{unaw}. 
Consequently,
$
|\{x\in\mathbb R^n:|Du(x)|=1\}|=0,
$
and, after passing to a further subsequence,
\eqn{conv.0}
$$
\mathcal h_i\rightharpoonup\mathcal h_u
\quad
\mbox{weakly in }L^q_{\loc}(\mathbb R^n).
$$
Since $q\ge2$ and
$0\le\log\mathcal h_i\le\mathcal h_i$,
estimates \eqref{cz.0}--\eqref{app.555} show that
$\{\log\mathcal h_i\}$ is bounded in
$W^{1,2}_{\loc}(\mathbb R^n)$.
Similarly, $0<\mathcal h_i^{-1}\le1$ and the same
gradient estimates show that
$\{\mathcal h_i^{-1}\}$ is bounded in
$W^{1,2}_{\loc}(\mathbb R^n)$.
Their almost everywhere limits are
$\log\mathcal h_u$ and $\mathcal h_u^{-1}$, respectively.
Weak compactness therefore gives
$
\mathcal h_u^{-1},\log\mathcal h_u
\in W^{1,2}_{\loc}(\mathbb R^n),
$
finishing the proof of \eqref{unaw}.
The bounds in \eqref{duaw} follow from
Proposition \ref{boun.p}. We now pass to the limit in the Euler--Lagrange equation.
For every $\varphi\in C^\infty_0(\mathbb R^n)$,
\eqn{app.weak.approx}
$$
\int_{\mathbb R^n}
\langle\mathcal h_iDu_i,D\varphi\rangle\dx
=
\int_{\mathbb R^n}\mu_i\varphi\dx.
$$
Set $K:=\supp\,\varphi$ and $q':=q/(q-1)\le2$.
By \eqref{conv.grad},
$Du_i\to Du$ strongly in $L^{q'}(K)$.
Together with \eqref{conv.0} and the strong convergence
$\mu_i\to\mu$ in $L^q(\mathbb R^n)$, this allows us to
pass to the limit in \eqref{app.weak.approx}, obtaining
$$
\int_{\mathbb R^n}
\langle\mathcal h_uDu,D\varphi\rangle\dx
=
\int_{\mathbb R^n}\mu\varphi\dx.
$$
Since
$\mathcal h_uDu\in L^q_{\loc}(\mathbb R^n;\mathbb R^n)$,
the minimizer $u$ weakly solves \eqref{bi}.

\subsubsection*{Proof of \textnormal{(I)}} Assume that $2<q<n$ and set $m:=q-2$.
We apply \eqref{cacc} to $(u_i,\mu_i)$ with exponent $m$
and a cutoff $\eta\in C^\infty_0(2B)$ satisfying
$
0\le\eta\le1$, 
$
\eta=1\ \mbox{on }B$, $
|D\eta|\le c|B|^{-1/n}.
$ 
Since $\mathcal h_i\ge1$, the left-hand side controls
$\|D\mathcal h_i^{(q-2)/2}\|_{L^2(B)}^2$
and $\|D^2u_i\|_{L^2(B)}^2$.
On the right-hand side, Young's inequality and
\eqref{app.2} give
$$
\int_{2B}\eta^2\mu_i^2\mathcal h_i^{q-2}\dx
\le
\|\mathcal h_i\|_{L^q(2B)}^q
+
\|\mu\|_{L^q(\mathbb R^n)}^q.
$$
Combining this with \eqref{cz.0}, we obtain
\eqn{app.5}
$$
\begin{aligned}
&
\|D\mathcal h_i^{(q-2)/2}\|_{L^2(B)}
+
\|D^2u_i\|_{L^2(B)}
\\
&\qquad\le
c\bigl(|B|^{-1/n}+1\bigr)
\left(
|B|^{1/2}\mathcal h_\infty(\mu)^{q/2}
+
[\mathfrak m_\mu^{q/2}+1]
\|\mu\|_{L^q(\mathbb R^n)}^{q/2}
\right),
\end{aligned}
$$
where $c=c(n,q)$.
Moreover,
$\mathcal h_i^{q-2}\le\mathcal h_i^q$.
Thus \eqref{cz.0} and \eqref{app.5} imply that
$\{\mathcal h_i^{(q-2)/2}\}$ is bounded in
$W^{1,2}_{\loc}(\mathbb R^n)$.
Its almost everywhere convergence and weak compactness yield
$
\mathcal h_u^{(q-2)/2}
\in W^{1,2}_{\loc}(\mathbb R^n).
$

\subsubsection{Proof of \textnormal{(II)}}\label{equalize}

Assume that $q=n\ge3$.
After passing to a further subsequence, we may assume that
$
\sum_{i=1}^\infty\|\mu_i-\mu\|_{L^n(\mathbb R^n)}<\infty.
$
Set $
g:=|\mu|+
\sum_{i=1}^\infty|\mu_i-\mu|$. 
Then $g\in L^n(\mathbb R^n)$ and
$|\mu_i|\le g$ almost everywhere.
In particular, for every $x_0\in\mathbb R^n$ and $r>0$,
$
\|\mu_i\|_{L^n(B_r(x_0))}
\le
\|g\|_{L^n(B_{2r}(x_0))}.
$
It follows that \eqref{smallina} holds uniformly with respect to both $i$ and $x_0$.
Of course, the absolute continuity of the integral of $g^n$
allows the threshold radii in Lemma \ref{caccim}
and Proposition \ref{p4.4} to be chosen independently
of $i$ and of the center $x_0$. Fix $p>1$.
Proposition \ref{p4.4}, \eqref{app.2}, and a finite
covering argument give
$
\sup_{i\in\mathbb N}\|\mathcal h_i^{p/2}\|_{W^{1,2}(B)}<\infty
$
for every ball $B\subset\mathbb R^n$.
Passing to the limit by weak compactness and almost
everywhere convergence yields
$
\mathcal h_u^{p/2}
\in W^{1,2}_{\loc}(\mathbb R^n).
$
The case $p=1$ follows by applying the usual Sobolev chain rule
to $t\mapsto t^{1/p_0}$ on $[1,\infty)$, with any fixed
$p_0>1$, and to the function $\mathcal h_u^{p_0/2}$.

\subsubsection{Proof of \textnormal{(III)}}

Assume that $n=2$. The idea is to use the limiting embedding \rif{trudinger.scale} on the function $p\log \mathcal h_u= \log \mathcal h_u^p$, keeping in mind we have just proved that 
$\log\mathcal h_u\in W^{1,2}_{\loc}(\mathbb R^2)$.
Fix a ball $B\subset\mathbb R^2$ and choose
$\eta\in C^\infty_0(2B)$ such that
$0\le\eta\le1$ and $\eta=1$ on $B$.
Set
$
w:=\eta\log\mathcal h_u\in W^{1,2}_0(2B)$.  We can assume that $\|Dw\|_{L^2(2B)}>0$ (otherwise $w=0$ almost everywhere and
$\mathcal h_u=1$ almost everywhere on $B$ and we are done). 
For every $1\le p<\infty$, Young's inequality gives\footnote{Use Young's inequality
$
ab\le\varepsilon a^2+b^2/(4\varepsilon),
$ for $\eps>0$ with 
 $a=\log\mathcal h_u$, $b=p$, and
$\varepsilon=\mathfrak c_3/\|Dw\|_{L^2(2B)}^2$.}
$$
p\log\mathcal h_u
\le
\frac{\mathfrak c_3}{\|Dw\|_{L^2(2B)}^2}
\log^2\mathcal h_u
+
\frac{p^2\|Dw\|_{L^2(2B)}^2}{4\mathfrak c_3}.
$$
Since $w=\log\mathcal h_u$ on $B$, we conclude, also using Lemma \ref{trudinger.lemma}, that
$$
\begin{aligned}
\int_B\mathcal h_u^p\dx
&\le
\exp\left\{
\frac{p^2\|Dw\|_{L^2(2B)}^2}{4\mathfrak c_3}
\right\}
\int_{2B}
\exp\left\{
\frac{\mathfrak c_3w^2}{\|Dw\|_{L^2(2B)}^2}
\right\}\dx\\
&\le c_n
 |B|
\exp\left\{
\frac{p^2\|Dw\|_{L^2(2B)}^2}{4\mathfrak c_3}
\right\}.
\end{aligned}
$$
Being $B$ arbitrary, this proves
$\mathcal h_u\in L^p_{\loc}(\mathbb R^2)$
for every $1\le p<\infty$, completing the proof.

\subsection{Proof of Theorems \ref{t2} and \ref{t222}}

Assume first that $n\ge3$ and that the assumptions of Theorem
\ref{t2} hold. Since
$
L(n,1)(\mathbb R^n)\hookrightarrow L^n(\mathbb R^n),
$
Theorem \ref{t1}, with $q=n$, already implies that $u$ is a
distributional solution and that
$u\in W^{2,2}_{\loc}(\mathbb R^n)$. When $n=2$ and the assumptions of Theorem \ref{t222} hold, the
embedding
$
L^2(\log L)^\alpha(\mathbb R^2)\hookrightarrow L^2(\mathbb R^2)
$
allows us to apply Theorem \ref{t1} with $q=2$. Thus the same
conclusions hold in dimension two. Apply now Proposition \ref{boostdopo} to each smooth solution $u_i$.
Using \eqref{app.2}, we obtain
\eqn{app.boost.uniform}
$$
\|\mathcal h_i\|_{L^\infty(\mathbb R^n)}
\le
\begin{cases}
\displaystyle
\exp\left\{c_n[\mu]_{n,1}\right\}.
& n\ge3
\\[3mm]
\displaystyle
\mathcal h_\infty(\mu)
\exp\left\{
c_\alpha
\|\mu\|_{L^2(\log L)^\alpha(\mathbb R^2)}
\right\}
& n=2.
\end{cases}
$$
Recall that $\mathcal h_\infty(\mu)$ is as in
Proposition \ref{com.sup}. 
The compactness argument in the proof of Theorem \ref{t1} gives
$Du_i\to Du$ almost everywhere. Hence
$\mathcal h_i\to\mathcal h_u$ almost everywhere, and letting $i\to \infty$ in 
\eqref{app.boost.uniform} yields the bounds stated in Theorems \ref{t2} and \ref{t222}. It remains to prove the continuity of $Du$. We differentiate
directly the equation satisfied by $u$. By Theorem \ref{t1},
$
u\in W^{2,2}_{\loc}(\mathbb R^n)
$
and $u$ is a weak solution to \eqref{bi}. Moreover, the
boost bound already obtained gives a constant $M<\infty$ such that
$
\mathcal h_u\le M
$
almost everywhere. Consequently,
$
|Du|\le\theta:=\sqrt{1-M^{-2}}<1$
holds a.e. in $\er^n$. 
Set
$
A(x):=\partial^2H(Du(x)).
$
Since
$
A=\mathcal h_u I+\mathcal h_u^3Du\otimes Du,
$
the matrix $A$ is measurable, bounded, and uniformly elliptic.
More precisely,
\eqn{app.uniform.ellipticity}
$$
|\xi|^2
\le\langle A(x)\xi,\xi\rangle\le M^3|\xi|^2
\qquad
\mbox{for every }\xi\in\mathbb R^n
\mbox{ and almost every }x\in\mathbb R^n.
$$
Fix $s\in\{1,\ldots,n\}$; a standard difference quotient argument leads to 
\eqn{app.linearized}
$$
\int_{\mathbb R^n}
\langle ADD_su,D\varphi\rangle\dx
=
-\int_{\mathbb R^n}\mu D_s\varphi\dx
\qquad
\mbox{for every }\varphi\in C^\infty_0(\mathbb R^n).
$$
Set $v:=D_su$ and $f:=-\mu e_s$. Then
\eqref{app.linearized} reads
$
-\diver(ADv)=-\diver f. 
$
If $n\ge3$, then $f\in L_{\loc}(n,1)$, and the
endpoint continuity theorem for scalar uniformly elliptic
equations with bounded measurable coefficients gives
$v\in C_{\loc}^0(\mathbb R^n)$; see for instance \cite[Theorem 5.8]{sa21}. 
If $n=2$, \rif{immergilog} gives
$\mu\in L_{\loc}(2,1)(\mathbb R^2)$.
Meyers' estimates and real interpolation then yield
$Dv\in L_{\loc}(2,1)$, and the endpoint Sobolev embedding
implies that $v$ is continuous.
Since $s$ is arbitrary, $Du$ is continuous in
$\mathbb R^n$.
\subsection{Proof of Theorem \ref{sm.t2} and Corollary \ref{sm.c}}
By Theorem \ref{t1}, $u$ weakly solves \eqref{bi} and
$$
Du,\ \log\mathcal h_u
\in W^{1,2}_{\loc}(\mathbb R^n).
$$
Applying Theorem \ref{cap.t.1} with $p=2$ gives the
capacitary and Hausdorff dimension bounds in
Theorem \ref{sm.t2}.
Under the additional assumption on the diameters of
light rays, the same theorem yields Corollary \ref{sm.c}.

\section{Theorem \ref{dir.borderline}}\label{diriproof}
Let $\tilde\varepsilon_\partial$ and $\sigma_*$
be given by Proposition \ref{corr}, and choose
$
\varepsilon_\partial:=\tilde\varepsilon_\partial/2$ in Theorem \ref{dir.borderline}. 
Define
$\mathcal h_{\partial\Omega}$ by
\eqref{boundary.boost.threshold}.
We follow the approximation scheme of Section \ref{apsec},
describing only the modifications needed for the Dirichlet
problem.
Set 
$Z:=L(n,1)(\Omega)$ when $ n\ge3 $ and 
$Z:= L^2(\log L)^\alpha(\Omega)$ when $n=2$.
Extend $f$ and $\mu_*$ by zero outside $\Omega$, and let
$f_i,\nu_i\in C^\infty(\overline\Omega)$ be the restrictions
to $\Omega$ of their convolutions with the mollifiers used in Section \ref{mollisi}. 
As in Section \ref{apsec}, we have
\eqn{per}
$$
\|f_i-f\|_Z+\|\nu_i-\mu_*\|_Z\to 0, \quad \|f_i\|_{L^\infty(\Omega)}
\le \|f\|_{L^\infty(\Omega)}
\le F_0, \quad [\nu_i]_{n,1;\Omega}
\le[\mu_*]_{n,1;\Omega}
\le\Lambda_0.
$$
Moreover, strong convergence in $L(n,1)(\Omega)$ and
\eqref{data2} give
$$
[\nu_i]_{n,1;\Omega_{r_0}}
\le
[\mu_*]_{n,1;\Omega_{r_0}}
+
[\nu_i-\mu_*]_{n,1;\Omega}
\le
\frac{\tilde\varepsilon_\partial}{2}+\tx{o}(1).
$$
Thus, after discarding finitely many terms,
$[\nu_i]_{n,1;\Omega_{r_0}}\le\tilde\varepsilon_\partial$.
We can therefore set
$
\mu_{*,i}:=\nu_i$ and $
\mu_i:=f_i+\nu_i$
and apply Corollary \ref{app.cor} with the threshold
$\tilde\varepsilon_\partial$. Let $u_i$ be the Dirichlet minimizer with charge $\mu_i$
and the fixed boundary datum $u_0$.
By \cite[Theorem 3.6]{bs82} and convexity,
$u_i\in C^{2,\beta}(\overline\Omega)\cap C^\infty(\Omega)$
is a strictly spacelike classical solution.
Set $\mathcal h_i:=\mathcal h_{u_i}$.
Corollary \ref{app.cor}, applied with
$f_i$ and $\mu_{*,i}$, gives
$\supp(\mathcal h_i-\kappa)_+\Subset\Omega
$ for every $\kappa\ge\mathcal h_{\partial\Omega}$, where $\mathcal h_{\partial\Omega}$ has been defined in \rif{boundary.boost.threshold}. 
The threshold $\mathcal h_{\partial\Omega}$ is independent
of $i$; the width of the corresponding boundary collar
may depend on $i$. Note that this is similar to \rif{supporto}. At this point we can therefore repeat the testing argument in the proof
of Proposition \ref{boostdopo}, using compactly supported
truncations in $\Omega$. It gives
\[
\int_\Omega
\left|D(\log\mathcal h_i-\log\kappa)_+\right|^2\dx
\le
c_n\int_{\{\mathcal h_i>\kappa\}}|\mu_i|^2\dx,
\qquad
\kappa\ge\mathcal h_{\partial\Omega},
\]
which is indeed the analogue of \eqref{whit}. 
Applying the same iteration to
\[
v_i:=
\left(\log\frac{\mathcal h_i}
{\mathcal h_{\partial\Omega}}\right)_+
\in W^{1,2}_0(\Omega),
\]
we obtain
\begin{equation}\label{dir.approx.boost}
\|\mathcal h_i\|_{L^\infty(\Omega)}
\le
\begin{cases}
\displaystyle
\mathcal h_{\partial\Omega}
\exp\!\left\{c_n[\mu_i]_{n,1;\Omega}\right\}
& n\ge3\\[2mm]
\displaystyle
\mathcal h_{\partial\Omega}
\exp\!\left\{
c_{\alpha,|\Omega|}
\|\mu_i\|_{L^2(\log L)^\alpha(\Omega)}
\right\}
& n=2.
\end{cases}
\end{equation}
Indeed, the Sobolev and Moser--Trudinger inequalities used in Proposition \ref{boostdopo} 
apply to the zero extensions of these truncations.
In dimension two, we use
$|\{\mathcal h_i>\mathcal h_{\partial\Omega}\}|\le|\Omega|$
in place of \eqref{stimamis}; this creates in the constants a dependence on $|\Omega|$ (appearing indeed in the final statement). By strong convergence of the charges, \eqref{dir.approx.boost}
provides a constant $M\ge2$, independent of $i$, such that
$
\mathcal h_i\le M$ and $
|Du_i|\le\theta:=\sqrt{1-M^{-2}}<1$ in 
$\Omega$. The convergence argument of
Section \ref{apsec}, now with fixed boundary datum, yields,
after passing to a subsequence,
$
u_i\to u$ uniformly on $\overline\Omega$,
 and $
\mathcal E_{\mu_i}(u_i;\Omega)
\to\mathcal E_\mu(u;\Omega).
$
Moreover, up to a further subsequence,
$u_i\rightharpoonup^*u$ in $W^{1,\infty}(\Omega)$.
By weak-* lower semicontinuity,
$
\nr{Du}_{L^\infty(\Omega)}
\le
\liminf_{i\to\infty}\nr{Du_i}_{L^\infty(\Omega)}
\le\theta<1.
$
Moreover, the uniform, strict convexity of $H$ and the minimality
of $u_i$, applied to $(u_i+u)/2$, give, via standard convexity arguments
\[
\frac14\int_\Omega|Du_i-Du|^2\dx
\le
\mathcal E_{\mu_i}(u;\Omega)
-
\mathcal E_{\mu_i}(u_i;\Omega)
\to 0.
\]
Thus $Du_i\to Du$ strongly in $L^2(\Omega)$ and, after a
further subsequence, almost everywhere.
In particular, $\mathcal h_i\to\mathcal h_u$ almost everywhere.
Passing to the limit in \eqref{dir.approx.boost}, using
\rif{per}, proves
\eqref{boost.bound.dir} and \eqref{boost.2d.dir}.
The uniform spacelikeness and the strong convergence
of the gradients allow us to pass to the limit in the
equations, so $u$ weakly solves \eqref{bi} in $\Omega$.
Standard local difference quotient estimates, using
$\mu\in L^2(\Omega)$, give
$u\in W^{2,2}_{\loc}(\Omega)$.
The argument used in the proof of Theorems \ref{t2}
and \ref{t222} then yields $u\in C^1(\Omega)$; see \rif{app.uniform.ellipticity}-\rif{app.linearized}.
\begin{remark}\label{prefactor}
\emph{The factor $\mathcal h_{\partial\Omega}$ in
\eqref{boost.bound.dir} cannot be dropped.
To see this, we go back to Remark
\ref{controre}. 
For $n\ge3$, define $\eta_T$ on $\mathbb R^n$
by the same formula as in \eqref{logatau}, and set
$
u_{a,T}(x):=a x_1\eta_T(x),
$
where $a\in(0,1)$ and $T$ is sufficiently large. On the fixed domain $\Omega=B_1(0)$, we have
$
u_{a,T}(x)=a x_1.
$
Therefore, the corresponding charge vanishes in $\Omega$, and $u_{a,T}|_\Omega$ is the Dirichlet minimizer with
boundary datum $a x_1$, by convexity of $H$. All the assumptions of Theorem \ref{dir.borderline}
are satisfied with $\tx u_0(x)=a x_1$,
$\sigma_0=1-a$, and $f=\mu_*=0$.
However,
$$
\nr{\mathcal h_{u_{a,T}}}_{L^\infty(\Omega)}=\frac1{\sqrt{1-a^2}}>1
=
\exp\left\{c_n[\mu]_{n,1;\Omega}\right\}.
$$}
\end{remark}

\section{Theorem \ref{t3}}\label{passa}

We pass to the limit in the a priori estimate \eqref{0.10},
using the approximation constructed in Section \ref{apsec}.
Recall that the corresponding minimizers $u_i$ are smooth
classical solutions with charges $\mu_i\in C^\infty_0(\mathbb R^n)$, and recall also 
\eqref{convconv} and \eqref{conv.grad}. Set $\mathcal h_i:=\mathcal h_{u_i}$ and let $\gamma$
be the exponent in Proposition \ref{p31}. 
Theorems \ref{t2} and \ref{t222} give $u\in C^1(\mathbb R^n)$.
Moreover, by \eqref{app.boost.uniform}, there exists $M\ge2$
such that
\eqn{uniunni}
$$
\mathcal h_i,\mathcal h_u\le M\Longrightarrow |Du_i|,|Du|\le\theta:=\sqrt{1-M^{-2}}<1 \quad \mbox{in $\er^n$}. 
$$
The approximation also gives $\mu_i\to\mu$ strongly in $Z$, where 
$Z:=L(n,1)(\mathbb R^n)$ when $n\ge3$ and 
$Z:=L^2(\log L)^\alpha(\mathbb R^2)$ when $n=2$. Proceeding as in Section \ref{equalize}, we choose a
(not relabelled) subsequence such that
$\sum_i\|\mu_i-\mu\|_Z<\infty$. Then
\eqn{lagg}
$$
|\mu_i|,|\mu|\le
g:=|\mu|+\sum_{i=1}^\infty|\mu_i-\mu|\in Z,
$$
and $\mu_i\to\mu$ almost everywhere in $\mathbb R^n$.
Write
$K_s^i(x_0):=K_s^{u_i}(x_0)$ and $K_s(x_0):=K_s^u(x_0).$
Lemma \ref{balls} \textnormal{(h$_1$)} and \rif{uniunni} give
\eqn{00.1}
$$
B_s(x_0)\subset K_s^i(x_0)\cap K_s(x_0),
\qquad
K_s^i(x_0)\cup K_s(x_0)\subset B_{Ms}(x_0),
$$
for every $x_0\in\mathbb R^n$ and $s>0$.
For a fixed center $x_0$, consider
$
F_{x_0}(x):=|x-x_0|^2-|u(x)-u(x_0)|^2.
$
Since $u\in C^1(\mathbb R^n)$ and $|Du|\le\theta$,
\[
\begin{aligned}
\langle D_xF_{x_0}(x),x-x_0\rangle
&=
2|x-x_0|^2
-2 (u(x)-u(x_0) )
\langle Du(x),x-x_0\rangle
\\
&\ge
2(1-\theta^2)|x-x_0|^2>0
\qquad\text{if }x\ne x_0.
\end{aligned}
\]
Thus every positive level set of $F_{x_0}$ is a
$C^1$ hypersurface, and
$|\partial K_s(x_0)|=0$ for every $s>0$.
The local uniform convergence of $u_i$ therefore implies
\eqn{passa.sets}
$$
\mathds1_{K_s^i(x_0)}
\to 
\mathds1_{K_s(x_0)}
\quad\text{almost everywhere in }\mathbb R^n,
$$
for every fixed $x_0$ and $s>0$.
Together with \eqref{00.1}, this gives
$|K_s^i(x_0)|\to|K_s(x_0)|$.
Since $\mathcal h_i^{-(\gamma+1)}\to
\mathcal h_u^{-(\gamma+1)}$ almost everywhere by \rif{conv.grad}, dominated convergence yields
\eqn{passa.mean}
$$
\mint_{K_r^i(x_0)}
\mathcal h_i^{-(\gamma+1)}\dx
\to 
\mint_{K_r(x_0)}
\mathcal h_u^{-(\gamma+1)}\dx
$$
for every $x_0\in\mathbb R^n$ and $r>0$. To deal with the potential terms, for $m\in\{1,2\}$, define
$$
q_m(s):=s^{m-n-1}\int_0^{\omega_nM^ns^n} (g^*(t))^m\,\dd t,\qquad s>0, 
$$
where $g^*$ denotes the nonincreasing rearrangement of $g$,
as defined in \eqref{rearrangia}.
By \eqref{00.1} and \rif{lagg} and the classical Hardy--Littlewood rearrangement
inequality, we have 
\eqn{sopra}
$$
s^{m-n-1}
\int_{K_s^i(x_0)}|\mu_i|^m\dx\le q_m(s), \quad  s^{m-n-1}
\int_{K_s(x_0)}|\mu|^m\dx
\le q_m(s).
$$
If $n\ge3$, Fubini's theorem gives
$$
\int_0^\infty q_m(s)\,\dd s
=\frac{(\omega_nM^n)^{1-m/n}}{n-m}\int_0^\infty\bigl(t^{1/n}g^*(t)\bigr)^m\frac{\dd t}{t}
<\infty
$$
and here we have used 
$L(n,1)\hookrightarrow L(n,m)$ for $m=1,2$ and the very definition in \rif{definorma} to establish the finiteness stated above. 
If $n=2$, set
$
\mathcal L(s):=\log(e+1/[\omega_2M^2s^2]).
$
By \eqref{orlicz.rearr},
\[
\int_0^{\omega_2M^2s^2}
(g^*(t) )^2\,\dd t
\le
\frac{c_\alpha \|g\|_{L^2(\log L)^\alpha(\mathbb R^2)}^2}{\mathcal L(s)^\alpha}.
\]
Hence, also using Cauchy--Schwarz,
\[
q_2(s)\le
\frac{c_\alpha }{s\mathcal L(s)^\alpha} \|g\|_{L^2(\log L)^\alpha(\mathbb R^2)}^2,
\qquad
q_1(s)\le
\frac{c_\alpha\sqrt{\omega_nM^n}}{s\mathcal L(s)^{\alpha/2}}\|g\|_{L^2(\log L)^\alpha(\mathbb R^2)}.
\]
Since $\alpha>2$, both the functions in the right-hand sides of the above display are integrable near zero
and on every compact subinterval of $(0,\infty)$.
Consequently, $q_1,q_2\in L^1(0,r)$ for every $r>0$. For every fixed $x_0$ and $s>0$, \eqref{passa.sets},
the almost everywhere convergence of $\mu_i$, and the
local integrability of $g^m$ give
\[
\int_{K_s^i(x_0)}|\mu_i|^m\dx
\to 
\int_{K_s(x_0)}|\mu|^m\dx.
\]
Together with \eqref{sopra}, this allows us to apply the dominated convergence theorem, yielding
\eqn{passa.potentials}
$$
\mathbf{PL}_{m,u_i}^{1}
(|\mu_i|^m;x_0,r)
\to
\mathbf{PL}_{m,u}^{1}
(|\mu|^m;x_0,r),
\qquad m=1,2,
$$ for every $x_0\in\mathbb R^n$ and $r>0$. 
Apply \eqref{0.10} to $u_i$ and discard the nonnegative
Hessian term. Using \eqref{passa.mean} and
\eqref{passa.potentials}, we can pass to the limit and
obtain \eqref{int.pot} for almost every $x_0$ for every $r>0$. Finally, fix $r>0$. We show that all terms in the limiting
inequality are continuous with respect to $x_0$. For this we already know, from Theorems \ref{t2} and \ref{t222}, that $Du$, and therefore $\mathcal h_u^{-1}$, are continuous. 
If $x_j\to x_0$, the continuity of $u$ and the null measure
of every positive level set of $F_{x_0}$ imply that
$
\mathds1_{K_s(x_j)}
\to 
\mathds1_{K_s(x_0)}$
a.e., and for every $s>0$.
The sets $K_s(x_j)$, $0<s\le r$, lie in a common bounded
ball. Dominated convergence and absolute continuity of the integral therefore gives continuity
of the mean in \eqref{int.pot}. 
It also gives continuity of the integrals defining the
potentials at every fixed scale. The 
inequalities  \rif{sopra} then yield continuity of
the potentials.
Therefore \eqref{int.pot} extends to every
$x_0\in\mathbb R^n$. As $r>0$ was arbitrary, the proof
is complete.

\section{Theorems \ref{light.t} and \ref{b.th}} \label{s7.7}

\subsection{Theorem \ref{light.t}}

By Remark \ref{globale.locale}, $u$ minimizes the
corresponding Dirichlet functional in every ball and,
in particular, is a local minimizer in the sense of
Definition \ref{minimilocali}. Assume by contradiction that
$\mathcal S_u\neq\varnothing$.
By the definition of $\mathcal S_u$, there exists a
nondegenerate light segment
$\overline{xy}\subset\mathbb R^n$.
Up to exchanging $x$ and $y$, we may assume that
$
u(y)-u(x)=|y-x|.
$
Set
$
x_t:=x+t(y-x)
$
for $t\in\mathbb R$.
By weak spacelikeness,
$
u(x_t)=u(x)+t|y-x|
$
for every $t\in[0,1]$.
Theorem \ref{al.t}, applied with $x_0=x$ and $x_1=y$,
therefore gives
$
u(x_t)=u(x)+t|y-x|$
for every $t\in\mathbb R$.
This contradicts the global boundedness of $u$ given
by \eqref{infsup} when $n\ge3$, and the sublinear
growth property \eqref{weightsmall} when $n=2$.
Hence $\mathcal S_u=\varnothing$, and $u$ has no
light segments. Assume now, in addition, that
$\mu\in L^2_{\loc}(\mathbb R^n)$, and fix a ball
$B\Subset\mathbb R^n$.
Since $u$ has no light segments, its restriction to $B$
is a spacelike extension of the boundary datum
$u|_{\partial B}$.
Moreover, $\mu|_B\in L^1(B)\cap L^2(B)$.
We may therefore apply \cite[Theorem 1.14(i)]{bimm24}
when $n\ge3$, and \cite[Theorem 1.11(i)]{bimm24}
when $n=2$, with zero singular part of the charge.
These results give
$
\mathcal h_u\in L^1_{\loc}(B)
$
and
$$
\int_B
\langle\mathcal h_uDu,D\varphi\rangle\dx
=
\int_B\mu\varphi\dx
\qquad
\mbox{for every }\varphi\in C^\infty_0(B).
$$
Since $B$ is arbitrary, it follows that
$
\mathcal h_uDu\in L^1_{\loc}(\mathbb R^n;\mathbb R^n)
$
and $u$ weakly solves \eqref{bi} in $\mathbb R^n$.
This proves Theorem \ref{light.t}.

\subsection{Theorem \ref{b.th}}
Suppose by contradiction that $u$ has a light segment
$
\overline{xy}\Subset\Omega, 
$ 
and assume that
$
u(y)-u(x)=|y-x|.$
Set
$
x_t:=x+t(y-x),$
and let $(t_-,t_+)$ be the connected component of
$
\{t\in\mathbb R:x_t\in\Omega\}$
containing $[0,1]$. Since $\Omega$ is bounded,
$
-\infty<t_-<0<1<t_+<\infty,$
and the points $
x_-:=x_{t_-}$ , $
x_+:=x_{t_+}
$
belong to $\partial\Omega$. Moreover,
$
\overline{x_-x_+}\subset\overline\Omega.
$ By Theorem \ref{al.t},
$
u(x_t)=u(x)+t|y-x|$
for every $t\in(t_-,t_+)$.
Passing to the endpoints along this segment and using the boundary
condition in the sense of Definition \ref{defidatobordo}, we obtain
$
u_0(x_+)-u_0(x_-)=(t_+-t_-)|y-x|=|x_+-x_-|.
$
This contradicts the strict chord condition in
\eqref{u0u0.bd00}. Therefore $u$ has no light segments in $\Omega$. 
Finally, when 
$
\mu\in L^2_{\loc}(\Omega),
$
we can conclude exactly as in the proof of Theorem \ref{light.t}.

\section{Appearance of light segments and Theorem \ref{ex.t}}\label{shine} 

Our ultimate goal is to construct a weakly spacelike function $u\in \mathcal{D}^{1,2}(\mathbb{R}^{n})$ with a light segment and whose Lorentzian mean curvature belongs to $L(n-1,\infty)\setminus L(n-1,s)(\mathbb{R}^{n})$ for all $0<s<\infty$. The construction can be split in two steps: first, we design a weakly spacelike function $v\in W^{1,\infty}_{\loc}(\mathbb{R}^{n})$ such that $\diver(\mathcal{h}_{v}Dv)\in L_{\loc}(n-1,\infty)(\mathbb{R}^{n})$ and then cut it off so that the newly defined function belongs to the right energy space and it is a minimizer. We warn the reader that the rest of the proof will be full of elementary calculus computations, that we are going to report in some detail in order to facilitate digestion. 

{\em Step 1: Main construction}. 
Let $\tx{l}>0$, take $x_{1}:=-\tx{l}\tx{e}_{n}$, where, as usual, $\tx{e}_{n}:=(0,\cdots,0,1)$ and $x_{0}=0$, recall that $\overline{x_{1}x_{0}}=\{-t\tx{e}_{n}, \ t\in [0,\tx{l}]\}$ and introduce the function $$v(x):=\frac{\snr{x-x_{1}}-\snr{x-x_{0}}-\tx{l} }{2}=\frac{\snr{x+\tx{l}\tx{e}_{n}}-\snr{x}-\tx{l} }{2}.$$ 
Note that, for every
$x\in\mathbb R^n\setminus\{x_0,x_1\}$,
\eqn{formulac}
$$
Dv(x)=\frac{1}{2}\left(\frac{x+\tx{l}\tx{e}_{n}}{\snr{x+\tx{l}\tx{e}_{n}}}-\frac{x}{\snr{x}}\right)
$$
and, therefore
\eqn{formulac2}
$$
\snr{Dv(x)}^2
=\frac14\left|
\frac{x+\tx{l}\tx{e}_n}{\snr{x+\tx{l}\tx{e}_n}}
-\frac{x}{\snr{x}}
\right|^2=
\frac12\left(
1-\frac{\langle x+\tx{l}\tx{e}_n,x\rangle}
{\snr{x+\tx{l}\tx{e}_n}\snr{x}}\right)=
\frac{
\tx{l}^2-(\snr{x}-\snr{x+\tx{l}\tx{e}_n})^2}{4\snr{x}\snr{x+\tx{l}\tx{e}_n}},
$$
so that $v\in W^{1,\infty}_{\loc}(\mathbb{R}^{n})$ and $\nr{Dv}_{L^{\infty}(\mathbb{R}^{n})}\le 1$.
Specifically, for every $x\in\mathbb R^n\setminus\{x_0,x_1\}$,
\eqn{aa.0}
$$
\snr{Dv(x)}=1 \mbox{ and } \mathcal{h}_{v}^{-1}(x)=0
\mbox{ iff } x\in (x_{1}x_{0}).
$$
In fact, for every such $x$, we have
\eqn{aa.1}
$$
\begin{cases}
\displaystyle
\ \snr{Dv(x)}^{2}=1 \ \Longleftrightarrow \ \frac{\langle x+\tx{l}\tx{e}_{n},x\rangle}{\snr{x+\tx{l}\tx{e}_{n}}\snr{x}}=-1\vspace{1.5mm}\\
\displaystyle
\ \mathcal{h}_{v}^{-2}(x)=\frac{1}{4}\left(\frac{(\snr{x}+\snr{x+\tx{l}\tx{e}_{n}}-\tx{l})(\snr{x}+\snr{x+\tx{l}\tx{e}_{n}}+\tx{l})}{\snr{x}\snr{x+\tx{l}\tx{e}_{n}}}\right)=0\ \Longleftrightarrow \ \snr{x}+\snr{x+\tx{l}\tx{e}_{n}}=\tx{l}.
\end{cases}
$$
Moreover,
\eqn{aa.6}
$$
\begin{cases}
\displaystyle
\ v(-t_{2}\tx{e}_{n})-v(-t_{1}\tx{e}_{n})=-\snr{t_{2}-t_{1}}\qquad 0\le t_{1}\le t_{2}\le \tx{l}\vspace{1.5mm}\\
\displaystyle
\ v(-t\tx{e}_{n})=-\tx{l}\ \ \mbox{if}\ \ t\ge \tx{l}\quad \mbox{and}\quad v(-t\tx{e}_{n})=0\ \ \mbox{if}\ \ t\le 0,
\end{cases}
$$
so $\overline{x_{1}x_{0}}$ is a compact light ray for $v$. Set
\eqn{primadiv}
$$
\mu_v(x):=
\begin{cases}
-\diver(\mathcal h_vDv)(x) 
& x\notin\overline{x_1x_0} \\[2pt]
0 
& x\in\overline{x_1x_0}\,.
\end{cases}
$$
For $x\notin\overline{x_1x_0}$, a lengthy but straightforward computation yields
\begin{flalign}
\displaystyle \mu_{v}(x)&=-\langle D\mathcal{h}_{v}(x),Dv\rangle-\mathcal{h}_{v}\Delta v\nonumber \\
\displaystyle &=-\frac{\mathcal{h}_{v}}{4}\left(\frac{1}{\snr{x+\tx{l}\tx{e}_{n}}}-\frac{1}{\snr{x}}\right)\left(1-\frac{\langle x+\tx{l}\tx{e}_{n},x\rangle}{\snr{x}\snr{x+\tx{l}\tx{e}_{n}}}\right)\nonumber \\
\displaystyle &\quad -\mathcal{h}_{v}\left(\frac{n-1}{2}\right)\left(\frac{1}{\snr{x+\tx{l}\tx{e}_{n}}}-\frac{1}{\snr{x}}\right)\nonumber \\
&=-\frac{\mathcal{h}_{v}}{2}\left(\frac{1}{\snr{x+\tx{l}\tx{e}_{n}}}-\frac{1}{\snr{x}}\right)\left(\frac{1}{2}\left(1-\frac{\langle x+\tx{l}\tx{e}_{n},x\rangle}{\snr{x}\snr{x+\tx{l}\tx{e}_{n}}}\right)+n-1\right),
\label{espressionemu}
\end{flalign}
where we also used that 
\eqn{dvdv.00}
$$
\begin{cases}
\displaystyle
\ \frac{\snr{x}^{2}+\snr{x+\tx{l}\tx{e}_{n}}^{2}-\tx{l}^{2}}{2\snr{x}\snr{x+\tx{l}\tx{e}_{n}}}=\frac{\langle x,x+\tx{l}\tx{e}_{n}\rangle}{\snr{x}\snr{x+\tx{l}\tx{e}_{n}}}\vspace{1.5mm}\\
\displaystyle
\ D^{2}v(x)=\frac{1}{2}\left(\frac{1}{\snr{x+\tx{l}\tx{e}_{n}}}-\frac{1}{\snr{x}}\right)\mathbb{I}_{n}-\frac{1}{2}\left(\frac{(x+\tx{l}\tx{e}_{n})\otimes (x+\tx{l}\tx{e}_{n})}{\snr{x+\tx{l}\tx{e}_{n}}^{3}}-\frac{x\otimes x}{\snr{x}^{3}}\right)
\end{cases}
$$
and that
$$
\langle D^{2}v\,Dv,Dv\rangle=
\frac{1}{2}\left(
\frac{1}{\snr{x+\tx{l}\tx{e}_{n}}}-\frac{1}{\snr{x}}\right)\snr{Dv}^{2}\left(1-\snr{Dv}^{2}\right).
$$
Fix $x\notin\overline{x_1x_0}$. The ``test''  point
$$
z:=-\frac{\tx{l}\snr{x}}
{\snr{x}+\snr{x+\tx{l}\tx{e}_{n}}}\tx{e}_{n}
$$
belongs to $\overline{x_1x_0}$, and a direct computation gives
$$
\dist(x,\overline{x_1x_0})^2
\le\snr{x-z}^2
=
\frac{
\snr{x}\snr{x+\tx{l}\tx{e}_{n}}
\left((\snr{x}+\snr{x+\tx{l}\tx{e}_{n}})^2-\tx{l}^2\right)}
{(\snr{x}+\snr{x+\tx{l}\tx{e}_{n}})^2}.
$$
By the triangle inequality,
$
\snr{x}+\snr{x+\tx{l}\tx{e}_{n}}
\ge
\snr{(x+\tx{l}\tx{e}_{n})-x}
=\tx{l},
$
with equality if and only if $x\in\overline{x_1x_0}$.
Thus
$
(\snr{x}+\snr{x+\tx{l}\tx{e}_{n}})^2-\tx{l}^2>0.
$
Therefore,
\begin{eqnarray}
\mathcal h_v(x)
&\stackrel{\eqref{aa.1}}{=}&
\frac{2\sqrt{\snr{x}\snr{x+\tx{l}\tx{e}_{n}}}}
{\sqrt{(\snr{x}+\snr{x+\tx{l}\tx{e}_{n}})^2-\tx{l}^2}} \nonumber \\
&\le&
\frac{2\snr{x}\snr{x+\tx{l}\tx{e}_{n}}}
{(\snr{x}+\snr{x+\tx{l}\tx{e}_{n}})
\dist(x,\overline{x_1x_0})}
\le
\frac{\snr{x}+\snr{x+\tx{l}\tx{e}_{n}}}
{2\dist(x,\overline{x_1x_0})}.\label{boost.segment.bound}
\end{eqnarray}
By \eqref{formulac2}, the expression \eqref{espressionemu} reads
$$
\mu_v(x)
=-\frac{\mathcal h_v}{2}
\left(
\frac1{\snr{x+\tx{l}\tx{e}_{n}}}-\frac1{\snr{x}}
\right)\left(n-1+\snr{Dv}^2\right).
$$
Since $\mathcal h_v>0$ and $n-1+\snr{Dv}^2\ge0$,
taking absolute values and estimating, we obtain
$$
\begin{aligned}
\snr{\mu_v(x)}
&=
\frac{\mathcal h_v}{2}
\left|\frac1{\snr{x+\tx{l}\tx{e}_{n}}}-\frac1{\snr{x}}\right|
\left(n-1+\snr{Dv}^2\right)\\
&\le
\frac{n\mathcal h_v}{2}
\frac{\snr{\snr{x}-\snr{x+\tx{l}\tx{e}_{n}}}}
{\snr{x}\snr{x+\tx{l}\tx{e}_{n}}}\\
&\le
\frac n2
\frac{2\snr{x}\snr{x+\tx{l}\tx{e}_{n}}}
{(\snr{x}+\snr{x+\tx{l}\tx{e}_{n}})
\dist(x,\overline{x_1x_0})}
\frac{\snr{\snr{x}-\snr{x+\tx{l}\tx{e}_{n}}}}
{\snr{x}\snr{x+\tx{l}\tx{e}_{n}}}\\
&=
\frac{n\snr{\snr{x}-\snr{x+\tx{l}\tx{e}_{n}}}}
{(\snr{x}+\snr{x+\tx{l}\tx{e}_{n}})
\dist(x,\overline{x_1x_0})}
\le
\frac{n}{\dist(x,\overline{x_1x_0})}.
\end{aligned}
$$
Here we used $\snr{Dv}\le1$, \eqref{boost.segment.bound}, and
$
\snr{\snr{x}-\snr{x+\tx{l}\tx{e}_{n}}}
\le\snr{x}+\snr{x+\tx{l}\tx{e}_{n}}.
$
Since $\dist(x,\overline{x_1x_0})\ge\snr{x}-\tx{l}\ge\tx{l}$ outside
$B_{2\tx{l}}(0)$, this proves
\eqn{aa.2}
$$
\mu_{v}\in L^{\infty}(\mathbb{R}^{n}\setminus B_{2\tx{l}}(0))\qquad \mbox{and}\qquad \snr{\mu_{v}(x)}\le\frac{n}{\dist(x,\overline{x_{1}x_{0}})} \ \ \mbox{in} \ \ B_{2\tx{l}}(0)\setminus\overline{x_1x_0}.
$$
Now notice that $$\left\{x\in B_{2\tx{l}}(0)\colon \snr{\mu_{v}(x)}>\lambda \right\}\stackrel{\eqref{aa.2}}{\subseteq} \left\{x\in B_{2\tx{l}}(0)\colon \frac{n}{\dist(x,\overline{x_{1}x_{0}})}>\lambda \right\}\qquad \mbox{for all} \ \ \lambda>0,$$
thus
\begin{flalign*}
 \sup_{\lambda>0}\lambda^{n-1}\left|\left\{x\in B_{2\tx{l}}(0)\colon \snr{\mu_{v}(x)}>\lambda\right\}\right |&\le  \sup_{\lambda>0}\lambda^{n-1}\left|\left\{x\in B_{2\tx{l}}(0)\colon \frac{n}{\dist(x,\overline{x_{1}x_{0}})}>\lambda\right\}\right|\nonumber \\
 &\le  \sup_{\lambda>0}\lambda^{n-1}\left|\left\{x\in B_{2\tx{l}}(0)\colon \frac{n}{\lambda}\ge\dist(x,\overline{x_{1}x_{0}})\right\}\right|\le c(n,\tx{l}),
\end{flalign*}
given that
$$
\left|\left\{x\in B_{2\tx{l}}(0)\colon \frac{n}{\lambda}\ge\dist(x,\overline{x_{1}x_{0}})\right\}\right|\le \frac{c(n,\tx{l})}{\lambda^{n-1}}
$$
and $$\mu_{v}\in L(n-1,\infty)(B_{2\tx{l}}(0))\cap L^{\infty}(\mathbb{R}^{n}\setminus B_{2\tx{l}}(0)), 
$$ 
and, therefore
$$
\mu_{v}\in L^{p}(B_{2\tx{l}}(0))\cap L^{\infty}(\mathbb{R}^{n}\setminus B_{2\tx{l}}(0)), \qquad \mbox{for all $1\le p<n-1$}.
$$ 

{\em Step 2: Cut-off}.
Let $\tx{L}>\max\{2\tx{l},1\}$ be a number to be fixed, and let
$\eta_{\tx{L}}\in C^{2}_{0}(\mathbb{R}^{n})$ satisfy
\eqn{may}
$$
\mathds{1}_{B_{\tx{L}}(0)}
\le \eta_{\tx{L}}
\le \mathds{1}_{B_{2\tx{L}}(0)},
\qquad
\snr{D\eta_{\tx{L}}}\lesssim_n \frac1{\tx{L}^{1}},
\qquad
\snr{D^{2}\eta_{\tx{L}}}\lesssim_n \frac{1}{\tx{L}^{2}}.
$$
Set $u:=\eta_{\tx{L}}v$. Since $v$ is globally Lipschitz and
$\eta_{\tx{L}}$ has compact support, we have
$u\in\mathcal{D}^{1,2}(\mathbb{R}^{n})$. Moreover, recall that $\nr{v}_{L^{\infty}(\mathbb{R}^{n})}\le\tx{l}$. For $x\in B_{2\tx{L}}(0)\setminus B_{\tx{L}}(0)$, and recalling \rif{formulac} and \rif{formulac2}, we find
$$
\snr{Dv(x)}
\le
\frac12\left|
\frac{x+\tx{l}\tx{e}_{n}}{\snr{x+\tx{l}\tx{e}_{n}}}
-\frac{x}{\snr{x+\tx{l}\tx{e}_{n}}}
\right|+\frac12\left|
\frac{x}{\snr{x+\tx{l}\tx{e}_{n}}}
-\frac{x}{\snr{x}}
\right|\le
\frac{\tx{l}}{\snr{x+\tx{l}\tx{e}_{n}}}
\le\frac{2\tx{l}}{\tx{L}},
$$
where we used
$\snr{x+\tx{l}\tx{e}_{n}}\ge\snr{x}-\tx{l}\ge\tx{L}/2$ and, by  \rif{may}
\eqn{a.42}
$$
\snr{Du(x)} \le
\eta_{\tx{L}}(x)\snr{Dv(x)}
+\snr{v(x)}\snr{D\eta_{\tx{L}}(x)}
\leq\frac{c_*(n)\tx{l}}{\tx{L}}
$$
for some constant $c_*(n)\geq 3$. 
We now fix $
\tx{L}:=\max\left\{2,\;2c_*(n)\tx{l}\right\}$, which is  
a choice ensuring that $\tx{L}>\max\{2\tx{l},1\}$, and that
the right-hand side of \eqref{a.42} is at most $1/2$.
Since $u=v$ on $B_{\tx{L}}(0)$ and $u=0$ outside
$B_{2\tx{L}}(0)$, we obtain
\eqn{a.43}
$$
\begin{cases}
\displaystyle
\nr{Du}_{L^{\infty}(B_{\tx{L}}(0))}
=
\nr{Dv}_{L^{\infty}(B_{\tx{L}}(0))}
\le1
\\[6pt]
\displaystyle
\nr{Du}_{L^{\infty}(\mathbb{R}^{n}\setminus B_{2\tx{L}}(0))}
=0
\\[6pt]
\displaystyle
\nr{Du}_{L^{\infty}(B_{2\tx{L}}(0)\setminus B_{\tx{L}}(0))}
\le1/2.
\end{cases}
$$
Thus $u\in\mathbb{X}(\mathbb{R}^{n})$.
Moreover, $u=v$ in a neighbourhood of
$\overline{x_{1}x_{0}}$, so \eqref{aa.6} shows that this
segment is a light segment for $u$ and cannot be extended
past either endpoint. We deduce that $\overline{x_{1}x_{0}}$ is a
compact light ray also for $u$. According to \rif{primadiv}, we define
$$
\mu_u(x):=
\begin{cases}
-\diver(\mathcal h_uDu)(x) 
& x\notin\overline{x_1x_0} \\[2pt]
0 
& x\in\overline{x_1x_0}.
\end{cases}
$$
Since $\eta_{\tx{L}}=1$ on
$B_{\tx{L}}(0)\supset B_{2\tx{l}}(0)$, we have
$\mu_u=\mu_v$ on
$B_{\tx{L}}(0)\setminus\overline{x_{1}x_{0}}$.
Also, $\mu_u=0$ outside $B_{2\tx{L}}(0)$.
It remains to estimate $\mu_u$ in
$B_{2\tx{L}}(0)\setminus B_{\tx{L}}(0)$.
On this annulus, \eqref{a.43} gives
$\mathcal h_u\le2/\sqrt3$, while \eqref{dvdv.00} and
$
\snr{x}\ge\tx{L}$, $\snr{x+\tx{l}\tx{e}_{n}}\ge\tx{L}/2$
give $\snr{D^2v(x)}\le c(n)/\tx{L}$. Since
$v\in C^\infty(\mathbb R^n\setminus\{x_0,x_1\})$
and $\eta_{\tx{L}}\in C^2_0(\mathbb R^n)$, we have
$u\in C^2(\mathbb R^n\setminus\{x_0,x_1\})$.
In particular, on
$B_{2\tx{L}}(0)\setminus B_{\tx{L}}(0)$,
the bound $\snr{Du}\le1/2$ ensures that
$\mathcal h_uDu$ is continuously differentiable.
Thus all derivatives, and the following computation, hold pointwise in the classical sense:
$$
\begin{aligned}
\snr{\mu_u}
&\le
\mathcal h_u^3\snr{Du}^2\snr{D^2u}
+\mathcal h_u\snr{\Delta u} \le c(n)\snr{D^2u}\\
&\le c(n)\left(
\snr{D^2v}
+2\snr{D\eta_{\tx{L}}}\snr{Dv}
+\snr{v}\snr{D^2\eta_{\tx{L}}}
\right)\lesssim_n
\frac1{\tx{L}}+\frac{\tx{l}}{\tx{L}^2}
\lesssim_n\frac{1}{\tx{L}}.
\end{aligned}
$$
Together with \eqref{aa.2}, this proves that
$$
\mu_u\in L(n-1,\infty)(\mathbb R^n),
\qquad \supp\, \mu_u\subset\overline{B_{2\tx{L}}(0)}.
$$
Since $n\ge3$ and $2n/(n+2)<n-1$, 
and being $\mu_u$ compactly supported, we have 
$
\mu_u\in L^{2n/(n+2)}(\mathbb R^n)
\subset\mathcal D^{1,2}(\mathbb R^n)^*.
$
We next verify that
$-\diver(\mathcal h_uDu)=\mu_u$
weakly holds in $\mathbb R^n$.
Note that the inequality in  \eqref{boost.segment.bound}, which holds for $x\notin\overline{x_1x_0}$,  involves a right-hand side which is locally integrable when $n\geq 3$. Since $u\equiv v$ on $B_{\tx{L}}(0)$ and
$\snr{Du}\le1/2$ outside $B_{\tx{L}}(0)$, we conclude that
$$
\mathcal h_uDu\in L^1(\mathbb R^n;\mathbb R^n),\qquad \supp\, \mathcal h_uDu \subset\overline{B_{2\tx{L}}(0)}.
$$
We consider the tube 
$
T_\varepsilon
:=
\left\{
x\in\mathbb R^n:
\dist(x,\overline{x_1x_0})<\varepsilon
\right\},
$ for $\varepsilon>0$ small enough to have 
$\overline{T_\varepsilon}\subset B_{\tx{L}}(0),$
and denote by $\nu_\varepsilon$ its outward unit normal.
The boundary $\partial T_\varepsilon$ consists of a
cylindrical lateral surface and two hemispherical caps
centred at $x_0$ and $x_1$.
We call the cylindrical part of $\partial T_\varepsilon$ the following set 
$
\left\{
x=(\bar x,x_n)\in\mathbb R^n:
\snr{\bar x}=\varepsilon,\quad
-\tx{l}<x_n<0
\right\}
$
and the outward unit normal is
$
\nu_\varepsilon(x)
=
\left( \bar x/\varepsilon,0\right).
$
Since $u=v$ in a neighbourhood of $\overline{T_\varepsilon}$,
the expression of $Dv$ in \rif{formulac} gives
$$
\mathcal h_u\langle Du, \nu_\varepsilon\rangle=
\frac{\varepsilon\mathcal h_v}{2}\left(
\frac{1}{\snr{x+\tx{l}\tx{e}_n}}-\frac{1}{\snr{x}}\right).
$$
On the cylindrical part we have
$\dist(x,\overline{x_1x_0})=\snr{\bar x}=\varepsilon$.
Using the preceding identity and \eqref{boost.segment.bound}, we obtain
\eqn{cyly}
$$
\begin{aligned}
\snr{\mathcal h_u\langle Du,\nu_\varepsilon\rangle}
&=
\frac{\varepsilon\mathcal h_v
\snr{\snr{x}-\snr{x+\tx{l}\tx{e}_{n}}}}
{2\snr{x}\snr{x+\tx{l}\tx{e}_{n}}}\\
&\le
\frac{\varepsilon
\snr{\snr{x}-\snr{x+\tx{l}\tx{e}_{n}}}}
{2\snr{x}\snr{x+\tx{l}\tx{e}_{n}}}
\frac{2\snr{x}\snr{x+\tx{l}\tx{e}_{n}}}
{(\snr{x}+\snr{x+\tx{l}\tx{e}_{n}})\varepsilon} =
\frac{\snr{\snr{x}-\snr{x+\tx{l}\tx{e}_{n}}}}
{\snr{x}+\snr{x+\tx{l}\tx{e}_{n}}}
\le1.
\end{aligned}
$$
On the two caps
$$
\left\{
x\in\mathbb R^n:
\snr{x}=\varepsilon,\quad x_n\ge0
\right\}
\quad\mbox{and}\quad
\left\{
x\in\mathbb R^n:
\snr{x+\tx{l}\tx{e}_n}=\varepsilon,\quad
x_n\le-\tx{l}
\right\},$$
we have $
\langle x+\tx{l}\tx{e}_n,x\rangle
=
\snr{\bar x}^2+x_n(x_n+\tx{l})
\ge0.
$
By \eqref{formulac2}, this gives $\snr{Dv}^2\le1/2$ and hence
$\snr{\mathcal h_uDu}=\snr{\mathcal h_vDv}\le1$.
Consequently, also recalling \rif{cyly}, for every $\varphi\in C^\infty_0(\mathbb R^n)$,
$$
\left|
\int_{\partial T_\varepsilon}
\varphi\,\mathcal h_u\langle Du,\nu_\varepsilon\rangle 
\,d\mathcal H^{n-1}\right|\le
c(n)\nr{\varphi}_{L^\infty}
\left(\tx{l}\varepsilon^{n-2}+\varepsilon^{n-1}\right)
\stackrel{\eps\to 0}{\longrightarrow}0.
$$
Note that here we are assuming $n\geq 3$. Integrating by parts outside $T_\varepsilon$ and letting
$\varepsilon\downarrow0$, we obtain
$$
\int_{\mathbb R^n}
\mathcal h_u\langle Du, D\varphi\rangle\dx
=
\int_{\mathbb R^n}\mu_u\varphi\dx\qquad \mbox{for every }\varphi\in C^\infty_0(\mathbb R^n).
$$
Thus $u$ is a global weak solution with datum $\mu_u$.
Since $\mathcal h_uDu$ and $\mu_u$ are integrable and
compactly supported, a standard cutoff and mollification
argument allows us to test the weak formulation with
$w-u$ for every $w\in\mathbb X(\mathbb R^n)$.
Convexity of the integrand then shows that $u$
is a global minimizer of $\mathcal E_{\mu_u}$,
and, as usual, strict convexity yields uniqueness. Since the graph of $u$ admits light segments, by Theorem \ref{light.t} we can exclude that 
$\mu_u\in L(n-1,s)(\mathbb R^n)$ for any $0<s<\infty$.
The proof is complete.


\section{A fat $\mathcal K_u$ and Corollary \ref{brutto} }\label{esempione}

\subsection{A one point construction}\label{onepoint} Starting from the example in Theorem \ref{ex.t}, we show that
$\mathcal S_u$ is not necessarily closed and hence may be
strictly contained in $\mathcal K_u$.
This phenomenon can only occur outside the regime covered by
Theorem \ref{al.t}.
Recall that
$\mathcal K_u=\mathcal S_u\dot\cup\mathcal L_u$
by Proposition \ref{lem:structure-Kmu}. Let $n\ge3$, and let
$u\in\mathcal D^{1,2}(\mathbb R^n)$ and
$\mu_u\in L(n-1,\infty)$ be the function and charge constructed in the proof
of Theorem \ref{ex.t}. Recall that
$\overline{x_1x_0}$ is the unique compact light ray of $u$,
see  \eqref{aa.1}, \eqref{aa.6} and \eqref{a.43}, and that $u$, $\mu_u$, and
$\partial H(Du)$ are compactly supported. Fix $\tx{r}>0$ so large that $\supp\, u, \ \supp\, \mu_{u},\ \supp\, \partial H(Du)\Subset B_{\tx{r}}(0)$, and, for $i\in \mathbb{N}\setminus \{0\}$, set $y_{i}:=2^{-i}\tx{e}_{1}$, $\delta_{i}:=2^{-3i}/(10\tx{r})$, $\rr_{i}:=\tx{r}\delta_{i}$, and observe that
\eqn{ww.60}
$$
B_{\rr_{i}}(y_{i})\cap B_{\rr_{j}}(y_{j})=\varnothing \ \Longleftrightarrow \ i\not =j.
$$
A direct computation indeed yields that $\snr{y_{i}-y_{j}}>2\max\{\rr_{i},\rr_{j}\}$ whenever $i\not =j$ - specifically, $\rr_{i}<\snr{y_{i}}$, so $0\not \in B_{\rr_{i}}(y_{i})$ for all $i\in \mathbb{N}\setminus \{0\}$. We further define the scaled functions
\eqn{riscalate}
$$
\mathbb{R}^{n}\ni x\mapsto u_{i}(x):=\delta_{i}u\left(\frac{x-y_{i}}{\delta_{i}}\right).
$$
By construction we have that $\supp\, u_{i}\Subset B_{\rr_{i}}(y_{i})$ and \eqref{ww.60} assures that $\supp\, u_{i} \cap \supp\, u_{j} =\varnothing$ for all $i\not =j$, so letting
$
\mathbb{R}^{n}\ni x\mapsto \mathcal{u}_{k}(x):=\sum_{i=1}^{k}u_{i}(x),
$
we deduce that $\nr{D\mathcal{u}_{k}}_{L^{\infty}(\mathbb{R}^{n})}\le 1$ as any $x\in \mathbb{R}^{n}$ belongs to at most one of the balls $B_{\rr_{i}}(y_{i})$. Moreover, $\nr{u_{i}}_{L^{\infty}(\mathbb{R}^{n})}=\delta_{i}\nr{u}_{L^{\infty}(B_{\tx{r}}(0))}$ and $\sum_{i=1}^{\infty}\delta_{i}<\infty$, thus 
\eqn{mima}
$$\mathcal{u}_{k}\to \mathcal{u}:=\sum_{i=1}^{\infty}u_{i}$$ uniformly in $\mathbb{R}^{n}$ and $\nr{D\mathcal{u}}_{L^{\infty}(\mathbb{R}^{n})}\le 1$. In particular,
$$
\supp\,\mathcal u=\{0\}\cup\bigcup_{i=1}^{\infty}\supp\,u_i=\{0\}\cup\bigcup_{i=1}^{\infty}\left(y_i+\delta_i\supp\,u\right)
$$
and, as $\nr{Du_{i}}_{L^{2}(\mathbb{R}^{n})}=\delta_{i}^{n/2}\nr{Du}_{L^{2}(\mathbb{R}^{n})}$ we obtain
$$
\nr{D\mathcal{u}}_{L^{2}(\mathbb{R}^{n})}=\left(\sum_{i=1}^{\infty}\nr{Du_{i}}_{L^{2}(\mathbb{R}^{n})}^{2}\right)^{1/2}=\left(\sum_{i=1}^{\infty}\delta_{i}^{n}\right)^{1/2}\nr{Du}_{L^{2}(\mathbb{R}^{n})}<\infty \ \Longrightarrow \ \mathcal{u}\in \mathbb{X}(\mathbb{R}^{n}).
$$
Recall that the original function $u$ has a unique compact light ray
$
\ell=\overline{x_1x_0}.
$
For every $i\ge1$, set
$
y_{0;i}:=y_i+\delta_i x_0=y_i$ and $
y_{1;i}:=y_i+\delta_i x_1
=y_i-\tx l\delta_i\tx e_n$. 
By scaling, the unique compact light ray of $u_i$ is
$
\ell_i:=y_i+\delta_i\ell
=\overline{y_{1;i}y_{0;i}}.
$
By contradiction, assume that $j>i$, and that $\overline{z_{j}z_{i}}$ is a segment with $z_{j}\in \supp\, u_{j}$, $z_{i}\in \supp\, u_{i}$, and, keeping \eqref{ww.60} in mind, $\snr{\mathcal{u}(z_{j})-\mathcal{u}(z_{i})}=\snr{u_{j}(z_{j})-u_{i}(z_{i})}=\snr{z_{j}-z_{i}}$ holds. Name $z_{j;\partial}\in \supp\, u_{j}$ as the last point in $\supp\, u_{j}$ encountered by $\overline{z_{j}z_{i}}$ when moving from $z_{j}$ to $z_{i}$, and $z_{i;\partial}\in\supp\, u_{i}$ as the first point in $\supp\, u_{i}$ subsequently encountered, so that $\snr{z_{j;\partial}-z_{i;\partial}}\ge \dist(\supp\, u_{j},\supp\, u_{i})>0$ and $z_{j},\ z_{j;\partial}, \ z_{i;\partial}, \ z_{i}$ are collinear. Since $\left.u_{k}\right|_{\partial \supp\, u_{k}}=0$, $k\in \{i,j\}$, we have that $\snr{u_{k}(x)}\le \dist(x,\partial\supp\, u_{k})$, $k\in \{i,j\}$, therefore
\begin{flalign*}
\snr{z_{j}-z_{i}}&= \snr{\mathcal{u}(z_{j})-\mathcal{u}(z_{i})}\stackrel{\eqref{ww.60}}{=}\snr{u_{j}(z_{j})-u_{i}(z_{i})}\nonumber \\
&\le  \snr{u_{j}(z_{j})}+\snr{u_{i}(z_{i})}\le \dist(z_{j},\partial\supp\, u_{j})+\dist(z_{i},\partial \supp\, u_{i})\nonumber \\
&\le \snr{z_{j}-z_{j;\partial}}+\snr{z_{i}-z_{i;\partial}} \le \snr{z_{i}-z_{j}}-\dist(\supp\, u_{i},\supp\, u_{j})\stackrel{\eqref{ww.60}}{<}\snr{z_{i}-z_{j}},
\end{flalign*}
a contradiction. Therefore no light segment of $\mathcal u$ can meet
two distinct supports $\supp\,u_i$ and $\supp\,u_j$.
Every light segment must meet at least one such support,
since $\mathcal u$ vanishes outside their union.
Hence $\mathcal u=u_i$ along that segment for some $i$,
and the segment is therefore contained in $\ell_i$.
Conversely, $\mathcal u=u_i$ on $\ell_i$, so the light
rays of $\mathcal u$ are precisely the segments $\ell_i$.
Consequently,
$$
\mathcal S_{\mathcal u}=
\bigcup_{i=1}^{\infty}\ell_i=
\bigcup_{i=1}^{\infty}\overline{y_{1;i}y_{0;i}}.
$$
In particular, $\snr{\mathcal S_{\mathcal u}}=0$. Moreover, by disjointness of the supports and the
corresponding property of $u$, we have
$|D\mathcal u|<1$ almost everywhere in $\mathbb R^n$. In this way the functions
$$
\mu_{i}(x):=-\diver\, \partial H(Du_{i}(x))=\delta_{i}^{-1}\mu_{u}\left(\frac{x-y_{i}}{\delta_{i}}\right),\qquad \quad \mu_{\mathcal{u}}(x):=\sum_{i=1}^{\infty}\mu_{i}(x)
$$
are defined almost everywhere. Let us show that $-\diver\, \partial H(D\mathcal{u})=\mu_{\mathcal{u}}$. In this respect, observe that
\eqn{ww.62}
$$
\nr{\partial H(Du_{i})}_{L^{1}(\mathbb{R}^{n})}=\delta_{i}^{n}\nr{\partial H(Du)}_{L^{1}(\mathbb{R}^{n})},\qquad  \nr{\mu_{i}}_{L^{1}(\mathbb{R}^{n})}=\delta_{i}^{n-1}\nr{\mu_{u}}_{L^{1}(\mathbb{R}^{n})},
$$
and, for any fixed $k\in \mathbb{N}\setminus \{0\}$, it holds that
\eqn{egoego}
$$
-\sum_{i=1}^{k}\diver\, \partial H(Du_{i})=\sum_{i=1}^{k}\mu_{i}.
$$
Since summing in \eqref{ww.62} leads to finite quantities, we deduce that 
$$
\sum_{i=1}^{k}\mu_{i}\stackrel{k\to \infty}{\longrightarrow} \mu_{\mathcal{u}}, \quad \mbox{and} \quad \sum_{i=1}^{k}\partial H(Du_{i})\stackrel{k\to \infty}{\longrightarrow} \partial H(D\mathcal{u}), \qquad \mbox{in $L^{1}(\mathbb{R}^{n})$}.
$$ 
This allows to pass to the limit in the weak formulations of \rif{egoego} thereby obtaining
$$
-\diver\,\partial H(D\mathcal u)=\mu_{\mathcal u}
\qquad\mbox{in }\mathbb R^n.
$$
Since $\partial H(D\mathcal u)$ and $\mu_{\mathcal u}$
are integrable and compactly supported, the same
convexity argument used
in the proof of Theorem \ref{ex.t} shows that
$\mathcal u$ is a global minimizer of \eqref{bi.en}
with charge $\mu_{\mathcal u}$. 
Uniqueness follows from the strict convexity of $H$,
see \eqref{0.1.1}.
Notice also that 
\begin{eqnarray*}
\lambda^{n-1}\snr{\{x\in \mathbb{R}^{n}\colon \snr{\mu_{\mathcal{u}}(x)}>\lambda \}}&\stackrel{\eqref{ww.60}}{=}&\lambda^{n-1}\sum_{i=1}^{\infty}\snr{\{x\in \mathbb{R}^{n}\colon \snr{\mu_{i}(x)}>\lambda\}}\nonumber \\
&=&\sum_{i=1}^{\infty}(\delta_{i}\lambda )^{n-1}\snr{\{x\in \mathbb{R}^{n}\colon \snr{\mu_{u}(x)}>\delta_{i}\lambda\}}\delta_{i}\nonumber \\
&\le&
[\mu_u]_{n-1,\infty;\mathbb R^n}^{n-1}
\sum_{i=1}^{\infty}\delta_i
=
\frac{1}{70\tx{r}}
[\mu_u]_{n-1,\infty;\mathbb R^n}^{n-1}.
\end{eqnarray*}
We conclude that  $\mu_{\mathcal{u}}\in L(n-1,\infty)$, but $\mu_{\mathcal{u}}\not \in L_{\loc}(n-1,s)$ for any $0<s<\infty$, as via \eqref{ww.60} and scaling, it would be $\mu_{u}\in L_{\loc}(n-1,s)$ in contradiction with Theorem \ref{ex.t}. We have therefore reproduced the same situation as in Theorem
\ref{ex.t}, but now
$$
\mathcal S_{\mathcal u}=
\bigcup_{i=1}^{\infty}\ell_i=
\bigcup_{i=1}^{\infty}\overline{y_{1;i}y_{0;i}}.
$$
Moreover, for every $z\in\ell_i$,
$
|z|\le|y_{1;i}|+|y_{0;i}|\le2|y_i|+\tx l\delta_i\to 0$ as 
$i\to\infty$.
Hence
$
\overline{\mathcal S_{\mathcal u}} = \mathcal S_{\mathcal u}\cup\{0\}$ and $\mathcal K_{\mathcal u} =\mathcal S_{\mathcal u}\cup\{0\}.
$
On the other hand,
$
\dist(0,\ell_i)
\le |y_{0;i}| = |y_i| =1/2^{i}
\to 0$ 
and 
$
\diam\, (\ell_i)=\tx l\,\delta_i \to 0.
$
Therefore, by the definition in \rif{language3},
$
0\in\mathcal L_{\mathcal u}.
$
Since
$
\mathcal K_{\mathcal u}\setminus\mathcal S_{\mathcal u}=\{0\},
$
Proposition \ref{lem:structure-Kmu} yields
$
\mathcal L_{\mathcal u}=\{0\}.
$
Let us finally point out that, as all maps from Theorem \ref{ex.t} have compact support, all the procedure is essentially local, and the preceding construction can be translated and scaled in such a way it remains valid on bounded domains.

\begin{figure}[ht]
\centering
\begin{tikzpicture}[
x=1.5cm,
y=1.5cm,
font=\small,
light ray/.style={
draw=red!75!black,
line width=1.2pt,
line cap=round
},
support ball/.style={
draw=blue!60!black,
fill=blue!5,
line width=.5pt
}
]

\draw[->,gray!65]
(-.25,0)--(7.8,0)
node[right] {$\tx e_1$};

\foreach \xx/\rr in {
6.4/.85,
3.2/.42,
1.6/.20,
.8/.092,
.4/.041,
.2/.017,
.1/.0065
}{
\draw[support ball]
(\xx,0) circle[radius=\rr];

\draw[light ray]
(\xx,0)--(\xx,{-0.75*\rr});

\pgfmathsetmacro{\pointsize}{min(.018,.18*\rr)}
\fill
(\xx,0) circle[radius=\pointsize];
}

\fill (0,0) circle[radius=.025];
\node[below left] at (0,0) {$0$};

\node[above right] at (6.4,0) {$y_1$};
\node[above right] at (3.2,0) {$y_2$};
\node[above] at (1.6,.20) {$y_3$};
\node[above] at (.8,.092) {$y_4$};

\node[right,text=red!75!black]
at (6.44,-.36) {$\ell_1$};

\node[right,text=red!75!black]
at (3.24,-.21) {$\ell_2$};

\node[below,text=red!75!black]
at (1.6,-.21) {$\ell_3$};

\node[above,text=blue!60!black]
at (6.4,.90) {$B_{\tx r\delta_1}(y_1)$};

\node[above,text=blue!60!black]
at (3.2,.47) {$B_{\tx r\delta_2}(y_2)$};

\draw[->,gray!75]
(2,-.85)--(.35,-.85)
node[midway,below,text=black]
{$y_i\to0,\qquad \diam\,(\ell_i)\to0$};

\end{tikzpicture}
\caption{
The light rays $\ell_i=y_i+\delta_i\ell$ lie inside the
pairwise disjoint balls $B_{\tx r\delta_i}(y_i)$ and accumulate
only at the origin. Thus
$\mathcal S_{\mathcal u}=\bigcup_i\ell_i$,
$\mathcal K_{\mathcal u}=\mathcal S_{\mathcal u}\,\dot\cup\,\{0\}$, and
$\mathcal L_{\mathcal u}=\{0\}$.}
\label{fig:onepoint}
\end{figure}

\medskip
\subsection{Extension to compact sets and proof of Corollary \ref{brutto}}
The preceding construction extends to any non-empty compact set
$C\Subset B_1(0)$ with empty interior. The procedure is a rather standard one in Real Analysis (see Remark \ref{standre}). 
Indeed, take a Whitney decomposition of
$\mathbb R^n\setminus C$ and retain the cubes $Q_i$
whose centres $y_i$ lie in $B_1(0)$; see
\cite[Chapter~I, Section~3, Theorem~3]{ste70}.
The Whitney construction made there implies that the set of accumulation
points of these centres is exactly $C$.
Choose $0<\delta_i\le2^{-i}$ sufficiently small that
$
\overline{B_{\tx r\delta_i}(y_i)}
\Subset
\operatorname{int}Q_i\cap B_2(0).
$
These balls are pairwise disjoint since the Whitney cubes $Q_i$ have pairwise disjoint interiors.
Mimicking the construction in \rif{riscalate} and \rif{mima}, this time set
$$
\mathcal u_C(x)
:=
\sum_{i=1}^{\infty}
\delta_i u\left(\frac{x-y_i}{\delta_i}\right).
$$
By disjointness of the supports and $\sum_i\delta_i^n<\infty$,
the partial sums converge strongly in
$W^{1,2}(\mathbb R^n)$. In particular,
$D\mathcal u_C=0$ almost everywhere on $C$.
The $L^1$-limit of the corresponding fluxes is therefore
$\partial H(D\mathcal u_C)$. 
All the  estimates and arguments in Section \ref{onepoint} apply unchanged,
since they depend only on the disjointness of supports of the rescaled functions appearing in the sum in the above display; moreover $\delta_i\le2^{-i}$ guarantees all the
required summability properties. 
Thus $\mathcal u_C$ is a compactly supported global
minimizer and weak solution, with compactly supported
charge
$
\mu_{\mathcal u_C}\in L(n-1,\infty)(\mathbb R^n).
$
Its light rays are precisely $\ell_i=y_i+\delta_i\ell$.
As in Section \ref{onepoint}, since $y_i\in\ell_i$ and
$\diam\,(\ell_i)\to0$, sequences of points on distinct rays
have precisely the same limits as the corresponding centres,
whose accumulation set is $C$.
Since each $\ell_i$ is closed and disjoint from $C$, we obtain
$$
\mathcal S_{\mathcal u_C}=\bigcup_{i=1}^{\infty}\ell_i,
\qquad
\mathcal K_{\mathcal u_C}
=
\mathcal S_{\mathcal u_C}\,\dot\cup\,C,
\qquad
\mathcal L_{\mathcal u_C}=C.
$$
Choosing $C$ with $|C|>0$ gives
$
|\mathcal K_{\mathcal u_C}|>0
$
and
$
\dim_{\mathcal H}\mathcal K_{\mathcal u_C}=n.
$
Moreover, compact support of the charge implies
$
\mu_{\mathcal u_C}\in L^q(\mathbb R^n)
$
for every $1\le q<n-1$.
Restricting to $B_3(0)$ yields a Dirichlet minimizer with
zero boundary datum.
Consequently, Conjecture 4 in \cite{bimm24} is false
for $n\ge4$ and $2\le q<n-1$, and Question 5 has a
negative answer for every $n\ge3$.

\begin{remark}\label{standre}{\em
The one above is a typical example of replication of singularities. Once identified the basic, one point phenomenon, one can replicate it using standard tools from Real Analysis, distributing the singularities on suitable bad sets and their accumulation points. See for instance \cite[Section~6]{jms89},
\cite[Section~3]{fmm04}, and \cite{gwz17} for various types of constructions, also involving fractals. In this situation the extension is rather straightforward as the initial construction is already based on the replication and the scaling of the same profile on balls with disjoint supports.  
}
\end{remark}

\begin{figure}[ht]
\centering
\begin{tikzpicture}[
x=0.9cm,y=0.9cm,
font=\small,
whitney/.style={draw=black!45,thin},
ball/.style={draw=blue!65!black,fill=blue!5,thin},
ray/.style={draw=red!75!black,line width=0.8pt}
]

\def\cantorintervals{
0/0.28125,
0.34375/0.625,
0.875/1.15625,
1.21875/1.5,
2.5/2.78125,
2.84375/3.125,
3.375/3.65625,
3.71875/4
}

\foreach \a/\b in \cantorintervals {
  \foreach \c/\d in \cantorintervals {
    \fill[black!22] (\a,\c) rectangle (\b,\d);
  }
}

\node at (2,4.65) {$C$};
\draw[->,thin] (2.15,4.45) -- (2.7,3.83);

\foreach \xx/\yy/\hh in {
0.875/5.25/0.25,
0.875/4.625/0.125,
0.875/4.3125/0.0625,
5.25/3.125/0.25,
4.625/3.125/0.125,
4.3125/3.125/0.0625,
-1.25/0.875/0.25,
-0.625/0.875/0.125,
-0.3125/0.875/0.0625,
2.5/-1.25/0.25,
2.5/-0.625/0.125,
2.5/-0.3125/0.0625,
2/0.875/0.125,
1.75/0.875/0.0625,
1.625/0.875/0.03125,
2/2/0.125
}{
  \begin{scope}[shift={(\xx,\yy)}]
    \draw[whitney]
      (-\hh,-\hh) rectangle (\hh,\hh);
    \draw[ball]
      (0,0) circle[radius={0.68*\hh}];
    \draw[ray]
      (0,0) -- (0,{-0.52*\hh});
    \fill (0,0) circle[radius=0.55pt];
  \end{scope}
}

\draw[black!35,dashed,thin]
  (5.5,3.375) -- (7.2,3.7);
\draw[black!35,dashed,thin]
  (5.5,2.875) -- (7.2,1.1);

\begin{scope}[shift={(8.5,2.4)}]
  \draw[whitney]
    (-1.3,-1.3) rectangle (1.3,1.3);
  \node[above left] at (1.3,1.3) {$Q_i$};

  \draw[ball,line width=0.8pt]
    (0,0) circle[radius=0.884];

  \draw[ray,line width=1.2pt]
    (0,0) -- (0,-0.676)
    node[midway,right=4pt] {$\ell_i$};

  \fill (0,0) circle[radius=1.2pt];
  \node[above right] at (0,0) {$y_i$};

  \node[text=blue!65!black] at (0,-1.7)
    {$B_{\tx r\delta_i}(y_i)$};
\end{scope}

\end{tikzpicture}
\caption{Schematic representation of the construction. The grey set refers to 
a finite approximation of $C$; only selected Whitney cubes
of $\mathbb R^n\setminus C$ are shown.
Each light ray $\ell_i$, with endpoint $y_i$, lies inside
$B_{\tx r\delta_i}(y_i)$, whose closure is contained in
$\operatorname{int}Q_i$.}
\end{figure}

\section{Appendix: revisiting and extending basic results}\label{appe}
Here we revisit and complement several results from the existing literature that have been used in the preceding sections. The main novelties concern the two-dimensional case, which is often missing. We also slightly extend some higher-dimensional results and formulate them in a form suited to the present treatment. For the sake of completeness, we provide a unified discussion for all $n\geq 2$. We begin by addressing the existence of minimizers of \eqref{bi.en} for locally bounded charges. The case $n\geq 3$ was treated in \cite{bdp16}, from which we borrow most of the argument of the following Proposition \ref{exex}, extending it here to dimension two.

\begin{proposition}[Existence and Euler--Lagrange equation]\label{exex}
Let $\mu\in \DD^*\cap L^\infty_{\loc}(\mathbb R^n)$ and, if $n=2$, assume in addition that 
\eqn{addition}
$$
\int_{\mathbb R^2}\mu\dx=0.
$$
There exists a unique minimizer $u\in\mathbb X(\mathbb R^n)$ of the functional
$\mathcal E_\mu$ such that
\eqn{l1l1}
$$
\int_{\mathbb R^n}
\frac{|Du|^2}{\sqrt{1-|Du|^2}}\dx
\le
\langle\mu,u\rangle.
$$
Moreover,
$
u\in C^1(\mathbb R^n)\cap W^{2,2}_{\loc}(\mathbb R^n)
$
and $u$ is a strictly spacelike strong solution to
\eqref{bi}. In particular,
\eqn{mm.9}
$$
\int_{\mathbb R^n}
\langle\partial H(Du),Dw\rangle\dx
=
\int_{\mathbb R^n}\mu w\dx
$$
for every compactly supported
$w\in W^{1,\infty}(\mathbb R^n)$.
If $\mu\neq0$ in $\mathcal D^{1,2}(\mathbb R^n)^*$,
then $u\not\equiv0$.
If, in addition, $\mu\in C^k(\mathbb R^n)$
for an integer $k\ge1$, then
$u\in C^{k+1}(\mathbb R^n)$ and \eqref{bi}
holds classically.
\end{proposition}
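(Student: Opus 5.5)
Existence and uniqueness will follow from the Direct Method in the closed convex set $\mathbb X(\mathbb R^n)\subset\DD$. By \eqref{elema} and $\mu\in\DD^*$, $\mathcal E_\mu(w)\ge \frac12\|Dw\|_{L^2}^2-\|\mu\|_{\DD^*}\|Dw\|_{L^2}$, so minimizing sequences are bounded in $\DD$. Weak closedness of $\mathbb X(\mathbb R^n)$, weak lower semicontinuity of $w\mapsto\int H(Dw)$ (convexity) and weak continuity of $\langle\mu,\cdot\rangle$ give a minimizer, and strict convexity of $H$ on the unit ball gives uniqueness. If $\mu\ne0$ in $\DD^*$, we pick $\varphi$ with $\langle\mu,\varphi\rangle>0$ and test $t\varphi$ for small $t$. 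Since $H(t D\varphi)=O(t^2)$, we get $\mathcal E_\mu(t\varphi)<0=\mathcal E_\mu(0)$, hence $u\not\equiv0$. In dimension two the pairing must be understood modulo constants, and this is exactly where \eqref{addition} enters: it makes $\int\mu w$ invariant under $w\mapsto w+c$, in the spirit of Remark \ref{concreto}.

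Inequality \eqref{l1l1} will come from the one-sided variation $u_t:=(1-t)u$, $t\in(0,1)$, which stays in $\mathbb X(\mathbb R^n)$. Minimality gives $\int[H(Du)-H((1-t)Du)]/t\le\langle\mu,u\rangle$. Since $\partial H(sDu)\cdot Du$ is monotone in $s$, Fatou's lemma (or monotone convergence) as $t\downarrow0$ yields \eqref{l1l1}. In particular $\mathcal h_u|Du|^2\in L^1$, so $|Du|<1$ a.e., but not yet uniformly.

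The core step is strict spacelikeness, and this is where $\mu\in L^\infty_{\loc}$ is used. Minimizers of $\mathcal E_\mu$ are local minimizers in the sense of Definition \ref{minimilocali}, and this is the only property needed here. On any ball $B$, $u$ is the Dirichlet minimizer with bounded curvature and its own boundary values, so Bartnik--Simon applies. We first exclude light segments. For $n\ge3$ this is Theorem \ref{al.t} combined with the decay $u(x)\to0$ from Lemma \ref{lemmino}: a light segment would extend to a full light line, contradicting boundedness. For $n=2$ we use Theorem \ref{al.t} with $\mu\in L^1_{\loc}$ and the sublinear growth \eqref{weightsmall}. In both cases the argument is the one later used in Section \ref{s7.7}.

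Without light segments, $u|_{\partial B}$ is spacelike, and \cite[Theorem 3.6/4.1]{bs82} (bounded mean curvature, spacelike data) gives $u\in C^{1}(B)$ with $|Du|<1$, $u\in W^{2,2}_{\loc}$, and the weak equation on $B$. Covering $\mathbb R^n$ by balls yields a strictly spacelike strong solution. \eqref{mm.9} then holds for compactly supported Lipschitz $w$ by density, since $\partial H(Du)$ is locally bounded.

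Higher regularity is standard. Once $|Du|\le1-\delta$ locally, the equation is uniformly elliptic with $C^{0,\alpha}$ coefficients after the $W^{2,2}$ and De Giorgi step. Schauder bootstrapping then gives $u\in C^{k+1}$ when $\mu\in C^k$, since $\mu\in C^k\subset C^{k-1,\alpha}_{\loc}$ gives $u\in C^{k+1,\alpha}_{\loc}$, and the equation holds classically.

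The main obstacle I expect is the two-dimensional adaptation: making the duality pairing and the Bartnik--Simon localization consistent with the normalized space $\DDDD$. In particular one must check that local competitors with compact support are admissible globally after subtracting their $B_1$-average. This is handled as in Remark \ref{globale.locale} using \eqref{addition}.
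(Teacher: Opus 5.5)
Your proposal is correct and follows essentially the same route as the paper: the Direct Method, the radial variation $tu$ (your $(1-t)u$) with monotone convergence for \eqref{l1l1}, and exclusion of light segments via Theorem \ref{al.t} combined with the decay, respectively sublinear growth, at infinity from Lemma \ref{lemmino} and \eqref{weightsmall}. The paper then also applies Bartnik--Simon on balls, uses density for \eqref{mm.9}, and concludes with an elliptic bootstrap. The differences are minor: the paper applies \cite[Corollary 4.2]{bs82} first, away from the union of light segments joining points of $\partial B_R$, and only then shows that this union is empty, and it derives $u\not\equiv0$ from \eqref{mm.9}, whereas your energy comparison $\mathcal E_\mu(t\varphi)<0$ is equally valid and avoids the Euler--Lagrange equation altogether.
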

\begin{proof} 
Using \rif{elema}, for every \(w\in\mathbb X(\mathbb R^n)\),
\begin{align}
\mathcal E_\mu(w)
&=
\int_{\mathbb R^n}H(Dw)\dx-\langle\mu,w\rangle
\ge
\frac12\|Dw\|_{L^2(\mathbb R^n)}^2
-
\|\mu\|_{\DDD}\|w\|_{\DD}
\nonumber\\
&\ge
\frac14\|w\|_{\DD}^2
-
\|\mu\|_{\DDD}^2.
\label{mm.1}
\end{align}
Since \(0\in\mathbb X(\mathbb R^n)\) and
\(\mathcal E_\mu(0)=0\), it follows that
$
-\infty
<
m_\mu
:=
\inf_{w\in\mathbb X(\mathbb R^n)}
\mathcal E_\mu(w)
\le0.
$
Let
$
\{u_j\}_{j\in\mathbb N}
\subset\mathbb X(\mathbb R^n)
$
be a minimizing sequence so that \eqref{mm.1} shows that
\(\{u_j\}\) is bounded in \(\DD\).
Therefore, after passing to a subsequence, there exists
\(u\in\DD\) such that
$
u_j\rightharpoonup u$
weakly in $\DD.$
The set \(\mathbb X(\mathbb R^n)\) is convex and strongly closed in
\(\DD\), and, consequently, it is also weakly closed so that 
$
u\in\mathbb X(\mathbb R^n).
$
Since \(H\) is convex and lower semicontinuous, the integral functional is weakly lower semicontinuous, while the linear term is weakly continuous. Therefore 
$
\mathcal E_\mu(u)
\le
\liminf_{j\to\infty}\mathcal E_\mu(u_j)
=
m_\mu,
$
and hence \(u\) is a minimizer. As for uniqueness, let $v \in \DD$ be another minimizer.  Then \((u+v)/2\in\mathbb X(\mathbb R^n)\) and the
strict convexity of \(H\) implies 
$
Du=Dv$ a.e. in $\mathbb R^n$. Since \(w\mapsto Dw\) is injective on \(\DD\), we conclude
that \(u=v\). Notice that, in dimension two, this last conclusion
follows from the condition 
$(u-v)_{B_1(0)}=0. 
$
The validity of \eqref{l1l1} for $n=2$ can be retrieved as in \cite[Proposition 2.7]{bdp16}, and we briefly report the argument for completeness. For every \(t\in[0,1]\), one has
$
tu\in\mathbb X(\mathbb R^n).
$
If 
$
\psi(t):=\mathcal E_\mu(tu)
$, then the minimality of $u$ implies 
$
\psi(1)\le\psi(t)
$
for every $t\in[0,1].$ 
The function \(\psi\) is convex and, for \(t\in(0,1)\),
\[
\psi'(t)
=
\int_{\mathbb R^n}
\frac{t|Du|^2}{\sqrt{1-t^2|Du|^2}}\dx
-
\langle\mu,u\rangle.
\]
By convexity,
\[
\psi'(t)
\le
\frac{\psi(1)-\psi(t)}{1-t}
\le0 \Longrightarrow 
\int_{\mathbb R^n}
\frac{t|Du|^2}{\sqrt{1-t^2|Du|^2}}\dx
\le
\langle\mu,u\rangle.
\]
Letting \(t\uparrow1\) and using the monotone convergence theorem, we
obtain \rif{l1l1}. 
Fix $R>1$ and set $B_R:=B_R(0)$.
By Remark \ref{globale.locale}, $u$ minimizes the
corresponding Dirichlet functional in $B_R$, namely
$
\mathcal E_\mu(u;B_R)\le\mathcal E_\mu(w;B_R)
$
 for every $w\in\mathfrak X(u;B_R)$.
Denote 
$$
\mathcal S_{u,\partial}(B_R)
:=
\bigcup
\left\{\overline{xy}:
x,y\in\partial B_R,\quad
x\neq y,\quad
|u(x)-u(y)|=|x-y|
\right\}.
$$
It is then easy to see that \(\mathcal S_{u,\partial}(B_R)\) is relatively closed in \(B_R\). Since \(u|_{\partial B_R}\) is bounded and admits a weakly spacelike
extension to \(B_R\), namely \(u\) itself, we can apply
\cite[Corollary 4.2]{bs82}. It follows that \(u\) is strictly
spacelike and solves \eqref{bi} in
$
B_R\setminus\mathcal S_{u,\partial}(B_R).
$ 
Actually, we do have that 
\eqn{vuotino}
$$
\mathcal S_{u,\partial}(B_R)=\varnothing
\qquad
\text{for every }R>1.
$$
Assume by contradiction that this is not the case. Then there exist
\(x,y\in\partial B_R\), \(x\neq y\), such that
$
|u(x)-u(y)|=|x-y|
$
(as usual we may assume that
\(u(x)>u(y)\)). Since \(u\) is \(1\)-Lipschitz we have
\eqn{mm.4}
$$
u(y+t(x-y))
=
u(y)+t|x-y|
\qquad
\text{for every }t\in[0,1].
$$
Now fix \(R_{1}>R\). Since \(u\) minimizes the corresponding
Dirichlet problem in \(B_{R_{1}}\), and the segment
\(\overline{xy}\Subset B_{R_{1}}\) satisfies \eqref{mm.4},
Theorem \ref{al.t} implies that
$
u(y+t(x-y))
=
u(y)+t|x-y|
$
for every \(t\in\mathbb R\) such that
$
y+t(x-y)\in B_{R_{1}}.
$ Here we recall that we are assuming that $\mu$ is locally bounded. 
Since \(R_1>R\) is arbitrary, we conclude that
$
u(y+t(x-y))
=
u(y)+t|x-y|$
for every $t\ge0. 
$
This means that $u$ has linear growth at infinity which obviously contradicts the fact that $u(x)\to 0$ at infinity when $n\geq 3$ and \eqref{weightsmall} when $n=2$. This establishes \rif{vuotino} and therefore we infer that \(u\) is strictly spacelike in
\(\mathbb R^n\). By \cite[Corollary 4.2]{bs82} we have that 
$
u\in
C^1_{\loc}(\mathbb R^n)
\cap
W^{2,2}_{\loc}(\mathbb R^n)
$
and \(u\) solves \eqref{bi} in every ball. Consequently,
\rif{mm.9}
holds for every smooth $w$ with compact support. 
We finally extend the validity of \rif{mm.9} to compactly supported
\(W^{1,\infty}\)-functions $w$. Choose \(R_w>1\) such that
$
\supp\,   w \Subset B_{R_w}.
$
By standard mollification, there exists
\(\{\varphi_j\}\subset C_{0}^\infty(\mathbb R^n)\) such that
$
\supp\,   \varphi_j\Subset B_{2R_w}$ and 
$\varphi_j\to w$ uniformly, 
and
$
D\varphi_j\rightharpoonup^* Dw$ 
in $
L^\infty(B_{2R_w};\mathbb R^n)$. Obviously 
\eqn{mm.99}
$$
\int_{\mathbb{R}^{n}}\langle\partial H(Du),D\varphi_j\rangle\dx=\int_{\mathbb{R}^{n}}\mu \varphi_j\dx\,.
$$
Since \(u\in C^1_{\loc}(\mathbb R^n)\) is strictly spacelike,
compactness gives
$
\|Du\|_{L^\infty(B_{2R_w})}\le\theta_w<1.
$
Hence
$\partial H(Du)=Du/\sqrt{1-|Du|^2}\in L^\infty(B_{2R_w};\mathbb R^n).$
We can therefore pass to the limit in \eqref{mm.99} and obtain
\rif{mm.9}. If $u\equiv0$, then \eqref{mm.9} yields
$\int_{\mathbb R^n}\mu\varphi\,dx=0$
for every $\varphi\in C^\infty_0(\mathbb R^n)$.
By Remark \ref{concreto} and density, this implies
$\mu=0$ in $\DD^*$, proving the nontriviality assertion. Moreover, since $
u\in W^{2,2}_{\loc}(\mathbb R^n)
$
and \(u\) is strictly spacelike, we conclude that 
$
\partial H(Du)\in W^{1,2}_{\loc}(\mathbb R^n;\mathbb R^n),
$
so \eqref{bi} holds almost everywhere. Thus \(u\) is a so-called strong
solution. Finally, if \(\mu\in C^k(\mathbb R^n)\), the equation is uniformly
elliptic on every compact subset of \(\mathbb R^n\). Standard
elliptic bootstrap therefore gives
$
u\in C^{k+1}(\mathbb R^n);
$
see also \cite[Remark, pag.~147]{bs82}. Hence, if $k\ge1$, $u$ is a classical solution
of \eqref{bi}. 
\end{proof}
\noindent It is now convenient to introduce the quantity
\eqn{haccone}
$$
\mathbb{H}(\mu, \mathcal h_{\infty},q):=\begin{cases}
    \displaystyle
    \ \frac{\Ui^{q}}{(1-\mathcal{h}_{\infty}^{-2})^{q}}&\mbox{if }n\geq 3\vspace{1.5mm}\\ 
    \displaystyle
    \ \frac{\Ui^{2}}{(1-\mathcal{h}_{\infty}^{-2})^{2}}\log^2\left(e+\frac{\Ui}{\sqrt{1-\mathcal{h}_{\infty}^{-2}}}\right)&\mbox{if }n=2,
\end{cases}
$$
where $q\geq 2$, $\Ui$ has been defined in \eqref{mmi}, and $\mathcal{h}_{\infty}>1$ is a constant.
\begin{proposition}[Calder\'on--Zygmund type estimate]\label{cz.p}
Let $n\ge2$, let $\mu\in C^\infty_0(\mathbb R^n)$
satisfy \eqref{addition} when $n=2$, and let
$u\in\mathbb X(\mathbb R^n)$ be the minimizer
of $\mathcal E_\mu$.
Assume that there exists $\mathcal h_\infty>1$
such that $\supp\, (\mathcal h_u-\mathcal h_\infty)_+$
is compact. If $n\ge3$, then, for every $2\le q<\infty$,
\eqn{czcz}
$$
\int_{\mathbb R^n}
(\mathcal h_u-\mathcal h_\infty)_+^q\dx
\le
c\,\mathbb H(\mu,\mathcal h_\infty,q)
\int_{\mathbb R^n}|\mu|^q\dx,
$$
where $c=c(n,q)$.
If $n=2$, then
\eqn{mm.20}
$$
\int_{\mathbb R^2}
(\mathcal h_u-\mathcal h_\infty)_+^2\dx
\le
c\,\mathbb H(\mu,\mathcal h_\infty,2)
\int_{\mathbb R^2}
\mu^2\log^2(e+|x|)\dx,
$$
where $c>0$ is a dimensional constant.
The quantity $\mathbb H(\cdot)$ is defined in \eqref{haccone}. Finally, $u$ is smooth, strictly
spacelike, and solves \eqref{bi} classically. In fact, the same estimates hold for any smooth, strictly
spacelike classical solution $u\in\mathbb X(\mathbb R^n)$
to \eqref{bi}, with the same assumptions on $\mu$
and on the support of
$(\mathcal h_u-\mathcal h_\infty)_+$.\end{proposition}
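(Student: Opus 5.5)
We test the differentiated equation \eqref{0} with $\varphi:=(\mathcal h_u-\mathcal h_\infty)_+^{q-1}\mathcal h_u^{-1}D_su$ (or a close variant), which is admissible since the support of $(\mathcal h_u-\mathcal h_\infty)_+$ is compact and, by Proposition \ref{exex}, $u$ is smooth and strictly spacelike. The last conclusion of the statement follows directly from Proposition \ref{exex} applied with $\mu\in C^\infty_0$. The aim is to bound $\int (\mathcal h_u-\mathcal h_\infty)_+^q$ through a quantity in which only $\mu$ and $u$ appear linearly. The key structural point is that $(\mathcal h_u-\mathcal h_\infty)_+\le \mathcal h_u\le \mathcal h_u|Du|^2/(1-\mathcal h_\infty^{-2})$ on the set $\{\mathcal h_u>\mathcal h_\infty\}$, because there $|Du|^2=1-\mathcal h_u^{-2}\ge 1-\mathcal h_\infty^{-2}$. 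This accounts for the factor $(1-\mathcal h_\infty^{-2})^{-q}$ in $\mathbb H$.

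Rather than differentiating, it may be cleaner to test the equation \eqref{mm.9} itself with $w:=u\,(\mathcal h_u-\mathcal h_\infty)_+^{q-2}\cdot(\text{suitable power})$. This produces
\[
\int \mathcal h_u|Du|^2 (\mathcal h_u-\mathcal h_\infty)_+^{q-2}\dx+\int u\,\mathcal h_u\langle Du, D(\mathcal h_u-\mathcal h_\infty)_+^{q-2}\rangle\dx=\int \mu u(\mathcal h_u-\mathcal h_\infty)_+^{q-2}\dx .
\]
The right-hand side is controlled by H\"older's inequality, using $\|u\|_{L^\infty}\le c\,\mathfrak m_\mu$ from Proposition \ref{boun.p}; in dimension two one uses instead the weight $\log(e+|x|)$, which accounts for the weighted norm and the $\mathfrak m_\mu$ factor. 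Combined with the pointwise lower bound above, this gives
\[
\|(\mathcal h_u-\mathcal h_\infty)_+\|_{L^q}^{q}\lesssim \frac{\mathfrak m_\mu}{1-\mathcal h_\infty^{-2}}\|\mu\|_{L^q}\|(\mathcal h_u-\mathcal h_\infty)_+\|^{q-1}_{L^q}+(\text{cross term}).
\]
After Young's inequality this yields exactly the power $\mathbb H(\mu,\mathcal h_\infty,q)$.

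The main obstacle is the cross term containing $D(\mathcal h_u-\mathcal h_\infty)_+$. To handle it I would substitute $D\mathcal h_u=\mathcal h_u^3D^2u\,Du$ from \eqref{identitas}, so that the cross term becomes $\int u\,\mathcal h_u^{4}\langle D^2u Du,Du\rangle(\cdots)$. I would then try to absorb it via the Caccioppoli-type inequality of Lemma \ref{cacclem} with $m=q-2$, with cutoff $\eta\equiv1$ on a large ball containing the compact support, so that no $|D\eta|$ term appears. That lemma controls $\int (\mathcal h_u-\mathcal h_\infty)_+$-weighted $\mathcal h_u^{m-2}|D\mathcal h_u|^2$ by $\int\mu^2\mathcal h_u^{q-2}$. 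Alternatively, one can avoid the cross term entirely by testing with $u$ times a function of $\mathcal h_u$ chosen so that the gradient falls on $u$ only, using the identity $\langle \mathcal h_uDu,Du\rangle$ together with integration by parts. If absorption fails, I would fall back on the $W^{2,2}$-type control \eqref{cacc} and Sobolev embedding. The two-dimensional logarithm in $\mathbb H$ suggests that $u$ grows like $\mathfrak m_\mu\log(e+|x|)$, estimated through \eqref{logsup}; the extra $\log$ of $\mathfrak m_\mu/\sqrt{1-\mathcal h_\infty^{-2}}$ arises from optimizing a radius cutoff in the weighted estimate.
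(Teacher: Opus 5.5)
There is a genuine gap. Your outer skeleton matches the paper: you test \eqref{mm.9} with $u$ times a power of $\mathfrak t:=(\mathcal h_u-\mathcal h_\infty)_+$, use $|Du|^2\ge 1-\mathcal h_\infty^{-2}$ on $\{\mathfrak t>0\}$, and combine $\|u\|_{L^\infty}\le c\,\mathfrak m_\mu$ with H\"older. The gap is exactly where you place the main obstacle: the cross term containing $D\mathcal h_u$ is never controlled, and none of your remedies works.

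\begin{itemize}
\item Lemma \ref{cacclem} needs a compactly supported cutoff $\eta$, so the term $\int\mathcal h_u^{m+2}|D\eta|^2\dx$ cannot be removed. Indeed $\mathcal h_u\ge1$ everywhere; only $\mathfrak t$ has compact support. For $n\ge3$ this term is at least $c_nR^{n-2}$ whenever $\eta=1$ on $B_R$.
\item That lemma is also untruncated and carries the wrong weights. It bounds $\int\eta^2\mathcal h_u^{m-2}|D\mathcal h_u|^2\dx$, not the energy $\int\mathfrak t^{\,q-2}\langle\mathcal H(Du)D\mathcal h_u,D\mathcal h_u\rangle\dx$ that the cross term produces. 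Its right-hand side $\int\eta^2\mu^2\mathcal h_u^{q-2}\dx$ generates, through $\mathcal h_u\le\mathfrak t+\mathcal h_\infty$, terms such as $\mathcal h_\infty^{q-2}\int\mu^2\dx$. These are not of the form $\mathbb H(\mu,\mathcal h_\infty,q)\int|\mu|^q\dx$.
\item Testing with $u f(\mathcal h_u)$ ``so that the gradient falls on $u$ only'' is impossible. The gradient $D(uf(\mathcal h_u))$ contains $uf'(\mathcal h_u)D\mathcal h_u$ unless $f$ is constant.
\item The ``suitable power'' is not innocent. With $w=u\,\mathfrak t^{\,q-1}$, Young's inequality on the cross term leaves $u^2\mathcal h_u\mathfrak t^{\,q-3}\langle\mathcal H(Du)D\mathcal h_u,D\mathcal h_u\rangle$. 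This is singular near $\{\mathcal h_u=\mathcal h_\infty\}$ when $q<3$, and it carries an extra unbounded factor $\mathcal h_u$.
\end{itemize}

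The missing ingredient is a \emph{truncated} Caccioppoli inequality, coming from the fact that the boost is a subsolution. Test \eqref{0} with $\varphi D_su$, sum over $s$, use \eqref{identitas}, and discard the nonnegative Hessian term. This gives
\[
\int_{\mathbb R^n}\langle\mathcal H(Du)D\mathcal h_u,D\varphi\rangle\dx\le\int_{\mathbb R^n}\varphi\,\langle Du,D\mu\rangle\dx\qquad\text{for compactly supported Lipschitz }\varphi\ge0 .
\]
Your first paragraph, with $\varphi=\mathfrak t^{\,q-1}\mathcal h_u^{-1}$, was in fact heading there. Choose $\varphi=\mathfrak t^{\,q-1}$, which is admissible without any cutoff precisely because of the support assumption. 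Integrate the $D\mu$-term by parts through the equation, writing $\varphi Du=\mathcal h_u^{-1}\varphi\,\mathcal h_uDu$. Then use $|\langle Du,D\mathcal h_u\rangle|\le|Du|\langle\mathcal H(Du)D\mathcal h_u,D\mathcal h_u\rangle^{1/2}$, which follows from $\mathcal H(Du)Du=Du$. This yields
\[
\int_{\mathbb R^n}\mathfrak t^{\,q-2}\langle\mathcal H(Du)D\mathcal h_u,D\mathcal h_u\rangle\dx\le c(q)\int_{\mathbb R^n}\mathfrak t^{\,q-2}\mu^2\dx ,
\]
with $\mathfrak t^{0}$ read as $\mathds 1_{\{\mathfrak t>0\}}$ when $q=2$.

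Next, test \eqref{mm.9} with $u\,\mathcal h_u^{-1}\mathfrak t^{\,q}$. The factor $\mathcal h_u^{-1}$ turns the main term into $\int\mathfrak t^{\,q}|Du|^2\dx$. It also bounds both cross terms by $(q+1)\int|u|\,\mathfrak t^{\,q-1}|\langle Du,D\mathcal h_u\rangle|\dx$. Young's inequality then produces exactly the truncated energy above, and your $|Du|^2\ge1-\mathcal h_\infty^{-2}$ plus H\"older step concludes.

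In dimension two $u$ is unbounded, so both steps must be weighted:
\begin{itemize}
\item test the subsolution inequality with $\log^2(A+|x|)\,\mathfrak t$, and \eqref{mm.9} with $u\,\mathcal h_u^{-1}\mathfrak t^{2}$, using $|u|\le c\,\mathfrak m_\mu\log(e+|x|)$;
\item differentiating the weight costs $c\,\mathfrak m_\mu^2A^{-2}\int\mathfrak t^{2}\dx$, which is absorbed by taking $A\approx\mathfrak m_\mu/\sqrt{1-\mathcal h_\infty^{-2}}$;
\item the bound $\log(A+|x|)\lesssim\log A\,\log(e+|x|)$ then produces the extra logarithm in $\mathbb H$.
\end{itemize}
Your two-dimensional sketch is missing this weighted truncated estimate, in which the additive parameter $A$ of the weight is what gets optimized, not a radius cutoff.
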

\begin{proof}
The proof closely follows those in \cite[Theorem 3.5]{bs82},
\cite[Theorem 3.6]{haa20} and \cite[Proposition 4.2]{bi23},
where such statements are available, in a slightly different
form, in the case $n\geq3$. Here we report the necessary
modifications and the proof for the case $n=2$. By Proposition \ref{exex}, $u$ is smooth, strictly
spacelike, and solves \eqref{bi} classically. Conversely, for a smooth strictly spacelike solution
satisfying the stated hypotheses, the support condition
gives $\mathcal h_u\in L^\infty(\mathbb R^n)$, hence
$\partial H(Du)\in L^2(\mathbb R^n;\mathbb R^n)$. 
By density and convexity, such a solution minimizes
$\mathcal E_\mu$ as well. The proof is based on the structural fact that the boost
function $\bof$ satisfies an inhomogeneous subsolution
inequality for a linear elliptic operator with coefficient
matrix $\mathcal H(Du)$ - see \rif{cz.aux} below.
When $\mu\equiv0$, this reduces to a genuine subsolution
property. Throughout the proof, we set
$
\eta:=(\mathcal h_u-\mathcal{h}_{\infty})_+
$
and
$
\delta:=1-\mathcal{h}_{\infty}^{-2}\in(0,1).
$
When $q=2$, every occurrence of $\eta^{q-2}$
is understood as $\mathds{1}_{\{\eta>0\}}$, consistently
with $D\eta=\mathds{1}_{\{\eta>0\}}D\mathcal h_u$
almost everywhere.
We use the notation in Section \ref{ner}. Testing \rif{0} with $ \varphi D_su$ instead of $\varphi$, where
$\varphi\ge0$ is compactly supported, summing over
$s\in\{1,\ldots,n\}$, recalling \rif{identitas},
and dropping the nonnegative term containing
$\partial^2H(Du)D_sDu$, we obtain
\eqn{cz.aux}
$$
\int_{\mathbb R^n}
\langle\mathcal H(Du)D\mathcal h_u,D\varphi\rangle\dx
\le
\int_{\mathbb R^n}
\varphi\langle Du,D\mu\rangle\dx.
$$
\indent
{\em Case \(n\ge3\)}. 
When $n\geq 3$ we essentially reproduce the arguments of \cite[Theorem 3.6]{haa20} and \cite[Proposition 4.2]{bi23}. 
Let \(q\in[2,\infty)\). Since \(\eta\) has compact support, we may
use
$
\varphi
=
u\mathcal h_u^{-1}\eta^q
$
as a test function in \eqref{mm.9} to get (recall \rif{recalla})
$$
\begin{aligned}
\int_{\mathbb R^n}\eta^q|Du|^2\dx
={}&
\int_{\mathbb R^n}
u\mathcal h_u^{-1}\eta^q
\langle Du,D\mathcal h_u\rangle\dx
-
q\int_{\mathbb R^n}
u\eta^{q-1}
\langle Du,D\mathcal h_u\rangle\dx
+
\int_{\mathbb R^n}
u\mathcal h_u^{-1}\eta^q\mu\dx.
\end{aligned}
$$
Since
$
0\le \eta/\mathcal h_u\le1
$
and, using \(\mathcal H(Du)Du=Du\) with the Cauchy--Schwarz inequality 
\eqn{cscs}
$$
|\langle Du,D\mathcal h_u\rangle|
=|\langle
\mathcal H(Du)Du,D\mathcal h_u
\rangle|\le
|Du|\langle\mathcal H(Du)D\mathcal h_u,D\mathcal h_u\rangle^{1/2},
$$
we infer that
\eqn{cz.111}
$$
\int_{\mathbb R^n}\eta^q|Du|^2\dx
\le
(q+1)
\int_{\mathbb R^n}
|u|\eta^{q-1}|Du|
\langle
\mathcal H(Du)D\mathcal h_u,D\mathcal h_u
\rangle^{1/2}\dx
+
\int_{\mathbb R^n}
|u|\eta^{q-1}|\mu|\dx.
$$
On the support of \(\eta\), we have
\(\mathcal h_u\ge\mathcal{h}_{\infty}\), and hence
\eqn{cz.delta}
$$
|Du|^2=1-\mathcal h_u^{-2}\ge1-\mathcal{h}_{\infty}^{-2}=\delta.
$$
By Young's inequality, \eqref{cz.111} yields
\eqn{cz.2}
$$
\begin{aligned}
\int_{\mathbb R^n}\eta^q|Du|^2\dx
\le{}&
c\|u\|_{L^\infty(\mathbb R^n)}^2
\int_{\mathbb R^n}
\eta^{q-2}\langle
\mathcal H(Du)D\mathcal h_u,D\mathcal h_u
\rangle\dx+\frac{c}{\delta}\|u\|_{L^\infty(\mathbb R^n)}^2\int_{\mathbb R^n}\eta^{q-2}\mu^2\dx.
\end{aligned}
$$
We next control the first integral on the right-hand side.
Taking
$
\varphi=\eta^{q-1}
$
in \eqref{cz.aux}, we obtain
$$
(q-1)
\int_{\mathbb R^n}\eta^{q-2}
\langle\mathcal H(Du)D\mathcal h_u,D\mathcal h_u\rangle\dx
\le\int_{\mathbb R^n}
\eta^{q-1}\langle Du,D\mu\rangle\dx.
$$
Integrating by parts and using
$
-\diver\, (\mathcal h_uDu)=\mu,
$
we find
$$
\int_{\mathbb R^n}
\eta^{q-1}\langle Du,D\mu\rangle\dx
=
\int_{\mathbb R^n}
\mathcal h_u^{-1}\eta^{q-1}\mu^2\dx
+
\int_{\mathbb R^n}
\mu\eta^{q-2}
\left(
\mathcal h_u^{-1}\eta-(q-1)
\right)
\langle Du,D\mathcal h_u\rangle\dx.
$$
Again using \rif{cscs} and 
Young's inequality we obtain
\eqn{cz.3}
$$
\int_{\mathbb R^n}
\eta^{q-2}
\langle
\mathcal H(Du)D\mathcal h_u,D\mathcal h_u
\rangle\dx
\le
c
\int_{\mathbb R^n}
\eta^{q-2}\mu^2\dx.
$$
Combining \eqref{cz.2} and \eqref{cz.3}, and recalling that
\(\delta<1\), gives
$$
\int_{\mathbb R^n}
\eta^q|Du|^2\dx
\le
\frac{c}{\delta}
\|u\|_{L^\infty(\mathbb R^n)}^2
\int_{\mathbb R^n}\eta^{q-2}\mu^2\dx
$$
with $c\equiv c(q)$. 
By \eqref{cz.delta},
$$
\int_{\mathbb R^n}\eta^q\dx
\le
\frac{c}{\delta^2}
\|u\|_{L^\infty(\mathbb R^n)}^2
\int_{\mathbb R^n}\eta^{q-2}\mu^2\dx.
$$
H\"older's inequality yields (not needed when $q=2$)
$$
\int_{\mathbb R^n}\eta^{q-2}\mu^2\dx
\le
\left(
\int_{\mathbb R^n}\eta^q\dx
\right)^{\frac{q-2}{q}}
\left(
\int_{\mathbb R^n}|\mu|^q\dx
\right)^{\frac2q}.
$$
Consequently,
$$
\int_{\mathbb R^n}\eta^q\dx
\le
\frac{c(n,q)}
{\delta^q}
\|u\|_{L^\infty(\mathbb R^n)}^q
\int_{\mathbb R^n}|\mu|^q\dx
$$
for $c\equiv c(n,q)$. 
By Proposition \ref{boun.p},
$
\|u\|_{L^\infty(\mathbb R^n)}
\le
c\,\Ui,
$
and hence \eqref{czcz} follows. 

{\em Case \(n=2\)}. 
Here we deal with the two dimensional case, which is not available in the literature; we have to compensate for the logarithmic growth of
\(u\). In the following we abbreviate 
$
L_A(x):=\log(A+|x|)$, where 
$A\ge e 
$
will be fixed below. 
We first derive a weighted estimate for \(D\mathcal h_u\). Taking 
$
\varphi=L_A^2\eta
$
in \eqref{cz.aux} we obtain
\eqn{cz.4}
$$
\begin{aligned}
I
:=
\int_{\mathbb R^2}
L_A^2
\langle
\mathcal H(Du)D\mathcal h_u,D\eta
\rangle\dx
& \le
-2
\int_{\mathbb R^2}
\eta\frac{L_A}{A+|x|}
\left\langle
\mathcal H(Du)D\mathcal h_u,
\frac{x}{|x|}
\right\rangle\dx\\
& \qquad 
+
\int_{\mathbb R^2}
L_A^2\eta
\langle Du,D\mu\rangle\dx.
\end{aligned}
$$
Note that $D\eta=\mathds{1}_{\eta>0}D\bof$, thus $I\geq 0$. 
By the Cauchy--Schwarz inequality associated with
\(\mathcal H(Du)\) - recall also that $\langle
\mathcal H(Du)\xi, \xi \rangle \leq |\xi|^2$ - 
the first term on the right-hand side can be estimated by
$$
\begin{aligned}
2\left|
\int_{\mathbb R^2}
\eta\frac{L_A}{A+|x|}
\left\langle
\mathcal H(Du)D\mathcal h_u,
\frac{x}{|x|}
\right\rangle\dx
\right|
\le\frac18 I
+\frac{c}{A^2}
\int_{\mathbb R^2}\eta^2\dx.
\end{aligned}
$$
Set
$
g:=L_A^2\eta.
$
Using
$$
g\langle D\mu,Du\rangle
=
\left\langle
D\left(\frac{\mu g}{\mathcal h_u}\right),
\mathcal h_uDu
\right\rangle
-\mu\langle Dg,Du\rangle+
\frac{\mu g}{\mathcal h_u}
\langle D\mathcal h_u,Du\rangle,
$$
and integrating by parts, we obtain
$$
\begin{aligned}
\int_{\mathbb R^2}
g\langle Du,D\mu\rangle\dx
={}&
\int_{\mathbb R^2}
\frac{\mu^2g}{\mathcal h_u}\dx-
\int_{\mathbb R^2}
\mu L_A^2\langle Du,D\eta\rangle\dx
\\
&-
2\int_{\mathbb R^2}
\mu\eta\frac{L_A}{A+|x|}
\left\langle
Du,\frac{x}{|x|}
\right\rangle\dx
+
\int_{\mathbb R^2}
\frac{\mu g}{\mathcal h_u}
\langle D\mathcal h_u,Du\rangle\dx.
\end{aligned}
$$
Since \(D\eta=D\mathcal h_u\) a.e. on \(\{\eta>0\}\) and
$
\eta=\mathcal h_u-\mathcal{h}_{\infty}
$
there, the second and fourth terms combine into
$$
-\int_{\{\eta>0\}}
\mu L_A^2\frac{\mathcal{h}_{\infty}}{\mathcal h_u}
\langle D\mathcal h_u,Du\rangle\dx.
$$
Moreover,
$
\eta/\mathcal h_u\le1,
$
and
$
|\langle D\mathcal h_u,Du\rangle|
\le
|Du|
\langle
\mathcal H(Du)D\mathcal h_u,D\mathcal h_u
\rangle^{1/2}.
$
Therefore Young's inequality gives
$$
\int_{\mathbb R^2}
L_A^2\eta
\langle Du,D\mu\rangle\dx
\le
\frac18 I
+
\frac{c}{A^2}
\int_{\mathbb R^2}\eta^2\dx
+
c\int_{\mathbb R^2}\mu^2L_A^2\dx.
$$
Inserting this estimate into \eqref{cz.4} and absorbing the fractions
of \(I\), we obtain the weighted Caccioppoli type estimate 
\eqn{cz.5}
$$
I
=
\int_{\mathbb R^2}
L_A^2
\langle
\mathcal H(Du)D\mathcal h_u,D\eta
\rangle\dx
\le
\frac{c}{A^2}
\int_{\mathbb R^2}\eta^2\dx
+
c\int_{\mathbb R^2}\mu^2L_A^2\dx.
$$
We now test \eqref{mm.9} with
$
\varphi
=
u\mathcal h_u^{-1}\eta^2.
$
Arguing exactly as in \eqref{cz.111}, with \(q=2\), we obtain
$$
\int_{\mathbb R^2}|Du|^2\eta^2\dx
\le 3\int_{\mathbb R^2}
|u|\eta|Du|
\langle
\mathcal H(Du)D\mathcal h_u,D\mathcal h_u
\rangle^{1/2}\dx+
\int_{\mathbb R^2}|u|\eta|\mu|\dx.
$$
Using \eqref{linf}$_2$ and Young's inequality gives, for every
\(\sigma\in(0,1)\),
$$
\int_{\mathbb R^2}|Du|^2\eta^2\dx\leq 
\frac18
\int_{\mathbb R^2}|Du|^2\eta^2\dx
+c\Ui^2 I+\sigma\int_{\mathbb R^2}\eta^2\dx+
\frac{c\Ui^2}{\sigma}
\int_{\mathbb R^2}\mu^2L_A^2\dx.
$$
Using \eqref{cz.5}, we therefore find
\eqn{cz.6}
$$
\int_{\mathbb R^2}|Du|^2\eta^2\dx\le \frac18
\int_{\mathbb R^2}|Du|^2\eta^2\dx+
c\left(\frac{\Ui^2}{A^2}+\sigma\right)\int_{\mathbb R^2}\eta^2\dx
+c\Ui^2\left(1+\frac1{\sigma}\right)\int_{\mathbb R^2}\mu^2L_A^2\dx.
$$
By \eqref{cz.delta},
$$
\int_{\mathbb R^2}\eta^2\dx
\le\frac1{\delta}
\int_{\mathbb R^2}|Du|^2\eta^2\dx.
$$
Choose a sufficiently large dimensional constant \(c_0\ge1\), and set
$$
\sigma
:=\frac{\delta}{16c_0},
\qquad
A:=e+\frac{4\sqrt{c_0}\,\Ui}{\sqrt{\delta}}.
$$
With \(c_0\) sufficiently large, the terms containing
\(\int|Du|^2\eta^2\) on the right-hand side of \eqref{cz.6} can be
absorbed into the left-hand side. We conclude that
$$
\int_{\mathbb R^2}|Du|^2\eta^2\dx
\le
c\,
\frac{\Ui^2}{\delta}
\int_{\mathbb R^2}\mu^2L_A^2\dx.
$$
Using once more \eqref{cz.delta}, we obtain
$$
\int_{\mathbb R^2}\eta^2\dx
\le
c\,
\frac{\Ui^2}{\delta^2}
\int_{\mathbb R^2}\mu^2L_A^2\dx.
$$
Finally, since \(A\ge e\),
$
L_A(x)=\log(A+|x|)\le\log A\log(e+|x|),
$
and our choice of \(A\) gives
$$
\log A
\le
c\log\left(
e+
\frac{\Ui}{\sqrt{\delta}}
\right).
$$
Therefore
$$
\begin{aligned}
\int_{\mathbb R^2}
(\mathcal h_u-\mathcal{h}_{\infty})_+^2\dx
\le{}&
c\,\frac{\Ui^2}{\delta^2}\log^2\left(e+\frac{\Ui}{\sqrt{\delta}}\right)
\int_{\mathbb R^2}
\mu^2\log^2(e+|x|)\dx.
\end{aligned}
$$
Recalling that
$
\delta=1-\mathcal{h}_{\infty}^{-2},
$
this is precisely \eqref{mm.20}. The proof is complete.
\end{proof}

\end{document}